\renewcommand{\codim}{\operatorname{codim}}
\newcommand*\bigcdot{\mathpalette\bigcdot@{.7}}
\newcommand*\bigcdot@[2]{\mathbin{\vcenter{\hbox{\scalebox{#2}{$\m@th#1\bullet$}}}}}
\newcommand{\adjo}{\mathrel{\vcenter{\offinterlineskip \ialign{##\cr$\rightarrow$\cr\noalign{\kern 0pt}$\leftarrow$\cr}}}}
\newcommand{\cocolon}{\nobreak\mskip8mu plus1mu \mathpunct{}\nonscript\mkern-\thinmuskip{:}\mskip2mu\relax}
\newcommand{\mf}[1]{{\mathfrak{#1}}}
\newcommand{\mb}[1]{{\mathbb{#1}}}
\newcommand{\mc}[1]{{\mathcal{#1}}}
\newcommand{\mrm}[1]{{\mathrm{#1}}}
\DeclareMathOperator{\mhm}{MHM}
\DeclareMathOperator{\Gr}{Gr}
\DeclareMathOperator{\coh}{Coh}
\DeclareMathOperator{\coker}{coker}
\DeclareMathOperator{\pro}{Pro}
\DeclareMathOperator{\supp}{Supp}
\title[Hodge theory, intertwining functors, and the Orbit Method]{Hodge theory, intertwining functors, and the Orbit Method for real reductive groups}
\author{Dougal Davis}
\author{Lucas Mason-Brown}
\thanks{DD was supported by the ARC grant FL200100141.}
\begin{document}

\subjclass{17B08, 22E46, 14F10, 32S35}
\keywords{Mixed Hodge modules, unipotent representations, nilpotent orbits, real reductive groups}

\begin{abstract}
We study the Hodge filtrations in the sense of Schmid and Vilonen on unipotent representations of real reductive groups. We show that for various well-defined classes of unipotent representations (including, for example, the oscillator representations of metaplectic groups, the minimal representations of all simple groups, and all unipotent representations of complex groups) the Hodge filtration coincides with the quantization filtration predicted by the Orbit Method. We deduce a number of longstanding conjectures about such representations, including a proof that they are unitary and a description of their $K$-types in terms of co-adjoint orbits. The proofs rely heavily on certain good homological properties of the Hodge filtrations on weakly unipotent representations, which are established using a Hodge-theoretic upgrade of the Beilinson-Bernstein theory of intertwining functors for $\mathcal{D}$-modules on the flag variety. The latter consists of an action of the affine Hecke algebra on a category of filtered monodromic $\mathcal{D}$-modules, which we use to compare Hodge filtrations coming from different localizations of the same representation. As an application of the same methods, we also prove a new cohomology vanishing theorem for mixed Hodge modules on partial flag varieties.
\end{abstract}

\maketitle

\tableofcontents

\section{Introduction}

The classification of irreducible unitary representations of a real reductive group $G_{\RR}$ is one of the major unsolved problems in representation theory. Two  conceptual approaches to this problem have emerged: the Orbit Method philosophy of Kostant and Kirillov and the Hodge theory approach of Schmid and Vilonen. In this paper, we combine these two approaches, gaining new insight into both.

The Orbit Method is an idea (dating back to around 1960) that irreducible unitary representations should naturally arise as quantizations of co-adjoint orbits. While it has been difficult to make precise in the context of reductive groups, the Orbit Method offers useful predictions about the overall structure of the unitary dual (that is, the set of all irreducible unitary representations). In particular, the Orbit Method suggests that the unitary dual is generated (see, e.g., \S\ref{subsec:conjecture} below) by a small set of building blocks called `unipotent representations'. Some such representations were defined in \cite{LMBM} using filtered quantizations of symplectic singularities, but it has remained a conjecture that these representations are unitary in general.

Hodge theory, on the other hand, provides a powerful set of tools for studying representations, and in particular for determining which are unitary. This is based on the observation that every irreducible representation of $G_{\RR}$ admits a canonical `Hodge' filtration coming from its Beilinson-Bernstein realization as a (twisted) $\mathcal{D}$-module on the complex flag variety. It was conjectured in \cite{SchmidVilonen2011}, then proved in \cite{DV}, that the Hodge filtration can be used to detect unitarity. In general, the Hodge filtration is an extremely complicated invariant, containing fine information about the geometry of $K$-orbits on the flag variety. However, we show in this paper that the Hodge filtration on a \emph{unipotent} representation is of a very simple form: its associated graded is identified with the sections of a vector bundle on a nilpotent co-adjoint $K$-orbit. In other words, for unipotent representations, the filtrations arising from Hodge theory and the Orbit Method coincide. We exploit this remarkable fact to prove a number of longstanding conjectures about unipotent representations, including their unitarity.

The main ingredient in our analysis of the Hodge filtration on a unipotent representation is a general theorem that, assuming only certain properties of the annihilator, the Hodge associated graded must be a Cohen-Macaulay module. From here the description as a vector bundle quickly follows. The Cohen-Macaulay property is proved using a new Hodge-theoretic upgrade of the Beilinson-Bernstein intertwining functor theory, which is likely to be of independent interest. As an application of the same methods, we also prove a cohomology vanishing theorem for Hodge modules on partial flag varieties, generalizing a result of Vilonen and the first named author \cite[Theorem 1.3]{DV}.

\subsection{Unipotent representations and the Orbit Method}

Originating in the work of Kostant (\cite{Kostant1959}) and Kirillov (\cite{Kirillov1962}), the Orbit Method proposes a correspondence between irreducible unitary $G_{\RR}$-representations and orbits for the co-adjoint action of $G_{\RR}$ on $\fg_{\RR}^*$. By the Jordan decomposition, every such orbit can be constructed from a \emph{nilpotent} co-adjoint orbit for a Levi subgroup of $G_\mb{R}$ and a character of its Lie algebra. Accordingly, the Orbit Method predicts a finite set of `unipotent representations', conjecturally attached to nilpotent co-adjoint orbits, from which the entire unitary dual can be constructed by various forms of induction. See \S\ref{subsec:conjecture} below for a precise conjecture.

The problem of defining the set of unipotent representations has a long and rich history, which we will not summarize here. In the monograph \cite{LMBM}, Losev, Matvieievskyi, and the second-named author gave a precise definition of unipotent representations in the case of \emph{complex} groups. To a complex reductive group $G$ and a finite connected cover $\widetilde{\OO}$ of a nilpotent co-adjoint orbit, they attach a two-sided ideal $I(\widetilde{\OO})\subset U(\fg)$ in the universal enveloping algebra. This ideal is defined to be the kernel of a ring homomorphism $U(\fg) \to \cA_0(\widetilde{\OO})$ to a certain canonical quantization of the ring of regular functions $\CC[\widetilde{\OO}]$. A \emph{unipotent ideal} is a two-sided ideal which arises in this fashion; a \emph{unipotent representation} is an irreducible Harish-Chandra $U(\fg)$-bimodule (thought of as the Harish-Chandra module of a representation of $G$) which is annihilated on both sides by a unipotent ideal. Such representations were classified in \cite{LMBM} and \cite{MBM} and shown to arise as quantizations of $G$-equivariant vector bundles on nilpotent co-adjoint orbits. It was conjectured in \cite{LMBM} (and verified by hand for classical $G$) that all such representations are unitary. Our first contribution is a proof of this conjecture in general.

\begin{thm}[Theorem \ref{thm:complexunitary}] \label{thm:intro complex unipotent}
Let $G$ be a complex reductive group, and let $M$ be a unipotent Harish-Chandra $U(\mf{g})$-bimodule. Then $M$ is unitary.
\end{thm}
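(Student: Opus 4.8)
The plan is to reduce the statement to the Hodge-theoretic unitarity criterion of \cite{DV} and then verify that criterion by identifying the Schmid--Vilonen Hodge filtration on a unipotent bimodule with the quantization filtration predicted by the Orbit Method. Throughout one uses that a Harish-Chandra $U(\fg)$-bimodule is the Harish-Chandra module of a representation of the complex group $G$ viewed as a real reductive group, so that the canonical Hodge filtration and the results of \cite{DV} apply. Fix an irreducible unipotent $M$ whose two-sided annihilator is a unipotent ideal $I(\widetilde{\OO})$. By \cite{LMBM}, $M$ is a quantization of a $G$-equivariant vector bundle $\mathcal{V}$ on a finite connected cover $\widetilde{\OO}$ of a nilpotent co-adjoint orbit $\OO$: concretely, $\cA_0(\widetilde{\OO})$ carries its order filtration with $\operatorname{gr}\cA_0(\widetilde{\OO}) = \CC[\widetilde{\OO}]$, together with a canonical $\ast$-structure whose Hermitian form is positive definite, and $M$ acquires a good filtration $F^{\mathrm{quant}}$ with $\operatorname{gr}^{\mathrm{quant}}M \cong \Gamma(\widetilde{\OO},\mathcal{V})$ as graded $\CC[\widetilde{\OO}]$-modules, carrying a definite induced form. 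It therefore suffices to prove that $F^{H}M$ and $F^{\mathrm{quant}}M$ agree up to shift.

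The crux of the argument, and what I expect to be the main obstacle, is to show that the Hodge associated graded $\operatorname{gr}^{H}M$ is a Cohen--Macaulay module over $\CC[\overline{\OO}]$. This is an instance of the general theorem that, for a weakly unipotent Harish-Chandra module, the Hodge associated graded is Cohen--Macaulay --- and unipotent ideals have the required properties (maximality and sufficiently small infinitesimal character), so the hypothesis is met. The idea of the proof of that theorem is to localize $M$ on the flag variety $X$ --- or, since the infinitesimal character of $I(\widetilde{\OO})$ is typically singular, on an adapted partial flag variety, where a cohomology vanishing theorem for mixed Hodge modules forces $R\Gamma$ to be concentrated in degree zero and strictly compatible with Hodge filtrations --- and then to compare the Hodge filtrations attached to different localizations using the Hodge-theoretic upgrade of Beilinson--Bernstein intertwining functor theory. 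At a suitably chosen twist the localization of $M$ is a mixed Hodge module whose underlying filtered $\mathcal{D}$-module has Cohen--Macaulay associated graded essentially by inspection; the action of the affine Hecke algebra on filtered monodromic $\mathcal{D}$-modules transports this to the localization at the original infinitesimal character while tracking the shift in Hodge filtration, and the relevant intertwining functors, built from $j_{!}$, $j_{*}$ along affine embeddings and from twists by Hodge line bundles, are shown to preserve Cohen--Macaulayness of the associated graded. Passing back to global sections, which is exact and Hodge-compatible in the relevant range, yields the claim for $M$.

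Granting this, the identification of the two filtrations is essentially formal. Since $M$ has associated variety $\overline{\OO}$ and $\operatorname{gr}^{H}M$ is a Cohen--Macaulay $\CC[\overline{\OO}]$-module of full support, it is maximal Cohen--Macaulay; restricting to the smooth locus $\OO$ and using normality of $\widetilde{\OO}$, $\operatorname{gr}^{H}M$ is recovered as the module of global sections of a $G$-equivariant vector bundle $\mathcal{V}'$ on $\widetilde{\OO}$. The pair $(M,F^{H})$ then makes $M$ into a module over a filtered quantization of $\CC[\widetilde{\OO}]$ with the same associated graded as $\cA_0(\widetilde{\OO})$; by the uniqueness of the canonical quantization and the classification of its modules in \cite{LMBM} and \cite{MBM}, this quantization is $\cA_0(\widetilde{\OO})$ and the module is the one underlying $M$, whence $\mathcal{V}' \cong \mathcal{V}$ and $F^{H}M = F^{\mathrm{quant}}M$ up to shift.

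Finally, unitarity follows by feeding this identification back into \cite{DV}. The bimodule $M$ is Hermitian --- its annihilator is $\ast$-stable --- and by the criterion of \cite{DV} it is unitary if and only if the invariant Hermitian pairing is sign-coherently definite on the Hodge graded pieces $\operatorname{gr}^{H}_{p}M$. Under the identification $F^{H}M = F^{\mathrm{quant}}M$, this pairing is precisely the one induced by the positive $\ast$-structure on $\cA_0(\widetilde{\OO})$ and the definite Hermitian form on the fibers of $\mathcal{V}$, hence is definite; therefore $M$ is unitary. As a sanity check, the bottom piece of $F^{H}M$ is the lowest $K$-type space, on which definiteness is classical; the Hodge-theoretic input is exactly what propagates positivity from there to all of $M$.
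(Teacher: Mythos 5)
Your proposal correctly identifies the central technical ingredient --- the Cohen--Macaulayness of $\gr^{F^H}M$, proved via Hodge-theoretic intertwining functors --- and the overall strategy of reducing to the Hodge-theoretic unitarity criterion of \cite{DV}. However, the final step is circular, and the proposal also skips a necessary reduction.

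The circularity is in the last paragraph. You assert that $\cA_0(\widetilde{\OO})$ carries a $\ast$-structure ``whose Hermitian form is positive definite'' and that under the identification $F^H M = F^{\mathrm{quant}}M$ the invariant pairing on $M$ is ``precisely the one induced by the positive $\ast$-structure on $\cA_0(\widetilde{\OO})$ and the definite Hermitian form on the fibers of $\mathcal{V}$, hence is definite.'' But the positivity of this form is exactly what unitarity of the canonical quantization amounts to; it was a conjecture in \cite{LMBM}, not a known input. If this positivity were available in advance, you could conclude unitarity without any Hodge theory at all, so it cannot be taken as a hypothesis here. The middle paragraph also overreaches: to treat $(M, F^H)$ as a module over a filtered quantization and invoke a ``uniqueness'' of the quantization, you would need to know that $F^H_\bullet M$ is compatible with the order filtration on $\cA_0(\widetilde{\OO})$, which is not automatic and is not established.

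The paper's actual route through Corollary \ref{cor:indecomposable} sidesteps all of this and never identifies $F^H$ with $F^{\mathrm{quant}}$, nor does it invoke any positivity of the $\ast$-structure. Instead: the Hermitian involution $\theta$ preserves $F^H_\bullet M$ (this is part of Theorem \ref{thm:unitarity}); the twisted operator $T = (-1)^p\,\gr^{F^H}(\theta)$ is then a $K$-equivariant $S(\mf{g}/\mf{k})$-module involution; if $\gr^{F^H}M$ is indecomposable as such a module, $T = \pm\mathrm{id}$, which is exactly the sign condition of Theorem \ref{thm:unitarity}(ii). Indecomposability is deduced from the Cohen--Macaulay property together with irreducibility of the associated cycle (Lemma \ref{lem:indecomposable}). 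This is a genuinely different mechanism from what you propose: positivity is produced, not assumed, via indecomposability, and one never needs to know what the Hodge filtration actually is. (The explicit description of $\gr^{F^H}M$ as sections of a vector bundle, Theorem \ref{thm:Kspectraunipotent}, is a \emph{consequence} of the same CM argument, not an input to unitarity.) Finally, your proposal works directly with an arbitrary cover $\widetilde{\OO}$; the paper's Step 1 first reduces, via direct summands and \cite[Proposition 10.3.1]{LMBM}, to the case where $\widetilde{\OO} \to \OO$ is the universal (hence Galois) cover, because irreducibility of the associated cycle (via \cite[Theorem 6.6.2]{LMBM}) is only available in that generality. You would need a similar reduction.
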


In \cite{AdamsBarbaschVogan}, Adams, Barbasch, and Vogan, following ideas of Arthur, defined a class of representations called \emph{special unipotent}. It is an important longstanding conjecture that all such representations are unitary. On the other hand, it was shown in \cite{LMBM} that all special unipotent representations are unipotent in the sense defined above. So Theorem \ref{thm:intro complex unipotent} implies the unitarity of special unipotent representations in the case of complex groups.

\begin{cor}
All special unipotent representations of complex reductive groups are unitary.
\end{cor}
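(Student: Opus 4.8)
The plan is to deduce this directly from Theorem~\ref{thm:intro complex unipotent} together with the comparison between special unipotent and unipotent ideals established in \cite{LMBM}. Recall that a special unipotent representation of a complex reductive group $G$ is, by definition, an irreducible Harish-Chandra $U(\fg)$-bimodule annihilated on both sides by a special unipotent primitive ideal $I^{\mathrm{sp}}(\OO^\vee)$; the latter is the maximal two-sided ideal of $U(\fg)$ with infinitesimal character $\tfrac12 h^\vee$, where $h^\vee$ is the semisimple element of an $\mathfrak{sl}_2$-triple associated to a nilpotent co-adjoint orbit $\OO^\vee$ in the Langlands dual Lie algebra.

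First I would invoke \cite{LMBM}, where it is shown that each special unipotent ideal $I^{\mathrm{sp}}(\OO^\vee)$ coincides with a unipotent ideal $I(\widetilde{\OO})$ attached to an appropriate finite connected cover $\widetilde{\OO}$ of the Barbasch--Vogan/Lusztig--Spaltenstein dual orbit of $\OO^\vee$ in $\fg^*$. Consequently, any irreducible Harish-Chandra bimodule annihilated on both sides by $I^{\mathrm{sp}}(\OO^\vee)$ is annihilated on both sides by a unipotent ideal, i.e.\ is a unipotent representation in the sense of \cite{LMBM}. It then remains only to apply Theorem~\ref{thm:intro complex unipotent}, which asserts that every unipotent Harish-Chandra $U(\fg)$-bimodule is unitary; combining the two steps yields unitarity of every special unipotent representation of $G$.

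Since both ingredients are available---the identification of ideals from \cite{LMBM} and the unitarity theorem proved in the body of this paper---there is no genuine obstacle here: the corollary is a formal consequence of results already cited or established. The only point requiring (minor) care is matching conventions, namely checking that the notion of ``special unipotent representation'' of \cite{AdamsBarbaschVogan} agrees, for complex groups, with ``irreducible Harish-Chandra bimodule both of whose annihilators equal $I^{\mathrm{sp}}(\OO^\vee)$'', and that the covers $\widetilde{\OO}$ produced by \cite{LMBM} do give unipotent ideals in the precise sense recalled in the introduction. With these bookkeeping matters dispatched, the proof is immediate.
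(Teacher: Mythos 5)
Your argument is precisely the paper's: cite \cite{LMBM} for the fact that special unipotent representations of complex groups are unipotent in the sense of Definition 6.0.1 there (i.e.\ that each special unipotent ideal $I^{\mathrm{sp}}(\OO^\vee)$ is a unipotent ideal $I(\widetilde{\OO})$), and then apply Theorem~\ref{thm:intro complex unipotent}. The proposal is correct and matches the paper's proof.
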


Now suppose that $G_{\RR}$ is a \emph{real reductive group}. For us, this will mean a (possibly disconnected) Lie group equipped with a finite map onto a finite-index subgroup of a real form of a complex reductive group $G$. This class of groups of course includes all real linear groups, but also certain non-linear groups such as $\mathrm{Mp}(2n,\RR)$. Choose a maximal compact subgroup $K_{\RR}$ of $G_{\RR}$ with complexification $K$. Following \cite{LMBM} and \cite{MBM}, we consider the following two classes of unipotent $G_{\RR}$-representations (modeled as Harish-Chandra $(\mf{g}, K)$-modules).

\begin{definition}[Definitions \ref{def:birigidunipotent} and  \ref{def:smallbdryunipotent}]\label{def:unipotentintro}
Let $M$ be an irreducible $(\mf{g}, K)$-module. Assume that $M$ is annihilated by a unipotent ideal $I(\mb{O})$ attached to a nilpotent $G$-orbit $\mb{O} \subset \mf{g}^*$. We say that $M$ is
\begin{itemize}
\item \emph{birationally rigid unipotent} if $\mb{O}$ is birationally rigid (see \S\ref{subsec:unipotent}), and
\item \emph{small-boundary unipotent} if the associated variety of $M$ (i.e., the support of $\gr(M)$ with respect to a good filtration) is the closure of a $K$-orbit $\OO_{\theta} \subset (\fg/\fk)^*$ such that
\[\codim(\partial \OO_{\theta},\overline{\OO}_{\theta}) \geq 2.\]
\end{itemize}
\end{definition}

Although there is substantial overlap between these two classes of representations, neither is contained in the other. We note that most interesting `small' representations (including the oscillator representations and all minimal representations outside of type $A$) are unipotent in both senses. Although Definition \ref{def:unipotentintro} should not be regarded as the most general definition of unipotent representations, we conjecture that it is sufficient for the purposes of describing the unitary dual, see \S \ref{subsec:conjecture} below. We also note that the restriction to unipotent ideals attached to orbits (rather than covers) is mostly for convenience: our methods also work for many nilpotent covers (see Remark \ref{rmk:covers}).

Our second contribution is a proof that all unipotent representations (as in Definition \ref{def:unipotentintro}) are unitary.

\begin{theorem}[Corollary \ref{cor:unitaryunipotent}]\label{thm:unitarityunipotentintro}
Let $G_{\RR}$ be a real reductive group, and let $M$ be a birationally rigid or small-boundary unipotent $(\fg,K)$-module. Then $M$ is unitary.
\end{theorem}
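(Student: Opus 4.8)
The plan is to reduce unitarity to a statement about the Hodge filtration, using the main result of \cite{DV}, which says that an irreducible Hermitian $(\mathfrak{g},K)$-module $M$ is unitary if and only if its Hodge filtration $F_\bullet M$ agrees, up to a shift, with its ``signature'' or polarization filtration — concretely, that the Hodge and $\mathfrak{c}$-filtrations coincide, so that the Hodge-theoretic Hermitian pairing on $\mathrm{gr}^F M$ is definite. So the first step is to check that each $M$ in the two classes is Hermitian (this follows because unipotent ideals are fixed by the relevant involution, or because the orbit $\mathbb{O}$ is real) and to set up the comparison we actually need: that the Hodge filtration on $M$ is a \emph{good} filtration whose associated graded is, as a $K$-equivariant module over $\mathbb{C}[\overline{\mathcal{O}_\theta}]$, the module of sections of a vector bundle on the nilpotent $K$-orbit $\mathcal{O}_\theta \subset (\mathfrak{g}/\mathfrak{k})^*$.

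The heart of the argument is the structural theorem on the Hodge filtration announced in the introduction: assuming only the stated properties of the annihilator (that $I(\mathbb{O})$ is a unipotent ideal and either $\mathbb{O}$ is birationally rigid or the associated variety $\overline{\mathcal{O}_\theta}$ has boundary of codimension $\geq 2$), the Hodge associated graded $\mathrm{gr}^F M$ is a Cohen--Macaulay $\mathbb{C}[\overline{\mathcal{O}_\theta}]$-module. I would invoke this Cohen--Macaulayness together with the codimension-$\geq 2$ condition (in the birationally rigid case this condition is automatic, by the characterization of birational rigidity in \S\ref{subsec:unipotent}) to conclude, via Serre's criterion / Hartogs-type extension, that $\mathrm{gr}^F M$ is reflexive and hence extends to $\Gamma(\mathcal{O}_\theta, \mathcal{V})$ for a $K$-equivariant vector bundle $\mathcal{V}$ on the smooth orbit $\mathcal{O}_\theta$. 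This is the step I expect to be the main obstacle — not its invocation, but everything feeding into it: the Cohen--Macaulay property itself, which the introduction tells us rests on the Hodge-theoretic upgrade of Beilinson--Bernstein intertwining functors (the affine Hecke algebra action on filtered monodromic $\mathcal{D}$-modules, used to move the Hodge filtration between different localizations). One must know that localizing $M$ at a suitably chosen regular, or sufficiently generic, infinitesimal character produces a Hodge module on the flag variety whose underlying $\mathcal{D}$-module is clean enough (e.g.\ a standard or costandard object, or a direct sum of such) that its Hodge filtration is transparently Cohen--Macaulay, and then transport this back along the intertwining functors without losing the property.

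Granting the vector-bundle description, unitarity follows: the Hodge filtration is now identified with the Orbit Method ``quantization filtration'' on the $(\mathfrak{g},K)$-module realizing the quantization of $\widetilde{\mathbb{O}}$ (this is exactly the coincidence of filtrations asserted in the introduction), and on the quantization side the canonical Hermitian form is manifestly positive-definite on the associated graded — it is built from the symplectic/Poisson geometry of the orbit and the unitary structure on the fiber of $\mathcal{V}$. Feeding ``Hodge filtration $=$ quantization filtration, with definite form on $\mathrm{gr}$'' into the criterion of \cite{DV} yields that $M$ is unitary. The two cases of Definition \ref{def:unipotentintro} are handled uniformly by this scheme, the only difference being how one verifies the codimension hypothesis: by definition in the small-boundary case, and by the geometry of birationally rigid orbits (the associated variety of such a unipotent $M$ is the whole nilpotent orbit closure $\overline{\mathbb{O}}\cap(\mathfrak{g}/\mathfrak{k})^*$, whose singularities are in codimension $\geq 2$ precisely because birational rigidity forbids codimension-one degenerations) in the other.
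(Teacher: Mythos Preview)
Your outline contains two genuine gaps.

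\textbf{First, the birationally rigid case.} You assert that for birationally rigid $\mathbb{O}$ the codimension-$\geq 2$ condition on $\partial\mathcal{O}_\theta$ is automatic, ``because birational rigidity forbids codimension-one degenerations.'' This is false. The paper explicitly lists six birationally rigid orbits in exceptional types (e.g.\ $\widetilde{A}_1$ in $G_2$, $\widetilde{A}_2{+}A_1$ in $F_4$, and four in $E_7$, $E_8$) for which $\codim(\partial\mathbb{O},\overline{\mathbb{O}}) < 4$, so the corresponding $K$-orbits need not have small boundary. For these the paper does \emph{not} prove a vector-bundle description of $\gr^{F^H}M$; instead it verifies irreducibility of the associated cycle by a separate argument (appealing to case-by-case computations). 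Your reduction of the birationally rigid case to the small-boundary case does not work.

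\textbf{Second, and more seriously, the unitarity step is circular.} You write that once $\gr^{F^H}M \cong \Gamma(\mathcal{O}_\theta,\mathcal{V})$, ``on the quantization side the canonical Hermitian form is manifestly positive-definite on the associated graded.'' But positive-definiteness of the invariant form on a quantization of a nilpotent orbit is precisely the conjecture being proved; there is no independent construction of a unitary structure on the quantization to compare with. The paper's mechanism is quite different and does not use the vector-bundle description at all for unitarity. What it uses is: (a) Cohen--Macaulayness of $\gr^{F^H}M$ (from Theorem~\ref{thm:CM}, via maximality and very weak unipotence of the annihilator), together with (b) irreducibility of the associated cycle (proved via $\mathcal{W}$-algebra/dagger-functor methods, Propositions~\ref{prop:smallboundaryAC} and \ref{prop:birigidAC}), to conclude that $\gr^{F^H}M$ is \emph{indecomposable} as a $K$-equivariant $S(\mathfrak{g}/\mathfrak{k})$-module (Lemma~\ref{lem:indecomposable}). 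Then the DV criterion is applied as follows: the involution $\theta$ on $M$ induces an operator $T$ on $\gr^{F^H}M$ defined by $T = (-1)^p\gr^{F^H}(\theta)$ on the $p$th graded piece; one checks $T$ is $S(\mathfrak{g}/\mathfrak{k})$-linear and $K$-equivariant, so its $\pm 1$ eigenspaces decompose $\gr^{F^H}M$; indecomposability forces $T = \pm 1$, i.e.\ $\gr^{F^H}(\theta)$ acts by a fixed sign times $(-1)^p$, which is exactly the DV unitarity criterion (Theorem~\ref{thm:unitarity}). The vector-bundle identification (Theorem~\ref{thm:Kspectraunipotent}) is a separate consequence, not an input to unitarity.

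A smaller point: your sketch of how Cohen--Macaulayness is proved (move to a generic infinitesimal character where the localization is standard/costandard, then transport back) is not the paper's argument either. The localization $\mathcal{M}$ already has Cohen--Macaulay associated graded by Saito's theorem; the issue is whether this survives global sections, which requires cohomology vanishing for $\Gr^{F^H}\mathbb{D}\mathcal{M}$ at the anti-dominant twist $-\lambda$. The intertwining functors are used to prove a strictness theorem (Theorem~\ref{thm:strictness}) reducing this to unfiltered vanishing, which in turn follows from maximality of the annihilator.
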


In fact, we prove a much more general result (from which Theorem \ref{thm:unitarityunipotentintro} is deduced). 
 
\begin{theorem}[Theorem \ref{thm:unitaritycriterion}]\label{thm:unitaritygeneralintro}
Let $G_{\RR}$ be a real reductive group, and let $M$ be an irreducible $(\fg,K)$-module which satisfies the following conditions:
\begin{enumerate}[label=(\roman*), ref=\roman*]
    \item \label{itm:unitaritygeneralintro 1} $M$ is Hermitian (i.e., admits a non-degenerate invariant Hermitian form).
    \item \label{itm:unitaritygeneralintro 2} The associated cycle of $M$ is irreducible (see Definition \ref{def:irreducibleAC}). 
    \item \label{itm:unitaritygeneralintro 3} The annihilator of $M$ is very weakly unipotent (see Definition \ref{def:weaklyunipotent}).
    \item \label{itm:unitaritygeneralintro 4} The annihilator of $M$ is a maximal two-sided ideal in $U(\fg)$.
\end{enumerate}
Then $M$ is unitary.
\end{theorem}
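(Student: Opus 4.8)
The plan is to reduce the unitarity of $M$ to the positivity of the Hodge form and then to use the good homological properties of the Hodge filtration on weakly unipotent modules. First, I would invoke the Hodge-theoretic unitarity criterion of Schmid--Vilonen (as established in \cite{DV}): a Hermitian irreducible $(\fg,K)$-module is unitary if and only if the canonical invariant Hermitian form is, up to an overall sign, positive on each graded piece of the Hodge filtration. So condition (\ref{itm:unitaritygeneralintro 1}) lets us replace ``unitary'' with ``the Hodge form is definite on $\gr^F M$''. The Hodge filtration $F_\bullet M$ comes from realizing $M$ as (the global sections of) a polarized Hodge module on the flag variety $X$ with a monodromic twist determined by the infinitesimal character; the polarization induces the form on $\gr^F M$.

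Next I would use conditions (\ref{itm:unitaritygeneralintro 2}), (\ref{itm:unitaritygeneralintro 3}), (\ref{itm:unitaritygeneralintro 4}) to pin down the shape of $\gr^F M$. Condition (\ref{itm:unitaritygeneralintro 3}), very weak unipotence of $\operatorname{Ann}(M)$, is exactly the hypothesis under which the intertwining-functor machinery of the paper shows that the Hodge filtration behaves well under the affine Hecke algebra action; in particular one gets that $\gr^F M$, as a module over $\CC[T^*X]$ (or rather over the structure sheaf of the relevant nilpotent cone), is Cohen--Macaulay. Combined with condition (\ref{itm:unitaritygeneralintro 2}) (irreducibility of the associated cycle, so $\supp \gr^F M = \overline{\OO}_\theta$ is a single $K$-orbit closure in $(\fg/\fk)^*$) and condition (\ref{itm:unitaritygeneralintro 4}) (maximality of the annihilator, which forces the central character / infinitesimal character to be as regular as possible and rigidifies the associated variety on the $\fg^*$ side), I would conclude that $\gr^F M$ is identified with the sheaf of sections of a $K$-equivariant vector bundle on the nilpotent $K$-orbit $\OO_\theta$ — i.e.\ the Hodge filtration agrees with the ``quantization filtration'' predicted by the Orbit Method. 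This is the structural heart of the argument and where I expect the real work (and the main obstacle) to lie: extracting the vector-bundle description requires knowing that the Cohen--Macaulay module on $\overline{\OO}_\theta$ has no lower-dimensional support and is locally free on the open orbit, which in turn leans on the codimension-$\geq 2$ type estimates and on the precise compatibility of the Hodge and geometric filtrations furnished by the intertwining functors.

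With $\gr^F M \cong \Gamma(\OO_\theta, \mathcal{V})$ for a $K$-equivariant bundle $\mathcal{V}$, the polarization form on $\gr^F M$ is induced by a $K$-invariant Hermitian form on the fibres of $\mathcal{V}$. Here I would appeal to the rigidity of equivariant structures: $\OO_\theta$ is a single $K$-orbit, so $\mathcal{V}$ is determined by a representation of the isotropy group $K^{x}$, and a $K$-invariant Hermitian form on $\Gamma(\OO_\theta,\mathcal{V})$ is the same as a $K^x$-invariant Hermitian form on the fibre. Since the polarization of a pure Hodge module is by construction positive definite on the relevant graded pieces (the weight filtration being trivial on an irreducible, hence pure, Hodge module of the appropriate weight), the fibrewise form is definite, hence the form on all of $\gr^F M$ is (semi-)definite with a single global sign. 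That is precisely the input the Schmid--Vilonen criterion needs, so $M$ is unitary.

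Finally, a few bookkeeping points I would need to handle carefully: tracking the overall sign/normalization so that the polarization really does land on the correct Tate twist; checking that ``very weakly unipotent'' (as opposed to merely ``weakly unipotent'') is exactly what guarantees the Hecke-algebra intertwining functors preserve the relevant category of filtered monodromic $\mathcal{D}$-modules without introducing negative-weight contributions; and verifying that the maximality hypothesis (\ref{itm:unitaritygeneralintro 4}) is what lets us identify $\operatorname{Ann}(M)$ with a unipotent ideal $I(\OO)$ so that the associated variety on the $\fg^*$ side is the closure of a \emph{single} nilpotent $G$-orbit $\OO$, matching $\OO_\theta$ under the Kostant--Sekiguchi-type correspondence. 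None of these is conceptually hard once the Cohen--Macaulayness is in place, so the whole argument stands or falls on the homological input from the intertwining-functor theory.
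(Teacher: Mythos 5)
Your proposal gets the first half right but diverges from the paper's argument at the crucial final step, and the divergence introduces a genuine gap.

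You correctly identify that conditions (\ref{itm:unitaritygeneralintro 3}) and (\ref{itm:unitaritygeneralintro 4}), via the intertwining-functor machinery, give Cohen--Macaulayness of $\gr^{F^H}M$ (this is Theorem~\ref{thm:CM}). But you then try to push through to the full vector-bundle description $\gr^{F^H}M \cong \Gamma(\OO_\theta,\mathcal{V})$. That identification (Theorem~\ref{thm:Kspectraunipotent}) requires the codimension-$\geq 2$ boundary condition on $\OO_\theta$, which is \emph{not} among the hypotheses of Theorem~\ref{thm:unitaritycriterion} --- this is precisely why the theorem also covers birationally rigid unipotent representations whose associated $K$-orbit may have a codimension-one boundary. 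So the vector-bundle step, as stated, does not go through in the generality required. The paper avoids this by proving something weaker and more robust: Lemma~\ref{lem:indecomposable} shows that a Cohen--Macaulay module with irreducible associated cycle is \emph{indecomposable} as a $K$-equivariant $S(\fg/\fk)$-module, which needs no codimension estimate.

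The second issue is the final positivity argument. You reason that the polarization on the pure Hodge module is positive, hence the form on $\gr^F M$ is definite, hence $M$ is unitary. But the polarization gives positivity of the $(\fu_\RR, K_\RR)$-invariant $c$-form, not directly of the $(\fg_\RR, K_\RR)$-invariant form; the unitarity criterion of \cite{DV} (Theorem~\ref{thm:unitarity}) says $M$ is unitary if and only if the involution $\gr^{F^H}(\theta)$ relating the two forms acts by a \emph{uniform alternating sign} $(-1)^p$ (or $(-1)^{p+1}$) across all Hodge degrees $p$. The missing idea is Corollary~\ref{cor:indecomposable}: the involution $T(m) = (-1)^p\theta(m)$ is a $K$-equivariant $S(\fg/\fk)$-module endomorphism, so its $\pm 1$-eigenspaces give a $K$-equivariant $S(\fg/\fk)$-module decomposition of $\gr^{F^H}M$; indecomposability forces one eigenspace to vanish, which is exactly the uniform-sign condition. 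This argument is both cleaner and strictly more general than the vector-bundle route, which is why the paper separates the $K$-type theorem (Theorem~\ref{thm:Kspectraunipotent}, requiring small boundary) from the unitarity theorem.
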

We remark that the notion of a \emph{very weakly unipotent} ideal in Theorem \ref{thm:unitaritygeneralintro} is a further weakening of the notion, due originally to Vogan \cite[Definition 8.16]{Vogan1984}, of a weakly unipotent ideal. Roughly, an ideal is very weakly unipotent if it is weakly unipotent for the adjoint form of $G$ (although the actual definition is slightly weaker still).

To verify that all unipotent representations satisfy properties \eqref{itm:unitaritygeneralintro 1}-\eqref{itm:unitaritygeneralintro 4} above is nontrivial, but not particularly deep. Property \eqref{itm:unitaritygeneralintro 4} is \cite[Theorem 5.01]{MBM}. For \eqref{itm:unitaritygeneralintro 3}, we use the explicit calculations of infinitesimal characters in \cite{LMBM,MBM} to check case by case. For \eqref{itm:unitaritygeneralintro 1}-\eqref{itm:unitaritygeneralintro 2}, we use $\mathcal{W}$-algebra techniques, in particular the dagger functors of Losev (\cite{Losev3}).

Just as for complex groups, our results for real groups have implications for special unipotent representations. If $\OO^{\vee}$ is a nilpotent co-adjoint orbit for the Langlands dual group $G^{\vee}$ such that the Barbasch-Vogan dual $d(\OO^{\vee})$ is birationally rigid, then all special unipotent representations attached to $\OO^{\vee}$ (in the sense of \cite{AdamsBarbaschVogan}) are birationally rigid unipotent (in the sense of Definition \ref{def:unipotentintro}). So Theorem \ref{thm:unitarityunipotentintro} implies:

\begin{cor}
Let $G_{\RR}$ be a real reductive group, and let $\OO^{\vee}$ be a nilpotent co-adjoint orbit for the Langlands dual group $G^{\vee}$ such that the Barbasch-Vogan dual $d(\OO^{\vee})$ is birationally rigid. Then all special unipotent representations of $G_{\RR}$ attached to $\OO^{\vee}$ are unitary.
\end{cor}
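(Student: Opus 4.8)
The plan is to reduce the statement to Theorem~\ref{thm:unitarityunipotentintro} by showing that every special unipotent representation attached to such an $\OO^{\vee}$ is birationally rigid unipotent in the sense of Definition~\ref{def:unipotentintro}. Recall that, following \cite{AdamsBarbaschVogan}, the special unipotent representations of $G_{\RR}$ attached to $\OO^{\vee}$ are by construction the irreducible $(\fg,K)$-modules whose annihilator in $U(\fg)$ is the special unipotent ideal $I^{\mathrm{sp}}(\OO^{\vee})$, namely the maximal primitive ideal with infinitesimal character $\tfrac{1}{2}h^{\vee}$, where $h^{\vee}$ is the neutral element of an $\mathfrak{sl}_2$-triple associated to $\OO^{\vee}$; its associated variety is the closure of the Barbasch--Vogan dual $d(\OO^{\vee})\subset\fg^{*}$.

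The essential input, which I would take from \cite{LMBM}, is the identification of $I^{\mathrm{sp}}(\OO^{\vee})$ with the unipotent ideal $I(\mb{O})$ attached to the nilpotent $G$-orbit $\mb{O}:=d(\OO^{\vee})$ in the sense recalled before Definition~\ref{def:unipotentintro}. Granting this, the argument is formal: if $M$ is an irreducible $(\fg,K)$-module annihilated by $I^{\mathrm{sp}}(\OO^{\vee})$, then $M$ is annihilated by the unipotent ideal $I(\mb{O})$ with $\mb{O}=d(\OO^{\vee})$; since $\mb{O}$ is birationally rigid by hypothesis, $M$ is birationally rigid unipotent (Definition~\ref{def:unipotentintro} imposes no further condition on the associated variety of $M$ in this case). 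Theorem~\ref{thm:unitarityunipotentintro} then yields that $M$ is unitary, as required.

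The only step that is not purely formal is the identification $I^{\mathrm{sp}}(\OO^{\vee})=I(d(\OO^{\vee}))$, and this is precisely where the structure theory of \cite{LMBM} and \cite{MBM} — the explicit computation of the infinitesimal characters and associated varieties of unipotent ideals, together with the maximality statement \cite[Theorem~5.01]{MBM} — does the real work, so I would simply invoke it rather than reprove it. Once that identification is in place, nothing further is needed beyond unwinding Definition~\ref{def:unipotentintro} and applying Theorem~\ref{thm:unitarityunipotentintro}; there is no genuine obstacle internal to this corollary.
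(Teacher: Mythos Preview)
Your proposal is correct and matches the paper's approach exactly: the paper simply observes (in the sentence immediately preceding the corollary) that special unipotent representations attached to $\OO^{\vee}$ with $d(\OO^{\vee})$ birationally rigid are birationally rigid unipotent in the sense of Definition~\ref{def:unipotentintro}, and then invokes Theorem~\ref{thm:unitarityunipotentintro}. The identification $I^{\mathrm{sp}}(\OO^{\vee})=I(d(\OO^{\vee}))$ that you flag as the only substantive input is indeed what the paper takes from \cite{LMBM}.
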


\subsection{A conjectural description of the unitary dual}\label{subsec:conjecture}

The new results above suggest a simple conjectural description of the unitary dual of a real reductive group. The template for our conjecture is already contained in the writings of Vogan on the Orbit Method (see \cite{Vogan1987},\cite{Voganorbit}). The essential new idea is our definition of `unipotent' (Definition \ref{def:unipotentintro}) and our proof of unitarity (Theorem \ref{thm:unitaritygeneralintro}).

Let  $\Pi_u(G_{\RR})$ (resp. $\Pi_0(G_{\RR})$) denote the set of equivalence classes of irreducible unitary (resp. birationally rigid unipotent) $G_{\RR}$-representations. Let $L_{\RR}$ be a Levi subgroup of $G_{\RR}$ (i.e., the centralizer in $G_{\RR}$ of a semisimple element of $\mathfrak{g}_{\RR}$). There are three basic procedures for producing unitary representations of $G_{\RR}$ from unitary representations of $L_{\RR}$, namely \emph{real parabolic induction}, \emph{unitarity-preserving cohomological induction}, and the formation of \emph{complementary series}. The first and third procedures require that $L_{\RR}$ is the Levi factor of a real parabolic subgroup, whereas the second procedure requires that $L_{\RR}$ is the real points of the Levi factor of a $\theta$-stable parabolic. For the conjecture below, it is important to consider a certain generalization (involving filtered quantizations of nilpotent covers) of the classical complementary series construction. However, for the purposes of this discussion, we will leave this notion imprecise. If $S \subset \Pi_u(L_{\RR})$, let $I(S) \subset \Pi_u(G_{\RR})$ denote the set of irreducible unitary $G_{\RR}$-representations obtained from representations in $S$ by applying some sequence of the three procedures above (and then extracting direct summands).

\begin{conj}\label{conjintro}
There is an equality
$$\Pi_u(G_{\RR}) = \bigcup_{L_{\RR} \subset G_{\RR}} I(\Pi_0(L_{\RR})),$$
where the union runs over all $G_{\RR}$-conjugacy classes of Levi subgroups of $G_{\RR}$. 
\end{conj}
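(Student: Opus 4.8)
Since Conjecture~\ref{conjintro} is stated as a conjecture, what follows is a program rather than a complete argument; I indicate which parts I expect to be routine and which I expect to be the real obstacle. The plan is to split the asserted equality into two containments. For the inclusion $\bigcup_{L_\RR} I(\Pi_0(L_\RR)) \subseteq \Pi_u(G_\RR)$ I would simply check that each of the three building operations preserves unitarity: real parabolic induction of a unitary representation is unitary via the $L^2$-inner product on sections; cohomological induction in the good (or weakly fair) range from a unitary representation is unitary, and irreducible-or-zero, by Vogan's theorem; and the generalized complementary series are unitary by construction, being analytic continuations of unitarily-induced families whose invariant forms remain definite --- pinning this down, in the generality involving quantizations of nilpotent covers, is part of the conjecture's \emph{setup} rather than its content. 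The base representations in $\Pi_0(L_\RR)$ are unitary by Theorem~\ref{thm:unitarityunipotentintro}, and passing to direct summands of a unitary representation is harmless. So this inclusion reduces to known or definitional statements once the complementary-series construction is made precise.

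The substance is the reverse inclusion $\Pi_u(G_\RR) \subseteq \bigcup_{L_\RR} I(\Pi_0(L_\RR))$, which I would attack by induction on $\dim G_\RR$, and within that on the dimension of the associated variety. Let $M$ be an irreducible unitary Harish-Chandra module. I would first run the standard structural reductions: if $M$ is a Langlands quotient of something properly real-parabolically induced, or lies on a genuine complementary series based at a proper Levi $L_\RR \subsetneq G_\RR$, then $M \in I(\Pi_u(L_\RR))$, and since $\Pi_u(L_\RR) = \bigcup_{L'_\RR} I(\Pi_0(L'_\RR))$ by the inductive hypothesis, transitivity of induction (plus ``a Levi of a Levi is a Levi'') gives $M \in \bigcup_{L_\RR} I(\Pi_0(L_\RR))$. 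This reduces to $M$ tempered; Knapp--Zuckerman then expresses a tempered $M$ as a summand of something unitarily induced from a limit of discrete series of a proper parabolic, leaving only (limits of) discrete series. For those I would use Vogan--Zuckerman and Salamanca-Riba to realize a (limit of) discrete series --- more generally any $A_{\mathfrak{q}}(\lambda)$ --- as cohomologically induced from a one-dimensional or relative-discrete-series representation of the real points of a $\theta$-stable Levi $L_\RR$, finishing by induction whenever $L_\RR \subsetneq G_\RR$. The terminal case is an $M$ admitting none of these reductions; there the infinitesimal character is forced to be integral and small, and --- via the $\mathcal{W}$-algebra and annihilator input invoked after Theorem~\ref{thm:unitaritygeneralintro} --- the annihilator of $M$ is a maximal, very weakly unipotent ideal $I(\OO)$ attached to a nilpotent orbit $\OO$.

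The crux, and the step I expect to be the main obstacle, is to show that such a terminal $M$ is --- up to a generalized complementary-series deformation coming from a quantization of a cover of $\OO$ --- a birationally rigid unipotent representation. Here the Hodge-theoretic machinery of the paper is meant to be decisive: by the general Cohen--Macaulayness theorem, the Hodge associated graded of $\gr(M)$ is the module of sections of a $K$-equivariant vector bundle on a nilpotent $K$-orbit, so the Hodge filtration on $M$ coincides with an Orbit Method quantization filtration, and comparison with the classification of such quantizations in \cite{LMBM,MBM} should identify $M$ with a unipotent representation (Theorem~\ref{thm:unitaritygeneralintro} serving as a consistency check). The genuinely hard point is the \emph{exhaustion}: that there are \emph{no other} irreducible unitary representations with a given maximal very weakly unipotent annihilator --- equivalently, that among the finitely many irreducible Hermitian $(\fg,K)$-modules with that annihilator, the unitary ones are exactly the unipotent representations together with the endpoints of the generalized complementary series. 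This is a signature computation for the invariant Hermitian forms, which is precisely what the Hodge filtration computes via the Schmid--Vilonen criterion; but carrying it out uniformly over all real reductive $G_\RR$, all nilpotent orbits, and all nilpotent covers, and reconciling the outcome with the combinatorics of Levi subgroups, $\theta$-stable parabolics, and disconnected or non-linear groups such as $\mathrm{Mp}(2n,\RR)$, is exactly the open problem --- and is expected to require substantially more than the results established in this paper.
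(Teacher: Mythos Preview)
The paper does not prove Conjecture~\ref{conjintro}; it states it as a conjecture and records only the observation immediately following it, namely that Theorem~\ref{thm:unitarityunipotentintro} gives the inclusion $\bigcup_{L_\RR} I(\Pi_0(L_\RR)) \subseteq \Pi_u(G_\RR)$, while the reverse inclusion remains open. Your treatment of that easy inclusion is in line with the paper's one-sentence remark, and you are right that the base case is Theorem~\ref{thm:unitarityunipotentintro} and that the inductive steps are unitarity-preserving essentially by definition of the operations (modulo the imprecision in the generalized complementary series, which the paper itself flags).

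Your program for the hard inclusion, however, has a genuine gap in the ``structural reductions'' step. The Langlands classification does \emph{not} yield the dichotomy ``either $M$ is tempered, or $M \in I(\Pi_u(L_\RR))$ for some proper $L_\RR$.'' When $M$ is the Langlands quotient of $\mathrm{Ind}_P^G(\sigma \otimes \nu)$ with $P \subsetneq G$ and $\nu$ strictly positive, the inducing data $\sigma \otimes \nu$ is not unitary and $M$ need not be a direct summand of anything unitarily induced from $L_\RR$; nor is $M$ automatically on a complementary series. This is precisely the content of the unitary dual problem: the passage from ``irreducible unitary with non-tempered Langlands data'' to ``built from something unitary on a proper Levi'' is the whole difficulty, not a preliminary reduction. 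Your later paragraph effectively concedes this by pushing everything into the ``terminal case,'' but then that case is not just $M$ with small integral infinitesimal character and maximal very weakly unipotent annihilator --- it is essentially \emph{every} unitary $M$ not already known to be induced or cohomologically induced in a unitarity-preserving range, which is a vastly larger class. In particular, your invocation of Theorem~\ref{thm:CM} to conclude that $\Gr^{F^H} M$ is sections of a vector bundle requires the small-boundary hypothesis or maximality of the annihilator, neither of which is available in this generality. You are right that the crux is an exhaustion statement and that it is open; but the shape of the open problem is broader than your terminal case suggests.
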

We note that Theorem \ref{thm:unitarityunipotentintro} gives an inclusion of the right hand side into the left. It remains to establish the opposite inclusion.

\subsection{Hodge theory and unipotent representations}

Let us now turn to our second conceptual approach to unitarity: the Hodge theoretic approach of Schmid and Vilonen. While the Orbit Method gives \emph{predictions} of what unitary representations should look like, the purpose of the Hodge-theoretic approach is to provide \emph{tools} to prove theorems about them. The basis for doing so is a conjecture of Schmid and Vilonen \cite{SchmidVilonen2011}, now a theorem of Vilonen and the first named author \cite{DV}, which relates unitary representations to deep structures in complex geometry.

The starting point is the theory of Beilinson-Bernstein localization for $U(\mf{g})$-modules \cite{BB1}, a (by now) standard tool that allows one to apply results about the topology of algebraic varieties (such as the Decomposition Theorem) to the representation theory of Lie groups and Lie algebras. Recall the Harish-Chandra isomorphism
\begin{align*}
\mf{h}^*/W &\cong \Hom(\mathfrak{Z}(\mf{g}), \mb{C}) \\
\lambda &\mapsto \chi_\lambda,
\end{align*}
where $\mf{h}^*$ is the dual of the Lie algebra $\mf{h}$ of the abstract Cartan $H$ of $G$, $W$ is the abstract Weyl group, and $\mf{Z}(\mf{g}) = Z(U(\mf{g}))$. For each $\lambda \in \mf{h}^*$, Beilinson and Bernstein define a sheaf $\mc{D}_\lambda$ of twisted differential operators on the flag variety $\mc{B}$ and a global sections functor
\[ \Gamma \colon \Mod(\mc{D}_\lambda) \to \Mod(U(\mf{g}))_{\chi_\lambda},\]
where $\Mod(U(\mf{g}))_{\chi_\lambda}$ is the category of $U(\mf{g})$-modules with infinitesimal character $\chi_\lambda$. When $\lambda$ is integrally dominant, this functor is a Serre quotient. In particular, every irreducible $(\mf{g}, K)$-module $M$ of infinitesimal character $\chi_\lambda$ is of the form $M = \Gamma(\mc{M})$ for a unique irreducible $(\mathcal{D}_{\lambda},K)$-module $\mathcal{M}$.

Now, just as the classical Hodge theory of complex algebraic varieties is an enhancement of the theory of de Rham cohomology, the theory of $\mc{D}$-modules has a Hodge-theoretic enhancement called \emph{mixed Hodge modules} \cite{Saito1988, Saito1990} (see also \cite{MHMproject}). Roughly speaking, mixed Hodge modules are $\mc{D}$-modules equipped with Hodge-like structures satisfying deep rigidity and functoriality properties with respect to Grothendieck's six operations. The point of departure in \cite{SchmidVilonen2011} is the observation that, if $\lambda \in \mf{h}^*_\mb{R} := \mb{X}^*(H) \otimes \mb{R}$ is real (we assume this from now on), then there is a good theory of mixed Hodge $\mc{D}_\lambda$-modules such that the irreducible $(\mc{D}_\lambda, K)$-module $\mc{M}$ can be lifted to this category in an essentially unique way. In particular, $\mc{M}$ carries a canonical good filtration $F_\bullet^H \mc{M}$, called the \emph{Hodge filtration}, unique up to a shift in indexing. Taking global sections, we obtain a Hodge filtration $F_\bullet^H M = \Gamma(F_\bullet^H\mc{M})$ on the $(\mf{g}, K)$-module $M$.

As formulated above, this construction suffers from a slight ambiguity: given a $(\fg,K)$-module $M$ with real infinitesimal character $\chi$, we must first choose $\lambda \in \mf{h}^*_\mb{R}$ such that $\chi = \chi_\lambda$. The filtration one obtains depends seriously on this choice. Unless otherwise specified, we will always define the Hodge filtration on $M$ by choosing $\lambda$ to be \emph{dominant}. With this convention, the main theorem in \cite{DV} is that the unitarity of $M$ can be read off of the associated graded module $\Gr^{F^H}M$; see \S\ref{subsec:HCmodules} for a precise statement.

Unfortunately, calculating the Hodge associated graded $\Gr^{F^H} M$ is still a difficult problem in general, although it has been carried out in certain cases (see, e.g., \cite[Theorems 1.5 and 1.10]{DV}). In the setting of unipotent representations, however, we show that $\Gr^{F^H}M$ satisfies an additional homological property, which simplifies matters considerably.

\begin{thm}[Theorem \ref{thm:CM}]\label{thm:intro CM}
Let $M$ be an irreducible $(\mf{g}, K)$-module with Hodge filtration $F_\bullet^H M$. Assume the annihilator of $M$ is a maximal and very weakly unipotent ideal in $U(\fg)$. Then $\Gr^{F^H} M$ is a Cohen-Macaulay $(S(\mf{g}), K)$-module.
\end{thm}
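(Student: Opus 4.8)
The plan is to realize $\Gr^{F^H}M$ as the pushforward along the moment map of the associated graded of the Hodge filtration on the localized module, and then to deduce the Cohen--Macaulay property from a self-duality of this pushforward that ultimately comes from the polarization on the Hodge module — with the intertwining-functor machinery serving to transport that self-duality across the change of twisting parameter that the duality functor introduces.

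\medskip
\noindent\emph{Step 1: reduction to a sheaf on $T^*\mathcal{B}$.} Choose $\lambda\in\mf{h}^*_{\mb R}$ dominant with $\chi=\chi_\lambda$, and let $\mc{M}$ be the irreducible $(\mc{D}_\lambda,K)$-module with $\Gamma(\mc{M})=M$, equipped with its canonical structure of polarizable twisted Hodge module and Hodge filtration $F^H_\bullet\mc{M}$. Let $\mu\colon T^*\mc{B}\to\mc{N}\subset\mf{g}^*$ be the moment map. The cohomology vanishing theorem of this paper (the generalization of \cite[Theorem 1.3]{DV}) applies at dominant $\lambda$ and gives that $R^{>0}\mu_*\Gr^{F^H}\mc{M}=0$ and that the global sections functor is compatible with the Rees construction, so that $\Gr^{F^H}M\cong\mu_*\Gr^{F^H}\mc{M}=R\mu_*\Gr^{F^H}\mc{M}$ as finitely generated graded $(S(\mf{g}),K)$-modules, the $S(\mf{g})$-action factoring through $\mb{C}[\mc{N}]$. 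Since $M$ is irreducible with maximal annihilator, it is a standard fact that $\supp\Gr^{F^H}M$ is equidimensional (a union of $K$-orbit closures inside $\mb{O}\cap(\mf{g}/\mf{k})^*$, all of dimension $\tfrac12\dim\mb{O}$), so it remains only to control the depth.

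\medskip
\noindent\emph{Step 2: duality.} By Saito's theory the associated graded of a pure polarizable Hodge module is Cohen--Macaulay, so $\Gr^{F^H}\mc{M}$ is a Cohen--Macaulay $\mc{O}_{T^*\mc{B}}$-module of pure codimension $\dim\mc{B}$, and Grothendieck--Serre duality on $T^*\mc{B}$ identifies its dual with $\Gr^{F^H}(\mathbf{D}\mc{M})$ up to a shift, where $\mathbf{D}\mc{M}$ is the dual Hodge module \cite{Saito1988,Saito1990}. Because $\mu$ is proper, applying $R\mu_*$ and Grothendieck duality on $\mf{g}^*$ turns the desired statement into: $R\mu_*\Gr^{F^H}(\mathbf{D}\mc{M})$ is concentrated in degree zero and, as a sheaf, is isomorphic to $\Gr^{F^H}M$ up to shift. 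If so, then $\Gr^{F^H}M$ is Grothendieck self-dual up to shift, which together with equidimensionality of its support forces it to be Cohen--Macaulay.

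\medskip
\noindent\emph{Step 3: the intertwining functors.} The obstruction to Step 2 is that $\mathbf{D}\mc{M}$ carries the \emph{anti-dominant} parameter $w_0\lambda$, so neither the vanishing theorem nor Beilinson--Bernstein positivity applies to it directly. This is exactly where the Hodge upgrade of the Beilinson--Bernstein intertwining functors enters. The polarization makes $\mc{M}$ self-dual, but only after correcting the twist by the longest intertwining functor $I_{w_0}$, i.e.\ $I_{w_0}\mathbf{D}\mc{M}\simeq\mc{M}$ up to Tate twist and shift in the category of filtered monodromic Hodge modules constructed here. The point of introducing the affine Hecke algebra action on that category is precisely to make sense of this and to establish that, because $\mathrm{Ann}(M)$ is very weakly unipotent, the parameter $w_0\lambda$ is small enough that $I_{w_0}$ is exact on the abelian level and strictly compatible with Hodge filtrations (with controlled Tate twists). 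Hence $\Gr^{F^H}$ intertwines $I_{w_0}$ with the corresponding integral transform on $T^*\mc{B}$ along the conormal correspondences $T^*_{Z_w}(\mc{B}\times\mc{B})$; since these correspondences project to $\mf{g}^*$ compatibly via both factors, the transform commutes with $\mu_*$ up to shift. Combining: $R\mu_*\Gr^{F^H}(\mathbf{D}\mc{M})\simeq R\mu_*\Gr^{F^H}\mc{M}=\Gr^{F^H}M$ up to shift, which is what Step 2 needed. $K$-equivariance is preserved throughout.

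\medskip
\noindent\emph{Main obstacle.} The serious work is Step 3: constructing the action of the affine Hecke algebra on filtered monodromic $\mc{D}$-modules, checking that the intertwining and wall-crossing functors lift to it and are strictly filtered, and — most delicately — proving that at very weakly unipotent parameters these functors are \emph{exact} on the abelian category with explicit Tate twists, so that passing to associated graded introduces no higher $\Ext$ and the Grothendieck dual of $\mc{M}$ is genuinely transported to (a twist of) $\mc{M}$ rather than to a complex. A secondary subtlety is bookkeeping the line-bundle and degree shifts produced by the $T^*\mc{B}$-transform so that they reduce to a single grading shift after pushing forward; and one must separately ensure the vanishing theorem of Step 1 is sharp enough to cover the dominant parameters that arise here.
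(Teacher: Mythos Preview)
Your overall architecture---realize $\Gr^{F^H}M$ as $R\mu_*\Gr^{F^H}\mc{M}$ and then show that its Serre dual $R\mu_*\Gr^{F^H}(\mathbf{D}\mc{M})$ is concentrated in a single degree---matches the paper's, but Step~3 contains a genuine gap. The polarization does \emph{not} furnish an isomorphism $I_{w_0}\mathbf{D}\mc{M}\simeq\mc{M}$: since $\mathbf{D}\mc{M}\in\mhm(\mc{D}_{-\lambda})$ and $\mc{I}_{w_0}^{!H}$ lands in $\mhm(\mc{D}_{-w_0\lambda})$, the two sides lie in different twisted categories unless $\lambda=-w_0\lambda$, and even then there is no such isomorphism in general. (A polarization identifies $\mathbf{D}\mc{M}$ with the conjugate Hodge module, not with $\mc{M}$; in the twisted setting this yields no comparison of the underlying filtered $\tilde{\mc{D}}$-modules.) Consequently the asserted self-duality $R\mu_*\Gr^{F^H}(\mathbf{D}\mc{M})\simeq\Gr^{F^H}M$ is strictly stronger than Cohen--Macaulayness and is simply false for most of the representations covered by the theorem. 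There is also a related conflation: the functor that commutes with $\mrm{R}\Gamma$ on the nose (Theorem~\ref{thm:intertwining}) is the filtered intertwining functor $\mc{I}_w^!$, \emph{not} the Hodge intertwining functor $\mc{I}_w^{!H}$, and only the latter is compatible with Hodge filtrations.

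The paper's argument avoids self-duality altogether. The intertwining machinery is used only to prove that the filtered complex $\mathbf{D}\,\mrm{R}\Gamma(\mc{M},F^H_\bullet)\cong\mrm{R}\Gamma(\mathbf{D}\mc{M},F^H_\bullet)$ is \emph{strict} (Theorem~\ref{thm:strictness} at the anti-dominant parameter $-\lambda$, with very weak unipotence killing the Bernstein-relation error terms so that $\mc{I}_w^{!H}$ does commute with $\mrm{R}\Gamma$, Lemma~\ref{lem:weakly unipotent intertwining 2}). Strictness gives $\mathbf{D}\Gr^{F^H}M\cong\Gr^{F^H}(\mathbf{D}M)$, reducing the problem to showing the \emph{unfiltered} complex $\mathbf{D}M$ is concentrated in a single degree. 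This is where maximality of $\mrm{Ann}(M)$ does its real work: every nonzero holonomic module annihilated by a maximal ideal $I$ with $V(I)=\overline{\mb{O}}$ has annihilator exactly $I$, hence associated variety of pure dimension $\tfrac12\dim\mb{O}$ (Lemma~\ref{lem:support}); applied to each $\mc{H}^i(\mathbf{D}M)$ together with the usual dimension bound $\dim\mrm{AV}(\mc{H}^i(\mathbf{D}M))\le\dim\mc{B}-i$, this forces vanishing for $i\neq\dim\mc{B}-\tfrac12\dim\mb{O}$. Your proposal invokes maximality only for equidimensionality of the support and tries to replace this algebraic step by a self-duality that is not there.
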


Since Cohen-Macaulay modules are completely determined by their localizations outside codimension $2$ subsets of their support, Theorem \ref{thm:intro CM} reduces the computation of $\Gr^{F^H} M$ to a computation in codimension $0$ and $1$. This observation implies Theorem \ref{thm:unitaritygeneralintro} almost immediately, and gives a complete description of $\Gr^{F^H} M$ for small-boundary unipotent representations.

\begin{theorem}[Theorem \ref{thm:Kspectraunipotent}]\label{thm:unipotentquantizationintro}
Let $M$ be a small-boundary unipotent $(\fg,K)$-module with Hodge filtration $F_\bullet^HM$. Then there exists a $K$-orbit $\OO_{\theta} \subset (\fg/\fk)^*$ and an irreducible $K$-equivariant vector bundle $\mathcal{V}$ on $\OO_{\theta}$ such that
\[ \gr^{F^H}(M) \xrightarrow{\sim} \Gamma(\OO_{\theta},\mathcal{V})\]
as $(S(\fg),K)$-modules. In particular, if we choose $x \in \OO_{\theta}$ and write $\rho$ for the corresponding irreducible representation of the stabilizer $K_{x}$ of $x$, then
\[M \simeq \Ind^K_{K_{x}}\rho\]
as representations of $K$.
\end{theorem}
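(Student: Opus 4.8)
The plan is to read off the structure of the Hodge associated graded $\gr^{F^H}M$ directly from the Cohen--Macaulay property of Theorem~\ref{thm:intro CM}, combined with the codimension hypothesis built into the definition of small-boundary unipotent, using nothing beyond the unmixedness of Cohen--Macaulay modules and the classification of equivariant sheaves on a homogeneous space.

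The first step is to check that Theorem~\ref{thm:intro CM} applies. By Definition~\ref{def:unipotentintro}, a small-boundary unipotent module $M$ is an irreducible $(\fg,K)$-module whose annihilator is a unipotent ideal $I(\mb{O})$; this ideal is maximal by \cite[Theorem 5.01]{MBM}, and it is very weakly unipotent (Definition~\ref{def:weaklyunipotent}), which one checks case by case from the infinitesimal character computations of \cite{LMBM,MBM} exactly as in the verification of hypothesis~\eqref{itm:unitaritygeneralintro 3} of Theorem~\ref{thm:unitaritygeneralintro}. Thus $N := \gr^{F^H}M$ is a Cohen--Macaulay $(S(\fg),K)$-module. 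Since the Hodge filtration is a good filtration, $\supp N$ equals the associated variety of $M$, which by hypothesis is the irreducible affine variety $\overline{\OO}_\theta \subset (\fg/\fk)^*$ (the closure of a single $K$-orbit $\OO_\theta$, with $\codim(\partial\OO_\theta,\overline{\OO}_\theta)\ge 2$).

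Next I would pass to the open orbit. Let $j\colon \OO_\theta\hookrightarrow\overline{\OO}_\theta$ be the open immersion with closed complement $Z=\partial\OO_\theta$. Because $N$ is Cohen--Macaulay with support the whole of the irreducible variety $\overline{\OO}_\theta$, unmixedness gives $\operatorname{grade}(I_Z,N)=\codim(Z,\overline{\OO}_\theta)\ge 2$, so $H^0_Z(N)=H^1_Z(N)=0$ and, since $\overline{\OO}_\theta$ is affine, the restriction map $N\to\Gamma(\OO_\theta, j^*\widetilde N)$ is an isomorphism of $(S(\fg),K)$-modules. Restricting one step further, $j^*\widetilde N$ is a Cohen--Macaulay coherent sheaf of full support on the smooth homogeneous space $\OO_\theta$, hence locally free; being $K$-equivariant on $\OO_\theta = K/K_x$, it has the form $\mc{V}=K\times_{K_x}\rho$ for a finite-dimensional representation $\rho$ of the isotropy group $K_x$. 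That $\rho$ is irreducible --- equivalently, that $\mc{V}$ is an irreducible $K$-equivariant vector bundle --- is precisely the assertion that the associated cycle of $M$ is irreducible in the sense of Definition~\ref{def:irreducibleAC}, which holds for small-boundary unipotent $M$ by the $\mc{W}$-algebra and dagger-functor arguments establishing hypothesis~\eqref{itm:unitaritygeneralintro 2} of Theorem~\ref{thm:unitaritygeneralintro}. This yields the isomorphism $\gr^{F^H}M\xrightarrow{\ \sim\ }\Gamma(\OO_\theta,\mc{V})$.

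The remaining $K$-type statement is then formal: for the reductive algebraic group $K$ and the closed subgroup $K_x$ one has $\Gamma(K/K_x,\,K\times_{K_x}\rho)\cong\Ind^K_{K_x}\rho$ as $K$-representations, while $\gr^{F^H}M\cong M$ as $K$-representations because the Hodge filtration is $K$-stable and $K$ is reductive. The only non-formal inputs are Theorem~\ref{thm:intro CM}, whose proof via the Hodge-theoretic intertwining functor formalism is the technical core of the paper and which I am treating here as a black box, and the irreducibility of the associated cycle; I expect the former to be the genuine obstacle, with everything downstream reducing to the Cohen--Macaulay/codimension-$2$ mechanism described above.
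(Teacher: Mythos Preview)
Your overall strategy matches the paper's proof: apply the Cohen--Macaulay theorem, use the codimension-$2$ hypothesis to identify $\gr^{F^H}M$ with sections over $\OO_\theta$, and invoke irreducibility of the associated cycle. The local-cohomology argument for $N\xrightarrow{\sim}\Gamma(\OO_\theta,j^*\widetilde N)$ is exactly what the paper does (citing \cite[Proposition~1.11]{Hartshorne1992}).

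There is, however, a genuine slip in the step ``$j^*\widetilde N$ is a Cohen--Macaulay coherent sheaf of full support on the smooth homogeneous space $\OO_\theta$, hence locally free.'' The sheaf $j^*\widetilde N$ is a priori only a coherent $\mathcal{O}_U$-module (for $U$ an open set in the ambient $\fg^*$) with \emph{set-theoretic} support $\OO_\theta$; it need not be an $\mathcal{O}_{\OO_\theta}$-module at all. Cohen--Macaulayness over the ambient ring does not force the reduced ideal of the support to annihilate the module: already $k[x]/(x^2)$ is Cohen--Macaulay over $k[x]$ with support $\{0\}$, yet is not a module over $k[x]/(x)$. So your implication ``CM with full support on a smooth variety $\Rightarrow$ locally free'' is unjustified as stated.

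The paper closes this gap by using the irreducibility of $\AC(M)$ more fully than you do. Rather than first proving local freeness and then invoking $\AC$-irreducibility for the irreducibility of $\rho$, the paper takes a finite $K$-stable filtration $0=\mathcal{G}_0\subsetneq\cdots\subsetneq\mathcal{G}_n=j^*\mathcal{F}$ whose successive quotients are scheme-theoretically supported on $\OO_\theta$ (hence vector bundles there); by definition $\AC(M)=\sum_i[\mathcal{G}_i/\mathcal{G}_{i-1}]\cdot\OO_\theta$, and irreducibility of $\AC(M)$ (Proposition~\ref{prop:smallboundaryAC}(i)) forces $n=1$ and $\mathcal{G}_1$ irreducible. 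Thus a single invocation of $\AC$-irreducibility gives both that $j^*\mathcal{F}$ is a vector bundle and that it is irreducible. Your argument can be repaired in exactly this way; the only change is to move the associated-cycle input one step earlier.
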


The final statement about $K$-multiplicities in Theorem \ref{thm:unipotentquantizationintro} is a conjecture of Vogan \cite[Conjecture 12.1]{Vogan1991}, based on the expectation that unipotent representations should quantize nilpotent co-adjoint orbits. A pleasing feature of our Hodge-theoretic approach is that this property of unipotent representations is proved together with their unitarity, in keeping with the underlying philosophy of the Orbit Method.

\subsection{Vanishing theorems for Hodge filtrations}

Let us now discuss the proof of Theorem \ref{thm:intro CM}, which in fact takes up the bulk of this paper. Recall that a coherent sheaf on a variety (or module over a commutative ring) is Cohen-Macaulay if and only if its Serre dual is concentrated in a single cohomological degree. By a fundamental result of Saito (e.g., \cite[Lemme 5.1.13]{Saito1988}), the associated graded $\Gr^{F^H}\mc{M}$ of the mixed Hodge module $\mc{M} \in \mhm(\mc{D}_\lambda)$ is a Cohen-Macaulay sheaf on $T^*\mc{B}$, with Serre dual given by
\[ \mb{D}_{\mrm{Serre}} \Gr^{F^H}\mc{M} = \Gr^{F^H}\mb{D}\mc{M}\]
up to a cohomological shift. Here $\mb{D}\mc{M} \in \mhm(\mc{D}_{-\lambda})$ is the dual Hodge module. Unfortunately, it does not {\it a priori} follow that the associated graded $\Gr^{F^H} M$ of the global sections is also Cohen-Macaulay: to draw this conclusion, we need to assume that
\begin{equation} \label{eq:intro CM vanishing}
\mrm{H}^i(\mc{B}, \Gr^{F^H}\mc{M}) = 0 \quad \text{and} \quad \mrm{H}^j(\mc{B}, \Gr^{F^H}\mb{D}\mc{M}) = 0, \quad \text{for $i \neq 0$ and $j \neq d$},
\end{equation}
for some $d$. The first of these conditions is \cite[Theorem 1.3]{DV}, which holds since $\lambda$ is dominant. Since $-\lambda$ is \emph{not} dominant, however, this result does not apply to the dual $\mb{D}\mc{M}$: we instead use the following theorem to reduce \eqref{eq:intro CM vanishing} to the vanishing of $\mrm{H}^i(\mc{B}, \mb{D}\mc{M})$, which can be proved by an algebraic argument using maximality of the annihilator.

\begin{thm}[Theorem \ref{thm:strictness}] \label{thm:intro strictness}
Assume $\mc{M} \in \mhm(\mc{D}_\lambda)$ is very weakly unipotent (Definition \ref{def:CandD}). Then, with no dominance assumption on $\lambda$, the spectral sequence
\[ \mrm{E}_1^{p, q} = \mrm{H}^{p + q}(\mc{B}, \Gr^{F^H}_{-p}\mc{M}) \Rightarrow \mrm{H}^{p + q}(\mc{B}, \mc{M}) \]
degenerates at $\mrm{E}_1$.
\end{thm}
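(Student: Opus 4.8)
The plan is to exploit the fact that the spectral sequence in question is the ``Hodge-filtered'' form of the Beilinson--Bernstein global sections functor, and to use a deformation/induction argument along the affine Hecke algebra action on filtered monodromic $\mathcal{D}$-modules advertised in the introduction. Degeneration at $\mathrm{E}_1$ is equivalent to \emph{strictness} of $\Gamma$ with respect to the Hodge filtration, i.e.\ the statement that $\mrm{H}^\bullet(\mc{B}, F_p^H\mc{M}) \to \mrm{H}^\bullet(\mc{B}, \mc{M})$ is injective for all $p$. When $\lambda$ is dominant this is immediate: all higher cohomology of $\mc{M}$ vanishes, and \cite[Theorem 1.3]{DV} gives the same vanishing for $\Gr^{F^H}\mc{M}$, so the spectral sequence is concentrated in a single total degree and there is nothing to prove. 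The content is therefore entirely about moving away from the dominant chamber while keeping track of the Hodge filtration, and this is exactly what the intertwining functors are designed to do.

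First I would reduce to the case where $\lambda$ is regular, by a standard translation-principle argument: tensoring with a suitable finite-dimensional representation (realized Hodge-theoretically, so that the Hodge filtration is respected up to shift) identifies the relevant cohomology and the spectral sequence at singular $\lambda$ with a direct summand of the one at a nearby regular weight, so degeneration at regular weights implies it in general. Next, with $\lambda$ regular, I would write $\lambda = w\mu$ for $\mu$ dominant and $w \in W$, and induct on the length $\ell(w)$. The base case $\ell(w)=0$ is the dominant case handled above. For the inductive step, write $w = s_\alpha w'$ with $\ell(w') = \ell(w)-1$ and apply the Hodge-theoretic intertwining functor $I_{s_\alpha}$ relating $\mhm(\mc{D}_{w'\mu})$ and $\mhm(\mc{D}_{s_\alpha w'\mu})$. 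The key input — which is where the affine Hecke algebra action does its work — is that $I_{s_\alpha}$ is an equivalence of filtered categories (an exact functor on the underlying $\mathcal{D}$-modules, shifting the Hodge filtration in a controlled way) that is compatible with $\Gamma$ in the precise sense that $\Gamma \circ I_{s_\alpha} \cong \Gamma$ as functors to $U(\fg)$-modules, intertwining the two Hodge filtrations up to the known shift. Granting this, the spectral sequence for $\mc{M}$ at $w\mu$ is carried isomorphically (after reindexing) to the one for $I_{s_\alpha}^{-1}\mc{M}$ at $w'\mu$, which degenerates by the inductive hypothesis; hence so does the original.

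The role of the very weak unipotence hypothesis on $\mc{M}$ (Definition~\ref{def:CandD}) is to guarantee that the reflections $s_\alpha$ used in the reduction are \emph{integral}, so that the intertwining functors are actually available as functors between the relevant twisted-$\mathcal{D}$-module categories, and moreover that no ``bad'' non-integral wall-crossing contributes spurious shifts that would break strictness; concretely, very weak unipotence controls the size of $\langle \lambda^\vee, \alpha\rangle$ for the relevant coroots $\alpha$, which is what makes the filtered intertwining functor behave as cleanly as the unfiltered one. The main obstacle I anticipate is establishing the compatibility of the Hodge filtration with $I_{s_\alpha}$ with the \emph{correct} shift — that is, showing that $I_{s_\alpha}$ on mixed Hodge modules is $t$-exact (not merely the underlying $\mathcal{D}$-module functor) and computing its effect on $F^H_\bullet$ precisely enough that the global-sections spectral sequences match term by term. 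This requires knowing that the intermediate extension across the hyperplane $H_\alpha = 0$ interacts with the Hodge filtration in the expected way, which is a genuinely Hodge-theoretic statement about nearby/vanishing cycles for the monodromic family over that hyperplane; everything else is bookkeeping on top of the Beilinson--Bernstein picture and the already-cited dominant vanishing theorem.
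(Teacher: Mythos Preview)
Your overall skeleton is right: induct on the length of $w$ needed to make $\lambda$ dominant, and use intertwining functors to transport the (known) strictness at dominant twist back to $\lambda$. But you have misidentified both the role of the hypothesis and the mechanism by which the induction step goes through.

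Very weak unipotence (Definition~\ref{def:CandD}) is \emph{not} an integrality condition and has nothing to do with the size of $\langle \lambda, \check\alpha\rangle$. It is a vanishing condition: $\mrm{R}\Gamma(\mc{B}, \mc{M}\otimes\mc{O}(\mu))=0$ for all $\mu\in D^\circ(\lambda)$. The subtlety you flag as ``the main obstacle'' --- that the Hodge intertwining functor $\mc{I}_{s_\alpha}^{!H}$ should commute with $\mrm{R}\Gamma$ as a \emph{filtered} functor --- is false in general, and it is exactly here that the hypothesis enters. The paper constructs two different filtered lifts of $\mc{I}_{s_\alpha}$: the filtered intertwining functor $\mc{I}_{s_\alpha}^!$, which \emph{does} commute with $\mrm{R}\Gamma$ (Theorem~\ref{thm:intertwining}) but does not send Hodge filtrations to Hodge filtrations, and $\mc{I}_{s_\alpha}^{!(\lambda)}$, which does preserve Hodge filtrations but does not a priori commute with $\mrm{R}\Gamma$. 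Their difference is quantified by Theorem~\ref{thm:deformation relations}: a cone built from translates $\mc{M}\otimes\mc{O}(n\alpha)$ with $0<n<|\langle\lambda,\check\alpha\rangle|$. Each such $n\alpha$ lies in $D^\circ(\lambda)$, so very weak unipotence kills $\mrm{R}\Gamma$ of these translates; a secondary induction on $|D^\circ(\lambda)|$ (noting $|D^\circ(\lambda+n\alpha)|<|D^\circ(\lambda)|$, and that $\mc{M}\otimes\mc{O}(n\alpha)$ is again very weakly unipotent) then gives strictness for them, so the \emph{filtered} complex $\mrm{R}\Gamma(\mc{M}\otimes\mc{O}(n\alpha),F^H_\bullet)$ vanishes. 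Only now does $\mrm{R}\Gamma(\mc{M},F^H_\bullet)\cong\mrm{R}\Gamma(\mc{I}_{s_\alpha}^{!H}\mc{M},F^H_\bullet)\{1\}$ follow, and the length induction proceeds. Your proposed ``reduction to regular $\lambda$ by translation'' is unnecessary (and it is not clear very weak unipotence survives it); the paper works directly, with no regularity assumption.
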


Theorem \ref{thm:intro strictness} also implies the following generalization of \cite[Theorem 1.3]{DV} to partial flag varieties. If $\mc{P} = G/P$ is a partial flag variety, we have a family of sheaves $\mc{D}_{\mc{P}, \lambda}$ of twisted differential operators on $\mc{P}$ indexed by $\lambda \in \mrm{Pic}^G(\mc{P}) \otimes \mb{C} = \mb{X}^*(P) \otimes \mb{C} \subset \mf{h}^*$. We fix the indexing so that $\mc{D}_{\mc{P}, \rho(\mf{g}/\mf{p})} = \mc{D}_\mc{P}$ is the ordinary sheaf of untwisted differential operators, where $\rho(\mf{g}/\mf{p})$ is half the sum of the positive roots in $\mf{g}/\mf{p}$ (or equivalently, half the class of the anti-canonical bundle of $\mc{P}$).

\begin{thm}[Theorem \ref{thm:partial exactness}] \label{thm:intro partial vanishing}
Let $\mc{P}$ be a partial flag variety, let $\lambda \in \mrm{Pic}^G(\mc{P}) \otimes \mb{R} \subset \mf{h}^*_\mb{R}$ and let $\mc{M} \in \mhm(\mc{D}_{\mc{P}, \lambda})$. If $\lambda$ is dominant, then
\[ \mrm{H}^i(\mc{P}, F_p^H \mc{M}) = 0 \quad \text{for $i > 0$ and all $p$}.\]
\end{thm}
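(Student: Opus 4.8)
The plan is to reduce the statement on the partial flag variety $\mc{P} = G/P$ to the corresponding vanishing on the full flag variety $\mc{B}$, where it follows from Theorem \ref{thm:intro strictness} together with \cite[Theorem 1.3]{DV}. Let $\pi \colon \mc{B} \to \mc{P}$ be the natural projection; its fibres are flag varieties for the Levi $L$ of $P$. First I would pull back the Hodge module $\mc{M} \in \mhm(\mc{D}_{\mc{P},\lambda})$ along $\pi$. The key technical input is that $\pi$ is a smooth proper morphism, so $\pi^*$ (suitably normalized, i.e.\ $\pi^!$ up to shift and twist) carries mixed Hodge modules on $\mc{P}$ to mixed Hodge modules on $\mc{B}$, and at the level of filtered $\mc{D}$-modules the Hodge filtration on $\pi^* \mc{M}$ is computed from that on $\mc{M}$ by the standard formula for pullback along a smooth map: $F_\bullet^H \pi^* \mc{M}$ is obtained from $\pi^* F_\bullet^H \mc{M}$ by the de Rham complex of $\mc{B}/\mc{P}$, so that $\gr^{F^H} \pi^*\mc{M} = \gr^{F^H}\mc{M} \otimes (\text{relative differentials})$ on $T^*\mc{B}$ via the correspondence $T^*\mc{P} \times_{\mc{P}} \mc{B} \leftarrow \pi^\circ \to T^*\mc{B}$. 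The upshot is that $F_p^H \pi^*\mc{M}$ has a finite filtration whose subquotients are of the form $(\pi^* F_{p-k}^H \mc{M}) \otimes \Omega^k_{\mc{B}/\mc{P}}[\text{shift}]$.

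Next I would note that $\pi^*\mc{M}$ is a twisted mixed Hodge module on $\mc{B}$ for the twist $\mc{D}_\lambda$ (since $\pi^*$ of $\mc{D}_{\mc{P},\lambda}$-modules are $\mc{D}_\lambda$-modules, with our normalization of indices matching because $\mc{D}_{\mc{P},\rho(\mf{g}/\mf{p})}$ is the untwisted sheaf and $\pi^*$ adds $\rho(\mf{g}/\mf{p})$ back to get $\rho$ on $\mc{B}$), and that the very-weak-unipotence hypothesis is about $\lambda$, hence passes to $\pi^*\mc{M}$; actually for this theorem we only need dominance of $\lambda$, which is unchanged. Applying Theorem \ref{thm:intro strictness} to $\pi^*\mc{M}$ (or more simply, since $\lambda$ is dominant, directly \cite[Theorem 1.3]{DV}) gives $\mrm{H}^i(\mc{B}, F_p^H \pi^*\mc{M}) = 0$ for $i > 0$ and all $p$. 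By the filtration of $F_p^H\pi^*\mc{M}$ described above and the projection formula, this yields $\mrm{H}^i(\mc{B}, \pi^* F_{p'}^H \mc{M} \otimes \Omega^k_{\mc{B}/\mc{P}}) = 0$ for $i > 0$; taking $k = 0$ and using $R\pi_* \Ocal_{\mc{B}} = \Ocal_{\mc{P}}$ (the fibres are rational), the projection formula $R\pi_* \pi^* F_{p'}^H\mc{M} = F_{p'}^H\mc{M}$ together with the Leray spectral sequence gives $\mrm{H}^i(\mc{P}, F_p^H\mc{M}) = \mrm{H}^i(\mc{B}, \pi^* F_p^H\mc{M})= 0$ for $i>0$, which is the claim. (One must check that $R^{>0}\pi_* \pi^* F_p^H\mc{M} = 0$, i.e.\ that $F_p^H\mc{M}$ is acyclic along the fibres; this follows from $R^{>0}\pi_*\Ocal_{\mc{B}} = 0$ and flatness.)

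The main obstacle I anticipate is not the cohomological bookkeeping above but rather the compatibility of the Hodge filtration with the smooth pullback $\pi^*$ — specifically, checking that the filtered pullback in the theory of mixed Hodge modules genuinely agrees, with the correct twist and index normalization, with the naive construction $\pi^* F_\bullet^H \mc{M}$ convolved with the relative de Rham complex, and that no strictness failure is introduced. In Saito's formalism $\pi^!$ for smooth $\pi$ is exact up to shift and strictly compatible with Hodge filtrations, so this should go through, but it requires care with the normalization conventions for $\mc{D}_{\mc{P},\lambda}$ versus $\mc{D}_\lambda$ and with the fact that the fibres $\mc{B}_x = L/B_L$ contribute their own (pure, Tate-type) Hodge structure in cohomology — one needs $\mrm{H}^{>0}(L/B_L, \Ocal) = 0$, which is classical. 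A secondary point to verify is that $\pi^*\mc{M}$ remains an object of $\mhm$ (not just a filtered $\mc{D}$-module), so that Theorem \ref{thm:intro strictness} applies; this is automatic since smooth pullback preserves $\mhm$. Given these, the argument is a clean descent from $\mc{B}$ to $\mc{P}$.
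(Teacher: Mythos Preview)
Your overall architecture --- pull $\mc{M}$ back to the full flag variety $\mc{B}$ and invoke the known vanishing there --- is the paper's first move as well, but there is a genuine gap at the crucial step. You assert that the pullback $\pi^*\mc{M}$ is a $\mc{D}_{\mc{B},\lambda}$-module and then apply \cite[Theorem 1.3]{DV} using the dominance of $\lambda$. This miscomputes the twist: as the paper notes immediately after stating the theorem, the pullback of a $\mc{D}_{\mc{P},\lambda}$-module is a $\mc{D}_{\mc{B},\,\lambda + \rho - \rho(\mf{g}/\mf{p})} = \mc{D}_{\mc{B},\,\lambda + \rho(\mf{l})}$-module, and $\lambda + \rho(\mf{l})$ is generally \emph{not} dominant even when $\lambda$ is. (Your own bookkeeping, ``$\pi^*$ adds $\rho(\mf{g}/\mf{p})$ back to get $\rho$ on $\mc{B}$,'' correctly identifies a nonzero shift $\rho - \rho(\mf{g}/\mf{p}) = \rho(\mf{l})$, so the conclusion that the indices match is inconsistent with it.) Consequently \cite[Theorem 1.3]{DV} does not apply, and neither does Theorem \ref{thm:intro strictness} without first verifying very weak unipotence --- which you do not do.

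The paper closes this gap in two stages. For $\lambda = 0$ it shows, using Borel--Weil--Bott on the fibres, that the pullback $\pi_S^{\bigcdot}\mc{M}$ is very weakly unipotent at the (non-dominant) twist $\rho(\mf{l})$; then Theorem \ref{thm:strictness} gives strictness of $\mrm{R}\Gamma(\mc{B},(\pi_S^{\bigcdot}\mc{M},\pi_S^{\bigcdot}F_\bullet^H\mc{M})) = \mrm{R}\Gamma(\mc{P},(\mc{M},F_\bullet^H\mc{M}))$, and the unfiltered partial-flag vanishing (Theorem \ref{thm:partial exactness}) finishes the case. For general dominant $\lambda$, the paper does \emph{not} argue via very weak unipotence on $\mc{B}$; instead it runs a wall-crossing/deformation argument on $\mc{P}$ itself (choosing a boundary divisor and $f \in \Gamma_{\mb{R}}(\tilde U)^{\mathit{mon}}$, then sliding $s$ from $0$ down to $-1$), inducting on $\dim\supp\mc{M}$ to reduce to $\lambda = 0$. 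A smaller point: for smooth $\pi$ the Hodge filtration on the pullback is simply the naive $\mc{O}$-module pullback $\pi^*F_p^H\mc{M}$ (up to an index shift), not a convolution with the relative de Rham complex --- that construction belongs to the pushforward. This actually makes the descent step easier than you indicate, but it does not rescue the twist issue above.
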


Theorem \ref{thm:intro partial vanishing} is the Hodge-theoretic analog of the Beilinson-Bernstein vanishing for partial flag varieties (see, e.g., \cite[Theorem 6.3]{Bien1990}). Note that the pullback of a $\mc{D}_{\mc{P}, \lambda}$-module to $\mc{B}$ is a $\mc{D}_{\lambda + \rho - \rho(\mf{g}/\mf{p})}$-module: even if $\lambda$ is dominant, $\lambda + \rho - \rho(\mf{g}/\mf{p})$ need not be. 

\subsection{Intertwining functors}

Finally, Theorem \ref{thm:intro strictness} is proved using a theory of \emph{intertwining functors} for Hodge modules to reduce the study of Hodge filtrations at non-dominant twists to the dominant case treated in \cite{DV}.

The classical intertwining functors for $\mc{D}$-modules on the flag variety were defined in \cite{BeilinsonBernstein1983}, see also \cite{HMSW2}. For the purposes of this introduction, these are functors
\[ \mc{I}_w \colon \mrm{D}^b\coh(\mc{D}_\lambda) \to \mrm{D}^b\coh(\mc{D}_{w\lambda}) \]
for $w \in W$ and $\lambda \in \mf{h}^*$ such that the diagram
\[
\begin{tikzcd}[column sep=1em]
\mrm{D}^b\coh(\mc{D}_\lambda) \ar[rd, "\mrm{R}\Gamma"'] \ar[rr, "\mc{I}_w"] & & \mrm{D}^b\coh(\mc{D}_{w\lambda}) \ar[dl, "\mrm{R}\Gamma"] \\
& \mrm{D}^b\mrm{Mod}_{fg}(U(\mf{g}))_{\chi_\lambda}
\end{tikzcd}
\]
commutes. Here $\mrm{Mod}_{fg}(U(\mf{g}))_{\chi_\lambda}$ denotes the category of finitely generated $U(\mf{g})$-modules with infinitesimal character $\chi_\lambda = \chi_{w\lambda}$ and $\coh(\mc{D}_\lambda)$ the category of coherent $\mc{D}_\lambda$-modules.

Now, the rings $\mc{D}_\lambda$ are constructed (see \S\ref{subsec:localization}) as quotients of a single sheaf $\tilde{\mc{D}}$ on $\mc{B}$ (a central extension of $\mc{D}$ by $S(\mf{h})$); thus, we may consider all infinitesimal characters together by embedding the various categories $\Coh(\mathcal{D}_{\lambda})$ into $\Coh(\tilde{\mathcal{D}})$. We lift the intertwining functors to the filtered derived category $\mrm{D}^b\coh(\tilde{\mc{D}}, F_\bullet)$ of coherent $\tilde{\mc{D}}$-modules with good filtration:

\begin{thm}[Corollary \ref{cor:affine hecke} and Theorem \ref{thm:intertwining}] \label{thm:intro filtered intertwining}
For $w \in W$ and $\mu \in \mb{X}^*(H)$, there are explicit functors
\[ \mc{I}_w^!, \mc{I}_w^* = (\mc{I}_{w^{-1}}^!)^{-1}, \ t_\mu = \mc{O}(\mu) \otimes - \colon \mrm{D}^b\coh(\tilde{\mc{D}}, F_\bullet) \to \mrm{D}^b\coh(\tilde{\mc{D}}, F_\bullet), \]
where $\mc{O}(\mu)$ is the line bundle on the flag variety corresponding $\mu \in \mb{X}^*(H)$, such that:
\begin{enumerate}
\item the induced operators on the Grothendieck group $\mrm{K}(\mrm{D}^b\coh(\tilde{\mc{D}}, F_\bullet))$ generate an action of the (extended) affine Hecke algebra in its Bernstein presentation, and
\item for all $w \in W$, the diagram
\[
\begin{tikzcd}[column sep=1em]
\mrm{D}^b\coh(\tilde{\mc{D}}, F_\bullet) \ar[rr, "\mc{I}_{w}^!"] \ar[dr, "\mrm{R}\Gamma"'] & & \mrm{D}^b\coh(\tilde{\mc{D}}, F_\bullet) \ar[dl, "\mrm{R}\Gamma"] \\
& \mrm{D}^b\mrm{Mod}_{fg}(U(\mf{g}), F_\bullet)
\end{tikzcd}
\]
commutes.
\end{enumerate}
\end{thm}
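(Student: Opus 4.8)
The plan is to construct the functors $\mc{I}_w^!$ and $\mc{I}_w^*$ directly as filtered integral transforms along the partial flag varieties $\mc{B} \times_{\mc{P}_s} \mc{B}$ associated to simple reflections $s$, lifting the classical construction of \cite{BeilinsonBernstein1983}. For a simple reflection $s$, one has the incidence variety $Y_s = \mc{B} \times_{\mc{P}_s} \mc{B}$ with its two projections $p_1, p_2 \colon Y_s \to \mc{B}$; classically, $\mc{I}_s = \mathrm{R}(p_1)_+ p_2^+$ (suitably twisted). The key point is that $p_1$ is a $\mb{P}^1$-bundle, so this is a proper pushforward along a smooth morphism followed by a smooth pullback, and Saito's theory tells us exactly how the Hodge (or here, the good) filtration behaves: $p_2^+$ shifts and the proper pushforward is computed by a filtered relative de Rham complex whose behavior on a $\mb{P}^1$-bundle is explicitly controlled. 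First I would set up the filtered six-functor formalism for $\tilde{\mc{D}}$-modules (or cite the relevant parts of Saito's formalism adapted to the monodromic/$S(\mf{h})$-linear setting), then define $\mc{I}_s^!$ and $\mc{I}_s^*$ as the appropriate filtered compositions, checking they are mutually inverse up to the expected shift. For general $w$ with a reduced expression $w = s_{i_1} \cdots s_{i_\ell}$, one composes, and the fact that the result is independent of the reduced expression (on the derived category, up to canonical isomorphism) follows from the braid relations, which can be verified on the relevant two-step partial flag varieties exactly as in the unfiltered theory, now carrying the filtration along.

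Next I would address compatibility with global sections. The commutativity of the triangle with $\mrm{R}\Gamma$ in the unfiltered setting is the defining property of intertwining functors; at the filtered level, one uses that $\mrm{R}\Gamma$ commutes with the proper pushforward $\mrm{R}(p_1)_+$ (base change / projection formula, compatibly filtered, since $\mc{B}$ is proper) and that $\mrm{R}\Gamma$ applied to $p_2^+$ of a module on $\mc{B}$ can be compared to $\mrm{R}\Gamma$ on $\mc{B}$ via the projection $Y_s \to \mc{B}$. The filtered enhancement here is essentially formal once the unfiltered statement and the filtered six functors are in place: the comparison maps of filtered complexes are constructed, and one checks they are quasi-isomorphisms by forgetting the filtration and invoking the classical result, using that the associated graded pieces are bounded and the filtrations exhaustive.

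The remaining, and I expect most delicate, ingredient is item (1): identifying the operators induced on $\mrm{K}(\mrm{D}^b\coh(\tilde{\mc{D}}, F_\bullet))$ with the affine Hecke algebra in its Bernstein presentation. The Grothendieck group of filtered modules is a module over $\mb{Z}[v^{\pm 1}]$ (the variable $v$ recording the filtration shift), and the plan is to compute the class of $\mc{I}_s^!(\mc{M})$ in terms of the class of $\mc{M}$ and the Bott–Samelson-type recollement on the $\mb{P}^1$-bundle $Y_s \to \mc{B}$: there is a distinguished triangle relating $\mc{I}_s^!$, the identity, and the pushforward from the diagonal, whose Grothendieck-group incarnation is precisely the quadratic relation $(T_s - v)(T_s + v^{-1}) = 0$ (with the normalization dictated by the Tate twists appearing in the de Rham complex of $\mb{P}^1$). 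Combined with the braid relations from the previous step and the commuting operators $t_\mu$ for $\mu \in \mb{X}^*(H)$ — which manifestly satisfy $t_\mu t_\nu = t_{\mu + \nu}$ and whose interaction with the $\mc{I}_s^!$ is governed by the standard "$T_s t_\mu - t_{s\mu} T_s = \ldots$" Bernstein relation, again checkable on $Y_s$ — this yields exactly the Bernstein presentation. The main obstacle is bookkeeping the Tate twists and filtration shifts with enough care that the quadratic relation comes out with the correct normalization (rather than off by a power of $v$), and confirming that the lattice part $\mb{Z}[\mb{X}^*(H)] = \mb{Z}[t_\mu^{\pm}]$ embeds correctly; this is where I would be most careful to track $F_\bullet$ through the relative de Rham complex of each $\mb{P}^1$-bundle explicitly.
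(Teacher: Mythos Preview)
Your proposal has two genuine gaps, both stemming from a conflation between the Hodge-theoretic intertwining functor and the filtered one that the theorem is about.

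\textbf{On commuting with $\mrm{R}\Gamma$.} The step ``one checks they are quasi-isomorphisms by forgetting the filtration and invoking the classical result, using that the associated graded pieces are bounded and the filtrations exhaustive'' is simply false: a morphism of filtered complexes that is an unfiltered quasi-isomorphism need not be a filtered quasi-isomorphism (consider the identity on $\mb{C}$ with two different one-step filtrations). Boundedness and exhaustiveness do not help. Worse, if you construct $\mc{I}_s^!$ using Saito's six functors (the only setting in which $j_!$ from the open orbit $X_s$ is available with a good filtration), what you obtain on a Hodge module $\mc{M} \in \mhm(\mc{D}_{\widetilde{\lambda}})$ is the Hodge intertwining functor $\mc{I}_s^{!H}$, and this does \emph{not} commute with filtered $\mrm{R}\Gamma$ in general; the failure is precisely what Theorem~\ref{thm:intro categorified hecke} quantifies. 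The paper's $\mc{I}_w^!$ is instead convolution with the specific kernel $(\tilde{\Delta}_w^{(0)}, F_\bullet^H)$, the Hodge filtration on the free-monodromic standard at twist $0$, and the proof that this commutes with $\mrm{R}\Gamma$ (Theorem~\ref{thm:intertwining}) goes via the big projective $\tilde{\Xi}$, whose underlying filtered bimodule is $\tilde{\mc{D}} \otimes_{U(\mf{g})} \tilde{\mc{D}}$: one shows $(\tilde{\Xi}, F_\bullet^H) * (\tilde{\Delta}_w^{(0)}, F_\bullet^H) \cong (\tilde{\Xi}, F_\bullet^H)$ as comodules over the comonad $(\tilde{\Xi}, F_\bullet^H) * -$, and then a short Barr--Beck style argument gives the result. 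The Hodge structure on $\tilde{\Xi}$ is essential here and is not visible from a pull-push description along $Y_s$.

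\textbf{On the Bernstein relation.} Your recollement on $Y_s$ will indeed give the quadratic relation. But the Bernstein relation $t_{s_\alpha\mu} T_{s_\alpha} t_{-\mu} = T_{s_\alpha} + (1 - u)(1 - t_{-\langle \mu, \check\alpha\rangle \alpha})/(1 - t_\alpha)$ is not a local computation on $Y_s$. What the geometry gives directly (Theorem~\ref{thm:hecke relations}\,\eqref{itm:hecke relations 4}) is $t_{s\mu} \circ \mc{I}_s^{!(\lambda)} \circ t_{-\mu} = \mc{I}_s^{!(\lambda + \mu)}$, i.e.\ conjugation by $t_\mu$ sends the kernel $\tilde{\Delta}_s^{(\lambda)}$ to $\tilde{\Delta}_s^{(\lambda + \mu)}$. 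To extract the Bernstein relation for $T_s = -[\mc{I}_s^!] = -[\mc{I}_s^{!(0)}]$ you must compare the filtered kernels $(\tilde{\Delta}_s^{(\mu)}, F_\bullet^H)$ and $(\tilde{\Delta}_s^{(0)}, F_\bullet^H)$; these have the same underlying $\tilde{\mc{D}}\boxtimes\tilde{\mc{D}}$-module but different Hodge filtrations. The paper does this via a semi-continuity theorem for Hodge filtrations in families (Theorems~\ref{thm:intertwining semi-continuity} and \ref{thm:deformation relations}), imported from the wall-crossing theory of \cite{DV}, which produces strict exact sequences whose graded pieces are exactly the $\tilde{\delta}_{-n\alpha}$ terms in the Bernstein relation. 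This is not ``bookkeeping Tate twists''; it is the substance of the argument, and nothing in a purely Laumon-style filtered six-functor setup will produce it.
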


The filtered intertwining functors of Theorem \ref{thm:intro filtered intertwining} are constructed Hodge-theoretically as follows. First, we consider the equivariant derived category $\mrm{D}^b_G\mhm(\mc{D}_{\widetilde{0}} \boxtimes \mc{D}_{\widetilde{0}})$ of mixed Hodge $\tilde{\mc{D}}\boxtimes \tilde{\mc{D}}$-modules on $\mc{B} \times \mc{B}$ on which the center $S(\mf{h}) \boxtimes S(\mf{h})$ acts nilpotently. This is a Hodge-theoretic version of the \emph{monodromic Hecke category}, which has been studied extensively in geometric representation theory. Following a well-trodden path (see, e.g., \cite{bezrukavnikov-yun}), we complete this to a category $\widehat{\mrm{D}}^b_G\mhm(\mc{D}_{\widetilde{0}} \boxtimes \mc{D}_{\widetilde{0}})$ of well-behaved pro-objects on which the center may now act freely (see \S\S\ref{subsec:pro mhm} and \ref{subsec:intertwining construction}). This comes equipped with a `forgetful' functor
\[\widehat{\mrm{D}}^b_G\mhm(\mc{D}_{\widetilde{0}} \boxtimes \mc{D}_{\widetilde{0}}) \to \mrm{D}^b_{\mrm{St}} \coh^G(\tilde{\mc{D}} \boxtimes \tilde{\mc{D}}, F_\bullet)\]
 to an appropriate derived category $\mrm{D}^b_{\mrm{St}} \coh^G(\tilde{\mc{D}} \boxtimes \tilde{\mc{D}}, F_\bullet)$ of filtered $\tilde{\mc{D}} \boxtimes \tilde{\mc{D}}$-modules, which acts on $\mrm{D}^b\coh(\tilde{\mc{D}}, F_\bullet)$ by convolution. We define $\mc{I}_w^!$ by convolution with the image of a \emph{free-monodromic standard Hodge module} $\tilde{\Delta}_w^{(0)}$ under this functor. We remark that the filtered $\tilde{\mc{D}}\boxtimes \tilde{\mc{D}}$-module $\tilde{\Delta}_w^{(0)}$ has a simple concrete description (see \cite{BIR} and \cite[Remark 6.18]{DV}), although we will not use this here.

This appearance of the affine Hecke algebra is closely related to Springer theory. Taking associated gradeds defines an algebra homomorphism
\[  \Gr \colon \mrm{K}(\mrm{D}^b_{\mrm{St}}\coh^G(\tilde{\mc{D}} \boxtimes \tilde{\mc{D}}, F_\bullet)) \to \mrm{K}(\coh^{G \times \mrm{\mb{C}}^\times}(\mrm{St})),\]
where $\mrm{St} = \tilde{\mf{g}}^* \times_{\mf{g}^*} \tilde{\mf{g}}^*$ is the Steinberg variety. The target is isomorphic to the affine Hecke algebra, and it is well-known to experts that $\Gr$ sends the objects $\tilde{\Delta}_w^{(0)}$ to its Bernstein generators (cf., e.g., \cite{Tanisaki1987}).

Now, with appropriate conventions, one can show (see, for example, the proof of Corollary \ref{cor:intertwining}) that the filtered intertwining functors $\mc{I}_w^!$ restrict to the usual intertwining functors $\mc{I}_w$ for $\mc{D}_\lambda$-modules upon specializing the infinitesimal character and forgetting the filtration. Unfortunately, $\mc{I}_w^!$ does \emph{not} restrict to a functor between mixed Hodge module categories in a way which is compatible with the Hodge filtrations. However, by convolving with a free-monodromic standard object
\[ \tilde{\Delta}_w^{(\lambda)} \in \widehat{\mrm{D}}^b_G\mhm(\mc{D}_{\widetilde{w\lambda}} \boxtimes \mc{D}_{\widetilde{-\lambda}}) \]
in a monodromic Hecke category with non-trivial twist, we obtain functors
\[ \mc{I}_w^{!(\lambda)} \colon \mrm{D}^b\coh(\tilde{\mc{D}}, F_\bullet) \to \mrm{D}^b\coh(\tilde{\mc{D}}, F_\bullet) \quad \text{and} \quad \mc{I}_w^{!H} \colon \mrm{D}^b\mhm(\mc{D}_\lambda) \to \mrm{D}^b\mhm(\mc{D}_{w\lambda})\]
such that
\[ (\mc{I}_w^{!H} \mc{M}, F_\bullet^H) \cong \mc{I}_{w}^{!(\lambda)} (\mc{M}, F_\bullet^H).\]
The difference between $\mc{I}_w^{!(\lambda)}$ and $\mc{I}_w^!$ measures the failure of $\mc{I}_w^{!H}$ to commute with global sections. This difference can be quantified precisely. For example, we show:

\begin{thm} \label{thm:intro categorified hecke}
Let $\alpha$ be a simple root with corresponding simple reflection $s_\alpha \in W$ and let $\lambda \in \mf{h}^*_\mb{R}$. If $\langle \lambda, \check\alpha \rangle \geq 0$, then for all $(\mc{M}, F_\bullet) \in \mrm{D}^b\coh(\tilde{\mc{D}}, F_\bullet)$, there is a distinguished triangle
\[ \mc{I}_{s_\alpha}^{!(\lambda)}(\mc{M}, F_\bullet) \to \mc{I}_{s_\alpha}^!(\mc{M}, F_\bullet) \to \mc{K} \to \mc{I}_{s_\alpha}^{!(\lambda)}(\mc{M}, F_\bullet)[1]\]
such that the cone $\mc{K}$ lies in the full triangulated subcategory generated by the objects
\[ t_{-n\alpha}(\mc{M}, F_\bullet) \quad \text{for $n \in \mb{Z}$ with $0 < n \leq \langle  \lambda, \check\alpha \rangle$}\]
and their filtration shifts. A similar statement holds for $\langle \lambda, \check\alpha \rangle < 0$.
\end{thm}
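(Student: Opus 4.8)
The plan is to reduce the statement to a rank-one computation with free-monodromic standard kernels and then convolve. By construction, $\mc{I}_{s_\alpha}^!$ and $\mc{I}_{s_\alpha}^{!(\lambda)}$ are given by convolution with the images, under the forgetful functor $\widehat{\mrm{D}}^b_G\mhm(-)\to\mrm{D}^b_{\mrm{St}}\coh^G(\tilde{\mc{D}}\boxtimes\tilde{\mc{D}}, F_\bullet)$, of the free-monodromic standard objects $\tilde{\Delta}_{s_\alpha}^{(0)}$ and $\tilde{\Delta}_{s_\alpha}^{(\lambda)}$ respectively. It therefore suffices to produce, in $\mrm{D}^b_{\mrm{St}}\coh^G(\tilde{\mc{D}}\boxtimes\tilde{\mc{D}}, F_\bullet)$, a distinguished triangle
\[ \tilde{\Delta}_{s_\alpha}^{(\lambda)} \to \tilde{\Delta}_{s_\alpha}^{(0)} \to \mc{K}_\alpha^{(\lambda)} \to \tilde{\Delta}_{s_\alpha}^{(\lambda)}[1] \]
whose cone $\mc{K}_\alpha^{(\lambda)}$ is set-theoretically supported on the diagonal $\mc{B}\hookrightarrow\mc{B}\times\mc{B}$ and lies in the triangulated subcategory generated by the diagonal kernels $\Delta_*(\mc{O}(-n\alpha), F_\bullet)$ representing the twisting functors $t_{-n\alpha}$, for integers $0 < n \le \langle\lambda,\check\alpha\rangle$, together with their filtration shifts. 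Convolving this triangle with $(\mc{M}, F_\bullet)$ then yields the theorem, since convolution is triangulated and $\Delta_*(\mc{O}(-n\alpha), F_\bullet)\star(\mc{M}, F_\bullet)\cong t_{-n\alpha}(\mc{M}, F_\bullet)$ up to shift. Both kernels are supported on $Z_\alpha := \mc{B}\times_{\mc{P}_\alpha}\mc{B}$, a $\mb{P}^1\times\mb{P}^1$-bundle over the partial flag variety $\mc{P}_\alpha = G/P_\alpha$ which contains the diagonal copy of $\mc{B}$ as a divisor, so the problem is genuinely rank one.

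To build the triangle I would start from the concrete presentations of the free-monodromic standards recalled from \cite{BIR} and \cite[Remark 6.18]{DV}. On the open $G$-orbit $Z_\alpha^\circ := Z_\alpha\setminus\mc{B}$, both $\tilde{\Delta}_{s_\alpha}^{(0)}$ and $\tilde{\Delta}_{s_\alpha}^{(\lambda)}$ reduce to the free-monodromic rank-one local system, the difference between the two objects being concentrated along the diagonal divisor; the hypothesis $\langle\lambda,\check\alpha\rangle\ge 0$ is precisely what makes the resulting comparison morphism go in the direction $\tilde{\Delta}_{s_\alpha}^{(\lambda)}\to\tilde{\Delta}_{s_\alpha}^{(0)}$. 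Its cone is then computed by a local calculation on a $\mb{P}^1$ appearing in the fibration $Z_\alpha$ (a fiber of one of the two projections to $\mc{B}$), which meets the diagonal in a single point: placing that point at $\infty$ and using the affine coordinate $u$ on its complement, the calculation amounts to resolving the $!$-extension of the free-monodromic module $\mb{C}[u]\cdot u^{-s}$ across $\infty$ and comparing its two specializations. The output is an iterated extension whose graded pieces push forward to the diagonal kernels $\Delta_*\mc{O}(-n\alpha)$ for the positive integers $n\le c := \langle\lambda,\check\alpha\rangle$ — the $n$-th piece appearing precisely because $u^{-n}$ lies in $u^{-c}\mb{C}[u]$ if and only if $n\le c$ — each placed in its correct cohomological degree and carrying its correct Tate twist on the Hodge filtration. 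Keeping track of these degree and filtration shifts, and confirming that the cone does lie in the subcategory generated by the $\Delta_*(\mc{O}(-n\alpha), F_\bullet)$ and their shifts, is where most of the work sits.

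The case $\langle\lambda,\check\alpha\rangle < 0$ is handled symmetrically: one runs the analogous computation with the $!$-extension replaced by the $*$-extension, or equivalently applies the case $\langle\lambda,\check\alpha\rangle\ge 0$ to $-\lambda$ and conjugates by $\mc{I}_{s_\alpha}^* = (\mc{I}_{s_\alpha}^!)^{-1}$ from Theorem \ref{thm:intro filtered intertwining}, which exchanges the roles of source and target in the triangle and shifts the range of twists accordingly.

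I expect the local fiberwise step to be the main obstacle. One must pin down the comparison morphism $\tilde{\Delta}_{s_\alpha}^{(\lambda)}\to\tilde{\Delta}_{s_\alpha}^{(0)}$ precisely enough to identify its cone, and in particular get the cohomological placements and the Hodge/Tate shifts of the kernels $\Delta_*\mc{O}(-n\alpha)$ exactly right; the analogue in the untwisted monodromic Hecke category is classical (cf. \cite{bezrukavnikov-yun}), but the presence of the Hodge filtration and of a non-integral real twist $\lambda$ demands care. As a consistency check, the identity induced on Grothendieck groups must be the Bernstein relation in the affine Hecke algebra relating the generator attached to $\mc{I}_{s_\alpha}^!$ to the lattice generators $\theta_{-n\alpha}$, compatibly with Theorem \ref{thm:intro filtered intertwining}(1); this both dictates the shape of the answer and fixes the range $0 < n \le \langle\lambda,\check\alpha\rangle$.
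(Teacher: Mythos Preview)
Your reduction to a kernel-level statement is exactly what the paper does: one builds a strict short exact sequence
\[ 0 \to (\tilde{\Delta}_{s_\alpha}^{(\lambda)}, F_\bullet^H) \to (\tilde{\Delta}_{s_\alpha}^{(0)}, F_\bullet^H) \to (\mc{K}, F_\bullet) \to 0 \]
in $\coh^G(\tilde{\mc{D}}\boxtimes\tilde{\mc{D}}, F_\bullet)$ (this is Theorem~\ref{thm:deformation relations}\eqref{itm:deformation relations 3}) and then convolves with $(\mc{M}, F_\bullet)$. The difference is in how the kernel triangle is built.

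You propose to use the concrete presentations of $\tilde{\Delta}_{s_\alpha}^{(\lambda)}$ from \cite{BIR} and \cite[Remark~6.18]{DV} and then do one local $\mb{P}^1$ computation to identify the cone all at once. The paper explicitly avoids these presentations. Instead it proceeds in two steps. First, a general semi-continuity theorem (Theorem~\ref{thm:intertwining semi-continuity}, proved via the pro-Hodge wall-crossing machinery of \S\ref{subsec:semi-continuity}) gives, for $\mu$ in the cone $s_\alpha\mf{h}^*_{\mb{R},+} - \mf{h}^*_{\mb{R},+}$, canonical strict injections
\[ (\tilde{\Delta}_{s_\alpha}^{(\lambda)}, F_\bullet^H) \hookrightarrow (\tilde{\nabla}_{s_\alpha}^{(\lambda)}, F_\bullet^H)\{-1\} \hookrightarrow (\tilde{\Delta}_{s_\alpha}^{(\lambda+\mu)}, F_\bullet^H), \]
with the second map an isomorphism for $0<\epsilon\ll 1$ and $\mu$ replaced by $\epsilon\mu$. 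Second, one factors the inclusion $\tilde{\Delta}_{s_\alpha}^{(\lambda)} \hookrightarrow \tilde{\Delta}_{s_\alpha}^{(0)}$ through the chain of intermediate twists crossing each integer value of $\langle\,\cdot\,,\check\alpha\rangle$ between $0$ and $\langle\lambda,\check\alpha\rangle$, and at each wall the cokernel of $\tilde{\Delta}_{s_\alpha}^{(n\nu)} \to \tilde{\nabla}_{s_\alpha}^{(n\nu)}\{-1\}$ is read off from the single Hecke relation Theorem~\ref{thm:hecke relations}\eqref{itm:hecke relations 5} (itself a short $\mb{P}^1$ calculation). The successive quotients are $\coker(\check\alpha+n+1\colon \tilde{\delta}_{-n\alpha}\to\tilde{\delta}_{-n\alpha}\{-1\})$, not bare $\Delta_*\mc{O}(-n\alpha)$'s; after convolution these land in the subcategory generated by $t_{-n\alpha}(\mc{M},F_\bullet)$ and its filtration shifts, as required.

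The paper's route buys a clean separation of concerns: the existence and strictness of the comparison map, together with all filtration bookkeeping, come from the general Hodge-theoretic semi-continuity, while the identification of the graded pieces reduces to a single integer-crossing computation reused at every wall. Your direct approach would have to construct the map and track the Hodge/Tate shifts by hand from the explicit model; this is the obstacle you correctly flag, and it is precisely what the semi-continuity theorem is designed to circumvent.
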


In fact, we prove a more precise statement (Theorem \ref{thm:deformation relations}) relating the Hodge-filtered $\tilde{\mc{D}}\boxtimes \tilde{\mc{D}}$-modules $\tilde{\Delta}_{s_\alpha}^{(\lambda)}$ and $\tilde{\Delta}_{s_\alpha}^{(0)}$. For example, when $\lambda \in \mb{X}^*(H)$, Theorem \ref{thm:deformation relations} categorifies the Bernstein relation
\[ t_{s_\alpha\lambda} T_{s_\alpha} t_{-\lambda} = T_{s_\alpha} +  (u - 1) (t_{-\alpha} + \cdots + t_{-\langle \lambda, \check\alpha\rangle\alpha})\]
in the affine Hecke algebra after applying the identity $\mc{I}_w^{!(\lambda)} = t_{w\lambda} \mc{I}_w^! t_{-\lambda}$.

Finally, for very weakly unipotent modules, we are able to show that the error term $\mc{K}$ in Theorem \ref{thm:intro categorified hecke} has vanishing cohomology in all degrees and hence that $\mc{I}_w^{!H}$ commutes with global sections for such modules after all. This implies Theorem \ref{thm:intro strictness}, which in turn implies Theorem \ref{thm:intro CM}, and hence our main results on unipotent representations.

\subsection{Outline of the paper}

In \S\ref{sec:hodge generalities}, we briefly recall the basics of mixed Hodge modules and set up some general categorical machinery for talking about mixed Hodge module structures on certain non-holonomic $\mc{D}$-modules (such as free-monodromic standards), similar to \cite[Appendix]{bezrukavnikov-yun}. In \S\ref{sec:intertwining}, we recall the basics of Beilinson-Bernstein localization and apply the machinery of \S\ref{sec:hodge generalities} to the construction of intertwining functors for Hodge modules and filtered $\mc{D}$-modules on the flag variety. Here we also establish the main properties of the intertwining functors (e.g., Theorems \ref{thm:intro filtered intertwining} and \ref{thm:intro categorified hecke}). In \S\ref{sec:CM}, we introduce the notion of a very weakly unipotent module and apply the intertwining functor theory to prove vanishing theorems for their Hodge filtrations (Theorems \ref{thm:intro CM}, \ref{thm:intro strictness} and \ref{thm:intro partial vanishing}). Finally, in \S\ref{sec:unipotent}, we recall necessary aspects of the theory of unipotent ideals and representations and the Hodge-theoretic unitarity criterion of \cite{DV} and prove Theorems \ref{thm:intro complex unipotent}, \ref{thm:unitarityunipotentintro}, \ref{thm:unitaritygeneralintro}, and \ref{thm:unipotentquantizationintro}.

\subsection*{Acknowledgements}
We would like to thank Jeff Adams, Ivan Losev, Jia-Jun Ma, Kari Vilonen, David Vogan and Shilin Yu for helpful conversations. We especially thank Shilin Yu for explaining to us why Theorem \ref{thm:intro complex unipotent} follows from our other results, and for pointing out a mistake in our original proof that unipotent ideals are very weakly unipotent.

\section{Generalities on mixed Hodge modules and completion} \label{sec:hodge generalities}

In this section, we collect some technical results on mixed Hodge modules and their pro-completions, which we use in the study of intertwining functors for Hodge modules. We briefly recall some key aspects of the theory of mixed Hodge modules in \S\ref{subsec:twisted mhm}. In \S\ref{subsec:pro mhm}, we define a completion of the category of monodromic mixed Hodge modules on a torus bundle over a smooth variety, and a forgetful functor to a category of filtered modules on the base. In \S\ref{subsec:free-monodromic}, we consider the special case of free-monodromic local systems and classify Hodge structures on them. Finally, in \S\ref{subsec:semi-continuity}, we give refinement of the deformation and wall-crossing theory of \cite[\S 3]{DV} and apply this to extensions of free-monodromic local systems.

\subsection{Twisted and monodromic mixed Hodge modules} \label{subsec:twisted mhm}

The theory of mixed Hodge modules was first developed by Morihiko Saito in \cite{Saito1988, Saito1990}. We will use a version of the theory with complex coefficients, due to Sabbah and Schnell \cite{MHMproject}, which is better adapted for our purposes. We refer the reader to \cite[\S 2]{DV} for a short summary of this theory, including the extension to twisted and monodromic $\mc{D}$-modules. We will follow the notation and conventions therein, with the exception that we denote the Hodge filtration by $F_\bullet^H$ instead of $F_\bullet$ to distinguish it from other filtrations that arise in this paper. We briefly recall here the most important aspects of the theory.

Recall that a \emph{(complex) mixed Hodge structure} is a finite dimensional complex vector space $V$ equipped with finite increasing filtrations $W_\bullet V$ (the weight filtration), $F_\bullet^H V$ (the Hodge filtration), and $\bar{F}_\bullet^H V$ (the conjugate Hodge filtration) such that for all $w \in \mb{Z}$, we have
\[ \Gr^W_w V = \bigoplus_{p + q = w} (\Gr^W_w V)^{p, q} \quad \text{where} \quad (\Gr^W_w V)^{p, q} = F^H_{-p} \Gr^W_w V \cap \bar{F}^H_{-q} \Gr^W_w V.\]
A basic example is the \emph{Tate structure} $\mb{C}(1)$, defined to be the $1$-dimensional complex vector space $\mb{C}$ with $W_\bullet$ jumping in degree $-2$ and $F_\bullet^H = \bar{F}_\bullet^H$ jumping in degree $1$. The set of mixed Hodge structures forms a $\mb{C}$-linear abelian category $\mrm{MHS}$.

The theory of mixed Hodge modules defines an analogous abelian category $\mhm(X)$ for every smooth complex variety $X$, such that $\mhm(\mrm{pt}) = \mrm{MHS}$. A mixed Hodge module $\mc{M} \in \mhm(X)$ consists of a regular holonomic $\mc{D}_X$-module (which we also denote by $\mc{M})$ equipped with a weight filtration, Hodge filtration and conjugate Hodge filtration. The weight filtration is a finite filtration $W_\bullet\mc{M}$ by $\mc{D}_X$-submodules. The Hodge filtration is a good filtration $F_\bullet^H \mc{M}$ compatible with the filtration $F_\bullet \mc{D}_X$ by order of differential operator. The conjugate Hodge filtration (which we will not discuss in detail) is defined not on $\mc{M}$ but on an auxiliary `complex conjugate' $\mc{D}$-module---see, e.g., \cite[\S 2.1]{DV} for details. The quadruple $(\mc{M}, W_\bullet, F_\bullet^H, \bar{F}_\bullet^H)$ is required to satisfy a complicated set of additional conditions, which we do not recall here. Morally, the conditions amount to requiring that the specialization of a mixed Hodge module to any point should be a mixed Hodge structure.

The basic example of a mixed Hodge module is a polarizable variation of Hodge structure: this is a mixed Hodge module $\mc{M}$ such that the underlying $\mc{D}$-module is a vector bundle and $W_\bullet \mc{M}$ jumps at a single degree. As a special case, any unitary local system $\mc{V}$ on $X$ defines a mixed Hodge module with `trivial' Hodge structure
\[
\begin{gathered}
 W_w \mc{V} = \begin{cases} \mc{V}, & \text{if $w \geq \dim X$} \\ 0, & \text{otherwise,}\end{cases} \quad
 F_p^H\mc{V} = \bar{F}_p^H\mc{V}= \begin{cases} \mc{V}, &\text{if $p \geq 0$}, \\ 0, &\text{otherwise.}\end{cases}
\end{gathered}
\]
Note that for a variation of Hodge structure, the conjugate Hodge filtration can be described explicitly as a filtration on the smooth vector bundle underlying $\mc{V}$ by anti-holomorphic sub-bundles as we have done above. More generally, a mixed Hodge module in which the underlying $\mc{D}$-module is a vector bundle with possibly non-trivial weight filtration is the same thing as a graded-polarizable admissible variation of mixed Hodge structure; these can be described in a similarly explicit way.

The main point of mixed Hodge module theory is that, in addition to containing the basic objects above, the derived categories $\mrm{D}^b\mhm(X)$ are equipped with Grothendieck's six operations. We will use the conventions of \cite[Notation 2.2]{DV} for these functors and other sheaf operations. In practice, one constructs complicated examples of mixed Hodge modules by applying the six operations (especially direct image functors) to the basic examples coming from variations of Hodge structure.

An important consequence of the six operations is that, for $\mc{M}, \mc{N} \in \mhm(X)$, the complex vector space $\Hom_{\mc{D}_X}(\mc{M}, \mc{N})$ comes equipped with a canonical mixed Hodge structure, such that the evaluation map
\[ \Hom_{\mc{D}_X}(\mc{M}, \mc{N}) \otimes \mc{M} \to \mc{N} \]
is a morphism of mixed Hodge modules. In particular, any morphism $f \in F_p^H\Hom_{\mc{D}_X}(\mc{M}, \mc{N})$ (resp., $W_w\Hom_{\mc{D}_X}(\mc{M}, \mc{N})$) satisfies
\[ f(F_{p'}^H\mc{M}) \subset F_{p + p'}^H\mc{N} \quad \text{(resp.,} \;\; f(W_{w'}\mc{M}) \subset W_{w + w'} \mc{N} \; \text{).}\]

Given a torus $H$ and an $H$-torsor $\pi\colon \tilde X \to X$, a twisted version of the theory can be defined as follows. Consider the sheaf of rings on $X$
\[ \tilde{\mc{D}} = \tilde{\mc{D}}_X := \pi_{\bigcdot}(\mc{D}_{\tilde X})^H,\]
where $\pi_{\bigcdot}$ denotes sheaf-theoretic direct image. The center of $\tilde{\mc{D}}$ is a copy of the symmetric algebra $S(\mf{h})$, where $\mf{h} = \mrm{Lie}(H)$, generated by the image of the derivative map $\mf{h} \to \mc{T}_{\tilde X}$ defined by the $H$-action on $\tilde X$. The sheaf $\tilde{\mc{D}}$ is locally free over this center and satisfies
\[ \tilde{\mc{D}} \otimes_{S(\mf{h}), 0} \mb{C} = \mc{D}_X.\]
Thus, we may regard $\tilde{\mc{D}}$ as a flat deformation of $\mc{D}_X$ over $\mf{h}^*$. For $\lambda \in \mf{h}^*$, a quasi-coherent $\tilde{\mc{D}}$-module $\mc{M}$ is said to be \emph{$\lambda$-twisted} (resp., \emph{$\lambda$-monodromic}) if $h - \lambda(h)$ acts on $\mc{M}$ by $0$ (resp., nilpotently) for all $h \in \mf{h}$. We write
\[ \Mod_\lambda(\mc{D}_{\tilde{X}}),\, \Mod_{\widetilde{\lambda}}(\mc{D}_{\tilde X}) \subset \Mod(\tilde{\mc{D}}) \]
for the full subcategories of $\lambda$-twisted and $\lambda$-monodromic modules respectively. The notation is motivated by the fact that, for fixed $\lambda$, the pullback functor
\[ \pi^{\bigcdot} \colon \Mod(\tilde{\mc{D}}) \to \Mod(\mc{D}_{\tilde X}) \]
is fully faithful when restricted to the subcategories $\Mod_\lambda(\mc{D}_{\tilde X})$ and $\Mod_{\widetilde{\lambda}}(\mc{D}_{\tilde X})$. We will often regard these as subcategories of $\Mod(\mc{D}_{\tilde X})$ in this way.

Similarly, we say that a mixed Hodge module $\mc{M}$ on $\tilde{X}$ is $\lambda$-twisted (resp., $\lambda$-monodromic) if its underlying $\mc{D}_{\tilde{X}}$-module lies in $\Mod_\lambda(\mc{D}_{\tilde X})$ (resp., $\Mod_{\widetilde{\lambda}}(\mc{D}_{\tilde X})$). We write $\mhm_\lambda(\tilde X)$ and $\mhm_{\widetilde{\lambda}}(\tilde X)$ for the corresponding full subcategories of $\mhm(\tilde X)$. Note that these categories are zero unless $\lambda \in \mf{h}^*_\mb{R} := \mb{X}^*(H) \otimes \mb{R}$. By \cite[Proposition 2.9]{DV}, for example, if $\mc{M} \in \mhm_{\widetilde{\lambda}}(\tilde X)$, then the Hodge and weight filtrations on $\tilde{X}$ descend to Hodge and weight filtrations on the corresponding $\tilde{\mc{D}}$-module on $X$, which we also denote by $F_\bullet^H$ and $W_\bullet$. In particular, we have a `forgetful' functor
\begin{align*}
 \mhm_{\widetilde{\lambda}}(\tilde X) &\to \coh(\tilde{\mc{D}}, F_\bullet) \\
 \mc{M} &\mapsto (\mc{M}, F_\bullet^H),
\end{align*}
where $\coh(\tilde{\mc{D}}, F_\bullet)$ is the category of coherent $\tilde{\mc{D}}$-modules equipped with a good filtration compatible with the order filtration $F_\bullet\tilde{\mc{D}}$. (We will sometimes call the latter \emph{coherent filtered $\tilde{\mc{D}}$-modules}.)

Finally, we remark that if an algebraic group $G$ acts compatibly on $X$ and $\tilde{X}$, then one can define $G$-equivariant monodromic mixed Hodge modules and their equivariant derived category $\mrm{D}^b_G\mhm_{\widetilde{\lambda}}(\tilde{X})$ in the usual way.

\subsection{Pro-mixed Hodge modules and filtered $\tilde{\mc{D}}$-modules} \label{subsec:pro mhm}

The above provides a theory of mixed Hodge structures on $\tilde{\mc{D}}$-modules on which the center acts with a single generalized eigenvalue. To construct our intertwining functors, we will also need to incorporate Hodge structures on $\tilde{\mc{D}}$-modules on which the center acts freely. We achieve this by adding certain pro-objects to the theory using a version of the machinery in \cite[Appendix]{bezrukavnikov-yun}. 

In the definition below, we recall that, for any category $\mc{C}$, the category $\pro \mc{C}$ of pro-objects is the full subcategory of $\mrm{Fun}(\mc{C}, \mrm{Set})^{\mathit{op}}$ generated under countable inverse limits by the image of the Yoneda embedding. More concretely, every pro-object $A$ is of the form $A = \varprojlim_n A_n$ for some $A_n \in \mc{C}$, and morphisms are given by
\[ \Hom_{\pro \mc{C}}(\varprojlim_n A_n, \varprojlim_m B_m) = \varprojlim_m \varinjlim_n \Hom_\mc{C}(A_n, B_m), \quad \text{for $A_n, B_m \in \mc{C}$}.\]
We also recall from \cite[Proposition 2.9]{DV}, for example, that for $\mc{M} \in \mhm_{\widetilde{\lambda}}(\tilde X)$ the nilpotent action of $\mf{h}$ on $\mc{M}$ defined by
\begin{align*}
\mf{h} &\to S(\mf{h}) \subset \tilde{\mc{D}} \\
h &\mapsto h - \lambda(h)
\end{align*}
determines a morphism of mixed Hodge modules $\mf{h}(1) \otimes \mc{M} \to \mc{M}$, where $\mf{h}(1) = \mf{h} \otimes \mb{C}(1)$.

\begin{defn} \label{defn:good pro-object}
Let $X$ be a smooth variety, $H$ a torus, $\pi \colon \tilde{X} \to X$ an $H$-torsor and $G$ an algebraic group acting compatibly on $X$ and $\tilde X$. For $\lambda \in \mf{h}^*_\mb{R}$, we say that a pro-object
\[ \mc{M} \in \pro \mrm{D}^b_G \mhm_{\widetilde{\lambda}}(\tilde X)\]
in the $G$-equivariant derived category is \emph{good} if the pro-object
\[ \mc{M} \overset{\mrm{L}}\otimes_{S(\mf{h}(1))} \mb{C} \in \pro \mrm{D}^b_G\mhm_{\widetilde{\lambda}}(\tilde X) \]
is constant (i.e., lies in the essential image of the Yoneda embedding). We write
\[ \widehat{\mrm{D}}^b_G\mhm_{\widetilde{\lambda}}(\tilde X)  \subset \pro \mrm{D}^b_G\mhm_{\widetilde{\lambda}}(\tilde X) \]
for the full subcategory of good pro-objects, and
\[ \widehat{\mhm}_{\widetilde{\lambda}}^G(\tilde X) = \widehat{\mrm{D}}^b_G\mhm_{\widetilde{\lambda}}(\tilde X) \cap \pro \mhm^G_{\widetilde{\lambda}}(\tilde X).\]
\end{defn}

\begin{prop}
The category $\widehat{\mrm{D}}^b_G\mhm_{\widetilde{\lambda}}(\tilde X)$ is triangulated, with the shift functor induced from the shift on $\mrm{D}^b_G\mhm_{\widetilde{\lambda}}(\tilde X)$ and exact triangles given by inverse limits of exact triangles in $\mrm{D}^b_G\mhm_{\widetilde{\lambda}}(\tilde X)$. Moreover, the subcategory $\widehat{\mhm}_{\widetilde{\lambda}}^G(\tilde X) \subset \widehat{\mrm{D}}^b_G\mhm_{\widetilde{\lambda}}(\tilde X)$ is the heart of a bounded t-structure.
\end{prop}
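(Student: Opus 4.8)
The plan is to reduce everything to general nonsense about pro-categories of triangulated categories with a $t$-structure, applied to the specific feature that makes our situation tractable: the ``completion'' variable $S(\mathfrak{h})$ acts nilpotently on each object of $\mathrm{D}^b_K\mhm_{\widetilde{\lambda}}(\tilde X)$, so that a pro-object is \emph{good} precisely when its derived reduction mod $\mathfrak{h}$ is constant. First I would establish that $\widehat{\mathrm{D}}^b_K\mhm_{\widetilde{\lambda}}(\tilde X)$ is closed under the two operations needed to make it triangulated inside $\pro\,\mathrm{D}^b_K\mhm_{\widetilde{\lambda}}(\tilde X)$: the shift (immediate, since $[1]$ commutes with $-\overset{\mathrm{L}}\otimes_{S(\mathfrak{h}(1))}\mathbb{C}$ and with inverse limits), and cones of morphisms between good pro-objects. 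For the latter the key point is that $-\overset{\mathrm{L}}\otimes_{S(\mathfrak{h}(1))}\mathbb{C}$ is itself exact (it is a derived tensor product, hence sends exact triangles to exact triangles), and a pro-object built as an inverse limit of a levelwise-exact tower of triangles has the property that its reduction is the inverse limit of the reductions; using the Mittag-Leffler condition coming from the pro-structure one checks this inverse limit of triangles is again a (constant, if the outer two terms are constant) object of $\mathrm{D}^b_K\mhm_{\widetilde{\lambda}}(\tilde X)$ rather than a genuine pro-object. The octahedral axiom and the other axioms of a triangulated category are then inherited from $\mathrm{D}^b_K\mhm_{\widetilde{\lambda}}(\tilde X)$ levelwise. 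This is the part of the argument that is ``routine but fiddly'', and it is the main obstacle: one must be careful that cones in a pro-category are only well-defined up to the usual non-functoriality, and that the tower representing a cone can be arranged to have good (constant reduction) outer terms; the cleanest route is to quote or adapt the corresponding lemmas from \cite[Appendix]{bezrukavnikov-yun}, where exactly this formalism of ``free-monodromic completion'' of a triangulated category is set up, and check that our hypotheses (an $H$-torsor with $\mathfrak{h}$ acting nilpotently, the category $\mathrm{D}^b_K\mhm_{\widetilde{\lambda}}$ being the derived category of an abelian category with enough structure) match theirs.

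Next I would identify the subcategory $\widehat{\mhm}^K_{\widetilde{\lambda}}(\tilde X)$ as the heart of a bounded $t$-structure. The natural candidate is to transport the standard $t$-structure on $\mathrm{D}^b_K\mhm_{\widetilde{\lambda}}(\tilde X)$ through the pro-completion: declare $\mc{K}=\varprojlim_n\mc{K}_n$ to be connective (resp. coconnective) if it admits a representation by a tower with each $\mc{K}_n$ connective (resp. coconnective) and transition maps strict for the $t$-structure. One then shows the truncation functors $\tau_{\leq 0},\tau_{\geq 1}$ extend to $\widehat{\mathrm{D}}^b_K\mhm_{\widetilde{\lambda}}(\tilde X)$ by applying them levelwise: the point requiring attention is that levelwise truncation preserves goodness, which follows because $\mathbb{C}\otimes^{\mathrm{L}}_{S(\mathfrak{h}(1))}-$ has cohomological amplitude bounded by $\dim H$, so the reduction of a levelwise-connective pro-object is connective only up to a fixed shift; to get an honest $t$-structure one must therefore check that the reduction of a good pro-object in the putative heart really is concentrated in degree $0$ — here one uses that \cite[Proposition 2.9]{DV} gives $\mathbb{C}\otimes^{\mathrm{L}}_{S(\mathfrak{h}(1))}\mc{M}$ for $\mc{M}\in\mhm_{\widetilde{\lambda}}$ as an honest (non-derived, up to the Koszul correction) object whose cohomology is computed by the Koszul complex of the nilpotent $\mathfrak{h}$-action, and strictness/goodness forces this Koszul complex to be a resolution concentrated appropriately. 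Boundedness of the $t$-structure is then the statement that every good pro-object has cohomology in only finitely many degrees, which holds because the reduction is constant and bounded and goodness lets one ``lift'' this boundedness back up the tower.

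Finally I would assemble the two halves: the heart is an abelian category (kernels and cokernels computed via the truncations, as always), it sits inside $\widehat{\mathrm{D}}^b_K\mhm_{\widetilde{\lambda}}(\tilde X)$ as stated, and $\widehat{\mhm}^K_{\widetilde{\lambda}}(\tilde X)=\widehat{\mathrm{D}}^b_K\mhm_{\widetilde{\lambda}}(\tilde X)\cap\pro\,\mhm^K_{\widetilde{\lambda}}(\tilde X)$ because a good pro-object lies in the heart exactly when each (suitably chosen) level does, which is exactly the condition of lying in $\pro\,\mhm^K_{\widetilde{\lambda}}$. I expect the genuinely new content over \cite{bezrukavnikov-yun} to be small — essentially the bookkeeping needed because we work with mixed Hodge modules (so the ``weights'' and the Tate twist $\mathfrak{h}(1)$ enter) rather than with $\ell$-adic or Betti sheaves — so the writeup can be kept short by citing their appendix for the abstract pro-completion lemmas and only spelling out the compatibility of the reduction functor $-\overset{\mathrm{L}}\otimes_{S(\mathfrak{h}(1))}\mathbb{C}$ with the $t$-structure, which is where Hodge-theoretic input (via \cite[Proposition 2.9]{DV}) is actually used.
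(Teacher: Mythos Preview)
Your treatment of the triangulated structure matches the paper's: both cite \cite[Appendix A]{bezrukavnikov-yun} for the abstract pro-completion machinery, and your sketch of closure under cones is a reasonable unpacking of that.

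For the $t$-structure, however, the paper takes a shortcut you do not. It explicitly restricts to the case where $K$ acts on $X$ with finitely many orbits and on $\tilde X$ with unipotent stabilizers---the only case used in the sequel---and in that setting invokes \cite[Lemma A.6.1]{bezrukavnikov-yun} directly, omitting the general case entirely. Your attempt at a general argument has a gap at the step ``levelwise truncation preserves goodness.'' Bounded cohomological amplitude of $-\overset{\mathrm{L}}\otimes_{S(\mathfrak{h}(1))}\mathbb{C}$ does not by itself give this: if $\mc{K}=\varprojlim_n\mc{K}_n$ is good, the pro-system $\tau_{\le 0}\mc{K}_n \overset{\mathrm{L}}\otimes_{S(\mathfrak{h}(1))}\mathbb{C}$ need not be constant just because $\mc{K}_n \overset{\mathrm{L}}\otimes_{S(\mathfrak{h}(1))}\mathbb{C}$ is, since truncation and reduction do not commute and the discrepancy can shift between degrees indefinitely. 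The argument of \cite[Lemma A.6.1]{bezrukavnikov-yun} uses the finite orbit stratification in an essential way (a d\'evissage through standard and costandard objects, each of which is manifestly good and concentrated in a single degree) to force this stabilization; without that geometric input your pro-truncation argument is incomplete. Your appeal to \cite[Proposition 2.9]{DV} does not fill the gap either: that result concerns descent of the Hodge and weight filtrations along $\pi\colon\tilde X\to X$, not any compatibility of the Koszul reduction with the perverse $t$-structure.
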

\begin{proof}
The techniques of \cite[Appendix A]{bezrukavnikov-yun} show that $\widehat{\mrm{D}}^b_G\mhm_{\widetilde{\lambda}}(\tilde X)$ is triangulated. To prove the statement about the t-structure, consider the case where $G$ acts on $X$ with finitely many orbits and on $\tilde{X}$ with unipotent stabilizers: in this setting, the argument of \cite[Lemma A.6.1]{bezrukavnikov-yun} applies. We will only consider this situation, so we omit the proof of the general case.
\end{proof}

As shown in \cite{DV}, the good pro-objects have a theory of underlying filtered $\tilde{\mc{D}}$-module: if $\mc{M} \in \widehat{\mhm}_{\widetilde{\lambda}}^G(\mc{D})$, then the pro-objects
\[ F_p^H \mc{M} \in \pro \coh^G(X) \]
are constant for all $p$, i.e., are actually coherent sheaves on $X$. The filtered $\tilde{\mc{D}}$-module underlying $\mc{M}$ is
\[ (\mc{M}, F_\bullet^H) := \left(\bigcup_p F_p^H \mc{M}, F_\bullet^H \mc{M}\right).\]

\begin{prop} \label{prop:forgetful coherence}
The filtered $\tilde{\mc{D}}$-module $(\mc{M}, F_\bullet^H)$ is coherent.
\end{prop}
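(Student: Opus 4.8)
The plan is to reduce the coherence of $(\mc{K}, F_\bullet^H)$ to coherence statements that have already been established for honest (non-pro) objects, using the defining ``goodness'' property to control the behaviour as we quotient by the central variable. Recall that $\mc{K} = \varprojlim_n \mc{K}_n$ with $\mc{K}_n \in \mhm_{\widetilde{\lambda}}^K(\tilde X)$, that each $F_p^H\mc{K} = \varprojlim_n F_p^H\mc{K}_n$ is by hypothesis a constant pro-object, hence a genuine coherent $\mc{O}_X$-module, and that $\mc{K} = \bigcup_p F_p^H\mc{K}$ as a $\tilde{\mc{D}}$-module. So the content of the proposition is the standard pair of finiteness conditions defining a coherent filtered $\tilde{\mc{D}}$-module: (a) $F_p^H\mc{K}$ is $\mc{O}_X$-coherent for every $p$ (already true by goodness), and (b) the filtration is \emph{good}, i.e.\ $F_i\tilde{\mc{D}}\cdot F_p^H\mc{K} = F_{p+i}^H\mc{K}$ for $p \gg 0$ locally on $X$, or equivalently the Rees module $\bigoplus_p F_p^H\mc{K}$ is coherent over the Rees ring $\bigoplus_i F_i\tilde{\mc{D}}$.

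First I would set up the comparison with $\mc{K} \overset{\mrm{L}}\otimes_{S(\mf{h}(1))}\mb{C}$. Since $S(\mf{h})$ is central in $\tilde{\mc{D}}$ and polynomial in $\dim H$ variables, one may compute this derived tensor product by a Koszul complex; goodness says the result is a constant pro-object, i.e.\ lies in $\mrm{D}^b_K\mhm_{\widetilde{\lambda}}(\tilde X)$. In particular $\mc{K}\otimes_{S(\mf{h})}\mb{C}$ (the $H^0$, or the top Koszul cohomology, with the appropriate shift) is an honest mixed Hodge module on $\tilde X$, and by the Saito theory (as recalled in \S\ref{subsec:twisted mhm} and \cite[Prop.\ 2.9]{DV}) its underlying filtered $\mc{D}_X$-module $(\mc{K}\otimes_{S(\mf{h})}\mb{C}, F_\bullet^H)$ is coherent with a good Hodge filtration. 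The key observation is then compatibility of filtrations: $F_p^H(\mc{K}\otimes_{S(\mf{h})}\mb{C})$ is a quotient (up to the Koszul differentials, which raise filtration degree in a controlled way) of $F_p^H\mc{K}\otimes_{S(\mf{h})}\mb{C} = F_p^H\mc{K}/\mf{h}\cdot F_{p-1}^H\mc{K}$, by strictness of the Hodge filtration under morphisms of mixed Hodge modules (the evaluation-map compatibility recalled in \S\ref{subsec:twisted mhm}).

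The heart of the argument is then a Nakayama/graded-Nakayama step: working Rees-theoretically, let $R = \bigoplus_i F_i\tilde{\mc{D}}$, let $\bar R = R/\mf{h}R = \bigoplus_i F_i\mc{D}_X$, and let $\mc{R}(\mc{K}) = \bigoplus_p F_p^H\mc{K}$, a graded module over the graded ring $R$ (in the second grading; the first Rees grading is auxiliary). Each graded piece $F_p^H\mc{K}$ is $\mc{O}_X$-coherent, and $\mc{R}(\mc{K})/\mf{h}\,\mc{R}(\mc{K})$ surjects onto $\mc{R}(\mc{K}\otimes_{S(\mf{h})}\mb{C})$, which is coherent over $\bar R$ with good filtration by Saito's theorem. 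Since $\mf{h}$ acts on $\mc{K}$ locally nilpotently (monodromicity!), a graded Nakayama lemma — applied degreewise in the $F^H$-grading, using that the filtration is bounded below and that the $\mf{h}$-action is nilpotent so that $\mf{h}^N$ kills any given $F_p^H\mc{K}$ locally — lets us lift a finite set of $\bar R$-generators to $R$-generators of $\mc{R}(\mc{K})$ locally on $X$. This simultaneously gives $\mc{O}_X$-coherence of every $F_p^H\mc{K}$ (already known) and goodness of the filtration, i.e.\ coherence of $(\mc{K}, F_\bullet^H)$ over $\tilde{\mc{D}}$.

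The main obstacle I anticipate is making the graded-Nakayama step genuinely rigorous in this pro-setting: one must check that ``constant pro-object after $\overset{\mrm{L}}\otimes_{S(\mf{h}(1))}\mb{C}$'' really does force the generation statement, and not merely some weaker degreewise finiteness. The cleanest route is probably to argue as in \cite[Appendix A]{bezrukavnikov-yun}: choose a presentation $\mc{K} = \varprojlim_n \mc{K}_n$ in which the transition maps $\mc{K}_{n+1}\to \mc{K}_n$ are surjections with kernel killed by $\mf{h}^{n}$ (possible since $\mc{K}$ is good — this is exactly the ``free-monodromic completion'' picture), note that each $\mc{K}_n$ is an honest mixed Hodge module so $(\mc{K}_n, F_\bullet^H)$ is coherent filtered by Saito, and then show the good Hodge filtrations on the $\mc{K}_n$ are uniformly good (the relevant bound $p_0$ beyond which $F_i\tilde{\mc{D}}\cdot F_p^H = F_{p+i}^H$ can be taken independent of $n$, because modulo $\mf{h}$ they all agree and the $\mf{h}$-adic corrections only shift by finitely much). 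Taking the inverse limit then preserves the good-filtration bound, giving coherence of $(\mc{K}, F_\bullet^H)$. I would present the uniformity of the good-filtration bound as the one lemma requiring real care; everything else is bookkeeping with Rees constructions and the strictness properties of the Hodge filtration already recorded in \S\ref{subsec:twisted mhm}.
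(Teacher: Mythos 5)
Your overall strategy — both filtered pieces coherent (given by goodness), then goodness of the filtration via reduction mod $\mf{h}$ — is sound, and the ``cleanest route'' in your last paragraph is close to a correct proof. But one claim at the center of your first argument is false, and it is exactly the place you flagged as needing care: the assertion that $\mf{h}$ acts locally nilpotently on $(\mc{K}, F_\bullet^H) = \bigcup_p F_p^H\mc{K}$, so that $\mf{h}^N$ kills $F_p^H\mc{K}$. Monodromicity gives nilpotence of $\mf{h}$ on each term $\mc{K}_n$, but the underlying filtered module $\bigcup_p F_p^H\mc{K}$ is the \emph{non-completed} union, and there $\mf{h}$ typically acts injectively. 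In the basic example of $\widehat{R}_x$ (take $\mc{K} = \widetilde{\mc{O}}_X(\lambda)$ with $X$ a point): the constant values $F_p^H\mc{K}$ are the degree-$\leq p$ polynomials in $R_x = S(\mf{h}/\mf{h}_x)$, the union is $R_x$ itself, and the action of $\mf{h}$ is multiplication by linear forms — no power of it vanishes. So the Nakayama step as you state it cannot be run on nilpotence. What would close the degree-wise induction on $\bigcup_p F_p^H\mc{K}$ directly is instead the \emph{boundedness below} of $F_\bullet^H\mc{K}$, which you invoke but then displace with the (false) nilpotence claim.

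Your ``cleanest route'' is the right repair, but ``the $\mf{h}$-adic corrections only shift by finitely much'' is not a justification — it is the content that needs proving. The correct reason the good-filtration bound $p_0$ is uniform in $n$ is a graded Nakayama argument run at each \emph{finite} level: on $\mc{K}_n$ the action of $\mf{h}$ really is nilpotent, and the quotients $\mc{K}_n/\mf{h}\mc{K}_n$ stabilize by goodness, so if $F_i\tilde{\mc{D}}\cdot F_{p_0}^H\mc{K}_n \subsetneq F_{p_0+i}^H\mc{K}_n$ for some $n$ and $i$, the discrepancy module $N$ is a nonzero graded module with $\bar{\mf{h}}$ nilpotent and $\bar{\mf{h}}N = N$ (the latter because $N/\bar{\mf{h}}N$ vanishes by the mod-$\mf{h}$ statement, using strictness of $\mc{K}_n \to \mc{K}_n/\mf{h}\mc{K}_n$), a contradiction. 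With the uniform bound and the constancy of each $F_p^H\mc{K}$, the good-filtration identity then passes to the limit as you say. So the skeleton is right, but you apply nilpotence to the wrong object (the limit instead of the terms), and the uniformity step needs the actual Nakayama argument you elided.
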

\begin{proof}
This was proved in a special case in \cite[Proposition 6.15]{DV}; the exact same proof applies in general.
\end{proof}

We can extend this to the level of derived categories as follows. Let us write
\[ \tilde{\mu} \colon T^*\tilde{X}/H \to \mf{g}^*\]
for the moment map of the $G$-action on $\tilde{X}$ and let
\[ \mrm{D}^b_{\tilde{\mu}^{-1}(0)} \coh^G(\tilde{\mc{D}}, F_\bullet) \subset \mrm{D}^b\coh^G(\tilde{\mc{D}}, F_\bullet) \]
denote the full subcategory of complexes $(\mc{M}, F_\bullet)$ of weakly $G$-equivariant filtered $\tilde{\mc{D}}$-modules such that the cohomology sheaves of $\Gr^F\mc{M}$ are supported set-theoretically on $\tilde{\mu}^{-1}(0)$. We also write
\[ \mrm{D}^b_{\tilde{\mu}^{-1}(0)} \coh^G_{\widetilde{\lambda}} (\tilde{\mc{D}}, F_\bullet)\subset \mrm{D}^b_{\tilde{\mu}^{-1}(0)} \coh^G(\tilde{\mc{D}}, F_\bullet)\]
for the full subcategory on which $h - \lambda(h)$ acts nilpotently for all $h \in \mf{h}$. Then the filtered complex underlying any object in $\mrm{D}^b_G\mhm_{\widetilde{\lambda}}(\tilde X)$ lies in $\mrm{D}^b_{\widetilde{\mu}^{-1}(0)} \coh_{\widetilde{\lambda}}^G(\tilde{\mc{D}}, F_\bullet)$. Consider the \emph{completion functor}
\begin{equation} \label{eq:pro forgetful functor 1}
\begin{aligned}
\mrm{D}^b_{\tilde{\mu}^{-1}(0)}\coh^G(\tilde{\mc{D}}, F_\bullet) &\to \pro \mrm{D}^b_{\tilde{\mu}^{-1}(0)} \coh^G_{\widetilde{\lambda}}(\tilde{\mc{D}}, F_\bullet) \\
\mc{M} &\mapsto \widehat{\mc{M}} := \varprojlim_{n} S(\mf{h})_{\lambda, n} \overset{\mrm{L}}\otimes_{S(\mf{h})} \mc{M},
\end{aligned}
\end{equation}
where $S(\mf{h})_{\lambda, n}$ is the filtered quotient $S(\mf{h})/I_\lambda^n$, where
\[ I_\lambda = \ker(S(\mf{h}) \xrightarrow{\lambda} \mb{C}).\]

\begin{prop} \label{prop:pro forgetful functor}
The completion functor \eqref{eq:pro forgetful functor 1} is fully faithful, and the forgetful functor
\begin{equation} \label{eq:pro forgetful functor 4}
\widehat{\mrm{D}}^b_G \mhm_{\widetilde{\lambda}}(\tilde X) \to \pro \mrm{D}^b_{\tilde{\mu}^{-1}(0)} \coh^G(\tilde{\mc{D}}, F_\bullet)_{\widetilde{\lambda}}
\end{equation}
factors through its essential image. Hence, we have a well-defined forgetful functor
\[ \widehat{\mrm{D}}^b_G \mhm_{\widetilde{\lambda}}(\tilde X) \to \mrm{D}^b_{\tilde{\mu}^{-1}(0)}\coh^G(\tilde{\mc{D}}, F_\bullet).\]
\end{prop}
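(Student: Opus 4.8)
The plan is to reduce the statement to two assertions: (a) the completion functor \eqref{eq:pro forgetful functor 1} is fully faithful, and (b) the forgetful functor \eqref{eq:pro forgetful functor 4} carries $\widehat{\mrm{D}}^b_K\mhm_{\widetilde{\lambda}}(\tilde X)$ into the essential image of \eqref{eq:pro forgetful functor 1}. Granting these, the last assertion is immediate: one composes \eqref{eq:pro forgetful functor 4} with a quasi-inverse of \eqref{eq:pro forgetful functor 1} on its essential image to obtain the desired functor $\widehat{\mrm{D}}^b_K\mhm_{\widetilde{\lambda}}(\tilde X) \to \mrm{D}^b_{\tilde{\mu}^{-1}(0)}\coh^K(\tilde{\mc{D}}, F_\bullet)$. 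Both (a) and (b) are formal consequences of the finiteness built into the notion of a good pro-object (Definition \ref{defn:good pro-object}), and follow the template of \cite[Appendix A]{bezrukavnikov-yun}; the only new bookkeeping is that of the good filtration $F_\bullet^H$ and the $K$-equivariance, which is routine.

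For (a), I would fix $\mc{K}, \mc{L}$ in $\mrm{D}^b_{\tilde{\mu}^{-1}(0)}\coh^K(\tilde{\mc{D}}, F_\bullet)$, write $\mc{K}_n = S(\mf{h})_{\lambda, n}\overset{\mrm{L}}\otimes_{S(\mf{h})}\mc{K}$ and similarly $\mc{L}_m$, so that by the formula for morphisms in a pro-category $\Hom_{\pro}(\widehat{\mc{K}}, \widehat{\mc{L}}) = \varprojlim_m \varinjlim_n \Hom(\mc{K}_n, \mc{L}_m)$. The key observation is that each $\mc{L}_m$, being a bounded complex of coherent $\tilde{\mc{D}}$-modules on which $\mf{h}$ acts with generalized eigenvalue $\lambda$, is annihilated by $I_\lambda^N$ for some $N$, and that by a filtered Artin--Rees argument $N$ may be taken to grow linearly in $m$ but to be otherwise independent of $\mc{L}$. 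For $n \ge N$ the derived base-change/restriction adjunction along $S(\mf{h}) \to S(\mf{h})_{\lambda, n}$ identifies $\Hom(\mc{K}_n, \mc{L}_m)$ with $\Hom(\mc{K}, \mc{L}_m)$, so the inner colimit stabilizes; one then identifies $\varprojlim_m \Hom(\mc{K}, \mc{L}_m)$ with $\Hom(\mc{K}, \mc{L})$ using that $\mc{K}$ has bounded coherent cohomology, so that the $I_\lambda$-adic tower of $\Hom$-groups is Mittag--Leffler with the expected limit. This is precisely the computation carried out in \cite[Appendix A]{bezrukavnikov-yun}, whose argument transcribes with only cosmetic changes.

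For (b), I would take $\mc{K} = \varprojlim_n \mc{K}_n \in \widehat{\mrm{D}}^b_K\mhm_{\widetilde{\lambda}}(\tilde X)$ good and let $\varprojlim_n(\mc{K}_n, F_\bullet^H)$ be its image under \eqref{eq:pro forgetful functor 4}. Since $\mc{K}\overset{\mrm{L}}\otimes_{S(\mf{h}(1))}\mb{C}$ is constant, a derived Nakayama argument for pro-objects over the central $S(\mf{h})$ (as in \cite[Appendix A]{bezrukavnikov-yun}, compare the proof of \cite[Proposition 6.15]{DV}) shows that the pro-system $\{\mc{K}_n\}$ is, up to isomorphism, the $I_\lambda$-adic tower $\{S(\mf{h})_{\lambda, n}\overset{\mrm{L}}\otimes_{S(\mf{h})}\mc{N}\}$ of a single honest object $\mc{N}$; passing to underlying filtered $\tilde{\mc{D}}$-complexes, which commutes with these base changes, then identifies $\varprojlim_n(\mc{K}_n, F_\bullet^H)$ with $\widehat{\mc{N}'}$, where $\mc{N}'$ is the filtered $\tilde{\mc{D}}$-complex underlying $\mc{N}$. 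Coherence of $\mc{N}'$ and the support condition on $\Gr^F\mc{N}'$ — i.e.\ membership in $\mrm{D}^b_{\tilde{\mu}^{-1}(0)}\coh^K(\tilde{\mc{D}}, F_\bullet)$ — are inherited from the monodromic complexes $\mc{K}_n$ via the (derived enhancement of the) content of Proposition \ref{prop:forgetful coherence}. I expect the main obstacle to be exactly this derived-Nakayama step: one must show that a good pro-object genuinely arises as the $I_\lambda$-adic completion of a single honest \emph{coherent} object, rather than of a pro-object or of something with infinitely generated cohomology, and here the goodness hypothesis together with the boundedness and coherence of the $\mc{K}_n$ are all used essentially, with some care required about derived versus classical $I_\lambda$-adic completion and about the interaction between $F_\bullet^H$ and the inverse limit over $n$.
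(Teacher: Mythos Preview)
Your treatment of (a) is correct in outline and close to the paper's, though the paper makes the limit computation more concrete than a generic Mittag--Leffler/Artin--Rees appeal: it resolves $\mc{K}$ by filtered free modules $\tilde{\mc{D}}\otimes_{\mc{O}_X}\mc{F}\{p\}[-q]$, for which $\Hom$ into any target $\mc{P}$ is $\mrm{Ext}^q_{\mc{O}_X}(\mc{F}, F_p\mc{P})$, and then uses that the pro-object $\varprojlim_m F_p(S(\mf{h})_{\lambda,m}\overset{\mrm{L}}\otimes_{S(\mf{h})} \mc{L})$ is constant with value $F_p\mc{L}$. This filtration-wise stabilization is where the good filtration enters essentially; your sketch does not isolate it.

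Your approach to (b) has a genuine gap. You want a derived Nakayama argument to produce an ``honest object $\mc{N}$'' whose $I_\lambda$-adic tower recovers the given pro-system, but you never say in which category $\mc{N}$ lives. It cannot be a mixed Hodge module: monodromic Hodge modules are holonomic, so the central $S(\mf{h})$-action on them is always nilpotent, and there is no intermediate abelian category of ``free-monodromic Hodge modules'' to land in. The paper avoids this by working asymmetrically. It only seeks an honest object on the \emph{target} side, in $\mrm{D}^b_{\tilde{\mu}^{-1}(0)}\coh^K(\tilde{\mc{D}}, F_\bullet)$, and constructs it explicitly for $\mc{K}$ in the heart $\widehat{\mhm}^K_{\widetilde{\lambda}}(\tilde X)$: the filtered module $(\bigcup_p F_p^H\mc{K},\, F_\bullet^H)$ is coherent by Proposition~\ref{prop:forgetful coherence}, and its completion is visibly the image of $\mc{K}$. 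To pass from the heart to the whole derived category, the paper first assumes $X$ is \emph{proper}, so that the Hom spaces in $\mrm{D}^b_{\tilde{\mu}^{-1}(0)}\coh^K_{\widetilde{\lambda}}(\tilde{\mc{D}}, F_\bullet)$ are finite-dimensional and \cite[Theorem A.2.2]{bezrukavnikov-yun} applies; this makes the essential image of \eqref{eq:pro forgetful functor 1} a triangulated subcategory, and since \eqref{eq:pro forgetful functor 4} sends exact triangles to limits of exact triangles, membership in that image propagates from the heart to arbitrary complexes. For non-proper $X$ one chooses a smooth $K$-equivariant compactification $j\colon X\hookrightarrow Y$ and deduces the statement from the proper case via a diagram chase with $j_*$ and $j^*$. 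Your proposal addresses neither the reduction to the heart nor the properness/compactification step, and a direct derived-Nakayama reconstruction in $\mrm{D}^b_{\tilde{\mu}^{-1}(0)}\coh^K(\tilde{\mc{D}}, F_\bullet)$ of a complex from its truncations, bypassing the heart, would itself require a nontrivial argument that you have not supplied.
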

\begin{proof}
Let us first show that the completion functor is fully faithful. To see this, suppose that $\mc{M}_1, \mc{M}_2 \in \mrm{D}^b_{\tilde{\mu}^{-1}(0)}\coh^G(\tilde{\mc{D}}, F_\bullet)$; we need to show that
\begin{equation} \label{eq:pro forgetful functor 3}
\Hom(\mc{M}_1, \mc{M}_2) = \varprojlim_m \varinjlim_n \Hom(S(\mf{h})_{\lambda, n} \overset{\mrm{L}}\otimes_{S(\mf{h})} \mc{M}_1, S(\mf{h})_{\lambda, m} \overset{\mrm{L}}\otimes_{S(\mf{h})} \mc{M}_2).
\end{equation}
For fixed $m$, we have
\[ \Hom(\mc{M}_1, S(\mf{h})_{\lambda, m} \overset{\mrm{L}}\otimes_{S(\mf{h})} \mc{M}_2) =\varinjlim_{n}\Hom(S(\mf{h})_{\lambda, n} \overset{\mrm{L}}\otimes_{S(\mf{h})} \mc{M}_1 , S(\mf{h})_{\lambda, m} \overset{\mrm{L}}\otimes_{S(\mf{h})} \mc{M}_2 );\]
this follows formally from the fact that $S(\mf{h})$ acts with generalized eigenvalue $\lambda$ on the target. Now, we can resolve $(\mc{M}_1, F_\bullet)$ by a bounded complex whose terms are finite direct sums of the form $\tilde{\mc{D}}\otimes_{\mc{O}_X}\mc{F}\{p\}[-q]$ for $p, q \in \mb{Z}$ and $\mc{F}$ a $G$-equivariant coherent sheaf on $X$. We have, for any $\mc{P}$,
\begin{equation} \label{eq:pro forgetful functor 2}
\Hom_{(\tilde{\mc{D}}, F_\bullet)}(\tilde{\mc{D}} \otimes_{\mc{O}_X}\mc{F}\{p\}[-q], \mc{P}) = \mrm{Ext}^q_{\mc{O}_X}(\mc{F}, F_p\mc{P}).
\end{equation}
For each $p$, the pro-object
\[ \varprojlim_m F_p(S(\mf{h})_{\lambda, m}\overset{\mrm{L}}\otimes_{S(\mf{h})} \mc{M}_2)\]
is constant, with value $F_p\mc{M}_2$. Applying \eqref{eq:pro forgetful functor 2}, we deduce
\[ \Hom_{(\tilde{\mc{D}}, F_\bullet)}(\tilde{\mc{D}} \otimes_{\mc{O}_X}\mc{F}\{p\}[-q], \mc{M}_2) = \varprojlim_m \Hom_{(\tilde{\mc{D}}, F_\bullet)}(\tilde{\mc{D}} \otimes_{\mc{O}_X}\mc{F}\{p\}[-q], S(\mf{h})_{\lambda, m}\overset{\mrm{L}}\otimes_{S(\mf{h})} \mc{M}_2))\]
and hence \eqref{eq:pro forgetful functor 3}. This proves full faithfulness of the completion functor.

To prove that \eqref{eq:pro forgetful functor 4} factors through the image of \eqref{eq:pro forgetful functor 1}, let us first suppose that $X$ is proper. Then, from \eqref{eq:pro forgetful functor 2}, the triangulated category $\mrm{D}^b_{\tilde{\mu}^{-1}(0)}\coh^G(\tilde{\mc{D}}, F_\bullet)_{\widetilde{\lambda}}$ has finite dimensional hom spaces. Hence, we may apply \cite[Theorem A.2.2]{bezrukavnikov-yun} and the argument of \cite[Example A.2.4]{bezrukavnikov-yun} to conclude that the essential image of \eqref{eq:pro forgetful functor 1} is a triangulated category (with exact triangles defined as inverse limits of exact triangles) and that \eqref{eq:pro forgetful functor 1} is a triangulated functor. Since \eqref{eq:pro forgetful functor 4} sends exact triangles to limits of exact triangles by definition, it therefore suffices to show that each $\mc{M} \in \widehat{\mhm}^G_{\widetilde{\lambda}}(\tilde{X})$ is mapped into the image of \eqref{eq:pro forgetful functor 1} i.e., that there exists a coherent filtered $\tilde{\mc{D}}$-module whose completion is the image of $\mc{M}$. But $(\mc{M}, F_\bullet^H)$ constructed above is such a module by Proposition \ref{prop:forgetful coherence}, so we are done in this case.

Finally, if $X$ is not proper, choose a smooth $G$-equivariant compactification $j \colon X \hookrightarrow Y$ and an $H$-torsor $\tilde{Y}$ over $Y$ whose pullback to $X$ is $\tilde{X}$. Then we have a commutative diagram
\[
\begin{tikzcd}
\widehat{\mrm{D}}^b_G \mhm_{\widetilde{\lambda}}(\tilde{Y}) \ar[d] & \widehat{\mrm{D}}^b_G \mhm_{\widetilde{\lambda}}(\tilde{X}) \ar[l, "j_*"'] \ar[d, "\eqref{eq:pro forgetful functor 4}"] \\
\pro \mrm{D}^b_{\tilde{\mu}^{-1}(0)}\coh^G(\tilde{\mc{D}}_Y, F_\bullet)_{\widetilde{\lambda}} \ar[r, "j^*"] & \pro \mrm{D}^b_{\tilde{\mu}^{-1}(0)} \coh^G(\tilde{\mc{D}}_X, F_\bullet)_{\widetilde{\lambda}} \\
\mrm{D}^b_{\tilde{\mu}^{-1}(0)}\coh^G(\tilde{\mc{D}}_Y, F_\bullet) \ar[r] \ar[u] & \mrm{D}^b_{\tilde{\mu}^{-1}(0)}\coh^G(\tilde{\mc{D}}_X, F_\bullet) \ar[u, "\eqref{eq:pro forgetful functor 1}"'].
\end{tikzcd}
\]
Since $\widehat{\mrm{D}}^b_G \mhm_{\widetilde{\lambda}}(\tilde{Y}) \to \pro \mrm{D}^b_{\tilde{\mu}^{-1}(0)}\coh^G(\tilde{\mc{D}}_Y, F_\bullet)_{\widetilde{\lambda}}$ factors through the essential image of $\mrm{D}^b_{\tilde{\mu}^{-1}(0)}\coh^G(\tilde{\mc{D}}_Y, F_\bullet)$, we deduce that \eqref{eq:pro forgetful functor 4} factors through the essential image of \eqref{eq:pro forgetful functor 1} as claimed.
\end{proof}

\subsection{Hodge structures on free-monodromic local systems} \label{subsec:free-monodromic}

In this subsection, we recall the construction of free-monodromic local systems from \cite[\S A.4]{bezrukavnikov-yun} and describe their lifts to objects in $\widehat{\mhm}_{\widetilde{\lambda}}^G(\tilde X)$.

Continuing with the setup of the previous subsection, suppose now that $G$ acts transitively on $X$ and choose a base point $x \in X$ with stabilizer $G_x$. The group $G_x$ necessarily acts on the $H$-torsor $\pi^{-1}(x)$ via a homomorphism $G_x \to H$. We write $\mf{h}_x \subset \mf{h}$ for the image of the derivative of this homomorphism, and $R_x  = S(\mf{h}/\mf{h}_x)$. In the statement below, we write $\Mod_{\mathit{nilp}}(R_x)$ for the category of $R_x$-modules on which each $h \in \mf{h}/\mf{h}_x$ acts nilpotently.

\begin{prop} \label{prop:loc sys classification}
Assume that the stabilizer $G_x$ is connected. Then we have an equivalence of categories
\[ \Mod_{\widetilde{\lambda}}^G(\mc{D}_{\tilde{X}}) \cong \begin{cases} \Mod_{\mathit{nilp}}(R_x), & \text{if $\lambda$ integrates to a character of $G_x$,} \\ 0, & \text{otherwise.}\end{cases}\]
The equivalence sends a $G$-equivariant $\tilde{\mc{D}}$-module $\mc{M}$ (necessarily a vector bundle) to its fiber at $x$, with $h \in \mf{h}/\mf{h}_x \subset R_x$ acting via $h - \lambda(h) \in S(\mf{h}) \subset \tilde{\mc{D}}$. The subalgebra $\mf{h}_x \subset \mf{h}$ and the equivalence above are independent of the choice of base point $x \in X$.
\end{prop}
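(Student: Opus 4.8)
The plan is to build the equivalence as the composition of two steps: first, descent along the $K$-action to reduce $K$-equivariant $\tilde{\mc{D}}$-modules on $\tilde X$ to modules over a fiber, and second, an analysis of what $\lambda$-monodromicity imposes on such a module. Since $K$ acts transitively on $X$ with $X \cong K/K_x$, and since a $K$-equivariant coherent sheaf on a homogeneous space is the same as a representation of the (connected) stabilizer, I would begin by noting that a $\lambda$-monodromic $K$-equivariant $\tilde{\mc{D}}$-module $\mc{M}$ is automatically $\mc{O}_X$-coherent and locally free: indeed, $\mc{M}$ is a $\tilde{\mc{D}}$-module on which $S(\mf{h})$ acts with the single generalized eigenvalue $\lambda$, so it is $\mc{O}_X$-quasi-coherent, and $K$-equivariance forces it to be a vector bundle since $K$ acts transitively. (This is the standard fact that equivariant D-modules on a homogeneous space, monodromic for the torus of automorphisms, are O-coherent — compare the appearance of variations of Hodge structure on homogeneous spaces in \S\ref{subsec:twisted mhm}.) Therefore the functor $\mc{M} \mapsto \mc{M}_x$ (the fiber at $x$) is faithful, and by equivariant descent it identifies $\Mod^K_{\widetilde\lambda}(\mc{D}_{\tilde X})$ with the category of finite-dimensional $K_x$-representations $V$ equipped with the extra structure coming from $\tilde{\mc{D}}$.

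The extra structure is precisely a commuting action of $S(\mf{h})$ — the center of $\tilde{\mc{D}}$ — which, being $K$-equivariant, must be $K_x$-equivariant on $V$; but $K_x$ acts on $\mf{h}$ through the homomorphism $K_x \to H$ into an abelian group, hence trivially on $\mf{h}$, so the $S(\mf{h})$-action commutes with $K_x$ in a way that is genuinely an $S(\mf{h})\rtimes K_x$-module structure with $K_x$ acting $S(\mf{h})$-linearly. Now I would split off the subalgebra $\mf{h}_x = \operatorname{image}(\operatorname{Lie}(K_x) \to \mf{h})$: the generators $h \in \mf{h}_x$ act on $V$ through the $K_x$-action (since they come from $\operatorname{Lie}(K_x)$), so $h - \lambda(h)$ acts on $V$ as $d\chi(h) - \lambda(h)$ where $\chi$ is the character by which $K_x$ (if $\lambda$ integrates) acts. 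For this to be nilpotent — which $\lambda$-monodromicity demands — and in fact for $\mc{M}$ to be nonzero at all, we need $\lambda|_{\mf{h}_x}$ to integrate to a character of $K_x$; if it does not, the category is zero, which is the second case. When it does, after twisting $V$ by the inverse of that character we may assume $\mf{h}_x$ acts by zero, and the remaining data is a module over $S(\mf{h})/(\mf{h}_x) = R_x = S(\mf{h}/\mf{h}_x)$ on which, by monodromicity, each $h \in \mf{h}/\mf{h}_x$ acts nilpotently (after subtracting $\lambda(h)$); that is exactly an object of $\Mod_{\mathit{nilp}}(R_x)$. Conversely, given such an $R_x$-module one reconstructs the $\tilde{\mc{D}}$-module by the associated bundle construction $K \times_{K_x} V$ with the evident flat connection deforming over $\mf{h}^*/\mf{h}_x^*$; these two constructions are mutually inverse, giving the equivalence. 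Checking that this reconstruction lands in $\tilde{\mc{D}}$-modules (i.e.\ that the connection is compatible with the $S(\mf{h})$-action) is the one routine but slightly fiddly verification.

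For the final independence claim: $\mf{h}_x$ is the image of $\operatorname{Lie}(K_x) \to \operatorname{Lie}(H)$, and changing the base point to $x' = k \cdot x$ replaces $K_x$ by the conjugate $K_{x'} = kK_xk^{-1}$ and the homomorphism $K_{x'}\to H$ by its precomposition with conjugation by $k^{-1}$; but $H$ is abelian, so conjugation by $k$ on $H$ is trivial and the two homomorphisms $K_x \to H$ and $K_{x'} \to H$ have literally the same image after the canonical identification $K_{x'}\cong K_x$, hence $\mf{h}_{x'} = \mf{h}_x$. The equivalence itself transports along the canonical isomorphism $\pi^{-1}(x') \cong \pi^{-1}(x)$ coming from the $K$-action (the element $k$ gives a canonical such isomorphism of $H$-torsors, well-defined because any two choices of $k$ differ by $K_x$, which acts on $\pi^{-1}(x)$ through $H$ — abelian — so the ambiguity is absorbed); thus the fiber functors at $x$ and $x'$ are canonically identified. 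I expect the main obstacle to be organizing the bookkeeping in this last paragraph cleanly — in particular, making the "$H$ abelian kills the conjugation ambiguity" argument precise enough that the identification of fiber functors is genuinely canonical and not merely canonical up to $K_x$-action — rather than any serious mathematical difficulty; the heart of the proposition is the soft descent statement plus the elementary linear-algebra analysis of the $S(\mf{h})$-action.
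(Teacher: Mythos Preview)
Your proposal is correct and follows essentially the same approach as the paper: reduce by equivariant descent to data over the base point, observe that this data is an $(\mf{h}, K_x)$-module with generalized eigenvalue $\lambda$, argue that compatibility with the $K_x$-action forces $\lambda|_{\mf{h}_x}$ to integrate (else the category is zero), twist by the resulting character, and use connectedness of $K_x$ to strip off the $K_x$-action and land in $\Mod_{\mathit{nilp}}(R_x)$; the independence claim is the same conjugation argument in both. The paper organizes the first step slightly more cleanly by restricting to the whole fiber $\pi^{-1}(x)$ (an $H$-torsor over a point, where $\tilde{\mc{D}}$ reduces to $S(\mf{h})$) rather than taking the $\mc{O}_X$-fiber at $x$ and then separately tracking the ``extra structure''---this packaging avoids having to argue that $K$-equivariance determines the connection part of the $\tilde{\mc{D}}$-module structure, which in your version is implicit in the claim that the fiber plus $S(\mf{h})$-action is the full data---but the content is the same.
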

\begin{proof}
Restricting to $\pi^{-1}(x)$ defines an equivalence of categories
\[ \Mod_{\widetilde{\lambda}}^G(\mc{D}_{\tilde{X}}) \cong \Mod_{\widetilde{\lambda}}^{G_x}(\mc{D}_{\pi^{-1}(x)}).\]
Since $x$ is a point, $\tilde{\mc{D}}_x = S(\mf{h})$, so by definition
\[ \Mod_{\widetilde{\lambda}}(\mc{D}_{\pi^{-1}(x)}) = \Mod(S(\mf{h}))_{\widetilde{\lambda}} \]
is the category of $S(\mf{h})$-modules with generalized eigenvalue $\lambda$. Adding an action of $G_x$ on the corresponding $\mc{D}_{\pi^{-1}(x)}$-module, this lifts to an equivalence
\[ \Mod_{\widetilde{\lambda}}^{G_x}(\mc{D}_{\pi^{-1}(x)}) = \Mod(\mf{h}, G_x)_{\widetilde{\lambda}},\]
with the category of $(\mf{h}, G_x)$-modules on which $\mf{h}$ acts with generalized eigenvalue $\lambda$. Note that the definition of equivariance for $\mc{D}$-modules enforces that the derivative of the $G_x$-action must agree with the restriction of the $\mf{h}$-action. In particular, $\Mod(\mf{h}, G_x)_{\widetilde{\lambda}} = 0$ unless $\lambda$ integrates to a character of $G_x$. In the latter case, tensoring with the inverse of this character gives
\[ \Mod(\mf{h}, G_x)_{\widetilde{\lambda}} \cong \Mod(\mf{h}, G_x)_{\widetilde{0}} = \Mod_{\mathit{nilp}}(R_x),\]
since $G_x$ is connected and acts semi-simply on any $(\mf{h}, G_x)$-module. This gives the equivalence of the proposition. Independence of $x$ follows since conjugation by $g \in G$ gives an isomorphism $G_x \cong G_{gx}$ commuting with the maps to $H$.
\end{proof}

In view of Proposition \ref{prop:loc sys classification}, we assume from now on that $G_x$ is connected and that $\lambda$ integrates to a character. In this setting, there is a unique irreducible equivariant $\lambda$-twisted local system on $\tilde{X}$ (corresponding to $\mb{C} \in \Mod_{\mathit{nilp}}(R_x)$ under the equivalence of Proposition \ref{prop:loc sys classification}). We will denote this by $\mc{O}_X(\lambda)$. We also make the following definition.

\begin{defn}
The \emph{free-monodromic local system on $\tilde{X}$} of twist $\lambda$ is the pro-object
\[ \widetilde{\mc{O}}_X(\lambda) \in \pro \Mod_{\widetilde{\lambda}}^G(\mc{D}_{\tilde X})\]
corresponding to the completion
\[ \widehat{R}_x = \varprojlim_n R_x/I_x^n \in \pro \Mod_{\mathit{nilp}}(R_x) \]
under the equivalence of Proposition \ref{prop:loc sys classification}. Here $I_x = \ker(R_x \to \mb{C})$ is the augmentation ideal.
\end{defn}

We now consider the Hodge-theoretic version of this story. First, we have the following analog of Proposition \ref{prop:loc sys classification}. In the statement, we write $\Mod_{\mrm{MHS}}(R_x)$ for the category of (finite dimensional) mixed Hodge structures equipped with an $R_x$-module structure compatible with the mixed Hodge structure on $R_x$ defined by $R_x = S(\mf{h}/\mf{h}_x(1))$. Note that such an $R_x$-module is automatically nilpotent.

\begin{prop} \label{prop:hodge local systems}
Under the assumptions of Proposition \ref{prop:loc sys classification}, assume further that $\lambda$ integrates to a character of $G_x$ and fix a pre-image $\tilde{x} \in \pi^{-1}(x)$ of $x$. Then there is an equivalence of categories
\[ \mhm^G_{\widetilde{\lambda}}(\tilde X) \cong \Mod_{\mrm{MHS}}(R_x),\]
given by taking the fiber of the mixed Hodge module on $\tilde X$ at $\tilde{x}$ with $R_x$-module structure defined by the monodromy action of $\mf{h}(1)$.
\end{prop}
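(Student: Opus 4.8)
The plan is to bootstrap the Hodge-theoretic statement from the underlying $\mathcal{D}$-module classification in Proposition \ref{prop:loc sys classification}, using the fact that a monodromic mixed Hodge module on $\tilde X$ has vector bundle underlying $\mathcal{D}$-module and is therefore an admissible variation of mixed Hodge structure. First I would observe that after twisting by the (unitary, since $\lambda \in \mf{h}^*_\mb{R}$) rank-one local system $\mathcal{O}_X(\lambda)$ — which carries a canonical trivial Hodge structure as in \S\ref{subsec:twisted mhm} — we may reduce to the case $\lambda = 0$, so it suffices to treat $\mhm^K_{\widetilde{0}}(\tilde X)$. By Proposition \ref{prop:loc sys classification}, the underlying $\tilde{\mc{D}}$-module of any $\mc{M} \in \mhm^K_{\widetilde{0}}(\tilde X)$ is determined by its fiber $V = \mc{M}_{\tilde x}$ as an object of $\Mod_{\mathit{nilp}}(R_x)$, where $\mf{h}/\mf{h}_x$ acts by the monodromy (residue) operators. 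The content of the proposition is that the extra Hodge data on $\mc{M}$ is exactly the data of a mixed Hodge structure on $V$ compatible with that on $R_x = S(\mf{h}/\mf{h}_x(1))$, and nothing more.

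The key steps, in order: (1) Construct the functor $\mhm^K_{\widetilde{0}}(\tilde X) \to \Mod_{\mathrm{MHS}}(R_x)$ by sending $\mc{M}$ to its fiber at $\tilde x$. Since $\mc{M}$ is a $K$-equivariant variation of mixed Hodge structure and $K$ acts transitively, the fiber at $\tilde x$ inherits a mixed Hodge structure (this is the standard fact that a VMHS on a point of a homogeneous space is a MHS; one can see it either from $K$-equivariance reducing everything to $K_x$-representations, or by restricting along the inclusion of the point); the monodromy morphism $\mf{h}(1) \otimes \mc{M} \to \mc{M}$ of mixed Hodge modules recalled before Definition \ref{defn:good pro-object} restricts to give the compatible $R_x = S(\mf{h}/\mf{h}_x(1))$-action on the fiber. (2) Construct a quasi-inverse: given $(V, R_x\text{-action})$, the $\tilde{\mc{D}}$-module is already pinned down by Proposition \ref{prop:loc sys classification}; one equips it with the Hodge and weight filtrations by transporting those on $V$ across the $K$-equivariant trivialization $\mc{M} \cong K \times_{K_x} V$, twisted appropriately so that the monodromy operators become morphisms of MHS of type $(-1,-1)$. (3) Verify that this quadruple $(\mc{M}, W_\bullet, F_\bullet^H, \bar F_\bullet^H)$ actually satisfies Saito's axioms for a mixed Hodge module, i.e., that it is an admissible variation of mixed Hodge structure. (4) Check the two functors are mutually inverse, which is essentially formal once (3) is done, and check independence of the choice of $\tilde x$ by the conjugation argument already used at the end of Proposition \ref{prop:loc sys classification}.

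The main obstacle is step (3): showing that an arbitrary $R_x$-module in mixed Hodge structures gives rise to an object satisfying the full list of mixed Hodge module axioms — in particular admissibility of the variation at infinity. Since $X$ is a homogeneous space, not compact, one must check admissibility along a compactification; but the whole point is that the variation is `unipotent' in the torus directions (the monodromy is the nilpotent residue coming from $\mf{h}/\mf{h}_x$) and constant along $K$-orbits, so the relevant limit mixed Hodge structures and nilpotent orbit conditions are governed precisely by the $R_x$-module structure. Concretely, I would reduce to the universal case: the free-monodromic local system $\widetilde{\mc{O}}_X(\lambda)$ and its finite-level truncations $R_x/I_x^n$ are the building blocks, so it is enough to produce a natural mixed Hodge module structure on each $R_x/I_x^n \otimes \mc{O}_X(\lambda)$ (with its geometric weight/Hodge filtrations coming from powers of the logarithm of monodromy), after which a general object is obtained by taking a subquotient of a sum of these in the abelian category $\mhm$, and admissibility is inherited. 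This reduction — identifying the geometric origin of the building-block Hodge modules and confirming the filtrations match — is where the real work lies; the equivalence itself then follows by comparing both sides to $\Mod_{\mathit{nilp}}(R_x)$ via the forgetful functor and Proposition \ref{prop:loc sys classification}.
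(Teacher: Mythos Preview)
Your approach is workable but takes a substantially longer route than the paper's, and your step (3) — verifying Saito's admissibility axioms directly on $\tilde{X}$ — is exactly the hard work that the paper avoids by reduction. The paper's proof is two lines: first restrict to the fiber $\pi^{-1}(x)$, giving an equivalence $\mhm^K_{\widetilde{\lambda}}(\tilde X) \cong \mhm^{K_x}_{\widetilde{\lambda}}(\pi^{-1}(x))$ just as in Proposition~\ref{prop:loc sys classification}; then invoke \cite[Lemma~2.8]{DV}, which already establishes that restriction to a point $\tilde{x}$ gives an equivalence $\mhm_{\widetilde{\lambda}}(\pi^{-1}(x)) \cong \Mod_{\mrm{MHS}}(S(\mf{h}(1)))$ for a torus. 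Since $K_x$ is connected, $K_x$-equivariance is a condition on the underlying $\mc{D}$-module only, so combining with Proposition~\ref{prop:loc sys classification} cuts this down to $\Mod_{\mrm{MHS}}(R_x)$. Your building-block reduction via $R_x/I_x^n$ would essentially reprove \cite[Lemma~2.8]{DV} in situ; it works, but there is no need once you reduce to a single torus fiber.

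One genuine correction: your step (4) claims independence of the choice of $\tilde{x}$ by the conjugation argument from Proposition~\ref{prop:loc sys classification}, but this is \emph{false} at the Hodge level. The equivalence of Proposition~\ref{prop:hodge local systems} does depend on $\tilde{x}$; see Remark~\ref{rmk:hodge change of base point}. The underlying $\mc{D}$-module equivalence is independent of base point, but moving $\tilde{x}$ inside $\pi^{-1}(x)$ changes the conjugate Hodge filtration, so only the $K \times H^c$-orbit of $\tilde{x}$ is canonical. This is precisely why the statement fixes $\tilde{x}$ rather than asserting independence.
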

\begin{proof}
As in the proof of Proposition \ref{prop:loc sys classification}, we have an equivalence
\[ \mhm^G_{\widetilde{\lambda}}(\tilde X) \cong \mhm^{G_x}_{\widetilde{\lambda}}(\pi^{-1}(x)).\]
By \cite[Lemma 2.8]{DV}, restriction to $\tilde{x}$ defines an equivalence
\[ \mhm_{\widetilde{\lambda}}(\pi^{-1}(x)) \cong \Mod_{\mrm{MHS}}(S(\mf{h}(1))).\]
Since $G_x$ is connected, $\mhm^{G_x}_{\widetilde{\lambda}}(\pi^{-1}(x))$ is the full subcategory of $\mhm_{\widetilde{\lambda}}(\tilde X)$ consisting of mixed Hodge modules whose underlying $\mc{D}$-module is $G_x$-equivariant, so the proposition now follows from Proposition \ref{prop:loc sys classification}.
\end{proof}

Of course, the equivalences of Propositions \ref{prop:hodge local systems} and \ref{prop:loc sys classification} are compatible with the forgetful functors $\mhm^G_{\widetilde{\lambda}}(\mc{D}_{\tilde X}) \to \Mod^G_{\widetilde{\lambda}}(\mc{D}_{\tilde X})$ and $\Mod_{\mrm{MHS}}(R_x) \to \Mod_{\mathit{nilp}}(R_x)$. The mixed Hodge structure on $R_x$ therefore defines a lift of $\widehat{R}_x$ to an object
\[ \widehat{R}_x = \varprojlim_n R_x/I_x^n \in \pro \Mod_{\mrm{MHS}}(R_x) \]
and thus a lift of $\widetilde{\mc{O}}_X(\lambda)$ to an object in $\pro \mhm^G_{\widetilde{\lambda}}(\tilde X)$. It is easy to see that this pro-object is good. We will endow $\widetilde{\mc{O}}_X(\lambda)$ with this Hodge structure unless otherwise specified.

\begin{rmk} \label{rmk:hodge change of base point}
While the definition of the free-monodromic local system $\widetilde{\mc{O}}_X(\lambda)$ is independent of the choice of base point, its upgrade to a mixed Hodge module is not. Indeed, if $\mc{M} \in \mhm_{\widetilde{\lambda}}^G(\tilde X)$ corresponds to 
\[ \mc{M}_{\tilde{x}} = (V, W_\bullet V, F_\bullet^H V, \bar{F}_\bullet^H V) \in \Mod_{\mrm{MHS}}(R_x) \]
under the equivalence of Proposition \ref{prop:hodge local systems}, then for $h \in \mf{h}$, the fiber of $\mc{M}$ at $\exp(2\pi i h)\tilde{x} \in \pi^{-1}(x)$ can be identified with
\[ \mc{M}_{\exp(2 \pi i h)\tilde{x}} = (V, W_\bullet V, F_\bullet^H V, \exp(4\pi \operatorname{Im} h) \bar{F}_\bullet^HV) \in \Mod_{\mrm{MHS}}(R_x).\]
This follows, for example, from the proof of \cite[Lemma 2.8]{DV}. The functor of Proposition \ref{prop:hodge local systems}, and hence the Hodge structure on $\widetilde{\mc{O}}_X(\lambda)$, therefore depends on the $G \times H^c$-orbit of $\tilde{x}$, where $H^c = \mb{X}_*(H) \otimes_{\ZZ} U(1)$ is the unique compact form of $H$.
\end{rmk}

Let us now relax the assumption that $G$ acts transitively on $X$ and let $Q \subset X$ be a $G$-orbit with connected stabilizers. Write $\tilde{Q}$ for the pre-image of $Q$ in $\tilde{X}$. Then for any choice of base point $\tilde{x} \in \tilde{Q}$, with image $x \in Q$, and $\lambda \in \mf{h}^*_\mb{R}$ integrating to a character of $G_x$, we have a free-monodromic local system $\widetilde{\mc{O}}_Q(\lambda) \in \widehat{\mhm}_{\widetilde{\lambda}}^G(\tilde Q)$ as above. Writing $j \colon \tilde Q \to \tilde X$ for the inclusion, we obtain objects
\[ j_!\widetilde{\mc{O}}_Q(\lambda), j_*\widetilde{\mc{O}}_Q(\lambda) \in \widehat{\mhm}_{\widetilde{\lambda}}^G(\tilde X).\]
The following proposition shows that while these objects depend on the choice of $\tilde{x} \in \tilde Q$, the underlying filtered $\tilde{\mc{D}}$-modules do not.

\begin{prop} \label{prop:filtration uniqueness}
Let $\widetilde{\mc{O}}_Q(\lambda)'$ be any object in $\widehat{\mhm}_{\widetilde{\lambda}}^G(\tilde Q)$ whose underlying pro-$\mc{D}$-module is $\widetilde{\mc{O}}_Q(\lambda)$, such that the unique morphism $\widetilde{\mc{O}}_Q(\lambda)'\to \mc{O}_Q(\lambda)$ is a morphism of pro-mixed Hodge modules. Then we have filtered isomorphisms
\[ (j_{!}\widetilde{\mc{O}}_Q(\lambda), F^H_\bullet) \cong (j_{!} \widetilde{\mc{O}}_Q(\lambda)', F^H_\bullet)\]
and
\[ (j_{*}\widetilde{\mc{O}}_Q(\lambda), F^H_\bullet) \cong (j_{*} \widetilde{\mc{O}}_Q(\lambda)', F^H_\bullet).\]
\end{prop}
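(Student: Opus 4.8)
The plan is to show that $\widetilde{\mc{O}}_Q(\lambda)$ and $\widetilde{\mc{O}}_Q(\lambda)'$ already agree as filtered $\tilde{\mc{D}}$-modules on $\tilde{Q}$, and then to propagate this identification through the extension functors $j_!$ and $j_*$.

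The first step is local, on $\tilde{Q}$. By Proposition~\ref{prop:hodge local systems}, $\widetilde{\mc{O}}_Q(\lambda)'$ is classified by a good pro-object of $\Mod_{\mrm{MHS}}(R_x)$ whose underlying $R_x$-module is $\widehat{R}_x$, and the hypothesis that the unique map to $\mc{O}_Q(\lambda)$ be a morphism of pro-mixed Hodge modules says exactly that the augmentation $\widehat{R}_x \to \mb{C}$ is a morphism to the trivial Hodge structure. I would argue that these two constraints force $F_\bullet^H\widehat{R}_x$ to be the standard filtration by polynomial degree, $F_p^H\widehat{R}_x = \bigoplus_{k \le p} S^k(\mf{h}/\mf{h}_x)$, which is the one underlying $\widetilde{\mc{O}}_Q(\lambda)$. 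The mechanism is a dévissage along the $I_x$-adic filtration: each graded piece $I_x^n/I_x^{n+1} \cong S^n(\mf{h}/\mf{h}_x(1))$ is pure of Tate type, so its Hodge filtration is rigid, and strictness of morphisms of mixed Hodge structures --- applied both to multiplication by elements of $R_x$ and to the augmentation --- propagates this rigidity up the $I_x$-adic filtration, pinning down $F_\bullet^H$ on each $\widehat{R}_x/I_x^n$ and hence on $\widehat{R}_x$. Crucially, the weight and conjugate Hodge filtrations are left unconstrained by this argument, consistent with Remark~\ref{rmk:hodge change of base point}; but they do not enter the underlying filtered $\tilde{\mc{D}}$-module, so we obtain $(\widetilde{\mc{O}}_Q(\lambda), F_\bullet^H) \cong (\widetilde{\mc{O}}_Q(\lambda)', F_\bullet^H)$ as filtered $\tilde{\mc{D}}$-modules on $\tilde{Q}$.

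The second step is the push-forward. Here I would invoke the standard feature of Saito's theory that the construction of $j_*$ and $j_!$ of a (pro-)mixed Hodge module is, at the level of the underlying filtered $\mc{D}$-module, a function of the underlying filtered $\mc{D}$-module of the input alone: the boundary $V$-filtrations used in the construction are Kashiwara--Malgrange filtrations, intrinsic to the $\mc{D}$-module, and the resulting formulas for $F_\bullet^H j_*\mc{M}$ and $F_\bullet^H j_!\mc{M}$ involve only these together with $F_\bullet^H\mc{M}$, never $W_\bullet\mc{M}$ or $\bar{F}_\bullet^H\mc{M}$. Combined with the first step, this gives the two filtered isomorphisms for ordinary mixed Hodge modules. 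To obtain the pro-statement I would approximate $\widetilde{\mc{O}}_Q(\lambda)$ and $\widetilde{\mc{O}}_Q(\lambda)'$ by the genuine mixed Hodge modules attached to $R_x/I_x^n$, apply the filtered identification level by level, and pass to the inverse limit, using Propositions~\ref{prop:forgetful coherence} and~\ref{prop:pro forgetful functor} and the fact that $j_!$ and $j_*$ commute with the relevant countable inverse limits.

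The main obstacle is the local rigidity argument of the first step: isolating precisely which part of the Hodge data is forced. A priori $\widetilde{\mc{O}}_Q(\lambda)'$ differs from $\widetilde{\mc{O}}_Q(\lambda)$ by an automorphism of the pro-$\mc{D}$-module $\widehat{R}_x$, i.e.\ by multiplication by a unit of $\widehat{R}_x$, and one must rule out that such a twist could shift the Hodge filtration while still yielding a mixed Hodge module with the prescribed augmentation quotient. Cleanly separating the genuinely non-rigid extension data --- where the base-point dependence of Remark~\ref{rmk:hodge change of base point} lives --- from the rigid Hodge filtration is the delicate point; the remaining ingredients are either formal limit arguments or standard properties of Saito's extension functors.
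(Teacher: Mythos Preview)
Your overall strategy is sound and, once corrected, ends up very close to the paper's proof. However, the central claim of your first step is false as stated: the constraints do \emph{not} force $F_\bullet^H\widehat{R}_x$ to equal the standard degree filtration. A simple counterexample in the rank-one case $R_x = \mb{C}[t]$, $V_2 = R_x/I_x^2$: setting $W_{-2} = \mb{C} t$, $W_0 = V_2$, $F_0^H = \mb{C}(1+ct)$, $F_1^H = V_2$, and $\bar{F}_0^H = \mb{C}(1+c't)$ defines a valid object of $\Mod_{\mrm{MHS}}(R_x)$ for \emph{any} $c, c' \in \mb{C}$, with the augmentation a morphism of mixed Hodge structures. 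So unit twists genuinely do shift $F_\bullet^H$; they cannot be ``ruled out''. Your d\'evissage correctly shows that $I_x^n/I_x^{n+1} \cong S^n(\mf{h}/\mf{h}_x(1))$ as mixed Hodge structures (in fact this also forces $W_\bullet$ to agree with the $I_x$-adic filtration, contrary to your remark that $W$ is unconstrained), and hence that $F_0^H$ is a line $\mb{C}\cdot u_0$ for some unit $u_0 \in \widehat{R}_x$. The $R_x$-module compatibility then gives $F_p^H = u_0 \cdot F_p^{\mrm{std}}$ for all $p$. So what your argument actually proves is that multiplication by $u_0$ is a filtered isomorphism --- which suffices for your conclusion, but is not the rigidity you claimed.

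This corrected version is essentially the paper's proof in different clothing. The paper observes that $\Hom(\widetilde{\mc{O}}_Q(\lambda), \widetilde{\mc{O}}_Q(\lambda)')$ carries a pro-mixed Hodge structure with $\Gr^W = R_x$, so $F_0^H$ of this Hom is one-dimensional, spanned by some $f$; this $f$ is exactly your unit $u_0$. The paper then finds $g \in F_0^H$ of the reverse Hom and checks $fg, gf$ are nonzero scalars times the identity, so $f$ is a filtered isomorphism with filtered inverse. For the second step, the paper notes that $j_!$ is fully faithful on mixed Hodge modules (for $j$ open), so the identification $\Hom(j_!A, j_!B) \cong \Hom(A, B)$ is an isomorphism of mixed Hodge structures; hence $j_!(f) \in F_0^H$ as well and is therefore a filtered isomorphism. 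Your alternative route through Saito's formulas for $F^H$ on $j_!$ and $j_*$ is also correct and gives the same conclusion, though the paper's formulation via the Hodge structure on $\Hom$ is more invariant and avoids invoking those formulas explicitly.
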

\begin{proof}
Consider the pro-mixed Hodge structure
\[ \Hom(\widetilde{\mc{O}}_Q(\lambda), \widetilde{\mc{O}}_Q(\lambda)');\]
note that this is precisely the image of $\widetilde{\mc{O}}_Q(\lambda)'$ under the functor of Proposition \ref{prop:hodge local systems}. In particular, the underlying pro-nilpotent $R_x$-module is $\widehat{R}_x$, and we have a surjective morphism
\[ \Hom(\widetilde{\mc{O}}_Q(\lambda), \widetilde{\mc{O}}_Q(\lambda)') \to \Hom(\widetilde{\mc{O}}_Q(\lambda), \mc{O}_Q(\lambda)) = \mb{C}\]
in $\pro \Mod_{R_x}(\mrm{MHS})$. Hence, we must have
\[ \Gr^W\Hom(\widetilde{\mc{O}}_Q(\lambda), \widetilde{\mc{O}}_Q(\lambda)') = R_x\]
as mixed Hodge structures. In particular,
\[ F_0^H \Hom(\widetilde{\mc{O}}_Q(\lambda), \widetilde{\mc{O}}_Q(\lambda)') = \mb{C}\cdot f\]
for some $f$. By the same argument, we also have
\[ F_0^H \Hom(\widetilde{\mc{O}}_Q(\lambda)', \widetilde{\mc{O}}_Q(\lambda)) = \mb{C}\cdot g, \quad F_0^H\Hom(\widetilde{\mc{O}}_Q(\lambda), \widetilde{\mc{O}}_Q(\lambda)) = \mb{C} \cdot\mrm{id},\]
and
\[ F_0^H\Hom(\widetilde{\mc{O}}_Q(\lambda)',\widetilde{\mc{O}}_Q(\lambda)') = \mb{C} \cdot \mrm{id}.\]
Since $f\circ g \neq 0$ and $g \circ f \neq 0$, we can therefore arrange so that $g = f^{-1}$. Applying the functor $j_!$ (resp., $j_*$) we obtain inverse isomorphisms of pro-$\mc{D}$-modules between $j_{!}\widetilde{\mc{O}}_Q(\lambda)$ and $j_{!} \widetilde{\mc{O}}_Q(\lambda)'$ (resp., the $j_*$ versions) lying in $F_0^H$ of the relevant hom spaces. Since any morphism in $F_0^H$ respects Hodge filtrations, this implies the proposition.
\end{proof}

\subsection{Deformation and wall-crossing for pro-mixed Hodge modules} \label{subsec:semi-continuity}

In this subsection, we establish a refinement of the deformation and wall-crossing theory of \cite[\S 3]{DV} for certain pro-mixed Hodge modules. As a special case, we get a semi-continuity result for the Hodge filtrations on extensions of free-monodromic local systems.

We begin with a general definition.

\begin{definition}
Let $X$ be a smooth variety. We say that a pro-object $\mc{M} \in \pro \mhm(X)$ is \emph{fair} if the pro-objects $F^H_p \mc{M}$ and $\mc{M}/W_w\mc{M}$ are constant for all $p, w \in \mb{Z}$.
\end{definition}

In the setting of \S\ref{subsec:pro mhm}, any good pro-object is also fair. Note that any fair pro-mixed Hodge module $\mc{M} = \varprojlim_n \mc{M}_n$ has an underlying filtered $\mc{D}$-module
\[ (\mc{M}, F^H_\bullet) = \left(\bigcup_p \varprojlim_n F^H_p\mc{M}_n, \varprojlim_n F^H_\bullet \mc{M}_n\right).\]
The underlying filtered $\mc{D}$-module need not be coherent in general.

Now suppose that $\pi \colon \tilde{X} \to X$ is an $H$-torsor (for some torus $H$) and $j \colon Q \hookrightarrow X$ is the inclusion of an affinely embedded locally closed subvariety. Attached to this setup, we have the group of semi-invariant functions
\[ \Gamma(\tilde{Q}, \mc{O}^\times)^{\mathit{mon}} = \coprod_{\mu \in \mb{X}^*(H)} \Gamma(\tilde{Q}, \mc{O}^\times)_\mu,\]
where $\tilde{Q} = \pi^{-1}(Q)$ and $\Gamma(\tilde{Q}, \mc{O}^\times)_\mu$ is the set of functions $f \colon \tilde{Q} \to \mb{C}^\times$ such that $h \cdot f = \mu(h) f$ for $h \in H$. Regarding $\Gamma(\tilde{Q}, \mc{O}^\times)^{\mathit{mon}}$ as an abelian group under multiplication (and hence a $\mb{Z}$-module), we set
\[ \Gamma_\mb{R}(\tilde{Q})^{\mathit{mon}} = \frac{\Gamma(\tilde{Q}, \mc{O}^\times)^{\mathit{mon}}}{\mb{C}^\times} \otimes_\mb{Z} \mb{R}.\]
We will write elements in the vector space $\Gamma_\mb{R}(\tilde{Q})^{\mathit{mon}}$ multiplicatively; e.g., for $f \in \Gamma(\tilde{Q}, \mc{O}^\times)^{\mathit{mon}}$ and $a \in \mb{R}$, we have the element $f^a \in \Gamma_\mb{R}(\tilde{Q})^{\mathit{mon}}$.

For $f \in \Gamma_\mb{R}(\tilde{Q})^{\mathit{mon}}$ and $\mc{M} \in \mhm(\tilde{Q})$, we have the associated deformation $f\mc{M} \in \mhm(\tilde{Q})$. If $\mc{M}$ is $\lambda$-monodromic for some $\lambda \in \mf{h}^*_\mb{R}$, then $f\mc{M}$ is naturally $(\lambda + \varphi(f))$-monodromic, where
\[ \varphi \colon \Gamma_\mb{R}(\tilde{Q})^{\mathit{mon}} \to \mf{h}^*_\mb{R} \]
is the linear map sending $f \in \Gamma(\tilde{Q}, \mc{O}^\times)_\mu$ to $\mu$. This construction extends directly to arbitrary pro-objects in $\mhm(\tilde{Q})$. If we choose a pre-image of $f$ in $\Gamma(\tilde{Q}, \mc{O}^\times)^{\mathit{mon}} \otimes \mb{R}$, we also have the associated pro-object
\[ f^s\mc{M}[[s]] = \varprojlim_n \frac{f^s\mc{M}[s]}{(s^n)} \in \pro \mhm(\tilde{Q}).\]
Note that the mixed Hodge structure on $f^s\mc{M}[s]/(s^n)$ and hence on $f^s\mc{M}[[s]]$ changes if we multiply $f$ by a scalar of norm $\neq 1$; this is essentially the same phenomenon as in Remark \ref{rmk:hodge change of base point}. The mixed Hodge structure is defined so that multiplication by $s$ corresponds to a Tate twist $(1)$: in particular, if $\mc{M}$ is a fair pro-mixed Hodge module, then the pro-mixed Hodge module $f^s\mc{M}[[s]]$ is also fair.

Finally, recall from \cite[Definition 3.1]{DV} the cone of positive elements $\Gamma_\mb{R}(\tilde{Q})^{\mathit{mon}}_+ \subset \Gamma_\mb{R}(\tilde{Q})^{\mathit{mon}}$ spanned by the boundary equations for $Q$.

\begin{theorem} \label{thm:pro semi-continuity}
Let $f \in \Gamma_\mb{R}(\tilde{Q})^{\mathit{mon}}_+$ be positive. Assume that $\mc{M} \in \pro \mhm(\tilde{Q})$ is a fair pro-(monodromic)-mixed Hodge module such that the natural surjection
\[ f^s\mc{M}[[s]] \to \mc{M} \]
splits as a morphism of pro-$\mc{D}$-modules. Then:
\begin{enumerate}
\item \label{itm:pro semi-continuity 1} We have isomorphisms $(f^\alpha \mc{M}, F_\bullet^H) \cong (\mc{M}, F_\bullet^H)$ of filtered $\mc{D}_{\tilde{Q}}$-modules for every $\alpha \in \mb{R}$. If $\mc{M}$ is monodromic, then these descend to isomorphisms of $\tilde{\mc{D}}_Q$-modules on $Q$.
\item \label{itm:pro semi-continuity 2} The canonical filtered morphisms
\begin{equation} \label{eq:pro semi-continuity 2}
j_!f^\alpha\mc{M} \to j_*f^\alpha \mc{M} \to j_+f^\alpha \mc{M} = j_+\mc{M}
\end{equation}
are injective and strict with respect to the Hodge filtrations. Here $j_+$ is Laumon's `naive' pushforward for filtered $\mc{D}$-modules (see, e.g., \cite[\S 3.2]{DV}).
\item \label{itm:pro semi-continuity 3} If $\alpha < \beta$, then
\[ j_*f^\alpha \mc{M} \subset j_!f^\beta \mc{M} \]
as submodules of $j_+\mc{M}$, with equality if $0 < \beta - \alpha \ll 1$.
\end{enumerate}
\end{theorem}
\begin{proof}
Since the morphism splits, we have that the identity $\mrm{id}_{\mc{M}}$ lies in the image of the $\mc{D}$-module hom
\[ \Hom(\mc{M}, f^s\mc{M}[[s]]) \to \Hom(\mc{M}, \mc{M}).\]
Since $\mrm{id}_{\mc{M}} \in F_0^H W_0 \Hom(\mc{M}, \mc{M})$, we conclude from Lemma \ref{lem:fair pro hom 2} below that there exists a morphism
\[ g \in F^H_0 W_0 \Hom(\mc{M}, f^s\mc{M}[[s]]) \]
splitting $f^s\mc{M}[[s]] \to \mc{M}$. Extending linearly in $s$, we obtain an isomorphism
\[ \tilde{g} \in F^H_0 W_0 \Hom(\mc{M}[[s]], f^s\mc{M}[[s]])\]
whose inverse is also in $F_0^HW_0$. Indeed, the inverse of $\tilde{g}$ is
\[ \tilde{g}^{-1} = \sum_{j \geq 0} c_j \prod_{k = 0}^{j - 1}(1 - \tilde{g}c_k) \in F^H_0W_0 \Hom(f^s\mc{M}[[s]], \mc{M}[[s]])\]
where we write $c_j$ for the `leading order term' morphism
\[ c_j \colon s^j f^s\mc{M}[[s]] \to s^j\mc{M} \to s^j \mc{M}[[s]].\]
In particular, $\tilde{g}$ induces an isomorphism between the underlying filtered $\mc{D}_{\tilde Q}$-modules
\[ (\mc{M}[s], F^H_\bullet) \cong (f^s\mc{M}[s], F^H_\bullet).\]
Taking the cokernel of $s - \alpha$, we deduce the desired filtered $\mc{D}_{\tilde Q}$-module isomorphism
\[ (\mc{M}, F^H_\bullet) \cong (f^\alpha\mc{M}, F^H_\bullet),\]
proving \eqref{itm:pro semi-continuity 1}.

To prove \eqref{itm:pro semi-continuity 2}, recall that by \cite[Lemma 3.5]{DV}, the morphisms
\begin{equation} \label{eq:pro semi-continuity 3}
j_!^{(\alpha)} f^s\mc{M}[s] \to j_*^{(\alpha)} f^s\mc{M}[s] \to j_+f^s\mc{M}[s]
\end{equation}
are injective and strict. Here $j_!^{(\alpha)}f^s\mc{M}[s]$ and $j_*^{(\alpha)}f^s\mc{M}[s]$ denote the filtered $\mc{D}$-modules underlying
\[ j_!f^s\mc{M}[[s - \alpha]] \cong j_!f^{s + \alpha} \mc{M}[[s]] \quad \text{and} \quad j_*f^s\mc{M}[[s - \alpha]] \cong j_*f^{s + \alpha}\mc{M}[[s]]\]
respectively. Now, the morphism $f^\alpha \tilde{g}$ defines filtered isomorphisms
\begin{equation} \label{eq:pro semi-continuity 1}
j_!^{(\alpha)}f^s\mc{M}[s] \cong j_!f^\alpha \mc{M} \otimes \mb{C}[s] \quad \text{and} \quad j_*^{(\alpha)} f^s\mc{M}[s]\cong j_*f^\alpha \mc{M} \otimes \mb{C}[s]
\end{equation}
So \eqref{eq:pro semi-continuity 2} is a direct summand of \eqref{eq:pro semi-continuity 3} and we deduce \eqref{itm:pro semi-continuity 2}.

Finally, to prove \eqref{itm:pro semi-continuity 3}, note that by \cite[Lemma 3.5]{DV} we have the asserted relation between $j_*^{(\alpha)} f^s\mc{M}[s]$ and $j_!^{(\beta)}f^s\mc{M}[s]$. By \eqref{eq:pro semi-continuity 1}, this implies the relation between $j_*f^\alpha\mc{M}$ and $j_!f^\beta \mc{M}$, so we are done.
\end{proof}

\begin{lemma} \label{lem:fair pro hom 1}
Let $\mc{M} = \varprojlim_n \mc{M}_n \in \pro \mhm(\tilde{Q})$ be a fair pro-object and $\mc{N} \in \mhm(\tilde{Q})$ be a mixed Hodge module. Then, for every weight $w \in \mb{Z}$, the ind-mixed Hodge structure
\[ W_w\Hom(\mc{M}, \mc{N}) := \varinjlim_n W_w\Hom(\mc{M}_n, \mc{N}) \in \operatorname{Ind}\mrm{MHS}\]
is constant, i.e., $W_w\Hom(\mc{M}, \mc{N})$ is a mixed Hodge structure.
\end{lemma}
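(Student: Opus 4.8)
The strategy is to reduce the statement to a stabilization property of the inverse system $\{\Hom(\mc{M}_n, \mc{N})\}$ and then combine it with the general fact that morphisms of mixed Hodge modules are strict. Write $\mc{M} = \varprojlim_n \mc{M}_n$ with $\mc{M}_n \in \mhm(\tilde{Q})$ and all transition maps $\mc{M}_{n+1} \to \mc{M}_n$ surjective (which we may arrange since $\mc{M}$ is a pro-object of mixed Hodge modules; if necessary replace the system by the images of the composites). First I would observe that the $\mc{D}$-module underlying $\mc{M}$, namely $\bigcup_p \varprojlim_n F_p^H\mc{M}_n$, is well-defined because $\mc{M}$ is fair: the pro-objects $F_p^H\mc{M}$ are constant. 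Fairness also gives that $\mc{M}/W_w\mc{M}$ is constant, so the weight filtration likewise descends to an honest finite filtration on the underlying $\mc{D}$-module, with each $W_w\mc{M}$ and each graded piece $\Gr^W_w\mc{M}$ a coherent (non-holonomic in general) filtered $\mc{D}$-module.

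The key point is that for a fixed target mixed Hodge module $\mc{N}$, the abelian groups $W_w\Hom_{\mc{D}}(\mc{M}_n, \mc{N})$ stabilize as $n \to \infty$. The cleanest route is to argue on associated gradeds: a $\mc{D}$-module morphism $\mc{M}_n \to \mc{N}$ lying in $W_w$ and $F_p^H$ is the same as a filtered, weight-$\le w$ morphism, and such morphisms factor through the coherent filtered $\mc{D}$-module quotients that stabilize by fairness (only finitely many of the $F_p^H$ and $W_w$ pieces are relevant, since $\mc{N}$ has bounded weights and $F_p^H\mc{N} = 0$ for $p \ll 0$). More precisely, since $\mc{N} \in \mhm(\tilde{Q})$ is a single object, $\Hom_{\mc{D}}(\mc{M}_n, \mc{N})$ is finite-dimensional (both are coherent and $\mc{N}$ is holonomic), and the natural surjections $\mc{M}_{n+1} \twoheadrightarrow \mc{M}_n$ induce injections $\Hom_{\mc{D}}(\mc{M}_n, \mc{N}) \hookrightarrow \Hom_{\mc{D}}(\mc{M}_{n+1}, \mc{N})$; intersecting with $W_w$ (a finite-dimensional sub), the increasing union $\varinjlim_n W_w \Hom_{\mc{D}}(\mc{M}_n, \mc{N})$ is therefore a filtered colimit of finite-dimensional subspaces of the single finite-dimensional space $W_w\Hom_{\mc{D}}(\mc{M},\mc{N})$ computed from the underlying (fair) data — hence stabilizes. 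Once the underlying vector space is constant, the mixed Hodge structures must be constant too: each $\Hom(\mc{M}_n, \mc{N})$ carries a mixed Hodge structure functorially (via the six operations, as recalled in \S\ref{subsec:twisted mhm}), the transition maps $\Hom(\mc{M}_n, \mc{N}) \to \Hom(\mc{M}_{n+1}, \mc{N})$ are morphisms of mixed Hodge structures, and any injective morphism of mixed Hodge structures which is eventually an isomorphism of underlying vector spaces is an isomorphism of mixed Hodge structures (MHS morphisms are strict, so a bijective one is an isomorphism). Passing to the colimit, $W_w\Hom(\mc{M},\mc{N})$ is constant, i.e. a genuine mixed Hodge structure.

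The main obstacle I expect is verifying the finiteness/stabilization cleanly in the non-holonomic, pro setting: a priori $\mc{M}$ is only a fair pro-object, not coherent, so $\Hom_{\mc{D}}(\mc{M},\mc{N})$ need not be finite-dimensional on the nose, and one has to see that fixing the weight $w$ (together with the automatic bound $F_p^H \mc{N} = 0$ for $p \ll 0$ and $W_\bullet \mc{N}$ finite) confines everything to a finite-dimensional, stabilizing picture. This is exactly the kind of bookkeeping done in \cite[Appendix A]{bezrukavnikov-yun} and in the constructions of \S\ref{subsec:pro mhm}, and I would invoke Proposition \ref{prop:forgetful coherence} together with fairness of $\mc{M}$ to pin down that only finitely many filtration steps of $\mc{M}_n$ can support a weight-$\le w$ map to $\mc{N}$. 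The remaining ingredients — strictness of MHS morphisms, functoriality of the Hodge structure on $\Hom$, the interaction of $F_\bullet^H$ and $W_\bullet$ with $\mc{D}$-linear maps recalled in \S\ref{subsec:twisted mhm} — are all standard and will be used without further comment. A parallel statement for the Hodge filtration $F_p^H\Hom(\mc{M},\mc{N})$ (which is what \emph{Lemma \ref{lem:fair pro hom 2}} presumably needs) follows by the same argument, intersecting the stabilizing finite-dimensional spaces with $F_p^H$ instead of $W_w$.
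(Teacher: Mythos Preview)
Your argument contains a circularity at the crucial step. You claim that the increasing union $\varinjlim_n W_w\Hom(\mc{M}_n,\mc{N})$ stabilizes because it sits inside ``the single finite-dimensional space $W_w\Hom_{\mc{D}}(\mc{M},\mc{N})$ computed from the underlying (fair) data''. But the underlying $\tilde{\mc{D}}$-module $\mc{M}$ is generally not holonomic (only coherent), so $\Hom_{\mc{D}}(\mc{M},\mc{N})$ has no reason to be finite-dimensional, and there is no a priori weight filtration on it; the finite-dimensionality of $W_w\Hom(\mc{M},\mc{N})$ is precisely what you are trying to prove. Invoking Proposition~\ref{prop:forgetful coherence} does not help here: coherence of a filtered module does not bound dimensions of Hom spaces into a holonomic target. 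The Hodge filtration is also a red herring and plays no role.

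The paper's proof is a one-liner using only the weight filtration, and in fact the correct version of the idea you half-state earlier (``such morphisms factor through \ldots quotients that stabilize by fairness''). Pick $w_0$ with $W_{w_0}\mc{N}=0$. Any $f\in W_w\Hom(\mc{M}_n,\mc{N})$ sends $W_{w_0-w}\mc{M}_n$ into $W_{w_0}\mc{N}=0$, so
\[
W_w\Hom(\mc{M}_n,\mc{N}) = W_w\Hom(\mc{M}_n/W_{w_0-w}\mc{M}_n,\mc{N}).
\]
Fairness says exactly that the pro-object $\varprojlim_n \mc{M}_n/W_{w_0-w}\mc{M}_n$ is constant, so the right-hand side is an ind-object over a constant system, hence constant. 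That is the entire proof; no surjectivity of transition maps, no Hodge filtration, and no appeal to finite-dimensionality of an ambient space is needed.
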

\begin{proof}
Pick $w_0 \in \mb{Z}$ such that $W_{w_0}\mc{N} = 0$. Then
\[ W_w \Hom(\mc{M}_n, \mc{N}) = W_w \Hom(\mc{M}_n/W_{w_0 - w}\mc{M}_n, \mc{N}).\]
Since $\mc{M}$ is fair, the pro-object $\varprojlim_n \mc{M}_n/W_{w_0 - w}\mc{M}_n$ is constant. So
\[ \Hom(\mc{M}, \mc{N}) = \varinjlim_n W_w\Hom(\mc{M}_n/W_{w_0 - w}\mc{M}_n, \mc{N}) \]
is also constant.
\end{proof}

\begin{lemma} \label{lem:fair pro hom 2}
Let $\mc{M}, \mc{N}, \mc{P} \in \pro \mhm(\tilde{Q})$ and let $\mc{N} \to \mc{P}$ be a morphism. Then, the morphism
\[ \phi \colon \Hom(\mc{M}, \mc{N}) \to \Hom(\mc{M}, \mc{P}) \]
of pro-ind-mixed Hodge structures is strict with respect to the Hodge and weight filtrations. That is, writing $\mc{M}= \varprojlim_m \mc{M}_m$, $\mc{N} = \varprojlim_n \mc{N}_n$ and $\mc{P} = \varprojlim_n \mc{P}_n$, for each $p$ and each $w$, the morphism
\[ \varprojlim_n F^H_p W_w\Hom(\mc{M}, \mc{N}_n) \to \varprojlim_n F^H_p W_w\Hom(\mc{M}, \mc{P}_n) \cap \phi(\varprojlim_n \Hom(\mc{M}, \mc{N}_n)) \]
is surjective.
\end{lemma}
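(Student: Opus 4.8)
The plan is to reduce the lemma to two standard inputs: the \emph{bistrictness} of morphisms of mixed Hodge structures, and the vanishing of $\varprojlim^1$ for a Mittag--Leffler tower of vector spaces.

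The first input is the assertion that the forgetful functor $\mrm{MHS} \to \{\text{bifiltered vector spaces}\}$, $V \mapsto (V, F_\bullet^H V, W_\bullet V)$, is exact; equivalently, for a short exact sequence $0 \to A \to B \xrightarrow{q} C \to 0$ in $\mrm{MHS}$ one has $q(F_p^H W_w B) = F_p^H W_w C$ for all $p, w$. This follows formally from Deligne's strictness of $\mrm{MHS}$-morphisms with respect to $W$ and to $F^H$ \emph{separately}: given $c \in F_p^H W_w C$, lift it along $q$ to some $b_1 \in W_w B$ and, separately, to some $b_2 \in F_p^H B$; then $b_1 - b_2 \in A$, and since the Hodge filtration on the sub-mixed-Hodge-structure $A/W_w A \hookrightarrow B/W_w B$ is the induced one, the image of $b_1 - b_2$ in $A/W_w A$ lies in $F_p^H(A/W_w A)$, whence $b_1 - b_2 = a_0 + a_1$ with $a_0 \in F_p^H A$ and $a_1 \in W_w A$; now $b := b_1 - a_1 = b_2 + a_0$ lies in $W_w B \cap F_p^H B$ and satisfies $q(b) = c$.

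Next I would reduce to the case of a levelwise morphism. Writing $\mc{M} = \varprojlim_m \mc{M}_m$, $\mc{N} = \varprojlim_n \mc{N}_n$, $\mc{P} = \varprojlim_n \mc{P}_n$, after reindexing the tower $\{\mc{N}_n\}$ cofinally we may assume the given morphism $\mc{N} \to \mc{P}$ is represented by a compatible family $f_n \colon \mc{N}_n \to \mc{P}_n$, so that $\phi = \varprojlim_n \phi_n$ with $\phi_n = \varinjlim_m (f_n)_* \colon \Hom(\mc{M}, \mc{N}_n) \to \Hom(\mc{M}, \mc{P}_n)$. Each $(f_n)_* \colon \Hom(\mc{M}_m, \mc{N}_n) \to \Hom(\mc{M}_m, \mc{P}_n)$ is a morphism of mixed Hodge structures, hence bistrict by the previous paragraph; since filtered colimits are exact and the filtrations $F_\bullet^H$, $W_\bullet$ are functorial, the colimit $\phi_n$ is again bistrict, i.e. $\phi_n(F_p^H W_w \Hom(\mc{M}, \mc{N}_n)) = \im(\phi_n) \cap F_p^H W_w \Hom(\mc{M}, \mc{P}_n)$ for all $p, w$.

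Finally I would carry out the inverse limit, which is the crux. Let $\psi \in \Hom(\mc{M}, \mc{P})$ satisfy $\psi = \phi(\tilde\psi)$ for some $\tilde\psi \in \Hom(\mc{M}, \mc{N})$ and $\psi_n \in F_p^H W_w \Hom(\mc{M}, \mc{P}_n)$ for all $n$. By bistrictness of $\phi_n$, the set $C_n := \{x \in F_p^H W_w \Hom(\mc{M}, \mc{N}_n) : \phi_n(x) = \psi_n\}$ is non-empty, and it is a coset of $K_n := \ker(\phi_n) \cap F_p^H W_w\Hom(\mc{M}, \mc{N}_n)$; the transition maps send $C_{n+1}$ into $C_n$, so the obstruction to $\varprojlim_n C_n$ being non-empty lies in $\varprojlim^1_n K_n$. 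By Lemma \ref{lem:fair pro hom 1} (applied with the fair source $\mc{M}$), $W_w\Hom(\mc{M}, \mc{N}_n)$ is a finite-dimensional mixed Hodge structure, so $\{K_n\}$ is a tower of subspaces of finite-dimensional vector spaces; such a tower is automatically Mittag--Leffler, hence $\varprojlim^1_n K_n = 0$ and $\varprojlim_n C_n \neq \emptyset$. Any element of $\varprojlim_n C_n$ lies in $\varprojlim_n F_p^H W_w\Hom(\mc{M}, \mc{N}_n)$ and maps to $\psi$ under $\phi$, which gives the desired surjectivity. The main obstacle is exactly this last step --- promoting the level-by-level lifts to a compatible family --- which is where a finiteness input is essential (here fairness of the source, via Lemma \ref{lem:fair pro hom 1}); the $\mrm{MHS}$-bistrictness and the reduction to a levelwise morphism are routine.
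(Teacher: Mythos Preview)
Your proposal is correct and follows essentially the same route as the paper: reduce to a levelwise morphism of towers, use (bi)strictness of morphisms of mixed Hodge structures to produce lifts at each level $n$, invoke Lemma~\ref{lem:fair pro hom 1} for the finite-dimensionality that makes the tower of lifts Mittag--Leffler, and pass to the inverse limit. The only cosmetic difference is that you package the final step as vanishing of $\varprojlim^1$ of the kernel tower $\{K_n\}$, whereas the paper argues directly that the images of the fiber-sets $\phi_m^{-1}(f_m)$ stabilize inside each finite-dimensional $F_p^H W_w\Hom(\mc{M},\mc{N}_n)$; these are the same argument.
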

\begin{proof}
First note that, by passing to subsequences if necessary, we can assume without loss of generality that $\mc{N} \to \mc{P}$ is given by a morphism of inverse systems $\mc{N}_n \to \mc{P}_n$. Now, suppose $(f_n)$ is sequence defining an element
\[ (f_n) \in \varprojlim_n F^H_p W_w\Hom(\mc{M}, \mc{P}_n) \cap \phi(\varprojlim_n \Hom(\mc{M}, \mc{N}_n)).\]
Then, for each $n$, there exists $m$ such that
\[ f_n \in F^H_p W_w \Hom(\mc{M}_m, \mc{P}_n) \cap \phi(\Hom(\mc{M}_m, \mc{N}_n)) = \phi(F^H_p W_w\Hom(\mc{M}_m, \mc{N}_n))\]
by strictness. By Lemma \ref{lem:fair pro hom 1}, we have $W_w \Hom(\mc{M}_m, \mc{N}_n) = W_w\Hom(\mc{M}, \mc{N}_n)$ for $m$ large enough, so we can write
\[ f_n \in \phi(F^H_p W_w\Hom(\mc{M}, \mc{N}_n))\]
for all $n$. Now, let us write $\phi_n$ for the morphism
\[ \phi_n = \phi \colon F^H_pW_w\Hom(\mc{M}, \mc{N}_n) \to F^H_p W_w\Hom(\mc{M}, \mc{P}_n).\]
For all $m > n$, the image of $\phi_m^{-1}(f_m)$ in $F^H_pW_w\Hom(\mc{M}, \mc{N}_n)$ is a torsor under a subspace of the finite dimensional vector space $F^H_pW_w\Hom(\mc{M}, \mc{N}_n)$; hence, the image stabilizes for $m \gg n$. In other words, the inverse system $\phi_n^{-1}(f_n)$ has the Mittag-Leffler property, so the inverse limit $\varprojlim_n \phi_n^{-1}(f_n)$ is non-empty. So
\[ (f_n) \in \phi(\varprojlim_n F^H_p W_w\Hom(\mc{M}, \mc{N}_n) )\]
as claimed.
\end{proof}

We note the following corollary for free-monodromic local systems. Assume that $\pi \colon \tilde X \to X$ is an $H$-torsor, that $G$ acts compatibly on $\tilde X$ and $X$, that $j \colon Q \hookrightarrow X$ is the inclusion of a $G$-orbit with connected stabilizers, and that $\tilde{x} \in \tilde{Q}$ is a base point with image $x = \pi(\tilde x)$. For $\lambda \in \mf{h}^*_\mb{R}$ integrating to a character of $G_x$, let $\widetilde{\mc{O}}_Q(\lambda)$ be the corresponding free-monodromic local system on $\tilde{Q}$ equipped with its usual Hodge structure defined by the base point $\tilde x$. Note that the map $\varphi$ identifies $\Gamma_\mb{R}^G(\tilde Q)^{\mathit{mon}}$ with the subspace $(\mf{h}_\mb{R}/\mf{h}_{\mb{R}, x})^* \subset \mf{h}^*_\mb{R}$.

\begin{cor} \label{cor:free-monodromic semi-continuity}
For all $\lambda \in \mf{h}^*_{\mb{R}}$ integrating to a character of $G_x$ and all $\mu \in \varphi(\Gamma_\mb{R}^G(\tilde Q)^{\mathit{mon}}_+)$, we have strict injections
\[ (j_!\widetilde{\mc{O}}_Q(\lambda), F_\bullet^H) \hookrightarrow (j_*\widetilde{\mc{O}}_Q(\lambda), F_\bullet^H) \hookrightarrow (j_!\widetilde{\mc{O}}_Q(\lambda + \mu), F_\bullet^H)\]
such that $(j_*\widetilde{\mc{O}}_Q(\lambda), F_\bullet^H) = (j_!\widetilde{\mc{O}}_Q(\lambda + \epsilon \mu), F_\bullet^H)$ for $0 < \epsilon \ll 1$.
\end{cor}
\begin{proof}
Let $f \in \Gamma_\mb{R}^G(\tilde Q)^{\mathit{mon}}_+$ be the unique element with $\varphi(f) = \mu$. Then
\[ \widetilde{\mc{O}}_{Q}(\lambda + \mu) = f\widetilde{\mc{O}}_{Q}(\lambda).\]
Since $\widetilde{\mc{O}}_{Q}(\lambda)$ is a projective object in $\pro \Mod_{\widetilde{\lambda}}^G(\tilde Q)$, the surjective morphism
\[ f^s\widetilde{\mc{O}}_{Q}(\lambda)[[s]] \to \widetilde{\mc{O}}_{Q}(\lambda) \]
must split as a morphism of pro-$\mc{D}$-modules. Applying Theorem \ref{thm:pro semi-continuity} now gives the result.
\end{proof}

\section{Intertwining functors and the Hodge filtration} \label{sec:intertwining}

In this section, we write down the theory of intertwining functors for Hodge modules. In \S\ref{subsec:localization}, we recall the basic statement of Beilinson-Bernstein localization. In \S\ref{subsec:intertwining construction}, we study the completed monodromic Hecke category and its convolution action on monodromic $\mc{D}$-modules and mixed Hodge modules on the flag variety, and use this to construct the Hodge and filtered intertwining functors. In \S\ref{subsec:affine hecke}, we use the results of \S\ref{subsec:semi-continuity} to deduce that the intertwining functors and line bundle twists satisfy the Bernstein relations for the affine Hecke algebra. Finally, we show that the filtered intertwining functors commute with derived global sections in \S\ref{subsec:intertwining}.

\subsection{Recollections on localization} \label{subsec:localization}

In this subsection, we record our notation and conventions for reductive groups and briefly recall the main constructions of Beilinson-Bernstein localization.

We fix from now on a connected complex reductive group $G$ with Lie algebra $\mf{g}$. We write $\mc{B}$ for the flag variety (the variety of Borel subgroups of $G$, or equivalently, Borel subalgebras of $\mf{g}$), $H$ for the abstract Cartan, and $\mf{h} = \mrm{Lie}(H)$. If we fix a maximal torus and Borel subgroup $T \subset B \subset G$, then we have natural identifications $\mc{B} \cong G/B$ and $H \cong B/N \cong T$, where $N$ is the unipotent radical of $B$. We thus obtain roots and coroots
\[ \Phi \subset \mb{X}^*(H) := \Hom(H, \mb{C}^\times) \quad \text{and} \quad \check\Phi \subset \mb{X}_*(H) := \Hom(\mb{C}^\times, H)\]
and a Weyl group $W = N_G(T)/T$ acting on $T = H$. We write $\Phi_+, \Phi_- \subset \Phi$ for the corresponding sets of positive and negative roots; here we adopt the convention that the negative roots $\Phi_-$ are the weights of $T$ acting on $\mrm{Lie}(N)$ and $\Phi_+ = -\Phi_-$. The based root datum constructed in this way is canonically independent of the choice of $T$ and $B$.

The flag variety $\mc{B}$ comes equipped with a canonical $G$-equivariant $H$-torsor $\tilde{\mc{B}}$ ($=G/N$ for any choice of Borel as above), called the \emph{base affine space}, and thus a sheaf of rings $\tilde{\mc{D}}$ as in \S\ref{subsec:twisted mhm}. For $\lambda \in \mf{h}^*$, we set
\[ \mc{D}_\lambda = \tilde{\mc{D}} \otimes_{S(\mf{h}), \lambda - \rho} \mb{C},\]
where $\rho = \frac{1}{2}\sum_{\alpha \in \Phi_+} \alpha$. With these conventions, the line bundle $\mc{O}(\mu) = \tilde{\mc{B}} \times^H \mb{C}_\mu$ is a $\mc{D}_{\mu + \rho}$-module for $\mu \in \mb{X}^*(H)$. Our convention for positive roots is such that $\mc{O}(\mu)$ is ample if and only if $\mu$ is regular dominant. Note that
\[ \Mod(\mc{D}_\lambda) = \Mod_{\lambda - \rho}(\mc{D}_{\tilde{\mc{B}}}).\]
By analogy, we write
\[ \Mod(\mc{D}_{\widetilde{\lambda}}) = \Mod_{\widetilde{\lambda - \rho}}(\mc{D}_{\widetilde{\mc{B}}}), \quad \mhm(\mc{D}_{\widetilde{\lambda}}) = \mhm_{\widetilde{\lambda - \rho}}(\tilde{\mc{B}}), \quad \text{etc.}\]

The starting point for Beilinson-Bernstein localization \cite{BB1} is that the $G$-action on $\tilde{\mc{B}}$ defines an algebra homomorphism $U(\mf{g}) \to \tilde{\mc{D}}$. It is shown in \cite{BB1} that for all $\lambda \in \mf{h}^*$, the composition $\mf{Z}(\mf{g}) \to U(\mf{g}) \to \tilde{\mc{D}} \to \mc{D}_\lambda$ factors through the infinitesimal character $\chi_\lambda$ associated to $\lambda$ via the Harish-Chandra isomorphism. Thus, we have a derived functor
\begin{equation} \label{eq:big bb functor}
\mrm{R}\Gamma \colon \mrm{D}^b\coh(\tilde{\mc{D}}) \to \mrm{D}^b \Mod_{fg}(U(\mf{g}))
\end{equation}
restricting to
\begin{equation} \label{eq:small bb functor}
\mrm{R}\Gamma \colon \mrm{D}^b\coh(\mc{D}_{\widetilde{\lambda}}) \to \mrm{D}^b\Mod_{fg}(U(\mf{g}))_{\widetilde{\chi_\lambda}}.
\end{equation}
The main theorem of the subject is that the latter functor is t-exact (i.e., it restricts to an exact functor between the corresponding abelian categories) when $\lambda$ is integrally dominant and an equivalence when $\lambda$ is regular.

Dually, one also has the \emph{localization functor}
\begin{align*}
\mrm{L}\Delta_{\widetilde{\lambda}} \colon \mrm{D}^b \Mod_{fg}(U(\mf{g}))_{\widetilde{\chi_\lambda}} &\to \mrm{D}^b \coh(\mc{D}_{\widetilde{\lambda}}), \\
M &\mapsto (\tilde{\mc{D}} \overset{\mrm{L}}\otimes_{U(\mf{g})} M)_{\widetilde{\lambda - \rho}},
\end{align*}
where $(-)_{\widetilde{\lambda - \rho}}$ denotes the projection to the $(\lambda - \rho)$-generalized eigenspace of $S(\mf{h}) \subset \tilde{\mc{D}}$. The functor $\mrm{L}\Delta_{\widetilde{\lambda}}$ is left adjoint to \eqref{eq:small bb functor} by construction, and right adjoint to \eqref{eq:small bb functor} by Serre duality. In particular, just as for $\mrm{R}\Gamma$, $\mrm{L}\Delta_{\widetilde{\lambda}}$ is t-exact whenever $\lambda$ is integrally dominant: note that this is \emph{not} the case in the twisted (rather than monodromic) version of the theory, see, e.g., \cite[Theorem 2.1]{HMSW2}. 

\subsection{The Hodge and filtered intertwining functors} \label{subsec:intertwining construction}

We now consider a filtered version of the above story. By construction, the homomorphism $U(\mf{g}) \to \tilde{\mc{D}}$ is compatible with the PBW filtration $F_\bullet U(\mf{g})$ and the order filtration $F_\bullet \tilde{\mc{D}}$. We may therefore lift \eqref{eq:big bb functor} to a functor
\[ \mrm{R}\Gamma \colon \mrm{D}^b\coh(\tilde{\mc{D}}, F_\bullet) \to \mrm{D}^b\Mod_{fg}(U(\mf{g}), F_\bullet)\]
from the filtered derived category of coherent $\tilde{\mc{D}}$-modules with good filtration to the filtered derived category of finitely generated $U(\mf{g})$-modules with good filtration. This has a left adjoint $\mrm{L}\Delta$ defined by
\[ \mrm{L}\Delta(M, F_\bullet) = (\widetilde{\mc{D}}, F_\bullet) \overset{\mrm{L}}\otimes_{(U(\mf{g}), F_\bullet)} (M, F_\bullet).\]

In \cite[\S 5-7]{DV}, Vilonen and the first named author studied the composition
\begin{equation} \label{eq:hodge bb functor}
 \mrm{D}^b\mhm(\mc{D}_{\widetilde{\lambda}}) \to \mrm{D}^b\coh(\tilde{\mc{D}}, F_\bullet) \to \mrm{D}^b\Mod_{fg}(U(\mf{g}), F_\bullet).
\end{equation}
and proved some analogs of the localization theorems in this setting. For example, it was shown that \eqref{eq:hodge bb functor} is filtered t-exact (i.e., the composition with $\Gr^F$ is t-exact) whenever $\lambda \in \mf{h}^*_\mb{R}$ is dominant \cite[Theorem 5.2]{DV}. Note that the original proof of Beilinson and Bernstein's theorem, which works by tensoring with finite dimensional representations and splitting via the action of the center, does not generalize to the filtered or Hodge settings. Instead, the localization theorems in \cite{DV} are proved by studying the functor
\[ \mrm{L}\Delta \circ \mrm{R}\Gamma \colon \mrm{D}^b\coh(\tilde{\mc{D}}, F_\bullet) \to \mrm{D}^b\coh(\tilde{\mc{D}}, F_\bullet)\]
and relating it to well-understood convolution functors on the categories of mixed Hodge modules.

Our approach to intertwining functors works in a similar way. We begin in this subsection by writing down appropriate categories of convolution functors on Hodge and filtered modules, including the Hodge and filtered intertwining functors in particular. We then study how these interact with $\mrm{L}\Delta \circ \mrm{R}\Gamma$ and with each other in subsequent subsections, and use this to deduce the results stated in the introduction.

Consider the product $\tilde{\mc{B}} \times \tilde{\mc{B}}$, regarded as an $H \times H$-torsor over $\mc{B} \times \mc{B}$ equivariant under the diagonal action of $G$. Applying the constructions of \S\ref{subsec:pro mhm}, we obtain for all $\lambda, \mu \in \mf{h}^*_\mb{R}$ a category
\[ \widehat{\mrm{D}}^b_G\mhm(\mc{D}_{\widetilde{\mu}} \boxtimes \mc{D}_{\widetilde{-\lambda}}) := \widehat{\mrm{D}}^b_G\mhm_{\widetilde{\mu - \rho}, \widetilde{-\lambda - \rho}}(\tilde{\mc{B}} \times \tilde{\mc{B}}) \]
and a functor
\begin{align*}
\widehat{\mrm{D}}^b_G\mhm(\mc{D}_{\widetilde{\mu}} \boxtimes \mc{D}_{\widetilde{-\lambda}}) &\to \mrm{D}^b_{\mrm{St}}\coh^G(\tilde{\mc{D}} \boxtimes \tilde{\mc{D}}, F_\bullet) \\
\mc{K} &\mapsto (\mc{K}, F_\bullet^H),
\end{align*}
where $\mrm{St} \cong \tilde{\mf{g}}^* \times_{\mf{g}^*} \tilde{\mf{g}}^*$ is the fiber over $0$ of the moment map
\[ \tilde{\mf{g}}^* \times \tilde{\mf{g}}^* = T^*\tilde{\mc{B}}/H \times T^*\tilde{\mc{B}}/H \to \mf{g}^*.\]

Let us describe some objects in these categories. Recall that $G$ acts on $\mc{B} \times \mc{B}$ with finitely many orbits, indexed naturally by the (abstract) Weyl group $W$. To fix notation, for $w \in W$, we choose a base point $x \in \mc{B}$ and a maximal torus $T$ fixing $x$ (and hence identified with $H$) and set
\[ X_w = G \cdot (x, \tilde{w}x) \]
where $\tilde{w} \in N_G(T)$ is a lift of $w \in W = N_G(T)/T$. The orbit $X_w$ is independent of the choices of $x$, $T$ and $\tilde{w}$. The stabilizer of $(x, \tilde{w}x)$ is an extension of $T$ by a unipotent group, hence connected, so we are in the situation of \S\ref{subsec:free-monodromic}. The associated homomorphism $T \to H \times H$ is $t \mapsto (t, w^{-1}(t))$. So by Proposition \ref{prop:loc sys classification}, $X_w$ either has no equivariant twisted local systems (if $\mu - w\lambda \not\in\mb{X}^*(H)$) or else the category of equivariant twisted local systems on $X_w$ is equivalent to that of nilpotent modules over the ring
\[ R_w := R_{(x, \tilde{w}x)} = \frac{S(\mf{h}) \otimes S(\mf{h})}{(h \otimes 1 + 1 \otimes w^{-1}(h) \mid h \in \mf{h})} \cong S(\mf{h}).\]
Assuming $\mu - w\lambda \in \mb{X}^*(H)$, we thus have an irreducible (resp., free-monodromic) twisted local system $\mc{O}_{X_w}(\mu - \rho, -\lambda - \rho)$ and (resp., $\widetilde{\mc{O}}_{X_w}(\mu - \rho, -\lambda - \rho)$) on $X_w$.

As in \S\ref{subsec:free-monodromic}, to fix a Hodge module structure on $\widetilde{\mc{O}}_{X_w}(\mu - \rho, -\lambda - \rho)$, we need to choose a base point in the pre-image $\tilde{X}_w$ of $X_w$ in $\tilde{\mc{B}} \times \tilde{\mc{B}}$. It will be helpful to make our choices compatibly for different $w$ as follows. Fix a compact form $U_\mb{R}$ of $G$, a base point $x \in \mc{B}$ and a point $\tilde{x} \in \pi^{-1}(x) \subset \tilde{\mc{B}}$. Let $T^c = \mrm{Stab}_{U_\mb{R}}(x)$ (a maximal torus in $U_\mb{R}$) and $\tilde{W} = N_{U_\mb{R}}(T^c)$; the group $\tilde{W}$ is an extension of the Weyl group $W$ by the compact torus $T^c$. Choose $\tilde{w} \in \tilde{W}$ and define the Hodge structure on $\widetilde{\mc{O}}_{X_w}(\mu - \rho, -\lambda - \rho)$ using the base point $(\tilde{x}, \tilde{w}\tilde{x})$. The $G \times H^c \times H^c$-orbit of this point, and hence the associated Hodge structure (Remark \ref{rmk:hodge change of base point}), depends only on $\tilde{x}$ and $U_\mb{R}$.

The following objects will play an important role.

\begin{notation}
For $\lambda \in \mf{h}^*_\mb{R}$, $\nu \in \mb{X}^*(H)$ and $w \in W$, we set
\[ \tilde{\Delta}_w^{(\lambda)} = j_{w!}\widetilde{\mc{O}}_{X_w}(w\lambda - \rho, -\lambda - \rho) (-\dim \mc{B}) \in \widehat{\mhm}(\mc{D}_{\widetilde{w\lambda}} \boxtimes \mc{D}_{\widetilde{-\lambda}}, G),\]
\[ \tilde{\nabla}_w^{(\lambda)} = j_{w*}\widetilde{\mc{O}}_{X_w}(w\lambda - \rho, -\lambda - \rho)(\ell(w) -\dim \mc{B}) \in \widehat{\mhm}(\mc{D}_{\widetilde{w\lambda}} \boxtimes \mc{D}_{\widetilde{-\lambda}}, G),\]
and
\begin{align*}
\tilde{\delta}_\nu^{(\lambda)} &= j_{1!}\widetilde{\mc{O}}_{X_1}(\lambda + \nu - \rho, -\lambda - \rho) (-\dim\mc{B}) \\
&= j_{1*}\widetilde{\mc{O}}_{X_1}(\lambda + \nu - \rho, -\lambda - \rho) (-\dim\mc{B}) \in \widehat{\mhm}(\mc{D}_{\widetilde{\lambda + \nu}} \boxtimes \mc{D}_{\widetilde{-\lambda}}, G),
\end{align*}
where $j_w \colon X_w \to \mc{B} \times \mc{B}$ is the inclusion. We will occasionally use the same notation without the tilde for the objects defined with the irreducible local system $\mc{O}_{X_w}$ in place of the free-monodromic local system $\widetilde{\mc{O}}_{X_w}$.
\end{notation}

Note that while the Hodge structures on the above objects depend on our choices of $U_\mb{R}$ and $\tilde{x}$, by Proposition \ref{prop:filtration uniqueness}, the underlying filtered $\tilde{\mc{D}} \boxtimes \tilde{\mc{D}}$-modules do not. Moreover, by the results of \S\ref{subsec:semi-continuity}, we have the following relations between these filtered modules as $\lambda$ varies. In the statement below, let $\mf{h}^*_{\mb{R}, +} \subset \mf{h}^*_\mb{R}$ denote the set of regular dominant elements.

\begin{cor} \label{cor:intertwining semi-continuity}
For all $\lambda \in \mf{h}^*_\mb{R}$ and $\mu \in w^{-1}\mf{h}^*_{\mb{R}, +} - \mf{h}^*_{\mb{R}, +}$, we have strict injections
\[ (\tilde{\Delta}_w^{(\lambda)}, F_\bullet^H) \hookrightarrow (\tilde{\nabla}_w^{(\lambda)}, F_\bullet^H)\{-\ell(w)\} \hookrightarrow (\tilde{\Delta}_w^{(\lambda + \mu)}, F_\bullet^H)\]
such that $(\tilde{\nabla}_w^{(\lambda)}, F_\bullet^H)\{-\ell(w)\} = (\tilde{\Delta}_w^{(\lambda + \epsilon \mu)}, F_\bullet^H)$ for $0 < \epsilon \ll 1$.
\end{cor}
\begin{proof}
By \cite[Proposition 10.3]{DV} applied to the symmetric pair $(G \times G, G)$, the map
\[ \varphi \colon \Gamma_\mb{R}^G(\tilde{X}_w)_+^{\mathit{mon}} \to \mf{h}^*_\mb{R} \oplus \mf{h}^*_\mb{R} \]
is injective, with image
\[ \{(\nu_1 - w\nu_2, \nu_2 - w^{-1}\nu_1) \mid \nu_1, \nu_2 \in \mf{h}^*_{\mb{R}, +}\}.\]
The result now follows from the definitions and Corollary \ref{cor:free-monodromic semi-continuity}.
\end{proof}

\begin{rmk} \label{rmk:cone}
It is easy to see that, for $w = 1$ (resp., a simple reflection $s_\alpha$), the cone $w^{-1}\mf{h}^*_{\mb{R}, +} - \mf{h}^*_{\mb{R}, +}$ appearing in Corollary \ref{cor:intertwining semi-continuity} is equal to $\mf{h}^*_\mb{R}$ (resp., $\{\mu \in \mf{h}^*_\mb{R} \mid \langle \mu, \check\alpha\rangle < 0\}$). More generally, we have
\[ w^{-1}\mf{h}^*_{\mb{R}, +} - \mf{h}^*_{\mb{R}, +} = \{\mu \in \mf{h}^*_\mb{R} \mid \text{if $\alpha \in \Phi_+$ and $w\alpha \in \Phi_-$ then $\langle \mu, \check\alpha \rangle < 0$}\},\]
although we will not use this fact.
\end{rmk}

Similarly, we have:

\begin{cor} \label{cor:translation independence}
We have
\[ (\tilde{\Delta}_1^{(\lambda)}, F^H_\bullet) \cong (\tilde{\Delta}_1^{(0)}, F^H_\bullet) \quad \text{and} \quad (\tilde{\delta}_\nu^{(\lambda)}, F^H_\bullet) \cong (\tilde{\delta}_\nu^{(0)}, F_\bullet^H) \]
for all $\lambda \in \mf{h}^*_\mb{R}$ and all $\nu \in \mb{X}^*(H)$.
\end{cor}

In view of Corollary \ref{cor:translation independence}, we will sometimes write
\[ (\tilde{\delta}_\nu, F_\bullet^H) = (\tilde{\delta}^{(\lambda)}_\nu, F_\bullet^H).\]

Now, our main interest in the categories $\widehat{\mrm{D}}^b_G\mhm(\mc{D}_{\widetilde{\mu}} \boxtimes \mc{D}_{\widetilde{-\lambda}})$ is that they act on the derived categories of monodromic mixed Hodge modules on $\tilde{\mc{B}}$. More precisely, we have a \emph{convolution functor}
\[ * \colon \widehat{\mrm{D}}^b_G \mhm(\mc{D}_{\widetilde{\mu}} \boxtimes \mc{D}_{\widetilde{-\lambda}}) \times \mrm{D}^b \mhm(\mc{D}_{\widetilde{\lambda}}) \to \mrm{D}^b \mhm(\mc{D}_{\widetilde{\mu}}) \]
given as follows. For
\[ \mc{K} \in \widehat{\mrm{D}}^b_G\mhm(\mc{D}_{\widetilde{\mu}} \boxtimes \mc{D}_{\widetilde{-\lambda}}) \quad \text{and} \quad \mc{M} \in \mrm{D}^b\mhm(\mc{D}_{\widetilde{\lambda}}),\]
we regard $\mc{K}$ and $\mc{M}$ as (pro-)complexes of mixed Hodge modules on $\tilde{\mc{B}} \times \tilde{\mc{B}}$ and $\tilde{\mc{B}}$ respectively and set
\begin{equation} \label{eq:mhm convolution}
\mc{K} * \mc{M} = \widetilde{\mrm{pr}}_{1*}\tilde{D}_{23}^\circ(\mc{K} \boxtimes \mc{M})(\dim \tilde{\mc{B}}) \in \pro \mrm{D}^b\mhm(\tilde{\mc{B}})
\end{equation}
where $\widetilde{\mrm{pr}}_1$ and $\tilde{D}_{23}$ are the morphisms
\begin{align*}
\tilde{\mc{B}} \times \tilde{\mc{B}} \times \tilde{\mc{B}} \xleftarrow{\tilde{D}_{23}} & \tilde{\mc{B}} \times \tilde{\mc{B}} \xrightarrow{\widetilde{\mrm{pr}}_1} \tilde{\mc{B}} \\
(x, y, y) \mapsfrom & (x, y) \mapsto x
\end{align*}
and $\tilde{D}_{23}^\circ = \tilde{D}_{23}^*[-\dim \tilde{\mc{B}}]$ is the intermediate pullback.

\begin{prop} \label{prop:good convolution}
We have
\[ \mc{K} * \mc{M} \in \mrm{D}^b\mhm(\mc{D}_{\widetilde{\mu}}) \subset \pro \mrm{D}^b\mhm(\tilde{\mc{B}}).\]
\end{prop}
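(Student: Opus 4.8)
The object $\mc{K} * \mc{M}$ is defined in \eqref{eq:mhm convolution} as a pro-object of $\mrm{D}^b\mhm(\tilde{\mc{B}})$, so there are two things to prove: that this pro-object is \emph{constant} (hence lies in $\mrm{D}^b\mhm(\tilde{\mc{B}})$), and that it is $(\mu - \rho)$-monodromic (hence lies in $\mrm{D}^b\mhm(\mc{D}_{\widetilde{\mu}}) = \mrm{D}^b\mhm_{\widetilde{\mu - \rho}}(\tilde{\mc{B}})$). The second point is routine: writing $\mc{K} = \varprojlim_n \mc{K}_n$ with $\mc{K}_n \in \mrm{D}^b_G\mhm(\mc{D}_{\widetilde{\mu}} \boxtimes \mc{D}_{\widetilde{-\lambda}})$, each $\mc{K}_n * \mc{M}$ is built from genuine complexes of mixed Hodge modules by the six operations, so $\mc{K}_n * \mc{M} \in \mrm{D}^b\mhm(\tilde{\mc{B}})$; and tracing the three $H$-torsor structures on $\tilde{\mc{B}}^3$ through $\tilde{D}_{23}^\circ$ (which merges the second and third factors) and $\widetilde{\mrm{pr}}_{1*}$ (which integrates over everything but the first) shows that only the first-factor monodromy, of generalized eigenvalue $\mu - \rho$, survives --- here the $\rho$-shifts in $\mhm(\mc{D}_{\widetilde{\lambda}}) = \mhm_{\widetilde{\lambda - \rho}}(\tilde{\mc{B}})$, in the kernel twists $\widetilde{\mc{O}}_{X_w}(\,\cdot - \rho,\, \cdot - \rho)$, and in the $(\dim\tilde{\mc{B}})$-twist conspire to make the bookkeeping come out. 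So $\mc{K}_n * \mc{M} \in \mrm{D}^b\mhm(\mc{D}_{\widetilde{\mu}})$, and it remains to prove constancy.

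The mechanism behind constancy is that convolution with $\mc{M}$ \emph{truncates} the free monodromy of the kernel $\mc{K}$. Since $\mc{M} \in \mrm{D}^b\mhm(\mc{D}_{\widetilde{\lambda}})$ is a bounded complex of coherent $\tilde{\mc{D}}$-modules, there is an $N$ with $(h - (\lambda - \rho)(h))^N = 0$ on all of $\mc{M}$ for every $h \in \mf{h}$; that is, the monodromy of $\mc{M}$ is nilpotent of bounded order. In the convolution, $\tilde{D}_{23}^\circ$ couples the monodromy of $\mc{M}$ to the second-factor monodromy of $\mc{K}$ --- the `free' direction appearing in the completion that defines $\mc{K}$, compare the ring $R_w \cong S(\mf{h})$ --- and the effect is that $\mc{K} * \mc{M}$ depends on $\mc{K}$ only through a bounded-order truncation $\mc{K}'$ of the completion, which is a \emph{genuine} (constant) complex of $G$-equivariant mixed Hodge modules, with $\mc{K} * \mc{M} \cong \mc{K}' * \mc{M}$. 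This is the Hodge-theoretic enhancement of the classical fact that convolving a free-monodromic kernel on $\mc{B} \times \mc{B}$ with a (nilpotently) monodromic $\mc{D}$-module on $\mc{B}$ yields an honest coherent module. Once $\mc{K}'$ is genuine, $\mc{K}' * \mc{M}$ is, by \eqref{eq:mhm convolution}, a composite of pullbacks and a push-forward (proper downstairs along $\mc{B} \times \mc{B} \to \mc{B}$, since $\mc{B}$ is complete) applied to genuine complexes, hence lies in $\mrm{D}^b\mhm(\tilde{\mc{B}})$. Combined with the first paragraph, $\mc{K} * \mc{M} \cong \mc{K}' * \mc{M} \in \mrm{D}^b\mhm(\mc{D}_{\widetilde{\mu}})$, as required.

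Alternatively, and perhaps more transparently, one can first reduce to the case $\mc{K} = \tilde{\Delta}_w^{(\lambda)}$ or $\tilde{\nabla}_w^{(\lambda)}$: these generate $\widehat{\mrm{D}}^b_G\mhm(\mc{D}_{\widetilde{\mu}} \boxtimes \mc{D}_{\widetilde{-\lambda}})$ under triangles and the convolution is exact, and for the standard and costandard kernels the free-monodromic local system $\widetilde{\mc{O}}_{X_w}$ has explicit monodromy module $\widehat{R}_w$, which one checks is replaced by a finite-length quotient upon convolving with $\mc{M}$. To make either approach rigorous, the machinery of \S\ref{subsec:pro mhm} (following \cite[Appendix]{bezrukavnikov-yun}) is used to track the pro-structures throughout. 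The step I expect to be the main obstacle is making precise the coupling between the monodromy of $\mc{M}$ and the free second-factor monodromy of $\mc{K}$ through $\tilde{D}_{23}^\circ$, and verifying that $\widetilde{\mrm{pr}}_{1*}$ --- a push-forward along a \emph{non-proper} morphism whose relevant fibre direction carries, after this coupling, a mixture of the free monodromy of $\mc{K}$ and the bounded nilpotent monodromy of $\mc{M}$ --- nonetheless produces a cohomologically bounded, genuine object. This is exactly where the $\rho$-shift and sign conventions entering $\mc{D}_\lambda$, $\mhm(\mc{D}_{\widetilde{\lambda}})$ and the kernels $\widetilde{\mc{O}}_{X_w}(\,\cdot - \rho,\, \cdot - \rho)$ must be handled carefully; everything else --- the eigenvalue computation and the manipulation of pro-objects --- is formal.
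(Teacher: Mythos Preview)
Your intuition is correct --- the nilpotent monodromy of $\mc{M}$ should cut the pro-system down to something constant --- but there is a genuine gap in the execution. You assert that the ``bounded-order truncation $\mc{K}'$ of the completion'' is a \emph{genuine} (constant) complex of mixed Hodge modules. This does not follow from the goodness hypothesis: Definition~\ref{defn:good pro-object} says only that $\mc{K} \overset{\mrm{L}}\otimes_{S(\mf{h}(1)) \otimes S(\mf{h}(1))} \mb{C}$ is constant, i.e.\ after killing \emph{both} monodromies. Truncating only in the second factor need not produce a constant object a priori --- think of a formal bidisk, which remains a pro-object after truncating in one direction. The paper's proof supplies exactly the missing step: using the $G$-orbit stratification of $\mc{B}\times\mc{B}$ and Proposition~\ref{prop:hodge local systems}, one observes that on each $X_w$ the map from the second-factor $S(\mf{h}(1))$ to $R_w$ is an isomorphism, so the single-factor tensor $\mc{K}\overset{\mrm{L}}\otimes_{S(\mf{h}(1))}\mb{C}$ is a direct summand of the double-factor one and hence constant. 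Your alternative route via $\tilde{\Delta}_w^{(\lambda)}$, $\tilde{\nabla}_w^{(\lambda)}$ runs into the same issue (and you would also need to argue these generate).

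A second, smaller gap: even after establishing that the relevant truncation is constant, your claim that $\mc{K}*\mc{M}$ literally equals $\mc{K}'*\mc{M}$ for a single fixed truncation $\mc{K}'$ is not obviously correct --- the pushforward $\widetilde{\mrm{pr}}_{1*}$ mixes monodromy levels. The paper avoids this by a two-step argument: first it cites \cite[Proposition 7.4]{DV} to deduce that each cohomology sheaf $\mc{H}^i(\mc{K}*\mc{M})$ is constant, then it uses an auxiliary lemma (Lemma~\ref{lem:constant exact}: in a pro-triangulated category with finite-dimensional Homs, if two vertices of an exact triangle are constant then so is the third) together with uniform cohomological boundedness of the truncated convolutions to conclude that the full complex is constant. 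You identified the non-proper pushforward as the ``main obstacle,'' which is right, but the resolution is this indirect cohomology-then-triangle argument rather than a direct truncation.
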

\begin{proof}
Since $\mc{K}$ is good, we have by definition that the object
\begin{equation} \label{eq:good convolution 1}
\mc{K} \overset{\mrm{L}}\otimes_{S(\mf{h}(1)) \otimes S(\mf{h}(1))}  \mb{C} \in \pro \mrm{D}^b_G\mhm(\mc{D}_{\widetilde{\mu}} \boxtimes \mc{D}_{\widetilde{-\lambda}})
\end{equation}
is constant. We claim that in fact
\begin{equation} \label{eq:good convolution 2}
 \mc{K} \overset{\mrm{L}}\otimes_{S(\mf{h}(1))} \mb{C}
\end{equation}
is constant as well, where $S(\mf{h}(1))$ acts via the monodromy on (say) the second copy of $\tilde{\mc{B}}$. To see this, it suffices to consider the case where $\mc{K} = j_{w*}\mc{K}'$ for some $w \in W$ and some $\mc{K}' \in \widehat{\mhm}_{\widetilde{\mu - \rho}, \widetilde{-\lambda - \rho}}(\tilde X_w)$. Since the quotient map $S(\mf{h}(1)) \to R_w$ is an isomorphism, it follows from the equivalence of Proposition \ref{prop:hodge local systems} that
\[ \mc{K}' \overset{\mrm{L}} \otimes_{S(\mf{h}(1)) \otimes S(\mf{h}(1))} \mb{C} \cong \bigoplus_n \wedge^n\mf{h}(1)[n] \otimes \mc{K}' \overset{\mrm{L}}\otimes_{S(\mf{h}(1))} \mb{C}.\]
So \eqref{eq:good convolution 2} is a direct summand of \eqref{eq:good convolution 1}, hence constant as claimed.

Now, by \cite[Proposition 7.4]{DV}, the constancy of \eqref{eq:good convolution 2} implies that the cohomology sheaves $\mc{H}^i(\mc{K} *\mc{M})$ are themselves constant pro-objects for all $i$. To conclude that $\mc{K}*\mc{M}$ is itself constant, we argue as follows. First, we may write
\[ \mc{K} * \mc{M} = \varprojlim_n (\mc{K} \overset{\mrm{L}} \otimes_{S(\mf{h}(1))} S(\mf{h}(1))_n) * \mc{M},\]
where $S(\mf{h}(1))_n = S(\mf{h}(1))/I^n$ for $I$ the augmentation ideal. Since \eqref{eq:good convolution 2} is constant, each term on the right hand side is an object in
\[ \mrm{D}^{[a, b]}_G\mhm(\mc{D}_{\widetilde{\mu}} \boxtimes \mc{D}_{\widetilde{-\lambda}}) \]
for some $a, b \in \mb{Z}$ independent of $n$. We now conclude by inductively applying the following lemma to the (inverse limits of) exact triangles
\[ \tau^{< i}(\mc{K} * \mc{M}) \to \tau^{\leq i}(\mc{K} * \mc{M}) \to \mc{H}^i(\mc{K} * \mc{M})[-i] \to \tau^{< i}(\mc{K} * \mc{M})[1].\]
\end{proof}

\begin{lem} \label{lem:constant exact}
Let $\mc{C}$ be a triangulated category linear over a field with finite dimensional morphism spaces and let
\[ A_n \to B_n \to C_n \to A_n[1], \quad n \in \mb{Z}_{> 0}\]
be an inverse system of exact triangles in $\mc{C}$. If the pro-objects $\varprojlim_n A_n$ and $\varprojlim_n C_n$ are constant, then so is $\varprojlim_n B_n$.
\end{lem}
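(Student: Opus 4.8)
\emph{Proof proposal.} The plan is to reconstruct the putative limit of the $B_n$ inside $\mc{C}$ by coning the limiting connecting map, and then to identify this object with $\varprojlim_n B_n$ using the five lemma, the one nontrivial input being a Mittag--Leffler argument made available by the finite-dimensionality hypothesis. Write $c \colon \mc{C} \to \pro\mc{C}$ for the constant pro-object functor. First fix objects $A, C \in \mc{C}$ together with isomorphisms $c(A) \xrightarrow{\sim} \varprojlim_n A_n$ and $c(C) \xrightarrow{\sim} \varprojlim_n C_n$ in $\pro\mc{C}$; concretely these are compatible systems of morphisms $a_n \colon A \to A_n$ and $c_n \colon C \to C_n$ that become invertible in $\pro\mc{C}$. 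Since the shift on $\pro\mc{C}$ is computed termwise, $\varprojlim_n A_n[1]$ is likewise constant with value $A[1]$. The morphisms $h_n$ assemble into a morphism $\varprojlim_n C_n \to \varprojlim_n A_n[1]$ in $\pro\mc{C}$; transporting it through the chosen isomorphisms and using $\Hom_{\pro\mc{C}}(c(C), c(A[1])) = \Hom_{\mc{C}}(C, A[1])$, we obtain a genuine morphism $h \colon C \to A[1]$ in $\mc{C}$. Chasing the identifications shows that $h$ satisfies $h_n \circ c_n = a_n[1] \circ h$ for every $n$; this compatibility is the key point that makes the argument run. Now choose a distinguished triangle $A \xrightarrow{f} B \xrightarrow{g} C \xrightarrow{h} A[1]$ in $\mc{C}$. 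This $B$ is the candidate value.

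Next I would produce a \emph{compatible} system of morphisms $b_n \colon B \to B_n$ refining $(A,B,C)$ to a morphism of triangles $(A,B,C) \to (A_n,B_n,C_n)$ with first and third components $a_n$ and $c_n$. For each fixed $n$, axiom (TR3) applied to the rotated triangles $C \xrightarrow{h} A[1] \to B[1] \to C[1]$ and $C_n \xrightarrow{h_n} A_n[1] \to B_n[1] \to C_n[1]$, together with the commuting square $h_n c_n = a_n[1] h$, produces some such $b_n$. The set $S_n$ of all valid $b_n$ is a nonempty coset of the finite-dimensional subspace $W_n = \{\, b \in \Hom_\mc{C}(B, B_n) : bf = 0,\ g_n b = 0 \,\}$, and the transition maps $\Hom_\mc{C}(B, B_{n+1}) \to \Hom_\mc{C}(B, B_n)$ carry $S_{n+1}$ into $S_n$ compatibly with this torsor structure. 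Since $\{W_n\}$ is an inverse system of finite-dimensional vector spaces it is Mittag--Leffler, so $\varprojlim^{1}_{n} W_n = 0$ and the inverse system of torsors $\{S_n\}$ has nonempty inverse limit. Choosing an element yields a compatible system $\{b_n\}$ and hence a morphism $\beta \colon c(B) \to \varprojlim_n B_n$ in $\pro\mc{C}$ fitting into a commutative ladder from the image under $c$ of the triangle $A \to B \to C \to A[1]$ to the diagram $\varprojlim_n A_n \to \varprojlim_n B_n \to \varprojlim_n C_n \to \varprojlim_n A_n[1]$, with isomorphisms on the $A$-, $C$- and $A[1]$-columns.

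Finally I would show $\beta$ is an isomorphism. Apply $\Hom_{\pro\mc{C}}(c(Y), -)$ to the ladder for an arbitrary $Y \in \mc{C}$. The top row gives the usual long exact sequence in $\mc{C}$ (since $\Hom_{\pro\mc{C}}(c(Y), c(-)) = \Hom_{\mc{C}}(Y,-)$); the bottom row gives the inverse limit over $n$ of the long exact sequences of the triangles $A_n \to B_n \to C_n \to A_n[1]$, and this limit is again exact because every vector space involved is finite-dimensional, so the relevant systems are Mittag--Leffler and $\varprojlim^{1}$ vanishes. The columns over $A$, $C$ and $A[1]$ are isomorphisms by the constancy hypotheses, so the five lemma shows $\Hom_{\pro\mc{C}}(c(Y), \beta)$ is an isomorphism for every $Y \in \mc{C}$. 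As every pro-object is a cofiltered limit $\varprojlim_m Z_m$ of constant ones and $\Hom_{\pro\mc{C}}(\varprojlim_m Z_m, -) = \varinjlim_m \Hom_{\pro\mc{C}}(c(Z_m), -)$, this forces $\Hom_{\pro\mc{C}}(Z, \beta)$ to be bijective for all $Z$, so $\beta$ is an isomorphism and $\varprojlim_n B_n \cong c(B)$ is constant.

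The main obstacle is the middle step: (TR3) is non-canonical, so the morphisms $b_n$ it produces a priori fail to be compatible under the transition maps, and one genuinely needs the finite-dimensionality hypothesis — via the Mittag--Leffler property of the torsor systems $\{S_n\}$ over $\{W_n\}$ — to repair them into an honest pro-morphism. Everything else is formal, given the definition of $\pro\mc{C}$ and the five lemma. Alternatively, this interpolation step could be phrased using the machinery of \cite[Appendix A]{bezrukavnikov-yun}, which supplies exactly this sort of conclusion in the categories relevant to our applications.
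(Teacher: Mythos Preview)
Your approach matches the paper's essentially line for line: build $B$ as the cone of the limiting connecting map, use (TR3) together with a Mittag--Leffler argument on the finite-dimensional torsors to produce a compatible family $(b_n)$, and then conclude by a five-lemma argument. The Mittag--Leffler step is exactly the paper's (it cites the proof of (TR3) in \cite[Theorem A.2.2]{bezrukavnikov-yun}).

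There is, however, a genuine gap in your final step. The identity $\Hom_{\pro\mc{C}}(\varprojlim_m Z_m, -) = \varinjlim_m \Hom_{\pro\mc{C}}(c(Z_m), -)$ is false: it would assert that $\varprojlim_m Z_m$ is the \emph{colimit} of the $c(Z_m)$ in $\pro\mc{C}$, whereas it is their limit. In particular, knowing that $\Hom_{\pro\mc{C}}(c(Y), \beta)$ is an isomorphism for every $Y \in \mc{C}$ does not, by itself, force $\beta$ to be an isomorphism in $\pro\mc{C}$, because the constant objects are not a generating family in the required sense. The fix---and this is exactly what the paper does---is to run the five lemma with the \emph{contravariant} functors $\Hom_{\pro\mc{C}}(-, c(D))$ instead. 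By the very definition of $\pro\mc{C}$ as a full subcategory of $\mrm{Fun}(\mc{C}, \mrm{Set})^{op}$, these functors detect isomorphisms. Moreover, applying $\Hom(-, D)$ to the inverse system of triangles yields a \emph{direct} system of long exact sequences, whose colimit is automatically exact; so you avoid the second Mittag--Leffler argument entirely and the five lemma gives $\varinjlim_n \Hom(B_n, D) \cong \Hom(B, D)$ for all $D$, hence $\beta$ is an isomorphism.
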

\begin{proof}
Write $A = \varprojlim_n A_n$, $C = \varprojlim_n C_n$ and $B = \mrm{Cone}(C[-1] \to A)$. We claim that $\varprojlim_n  B_n \cong B$. To see this, we argue as in the proof of (TR3) in \cite[Theorem A.2.2]{bezrukavnikov-yun}. Let $E_n$ denote the set of morphisms $B \to B_n$ fitting into a commutative diagram
\[
\begin{tikzcd}
A \ar[r] \ar[d] & B \ar[r] \ar[dashed, d] & C \ar[r] \ar[d] & A[1] \ar[d] \\
A_n \ar[r] & B_n \ar[r] & C_n \ar[r] & A_n[1].
\end{tikzcd}
\]
Since both triangles are exact, $E_n$ is a torsor under a subspace of the finite dimensional vector space $\Hom(B, B_n)$. Hence, the image of $E_m \to E_n$ stabilizes for $m \gg 0$. So the inverse system $\{E_n\}$ is Mittag-Leffler and hence there exists $e \in \varprojlim_n E_n$ defining a morphism $B \to \varprojlim_n B_n$ compatible with the exact triangles. For all $D \in \mc{C}$, the associated long exact sequences for $\Hom(-, D)$ imply that
\[ \Hom(\varprojlim_n B_n, D) = \varinjlim_n \Hom(B_n, D) \to \Hom(B, D) \]
is an isomorphism (the equality above being by definition of the category of pro-objects), and hence $B \to \varprojlim_n B_n$ is an isomorphism as well.
\end{proof}

Similarly, for
\[ \mc{K}_1 \in \widehat{\mrm{D}}^b_G\mhm(\mc{D}_{\widetilde{\nu}} \boxtimes \mc{D}_{\widetilde{-\mu}}) \quad \text{and} \quad \mc{K}_2 \in \widehat{\mrm{D}}^b_G \mhm(\mc{D}_{\widetilde{\mu}} \boxtimes \mc{D}_{\widetilde{-\lambda}}) \]
we have a convolution
\[ \mc{K}_1 * \mc{K}_2 = \widetilde{\mrm{pr}}_{13 *} \widetilde{D}_{23}^\circ (\mc{K}_1 \boxtimes \mc{K}_2)(\dim \tilde{\mc{B}}) \in \pro \mrm{D}^b_G \mhm(\tilde{\mc{B}} \times \tilde{\mc{B}}),\]
where $\widetilde{\mrm{pr}}_{14}$ and $\widetilde{D}_{23}$ are now the morphisms
\begin{align*}
\tilde{\mc{B}} \times \tilde{\mc{B}} \times \tilde{\mc{B}} \times \tilde{\mc{B}} \xleftarrow{\tilde{D}_{23}} \tilde{\mc{B}} \times \tilde{\mc{B}} & \times \tilde{\mc{B}} \xrightarrow{\widetilde{\mrm{pr}}_{14}} \tilde{\mc{B}} \times \tilde{\mc{B}} \\
(x, y, y, z) \mapsfrom  (x, y&, z) \mapsto (x, z).
\end{align*}
A similar argument to Proposition \ref{prop:good convolution} shows that
\[ \mc{K}_1 * \mc{K}_2 \in \widehat{\mrm{D}}^b_G\mhm(\mc{D}_{\widetilde{\nu}} \boxtimes \mc{D}_{\widetilde{-\lambda}}) \subset \pro\mrm{D}^b_G\mhm(\tilde{\mc{B}} \times \tilde{\mc{B}}).\]

\begin{prop}
With the notation as above, we have filtered isomorphisms
\[ (\mc{K} * \mc{M}, F^H_\bullet) \cong (\mc{K}, F^H_\bullet) * (\mc{M}, F^H_\bullet) \in \mrm{D}^b\coh(\tilde{\mc{D}}, F_\bullet) \]
and
\[ (\mc{K}_1 * \mc{K}_2, F^H_\bullet) \cong (\mc{K}_1, F^H_\bullet) * (\mc{K}_2, F^H_\bullet) \in \mrm{D}^b_{\mrm{St}}\coh(\tilde{\mc{D}} \boxtimes \tilde{\mc{D}}, F_\bullet).\]
\end{prop}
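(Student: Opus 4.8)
The plan is to unwind the convolution \eqref{eq:mhm convolution} into its constituent operations and to check that the forgetful functor $\mc{K} \mapsto (\mc{K}, F_\bullet^H)$ of Proposition \ref{prop:pro forgetful functor} intertwines each of them with the corresponding operation on coherent filtered $\tilde{\mc{D}}$-modules. Concretely, the convolution on the right-hand side of the proposition is to be read as the same formula \eqref{eq:mhm convolution} with the four mixed Hodge module operations appearing there---external product $\boxtimes$, the intermediate pullback $\tilde{D}_{23}^\circ$, the proper pushforward $\widetilde{\mrm{pr}}_{1*}$, and the Tate twist $(\dim\tilde{\mc{B}})$---replaced by, respectively, the external tensor product with its convolution filtration, the (shifted) derived $\tilde{\mc{D}}$-module pullback along $\tilde{D}_{23}$, the filtered direct image along $\widetilde{\mrm{pr}}_1$, and a shift of the filtration index. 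So it suffices to establish four compatibilities, one per operation, and then compose.

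Two of these are immediate or standard. For the external product, the Hodge filtration on $\mc{K} \boxtimes \mc{M}$ is by definition the convolution filtration of $F_\bullet^H\mc{K}$ and $F_\bullet^H\mc{M}$, and since external products into the pro-categories $\widehat{\mrm{D}}^b_G\mhm$ are formed level-wise in the pro-variable this persists after completion. The remaining two are part of Saito's theory \cite{Saito1988, Saito1990}, extended to the twisted and monodromic setting in \cite{DV}: for a morphism of smooth varieties the forgetful functor carries the mixed Hodge module $*$-pullback to the (appropriately shifted) filtered $\tilde{\mc{D}}$-module pullback, which handles $\tilde{D}_{23}^\circ$ (here $\tilde{D}_{23} = \mrm{id}_{\tilde{\mc{B}}} \times \Delta_{\tilde{\mc{B}}}$ is a closed embedding), and it carries the proper pushforward to the filtered direct image---this last being Saito's strictness theorem for direct images. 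Although $\tilde{\mc{B}}$ itself is not proper, every object in sight is monodromic, hence descends to $\mc{B} \times \mc{B}$, over which $\mrm{pr}_1$ is proper, so Saito's direct image theorem does apply; and the Tate twist only shifts $F_\bullet^H$, matching the filtration shift on the filtered side. The $G$-equivariance is harmless throughout. Composing the four isomorphisms yields the first assertion, and the second, for $\mc{K}_1 * \mc{K}_2$, is the identical argument with $\widetilde{\mrm{pr}}_1$ replaced by $\widetilde{\mrm{pr}}_{14}$ and an inert extra copy of $\tilde{\mc{B}}$.

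The only point that is not purely formal is the passage to pro-objects in the pushforward step: since $\mc{K}$ is merely a good pro-object, one should work as in the proof of Proposition \ref{prop:good convolution}, writing $\mc{K} * \mc{M} = \varprojlim_n (\mc{K} \overset{\mrm{L}}\otimes_{S(\mf{h}(1))} S(\mf{h}(1))_n) * \mc{M}$, applying the honest non-pro compatibilities term by term, and then passing to the inverse limit---checking along the way that the corresponding limit of filtered $\tilde{\mc{D}}$-modules also collapses to a single coherent filtered object, which is exactly what Propositions \ref{prop:good convolution} and \ref{prop:forgetful coherence} provide. I expect this bookkeeping with pro-objects, rather than any of the individual mixed-Hodge-module-to-filtered-$\tilde{\mc{D}}$-module compatibilities, to be the only place the argument needs care.
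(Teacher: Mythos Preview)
Your approach is essentially the same as the paper's, which simply cites \cite[(6.16)]{DV} for the termwise compatibilities you spell out and then invokes Proposition~\ref{prop:pro forgetful functor} to descend. One small sharpening: in your final step, knowing that both sides ``collapse to a single coherent filtered object'' (via Propositions~\ref{prop:good convolution} and~\ref{prop:forgetful coherence}) is not quite enough on its own---you need the \emph{full faithfulness} of the completion functor in Proposition~\ref{prop:pro forgetful functor} to conclude that the isomorphism of pro-objects actually comes from an isomorphism in $\mrm{D}^b\coh(\tilde{\mc{D}}, F_\bullet)$, so cite that directly.
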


Here we write
\[ (\mc{K}, F_\bullet) *(\mc{M}, F_\bullet) = \mrm{R}\mrm{pr}_{1\bigcdot}((\mc{K}, F_\bullet) \overset{\mrm{L}}\otimes_{\mrm{pr}_2^{-1}(\tilde{\mc{D}}, F_\bullet)} \mrm{pr}_2^{-1}(\mc{M}, F_\bullet)) \in \mrm{D}^b\coh(\tilde{\mc{D}}, F_\bullet), \]
and
\[ (\mc{K}_1, F_\bullet) * (\mc{K}_2, F_\bullet) = \mrm{R}\mrm{pr}_{13 \bigcdot}((\mrm{pr}_{12}^{-1}\mc{K}_1, F_\bullet) \overset{\mrm{L}}\otimes_{(\mrm{pr}_2^{-1}\tilde{\mc{D}}, F_\bullet)} (\mrm{pr}_{23}^{-1}\mc{K}_2, F_\bullet)) \in \mrm{D}^b_{\mrm{St}}\coh^G(\tilde{\mc{D}}\boxtimes \tilde{\mc{D}}, F_\bullet),\]
where $\tilde{\mc{D}}$ acts on a $\tilde{\mc{D}} \boxtimes \tilde{\mc{D}}$-module on the right via the side-changing isomorphism $\tilde{\mc{D}} \cong \tilde{\mc{D}}^{\mathit{op}}$ coming from the fact that $\tilde{\mc{B}}$ has trivial canonical bundle.

\begin{proof}
Applying \cite[(6.16)]{DV}, we see that, after taking completions, we have a natural isomorphism
\[  (\mc{K} * \mc{M}, F^H_\bullet) \cong (\mc{K}, F^H_\bullet) * (\mc{M}, F^H_\bullet) \in \pro \mrm{D}^b\coh(\tilde{\mc{D}}, F_\bullet)_{\widetilde{\mu - \rho}},\]
so we deduce the claim for $\mc{K}*\mc{M}$ by Proposition \ref{prop:pro forgetful functor}. The proof for $\mc{K}_1 *\mc{K}_2$ is similar.
\end{proof}

We are now ready to define the intertwining functors. For $\lambda \in \mf{h}^*_\mb{R}$ and $w \in W$, we write
\[ \mc{I}_w^{!H} = \tilde{\Delta}_w^{(\lambda)} * -,\, \mc{I}_w^{*H} = \tilde{\nabla}_w^{*(\lambda)} * - \colon \mrm{D}^b\mhm(\mc{D}_{\widetilde{\lambda}}) \to \mrm{D}^b\mhm(\mc{D}_{\widetilde{w\lambda}}), \]
\[ \mc{I}_w^{!(\lambda)} = (\tilde{\Delta}_w^{(\lambda)}, F_\bullet^H) * - ,\, \mc{I}_w^{*(\lambda)} = (\tilde{\nabla}_w^{(\lambda)}, F_\bullet^H) * - \colon  \mrm{D}^b\coh(\tilde{\mc{D}}, F_\bullet) \to \mrm{D}^b\coh(\tilde{\mc{D}}, F_\bullet).\]

\begin{defn}
The functors $\mc{I}_w^{!H}$ and $\mc{I}_w^{*H}$ are called the \emph{Hodge intertwining functors}. The functors $\mc{I}_w^! := \mc{I}_w^{!(0)}$ and $\mc{I}_w^* := \mc{I}_w^{*(0)}$ are called the \emph{filtered intertwining functors}.
\end{defn}

\begin{rmk}
In the setting of twisted (rather than monodromic) mixed Hodge modules, one can give a much simpler geometric definition (see, e.g., \cite[\S 5.1]{DV1}) of Hodge intertwining functors
\[ \mc{I}_w^{!H},\, \mc{I}_w^{*H} \colon \mrm{D}^b\mhm(\mc{D}_\lambda) \to \mrm{D}^b\mhm(\mc{D}_{w\lambda})\]
by imitating the Beilinson-Bernstein definition for $\mc{D}_\lambda$-modules. One can check that this is compatible with our more sophisticated definition for monodromic mixed Hodge modules. The main advantage of our approach, beyond its generalization to monodromic objects, is that it constructs both the Hodge and the filtered intertwining functors and is well-adapted to comparing the two.
\end{rmk}

We conclude this subsection with a brief discussion of the (much simpler) \emph{translation functors}. For $\lambda \in \mf{h}^*_\mb{R}$ and $\nu \in \mb{X}^*(H)$, we set
\[ t_\nu^H := \tilde{\delta}_\nu^{(\lambda)} * - \colon \mrm{D}^b\mhm(\mc{D}_{\widetilde{\lambda}}) \to \mrm{D}^b\mhm(\mc{D}_{\widetilde{\lambda + \nu}})\]
and
\[ t_\nu := (\tilde{\delta}_\nu^{(0)}, F_\bullet^H) * - \colon \mrm{D}^b\coh(\tilde{\mc{D}}, F_\bullet) \to \mrm{D}^b\coh(\tilde{\mc{D}}, F_\bullet).\]
These functors can be described explicitly as follows.

\begin{prop} \label{prop:translations}
Let $\mc{M} \in \mrm{D}^b\mhm(\mc{D}_{\widetilde{\lambda}})$ and $\mc{N} \in \mrm{D}^b\coh(\tilde{\mc{D}}, F_\bullet)$. Then
\[ t_\nu^H(\mc{M}) \cong \mc{M} \otimes \mc{O}(\nu)\]
and
\[ t_\nu(\mc{N}, F_\bullet) \cong (\mc{N} \otimes \mc{O}(\nu), F_\bullet).\]
\end{prop}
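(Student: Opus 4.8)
\textit{Proof proposal.} The plan is to reduce to the case $\nu = 0$ and then identify $\tilde{\delta}_0^{(\lambda)}$ as the monoidal unit for convolution. The orbit $X_1$ is the diagonal in $\mc{B}\times\mc{B}$, which is closed, so $j_1$ is a closed immersion, $j_{1!} = j_{1*}$, and the two expressions for $\tilde{\delta}_\nu^{(\lambda)}$ coincide. Since the two projections $\mrm{pr}_1, \mrm{pr}_2 \colon \mc{B}\times\mc{B}\to\mc{B}$ agree on $X_1$, tensoring with $\mrm{pr}_1^*\mc{O}(\nu)$ shifts the twist along the first factor by $\nu$ and leaves the second factor alone, so comparing the explicit local systems (and their Hodge structures from \S\ref{subsec:intertwining construction}) gives
\[ \tilde{\delta}_\nu^{(\lambda)} \cong \mrm{pr}_1^*\mc{O}(\nu)\otimes\tilde{\delta}_0^{(\lambda)} \]
as pro-mixed Hodge modules, and, by Proposition \ref{prop:filtration uniqueness}, as filtered $\tilde{\mc{D}}\boxtimes\tilde{\mc{D}}$-modules. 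Unwinding the convolution \eqref{eq:mhm convolution}, the factor $\mrm{pr}_1^*\mc{O}(\nu)$ pulls back along $\tilde{D}_{23}$ to $\widetilde{\mrm{pr}}_1^*\mc{O}(\nu)$ and hence passes through $\widetilde{\mrm{pr}}_{1*}$ by the projection formula, so $t_\nu^H(\mc{M}) = \tilde{\delta}_\nu^{(\lambda)}*\mc{M} \cong \mc{O}(\nu)\otimes(\tilde{\delta}_0^{(\lambda)}*\mc{M})$, and similarly $t_\nu(\mc{N}, F_\bullet) \cong \mc{O}(\nu)\otimes(\tilde{\delta}_0^{(0)}*(\mc{N}, F_\bullet))$. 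It therefore suffices to prove $\tilde{\delta}_0^{(\lambda)}*\mc{M} \cong \mc{M}$ and its filtered counterpart, i.e.\ that the free-monodromic standard attached to $w = 1$ is the convolution unit --- the Hodge-theoretic refinement of a well-known fact about Hecke categories.

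To prove the unit statement I would compute directly. Because $\tilde{\delta}_0^{(\lambda)}$ is supported on $\tilde{X}_1 = \tilde{\mc{B}}\times_{\mc{B}}\tilde{\mc{B}}$, which lies over the diagonal, the correspondence in \eqref{eq:mhm convolution} collapses: writing $q_1, q_2 \colon \tilde{X}_1 \to \tilde{\mc{B}}$ for the two projections, base change along $\tilde{D}_{23}$ identifies $\tilde{\delta}_0^{(\lambda)}*\mc{M}$, up to a cohomological shift and Tate twist, with $q_{1*}\bigl(\widetilde{\mc{O}}_{X_1}(\lambda - \rho, -\lambda - \rho)\otimes_{\mc{O}_{\tilde{X}_1}}q_2^*\mc{M}\bigr)$. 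Now the $H$-torsor $q_1 \colon \tilde{X}_1\to\tilde{\mc{B}}$ is the pullback of $\pi \colon \tilde{\mc{B}}\to\mc{B}$ along itself, hence has the canonical section $\tilde{x}\mapsto(\tilde{x},\tilde{x})$ and is therefore trivial: $\tilde{X}_1\cong\tilde{\mc{B}}\times H$, with $q_1$ the projection and $q_2$ the action map. The crucial feature is that the twist $-\lambda - \rho$ on the second factor of the kernel is opposite to the twist $\lambda - \rho$ of $\mc{M}$; this is exactly what makes $\widetilde{\mc{O}}_{X_1}(\lambda - \rho, -\lambda - \rho)\otimes q_2^*\mc{M}$ isomorphic, under the trivialization, to an external product $\mc{M}\boxtimes\widetilde{\mc{L}}$, where $\widetilde{\mc{L}}$ is the free-monodromic local system on $H$ of the residual twist. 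By the projection formula $\tilde{\delta}_0^{(\lambda)}*\mc{M} \cong \mc{M}\otimes\mrm{R}\Gamma(H,\widetilde{\mc{L}})$ up to shift and twist, and by the torus computation of \cite[\S A.4]{bezrukavnikov-yun} the object $\mrm{R}\Gamma(H,\widetilde{\mc{L}})$ is one-dimensional, concentrated in a single cohomological degree and weight. Matching this degree and weight against the normalizing shift $[-\dim\tilde{\mc{B}}]$ and Tate twists $(-\dim\mc{B})$, $(\dim\tilde{\mc{B}})$ built into \eqref{eq:mhm convolution} and the definition of $\tilde{\delta}_0^{(\lambda)}$ then gives $\tilde{\delta}_0^{(\lambda)}*\mc{M}\cong\mc{M}$ on the nose, including Hodge structures (via the canonical Hodge structure on $\widetilde{\mc{L}}$).

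The filtered statement is obtained by running the same computation through the explicit derived-tensor--pushforward formula for the filtered convolution, with the order filtration in place of the Hodge structure; alternatively one uses that the forgetful functor is compatible with convolution, as established above. The geometric input --- the collapse of the convolution correspondence onto the diagonal and the triviality of the pulled-back $H$-torsor --- is elementary, and I expect the main obstacle to be purely bookkeeping: checking that all the cohomological shifts, degrees, and Tate twists cancel exactly, so that one obtains an isomorphism with $\mc{M}$ with no residual shift and with the evident identification of filtered (resp.\ Hodge) structures, rather than merely up to shift and twist. A secondary technical point is the identification of $\widetilde{\mc{O}}_{X_1}(\lambda - \rho, -\lambda - \rho)\otimes q_2^*\mc{M}$ with an external product under the trivialization; for this one uses that $\mc{M}$ is $(\lambda - \rho)$-monodromic together with Proposition \ref{prop:filtration uniqueness} to reduce, if desired, to the universal free-monodromic kernel.
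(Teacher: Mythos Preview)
Your approach is essentially the same as the paper's: both reduce to $\nu = 0$ via $\tilde{\delta}_\nu \cong \mc{O}(\nu,0)\otimes\tilde{\delta}_0$, trivialize $\tilde{X}_1 \cong H \times \tilde{\mc{B}}$, and compute the pushforward along $H$ of a free-monodromic local system. Two small points of comparison. First, the paper chooses the trivialization $(h,\tilde y)\mapsto(h\tilde y,\tilde y)$, so that the \emph{second} projection $q_2$ becomes the projection to $\tilde{\mc{B}}$; then $q_2^*\mc{M}$ is tautologically an external product and no monodromicity argument is needed to identify the integrand --- your ``secondary technical point'' evaporates with this choice. Second, for the filtered statement the paper does not re-run the convolution computation: it observes (citing \cite[Lemma~7.1]{DV}) that $(\tilde{\delta}_0^{(0)},F_\bullet^H)\cong(\tilde{\mc{D}},F_\bullet)$ as filtered $\tilde{\mc{D}}\boxtimes\tilde{\mc{D}}$-modules, whence convolution with it is literally $(\tilde{\mc{D}},F_\bullet)\overset{\mrm{L}}\otimes_{(\tilde{\mc{D}},F_\bullet)}(-) = \mrm{id}$. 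This is cleaner than your suggested route, and in particular handles arbitrary $(\mc{N},F_\bullet)$ without passing through the Hodge case.
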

\begin{proof}
Observe that we have an isomorphism
\begin{equation} \label{eq:translations 1}
\begin{aligned}
H \times \tilde{\mc{B}} &\xrightarrow{\sim} \tilde{X}_1 \subset \tilde{\mc{B}} \times \tilde{\mc{B}} \\
(h, \tilde{y}) &\mapsto (h\tilde{y}, \tilde{y}).
\end{aligned}
\end{equation}
Recall that by Proposition \ref{prop:hodge local systems}, we have an equivalence
\[ \pro \mhm_{\widetilde{0}}(H) \cong \pro \Mod_{\mrm{MHS}}(S(\mf{h}(1))) \]
given by taking the fiber at $1 \in H$; for $V \in \pro \Mod_{\mrm{MHS}}(S(\mf{h}(1)))$, we write $V$ also for the corresponding object in $\pro \mhm_{\widetilde{0}}(H)$. Then the pullback of $\widetilde{\mc{O}}_{X_1}(\lambda + \nu  - \rho, -\lambda - \rho)$ under the isomorphism \eqref{eq:translations 1} is $S(\mf{h}(1))^\wedge \boxtimes \mc{O}_{\tilde{\mc{B}}}$. Thus, we get
\[ t_\nu^H(\mc{M}) = \tilde{\delta}_\nu^{(\lambda)} * \mc{M} = a_*(S(\mf{h}(1))^\wedge \boxtimes \mc{M})(\dim H),\]
as pro-mixed Hodge modules on $\tilde{\mc{B}}$, where $a \colon H \times \tilde{\mc{B}} \to \tilde{\mc{B}}$ is the action map. Now, by Lemma \ref{lem:monodromic pushforward} below, we can rewrite the pushforward as
\[ a_*(S(\mf{h}(1))^\wedge \boxtimes \mc{M})(\dim H) = (S(\mf{h}(1))^\wedge \boxtimes \mc{M}) \overset{\mrm{L}}\otimes_{S(\mf{h}(1))} \mb{C} = \mc{M},\]
where the tensor product is over the anti-diagonal action of $S(\mf{h}(1))$ on the two factors. So $t_\nu(\mc{M}) = \mc{M}$, regarded as an object of $\mrm{D}^b\mhm(\mc{D}_{\widetilde{\lambda + \nu}})$ instead of $\mrm{D}^b\mhm(\mc{D}_{\widetilde{\lambda}})$. This is the definition of $\mc{M} \otimes \mc{O}(\mu)$ as a monodromic mixed Hodge module, so this proves the first assertion. For the second, note that, by construction,
\[ \tilde{\delta}^{(0)}_\nu = \mc{O}(\nu, 0) \otimes \tilde{\delta}^{(0)}_0,\]
so it suffices to treat the case $\nu = 0$. In this case, by the argument of \cite[Lemma 7.1]{DV}, we have $(\tilde{\delta}_0^{(0)}, F^H_\bullet) \cong (\tilde{\mc{D}}, F_\bullet)$, so the claim follows.
\end{proof}

\begin{lem} \label{lem:monodromic pushforward}
Let $X$ be a smooth variety, $H$ a torus and $\mc{M} \in \mrm{D}^b\mhm_{\widetilde{0}}(H \times X)$. Then
\begin{equation} \label{eq:monodromic pushforward}
 \pi_*\mc{M} = \mc{M}|_{\{1\} \times X} \overset{\mrm{L}}\otimes_{S(\mf{h}(1))} \mb{C}(-\dim H),
\end{equation}
where $\pi \colon X \times H \to X$ is the natural projection and $\mc{M}|_{\{1\} \times X}$ is the restriction to $X = \{1\} \times X$.
\end{lem}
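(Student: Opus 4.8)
\textit{Proof sketch.} The plan is to reduce to the case $\dim H = 1$ and then to compute $\pi_*$ inside the six-functor formalism, by compactifying the fibre $\mb{C}^\times$ to $\mb{A}^1$. First I would fix an isomorphism $H \cong (\mb{C}^\times)^n$ (equivalently a basis of $\mf{h}$), which factors $\pi$ as a composition $\pi = \pi_1 \circ \cdots \circ \pi_n$ of trivial $\mb{C}^\times$-bundles, and correspondingly splits $S(\mf{h}(1)) = \bigotimes_{i = 1}^n S(\mf{h}_i(1))$ and $\mb{C}(-n) = \mb{C}(-1)^{\otimes n}$. Applying the rank-one case to $\pi_n, \pi_{n - 1}, \dots, \pi_1$ in turn then yields \eqref{eq:monodromic pushforward} for $\pi$, provided each partial direct image $(\pi_i \circ \cdots \circ \pi_n)_* \mc{M}$ is again monodromic for the remaining torus $\prod_{j < i} \mb{C}^\times$ and its restriction to the next fibre computes the corresponding partial derived tensor product; both points follow from the rank-one statement after base change along $\prod_{j \neq i} \mb{C}^\times \to \mrm{pt}$. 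So it suffices to treat $H = \mb{C}^\times$.

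In this case, write $j \colon \mb{C}^\times \times X \hookrightarrow \mb{A}^1 \times X$ for the open inclusion, $i_0 \colon X = \{0\} \times X \hookrightarrow \mb{A}^1 \times X$ for the zero section, and $p \colon \mb{A}^1 \times X \to X$ for the projection, so $\pi = p \circ j$. Since $\mc{M}$ is monodromic along $\mb{C}^\times$, so is $j_* \mc{M}$ along $\mb{C}^\times \subset \mb{A}^1$, and the contracting $\mb{C}^\times$-action on $\mb{A}^1$ supplies a canonical isomorphism $\pi_* \mc{M} = p_* j_* \mc{M} \cong i_0^* j_* \mc{M}$ in $\mrm{D}^b \mhm(X)$ (the contraction principle, valid for monodromic mixed Hodge modules). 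By the standard nearby-cycles description of $i_0^* j_*$ (Beilinson--Saito gluing), $i_0^* j_* \mc{M}$ is, up to a shift, the cone of the logarithm of monodromy
\[ N \colon \psi^{\mrm{un}} \mc{M} \longrightarrow \psi^{\mrm{un}} \mc{M}(-1), \]
so that $i_0^* j_* \mc{M}$ is the two-term complex $[\, \psi^{\mrm{un}} \mc{M} \xrightarrow{N} \psi^{\mrm{un}} \mc{M}(-1) \,]$ in cohomological degrees $-1$ and $0$. Because $\mc{M}$ is smooth along $\mb{C}^\times$, the nearby-cycles object $\psi^{\mrm{un}} \mc{M}$ is canonically identified, as a mixed Hodge module, with the fibre $\mc{M}|_{\{1\} \times X}$ of \cite[Lemma 2.8]{DV}, and under this identification $N$ becomes the operator induced by the monodromy morphism $\mf{h}(1) \otimes \mc{M} \to \mc{M}$ recalled in \S\ref{subsec:pro mhm} (cf.\ \cite[Proposition 2.9]{DV}): both are, on the underlying $\tilde{\mc{D}}$-module, the action of $\mf{h} \subset S(\mf{h}) \subset \tilde{\mc{D}}$. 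Finally, tensoring the Koszul resolution $0 \to S(\mf{h}(1)) \otimes \mf{h}(1) \to S(\mf{h}(1)) \to \mb{C} \to 0$ over $S(\mf{h}(1))$ with $\mc{M}|_{\{1\} \times X}$ and applying the twist $(-1)$ produces precisely the complex $[\, \mc{M}|_{\{1\} \times X} \xrightarrow{N} \mc{M}|_{\{1\} \times X}(-1) \,]$ in degrees $-1, 0$, i.e.\ $\mc{M}|_{\{1\} \times X} \overset{\mrm{L}}\otimes_{S(\mf{h}(1))} \mb{C}(-1)$; this is \eqref{eq:monodromic pushforward} for $\dim H = 1$.

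The computation itself is routine; the delicate part, and the real content of the proof, is the Hodge-theoretic bookkeeping in the rank-one case. One must check that the abstract monodromy operator on $\psi^{\mrm{un}} \mc{M}$ coming from the gluing formalism is literally the $S(\mf{h})$-action of \S\ref{subsec:pro mhm}, that ``the fibre $\mc{M}|_{\{1\} \times X}$'' in \eqref{eq:monodromic pushforward} is normalized as in \cite[Lemma 2.8]{DV} (so that it lies in the heart, in cohomological degree $0$, with the expected weights), and that the Tate twist $(-\dim H)$ --- which appears because $\mrm{H}^{\dim H}_{\mrm{dR}}(H)$ has Hodge type $(\dim H, \dim H)$, equivalently from the twist in the nearby-cycles triangle --- is accounted for correctly on both sides. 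A more elementary variant avoids the compactification altogether: since $\pi$ is affine, $\pi_* \mc{M}$ is computed directly by the relative de Rham complex $[\, \pi_{\bigcdot} \mc{M} \xrightarrow{\ \theta\ } \pi_{\bigcdot} \mc{M} \,]$, where $\theta$ is the Euler vector field trivializing the relative cotangent bundle and acting as the monodromy; recognizing this two-term complex as a complex of mixed Hodge modules computing the right-hand side of \eqref{eq:monodromic pushforward} requires exactly the same identifications, so either route meets the same obstacle.
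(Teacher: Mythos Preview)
Your approach is correct, but the paper's proof is much shorter and takes a different route. Rather than computing $\pi_*$ directly via compactification and nearby cycles, the paper argues by adjunction: the equivalence $\mhm_{\widetilde{0}}(H \times X) \cong \mhm(X, S(\mf{h}(1)))$ of \cite[Lemma 2.8]{DV} (restriction to $\{1\} \times X$) sends $\pi^*\mc{N}$ to $\mc{N}[-\dim H]$ with trivial $S(\mf{h}(1))$-action, so $\pi_*$ must be the right adjoint of that functor, which one identifies (by Koszul duality between derived $S(\mf{h}(1))$-invariants and coinvariants) with $(-)\overset{\mrm{L}}\otimes_{S(\mf{h}(1))}\mb{C}(-\dim H)$. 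This is three lines and needs no rank reduction, no contraction principle, and no nearby-cycles bookkeeping. Your route has the merit of making the Koszul complex appear geometrically as the two-term nearby-cycles complex, but at the cost of the convention-tracking you yourself flag; in particular the contraction principle for monodromic mixed Hodge modules, while true, would itself want a reference or short justification. Both arguments ultimately rest on \cite[Lemma 2.8]{DV}---you invoke it to identify $\psi^{\mrm{un}}\mc{M}$ with $\mc{M}|_{\{1\}\times X}$, whereas the paper uses it to transport the entire adjunction at once.
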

\begin{proof}
By \cite[Lemma 2.8]{DV}, the restriction to $\{1\} \times X$ defines an equivalence
\[ \mhm_{\widetilde{0}}(H \times X) \cong \mhm(X, S(\mf{h}(1)),\]
with the category of mixed Hodge modules on $X$ with an action of $S(\mf{h}(1))$. For $\mc{N} \in \mrm{D}^b\mhm(X)$, the pullback $\pi^*\mc{N}$ is sent under this equivalence to $\mc{N}[-\dim H]$ with trivial $S(\mf{h}(1))$-action. The right adjoint $\pi_*$ is hence given by the formula \eqref{eq:monodromic pushforward} as claimed.
\end{proof}

In view of Proposition \ref{prop:translations}, $t_\nu^H$ and $t_\nu$ are compatible under the functor from Hodge modules to filtered $\tilde{\mc{D}}$-modules. We will therefore drop the superscript $H$ from now on and write $t_\nu^H = t_\nu$ since confusion is unlikely to arise.

\subsection{The affine Hecke algebra} \label{subsec:affine hecke}

Let us now describe the full convolution relations satisfied by the objects $\tilde{\Delta}_w^{(\lambda)}$, $\tilde{\nabla}_w^{(\lambda)}$ and $\tilde{\delta}_\mu^{(\lambda)}$.

\begin{thm} \label{thm:hecke relations}
We have the following.
\begin{enumerate}
\item \label{itm:hecke relations 1} For $\lambda \in \mf{h}^*_\mb{R}$ and $w_1, w_2 \in W$, if $\ell(w_1w_2) = \ell(w_1) + \ell(w_2)$ then
\[ \tilde{\Delta}_{w_1}^{(w_2\lambda)} *\tilde{\Delta}_{w_2}^{(\lambda)} = \tilde{\Delta}_{w_1w_2}^{(\lambda)} \quad \text{and} \quad \tilde{\nabla}_{w_1}^{(w_2\lambda)} * \tilde{\nabla}_{w_2}^{(\lambda)} = \tilde{\nabla}_{w_1w_2}^{(\lambda)}.\]
\item \label{itm:hecke relations 2} For $\lambda \in \mf{h}^*_\mb{R}$ and $w \in W$,
\[ \tilde{\Delta}_w^{(w^{-1}\lambda)} *\tilde{\nabla}_{w^{-1}}^{(\lambda)} = \tilde{\delta}_0^{(\lambda)} = \tilde{\nabla}_{w^{-1}}^{(w\lambda)} * \tilde{\Delta}_w^{(\lambda)}.\]
\item \label{itm:hecke relations 3} For $\lambda \in \mf{h}^*_\mb{R}$ and $\mu, \nu \in \mb{X}^*(H)$,
\[ \tilde{\delta}_\mu^{(\lambda + \nu)} * \tilde{\delta}_\nu^{(\lambda)} = \tilde{\delta}_{\mu + \nu}^{(\lambda)}.\]
\item \label{itm:hecke relations 4} For $\lambda \in \mf{h}^*_\mb{R}$, $\mu \in \mb{X}^*(H)$ and $w \in W$,
\[ \tilde{\delta}_{w\mu}^{(w\lambda)} * \tilde{\Delta}_w^{(\lambda)} * \tilde{\delta}_{-\mu}^{(\lambda + \mu)} = \tilde{\Delta}_w^{(\lambda + \mu)} \quad \text{and} \quad \tilde{\delta}_{w\mu}^{(w\lambda)} * \tilde{\nabla}_w^{(\lambda)} * \tilde{\delta}_{-\mu}^{(\lambda + \mu)} = \tilde{\nabla}_w^{(\lambda + \mu)}.\]
\item \label{itm:hecke relations 5} For any simple root $\alpha$ and any $\lambda \in \mf{h}^*_\mb{R}$, we have an exact sequence
\[ 0 \to \tilde{\Delta}_{s_\alpha}^{(\lambda)} \to \tilde{\nabla}_{s_\alpha}^{(\lambda)}(-1) \to \mc{K} \to 0,\]
where
\[ \mc{K} = \begin{cases} \coker\left(\check\alpha + n + 1 \colon \tilde{\delta}_{-n\alpha}^{(\lambda)} \to \tilde{\delta}_{-n\alpha}^{(\lambda)}(-1)\right), & \text{if $n := \langle \lambda, \check\alpha\rangle \in \mb{Z}$}, \\ 0, & \text{otherwise}.\end{cases}\]
Here $\check \alpha + n + 1$ denotes the action of $\check\alpha + n + 1 \in S(\mf{h})$ in the first factor of $\tilde{\mc{D}}$ or, equivalently, the action of $\check\alpha \in R_1 = S(\mf{h}(1))$.
\end{enumerate}
\end{thm}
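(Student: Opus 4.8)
Parts (1)--(4) are the mixed-Hodge-module and filtered-$\tilde{\mc{D}}$-module upgrades of the standard convolution relations in the monodromic Hecke category, and the plan is to deduce each of them from base change for the six operations together with the $G$-orbit geometry of $\mc{B} \times \mc{B}$. The only genuinely new point is the bookkeeping of Hodge structures: these are pinned down by the choices of compact form $U_\mb{R}$ and base point $\tilde{x}$, so in each case one must check that the correspondence being used preserves these choices. By Remark \ref{rmk:hodge change of base point} this reduces to matching $G \times H^c \times H^c$-orbits of base points, and when only the underlying $\tilde{\mc{D}}$-modules are at issue one may invoke Proposition \ref{prop:filtration uniqueness} to ignore the Hodge normalisation altogether.

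For (1), the key input is that $\ell(w_1 w_2) = \ell(w_1) + \ell(w_2)$ is equivalent to the multiplication map $X_{w_1} \times_\mc{B} X_{w_2} \to X_{w_1 w_2}$ being an isomorphism; thus $\tilde{\Delta}_{w_1}^{(w_2\lambda)} * \tilde{\Delta}_{w_2}^{(\lambda)}$ is computed by a proper pushforward which, on $!$-extensions, is an isomorphism onto $\overline{X}_{w_1 w_2}$, and tracking the twists $w_1 w_2\lambda - \rho$, $-\lambda - \rho$ and the Tate shifts identifies it with $\tilde{\Delta}_{w_1 w_2}^{(\lambda)}$; dually for $\tilde{\nabla}$ with $*$-extensions, the shift $\ell(w_1 w_2) = \ell(w_1) + \ell(w_2)$ behaving additively. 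Part (3) is immediate from Corollary \ref{cor:translation independence} and Proposition \ref{prop:translations}: since $\tilde{\delta}_\mu^{(\lambda)} = \tilde{\delta}_0^{(\lambda)} \otimes \mc{O}(\mu, 0)$ and $\tilde{\delta}_0^{(\lambda)} * -$ is the unit, $\tilde{\delta}_\mu^{(\lambda+\nu)} * \tilde{\delta}_\nu^{(\lambda)}$ corresponds to $\mc{O}(\mu) \otimes \mc{O}(\nu) = \mc{O}(\mu+\nu)$. Part (2) for a simple reflection follows from adjunction along the open embedding $X_{s_\alpha} \hookrightarrow \mc{B} \times_{\mc{P}_\alpha} \mc{B}$, where $\mc{P}_\alpha = G/P_\alpha$ is the partial flag variety of the minimal parabolic $P_\alpha \supset B$ of type $\alpha$, and the general case then follows from (1). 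Part (4) follows by combining (1)--(3) with the projection formula for line-bundle twists, using that $\mc{O}(w\mu, 0) \boxtimes \mc{O}(0, \mu)$ restricted to $X_w$ is exactly the $G$-equivariant line bundle implementing the twist change from $\widetilde{\mc{O}}_{X_w}(w\lambda - \rho, -\lambda - \rho)$ to $\widetilde{\mc{O}}_{X_w}(w(\lambda + \mu) - \rho, -(\lambda+\mu) - \rho)$.

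Part (5) is the substantive relation and the main obstacle. I would reduce to $w = s_\alpha$ and push everything forward from $Z_\alpha := \mc{B} \times_{\mc{P}_\alpha} \mc{B}$, a $\mb{P}^1$-bundle over $\mc{B}$ with closed stratum the diagonal $X_1$ and open complement $X_{s_\alpha}$. The canonical morphism $\tilde{\Delta}_{s_\alpha}^{(\lambda)} \to \tilde{\nabla}_{s_\alpha}^{(\lambda)}(-1)$ is the natural map $j_! \to j_*$ for the free-monodromic local system $\widetilde{\mc{O}}_{X_{s_\alpha}}(s_\alpha\lambda - \rho, -\lambda - \rho)$ (once the Tate twist $(-1) = (-\ell(s_\alpha))$ matches the two normalisations), a morphism of pro-mixed Hodge modules; its injectivity and strictness with respect to the Hodge filtration are exactly the content of Theorem \ref{thm:intertwining semi-continuity} applied with $w = s_\alpha$. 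It remains to identify the cokernel $\mc{K}$, which is supported on $X_1$ and is therefore a quotient of $i_{1*}\widetilde{\mc{O}}_{X_1}(s_\alpha\lambda - \rho, -\lambda - \rho)(-\dim\mc{B}) = \tilde{\delta}_{-n\alpha}^{(\lambda)}(-1)$, where $n = \langle\lambda,\check\alpha\rangle$ and $s_\alpha\lambda = \lambda - n\alpha$ accounts for the twist in the first factor. To pin down this quotient I would restrict to the $\mb{G}_m$-slice sitting inside the relevant line bundle over $\mc{B}$ and use the explicit model of the free-monodromic standard on $\tilde{X}_{s_\alpha}$ of \cite{BIR} (cf.\ \cite[Remark 6.18]{DV}): the cokernel of $j_! \to j_*$ for such a free-monodromic local system is the cokernel of an Euler-type operator, and after absorbing the $\rho$-shift built into $\mc{D}_{-\lambda - \rho}$ and the weight of $\mc{O}(-n\alpha)$ this operator becomes multiplication by $\check\alpha + n + 1 \in S(\mf{h})$ acting in the first factor (equivalently, $\check\alpha \in R_1$). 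When $n \notin \mb{Z}$ this operator is invertible on the completed module, so $\mc{K} = 0$ and $\tilde{\Delta}_{s_\alpha}^{(\lambda)} \cong \tilde{\nabla}_{s_\alpha}^{(\lambda)}(-1)$; when $n \in \mb{Z}$ one obtains the asserted resonant cokernel.

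The hard part is precisely this last computation: pinning down the operator $\check\alpha + n + 1$ and the Tate twist on $\mc{K}$ requires carefully reconciling the $\rho$-shift conventions of \S\ref{subsec:localization}, the Tate normalisations in the definitions of $\tilde{\Delta}_w^{(\lambda)}$, $\tilde{\nabla}_w^{(\lambda)}$ and $\tilde{\delta}_\mu^{(\lambda)}$, and the monodromy-versus-weight bookkeeping for pro-mixed Hodge modules; a secondary subtlety is verifying that the sequence is exact at the level of honest (pro-)mixed Hodge modules and not merely in the derived category, for which one needs $\mc{K}$ concentrated in a single degree (clear from affineness of $j$ and the $\mb{P}^1$-bundle structure of $Z_\alpha$, but to be checked in the pro-completed setting). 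As a consistency check one can apply $\Gr$ and specialise the infinitesimal character: the single free-monodromic cokernel $\coker(\check\alpha + n + 1)$ must then decompose as $t_{-\alpha} + \cdots + t_{-n\alpha}$, recovering the classical Bernstein relation in the affine Hecke algebra.
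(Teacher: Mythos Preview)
Your overall strategy matches the paper's: parts (1), (3), (4) come from the orbit geometry of $\mc{B}\times\mc{B}$ and base change, and part (5) from an explicit $\mb{P}^1$-computation on $\mc{B}\times_{\mc{P}_\alpha}\mc{B}$. Two points of divergence are worth noting.

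First, the paper treats (1), (3), and (4) \emph{uniformly} via a single convolution lemma (Lemma~\ref{lem:basic convolution}): if $\ell(w_1w_2)=\ell(w_1)+\ell(w_2)$ then
\[
j_{w_1!}\widetilde{\mc{O}}_{X_{w_1}} * j_{w_2!}\widetilde{\mc{O}}_{X_{w_2}} \cong j_{w_1w_2!}\widetilde{\mc{O}}_{X_{w_1w_2}}(\dim\mc{B}),
\]
proved by taking the stalk at $(\tilde{x},\tilde{w}_1\tilde{w}_2\tilde{x})$ and applying Lemma~\ref{lem:monodromic pushforward}. Parts (3) and (4) are then the special cases $w_i=1$, together with the observation $\tilde{\delta}_\mu^{(\lambda)}=\tilde{\Delta}_1^{(\lambda)}\otimes\mc{O}(\mu,0)$. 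Your route through Proposition~\ref{prop:translations} and Corollary~\ref{cor:translation independence} for (3) is fine, but note that Proposition~\ref{prop:translations} is stated for the action on $\mrm{D}^b\mhm(\mc{D}_{\widetilde{\lambda}})$, not for convolution of kernels, so you would still need the same stalk computation.

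Second, your treatment of (2) is the one genuine gap. ``Adjunction along the open embedding $X_{s_\alpha}\hookrightarrow\mc{B}\times_{\mc{P}_\alpha}\mc{B}$'' does not by itself produce the isomorphism $\tilde{\nabla}_{s_\alpha}^{(s_\alpha\lambda)}*\tilde{\Delta}_{s_\alpha}^{(\lambda)}\cong\tilde{\delta}_0^{(\lambda)}$ with the correct Tate twist in the pro-MHM category. The paper instead verifies this by an explicit base-change computation: one checks via Proposition~\ref{prop:hodge local systems} that the $!$-stalks of $\tilde{\nabla}_{s_\alpha}^{(s_\alpha\lambda)}*\tilde{\Delta}_{s_\alpha}^{(\lambda)}$ at $(\tilde{x},\tilde{s}_\alpha\tilde{x})$ and $(\tilde{x},\tilde{x})$ are $0$ and $\widehat{R}_1(-2\dim\tilde{\mc{B}})[-2\dim\mc{B}-\dim H]$ respectively, using a Cartesian diagram over the $\mb{P}^1$-fibre through $x$. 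Your sketch would need to be expanded to something of this sort.

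Finally, your consistency check at the end is confused: the cokernel $\mc{K}$ in (5) is a \emph{single} quotient of $\tilde{\delta}_{-n\alpha}^{(\lambda)}$, contributing one term $(1-u)t_{-n\alpha}$ (after specialisation) to the K-group, not the sum $t_{-\alpha}+\cdots+t_{-n\alpha}$. That sum arises only after iterating (5) at the intermediate twists $\lambda,\lambda-\alpha,\ldots$, which is the content of Theorem~\ref{thm:deformation relations}, not of (5) itself.
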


The proof of Theorem \ref{thm:hecke relations}, which we give at the end of the subsection, consists of a standard set of calculations in the less standard setting of pro-monodromic mixed Hodge modules---this requires a little extra care due to the non-uniqueness of Hodge structures on free-monodromic local systems. Using Corollary \ref{cor:intertwining semi-continuity}, we also have the following.

\begin{thm} \label{thm:deformation relations}
Let $\alpha$ be a simple root and $\lambda \in \mf{h}^*_\mb{R}$. Then:
\begin{enumerate}
\item \label{itm:deformation relations 1} The filtered module $(\tilde{\Delta}_{s_\alpha}^{(\lambda)}, F_\bullet^H)$ (resp., $(\tilde{\nabla}_{s_\alpha}^{(\lambda)}, F_\bullet^H)$) depends only on $\lfloor\langle \lambda, \check\alpha\rangle\rfloor$ (resp., $\lceil \langle \lambda, \check\alpha \rangle \rceil$) and
\[ (\tilde{\nabla}_{s_\alpha}^{(\lambda)}, F_\bullet^H) = (\tilde{\Delta}_{s_\alpha}^{\left(\lambda - \frac{1}{2}\alpha\right)}, F_\bullet^H)\{1\} \quad \text{if $\langle \lambda, \check\alpha \rangle \in \mb{Z}$}.\]
\item \label{itm:deformation relations 2} If $\langle \lambda, \check\alpha \rangle < 0$ then we have a strict exact sequence
\[  0 \to (\tilde{\nabla}_{s_\alpha}^{(0)}, F_\bullet^H)\{-1\} \to (\tilde{\Delta}_{s_\alpha}^{(\lambda)}, F_\bullet^H) \to (\mc{K}, F_\bullet) \to 0\]
where $\mc{K}$ is an iterated extension of the filtered sheaves
\[ \coker(\check\alpha - n + 1 \colon \tilde{\delta}_{n\alpha} \to \tilde{\delta}_{n\alpha}\{-1\}) \quad \text{for $n \in \mb{Z}$ with $0 < n < - \langle \lambda, \check\alpha \rangle$}.\]
\item \label{itm:deformation relations 3} If $\langle \lambda, \check\alpha \rangle > 0$, then we have a strict exact sequence
\[ 0 \to (\tilde{\Delta}_{s_\alpha}^{(\lambda)}, F^H_\bullet) \to (\tilde{\Delta}_{s_\alpha}^{(0)}, F^H_\bullet) \to (\mc{K}, F_\bullet) \to 0\]
where $\mc{K}$ is an iterated extension of the filtered sheaves
\[ \coker(\check\alpha +  n + 1\colon \tilde{\delta}_{-n\alpha} \to \tilde{\delta}_{-n\alpha}\{-1\}) \quad \text{for $n \in \mb{Z}$ with $0 < n \leq \langle \lambda, \check\alpha \rangle$}.\]
\end{enumerate}
\end{thm}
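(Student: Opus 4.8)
The plan is to extract all three parts from Theorems~\ref{thm:intertwining semi-continuity} and~\ref{thm:hecke relations}(\ref{itm:hecke relations 5}), by analysing how the filtered module $(\tilde{\Delta}_{s_\alpha}^{(\lambda)}, F_\bullet^H)$ varies with the single parameter $t := \langle \lambda, \check\alpha\rangle$. Since $s_\alpha^{-1}\mf{h}^*_{\mb{R}, +} - \mf{h}^*_{\mb{R}, +} = \{\mu \mid \langle \mu, \check\alpha\rangle < 0\}$ by Proposition~\ref{prop:cone}, Theorem~\ref{thm:intertwining semi-continuity} gives, for $\langle \mu, \check\alpha\rangle < 0$, strict injections
\[ (\tilde{\Delta}_{s_\alpha}^{(\lambda)}, F_\bullet^H) \hookrightarrow (\tilde{\nabla}_{s_\alpha}^{(\lambda)}, F_\bullet^H)\{-1\} \hookrightarrow (\tilde{\Delta}_{s_\alpha}^{(\lambda + \mu)}, F_\bullet^H) \]
of submodules of a single naive pushforward $j_{s_\alpha +}$, the second an isomorphism when $0 < |\langle \mu, \check\alpha\rangle| \ll 1$, while Theorem~\ref{thm:hecke relations}(\ref{itm:hecke relations 5}) identifies the cokernel of the first injection and shows that it vanishes precisely when $t \notin \mb{Z}$. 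First I would combine these to see that, as a function of $t$, the submodule $(\tilde{\Delta}_{s_\alpha}^{(\lambda)}, F_\bullet^H)$ coincides with $(\tilde{\nabla}_{s_\alpha}^{(\lambda)}, F_\bullet^H)\{-1\}$ and with its value just below $t$ when $t \notin \mb{Z}$, is a \emph{proper} submodule of the value just below $t$ when $t \in \mb{Z}$, and---applying the two results again with a base point slightly above $t$ and using Theorem~\ref{thm:hecke relations}(\ref{itm:hecke relations 5}) there---coincides with its value just above $t$ for every $t$. Hence $t \mapsto (\tilde{\Delta}_{s_\alpha}^{(\lambda)}, F_\bullet^H)$ is right-continuous and jumps exactly at integers, so is constant on each interval $[m, m+1)$; granting that it depends on $\lambda$ only through $t$ (discussed below), this is the first claim of~(\ref{itm:deformation relations 1}). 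Reading off the equation ``$(\tilde{\nabla}_{s_\alpha}^{(\lambda)}, F_\bullet^H)\{-1\}$ $=$ value of $\tilde{\Delta}$ just below $t$'' at an integer $t$ then yields $(\tilde{\nabla}_{s_\alpha}^{(\lambda)}, F_\bullet^H) = (\tilde{\Delta}_{s_\alpha}^{(\lambda - \frac{1}{2}\alpha)}, F_\bullet^H)\{1\}$, and the statements for $\tilde{\nabla}$ are the mirror image (it jumps at integers from the left, so depends on $\lceil t \rceil$).

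Granting part~(\ref{itm:deformation relations 1}), I would deduce parts~(\ref{itm:deformation relations 2}) and~(\ref{itm:deformation relations 3}) by telescoping Theorem~\ref{thm:hecke relations}(\ref{itm:hecke relations 5}). For~(\ref{itm:deformation relations 3}) it is enough to treat $N := \langle\lambda,\check\alpha\rangle \in \mb{Z}_{>0}$, since the general case follows from~(\ref{itm:deformation relations 1}). Setting $\lambda_j := \lambda - (N - j)\tfrac{1}{2}\alpha$, so that $\langle\lambda_j,\check\alpha\rangle = j$ and $\lambda_{j-1} = \lambda_j - \tfrac12\alpha$, Theorem~\ref{thm:hecke relations}(\ref{itm:hecke relations 5}) gives strict exact sequences
\[ 0 \to \tilde{\Delta}_{s_\alpha}^{(\lambda_j)} \to \tilde{\nabla}_{s_\alpha}^{(\lambda_j)}(-1) \to \coker\!\big(\check\alpha + j + 1 \colon \tilde{\delta}_{-j\alpha} \to \tilde{\delta}_{-j\alpha}\{-1\}\big) \to 0 \]
(using $\tilde{\delta}_{-j\alpha}^{(\lambda_j)} = \tilde{\delta}_{-j\alpha}$ from Corollary~\ref{cor:translation independence}), whose middle term has underlying filtered module $(\tilde{\Delta}_{s_\alpha}^{(\lambda_{j-1})}, F_\bullet^H)$ by~(\ref{itm:deformation relations 1}). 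Concatenating for $j = N, N-1, \dots, 1$ and using $(\tilde{\Delta}_{s_\alpha}^{(\lambda_0)}, F_\bullet^H) = (\tilde{\Delta}_{s_\alpha}^{(0)}, F_\bullet^H)$ (equal $\check\alpha$-values) exhibits $(\tilde{\Delta}_{s_\alpha}^{(\lambda)}, F_\bullet^H)$ as a submodule of $(\tilde{\Delta}_{s_\alpha}^{(0)}, F_\bullet^H)$ with cokernel an iterated extension of the $\coker(\check\alpha + n + 1 \colon \tilde{\delta}_{-n\alpha} \to \tilde{\delta}_{-n\alpha}\{-1\})$ for $1 \leq n \leq N$; every map involved lies in $F_0^H$, so all the sequences are strict. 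Part~(\ref{itm:deformation relations 2}) is the same argument run with $N := -\langle\lambda,\check\alpha\rangle \in \mb{Z}_{>0}$: here $(\tilde{\nabla}_{s_\alpha}^{(0)}, F_\bullet^H)\{-1\} = (\tilde{\Delta}_{s_\alpha}^{(-\frac12\alpha)}, F_\bullet^H)$ by~(\ref{itm:deformation relations 1}), and telescoping Theorem~\ref{thm:hecke relations}(\ref{itm:hecke relations 5}) at $\mu_j := -\tfrac{j}{2}\alpha$ (so $\langle\mu_j,\check\alpha\rangle = -j$ and $\mu_j - \tfrac12\alpha = \mu_{j+1}$) for $j = 1, \dots, N - 1$ embeds it into $(\tilde{\Delta}_{s_\alpha}^{(\lambda)}, F_\bullet^H)$ with cokernel an iterated extension of the $\coker(\check\alpha - n + 1 \colon \tilde{\delta}_{n\alpha} \to \tilde{\delta}_{n\alpha}\{-1\})$ for $1 \leq n \leq N-1$.

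The one ingredient needing separate justification is that $(\tilde{\Delta}_{s_\alpha}^{(\lambda)}, F_\bullet^H)$ and $(\tilde{\nabla}_{s_\alpha}^{(\lambda)}, F_\bullet^H)$ depend on $\lambda$ only through $\langle\lambda,\check\alpha\rangle$. I would obtain this by reducing to the minimal parabolic: $\overline{X_{s_\alpha}} = X_1 \cup X_{s_\alpha} = \mc{B} \times_{\mc{P}_\alpha} \mc{B}$ is a $\mathbb{P}^1 \times \mathbb{P}^1$-bundle over $\mc{P}_\alpha = G/P_\alpha$ with $X_1$ the diagonal sub-bundle, so $\tilde{\Delta}_{s_\alpha}^{(\lambda)}$, $\tilde{\nabla}_{s_\alpha}^{(\lambda)}$, $\tilde{\delta}_{n\alpha}^{(\lambda)}$ and the exact sequences above are pulled back fibrewise from the semisimple-rank-one Levi $L_\alpha$, where $\langle\lambda,\check\alpha\rangle$ determines $\lambda$ up to the inessential center and the objects have the explicit description of \cite{BIR} and \cite[Remark 6.18]{DV}; alternatively one appeals to the deformation-invariance in Theorem~\ref{thm:pro semi-continuity}(\ref{itm:pro semi-continuity 1}) along directions in $\Gamma^G_\mb{R}(\tilde{X}_{s_\alpha})^{\mathit{mon}}$ lying over $\check\alpha^\perp \subset \mf{h}^*_\mb{R}$. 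I expect the main obstacle to be not any single step but the combined bookkeeping: keeping Tate twists in line with Hodge-filtration shifts, checking that the injections coming from Theorems~\ref{thm:intertwining semi-continuity} and~\ref{thm:hecke relations}(\ref{itm:hecke relations 5}) are mutually compatible as submodules of one fixed naive pushforward, and verifying strictness throughout (which, as noted, reduces to the relevant maps lying in $F_0^H$). The telescoping itself is then routine.
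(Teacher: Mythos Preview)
Your proposal is essentially correct and follows the same two inputs---Theorem~\ref{thm:intertwining semi-continuity} and Theorem~\ref{thm:hecke relations}\eqref{itm:hecke relations 5}---as the paper, with the same overall structure: establish \eqref{itm:deformation relations 1} first, then telescope. Your telescoping arguments for \eqref{itm:deformation relations 2} and \eqref{itm:deformation relations 3} match the paper's up to cosmetic choices (you first reduce to integer $\langle\lambda,\check\alpha\rangle$ via \eqref{itm:deformation relations 1}, whereas the paper sets $\nu = -\lambda/\langle\lambda,\check\alpha\rangle$ and telescopes directly for arbitrary $\lambda$).

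The one place where your write-up diverges is in \eqref{itm:deformation relations 1}. You treat ``depends only on $\lfloor t\rfloor$'' and ``depends only on $t$'' as separate steps and propose to handle the latter either by reducing to the Levi $L_\alpha$ or by deforming along $\check\alpha^\perp$ via Theorem~\ref{thm:pro semi-continuity}. The second option does not work as stated: the positive cone $\Gamma^G_{\mb{R}}(\tilde{X}_{s_\alpha})^{\mathit{mon}}_+$ maps (in the $\lambda$-variable) onto the \emph{open} half-space $\{\mu : \langle\mu,\check\alpha\rangle < 0\}$, and Theorem~\ref{thm:pro semi-continuity} requires $f$ strictly positive, so you cannot deform along the wall $\check\alpha^\perp$. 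The paper sidesteps the whole issue: for $t\notin\mb{Z}$ it chooses $\mu = \tfrac12\lfloor t\rfloor\alpha - \lambda$, which \emph{is} in the open cone, and moves directly to the specific point $\tfrac12\lfloor t\rfloor\alpha$. That single move simultaneously proves ``depends only on $t$'' and ``depends only on $\lfloor t\rfloor$''. For $t\in\mb{Z}$ one perturbs to $\lambda+\epsilon\alpha$ and invokes the non-integer case. Adopting this lets you delete your final paragraph entirely. One further small correction: strictness does not follow merely from the maps lying in $F_0^H$ (that only makes them filtered); it follows because the short exact sequences of Theorem~\ref{thm:hecke relations}\eqref{itm:hecke relations 5} are sequences of pro-mixed Hodge modules and the identifications from \eqref{itm:deformation relations 1} are filtered isomorphisms.
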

\begin{proof}
We first prove that $(\tilde{\Delta}^{(\lambda)}_{s_\alpha}, F_\bullet^H)$ depends only on $\lfloor\langle \lambda, \check\alpha\rangle\rfloor$. Consider first the case where $\langle \lambda, \check\alpha \rangle \not\in \mb{Z}$. In this case, by Remark \ref{rmk:cone}, we have
\[ \mu := \frac{1}{2}\lfloor \langle \lambda, \check\alpha\rangle \rfloor \alpha - \lambda \in s_\alpha\mf{h}^*_{\mb{R}, +} - \mf{h}^*_{\mb{R}, +}.\]
Now, by Theorem \ref{thm:hecke relations} \eqref{itm:hecke relations 5}, the natural map
\[ \tilde{\Delta}_{s_\alpha}^{(\lambda + s\mu)} \to \tilde{\nabla}_{s_\alpha}^{(\lambda + s\mu)}(-1) \]
is an isomorphism if $\langle \lambda + s\mu, \check\alpha \rangle \not \in \mb{Z}$, which holds, e.g., if $0 \leq s < 1$. Thus, by Corollary \ref{cor:intertwining semi-continuity}, we have an isomorphism
\[ (\tilde{\Delta}_{s_\alpha}^{(\lambda)}, F_\bullet^H) \xrightarrow{\sim} (\tilde{\Delta}_{s_\alpha}^{(\lambda + \mu)}, F_\bullet^H) = (\tilde{\Delta}^{\frac{1}{2}\lfloor\langle \lambda, \check\alpha\rangle \rfloor}, F_\bullet^H),\]
which establishes \eqref{itm:deformation relations 1} in this case. If $\langle \lambda, \check\alpha\rangle \in \mb{Z}$, then Corollary \ref{cor:intertwining semi-continuity} again implies that
\[ (\tilde{\Delta}_{s_\alpha}^{(\lambda)}, F_\bullet^H) = (\tilde{\Delta}_{s_\alpha}^{(\lambda + \epsilon \alpha)}, F_\bullet^H)\]
for $0 < \epsilon \ll 1$. Since $\lfloor \langle \lambda, \check\alpha \rangle \rfloor = \lfloor \langle \lambda + \epsilon \alpha, \check\alpha \rangle \rfloor$, this shows that $(\tilde{\Delta}_{s_\alpha}^{(\lambda)}, F_\bullet^H)$ depends only on $\lfloor \langle \lambda, \check\alpha\rangle\rfloor$, since this is so for $(\tilde{\Delta}_{s_\alpha}^{(\lambda + \epsilon \alpha)}, F_\bullet^H)$. By an identical argument, $(\tilde{\nabla}^{(\lambda)}_{s_\alpha}, F_\bullet^H)$ depends only on $\lceil\langle \lambda, \check\alpha\rangle\rceil$. To complete the proof of \eqref{itm:deformation relations 1}, we note that for $\langle \lambda, \check\alpha\rangle \in \mb{Z}$, the map
\[ (\tilde{\nabla}_{s_\alpha}^{(\lambda)}, F_\bullet^H) \to (\tilde{\Delta}_{s_\alpha}^{\left(\lambda - \frac{1}{2}\alpha\right)}, F_\bullet^H)\{1\} \]
is an isomorphism by Corollary \ref{cor:intertwining semi-continuity} and Theorem \ref{thm:hecke relations} \eqref{itm:hecke relations 5}.

We next prove \eqref{itm:deformation relations 2}. By Remark \ref{rmk:cone}, we have $\lambda \in s_\alpha\mf{h}^*_{\mb{R}, +} - \mf{h}^*_{\mb{R}, +}$. So Corollary \ref{cor:intertwining semi-continuity} provides the desired strict injection $\tilde{\nabla}_{s_\alpha}^{(0)}\{-1\} \to \tilde{\Delta}_{s_\alpha}^{(\lambda)}$. To compute the cokernel, we note that, by Corollary \ref{cor:intertwining semi-continuity} and Theorem \ref{thm:hecke relations} \eqref{itm:hecke relations 5}, we may factor this as
\[ \tilde{\nabla}_{s_\alpha}\{-1\} = \tilde{\Delta}_{s_\alpha}^{(\nu)} \to \tilde{\nabla}_{s_\alpha}^{(\nu)}\{-1\} = \tilde{\Delta}_{s_\alpha}^{(2\nu)}  \to \cdots \to \tilde{\nabla}_{s_\alpha}^{(N\nu)}\{-1\} = \tilde{\Delta}_{s_\alpha}^{(\lambda)},\]
where
\[ \nu = -\frac{\lambda}{\langle \lambda, \check\alpha \rangle} \quad \text{and} \quad N = \lceil -\langle \lambda, \check\alpha\rangle \rceil - 1.\]
By Theorem \ref{thm:hecke relations} \eqref{itm:hecke relations 5} again, we have
\[ \coker(\tilde{\Delta}_{s_\alpha}^{(n\nu)} \to \tilde{\nabla}_{s_\alpha}^{(n\nu)}\{-1\}) \cong \coker(\check\alpha - n + 1 \colon \tilde{\delta}_{n\alpha} \to \tilde{\delta}_{n\alpha}\{-1\}).\]
So $\mc{K}$ is an iterated extension of these filtered sheaves as claimed, proving \eqref{itm:deformation relations 2}. The proof of \eqref{itm:deformation relations 3} is similar.
\end{proof}

To illustrate their meaning, we note that at the level of Grothendieck groups, Theorems \ref{thm:hecke relations} and \ref{thm:deformation relations} imply that the intertwining and translation functors generate an action of the affine Hecke algebra on the Grothendieck group of filtered $\tilde{\mathcal{D}}$-modules.

\begin{cor} \label{cor:affine hecke}
Let $\mc{H}$ denote the affine Hecke algebra over $\mb{Z}[u, u^{-1}]$ with generators $T_w$ for $w \in W$ and $t_\mu$ for $\mu \in \mb{X}^*(H)$ and relations
\begin{align}
T_1 = t_0 = 1, \quad & \label{eq:bernstein 1} \\
t_\mu t_\nu = t_{\mu + \nu}, \quad & \text{for $\mu, \nu \in \mb{X}^*(H)$}, \label{eq:bernstein 2} \\
T_{w_1} T_{w_2} = T_{w_1w_2}, \quad  & \text{if $\ell(w_1w_2) = \ell(w_1) + \ell(w_2)$}, \label{eq:bernstein 3} \\
(T_{s_\alpha} - u)(T_{s_\alpha} + 1)= 0, \quad & \text{for $\alpha$ a simple root}, \label{eq:bernstein 4} \\
t_{s_\alpha\mu} T_{s_\alpha} t_{-\mu} = T_{s_\alpha} + (1 - u)\frac{1 - t_{-\langle \mu, \check\alpha \rangle\alpha}}{1 - t_\alpha}, \quad & \text{for $\alpha$ simple and $\mu \in \mb{X}^*(H)$}. \label{eq:bernstein 5}
\end{align}
Then there is an action of $\mc{H}$ on the Grothendieck group $\mrm{K}(\mrm{D}^b\coh(\tilde{\mc{D}}, F_\bullet))$ such that, for $\mc{M} \in \mrm{D}^b\coh(\tilde{\mc{D}}, F_\bullet)$ we have
\begin{equation} \label{eq:affine hecke K 1}
\begin{gathered}
 [\mc{M}\{-1\}] = u[\mc{M}], \quad  [t_\mu(\mc{M})] = t_\mu[\mc{M}], \\
 [\mc{I}_w^!(\mc{M})] = (-1)^{\ell(w)}T_w[\mc{M}], \quad [\mc{I}_w^*(\mc{M})] = (-1)^{\ell(w)}T_{w^{-1}}^{-1}[\mc{M}],
\end{gathered}
\end{equation}
and for $\mc{N} \in \mrm{D}^b\mhm(\mc{D}_\lambda)$ and $\alpha$ a simple root,
\begin{equation} \label{eq:affine hecke K 2}
[(t_\mu(\mc{N}), F_\bullet^H)] = t_\mu[(\mc{N}, F_\bullet^H)],
\end{equation}
\begin{equation} \label{eq:affine hecke K 3}
[(\mc{I}_{s_\alpha}^{!H}(\mc{N}), F_\bullet^H)] =  \left(-T_{s_\alpha} + (u - 1) \frac{1 - t_{-\lfloor\langle \lambda, \check\alpha\rangle\rfloor\alpha}}{1 - t_\alpha}\right)[(\mc{N}, F_\bullet^H)]
\end{equation}
and
\begin{equation} \label{eq:affine hecke K 4}
[(\mc{I}_{s_\alpha}^{*H}(\mc{N}), F_\bullet^H)] =  \left(-u^{-1}T_{s_\alpha} + (1 - u^{-1}) \frac{1 - t_{-\lceil\langle \lambda, \check\alpha\rangle - 1\rceil \alpha}}{1 - t_\alpha}\right)[(\mc{N}, F_\bullet^H)].
\end{equation}
\end{cor}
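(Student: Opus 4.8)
The plan is to define the prospective $\mc{H}$-action on $\mrm{K} := \mrm{K}(\mrm{D}^b\coh(\tilde{\mc{D}}, F_\bullet))$ by the formulas in \eqref{eq:affine hecke K 1} and then check the defining relations \eqref{eq:bernstein 1}--\eqref{eq:bernstein 5} one at a time using Theorems \ref{thm:hecke relations} and \ref{thm:deformation relations}. Since convolution with a fixed object is triangulated and commutes with the filtration shift $\{-1\}$, the functors $\mc{I}_w^!$, $\mc{I}_w^*$ and $t_\mu$ induce $\mb{Z}[u^{\pm 1}]$-linear endomorphisms of $\mrm{K}$, where $u := [\{-1\}]$ is invertible because $\{-1\}$ is an autoequivalence. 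So I set $t_\mu := [t_\mu(-)]$ and $T_w := (-1)^{\ell(w)}[\mc{I}_w^!]$; relations \eqref{eq:bernstein 1}, \eqref{eq:bernstein 2} and \eqref{eq:bernstein 3} then follow respectively from $\tilde{\Delta}_1^{(0)} = \tilde{\delta}_0^{(0)} \cong \tilde{\mc{D}}$, from Proposition \ref{prop:translations} (using $\mc{O}(\mu) \otimes \mc{O}(\nu) \cong \mc{O}(\mu + \nu)$), and from Theorem \ref{thm:hecke relations}\,\eqref{itm:hecke relations 1} at $\lambda = 0$ together with $(-1)^{\ell(w_1) + \ell(w_2)} = (-1)^{\ell(w_1 w_2)}$ when lengths add. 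Moreover, Theorem \ref{thm:hecke relations}\,\eqref{itm:hecke relations 2} at $\lambda = 0$ says that $\tilde{\Delta}_w^{(0)}$ and $\tilde{\nabla}_{w^{-1}}^{(0)}$ are mutually inverse under convolution, so $[\mc{I}_w^!]$ and $[\mc{I}_{w^{-1}}^*]$ are mutually inverse, giving $[\mc{I}_w^*] = (-1)^{\ell(w)} T_{w^{-1}}^{-1}$.

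For the quadratic relation \eqref{eq:bernstein 4}, I apply the forgetful functor to the (strict) short exact sequence of Theorem \ref{thm:hecke relations}\,\eqref{itm:hecke relations 5} at $\lambda = 0$ and convolve with an arbitrary object: this becomes a distinguished triangle in $\mrm{D}^b\coh(\tilde{\mc{D}}, F_\bullet)$, so classes add. Since the Tate twist $(-1)$ shifts the Hodge filtration so as to act by $u$ on $\mrm{K}$, and the error term $\mc{K}$ is the cokernel of the \emph{injective} map $\check\alpha + 1 \colon \tilde{\delta}_0 \to \tilde{\delta}_0(-1)$ and so convolves to $(u - 1)[t_0(-)] = (u-1)\,\mrm{id}$, I obtain $[\mc{I}_{s_\alpha}^!] + (u - 1)\,\mrm{id} = u\,[\mc{I}_{s_\alpha}^*]$. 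Combined with $[\mc{I}_{s_\alpha}^!][\mc{I}_{s_\alpha}^*] = \mrm{id}$ this yields $[\mc{I}_{s_\alpha}^!]^2 + (u - 1)[\mc{I}_{s_\alpha}^!] - u = 0$, which is \eqref{eq:bernstein 4} after substituting $[\mc{I}_{s_\alpha}^!] = -T_{s_\alpha}$.

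For the Bernstein relation \eqref{eq:bernstein 5}, the key point is Theorem \ref{thm:hecke relations}\,\eqref{itm:hecke relations 4} at $\lambda = 0$, which---after using Corollary \ref{cor:translation independence} to identify the underlying filtered modules of $\tilde{\delta}_{s_\alpha\mu}^{(0)}$ and $\tilde{\delta}_{-\mu}^{(\mu)}$ with $\tilde{\delta}_{s_\alpha\mu}$ and $\tilde{\delta}_{-\mu}$---gives the functorial identity $t_{s_\alpha\mu} \circ \mc{I}_{s_\alpha}^! \circ t_{-\mu} = \mc{I}_{s_\alpha}^{!(\mu)}$. It then remains to compute $[\mc{I}_{s_\alpha}^{!(\mu)}]$ for $\mu \in \mb{X}^*(H)$. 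For $\langle\mu, \check\alpha\rangle \geq 0$ I use Theorem \ref{thm:deformation relations}\,\eqref{itm:deformation relations 3}: each filtered sheaf $\coker(\check\alpha + n + 1 \colon \tilde{\delta}_{-n\alpha} \to \tilde{\delta}_{-n\alpha}\{-1\})$ convolves to $(u - 1)[t_{-n\alpha}(-)]$, so $[\mc{I}_{s_\alpha}^{!(\mu)}] = -T_{s_\alpha} - (u - 1)\sum_{n = 1}^{\langle\mu,\check\alpha\rangle} t_{-n\alpha}$, and the geometric-series identity $\sum_{n = 1}^{m} t_{-n\alpha} = -(1 - t_{-m\alpha})/(1 - t_\alpha)$ turns this into $-T_{s_\alpha} + (u - 1)(1 - t_{-\langle\mu,\check\alpha\rangle\alpha})/(1 - t_\alpha) = -\bigl(T_{s_\alpha} + (1 - u)(1 - t_{-\langle\mu,\check\alpha\rangle\alpha})/(1 - t_\alpha)\bigr)$, i.e.\ \eqref{eq:bernstein 5}. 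For $\langle\mu, \check\alpha\rangle < 0$ I argue identically with Theorem \ref{thm:deformation relations}\,\eqref{itm:deformation relations 2}, using the quadratic relation to rewrite the leading term $u\,[\mc{I}_{s_\alpha}^*] = -u T_{s_\alpha}^{-1}$ as $-T_{s_\alpha} + (u - 1)$. This establishes the $\mc{H}$-action. Finally, for $\mc{N} \in \mrm{D}^b\mhm(\mc{D}_\lambda)$: \eqref{eq:affine hecke K 2} is the compatibility of $t_\mu^H$ with $t_\mu$ recorded after Proposition \ref{prop:translations}, and \eqref{eq:affine hecke K 3}, \eqref{eq:affine hecke K 4} follow from the compatibilities $(\mc{I}_{s_\alpha}^{!H}\mc{N}, F_\bullet^H) \cong \mc{I}_{s_\alpha}^{!(\lambda)}(\mc{N}, F_\bullet^H)$ and $(\mc{I}_{s_\alpha}^{*H}\mc{N}, F_\bullet^H) \cong \mc{I}_{s_\alpha}^{*(\lambda)}(\mc{N}, F_\bullet^H)$ of convolution with the forgetful functor, together with Theorem \ref{thm:deformation relations}\,\eqref{itm:deformation relations 1}, which lets me replace $\lambda$ by an integral twist with the same floor (resp.\ ceiling) and so reduces the computation of $[\mc{I}_{s_\alpha}^{!(\lambda)}]$---and, via $[\mc{I}_{s_\alpha}^{*(\lambda)}] = [\mc{I}_{s_\alpha}^{!(s_\alpha\lambda)}]^{-1}$ and the quadratic relation, of $[\mc{I}_{s_\alpha}^{*(\lambda)}]$---to the integral case above.

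All of the essential geometry is already contained in Theorems \ref{thm:hecke relations} and \ref{thm:deformation relations}, so I expect the only real difficulty to be bookkeeping: transporting those statements, which are phrased for (pro-)mixed Hodge modules with Tate twists, into correct operator identities on $\mrm{K}(\mrm{D}^b\coh(\tilde{\mc{D}}, F_\bullet))$. The two delicate points are (i) checking that the relevant short exact sequences of (pro-)mixed Hodge modules descend to distinguished triangles of coherent filtered $\tilde{\mc{D}}$-modules, and that the maps $\check\alpha + n + 1$ (resp.\ $\check\alpha - n + 1$) are injective, which together with the Tate-twist/filtration-shift convention pins down the exact power of $u$ and the sign in each error term; and (ii) the elementary but fiddly geometric-series and quadratic-relation manipulations matching $[\mc{I}_{s_\alpha}^{!(\mu)}]$ and $[\mc{I}_{s_\alpha}^{*(\lambda)}]$ with the right-hand sides of \eqref{eq:bernstein 5}, \eqref{eq:affine hecke K 3} and \eqref{eq:affine hecke K 4}.
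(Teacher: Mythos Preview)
Your proposal is correct and follows essentially the same route as the paper. The only cosmetic difference is that the paper packages the verification of \eqref{eq:bernstein 1}--\eqref{eq:bernstein 5} as a ring homomorphism $\mc{H} \to \mrm{K}(\mrm{D}^b_{\mrm{St}}\coh^G(\tilde{\mc{D}}\boxtimes\tilde{\mc{D}}, F_\bullet))$ first and then passes to the module action, whereas you work directly with the induced operators on $\mrm{K}(\mrm{D}^b\coh(\tilde{\mc{D}}, F_\bullet))$; the inputs (Theorem \ref{thm:hecke relations} parts \eqref{itm:hecke relations 1}--\eqref{itm:hecke relations 5}, Theorem \ref{thm:deformation relations}, Corollary \ref{cor:translation independence}) and the way they are combined are the same.
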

\begin{proof}
Define an algebra homomorphism
\begin{equation} \label{eq:affine hecke K 5}
\mc{H} \to \mrm{K}(\mrm{D}^b_{\mrm{St}}\coh^G(\tilde{\mc{D}}\boxtimes \tilde{\mc{D}}, F_\bullet))
\end{equation}
sending $t_\mu$ to $[(\tilde{\delta}_{\mu}, F_\bullet^H)]$, $T_w$ to $(-1)^{\ell(w)} [(\tilde{\Delta}_{s_\alpha}^{(0)}, F_\bullet^H)]$ and $u$ to the filtration shift. Relation \eqref{eq:bernstein 1} follows from Proposition \ref{prop:translations} while \eqref{eq:bernstein 2} and \eqref{eq:bernstein 3} are clear from Theorem \ref{thm:hecke relations}. For \eqref{eq:bernstein 4} combine Theorem \ref{thm:hecke relations} \eqref{itm:hecke relations 2} and \eqref{itm:hecke relations 5}, while for \eqref{eq:bernstein 5}, combine Theorem \ref{thm:hecke relations} \eqref{itm:hecke relations 4} with Theorem \ref{thm:deformation relations} \eqref{itm:deformation relations 3}. So the homomorphism \eqref{eq:affine hecke K 5} is well-defined. Since $\mrm{K}(\mrm{D}^b_{\mrm{St}}\coh^G(\tilde{\mc{D}}\boxtimes \tilde{\mc{D}}, F_\bullet))$ acts on $\mrm{K}(\mrm{D}^b\coh(\tilde{\mc{D}}, F_\bullet))$ by convolution, we obtain the desired action of $\mc{H}$.

Now \eqref{eq:affine hecke K 1} is clear by construction, while \eqref{eq:affine hecke K 2} follows from Corollary \ref{cor:translation independence}. The relations \eqref{eq:affine hecke K 3} and \eqref{eq:affine hecke K 4} follow from Theorem \ref{thm:deformation relations}.
\end{proof}

\begin{proof}[Proof of Theorem \ref{thm:hecke relations}]
Let us now pay our dues and check the relations in Theorem \ref{thm:hecke relations}. The relations \eqref{itm:hecke relations 1}, \eqref{itm:hecke relations 3} and \eqref{itm:hecke relations 4} are special cases of the following lemma.

\begin{lemma} \label{lem:basic convolution}
If $\ell(w_1w_2) = \ell(w_1) + \ell(w_2)$ then
\[ j_{w_1!} \widetilde{\mc{O}}_{X_{w_1}}(\nu - \rho, -\mu - \rho) * j_{w_2!} \widetilde{\mc{O}}_{X_{w_2}}(\mu - \rho, -\lambda - \rho) \cong j_{w_1w_2!} \widetilde{\mc{O}}_{X_{w_1w_2}}(\nu - \rho, -\lambda - \rho) (\dim \mc{B})\]
and
\[ j_{w_1*} \widetilde{\mc{O}}_{X_{w_1}}(\nu - \rho, -\mu - \rho) * j_{w_2*} \widetilde{\mc{O}}_{X_{w_2}}(\mu - \rho, -\lambda - \rho) \cong j_{w_1w_2*} \widetilde{\mc{O}}_{X_{w_1w_2}}(\nu - \rho, -\lambda - \rho) (\dim \mc{B})\]
\end{lemma}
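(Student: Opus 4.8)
Here is my plan for proving Lemma~\ref{lem:basic convolution}.

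The strategy is to compute both convolutions directly from the geometric definition, reducing the content to the classical statement about relative positions of Borel subgroups together with the monodromy and Hodge bookkeeping for free-monodromic local systems. First I would unwind the definition of $*$: since $\tilde{D}_{23}\colon \tilde{\mc{B}}^3\hookrightarrow\tilde{\mc{B}}^4$, $(x,y,z)\mapsto(x,y,y,z)$, is transverse to the locally closed subvariety $\tilde{X}_{w_1}\times\tilde{X}_{w_2}\subset\tilde{\mc{B}}^4$ (both orbit-preimages project submersively onto their respective $\tilde{\mc{B}}$-factors, in particular onto the middle one), the intermediate pullback $\tilde{D}_{23}^\circ$ applied to $j_{w_1!}\widetilde{\mc{O}}_{X_{w_1}}(\nu-\rho,-\mu-\rho)\boxtimes j_{w_2!}\widetilde{\mc{O}}_{X_{w_2}}(\mu-\rho,-\lambda-\rho)$ is an honest shifted restriction, with no higher Tor. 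Base change for the $!$-pushforward then identifies it with the $!$-extension to $\tilde{\mc{B}}^3$ of the pro-local-system on the twisted product $Z:=\tilde{X}_{w_1}\times_{\tilde{\mc{B}}}\tilde{X}_{w_2}$ (middle $\tilde{\mc{B}}$-coordinate matched) obtained by restricting the external product.

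The geometric heart is the classical $BN$-pair fact that when $\ell(w_1w_2)=\ell(w_1)+\ell(w_2)$, any pair of Borels in relative position $w_1w_2$ admits a \emph{unique} intermediate Borel in positions $w_1$ and $w_2$ (and none otherwise). Torsor-ified, this says that $\widetilde{\mrm{pr}}_{14}|_Z\colon Z\to\tilde{X}_{w_1w_2}$ is an $H$-torsor: the unique intermediate flag is rigid downstairs, and the one extra $H$-worth of data is the freedom in lifting it to the base affine space (a dimension count, $\dim Z=\dim\tilde{X}_{w_1w_2}+\dim H$, confirms this). I would then push the free-monodromic local system forward along this torus bundle using the collapsing of monodromic pushforwards along an $H$-bundle (Lemma~\ref{lem:monodromic pushforward}, exactly as in the proof of Proposition~\ref{prop:good convolution}): the middle monodromies, which match by construction of $Z$, are integrated out via $-\overset{\mrm{L}}\otimes_{S(\mf{h}(1))}\mb{C}$, and since each factor is a \emph{free}-monodromic (hence projective pro-)local system, this derived tensor product is concentrated in a single degree and produces precisely $\widetilde{\mc{O}}_{X_{w_1w_2}}(\nu-\rho,-\lambda-\rho)$ on $\tilde{X}_{w_1w_2}$, with the residual outer monodromies $\nu-\rho$ and $-\lambda-\rho$ surviving. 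Since $Z$ maps \emph{onto} the single orbit $\tilde{X}_{w_1w_2}$ (not a larger closed set), the result is the $!$-extension $j_{w_1w_2!}\widetilde{\mc{O}}_{X_{w_1w_2}}(\nu-\rho,-\lambda-\rho)$; tracking the shifts (transversality contributes one, the torus-collapse contributes the Tate twist $(-\dim H)$, and the definition of convolution has a built-in $(\dim\tilde{\mc{B}})$) yields the asserted $(\dim\mc{B})$.

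To promote this from an isomorphism of (filtered, pro-)$\mc{D}$-modules to one of pro-mixed Hodge modules, I would use that convolution is manifestly a composite of Hodge-theoretic operations, so $\mc{K}_1*\mc{K}_2$ carries a canonical mixed Hodge structure and it only remains to identify it. By the rigidity discussed in \S\ref{subsec:free-monodromic} (cf. Proposition~\ref{prop:filtration uniqueness} and its proof, together with Proposition~\ref{prop:hodge local systems}), a pro-mixed Hodge structure on a free-monodromic standard on $\tilde{X}_{w_1w_2}$ is pinned down by the underlying pro-$\mc{D}$-module together with the induced mixed Hodge structure on its quotient by the augmentation ideal, i.e. on the finite-monodromic standard $j_{w_1w_2!}\mc{O}_{X_{w_1w_2}}(\nu-\rho,-\lambda-\rho)$; for the latter one is back to the familiar computation with standard objects on the enhanced flag variety. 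The one point requiring care is the compatibility of base points: since our Hodge structures on the $\widetilde{\mc{O}}_{X_w}$ were fixed using base points $(\tilde{x},\tilde{w}\tilde{x})$ with $\tilde{w}\in\tilde{W}=N_{U_\mb{R}}(T^c)$, the product $\tilde{w}_1\tilde{w}_2\in\tilde{W}$ is a valid lift of $w_1w_2$ and the associated base point for $X_{w_1w_2}$ is compatible with those for $X_{w_1}$ and $X_{w_2}$ under the twisted-product identification, so the Hodge structures (and Tate twists) glue correctly. The $j_*$ statement then follows formally by Verdier duality, which interchanges $j_{w!}$ with $j_{w*}$, sends a free-monodromic local system of a given twist to one of the negated twist up to a Tate twist, and converts convolution of kernels into convolution of the duals; alternatively one reruns the same argument with $*$-pushforwards. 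I expect the main obstacle to be purely bookkeeping rather than conceptual: carrying the pro-object structure, the derived tensor product over $S(\mf{h}(1))$, and the Hodge and weight filtrations through base change and pushforward simultaneously, checking that no spurious higher cohomology appears and that all Tate twists come out exactly as stated, and handling the base-point dependence (Remark~\ref{rmk:hodge change of base point}) in the Hodge step — everything else is the standard monodromic Hecke-category computation transposed to the Hodge setting.
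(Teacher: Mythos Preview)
Your proposal is correct and follows essentially the same approach as the paper's proof. The paper is more terse: it directly computes the stalk of the convolution at the base point $(\tilde{x},\tilde{w}_1\tilde{w}_2\tilde{x})$ using the explicit identification $H\xrightarrow{\sim}\widetilde{\mrm{pr}}_{13}^{-1}(\tilde{x},\tilde{w}_1\tilde{w}_2\tilde{x})$, $h\mapsto(\tilde{x},\tilde{w}_1\tilde{x}h,\tilde{w}_1\tilde{w}_2\tilde{x})$, together with Lemma~\ref{lem:monodromic pushforward}, obtaining $\widehat{R}_{w_1}\otimes_{S(\mf{h}(1))}\widehat{R}_{w_2}(\dim\mc{B})=\widehat{R}_{w_1w_2}(\dim\mc{B})$; this single stalk computation simultaneously identifies the underlying pro-$\mc{D}$-module and the Hodge structure via Proposition~\ref{prop:hodge local systems}, so your separate rigidity and base-point discussion is already packaged inside it.
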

\begin{proof}
We prove the claim for $j_!$; the proof for $j_*$ is identical.

First, the condition on $w_1$ and $w_2$ ensures that the morphism
\[ \mrm{pr}_{13} \colon X_{w_1} \times_{\mc{B}} X_{w_2} \subset \mc{B} \times \mc{B} \times \mc{B} \to \mc{B} \times \mc{B} \]
is an isomorphism onto $X_{w_1w_2}$. Thus,
\[ j_{w_1!}\widetilde{\mc{O}}_{X_{w_1}} * j_{w_2!}\widetilde{\mc{O}}_{X_{w_2}} = j_{w_1w_2!} \widetilde{\mrm{pr}}_{13*}(\widetilde{\mc{O}}_{X_{w_1}} \otimes \widetilde{\mc{O}}_{X_{w_2}}) (\dim \tilde{\mc{B}}),\]
where we write
\[ \widetilde{\mrm{pr}}_{13} \colon \tilde{X}_{w_1} \times_{\tilde{\mc{B}}} \tilde{X}_{w_2} \to \tilde{X}_{w_1w_2} \]
for the natural projection and we have omitted the twists on the free-monodromic local systems for the sake of brevity. To compute this, we take the stalk at $(\tilde{x}, \tilde{w}_1\tilde{w}_2\tilde{x}) \in \tilde{X}_{w_1w_2}$, where $\tilde{w}_i$ are lifts of $w_i$ to $N_{U_\mb{R}}(T^c)$. We have an isomorphism
\begin{align*}
H &\xrightarrow{\sim} \widetilde{\mrm{pr}}_{13}^{-1}(\tilde{x}, \tilde{w}_1\tilde{w}_2\tilde{x}) \\
h &\mapsto (\tilde{x}, \tilde{w}_1\tilde{x}h, \tilde{w}_1\tilde{w}_2\tilde{x}).
\end{align*}
Thus, by Lemma \ref{lem:monodromic pushforward}, we have
\begin{align*}
(j_{w_1!}\widetilde{\mc{O}}_{X_{w_1}} * j_{w_2!}\widetilde{\mc{O}}_{X_{w_2}})|_{(\tilde{x}, \tilde{w}_1\tilde{w}_2 \tilde{x})} &= (\widetilde{\mc{O}}_{X_{w_1}}|_{(\tilde{x}, \tilde{w}_1\tilde{x})} \otimes \widetilde{\mc{O}}_{X_{w_2}}|_{(\tilde{w}_1\tilde{x}, \tilde{w}_1\tilde{w}_2\tilde{x})}) \overset{\mrm{L}}\otimes_{S(\mf{h}(1))} \mb{C}(\dim \mc{B}) \\
&= \widehat{R}_{w_1} \otimes_{S(\mf{h}(1))} \widehat{R}_{w_2}(\dim \mc{B}) = \widehat{R}_{w_1w_2}(\dim \mc{B}).
\end{align*}
So $j_{w_1!} \widetilde{\mc{O}}_{X_{w_1}} * j_{w_2!} \widetilde{\mc{O}}_{X_{w_2}} \cong j_{w_1w_2!} \widetilde{\mc{O}}_{X_{w_1w_2}} (\dim \mc{B})$ as claimed.
\end{proof}

Continuing with the proof of Thereom \ref{thm:hecke relations}, consider now relation \eqref{itm:hecke relations 2}. By \eqref{itm:hecke relations 1}, it suffices to consider the case where $w = s_\alpha$ for some simple root $\alpha$. We verify the desired relation for $\tilde{\nabla}_{s_\alpha}^{(s_\alpha\lambda)} * \tilde{\Delta}_{s_\alpha}^{(\lambda)}$; the relation for $\tilde{\Delta}_{s_\alpha}^{(s_\alpha\lambda)}* \tilde{\nabla}_{s_\alpha}^{(\lambda)}$ follows by symmetry. By Proposition \ref{prop:hodge local systems}, it suffices to check that
\begin{equation} \label{eq:hecke relations 2}
\begin{gathered}
i_{(\tilde x, \tilde{s}_\alpha \tilde{x})}^!(\tilde{\nabla}_{s_\alpha}^{(s_\alpha\lambda)} * \tilde{\Delta}_{s_\alpha}^{(\lambda)}) = 0 \quad \text{and} \\
i_{(\tilde x, \tilde{x})}^!(\tilde{\nabla}_{s_\alpha}^{(s_\alpha\lambda)} * \tilde{\Delta}_{s_\alpha}^{(\lambda)}) = \widehat{R}_1(-2\dim \tilde{\mc{B}})[-2 \dim \mc{B} -\dim H]
\end{gathered}
\end{equation}
as pro-$S(\mf{h}(1) \oplus \mf{h}(1))$-modules, where $i_{\tilde y}$ denotes the inclusion of a point $\tilde y \in \tilde{\mc{B}} \times \tilde{\mc{B}}$.

Observe that $X_{s_\alpha}$ is the complement of the diagonal inside $\mc{B} \times_{\mc{P}_\alpha} \mc{B}$, where $\mc{B} \to \mc{P}_\alpha$ is the projection to the corresponding partial flag variety. Let us write $\mb{P}^1$ for the fiber of $\mc{B} \to \mc{P}_\alpha$ containing $\tilde{x}$ and, for any subset $U \subset \mb{P}^1,$ $\tilde{U}$ for the pre-image of $U$ in $\tilde{\mc{B}}$. Then we have a commutative diagram
\[
\begin{tikzcd}
\mb{P}^1 - \{x, s_\alpha x\} \ar[r, "a' "] & \mb{P}^1 - \{x\} \ar[r, "b' "] & \mb{P}^1 \ar[dr, "p' "] \\
(\mb{P}^1 - \{x, s_\alpha x\})^\sim \ar[r] \ar[d, "r' "] \ar[u, "q' "'] & (\mb{P}^1 - \{x\})^{\sim} \ar[r] \ar[d] \ar[u] & \tilde{\mb{P}}^1 \ar[r] \ar[u] \ar[d] & (\tilde{x}, \tilde{s}_\alpha\tilde{x}) \ar[d, "i_{(\tilde{x}, \tilde{s}_\alpha\tilde{x})}"] \\
\tilde{X}_{s_\alpha} \times_{\tilde{\mc{B}}} \tilde{X}_{s_\alpha} \ar[r, "a"] & \tilde{X}_{s_\alpha} \times_{\mc{P}_\alpha} \tilde{\mc{B}} \ar[r, " b"] & \tilde{\mc{B}} \times_{\mc{P}_\alpha} \tilde{\mc{B}} \times_{\mc{P}_\alpha} \tilde{\mc{B}} \ar[r, "p"] & \tilde{\mc{B}} \times \tilde{\mc{B}} \\
(\mb{P}^1 - \{x\})^\sim \ar[r, "\mrm{id} "] \ar[u, "r'' "'] \ar[d, "q'' "] & (\mb{P}^1 - \{x\})^{\sim} \ar[r] \ar[u] \ar[d] & \tilde{\mb{P}}^1 \ar[r] \ar[u] \ar[d] & (\tilde{x}, \tilde{x}) \ar[u, "i_{(\tilde{x}, \tilde{x})}"'] \\
\mb{P}^1 - \{x\} \ar[r, "\mrm{id}"] & \mb{P}^1 - \{x\} \ar[r, "b'' " ] & \mb{P}^1 \ar[ur, "p'' "]
\end{tikzcd}
\]
in which each square is Cartesian. By construction, we have
\[ \tilde{\nabla}_{s_\alpha}^{(s_\alpha\lambda)} * \tilde{\Delta}_{s_\alpha}^{(\lambda)} = p_*b_*a_!(\widetilde{\mc{O}}_{X_{s_\alpha}} \otimes \widetilde{\mc{O}}_{X_{s_\alpha}})(-\dim \mc{B} + \dim H + 1). \]
So, by base change, there is a complex of pro-VMHS
\[ \mc{V} = q'_*(r')^!(\widetilde{\mc{O}}_{X_{s_\alpha}} \otimes \widetilde{\mc{O}}_{X_{s_\alpha}})(-\dim \mc{B} + \dim H + 1)\]
on $\mb{P}^1 - \{x, s_\alpha x\} \cong \mb{C}^\times$ such that
\[ i_{(\tilde x, \tilde{s}_\alpha \tilde{x})}^!(\tilde{\nabla}_{s_\alpha}^{(s_\alpha\lambda)} * \tilde{\Delta}_{s_\alpha}^{(\lambda)}) = p'_*b'_*a'_!\mc{V} = 0.\]
Similarly, we have
\[ i_{(\tilde x, \tilde{x})}^!(\tilde{\nabla}_{s_\alpha}^{(s_\alpha\lambda)} * \tilde{\Delta}_{s_\alpha}^{(\lambda)}) = p''_*b''_*\mc{V}' = \mc{V}'_{s_\alpha x}[1],\]
where
\[ \mc{V}' = q''_*(r'')^!(\widetilde{\mc{O}}_{X_{s_\alpha}} \otimes \widetilde{\mc{O}}_{X_{s_\alpha}})(-\dim \mc{B} + \dim H + 1).\]
Using Lemma \ref{lem:monodromic pushforward} and the fact that $r''$ is a closed immersion of codimension $\dim \mc{B} + 2\dim H + 1$, we have
\begin{align*}
\mc{V}'_{s_\alpha x} &= (\widetilde{\mc{O}}_{X_{s_\alpha}}|_{(\tilde{x}, \tilde{s}_\alpha\tilde{x})} \otimes \widetilde{\mc{O}}_{X_{s_\alpha}}|_{(\tilde{s}_\alpha\tilde{x}, \tilde{x})})\overset{\mrm{L}}\otimes_{S(\mf{h}(1))}\mb{C}(-2\dim \tilde{\mc{B}})[-\dim \mc{B} - 2\dim H - 1] \\
&= \widehat{R}_1(-2\dim \tilde{\mc{B}})[-\dim \mc{B} - 2\dim H - 1],
\end{align*}
so this proves \eqref{eq:hecke relations 2} and hence \eqref{itm:hecke relations 2}.

Finally, let us prove \eqref{itm:hecke relations 5}. By equivariance, it is enough to check the relation after pulling back to $\{\tilde x\} \times \tilde{\mb{P}}^1 \subset \tilde{\mc{B}} \times \tilde{\mc{B}}$. Moreover, a choice of root subgroup $r_\alpha \colon \mrm{SU}_2 \to U_\mb{R}$ compatible with the torus $T^c$ fixing $x$ defines an isomorphism
\[ \tilde{\mb{P}}^1 \cong H \times^{\check\alpha, \mb{C}^\times} (\mb{C}^2 - \{0\}),\]
so we may even check the relation after pulling back along the map
\[ r \colon H \times (\mb{C}^2 - \{0\}) \to \tilde{\mc{B}} \times \tilde{\mc{B}}.\]
Now, the free-monodromic local system pulls back to the pro-VMHS
\[ r^\circ \widetilde{\mc{O}}_{X_{s_\alpha}}(s_\alpha \lambda - \rho, -\lambda - \rho) = \coker(\check\alpha - s \colon \widehat{R}_{s_\alpha} \otimes z^{s - \langle \lambda, \check\alpha\rangle - 1} \mc{O}_{U}[[s]](1) \to \widehat{R}_{s_\alpha} \otimes z^{s - \langle \lambda, \check\alpha\rangle - 1}\mc{O}_{U}[[s]]),\]
where $U = r^{-1}(\tilde{X}_{s_\alpha})$, $z$ is the linear coordinate on $\mb{C}^2 - \{0\}$ such that $r(1, z = 0) = \tilde{x}$ and $r(1, z = 1) = \tilde{s}_\alpha\tilde{x}$, and we write $r^\circ$ for $r^*$ with a cohomological shift so that it sends $\mc{D}$-modules to $\mc{D}$-modules. Writing $j \colon U \to H \times (\mb{C}^2 - \{0\})$ for the inclusion, we have an exact sequence
\[ 0 \to j_! z^{s - \langle \lambda, \check\alpha\rangle - 1} \mc{O}_{U}[[s]] \to j_* z^{s - \langle \lambda, \check\alpha\rangle - 1} \mc{O}_{U}[[s]] \to \mc{K}_{\mrm{SL}_2} \to 0,\]
where
\[ \mc{K}_{\mrm{SL}_2} = \begin{cases} i_* \mb{C}(-1), & \text{if $\langle \lambda,  \check \alpha \rangle \in \mb{Z}$}, \\ 0, & \text{otherwise},\end{cases}\]
for $i$ the inclusion of $\{z = 0\}$. Thus, we have an exact sequence
\begin{equation} \label{eq:hecke relations 1}
 0 \to r^\circ \tilde{\Delta}_{s_\alpha}^{(\lambda)} \to r^\circ \tilde{\nabla}_{s_\alpha}^{(\lambda)} (-1) \to \coker(\check\alpha \colon \widehat{R}_{s_\alpha} \to \widehat{R}_{s_\alpha}(1)) \otimes \mc{K}_{\mrm{SL}_2}(-\dim \mc{B}) \to 0.
 \end{equation}
 Since
\[ \coker(\check\alpha \colon \widehat{R}_{s_\alpha} \to \widehat{R}_{s_\alpha}(1)) \cong \coker(\check\alpha \colon \widehat{R}_{1} \to \widehat{R}_{1}(1)) \]
as $S(\mf{h}(1)\oplus \mf{h}(1))$-modules, the cokernel in \eqref{eq:hecke relations 1} is $r^\circ \mc{K}$, so this proves \eqref{itm:hecke relations 5}.
\end{proof}

\subsection{The filtered intertwining property} \label{subsec:intertwining}

In this subsection, we conclude our study of intertwining functors by establishing the following key property:

\begin{theorem} \label{thm:intertwining}
If $(\mc{M}, F_\bullet) \in \mrm{D}^b\coh(\tilde{\mc{D}}, F_\bullet)$ and $w \in W$ then we have isomorphisms
\[ \mrm{R}\Gamma(\mc{M}, F_\bullet) \cong \mrm{R}\Gamma(\mc{I}_w^!(\mc{M}, F_\bullet)) \cong \mrm{R}\Gamma(\mc{I}_w^*(\mc{M}, F_\bullet)) \]
in $\mrm{D}^b\Mod_{fg}(U(\mf{g}), F_\bullet)$.
\end{theorem}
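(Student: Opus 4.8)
The plan is to reduce the statement to the analogous property for the underlying $\tilde{\mathcal{D}} \boxtimes \tilde{\mathcal{D}}$-module kernels and then to a direct computation on $\mathcal{B} \times \mathcal{B}$. Recall that $\mathcal{I}_w^! = (\tilde\Delta_w^{(0)}, F_\bullet^H) * -$ and $\mathcal{I}_w^* = (\tilde\nabla_w^{(0)}, F_\bullet^H) * -$, where the convolution $(\mathcal{K}, F_\bullet) * (\mathcal{M}, F_\bullet)$ on filtered modules is $\mathrm{R}\mathrm{pr}_{1\bigcdot}$ of a derived tensor product over $\mathrm{pr}_2^{-1}(\tilde{\mathcal{D}}, F_\bullet)$. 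Since the functor $\mathrm{R}\Gamma \colon \mathrm{D}^b\coh(\tilde{\mathcal{D}}, F_\bullet) \to \mathrm{D}^b\Mod_{fg}(U(\mathfrak{g}), F_\bullet)$ is itself $\mathrm{R}\mathrm{pr}_\bigcdot$ for $\mathrm{pr} \colon \mathcal{B} \to \mathrm{pt}$ composed with the localization identification, the composite $\mathrm{R}\Gamma \circ \mathcal{I}_w^!$ is computed by pushing $(\tilde\Delta_w^{(0)}, F_\bullet^H) \overset{\mathrm{L}}\otimes \mathrm{pr}_2^{-1}(\mathcal{M}, F_\bullet)$ all the way down to a point along $\mathcal{B} \times \mathcal{B} \to \mathcal{B} \to \mathrm{pt}$ (using the first projection on the kernel factor). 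By base change/projection formula for $\mathrm{R}\mathrm{pr}_\bigcdot$ on filtered $\mathcal{D}$-modules, this equals $\mathrm{R}\Gamma$ (on $\mathcal{B}$, via the $\tilde{\mathcal{D}}$-module structure on the second factor) of $\mathrm{R}\mathrm{pr}_{2\bigcdot}((\tilde\Delta_w^{(0)}, F_\bullet^H) \overset{\mathrm{L}}\otimes -) \overset{\mathrm{L}}\otimes (\mathcal{M}, F_\bullet)$. Hence it suffices to identify the filtered complex $\mathrm{R}\mathrm{pr}_{2\bigcdot}(\tilde\Delta_w^{(0)}, F_\bullet^H)$, where $\mathrm{pr}_2 \colon \mathcal{B} \times \mathcal{B} \to \mathcal{B}$ is the \emph{second} projection and the output is a filtered $(U(\mathfrak{g}), \tilde{\mathcal{D}})$-bimodule on $\mathcal{B}$; the claim is that this is $(\tilde{\mathcal{D}}, F_\bullet)$ itself, i.e. the structure sheaf placed in degree $0$ with the order filtration, compatibly with the left $U(\mathfrak{g})$-action and the right $\tilde{\mathcal{D}}$-action.

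So the crux is: \textbf{the derived pushforward of the filtered free-monodromic standard $\tilde\Delta_w^{(0)}$ along the second projection is the diagonal bimodule $(\tilde{\mathcal{D}}, F_\bullet)$.} First I would handle this for $w = 1$: here $\tilde\Delta_1^{(0)} = \tilde\delta_0^{(0)}$, and by (the argument of) \cite[Lemma 7.1]{DV} we already know $(\tilde\delta_0^{(0)}, F_\bullet^H) \cong (\tilde{\mathcal{D}}, F_\bullet)$ supported on the diagonal, so pushing forward along either projection is the identity; this recovers the tautological statement $\mathrm{R}\Gamma \cong \mathrm{R}\Gamma \circ \mathcal{I}_1^!$. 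For general $w$, I would use the factorization relation Theorem~\ref{thm:hecke relations}\eqref{itm:hecke relations 1}, $\tilde\Delta_{w_1}^{(0)} * \tilde\Delta_{w_2}^{(0)} = \tilde\Delta_{w_1w_2}^{(0)}$ when lengths add (taking all twists to be $0$), together with associativity of convolution, to reduce to the case $w = s_\alpha$ a simple reflection. For $w = s_\alpha$, the geometry is the one already laid out in the proof of Theorem~\ref{thm:hecke relations}\eqref{itm:hecke relations 5}: $X_{s_\alpha}$ is the complement of the diagonal in $\mathcal{B} \times_{\mathcal{P}_\alpha} \mathcal{B}$, and I would compute $\mathrm{R}\mathrm{pr}_{2\bigcdot}$ of $j_{s_\alpha!}\widetilde{\mathcal{O}}_{X_{s_\alpha}}(-\rho, -\rho)$ fiberwise over $\mathcal{B}$, i.e. over a $\mathbb{P}^1$: this is a computation of $\mathrm{R}\Gamma_c$ of a free-monodromic local system on $\mathbb{P}^1 \setminus \{pt\}$ (an affine line), and the key point is that the one nonzero cohomology, with its Hodge/order filtration, reproduces $(\tilde{\mathcal{D}}, F_\bullet)$ on the $\mathbb{P}^1$ — matching the $\mathcal{D}_\lambda$-module computation that underlies the classical intertwining property. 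Compatibility with the left $U(\mathfrak{g})$-action is automatic from $G$-equivariance of everything in sight. The statement for $\mathcal{I}_w^*$ follows symmetrically using $\tilde\nabla$ in place of $\tilde\Delta$ (or from the fact, provable from Theorem~\ref{thm:hecke relations}\eqref{itm:hecke relations 2}, that $\mathcal{I}_w^*$ is inverse to $\mathcal{I}_{w^{-1}}^!$ and both commute with $\mathrm{R}\Gamma$).

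\textbf{Main obstacle.} The delicate point is keeping track of \emph{filtrations and shifts} through the base-change/projection-formula manipulations in the pro-completed, non-holonomic setting: the convolutions are built from $\mathrm{R}\mathrm{pr}_\bigcdot$ of possibly unbounded pro-complexes, and one must know that pushforward along the proper map $\mathrm{pr}_2$ commutes with taking underlying filtered $\tilde{\mathcal{D}}$-modules and with the inverse limits defining the pro-objects. I expect this to follow from Proposition~\ref{prop:pro forgetful functor} and \cite[(6.16)]{DV} (the compatibility of convolution with the forgetful functor), exactly as in the proof that $(\mathcal{K}*\mathcal{M}, F_\bullet^H) \cong (\mathcal{K}, F_\bullet^H)*(\mathcal{M}, F_\bullet^H)$ earlier in this section — so the argument is really a matter of assembling already-established compatibilities and carrying out the one $\mathbb{P}^1$-fiber computation with its Tate twists tracked correctly. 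The secondary nuisance is that the Hodge structure on $\tilde\Delta_w^{(0)}$ depends on auxiliary choices ($U_\mathbb{R}$, $\tilde x$), but Proposition~\ref{prop:filtration uniqueness} guarantees the underlying filtered $\tilde{\mathcal{D}}\boxtimes\tilde{\mathcal{D}}$-module — which is all that enters $\mathcal{I}_w^!$ on filtered modules — is independent of these, so no real issue arises there.
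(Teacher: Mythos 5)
Your approach is genuinely different from the paper's, and I don't think it goes through as written. The paper's proof is a comonadic argument built around the big projective $\tilde{\Xi}$: using \eqref{eq:big projective hodge}, the functor $(\tilde{\Xi}, F_\bullet^H) * -$ is identified with the comonad $\tilde{\mc{D}} \overset{\mrm{L}}\otimes_{U(\mf{g})} \mrm{R}\Gamma(-)$; Lemmas~\ref{lem:xitilde intertwining} and~\ref{lem:xitilde intertwining comodule} show that $(\tilde{\Xi}, F_\bullet^H) * (\tilde{\Delta}_w^{(0)}, F_\bullet^H) \cong (\tilde{\Xi}, F_\bullet^H)$ as comodules; and the general Lemma~\ref{lem:comonad} then cancels the conservative left adjoint $\tilde{\mc{D}} \overset{\mrm{L}}\otimes_{U(\mf{g})} -$ to conclude. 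Crucially, the paper establishes only the identification \emph{after} applying $\tilde{\mc{D}}\otimes_{U(\mf{g})} -$, and uses the projectivity of $\tilde{\Xi}$ and its iterated-extension structure (Lemma~\ref{lem:xitilde stalks}) in an essential way.

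Your plan instead hinges on a much stronger assertion that you state but do not prove: that $\mrm{R}\mathrm{pr}_{2\bigcdot}(\tilde{\Delta}_w^{(0)}, F_\bullet^H) \cong (\tilde{\mc{D}}, F_\bullet)$ as filtered $(U(\mf{g}), \tilde{\mc{D}})$-bimodules, with no Tate twist or cohomological shift. This would indeed give the theorem by projection formula, but it is not a triviality, and I doubt it holds on the nose. On associated gradeds, $\Gr^{F^H}\tilde{\Delta}_w^{(0)}$ and $\Gr^{F^H}\tilde{\delta}_0^{(0)}$ are supported on the Steinberg variety and represent, by Corollary~\ref{cor:affine hecke}, the classes $(-1)^{\ell(w)}T_w$ and $1$ in the affine Hecke algebra $\mrm{K}^{G\times\mb{C}^\times}(\mrm{St})$. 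Pushing forward along $\mathrm{pr}_2$ to $\tilde{\mf{g}}^*$ computes the action of these classes on the unit of the polynomial representation $\mrm{K}^{G\times\mb{C}^\times}(\tilde{\mf{g}}^*)$, and $T_w \cdot 1 \neq 1$ there (it differs by a power of $u$). So the best you could hope for is an identification up to shifts of the form $\{-\ell(w)\}[\ell(w)]$, which you do not track and which would not drop out of your argument automatically. This is precisely the kind of bookkeeping the comonad argument is designed to sidestep.

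Separately, the $\mb{P}^1$-fiber step is mischaracterized. What needs to be computed is the derived $\mc{O}$-module pushforward along $\mathrm{pr}_2$ of the coherent filtered $\tilde{\mc{D}}\boxtimes\tilde{\mc{D}}$-module $(\tilde{\Delta}_{s_\alpha}^{(0)}, F_\bullet^H)$, which is of infinite rank over $\mc{O}_{\mc{B}\times\mc{B}}$ (it is coherent over $\tilde{\mc{D}}\boxtimes\tilde{\mc{D}}$, not over $\mc{O}$, per Proposition~\ref{prop:forgetful coherence}). This is not $\mrm{R}\Gamma_c$ of a local system on an affine line; it is a coherent sheaf calculation whose answer must be a sheaf on $\mc{B}$ with a $\tilde{\mc{D}}$-module structure and a prescribed filtration, and the monodromy lives in the $H\times H$-direction, not along the $\mb{A}^1$. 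Finally, "compatibility with the left $U(\mf{g})$-action is automatic from $G$-equivariance" is not enough: $G$-equivariance gives some $\mf{g}$-action, but you need the specific bimodule structure, and that requires an argument. In short, the blueprint diverges substantially from the paper's and its central claim is left unjustified and is likely false without additional twists.
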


Following the approach of \cite[\S 6]{DV}, Theorem \ref{thm:intertwining} is proved by analyzing the interaction between the intertwining functors and the big projective in $\widehat{\mhm}(\mc{D}_{\widetilde{0}} \boxtimes \mc{D}_{\widetilde{0}}, G)$.

\begin{definition}
The \emph{big projective} is the projective cover
\[ \tilde{\Xi} \in \pro\coh(\mc{D}_{\widetilde{0}} \boxtimes \mc{D}_{\widetilde{0}}, G) \]
of the simple object $\Delta^{(0)}_1$.
\end{definition}

It was shown in the proof of \cite[Theorem 6.16]{DV} that, for any choice of mixed Hodge module structure on $\tilde{\Xi}$ (i.e., an upgrade to an object in $\widehat{\mhm}(\mc{D}_{\widetilde{0}} \boxtimes \mc{D}_{\widetilde{0}}, G)$, which must exist on general grounds, cf., \cite[Lemma 4.5.3]{beilinson-ginzburg-soergel}) such that the morphism $\tilde{\Xi} \to \Delta^{(0)}_1$ is a morphism of pro-mixed Hodge modules, the underlying filtered $\tilde{\mc{D}}\boxtimes \tilde{\mc{D}}$-module is
\begin{equation} \label{eq:big projective hodge}
 (\tilde{\Xi}, F_\bullet^H) \cong (\tilde{\mc{D}} \otimes_{U(\mf{g})} \tilde{\mc{D}}, F_\bullet),
\end{equation}
where the filtration on the right hand side is induced by the order filtration on $\tilde{\mc{D}}$. We will fix such a mixed Hodge module structure from now on. The importance of $\tilde{\Xi}$ for localization theory comes from the fact that, by \eqref{eq:big projective hodge},
\[ (\tilde{\Xi}, F_\bullet^H) * - = (\tilde{\mc{D}}, F_\bullet) \overset{\mrm{L}}\otimes_{(U(\mf{g}), F_\bullet)} \mrm{R}\Gamma(-) = \mrm{L}\Delta \circ \mrm{R}\Gamma \]
is the composition of the derived global sections functor $\mrm{R}\Gamma$ with its left adjoint.

To prove Theorem \ref{thm:intertwining}, we begin with some preliminary lemmas on the objects $\tilde{\Xi}$, $\tilde{\Delta}_w^{(0)}$ and $\tilde{\nabla}_w^{(0)}$.

\begin{lemma} \label{lem:xitilde stalks}
For all $w \in W$, the stalks
\[ j_w^* \tilde{\Xi} \quad \text{and} \quad j_w^! \tilde{\Xi} \]
are indecomposable free-monodromic local systems equipped with surjective morphisms of mixed Hodge modules
\[ j_w^*\tilde{\Xi} \to \mc{O}_{X_w}(-\rho, -\rho)(\ell(w) - \dim \mc{B}) \quad \text{and} \quad j_w^! \tilde{\Xi} \to \mc{O}_{X_w}(-\rho, -\rho).\]
Hence, $\tilde{\Xi}$ can be written as an extension of objects $\tilde{\Delta}_w'(\ell(w))$ (resp., $\tilde{\nabla}_w'(\dim \mc{B})$), where 
\[ (\tilde{\Delta}_w', F_\bullet^H) \cong (\tilde{\Delta}_w^{(0)}, F_\bullet^H) \quad \text{and} \quad (\tilde{\nabla}_w', F_\bullet^H) \cong (\tilde{\nabla}_w^{(0)}, F_\bullet^H).\]
\end{lemma}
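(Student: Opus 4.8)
The plan is to analyze the stalks and costalks of $\tilde{\Xi}$ by relating them to the analogous structures on the category $\mc{O}$-side via the localization equivalence, using the explicit description \eqref{eq:big projective hodge}. First I would recall that under Beilinson--Bernstein localization (in the monodromic setting with trivial twist), the big projective $\tilde{\Xi}$ corresponds to the big projective pro-object in a completed category $\mc{O}$ for $\mf{g}$, whose stalks along the Bruhat strata are classically known (e.g.\ from \cite{beilinson-ginzburg-soergel}, \cite{bezrukavnikov-yun}) to be indecomposable free-monodromic local systems: on $X_w$, the stalk $j_w^* \tilde{\Xi}$ is (up to shift and twist) the free-monodromic standard object $\widetilde{\mc{O}}_{X_w}$, and dually the costalk $j_w^! \tilde{\Xi}$ is the free-monodromic costandard. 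The key input making this precise at the level of Hodge/filtered structures is \eqref{eq:big projective hodge}: since $(\tilde{\Xi}, F_\bullet^H) \cong (\tilde{\mc{D}} \otimes_{U(\mf{g})} \tilde{\mc{D}}, F_\bullet)$, the restriction $j_w^*$ (an $\mc{O}$-module pullback, hence exact for the underlying $\mc{O}_{\mc{B} \times \mc{B}}$-module, up to the shift making it a $\mc{D}$-module pullback) can be computed concretely on each stratum.

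The main steps, in order, are as follows. (i) Establish that $j_w^!\tilde{\Xi}$ and $j_w^*\tilde{\Xi}$ are concentrated in a single cohomological degree and are free-monodromic local systems on $X_w$: this follows because $\tilde{\Xi}$ is (pro-)tilting--like --- more precisely, it is simultaneously an iterated extension of $j_{v!}\widetilde{\mc{O}}_{X_v}$'s and of $j_{v*}\widetilde{\mc{O}}_{X_v}$'s --- together with the vanishing $j_w^! j_{v!}\widetilde{\mc{O}}_{X_v} = 0$ unless $v \leq w$ and its dual, which forces the stalk/costalk to be an extension of copies of $\widetilde{\mc{O}}_{X_w}$ only, hence a free-monodromic local system by Proposition \ref{prop:hodge local systems}. (ii) Show indecomposability: since $\tilde{\Xi}$ is indecomposable with $\operatorname{End}(\tilde{\Xi})$ local, and $j_w^*$ (resp.\ $j_w^!$) applied to the surjection $\tilde{\Xi} \to \tilde\Delta_1'$-type quotients controls the top, one deduces that $j_w^*\tilde{\Xi}$ has simple top $\mc{O}_{X_w}(-\rho,-\rho)$, hence is indecomposable; concretely this uses that $\operatorname{Hom}(\tilde\Xi, j_{w*}\mc{O}_{X_w}(-\rho,-\rho)(\ell(w)-\dim\mc{B}))$ is one-dimensional (it is the multiplicity space of the corresponding simple in $\tilde\Xi$, which is $1$ by BGG reciprocity / the structure of the big projective). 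This gives the asserted surjections of mixed Hodge modules $j_w^*\tilde{\Xi} \to \mc{O}_{X_w}(-\rho,-\rho)(\ell(w) - \dim\mc{B})$ and $j_w^!\tilde{\Xi} \to \mc{O}_{X_w}(-\rho,-\rho)$, after pinning down the Tate twists and shifts by computing on the open stratum $X_1$ (where $\tilde{\Xi}|_{X_1} = \widetilde{\mc{O}}_{X_1}$) and on the closed stratum, and using \eqref{eq:big projective hodge} to read off the weights. (iii) Deduce the filtration of $\tilde{\Xi}$: a standard dévissage along the closure order of the $X_w$ --- take the maximal open stratum $X_{w_0}$, form $j_{w_0!}j_{w_0}^*\tilde\Xi \to \tilde\Xi$ or $\tilde\Xi \to j_{w_0*}j_{w_0}^!\tilde\Xi$, and induct --- expresses $\tilde{\Xi}$ as an iterated extension whose graded pieces are the stalks $j_{w!}(j_w^*\tilde\Xi)$, i.e.\ $j_{w!}\widetilde{\mc{O}}_{X_w}(-\rho,-\rho)(\ell(w)-\dim\mc{B}) = \tilde\Delta_w^{(0)}(\ell(w))$ up to the indexing conventions of the definitions of $\tilde\Delta_w^{(\lambda)}$, $\tilde\nabla_w^{(\lambda)}$ (for the $\tilde\nabla$ version, induct from the closed stratum instead). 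The identification of the \emph{filtered} $\tilde{\mc{D}}\boxtimes\tilde{\mc{D}}$-modules $(\tilde\Delta_w', F_\bullet^H) \cong (\tilde\Delta_w^{(0)}, F_\bullet^H)$ then follows from Proposition \ref{prop:filtration uniqueness}, since $\tilde\Delta_w'$ is by construction $j_{w!}$ of a Hodge upgrade of $\widetilde{\mc{O}}_{X_w}$ admitting a morphism to $\mc{O}_{X_w}(-\rho,-\rho)$, and likewise for $\tilde\nabla_w'$ with $j_{w*}$.

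The main obstacle I anticipate is step (ii) together with the careful bookkeeping of Tate twists and cohomological shifts: one must be sure that the surjections land in the correct twist, that the ``simple top'' claim genuinely follows from the (completed, pro-object) version of BGG reciprocity rather than merely from a statement about the underlying $\mc{D}$-modules, and that the filtered identifications at the end are not just isomorphisms of $\mc{D}$-modules but of filtered modules --- this is exactly where Proposition \ref{prop:filtration uniqueness} is essential, since the Hodge structure on a free-monodromic local system depends on a base-point choice and only the underlying filtered object is canonical. A secondary technical point is justifying exactness of $j_w^*$ on $\tilde\Xi$ in the pro-completed setting; here I would lean on \eqref{eq:big projective hodge}, which presents $\tilde\Xi$ as the free $\tilde{\mc{D}}\boxtimes\tilde{\mc{D}}$-module $\tilde{\mc{D}}\otimes_{U(\mf g)}\tilde{\mc{D}}$ and hence has flat underlying $\mc{O}$-module on $\mc{B}\times\mc{B}$ minus nothing --- so the $*$-restriction to any smooth locally closed $X_w$ is concentrated in a single degree --- and on the general machinery of \cite[Appendix]{bezrukavnikov-yun} for manipulating such pro-objects.
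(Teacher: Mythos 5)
Your proposal is correct and arrives at the same endpoint, but substitutes a different argument for the key computational step. Where the paper simply cites \cite[Lemmas 6.7 and 6.9]{DV} to read off that $\mb{C}\overset{\mrm{L}}{\otimes}_{S(\mf{h}(1))} j_w^*\tilde{\Xi}$ equals $\mc{O}_{X_w}(-\rho,-\rho)(\ell(w)-\dim\mc{B})$ (and dually for $j_w^!$), which immediately forces the stalk to be the single free-monodromic local system $\widetilde{\mc{O}}_{X_w}(-\rho,-\rho)$ with the stated surjection, you instead rebuild this from first principles: the tilting structure of the big projective (a classical fact going back to Soergel and available in the completed monodromic setting from \cite{bezrukavnikov-yun}) gives that the stalks/costalks are concentrated in one degree and built from $\widetilde{\mc{O}}_{X_w}$, and the rank $1$ comes from $\dim\Hom(\tilde{\Xi}, \nabla_w^{(0)}) = [\nabla_w^{(0)}:L_1] = 1$ by projectivity (your parenthetical mislabels this as a multiplicity in $\tilde{\Xi}$ rather than in $\nabla_w^{(0)}$, but the conclusion is correct). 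The d\'evissage and the appeal to Proposition \ref{prop:filtration uniqueness} at the end are identical to the paper's. The paper's route is shorter because the relevant stalk computation is already packaged in \cite{DV}; yours is more self-contained but must be careful that the tilting claim and BGG reciprocity genuinely lift to the pro-completed Hodge-theoretic setting — you flag this correctly, and it is handled by the fact that the underlying pro-$\mc{D}$-module computation suffices once Proposition \ref{prop:filtration uniqueness} takes over. Your secondary suggestion to read things off directly from \eqref{eq:big projective hodge} is plausible but would require checking $\mc{O}$-flatness of $\tilde{\mc{D}}\otimes_{U(\mf{g})}\tilde{\mc{D}}$, which is not obvious and which the paper does not attempt.
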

\begin{proof}
To prove the statement about $j_w^*$, note that by \cite[Lemma 6.7]{DV}, we have
\[ \mb{C} \overset{\mrm{L}}\otimes_{S(\mf{h}(1))} j_w^* \tilde{\Xi} = \mc{O}_{X_w}(-\rho, -\rho)(\ell(w) - \dim \mc{B}).\]
Hence, ignoring the Hodge structure, $j_w^* \tilde{\Xi} \cong \widetilde{\mc{O}}_{X_w}(-\rho, -\rho)$ is an indecomposable free-monodromic local system, with the desired morphism of Hodge modules. By \cite[Lemma 6.9]{DV}, we have
\[  \mb{C} \overset{\mrm{L}}\otimes_{S(\mf{h}(1))} j_w^! \tilde{\Xi} = \mb{D}(\mb{C} \overset{\mrm{L}}\otimes_{S(\mf{h}(1))} j_w^* \tilde{\Xi})(-2 \dim \mc{B}) = \mc{O}_{X_w}(-\rho, -\rho) \]
so $j_w^!\tilde{\Xi}$ is also an indecomposable free-monodromic local system with the desired property. To conclude the final claim about $\tilde{\Xi}$, we note that $\tilde{\Xi}$ is an iterated extension both of the objects $j_{w!}j_w^*\tilde{\Xi}$ and of the objects $j_{w*}j_w^!\tilde{\Xi}$. Setting $\tilde{\Delta}_w' = j_{w!}j_w^*\tilde{\Xi}(-\ell(w))$ and $\tilde{\nabla}_w' = j_{w*}j_w^!\tilde{\Xi}(-\dim \mc{B})$, we conclude by applying Proposition \ref{prop:filtration uniqueness}.
\end{proof}

\begin{lem} \label{lem:convolution exactness}
Assume $\mc{K} \in \pro \coh^G(\mc{D}_{\widetilde{\lambda}} \boxtimes \mc{D}_{\widetilde{\nu}})$ and $\mc{M} \in \coh(\mc{D}_{\widetilde{\lambda}})_{rh}$. Then
\[ \mc{H}^i(\tilde{\Delta}_w^{(\lambda)} * \mc{M}) = 0 \quad \text{and} \quad \mc{H}^i(\tilde{\Delta}_w^{(\lambda)} * \mc{K}) = 0 \quad  \text{for $i < 0$}, \]
and
\[ \mc{H}^i(\tilde{\nabla}_w^{(\lambda)} * \mc{M}) = 0 \quad \text{and} \quad \mc{H}^i(\tilde{\nabla}_w^{(\lambda)} * \mc{K}) = 0 \quad \text{for $i > 0$}.\]
\end{lem}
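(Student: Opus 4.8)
The plan is to realise convolution with $\tilde{\Delta}_w^{(\lambda)}$ (resp.\ $\tilde{\nabla}_w^{(\lambda)}$) as a single shriek (resp.\ star) pushforward along an affine fibration, so that the asserted one-sided vanishing reduces to a standard cohomological amplitude estimate. First I would reduce to finite rank: since $\widetilde{\mc{O}}_{X_w}$ is a good pro-object, Proposition \ref{prop:good convolution} (and its analogue for convolution of kernels) shows that $\tilde{\Delta}_w^{(\lambda)} * \mc{M}$ and $\tilde{\Delta}_w^{(\lambda)} * \mc{K}$ are \emph{constant} pro-objects, computed as the inverse limit of $j_{w!}\mc{L}_n(\cdots) * \mc{M}$ over the finite-rank quotients $\mc{L}_n$ of $\widetilde{\mc{O}}_{X_w}$; a complex of (pro-)coherent $\tilde{\mc{D}}$-modules has vanishing cohomology in negative degrees as soon as every term of such an inverse system does. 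So it suffices to treat $j_{w!}\mc{L}(\cdots)$ and $j_{w*}\mc{L}(\cdots)$ for a finite-rank equivariant twisted local system $\mc{L}$ on $X_w$.

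Write $j_w \colon X_w \hookrightarrow \mc{B} \times \mc{B}$ for the orbit inclusion and $p_i = \mrm{pr}_i \circ j_w \colon X_w \to \mc{B}$ for $i = 1, 2$. Unwinding the convolution \eqref{eq:mhm convolution}, and using that $\mc{L}$ is a local system and hence $\mc{O}$-flat (so that the intermediate pullback $\tilde{D}_{23}^\circ$ is $t$-exact on $j_{w!}\mc{L} \boxtimes \mc{M}$), the projection formula for the open immersion $j_w$ together with the identity $\mrm{pr}_{1*} \circ j_{w!} = (\mrm{pr}_1 \circ j_w)_! = p_{1!}$ (valid because $\mrm{pr}_1$ is proper, $\mc{B}$ being complete) gives, up to a fixed cohomological shift and Tate twist,
\[ \tilde{\Delta}_w^{(\lambda)} * \mc{M} \cong p_{1!}\bigl(\mc{L} \otimes_{\mc{O}} p_2^* \mc{M}\bigr) \quad \text{and} \quad \tilde{\nabla}_w^{(\lambda)} * \mc{M} \cong p_{1*}\bigl(\mc{L} \otimes_{\mc{O}} p_2^* \mc{M}\bigr). \]
Since $p_2$ is smooth, $\mc{L} \otimes_{\mc{O}} p_2^* \mc{M}$ is again a single module on $X_w$ whenever $\mc{M}$ is regular holonomic. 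The key geometric input is that $p_1 = \mrm{pr}_1 \circ j_w$ is a Zariski-locally trivial fibration over $\mc{B}$ with fibre the Bruhat cell $\cong \mb{A}^{\ell(w)}$ (the Borels in relative position $w$ from the first one), hence in particular an \emph{affine} morphism with $\ell(w)$-dimensional fibres.

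Now, since $p_1$ is affine with $\ell(w)$-dimensional fibres, the affine Lefschetz (Artin vanishing) theorem gives that $p_{1*}$ has cohomological amplitude $[-\ell(w), 0]$ on modules and, dually, $p_{1!} = \mb{D} p_{1*} \mb{D}$ has amplitude $[0, \ell(w)]$. Applying this in the displayed formula shows that $\tilde{\Delta}_w^{(\lambda)} * \mc{M}$ is concentrated in degrees $[0, \ell(w)]$ and $\tilde{\nabla}_w^{(\lambda)} * \mc{M}$ in degrees $[-\ell(w), 0]$, once one checks that the fixed shift and twist above are calibrated so that the ``degree-zero part'' sits where claimed (the asymmetry being exactly the extra $\ell(w)$-twist built into the definition of $\tilde{\nabla}_w^{(\lambda)}$, so one may alternatively deduce the $\tilde{\nabla}$-statements from the $\tilde{\Delta}$-statements by Verdier duality). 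In particular $\mc{H}^i(\tilde{\Delta}_w^{(\lambda)} * \mc{M}) = 0$ for $i < 0$ and $\mc{H}^i(\tilde{\nabla}_w^{(\lambda)} * \mc{M}) = 0$ for $i > 0$; taking inverse limits as in the first step removes the finite-rank assumption. For the kernel version $\tilde{\Delta}_w^{(\lambda)} * \mc{K}$, $\tilde{\nabla}_w^{(\lambda)} * \mc{K}$ the same argument applies verbatim with an inert spectator copy of $\mc{B}$: one replaces $p_i$ by $p_i \times \mrm{id} \colon X_w \times \mc{B} \to \mc{B} \times \mc{B}$, which is again affine of relative dimension $\ell(w)$ with proper complementary projection $\mrm{pr}_{13}$, and invokes the same amplitude bounds; alternatively, by $G$-equivariance and finiteness of the orbit stratification of $\mc{B} \times \mc{B}$ one may first reduce $\mc{K}$ to a $j_{v!}$- or $j_{v*}$-extension of a finite-rank local system and then apply Lemma \ref{lem:basic convolution} when $\ell(wv) = \ell(w) + \ell(v)$ and the fibration argument otherwise.

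The step I expect to be the main obstacle is the identification in the displayed formula: matching the convolution \eqref{eq:mhm convolution} with a single shriek/star pushforward requires care with the intermediate pullback $\tilde{D}_{23}^\circ$, the projection formula and the side-changing conventions in the monodromic pro-setting, and -- fiddliest of all -- bookkeeping the cohomological shifts and Tate twists precisely enough to be sure that ``degree zero maps to degree zero''. Once that formula is in place, the geometry of $p_1$ (a locally trivial affine fibration with affine-space fibres) and the amplitude estimates for pushforward along an affine morphism are entirely standard.
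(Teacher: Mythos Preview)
Your approach is correct and, for the $\tilde{\nabla}$ half, essentially identical to the paper's: both write $\tilde{\nabla}_w^{(\lambda)} * \mc{M}$ as a $p_{w*}$-pushforward of a single module along the smooth affine projection $\tilde{X}_w \to \tilde{\mc{B}}$ and invoke Artin vanishing to get $\mc{H}^{>0} = 0$.

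For the $\tilde{\Delta}$ half, the paper takes a different and slicker route that completely sidesteps the shift bookkeeping you correctly identified as the main obstacle. Rather than identifying $\tilde{\Delta}_w^{(\lambda)} * \mc{M}$ with a $p_{1!}$-pushforward and invoking the dual of Artin vanishing, the paper simply observes that by Theorem~\ref{thm:hecke relations}\eqref{itm:hecke relations 2} the functor $\tilde{\Delta}_w^{(\lambda)} * -$ is \emph{inverse} to $\tilde{\nabla}_{w^{-1}}^{(w\lambda)} * -$, hence in particular is its right adjoint. Since the latter has just been shown to be right $t$-exact, its right adjoint is automatically left $t$-exact, giving $\mc{H}^{<0}(\tilde{\Delta}_w^{(\lambda)} * \mc{M}) = 0$ with no further geometric computation and no need to ever unpack the normalisation of twists in the definition of $\tilde{\Delta}_w^{(\lambda)}$. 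Your direct argument via $p_{1!}$ is also valid and even gives the sharper amplitude bound $[0,\ell(w)]$, but it does require verifying $\mrm{pr}_{1*} \circ j_{w!} = p_{1!}$ (using properness of $\mrm{pr}_1$ on $\mc{B}$, or equivalently Lemma~\ref{lem:monodromic pushforward} on $\tilde{\mc{B}}$) and then carefully matching the degree-zero calibration.

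One minor point: the paper does not first reduce to finite-rank quotients of the free-monodromic local system; it works directly with $\widetilde{\mc{O}}_{X_w}$ throughout, since the convolution formula and the $t$-exactness statements pass to pro-objects without difficulty. Your reduction is harmless but unnecessary. The paper also handles the kernel case $\mc{K}$ simply by declaring the proof ``similar'', in line with your spectator-variable remark.
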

\begin{proof}
Let us prove the claims for $\mc{M}$: the proofs for $\mc{K}$ are similar. By construction, we may write
\[ \tilde{\nabla}_w^{(\lambda)} * \mc{M} = p_{w*}(\widetilde{\mc{O}}_{X_w}(w\lambda - \rho, -\lambda - \rho) \otimes q_w^\circ \mc{M})(\dim H + \ell(w)),\]
where $p_w$ and $q_w$ are the first and second projections
\[ \tilde{\mc{B}} \xleftarrow{p_w} \tilde{X}_w \xrightarrow{q_w} \tilde{\mc{B}}\]
and $q_w^\circ = q_w^*[\dim \tilde{X}_w - \dim \tilde{\mc{B}}]$. Since $p_w$ and $q_w$ are smooth affine morphisms, it follows that $\mc{H}^i(\tilde{\nabla}_w^{(\lambda)} * \mc{M}) = 0$ for $i > 0$ by Artin vanishing. Now, by Theorem \ref{thm:hecke relations} \eqref{itm:hecke relations 2}, the functor $\tilde{\Delta}_w^{(\lambda)} * -$ is inverse, hence right adjoint, to the functor $\tilde{\nabla}_{w^{-1}}^{(w\lambda)} * - $. Since the latter is right t-exact, the former is therefore left t-exact, i.e., $\mc{H}^i(\tilde{\Delta}_w^{(\lambda)} * \mc{M}) = 0$ for $i < 0$ as claimed.
\end{proof}

Next, we compute the convolution of $\tilde{\Xi}$ with $\tilde{\Delta}^{(0)}_w$ as a filtered module:

\begin{lemma} \label{lem:xitilde intertwining}
For all $w \in W$, we have a filtered isomorphism
\[ (\tilde{\Xi}, F_\bullet^H) * (\tilde{\Delta}^{(0)}_w, F_\bullet^H) \cong (\tilde{\Xi}, F_\bullet^H).\]
\end{lemma}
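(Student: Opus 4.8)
The plan is to reduce to a simple reflection, identify the underlying $\tilde{\mc D}\boxtimes\tilde{\mc D}$-module of the convolution by classical Beilinson--Bernstein theory, and then promote this to a filtered isomorphism using the rigidity of the Hodge structure on the big projective. For the first reduction, fix a reduced expression $w = s_{i_1}\cdots s_{i_k}$; since the linear $W$-action fixes $0$, every twist occurring in Theorem \ref{thm:hecke relations}\eqref{itm:hecke relations 1} is $0$, so $(\tilde{\Delta}_w^{(0)}, F_\bullet^H) \cong (\tilde{\Delta}_{s_{i_1}}^{(0)}, F_\bullet^H) * \cdots * (\tilde{\Delta}_{s_{i_k}}^{(0)}, F_\bullet^H)$, and by associativity of the filtered convolution it suffices to treat a single simple reflection and iterate. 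For $w = s_\alpha$, I would use the identity $(\tilde{\Xi}, F_\bullet^H) * (-) = (\tilde{\mc D}, F_\bullet) \overset{\mrm L}\otimes_{(U(\mf g), F_\bullet)} \mrm R\Gamma(-)$, applied fibrewise over the second copy of $\tilde{\mc B}$ (viewing $\tilde{\Delta}_{s_\alpha}^{(0)}$ as a family of $\mc D_{\widetilde 0}$-modules): this rewrites $(\tilde{\Xi}, F_\bullet^H) * (\tilde{\Delta}_{s_\alpha}^{(0)}, F_\bullet^H)$ as $(\tilde{\mc D}, F_\bullet)$ tensored over $(U(\mf g), F_\bullet)$ with the relative derived sections of $(\tilde{\Delta}_{s_\alpha}^{(0)}, F_\bullet^H)$. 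Since $(\tilde{\Xi}, F_\bullet^H) * (\tilde{\delta}_0^{(0)}, F_\bullet^H) = (\tilde{\Xi}, F_\bullet^H)$ ($\tilde{\delta}_0^{(0)}$ being the monoidal unit, by Corollary \ref{cor:translation independence}), the lemma reduces to the statement that the relative derived sections of $(\tilde{\Delta}_{s_\alpha}^{(0)}, F_\bullet^H)$ and of $(\tilde{\delta}_0^{(0)}, F_\bullet^H)$ agree as filtered $(U(\mf g), \tilde{\mc D})$-bimodules.

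On the level of the underlying $\mc D$-modules this agreement is exactly the Beilinson--Bernstein intertwining property for $\mc I_{s_\alpha}$ at the dominant weight $0$, so $\tilde{\Xi} * \tilde{\Delta}_{s_\alpha}^{(0)}$ is isomorphic to $\tilde{\Xi}$ as a pro-object of $\coh(\mc D_{\widetilde 0}\boxtimes\mc D_{\widetilde 0}, G)$, both being the big projective. To obtain the filtered statement I would argue as in the proof of \cite[Theorem 6.16]{DV} behind \eqref{eq:big projective hodge}: the convolution carries a canonical pro-mixed Hodge module structure, which via this identification equips $\tilde{\Xi}$ with a pro-mixed Hodge module structure, and by the uniqueness up to shift of the mixed structure on the big projective (\cite[Lemma 4.5.3]{beilinson-ginzburg-soergel}) its underlying filtered module must be $(\tilde{\mc D}\otimes_{U(\mf g)}\tilde{\mc D}, F_\bullet)\{n\} = (\tilde{\Xi}, F_\bullet^H)\{n\}$ for some $n \in \mb Z$. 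One then pins down $n = 0$, for instance by convolving the resulting isomorphism on the right with $(\tilde{\nabla}_{s_\alpha}^{(0)}, F_\bullet^H)$ and invoking the filtered relations $\tilde{\Delta}_{s_\alpha}^{(0)}*\tilde{\nabla}_{s_\alpha}^{(0)} = \tilde{\delta}_0^{(0)} = \tilde{\nabla}_{s_\alpha}^{(0)}*\tilde{\Delta}_{s_\alpha}^{(0)}$ (Theorem \ref{thm:hecke relations}\eqref{itm:hecke relations 2}) together with the strict exact sequence of Theorem \ref{thm:hecke relations}\eqref{itm:hecke relations 5}, which breaks the symmetry between $\tilde{\Delta}$ and $\tilde{\nabla}$; alternatively one reduces to $G = \mathrm{SL}_2$ by $G$-equivariance and the fibration $\mc B \to \mc P_\alpha$ and computes the sections of a free-monodromic standard on $\mb P^1$ by hand, using \cite[Theorem 1.3]{DV} at $\lambda = 0$ to see that the Hodge filtration passes to cohomology without correction.

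The main obstacle is this last point: controlling the Hodge (equivalently, good) filtration on the convolution and pinning down the shift. A naive attempt to run the argument by convolving $\tilde{\Xi}$ directly with the standard--costandard sequence of Theorem \ref{thm:hecke relations}\eqref{itm:hecke relations 5} does not work, because $\check\alpha$ acts \emph{freely} rather than nilpotently on the pro-object $\tilde{\Xi}$: the operator $\check\alpha + 1$ is then injective but far from surjective, so the error term $\tilde{\Xi} * \mc K$ does not vanish. One is therefore forced to exploit the rigidity of the mixed structure on the big projective, and to keep careful track of Tate twists and filtration shifts throughout.
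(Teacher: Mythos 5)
Your overall strategy — reduce to a simple reflection, establish the unfiltered isomorphism, then upgrade to a filtered isomorphism via rigidity of the mixed structure on the big projective — is sound in spirit, and your remark at the end correctly diagnoses why the naive approach (convolving $\tilde\Xi$ with the standard–costandard sequence directly) fails: $\check\alpha$ acts freely, not nilpotently, on $\tilde\Xi$. However, the two middle steps differ from the paper's proof and are not fully executed.

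For the unfiltered isomorphism $\tilde\Xi*\tilde\Delta_{s_\alpha}^{(0)}\cong\tilde\Xi$, you propose to apply the identity $(\tilde\Xi,F_\bullet^H)*(-)=(\tilde{\mc D},F_\bullet)\overset{\mrm L}\otimes_{U(\mf g)}\mrm R\Gamma(-)$ ``fibrewise over the second copy of $\tilde{\mc B}$'' and invoke Beilinson--Bernstein intertwining. That identity as stated in the paper applies to objects on a single flag variety; to make it relative, you would need a fibrewise $\mrm R\Gamma$ and a family version of the BB intertwining isomorphism, neither of which is spelled out and both of which require a nontrivial compatibility check (the BB argument involves choices that must be made coherently over the second factor). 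The paper avoids this entirely: it first uses Lemmas \ref{lem:convolution exactness} and \ref{lem:xitilde stalks} to show $\tilde\Xi*(-)$ is t-exact, then observes $\tilde\Xi*\tilde\Delta_{s_\alpha}^{(0)}$ is projective (since $\tilde\nabla_{s_\alpha}^{(0)}*(-)$ is right t-exact and inverse), and identifies it with $\tilde\Xi$ by a purely formal multiplicity count of $\Hom$s into standards and costandards. That argument is self-contained and shorter than the relative-$\mrm R\Gamma$ route.

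For the filtered upgrade, the deeper issue is that your invocation of rigidity only determines $(\tilde\Xi*\tilde\Delta_{s_\alpha}^{(0)},F_\bullet^H)\cong(\tilde{\mc D}\otimes_{U(\mf g)}\tilde{\mc D},F_\bullet)\{n\}$ up to an undetermined $n$, because the normalization in \eqref{eq:big projective hodge} is tied to the condition that the counit $\tilde\Xi\to\Delta_1^{(0)}$ is a morphism of pro-mixed Hodge modules — an arbitrary mixed structure on $\tilde\Xi$ does not satisfy it and so only gives the filtration up to a Tate twist. The paper sidesteps the shift-determination step entirely by verifying this normalization directly: from Lemma \ref{lem:xitilde stalks}, $\Hom(\tilde\Xi*\tilde\Delta_{s_\alpha}^{(0)},\Delta_1^{(0)})=\Hom(j_{s_\alpha}^*\tilde\Xi,\widetilde{\mc O}_{X_{s_\alpha}}(-\rho,-\rho)(-\dim\mc B+1))=\mb C$ with trivial Hodge structure, so the unique nonzero morphism to $\Delta_1^{(0)}$ is automatically a morphism of mixed Hodge modules, and then the proof of \cite[Theorem 6.16]{DV} gives the filtered isomorphism with no shift to chase. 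Your two suggested routes for pinning down $n$ — convolving with $(\tilde\nabla_{s_\alpha}^{(0)},F_\bullet^H)$ and exploiting the strict sequence of Theorem \ref{thm:hecke relations}\eqref{itm:hecke relations 5}, or reducing to $\mathrm{SL}_2$ — are plausible (the first can in fact be made to work by comparing the lowest filtration degree in which the cokernel $\coker(\check\alpha+1)$ lives), but neither is carried out, and the $\mathrm{SL}_2$ reduction in particular is not obviously legitimate since $\tilde\Xi$ is not built from data local to the $\mc P_\alpha$-fibration. In short: right shape, different (and cleverly chosen) ingredients at the two key steps, but both of those steps would need substantial additional work to close.
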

\begin{proof}
By Theorem \ref{thm:hecke relations} \eqref{itm:hecke relations 1}, it suffices to treat the case where $w = s_\alpha$ for some simple root $\alpha$.

First, observe that, by Lemma \ref{lem:convolution exactness} and Lemma \ref{lem:xitilde stalks}, convolution with $\tilde{\Xi}$ is t-exact. In particular, $\tilde{\Xi} * \tilde{\Delta}^{(0)}_{s_\alpha}$ is concentrated in degree zero. Moreover, since convolution with $\tilde{\Delta}_{s_\alpha}^{(0)}$ and $\tilde{\nabla}_{s_\alpha}^{(0)}$ are inverse functors by Theorem \ref{thm:hecke relations} \eqref{itm:hecke relations 2}, we have, at the level of pro-$\mc{D}$-modules,
\[ \mrm{Ext}^i(\tilde{\Xi} * \tilde{\Delta}_{s_\alpha}, \mc{K}) = \mrm{Ext}^i(\tilde{\Xi}, \mc{K} * \tilde{\nabla}_{s_\alpha}^{(0)}) \]
for any $\mc{K} \in \coh^G(\mc{D}_{\widetilde{0}} \boxtimes \mc{D}_{\widetilde{0}})$. Since convolution with $\tilde{\nabla}_{s_\alpha}^{(0)}$ is right t-exact and $\tilde{\Xi}$ is projective, we conclude that $\tilde{\Xi} * \tilde{\Delta}^{(0)}_{s_\alpha}$ is projective as a pro-$\mc{D}$-module. Hence, we may write, as a pro-$\mc{D}$-module,
\[ \tilde{\Xi} * \tilde{\Delta}^{(0)}_{s_\alpha} = \bigoplus_{w' \in W} \mc{P}_{w'}^{\oplus n_{w'}}\]
for some $n_{w'}\geq 0$, where $\mc{P}_{w'}$ denotes the pro-projective cover of the irreducible object $j_{w'!*}\widetilde{\mc{O}}_{X_{w'}}(-\rho, -\rho)$. We claim that $n_1 = 1$ and $n_{w'} = 0$ for $w' \neq 1$. To see this, observe that for $w' \in W$ such that $\ell(w's_\alpha) = \ell(w') + 1$, we have
\[ \nabla_{w'}^{(0)} * \tilde{\nabla}_{s_\alpha}^{(0)} = \mb{C} \overset{\mrm{L}}\otimes_{S(\mf{h})} \tilde{\nabla}_{w'}^{(0)} * \tilde{\nabla}_{s_\alpha}^{(0)} = \mb{C} \overset{\mrm{L}}\otimes_{S(\mf{h})} \tilde{\nabla}_{w's_\alpha}^{(0)} = \mc{\nabla}_{w's_\alpha}^{(0)}. \]
Since the irreducible object $\Delta_1^{(0)} = \nabla_1^{(0)}$ has multiplicity $1$ in each costandard object $\nabla_{w'}^{(0)}$, we conclude that
\[ n_1 = \dim \Hom(\tilde{\Xi} * \tilde{\Delta}_{s_\alpha}^{(0)}, \nabla_1^{(0)}) = \dim \Hom(\tilde{\Xi}, \nabla_{s_\alpha}^{(0)}) = 1\]
and
\[ n_{w'} \leq \dim \Hom(\tilde{\Xi} * \tilde{\Delta}_{s_\alpha}^{(0)}, \nabla_{w'}^{(0)}) - n_1 = \dim \Hom(\tilde{\Xi}, \nabla_{ w's_\alpha}^{(0)}) - 1 = 0\]
for all $w'$ such that $\ell(w's_\alpha) = \ell(w') + 1$. Similarly, for $w' \in W$ such that $\ell(w' s_\alpha) = \ell(w') - 1$, we have
\[ \Delta_{w'}^{(0)}* \tilde{\nabla}_{s_\alpha}^{(0)} = \Delta_{w'w}^{(0)}\]
and hence, since $\Delta_1^{(0)}$ has multiplicity $1$ in each standard object $\Delta_{w'}^{(0)}$,
\[ n_{w'} \leq \dim \Hom(\tilde{\Xi} * \tilde{\Delta}_{s_\alpha}^{(0)}, \Delta_{w'}^{(0)}) - n_1 = \dim \Hom(\tilde{\Xi}, \Delta_{w's_\alpha}^{(0)}) - 1 = 0.\]
We conclude that $\tilde{\Xi} * \tilde{\Delta}_{s_\alpha}^{(0)} = \mc{P}_1 = \tilde{\Xi}$ as pro-$\tilde{\mc{D}}$-modules.

To upgrade this to a filtered isomorphism, observe that, as Hodge structures,
\[ \Hom(\tilde{\Xi} * \tilde{\Delta}_{s_\alpha}^{(0)}, \Delta_1^{(0)}) = \Hom(\tilde{\Xi}, \nabla^{(0)}_{s_\alpha}) = \Hom(j_{s_\alpha}^*\tilde{\Xi}, \widetilde{\mc{O}}_{X_{s_\alpha}}(-\rho, -\rho)(-\dim \mc{B} + 1)) = \mb{C}\]
by Lemma \ref{lem:xitilde stalks}. In other words, the unique non-zero morphism $\tilde{\Xi} * \tilde{\Delta}_{s_\alpha}^{(0)} \to \Delta^{(0)}_1$ is a morphism of mixed Hodge modules. So
\[ (\tilde{\Xi} *\tilde{\Delta}_{s_\alpha}^{(0)}, F_\bullet^H) \cong (\tilde{\Xi}, F_\bullet^H) \]
by the proof of \cite[Theorem 6.16]{DV}.
\end{proof}

Next, observe that the filtered object $(\tilde{\Xi}, F^H_\bullet)$ comes equipped with a comultiplication map
\[ (\tilde{\Xi}, F_\bullet^H) = \tilde{\mc{D}} \otimes_{U(\mf{g})} \tilde{\mc{D}} = \tilde{\mc{D}} \otimes_{U(\mf{g})} U(\mf{g}) \otimes_{U(\mf{g})} \tilde{\mc{D}} \to \tilde{\mc{D}} \otimes_{U(\mf{g})} \tilde{\mc{D}} \otimes_{U(\mf{g})} \tilde{\mc{D}} = (\tilde{\Xi}, F_\bullet^H) * (\tilde{\Xi}, F_\bullet^H)\]
and a counit
\[ (\tilde{\Xi}, F_\bullet^H) = \tilde{\mc{D}} \otimes_{U(\mf{g})} \tilde{\mc{D}} \to \tilde{\mc{D}} = (\tilde{\Delta}^{(0)}_1, F^H_\bullet).\]
Acting by convolution on $\mrm{D}^b\Coh(\tilde{\mc{D}}, F_\bullet)$, these are identified with the natural comonad structure on the functor
\[ (\tilde{\Xi}, F_\bullet^H) * - = \mrm{L}\Delta \circ \mrm{R}\Gamma(-) \colon \mrm{D}^b\coh(\tilde{\mc{D}}, F_\bullet) \to \mrm{D}^b\coh(\tilde{\mc{D}}, F_\bullet) \]
arising from the adjunction between $\mrm{R}\Gamma$ and $\mrm{L}\Delta$.

\begin{lemma} \label{lem:xitilde intertwining comodule}
The isomorphism of Lemma \ref{lem:xitilde intertwining} is an isomorphism of $(\tilde{\Xi}, F_\bullet)$-comodules.
\end{lemma}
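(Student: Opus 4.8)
The plan is to deduce this from the rigidity of the big projective already exploited in Lemma~\ref{lem:xitilde intertwining}, rather than computing the comodule structure maps by hand. By the identification $(\tilde{\Xi}, F_\bullet^H)\cong(\tilde{\mc{D}}\otimes_{U(\mf{g})}\tilde{\mc{D}}, F_\bullet)$ and the discussion preceding the lemma, convolution with $(\tilde{\Xi}, F_\bullet)$ \emph{is} the comonad $T = (\tilde{\mc{D}}, F_\bullet)\overset{\mrm{L}}\otimes_{(U(\mf{g}), F_\bullet)}\mrm{R}\Gamma(-)$ attached to the adjunction $\mrm{L}\Delta\dashv\mrm{R}\Gamma$, with comultiplication and counit as displayed. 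Hence for any $\mc{N}$ the object $\tilde{\Xi}*\mc{N} = T\mc{N}$ is the cofree $T$-comodule on $\mc{N}$; in particular the two filtered comodules in question, $\tilde{\Xi}*\tilde{\Delta}_w^{(0)} = T\tilde{\Delta}_w^{(0)}$ and $\tilde{\Xi} = \tilde{\Xi}*\tilde{\Delta}_1^{(0)} = T\tilde{\Delta}_1^{(0)}$ (note that $\tilde{\Delta}_1^{(0)} = (\tilde{\mc{D}}, F_\bullet)$ is the convolution unit), are both cofree, their coactions being the comultiplication of $(\tilde{\Xi}, F_\bullet)$ convolved on the left by the identity. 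By the cofree--forgetful adjunction for $T$, post-composition with the counit $\epsilon\colon\tilde{\Xi} = T\tilde{\Delta}_1^{(0)}\to\tilde{\Delta}_1^{(0)} = (\tilde{\mc{D}}, F_\bullet)$ gives a bijection between comodule morphisms $\tilde{\Xi}*\tilde{\Delta}_w^{(0)}\to\tilde{\Xi}$ and filtered $\tilde{\mc{D}}\boxtimes\tilde{\mc{D}}$-module morphisms $\tilde{\Xi}*\tilde{\Delta}_w^{(0)}\to(\tilde{\mc{D}}, F_\bullet)$. So it is enough to prove that $\phi$ agrees with the (unique) comodule morphism $\tilde{\phi}$ satisfying $\epsilon\circ\tilde{\phi} = \epsilon\circ\phi$.

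To do this I would pin $\phi$ down by its composite to the simple object $\Delta_1^{(0)}$. By Theorem~\ref{thm:hecke relations}~\eqref{itm:hecke relations 1} it suffices to treat $w = s_\alpha$. Recall that in the proof of Lemma~\ref{lem:xitilde intertwining} one first shows $\tilde{\Xi}*\tilde{\Delta}_{s_\alpha}^{(0)}\cong\tilde{\Xi}$ as pro-$\mc{D}$-modules and that the unique nonzero morphism $u\colon\tilde{\Xi}*\tilde{\Delta}_{s_\alpha}^{(0)}\to\Delta_1^{(0)}$ is a morphism of (pro-)mixed Hodge modules; then $\phi$ is produced by the argument in the proof of \cite[Theorem 6.16]{DV}, which from this data recovers the underlying filtered module as $(\tilde{\mc{D}}\otimes_{U(\mf{g})}\tilde{\mc{D}}, F_\bullet)$ in a way that is characterized, up to a nonzero scalar, as the filtered morphism lying in $F_0^HW_0$ of the relevant hom space whose further composite $\tilde{\Xi}*\tilde{\Delta}_{s_\alpha}^{(0)}\xrightarrow{\phi}\tilde{\Xi}\to\Delta_1^{(0)}$ is the given nonzero element of the one-dimensional space $F_0^HW_0\mathrm{Hom}(\tilde{\Xi}*\tilde{\Delta}_{s_\alpha}^{(0)}, \Delta_1^{(0)}) = \mb{C}\cdot u$. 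Now $\epsilon\circ\tilde{\phi} = \epsilon\circ\phi$, so $\tilde{\phi}$ and $\phi$ have the same (nonzero) composite with $\tilde{\Xi}\to\Delta_1^{(0)}$, whence $\tilde{\phi} = c\,\phi$ for some $c\in\mb{C}^\times$ by the characterization just recalled. Thus $\phi$ is a morphism of $(\tilde{\Xi}, F_\bullet)$-comodules, and being invertible, an isomorphism of such.

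The main obstacle is the first paragraph: one must verify carefully that convolution with $(\tilde{\Xi}, F_\bullet)$, built from the bimodule presentation, genuinely realizes the comonad $\mrm{L}\Delta\circ\mrm{R}\Gamma$ after convolving with an \emph{arbitrary} object of $\mrm{D}^b_{\mrm{St}}\coh^G(\tilde{\mc{D}}\boxtimes\tilde{\mc{D}}, F_\bullet)$, and that the cofree--forgetful formalism --- in particular the computation of comodule hom spaces by the adjunction --- is valid in this pro-equivariant filtered derived setting; this is essentially the content of the paragraph in the text identifying $(\tilde{\Xi}, F_\bullet)*-$ with $\tilde{\mc{D}}\overset{\mrm{L}}\otimes_{U(\mf{g})}\mrm{R}\Gamma(-)$ as a comonad, and once it is granted everything else is formal together with the rigidity input from \cite[Theorem 6.16]{DV} already used in Lemma~\ref{lem:xitilde intertwining}. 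A secondary point, requiring the same care as in that lemma, is to keep the free-monodromic convolution unit $\tilde{\Delta}_1^{(0)} = (\tilde{\mc{D}}, F_\bullet)$ and the simple object $\Delta_1^{(0)}$ --- through which $\phi$ is normalized --- distinct, reconciling the two via the natural surjection between them.
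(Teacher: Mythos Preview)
Your approach is correct but takes a different route from the paper. The paper verifies the comodule compatibility by a direct diagram chase: since $\tilde{\Xi}$ is generated as a filtered $\tilde{\mc{D}}\boxtimes\tilde{\mc{D}}$-module by $\Gamma(F_0\tilde{\Xi}) = \mb{C}$, it suffices to check the coaction square commutes after applying $\Gamma(F_0(-))$; at that level all arrows become isomorphisms between copies of $\mb{C}$, and a two-out-of-three argument (the outer rectangle has identity horizontals by the comonad counit axiom, the right square commutes by bifunctoriality of convolution) finishes the proof in two lines.

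Your cofree--forgetful argument is morally equivalent, since both rest on the one-dimensionality of the relevant filtered $G$-equivariant Hom, but packaged more abstractly. A few remarks: the adjunction you invoke is purely formal for any comonad on any category, and here everything lives in the heart (Lemma~\ref{lem:xitilde intertwining} shows $\tilde{\Xi}*\tilde{\Delta}_w^{(0)}$ is concentrated in degree~$0$), so your caveat in the last paragraph is overstated. On the other hand, your second paragraph muddles the Hodge and filtered pictures: the weight filtration $W_\bullet$ plays no role in $\mrm{D}^b_{\mrm{St}}\coh^G(\tilde{\mc{D}}\boxtimes\tilde{\mc{D}}, F_\bullet)$, so invoking $F_0^HW_0$ of the Hom as a mixed Hodge structure is neither needed nor meaningful in this category. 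The clean statement you actually want is that the filtered $G$-equivariant Hom space $\mathrm{Hom}(\tilde{\Xi}*\tilde{\Delta}_w^{(0)}, \tilde{\Xi}) \cong \mathrm{Hom}(\tilde{\Xi}, \tilde{\Xi}) \cong \Gamma(F_0\tilde{\Xi})^G = \mb{C}$, whence $\tilde{\phi}$ and $\phi$ are automatically proportional; the detour through the simple object $\Delta_1^{(0)}$ and the characterization from \cite[Theorem~6.16]{DV} is then redundant.
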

\begin{proof}
We need to show that the left square in the diagram
\begin{equation} \label{eq:xitilde intertwining convolution 1}
\begin{tikzcd}
\tilde{\Xi} \ar[r] \ar[d, "\rotatebox{90}{$\sim$}"] & \tilde{\Xi} * \tilde{\Xi}\ar[d, "\rotatebox{90}{$\sim$}"]  \ar[r] & \tilde{\Xi} \ar[d, "\rotatebox{90}{$\sim$}"] \\
\tilde{\Xi} * \tilde{\Delta}^{(0)}_w \ar[r] & \tilde{\Xi} * \tilde{\Xi} * \tilde{\Delta}^{(0)}_w \ar[r] & \tilde{\Xi}* \tilde{\Delta}^{(0)}_w
\end{tikzcd}
\end{equation}
commutes. Here the top row is given by the comultiplication and first factor counit for $\tilde{\Xi}$ and the top row is given by convolving the bottom row with $\tilde{\mc{I}}_w$. Observe that, since $\tilde{\Xi}$ is generated as a $\tilde{\mc{D}} \boxtimes \tilde{\mc{D}}$-module by $\Gamma(F_0\tilde{\Xi}) = \mb{C}$, it is enough to check commutativity after applying the functor $\Gamma(F_0(-))$. Now, under this functor, all arrows in \eqref{eq:xitilde intertwining convolution 1} become isomorphisms, so it suffices to show that the outer square and the right hand square both commute. But this is obvious, since the right hand square is given by convolving the isomorphism $\tilde{\Xi} \cong \tilde{\Xi} * \tilde{\Delta}^{(0)}_w$ with the counit of $\tilde{\Xi}$ and the horizontal morphisms in the outer square are identities, so the lemma is proved.
\end{proof}

\begin{lemma} \label{lem:comonad}
Let $\mc{C}$ and $\mc{D}$ be categories and $F^L \colon \mc{D} \to \mc{C}$ a conservative functor with right adjoint $F^R \colon \mc{C} \to \mc{D}$. If $x_1, x_2 \in \mc{C}$ are objects and $\psi \colon F^LF^R(x_1) \cong F^LF^R(x_2)$ is an isomorphism of comodules under the comonad $F^LF^R$, then there exists an isomorphism $\psi \colon F^R(x_1) \cong F^R(x_2)$ such that $\phi = F^L(\psi)$.
\end{lemma}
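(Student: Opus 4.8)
The plan is to treat this as a formal diagram chase involving the comonad $G = F^LF^R$ on $\mc{C}$, whose counit $\epsilon\colon G \Rightarrow \mrm{id}_{\mc{C}}$ is the counit of the adjunction $F^L \dashv F^R$ and whose comultiplication is $\delta = F^L\eta F^R$, where $\eta\colon \mrm{id}_{\mc{D}} \Rightarrow F^RF^L$ is the unit. The point is that the comodule structure on each object $F^LF^R(x_i) = G(x_i)$ is the cofree one, with coaction $\delta_{x_i}\colon G(x_i)\to GG(x_i)$, and that $(G(x_i),\delta_{x_i})$ is precisely the image of $F^R(x_i)$ under the comparison functor $\mc{D}\to \mc{C}^G$. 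So what must be shown is that this comparison functor lifts the given comodule isomorphism $\phi\colon F^LF^R(x_1)\to F^LF^R(x_2)$ to a morphism in $\mc{D}$, and that it reflects invertibility.

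First I would write down the candidate explicitly: transposing the $\mc{C}$-morphism $\epsilon_{x_2}\circ\phi\colon F^LF^R(x_1)\to x_2$ across the adjunction $F^L\dashv F^R$ produces $\psi := F^R(\epsilon_{x_2}\circ\phi)\circ\eta_{F^R(x_1)}\colon F^R(x_1)\to F^R(x_2)$ in $\mc{D}$. Next I would verify that $F^L(\psi)=\phi$: applying $F^L$ and using $F^LF^R = G$ on morphisms together with $F^L\eta_{F^R(x_1)} = \delta_{x_1}$ rewrites $F^L(\psi)$ as $G(\epsilon_{x_2})\circ G(\phi)\circ\delta_{x_1}$; the hypothesis that $\phi$ is a morphism of $G$-comodules replaces $G(\phi)\circ\delta_{x_1}$ by $\delta_{x_2}\circ\phi$; and the comonad counit identity $G(\epsilon)\circ\delta = \mrm{id}_G$ (which is one of the triangle identities for $F^L\dashv F^R$) collapses $G(\epsilon_{x_2})\circ\delta_{x_2}$ to the identity, leaving exactly $\phi$. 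Finally, since $\phi$ is an isomorphism and $F^L$ is conservative, the equality $F^L(\psi) = \phi$ forces $\psi$ to be an isomorphism, which is the claim.

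The only genuinely essential point — the single place where anything beyond the bare adjunction is used in the lifting step — is the comodule compatibility of $\phi$: without it, $F^L$ applied to the transpose $\psi$ does not recover $\phi$. Conservativity of $F^L$ enters only at the very end, to upgrade ``$F^L(\psi)$ invertible'' to ``$\psi$ invertible''; no exactness or comonadicity hypotheses on $F^L$ are needed, so I do not anticipate any real obstacle beyond keeping the triangle identities straight. I would not attempt to prove uniqueness of $\psi$, as that is not asserted and would require $F^L$ to be faithful, which is not among the hypotheses.
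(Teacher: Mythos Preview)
Your proposal is correct and is essentially identical to the paper's proof: the paper defines $\psi$ as the same composite $F^R(c)\circ F^R(\phi)\circ u_{F^R(x_1)}$ (writing $u$, $c$ for unit and counit), verifies $F^L(\psi)=\phi$ via exactly the same two-step diagram chase (comodule compatibility of $\phi$ followed by the triangle identity $F^Rc\circ uF^R=\mrm{id}$), and then invokes conservativity of $F^L$. The only difference is cosmetic: you phrase the construction as an adjunction transpose and the key identity as the comonad counit law $G(\epsilon)\circ\delta=\mrm{id}_G$, while the paper writes out the same maps directly.
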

\begin{proof}
Let us write $u \colon \mrm{id}_{\mc{D}} \to F^R F^L$ and $c \colon F^L F^R \to \mrm{id}_{\mc{C}}$ for the unit and counit of the adjunction. We set $\psi$ to be the composition
\[ \psi \colon F^R(x_1) \xrightarrow{uF^R} F^R F^L F^R(x_1) \xrightarrow{F^R(\phi)} F^RF^LF^R(x_2) \xrightarrow{F^Rc} F^R(x_2).\]
To see that $\phi = F^L(\psi)$, consider the commutative diagram
\[
\begin{tikzcd}
F^L F^R(x_1) \ar[r, "\phi"] \ar[d, "F^L u F^R"] & F^L F^R(x_2) \ar[rd, "\mrm{id}"] \ar[d, "F^LuF^R"] \\
F^LF^RF^LF^R(x_1) \ar[r, "F^LF^R(\phi)"] & F^LF^RF^LF^R(x_2) \ar[r, "F^LF^R c"] & F^LF^R(x_2).
\end{tikzcd}
\]
The left hand square commutes since $\phi$ is assumed to be a map of comodules, while the triangle on the right commutes because of unit-counit equations. The composition along the top is $\phi$, while the composition along the bottom is $F^L(\psi)$, so $\phi = F^L(\psi)$ as claimed. Finally, since $F^L(\psi)$ is an isomorphism and $F^L$ is conservative, $\psi$ itself is an isomorphism.
\end{proof}

\begin{proof}[Proof of Theorem \ref{thm:intertwining}]
By Lemmas \ref{lem:xitilde intertwining} and \ref{lem:xitilde intertwining comodule}, we have an isomorphism
\[ (\tilde{\Xi}, F_\bullet^H) * (\tilde{\Delta}^{(0)}_w, F_\bullet^H) \cong (\tilde{\Xi}, F_\bullet^H) \]
of comodules over $(\tilde{\Xi}, F_\bullet^H)$. Convolving with $(\mc{M}, F_\bullet) \in \mrm{D}^b\coh(\tilde{\mc{D}}, F_\bullet)$, we obtain filtered isomorphisms
\[ \mrm{L}\Delta \circ \mrm{R}\Gamma((\tilde{\Delta}_w^{(0)}, F_\bullet^H) * (\mc{M}, F_\bullet)) \cong \mrm{L}\Delta \circ \mrm{R}\Gamma(\mc{M}, F_\bullet) \]
of comodules over the comonad
\[ (\tilde{\Xi}, F_\bullet^H) * - = \mrm{L}\Delta \circ \mrm{R}\Gamma(-).\]
Applying Lemma \ref{lem:comonad} with $F^R = \mrm{R}\Gamma$ and $F^L = \mrm{L}\Delta$, we deduce that
\[ \mrm{R}\Gamma(\mc{I}_w^!(\mc{M}, F_\bullet)) = \mrm{R}\Gamma((\tilde{\Delta}^{(0)}_w, F_\bullet^H) * (\mc{M}, F_\bullet)) \cong \mrm{R}\Gamma(\mc{M}, F_\bullet) \]
as claimed. Since $\mc{I}_w^* = (\mc{I}_{w^{-1}}^!)^{-1}$, the assertion for $\mc{I}_w^*$ follows.
\end{proof}

We note that Theorem \ref{thm:intertwining} implies a monodromic version of Beilinson and Bernstein's result \cite[Theorem 12]{BeilinsonBernstein1983} on intertwining functors for unfiltered $\mc{D}$-modules. For any $\lambda \in \mf{h}^*_\mb{R}$, we have $\mc{D}$-module intertwining functors
\[
\mc{I}_w^{!\mc{D}}, \mc{I}_w^{*\mc{D}} \colon \mrm{D}^b\coh(\mc{D}_{\widetilde{\lambda}}) \to \mrm{D}^b\coh(\mc{D}_{\widetilde{w\lambda}}) 
\]
defined by convolving with the pro-objects $\tilde{\Delta}_w^{(\lambda)}$ and $\tilde{\nabla}_w^{(\lambda)}$ respectively. By construction, $\mc{I}_w^{!H}(\mc{M}) = \mc{I}_w^{!\mc{D}}(\mc{M})$ and $\mc{I}_w^{*H}(\mc{M}) = \mc{I}_w^{*\mc{D}}(\mc{M})$ as $\tilde{\mc{D}}$-modules.

\begin{cor} \label{cor:intertwining}
Let $\lambda \in \mf{h}^*_\mb{R}$ and $\mc{M} \in \mrm{D}^b\coh(\mc{D}_{\widetilde{\lambda}})$. Then for any simple root $\alpha$, we have
\[ \mrm{R}\Gamma(\mc{M}) \cong \begin{cases} \mrm{R}\Gamma(\mc{I}_{s_\alpha}^{*\mc{D}}(\mc{M})),  & \text{if $\langle \lambda, \check\alpha\rangle \geq 0$}, \\ \mrm{R}\Gamma(\mc{I}_{s_\alpha}^{!\mc{D}}(\mc{M})), & \text{if $\langle \lambda, \check\alpha\rangle \leq 0$},\end{cases}\]
as objects in $\mrm{D}^b\Mod_{fg}(U(\mf{g}))$.
\end{cor}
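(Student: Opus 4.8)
The idea is to deduce this from the filtered intertwining property (Theorem~\ref{thm:intertwining}), which gives $\mrm{R}\Gamma(\mc{M},F_\bullet)\cong\mrm{R}\Gamma(\mc{I}_{s_\alpha}^{!}(\mc{M},F_\bullet))\cong\mrm{R}\Gamma(\mc{I}_{s_\alpha}^{*}(\mc{M},F_\bullet))$ for \emph{every} coherent filtered $\tilde{\mc{D}}$-module, together with the deformation relations (Theorem~\ref{thm:deformation relations}): the latter express the twisted kernels $\tilde{\Delta}_{s_\alpha}^{(\lambda)}$, $\tilde{\nabla}_{s_\alpha}^{(\lambda)}$ in terms of $\tilde{\Delta}_{s_\alpha}^{(0)}$, $\tilde{\nabla}_{s_\alpha}^{(0)}$ up to iterated extensions by cokernels of scalar-shifted copies of the translation kernels $\tilde{\delta}_{n\alpha}$, and I will show these error terms die after convolution with $\mc{M}$ and passage to global sections when the sign of $\langle\lambda,\check\alpha\rangle$ matches the choice of $!$ or $*$. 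I would write out the case $\langle\lambda,\check\alpha\rangle\geq 0$ (proving the claim for $\mc{I}_{s_\alpha}^{*\mc{D}}$); the case $\langle\lambda,\check\alpha\rangle\leq 0$ is proved by the same argument with $\tilde{\Delta}_{s_\alpha}$ and $\tilde{\nabla}_{s_\alpha}$ interchanged, using part~\eqref{itm:deformation relations 2} of Theorem~\ref{thm:deformation relations} in place of~\eqref{itm:deformation relations 3} and the equality $\mc{I}_{s_\alpha}^{*}=(\tilde{\nabla}_{s_\alpha}^{(0)},F^H_\bullet)*-$ of Theorem~\ref{thm:intertwining} in place of $\mc{I}_{s_\alpha}^{!}=(\tilde{\Delta}_{s_\alpha}^{(0)},F^H_\bullet)*-$.

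I would first record two routine facts. (i) For $\mc{M}\in\mrm{D}^b\coh(\mc{D}_{\widetilde\lambda})$ equipped with any good filtration, the underlying $\tilde{\mc{D}}$-module of the filtered convolution $(\tilde{\nabla}_{s_\alpha}^{(\lambda)},F^H_\bullet)*(\mc{M},F_\bullet)$ is the $\mc{D}$-module convolution $\tilde{\nabla}_{s_\alpha}^{(\lambda)}*\mc{M}=\mc{I}_{s_\alpha}^{*\mc{D}}(\mc{M})$, by the standard comparison of the two models of convolution (tensor product of $\mc{D}$-modules over the middle copy of $\tilde{\mc{D}}$ versus external product followed by restriction to the partial diagonal), and similarly for $\tilde{\Delta}_{s_\alpha}^{(0)}$ and its twists. (ii) If $\mc{N}\in\mrm{D}^b\coh(\mc{D}_{\widetilde\mu})$ and $c\in\mb{C}$ with $\langle\mu-\rho,\check\alpha\rangle+c\neq 0$, then $\check\alpha+c\in S(\mf{h})=Z(\tilde{\mc{D}})$ acts on $\mc{N}$ as a nonzero scalar plus a nilpotent, hence invertibly. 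Since $\tilde{\delta}_{k\alpha}*\mc{M}\cong\mc{M}\otimes\mc{O}(k\alpha)\in\mrm{D}^b\coh(\mc{D}_{\widetilde{\lambda+k\alpha}})$ by Proposition~\ref{prop:translations} and convolution with $\tilde{\delta}_{k\alpha}$ is exact, it follows that for $k\in\mb{Z}$ and $c\in\mb{C}$ with $\langle\lambda,\check\alpha\rangle+2k-1+c\neq 0$, the convolution $\coker(\check\alpha+c\colon\tilde{\delta}_{k\alpha}\to\tilde{\delta}_{k\alpha}\{-1\})*\mc{M}$ vanishes in $\mrm{D}^b\coh(\tilde{\mc{D}})$; note the necessity of discarding the filtration, as this cokernel is nonzero as a filtered sheaf.

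Now suppose $\langle\lambda,\check\alpha\rangle\geq 0$. If $\langle\lambda,\check\alpha\rangle=0$, Theorem~\ref{thm:deformation relations}\eqref{itm:deformation relations 1} gives $(\tilde{\nabla}_{s_\alpha}^{(\lambda)},F^H_\bullet)\cong(\tilde{\nabla}_{s_\alpha}^{(0)},F^H_\bullet)$, so $\mc{I}_{s_\alpha}^{*\mc{D}}(\mc{M})$ is the underlying module of $\mc{I}_{s_\alpha}^{*}(\mc{M},F_\bullet)$ and Theorem~\ref{thm:intertwining} finishes. If $\langle\lambda,\check\alpha\rangle>0$, put $m=\lceil\langle\lambda,\check\alpha\rangle\rceil$ and choose $\lambda_0$ with $\langle\lambda_0,\check\alpha\rangle\in(m-1,m)$. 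By Theorem~\ref{thm:deformation relations}\eqref{itm:deformation relations 1}, $(\tilde{\nabla}_{s_\alpha}^{(\lambda)},F^H_\bullet)\cong(\tilde{\nabla}_{s_\alpha}^{(\lambda_0)},F^H_\bullet)$, and since $\langle\lambda_0,\check\alpha\rangle\notin\mb{Z}$, Theorem~\ref{thm:hecke relations}\eqref{itm:hecke relations 5} (whose cone vanishes) identifies this, up to a Tate twist, with $(\tilde{\Delta}_{s_\alpha}^{(\lambda_0)},F^H_\bullet)$. Then Theorem~\ref{thm:deformation relations}\eqref{itm:deformation relations 3} exhibits a strict short exact sequence with $(\tilde{\Delta}_{s_\alpha}^{(\lambda_0)},F^H_\bullet)$ a subobject of $(\tilde{\Delta}_{s_\alpha}^{(0)},F^H_\bullet)$ and quotient an iterated extension of the filtered sheaves $\coker(\check\alpha+n+1\colon\tilde{\delta}_{-n\alpha}\to\tilde{\delta}_{-n\alpha}\{-1\})$ for integers $0<n\leq\langle\lambda_0,\check\alpha\rangle$; every such $n$ satisfies $n\leq m-1<\langle\lambda,\check\alpha\rangle$, so fact~(ii) (with $k=-n$, $c=n+1$, giving $\langle\lambda,\check\alpha\rangle-n\neq 0$) shows that convolving with $\mc{M}$ and forgetting the filtration annihilates the quotient. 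Hence $\mc{I}_{s_\alpha}^{*\mc{D}}(\mc{M})\cong\tilde{\Delta}_{s_\alpha}^{(\lambda_0)}*\mc{M}\cong\tilde{\Delta}_{s_\alpha}^{(0)}*\mc{M}$ in $\mrm{D}^b\coh(\tilde{\mc{D}})$, and taking $\mrm{R}\Gamma$ and applying Theorem~\ref{thm:intertwining} to $\mc{I}_{s_\alpha}^{!}=(\tilde{\Delta}_{s_\alpha}^{(0)},F^H_\bullet)*-$ gives $\mrm{R}\Gamma(\mc{I}_{s_\alpha}^{*\mc{D}}(\mc{M}))\cong\mrm{R}\Gamma(\mc{M})$ in $\mrm{D}^b\Mod_{fg}(U(\mf{g}))$.

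The point requiring genuine care is the combinatorial bookkeeping in the previous paragraph: one must verify that the integer $n$ in each error cokernel never equals $\langle\lambda,\check\alpha\rangle$. This is exactly why I pass from $\lambda$ to the non-integral $\lambda_0$ via Theorem~\ref{thm:deformation relations}\eqref{itm:deformation relations 1}: a direct application of Theorem~\ref{thm:hecke relations}\eqref{itm:hecke relations 5} at twist $\lambda$ itself produces, when $\langle\lambda,\check\alpha\rangle$ is a positive integer, precisely the cokernel with $n=\langle\lambda,\check\alpha\rangle$, on which $\check\alpha+n+1$ acts with eigenvalue $0$ and which therefore does \emph{not} die after convolution with $\mc{M}$. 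One also needs that convolution with $(\mc{M},F_\bullet)$ carries the filtered short exact sequences of Theorem~\ref{thm:deformation relations} to distinguished triangles, which is immediate since convolution is a triangulated functor; the remainder is formal.
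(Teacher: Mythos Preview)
Your proposal is correct and rests on the same idea as the paper's proof: show that the error kernels appearing in Theorem~\ref{thm:deformation relations} become acyclic after convolving with $\mc{M}$ and forgetting the filtration (because the relevant element of $S(\mf{h})$ acts invertibly on $\mc{M}\otimes\mc{O}(k\alpha)$), then apply the filtered intertwining property (Theorem~\ref{thm:intertwining}).

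The one genuine difference is in how the positive case is handled. The paper treats $\langle\lambda,\check\alpha\rangle<0$ directly using Theorem~\ref{thm:deformation relations}\eqref{itm:deformation relations 2}, where the error terms run over $0<n<-\langle\lambda,\check\alpha\rangle$ with a \emph{strict} upper bound, so the problematic value $n=-\langle\lambda,\check\alpha\rangle$ never appears; it then deduces $\langle\lambda,\check\alpha\rangle>0$ in one line from the fact that $\mc{I}_{s_\alpha}^{*\mc{D}}$ is inverse to $\mc{I}_{s_\alpha}^{!\mc{D}}$ (Theorem~\ref{thm:hecke relations}\eqref{itm:hecke relations 2}). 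You instead attack $\langle\lambda,\check\alpha\rangle>0$ head-on via Theorem~\ref{thm:deformation relations}\eqref{itm:deformation relations 3}, whose range $0<n\leq\langle\lambda,\check\alpha\rangle$ is non-strict; your detour through a non-integral $\lambda_0$ with $\lceil\langle\lambda_0,\check\alpha\rangle\rceil=\lceil\langle\lambda,\check\alpha\rangle\rceil$ correctly trims the range to $n\leq m-1<\langle\lambda,\check\alpha\rangle$, avoiding the dangerous term. This works, but the paper's inverse-functor reduction is shorter and sidesteps the bookkeeping you flag in your final paragraph. (Incidentally, the $\langle\lambda,\check\alpha\rangle<0$ case you defer to ``the same argument'' is actually easier, for the reason just given: Theorem~\ref{thm:deformation relations}\eqref{itm:deformation relations 2} already has the strict bound, so no $\lambda_0$ trick is needed there---which is exactly what the paper does.)
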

\begin{proof}
If $\langle \lambda, \check\alpha \rangle = 0$ then the claim is true even at the filtered level by Theorem \ref{thm:deformation relations} \eqref{itm:deformation relations 1} and Theorem \ref{thm:intertwining}. We prove the claim for $\langle \lambda, \check\alpha \rangle < 0$; the case $\langle \lambda, \check\alpha \rangle > 0$ follows since $\mc{I}_{s_\alpha}^{*\mc{D}}$ is inverse to $\mc{I}_{s_\alpha}^{!\mc{D}}$. Observe that for any $n \neq - \langle \lambda, \check\alpha \rangle$, the morphism
\[ \check\alpha - n + 1 \colon \mc{M}\otimes \mc{O}(n\alpha) \to \mc{M} \otimes \mc{O}(n\alpha) \]
is an isomorphism. So, in the exact triangle of Theorem \ref{thm:deformation relations} \eqref{itm:deformation relations 2}, we have $\mc{K} * \mc{M} = 0$ at the level of unfiltered $\tilde{\mc{D}}$-modules. Thus
\[ \mc{I}_{s_\alpha}^{!\mc{D}}(\mc{M}) = \tilde{\Delta}_{s_\alpha}^{(\lambda)} *\mc{M} \cong \widetilde{\nabla}_{s_\alpha}^{(0)} * \mc{M} \]
as complexes of $\tilde{\mc{D}}$-modules. We conclude by applying Theorem \ref{thm:intertwining} to any good filtration on $\mc{M}$.
\end{proof}

\section{Weak unipotence, vanishing theorems and the Cohen-Macaulay property} \label{sec:CM}

In this section, we apply the theory developed in \S\ref{sec:intertwining} to prove vanishing theorems for Hodge filtrations at non-dominant twists, under the additional hypothesis of \emph{very weak unipotence}. We define this property for $\mc{D}$-modules in \S\ref{subsec:strictness} and show that it implies unusually good behavior of the intertwining functors (Lemma \ref{lem:weakly unipotent intertwining 2}) and hence the strictness of the Hodge filtration on global sections (Theorem \ref{thm:strictness}). In the following subsections \S\ref{subsec:ideals}--\ref{subsec:CM property}, we use our strictness theorem to deduce the Cohen-Macaulay property for unipotent representations: we recall some fundamental facts about ideals in $U(\mf{g})$ in \S\ref{subsec:ideals}, discuss very weak unipotence for $\mc{D}$-modules versus $U(\mf{g})$-modules in \S\ref{subsec:D vs Ug}, and give the proof of Theorem \ref{thm:CM} in \S\ref{subsec:CM property}. Finally, in \S\ref{subsec:partial flags}, we use Theorem \ref{thm:strictness} to prove our vanishing theorem for Hodge modules on partial flag varieties (Theorem \ref{thm:partial filtered exactness}).

\subsection{The strictness theorem} \label{subsec:strictness}

Recall the following theorem from \cite{DV}.

\begin{thm}[{\cite[Theorem 5.2]{DV}}] \label{thm:filtered exactness}
Let $\lambda \in \mf{h}^*_\mb{R}$ be dominant and $\mc{M} \in \mhm(\mc{D}_{\widetilde{\lambda}})$. Then
\[ \mrm{H}^i(\mc{B}, \Gr^{F^H}_p\mc{M}) = 0 \quad \text{for $i > 0$}.\]
\end{thm}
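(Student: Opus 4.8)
The plan is to reduce to the case of a \emph{simple} Hodge module and then to prove the vanishing by resolving singularities and applying Saito's vanishing theorem.

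Since every morphism of mixed Hodge modules is strictly compatible with the Hodge filtration, the functor $\mc{M}\mapsto\Gr^{F^H}_p\mc{M}$ is exact; hence, by the long exact sequence in cohomology, the class of $\mc{M}\in\mhm(\mc{D}_{\widetilde{\lambda}})$ for which $\mrm{H}^i(\mc{B},\Gr^{F^H}_p\mc{M})=0$ for all $i>0$ and all $p$ is stable under extensions. Filtering by weights and using the semisimplicity of pure Hodge modules, we may assume $\mc{M}$ is simple. A simple monodromic Hodge module is automatically twisted, because the central $S(\mf{h})$ acts semisimply on a simple module; so, as a $\mc{D}_\lambda$-module, $\mc{M}=j_{!*}(\mc{L}[\dim Y])$ for a smooth locally closed subvariety $j\colon Y\hookrightarrow\mc{B}$ and a polarizable twisted variation of Hodge structure $\mc{L}$ on $Y$.

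For such $\mc{M}$, I would pick a projective morphism $\pi\colon Z\to\overline{Y}$ with $Z$ smooth, an isomorphism over $Y$, and with $D:=\pi^{-1}(\partial Y)$ a simple normal crossings divisor, and set $p\colon Z\to\mc{B}$ to be the composite of $\pi$ with $\overline{Y}\hookrightarrow\mc{B}$. By Saito's decomposition theorem, $\mc{M}$ is a direct summand of $\mc{H}^0p_*\mc{N}$ for $\mc{N}$ the minimal extension of $\mc{L}$ to $Z$. Since the associated graded of the filtered pushforward $p_*$ is computed by the coherent derived pushforward of $\Gr^{F^H}$, and $\Gr^{F^H}$ is additive, it suffices to prove
\[ \mrm{H}^i(Z,\ \Gr^{F^H}_p\mc{N}\otimes p^*\mc{O}(\lambda-\rho))=0\quad\text{for $i>0$ and all $p$,}\]
a Kodaira--Akizuki--Nakano type statement on the \emph{smooth} variety $Z$ for the explicit sheaves $\Gr^{F^H}_p\mc{N}$, which Saito computes in terms of the logarithmic de Rham complex of $(Z,D)$ and the Hodge bundles of $\mc{L}$. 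When $\lambda$ is not integral, $\mc{O}(\lambda-\rho)$ is only a symbol, and one works throughout with the free-monodromic deformations of \S\ref{subsec:free-monodromic} and the wall-crossing theory of \cite[\S3]{DV}, reducing to integral twists by a limiting argument as $\lambda$ moves within the dominant cone.

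The vanishing on $Z$ should follow from Saito's vanishing theorem for Hodge modules (in a form allowing semi-positive, logarithmically-twisted coefficients), combined with the $\mc{D}$-module Beilinson--Bernstein vanishing $\mrm{H}^i(\mc{B},\mc{M})=0$ ($i>0$) in the dominant range: the role of dominance of $\lambda$ is to guarantee precisely the semi-positivity of the relevant twisting data --- $p^*\mc{O}(\lambda-\rho)$ together with the Griffiths-positive Hodge bundles of $\mc{L}$ --- that Saito's theorem requires. I expect the main obstacle to be exactly this positivity bookkeeping: the naive twist $\mc{O}(\lambda-\rho)$ can fail to be nef on $\mc{B}$ and need not even be a line bundle for non-integral $\lambda$, so the positivity underlying the argument is invisible on $\mc{B}$ and must be extracted from the variation $\mc{L}$ after pullback to the resolution, then balanced against the logarithmic poles along $D$ and the filtration shifts produced by $\mc{H}^0p_*$.
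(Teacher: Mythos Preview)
This theorem is not proved in the present paper: it is quoted verbatim from \cite[Theorem~5.2]{DV}, so there is no in-paper proof to compare against. That said, let me comment on the viability of your sketch, because it has real gaps that you yourself flag but do not close.

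The central issue is the appeal to ``Saito's vanishing theorem (in a form allowing semi-positive, logarithmically-twisted coefficients)''. Saito's Kodaira-type vanishing concerns the graded pieces of the \emph{de Rham complex} twisted by an \emph{ample} line bundle; it does not directly give vanishing for the individual coherent sheaves $\Gr^{F^H}_p\mc{M}$ on $\mc{B}$, and the ampleness hypothesis is essential, not cosmetic. In your situation there is no line bundle $\mc{O}(\lambda-\rho)$ at all when $\lambda$ is non-integral, and even for integral dominant $\lambda$ it need only be nef (and is trivial when $\lambda=\rho$), so no known form of Saito vanishing applies after pullback to $Z$. Your proposed remedy---extracting the missing positivity from the Hodge bundles of $\mc{L}$ and balancing it against logarithmic poles and filtration shifts from $\mc{H}^0p_*$---is exactly where the argument would have to do real work, and you have not indicated any mechanism by which this bookkeeping could succeed. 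The ``limiting argument'' for non-integral $\lambda$ via free-monodromic deformations is likewise only a gesture: the wall-crossing theory of \cite[\S3]{DV} relates Hodge filtrations at nearby twists, but does not by itself produce vanishing at one twist from vanishing at another without further input.

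For orientation: the argument actually given in \cite{DV} does not go through resolution of singularities and Saito vanishing. It proceeds instead by induction on the dimension of the support of $\mc{M}$, using the deformation theory $f^s\mc{M}$ along boundary divisors and the semi-continuity result \cite[Theorem~3.4]{DV} to move between $j_!$ and $j_*$ extensions (you can see the same template replayed in the proof of Theorem~\ref{thm:partial filtered exactness} above). The positivity that makes this work is the dominance of $\lambda$ expressed through the cone $\Gamma_\mb{R}(\tilde{Q})^{\mathit{mon}}_+$ of positive boundary equations, not through any line bundle on a resolution.
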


In this subsection, we generalize Theorem \ref{thm:filtered exactness} to non-dominant situations under the following key hypothesis.

\begin{defn}\label{def:CandD}
For $\lambda \in \mf{h}^*_\mb{R}$ we write
\[ C(\lambda) = \mrm{Conv}(W\cdot \lambda) \subset \mf{h}^*_\mb{R}, \quad C^\circ(\lambda) = C(\lambda) - W\cdot \lambda \subset C(\lambda),\]
\[ D(\lambda) = \{ \mu \in \mb{Z}\Phi \mid \lambda + \mu \in C(\lambda)\} \quad \text{and} \quad D^\circ(\lambda) = \{ \mu \in \mb{Z}\Phi \mid \lambda + \mu \in C^\circ(\lambda)\},\]
where $\mrm{Conv}$ denotes convex hull. We say that $\mc{M} \in \mrm{D}^b\coh(\mc{D}_{\widetilde{\lambda}})$ is \emph{very weakly unipotent} if
\[ \mrm{R}\Gamma(\mc{B}, \mc{M} \otimes \mc{O}(\mu)) = 0 \quad \text{for all $\mu \in D^\circ(\lambda)$}.\]
\end{defn}

Assuming that $\lambda$ is dominant, it is a theorem of Beilinson-Bernstein that
\[ \mrm{H}^i(\mc{B}, \mc{M}) = 0 \quad \text{for $i > 0$}.\]
Given this fact, Theorem \ref{thm:filtered exactness} is equivalent to the assertion that the filtered complex $\mrm{R}\Gamma(\mc{M}, F_\bullet^H\mc{M})$ is \emph{strict}, i.e., that the associated spectral sequence
\[ \mrm{E}_1^{p ,q} = \mrm{H}^{p + q}(\mc{B}, \Gr^{F^H}_{-p} \mc{M}) \Rightarrow \mrm{H}^{p + q}(\mc{B}, \mc{M}) \]
degenerates at $\mrm{E}_1$. It is this formulation that we generalize to non-dominant twists.

\begin{thm} \label{thm:strictness}
Let $\lambda \in \mf{h}^*_\mb{R}$ be arbitrary and let $\mc{M} \in \mrm{D}^b\mhm(\mc{D}_{\widetilde{\lambda}})$ be very weakly unipotent. Then the filtered complex
\[ \mrm{R}\Gamma(\mc{M}, F_\bullet^H) \in \mrm{D}^b\Mod_{\mathit{fg}}(U(\mf{g}), F_\bullet) \]
is strict. In particular, $\mrm{H}^i(\mc{B}, \Gr^{F^H}\mc{M}) = \Gr^{F^H}\mrm{H}^i(\mc{B}, \mc{M})$ is the associated graded of a good filtration on a $U(\mf{g})$-module.
\end{thm}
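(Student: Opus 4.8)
\textbf{Proof strategy for Theorem \ref{thm:strictness}.}

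The plan is to reduce the non-dominant case to the dominant case of Theorem \ref{thm:filtered exactness} by running the module $\mc{M}$ through the filtered intertwining functors constructed in \S\ref{sec:intertwining}, exploiting the very weak unipotence hypothesis to control the error terms. Fix $w \in W$ such that $w\lambda$ is dominant. The key point is that, although the \emph{Hodge} intertwining functor $\mc{I}_w^{!H} \colon \mrm{D}^b\mhm(\mc{D}_{\widetilde{\lambda}}) \to \mrm{D}^b\mhm(\mc{D}_{\widetilde{w\lambda}})$ sends $\mc{M}$ to a genuine mixed Hodge module whose Hodge filtration has the desired $E_1$-degeneration property (by Theorem \ref{thm:filtered exactness}, since $w\lambda$ is dominant), the underlying filtered $\tilde{\mc{D}}$-module $(\mc{I}_w^{!H}\mc{M}, F_\bullet^H)$ is \emph{not} the same as $\mc{I}_w^{!}(\mc{M}, F_\bullet^H)$, and it is the latter that satisfies $\mrm{R}\Gamma \circ \mc{I}_w^! \cong \mrm{R}\Gamma$ by Theorem \ref{thm:intertwining}. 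So I would first reduce, by writing $w$ as a product of simple reflections and using Theorem \ref{thm:hecke relations}\eqref{itm:hecke relations 1}, to the case of a single simple reflection $w = s_\alpha$; at each step the infinitesimal character is replaced by $s_\alpha$ applied to it, and one inducts on $\ell(w)$, so the only real content is the one-step comparison.

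The heart of the argument is then the following: I would prove a lemma (this is presumably \textbf{Lemma \ref{lem:weakly unipotent intertwining 2}}, referenced in the section introduction) asserting that for a very weakly unipotent $\mc{M}$ with $\langle \lambda, \check\alpha\rangle \geq 0$, the cone $\mc{K} * \mc{M}$ appearing in the categorified Bernstein relation — namely the distinguished triangle coming from Theorem \ref{thm:deformation relations}\eqref{itm:deformation relations 3} applied to $(\mc{M}, F_\bullet^H)$, whose cone is built from the sheaves $\coker(\check\alpha + n + 1 \colon \tilde{\delta}_{-n\alpha} \to \tilde{\delta}_{-n\alpha}\{-1\}) * (\mc{M}, F_\bullet^H)$ for $0 < n \leq \langle \lambda, \check\alpha\rangle$ — has \emph{vanishing derived global sections}. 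Indeed, convolution with $\tilde{\delta}_{-n\alpha}$ is just tensoring with $\mc{O}(-n\alpha)$ (Proposition \ref{prop:translations}), so these cokernels are, after forgetting the filtration, two-term complexes built from $\mc{M}\otimes\mc{O}(-n\alpha)$; the relevant shifts $-n\alpha$ for $0 < n \leq \langle\lambda,\check\alpha\rangle$ all lie in $D^\circ(\lambda)$ (this is a direct check with the definition of $C^\circ(\lambda)$ as the convex hull of the Weyl orbit minus the extreme points, since $\lambda$ and $s_\alpha\lambda = \lambda - \langle\lambda,\check\alpha\rangle\alpha$ are the two endpoints of an edge of $C(\lambda)$), so $\mrm{R}\Gamma(\mc{B}, \mc{M}\otimes\mc{O}(-n\alpha)) = 0$ by very weak unipotence. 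Hence $\mrm{R}\Gamma(\mc{K} * (\mc{M}, F_\bullet^H)) = 0$ as a filtered complex, and the triangle of Theorem \ref{thm:deformation relations}\eqref{itm:deformation relations 3} gives a filtered isomorphism $\mrm{R}\Gamma(\mc{I}_{s_\alpha}^{!(\lambda)}(\mc{M}, F_\bullet^H)) \cong \mrm{R}\Gamma(\mc{I}_{s_\alpha}^{!}(\mc{M}, F_\bullet^H)) \cong \mrm{R}\Gamma(\mc{M}, F_\bullet^H)$, the last isomorphism by Theorem \ref{thm:intertwining}. (When $\langle\lambda,\check\alpha\rangle < 0$ one uses part \eqref{itm:deformation relations 2} of the same theorem in the same way, with the shifts $n\alpha$, $0 < n < -\langle\lambda,\check\alpha\rangle$, again landing in $D^\circ(\lambda)$.)

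The remaining step is bookkeeping: I need to know that $\mc{I}_{s_\alpha}^{!(\lambda)}(\mc{M}, F_\bullet^H)$ coincides, as a filtered complex, with the underlying filtered $\tilde{\mc{D}}$-module of the Hodge module $\mc{I}_{s_\alpha}^{!H}\mc{M}$ — this is exactly the relation $(\mc{I}_w^{!H}\mc{M}, F_\bullet^H) \cong \mc{I}_w^{!(\lambda)}(\mc{M}, F_\bullet^H)$ recorded in the introduction — and that $\mc{I}_{s_\alpha}^{!H}\mc{M}$ is again very weakly unipotent (with respect to the twist $s_\alpha\lambda$), so that the induction closes; this last point follows because $C^\circ$, $D^\circ$ are $W$-equivariant in the appropriate sense and $\mrm{R}\Gamma$ is unchanged under the intertwining functor applied to the further twisted modules $\mc{M}\otimes\mc{O}(\mu)$. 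Combining, we get filtered isomorphisms $\mrm{R}\Gamma(\mc{M}, F_\bullet^H) \cong \mrm{R}\Gamma(\mc{I}_w^{!H}\mc{M}, F_\bullet^H)$, and the right-hand side is strict by Theorem \ref{thm:filtered exactness} applied at the dominant weight $w\lambda$ together with Beilinson--Bernstein vanishing (which gives that $\mrm{R}\Gamma(\mc{I}_w^{!H}\mc{M})$ is concentrated in degree zero, so strictness of the filtered complex is literally the $E_1$-degeneration of the spectral sequence there). Tracing the isomorphism back yields strictness of $\mrm{R}\Gamma(\mc{M}, F_\bullet^H)$, and the final sentence of the theorem — that $\mrm{H}^i(\mc{B}, \Gr^{F^H}\mc{M}) = \Gr^{F^H}\mrm{H}^i(\mc{B}, \mc{M})$ with the latter a good filtration — is the standard consequence of strictness of a filtered complex of coherent filtered modules. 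The main obstacle I anticipate is proving the vanishing $\mrm{R}\Gamma(\mc{K} * (\mc{M}, F_\bullet^H)) = 0$ \emph{at the filtered level} rather than merely for the underlying $\tilde{\mc{D}}$-modules: one must check that each filtered cokernel $\coker(\check\alpha + n + 1 \colon (\mc{M}\otimes\mc{O}(-n\alpha), F_\bullet^H) \to (\mc{M}\otimes\mc{O}(-n\alpha), F_\bullet^H)\{-1\})$ has vanishing $\mrm{R}\Gamma$, which requires knowing that $\check\alpha + n + 1$ acts \emph{strictly} — or at least that the relevant map on $\mrm{R}\Gamma$ is an isomorphism of filtered complexes — and this is where one genuinely uses that $\mrm{R}\Gamma(\mc{M}\otimes\mc{O}(-n\alpha))$ vanishes rather than just its associated graded.
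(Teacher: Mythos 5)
Your strategy is essentially the one the paper follows: reduce to the dominant case by the filtered intertwining functors, one simple reflection at a time, using very weak unipotence to kill the error term in the categorified Bernstein relation (Theorem \ref{thm:deformation relations}). Your identification of the key computation---that the shifts $-n\alpha$ (or $n\alpha$) land in $D^\circ(\lambda)$ because $\lambda$ and $s_\alpha\lambda$ are endpoints of an edge of $C(\lambda)$---is correct, as is the observation that this gives $\mrm{R}\Gamma(\mc{M}\otimes\mc{O}(-n\alpha))=0$ at the unfiltered level.

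However, the obstacle you flag at the end is a genuine gap, and you have not supplied the idea that closes it. You want $\mrm{R}\Gamma$ of the cone $\mc{K}*(\mc{M},F_\bullet^H)$ to vanish \emph{as a filtered complex}; since that cone lies in the triangulated subcategory generated by filtration shifts of $(\mc{M}\otimes\mc{O}(-n\alpha),F_\bullet^H)$, what you actually need is $\mrm{R}\Gamma(\mc{M}\otimes\mc{O}(-n\alpha),F_\bullet^H)=0$ \emph{as a filtered complex}. This is not implied by vanishing of the underlying unfiltered complex alone, and it does not help to focus on strictness of the endomorphism $\check\alpha+n+1$ (that is a red herring---you simply need each generating object to have vanishing filtered $\mrm{R}\Gamma$, after which the vanishing for any object of the subcategory is automatic). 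The paper resolves this by proving Lemma \ref{lem:weakly unipotent intertwining 2} and Theorem \ref{thm:strictness} \emph{simultaneously by induction on} $|D^\circ(\lambda)|$: the modules $\mc{M}\otimes\mc{O}(n\alpha)$ live at twist $\lambda+n\alpha$, and one checks $D^\circ(\lambda+n\alpha)\subsetneq\{\mu-n\alpha\mid\mu\in D^\circ(\lambda)\}$, so $|D^\circ(\lambda+n\alpha)|<|D^\circ(\lambda)|$. The inductive hypothesis then yields that $\mrm{R}\Gamma(\mc{M}\otimes\mc{O}(n\alpha),F_\bullet^H)$ is \emph{strict}, and a strict filtered complex whose underlying complex vanishes is itself zero. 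Your proposed induction on $\ell(w)$ cannot do this job, because the auxiliary twists $\lambda\pm n\alpha$ that appear in the cone are not of the form $w'\lambda$ and $|D^\circ(s_\alpha\lambda)|=|D^\circ(\lambda)|$, so $\ell(w)$ alone does not decrease along the auxiliary modules. Once you add the induction on $|D^\circ(\lambda)|$ as the outer induction (with $\ell(w)$ as a secondary induction only for assembling the one-step cases into the full $w$), the rest of your argument goes through exactly as written.
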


We prove Theorem \ref{thm:strictness} below as a consequence of unusually good behavior of the intertwining functors for very weakly unipotent Hodge modules. We begin with the following lemma; recall for the statement the $\mc{D}$-module intertwining functors $\mc{I}_w^{!\mc{D}}$ and $\mc{I}_w^{*\mc{D}}$ introduced before Corollary \ref{cor:intertwining}.

\begin{lem} \label{lem:weakly unipotent intertwining 1}
Let $\mc{M} \in \mrm{D}^b\coh(\mc{D}_{\widetilde{\lambda}})$ be very weakly unipotent. Then $\mc{I}_w^{!\mc{D}}(\mc{M})$ is very weakly unipotent for all $w \in W$.
\end{lem}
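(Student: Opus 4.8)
The plan is to reduce to the case of a single simple reflection $w = s_\alpha$ and then feed the Bernstein relation of Theorem~\ref{thm:hecke relations}\,\eqref{itm:hecke relations 5} into the monodromic Beilinson--Bernstein statement of Corollary~\ref{cor:intertwining}. For the reduction I would induct on $\ell(w)$: if $w = s_\alpha w'$ with $\ell(w) = \ell(w')+1$, then $\tilde{\Delta}_w^{(\lambda)} = \tilde{\Delta}_{s_\alpha}^{(w'\lambda)} * \tilde{\Delta}_{w'}^{(\lambda)}$ by Theorem~\ref{thm:hecke relations}\,\eqref{itm:hecke relations 1}, so $\mc{I}_w^{!\mc{D}}(\mc{M}) = \mc{I}_{s_\alpha}^{!\mc{D}}(\mc{I}_{w'}^{!\mc{D}}(\mc{M}))$; since $\mc{I}_{w'}^{!\mc{D}}(\mc{M})$ is very weakly unipotent by the inductive hypothesis, it suffices to prove the statement for a single simple reflection applied to an arbitrary very weakly unipotent module (the case $w=1$ being trivial since $\tilde{\Delta}_1^{(\lambda)} = \tilde{\delta}_0^{(\lambda)}$ acts as the identity by Proposition~\ref{prop:translations}).

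So assume $w = s_\alpha$ and fix $\nu \in D^\circ(s_\alpha\lambda)$; the goal is $\mrm{R}\Gamma(\mc{B}, \mc{I}_{s_\alpha}^{!\mc{D}}(\mc{M}) \otimes \mc{O}(\nu)) = 0$. The first step is to move the twist by $\mc{O}(\nu)$ on the target into a twist on the source: applying Theorem~\ref{thm:hecke relations}\,\eqref{itm:hecke relations 4} with $w = s_\alpha$ and with $\mu$ taken to be $s_\alpha\nu$, and then convolving with $\mc{M}\otimes\mc{O}(s_\alpha\nu)$ while using Proposition~\ref{prop:translations}, one obtains
\[ \mc{I}_{s_\alpha}^{!\mc{D}}(\mc{M}) \otimes \mc{O}(\nu) \;\cong\; \tilde{\Delta}_{s_\alpha}^{(\kappa)} * \bigl(\mc{M} \otimes \mc{O}(s_\alpha\nu)\bigr), \qquad \kappa := \lambda + s_\alpha\nu . \]
The next step is purely combinatorial. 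Since $C^\circ(\lambda)$ is $W$-invariant and $C^\circ(s_\alpha\lambda) = C^\circ(\lambda)$, the hypothesis $\nu \in D^\circ(s_\alpha\lambda)$, i.e.\ $s_\alpha\kappa = s_\alpha\lambda + \nu \in C^\circ(\lambda)$, forces $\kappa = s_\alpha(s_\alpha\kappa) \in C^\circ(\lambda)$ as well. Therefore $s_\alpha\nu = \kappa - \lambda \in D^\circ(\lambda)$, and, whenever $n := \langle \kappa, \check\alpha\rangle \in \mb{Z}$, also $s_\alpha\nu - n\alpha = s_\alpha\kappa - \lambda \in D^\circ(\lambda)$. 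By very weak unipotence of $\mc{M}$ this gives $\mrm{R}\Gamma(\mc{B}, \mc{M}\otimes\mc{O}(s_\alpha\nu)) = 0$ and, when $n \in \mb{Z}$, $\mrm{R}\Gamma(\mc{B}, \mc{M}\otimes\mc{O}(s_\alpha\nu - n\alpha)) = 0$.

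Now set $\mc{N} = \mc{M}\otimes\mc{O}(s_\alpha\nu)$ and convolve the short exact sequence $0 \to \tilde{\Delta}_{s_\alpha}^{(\kappa)} \to \tilde{\nabla}_{s_\alpha}^{(\kappa)}(-1) \to \mc{K} \to 0$ of Theorem~\ref{thm:hecke relations}\,\eqref{itm:hecke relations 5} with $\mc{N}$. By Proposition~\ref{prop:translations} the term $\mc{K}*\mc{N}$ is the cone of an endomorphism of $\mc{N}\otimes\mc{O}(-n\alpha) = \mc{M}\otimes\mc{O}(s_\alpha\nu - n\alpha)$ when $n \in \mb{Z}$ (and is $0$ otherwise), so $\mrm{R}\Gamma(\mc{B}, \mc{K}*\mc{N}) = 0$ by the previous paragraph (Tate twists being invisible to $\mrm{R}\Gamma$ of the underlying $\mc{D}$-module). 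Hence $\mrm{R}\Gamma(\mc{B}, \mc{I}_{s_\alpha}^{!\mc{D}}(\mc{N})) \cong \mrm{R}\Gamma(\mc{B}, \mc{I}_{s_\alpha}^{*\mc{D}}(\mc{N}))$. On the other hand, Corollary~\ref{cor:intertwining} identifies at least one of these two complexes with $\mrm{R}\Gamma(\mc{B}, \mc{N}) = 0$, according to the sign of $\langle \kappa, \check\alpha\rangle$; since the two complexes agree, both vanish. As $\mc{I}_{s_\alpha}^{!\mc{D}}(\mc{N}) = \mc{I}_{s_\alpha}^{!\mc{D}}(\mc{M})\otimes\mc{O}(\nu)$, this is exactly the required vanishing, completing the induction.

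The main obstacle is that at a non-dominant twist $\kappa$ neither $\mc{I}_{s_\alpha}^{!\mc{D}}$ nor $\mc{I}_{s_\alpha}^{*\mc{D}}$ commutes with $\mrm{R}\Gamma$ on the nose, the discrepancy being governed precisely by the Bernstein error term $\mc{K}$; the point that makes the argument go through is that, after convolution, $\mc{K}$ is again a line-bundle translate of $\mc{M}$ by a weight lying in $D^\circ(\lambda)$, so very weak unipotence is self-propagating through the Bernstein relation. In writing this up I would take care with the $\rho$-shifts and Tate twists appearing in Theorem~\ref{thm:hecke relations}\,\eqref{itm:hecke relations 5} and with the precise normalization $\tilde{\delta}_\mu^{(\lambda)} * (-) = \mc{O}(\mu)\otimes(-)$, since these are easy to misplace although they do not affect $\mrm{R}\Gamma$ of the underlying $\mc{D}$-modules.
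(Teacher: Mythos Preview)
Your proof is correct and follows essentially the same approach as the paper: reduce to a simple reflection, use the translation identity $\mc{I}_{s_\alpha}^{!\mc{D}}(\mc{M})\otimes\mc{O}(\nu) \cong \mc{I}_{s_\alpha}^{!\mc{D}}(\mc{M}\otimes\mc{O}(s_\alpha\nu))$ at the shifted twist $\kappa = \lambda + s_\alpha\nu$, and then combine Corollary~\ref{cor:intertwining} with the Bernstein short exact sequence of Theorem~\ref{thm:hecke relations}\,\eqref{itm:hecke relations 5}, observing that the $\mc{K}$-term involves only translates of $\mc{M}$ by weights in $D^\circ(\lambda)$. The paper organizes the final step as a case split on the sign of $\langle\kappa,\check\alpha\rangle$ (invoking the exact sequence only when $\langle\kappa,\check\alpha\rangle>0$), whereas you first establish $\mrm{R}\Gamma(\mc{I}_{s_\alpha}^{!\mc{D}}(\mc{N}))\cong\mrm{R}\Gamma(\mc{I}_{s_\alpha}^{*\mc{D}}(\mc{N}))$ uniformly and then appeal to Corollary~\ref{cor:intertwining}; this is a cosmetic difference and the content is the same.
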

\begin{proof}
It suffices to treat the case where $w = s_\alpha$ is a simple reflection. For $\mu \in D^\circ(s_\alpha\lambda) = s_\alpha D^\circ(\lambda)$, we have
\[ \mc{I}_{s_\alpha}^{!\mc{D}}(\mc{M}) \otimes \mc{O}(\mu) = \mc{I}_{s_\alpha}^{!\mc{D}}(\mc{M} \otimes \mc{O}(s_\alpha\mu)).\]
If $\langle \lambda + s_\alpha \mu, \check\alpha \rangle \leq 0$ then
\[ \mrm{R}\Gamma(\mc{I}_{s_\alpha}^{!\mc{D}}(\mc{M} \otimes \mc{O}(s_\alpha\mu))) = \mrm{R}\Gamma(\mc{M} \otimes \mc{O}(s_\alpha \mu)) = 0\]
by Corollary \ref{cor:intertwining}. If $\langle \lambda + s_\alpha \mu, \check\alpha\rangle > 0$, on the other hand, then Corollary \ref{cor:intertwining} gives
\[ \mrm{R}\Gamma(\mc{I}_{s_\alpha}^{*\mc{D}}(\mc{M}\otimes \mc{O}(s_\alpha\mu))) = \mrm{R}\Gamma(\mc{M} \otimes \mc{O}(s_\alpha \mu)) = 0.\]
So by Theorem \ref{thm:hecke relations} \eqref{itm:hecke relations 5}, we have
\[  \mrm{R}\Gamma(\mc{I}_{s_\alpha}^{!\mc{D}}(\mc{M} \otimes \mc{O}(s_\alpha\mu))) = 0 \quad \text{if $\langle \lambda + s_\alpha\mu,\check\alpha \rangle \not\in\mb{Z}$}\]
and
\[ \mrm{R}\Gamma(\mc{I}_{s_\alpha}^{!\mc{D}}(\mc{M} \otimes \mc{O}(s_\alpha\mu))) =  \mrm{Cone}(\mrm{R}\Gamma(\mc{M} \otimes \mc{O}(\mu - \langle \lambda, \check\alpha \rangle \alpha)) \to \mrm{R}\Gamma(\mc{M} \otimes\mc{O}(\mu - \langle \lambda, \check\alpha \rangle \alpha)))[-1],\]
if $\langle \lambda + s_\alpha\mu,\check\alpha \rangle \in\mb{Z}$. Since $\lambda + \mu - \langle \lambda, \check\alpha \rangle \alpha = s_\alpha(\lambda + s_\alpha\mu) \in C^\circ(\lambda)$, we have $\mu - \langle \lambda, \check\alpha \rangle \alpha \in D^\circ(\lambda)$, so
\[\mrm{R}\Gamma(\mc{M} \otimes\mc{O}(\mu - \langle \lambda, \check\alpha \rangle \alpha)) = 0.\]
So $\mrm{R}\Gamma(\mc{I}_{s_\alpha}^{!\mc{D}}(\mc{M}) \otimes \mc{O}(\mu)) = 0$ and hence $\mc{I}_{s_\alpha}^{!\mc{D}}(\mc{M})$ is very weakly unipotent as claimed.
\end{proof}

The next lemma, which we prove together with Theorem \ref{thm:strictness}, shows that, for very weakly unipotent modules, the Hodge intertwining functors commute with global sections after all.

\begin{lem} \label{lem:weakly unipotent intertwining 2}
Let $\lambda \in \mf{h}^*_\mb{R}$ be arbitrary and let $\mc{M} \in \mrm{D}^b\mhm(\mc{D}_{\widetilde{\lambda}})$ be very weakly unipotent. Then for any simple root $\alpha$, we have
\begin{equation} \label{eq:weakly unipotent intertwining 1}
\mrm{R}\Gamma(\mc{M}, F_\bullet^H) \cong \begin{cases} \mrm{R}\Gamma(\mc{I}_{s_\alpha}^{*H}(\mc{M}), F_\bullet^H)\{-1\},  & \text{if $\langle \lambda, \check\alpha\rangle > 0$}, \\ \mrm{R}\Gamma(\mc{I}_{s_\alpha}^{!H}(\mc{M}), F_\bullet^H)\{1\}, & \text{if $\langle \lambda, \check\alpha\rangle < 0$},\\ \mrm{R}\Gamma(\mc{I}_{s_\alpha}^{!H}(\mc{M}), F_\bullet^H) = \mrm{R}\Gamma(\mc{I}_{s_\alpha}^{*H}(\mc{M}), F_\bullet^H), & \text{if $\langle \lambda, \check\alpha\rangle = 0$}.\end{cases}
\end{equation}
Moreover, if $w \in W$ is the shortest element such that $w\lambda$ is dominant, then
\begin{equation} \label{eq:weakly unipotent intertwining 2}
 \mrm{R}\Gamma(\mc{M}, F_\bullet^H) \cong \mrm{R}\Gamma(\mc{I}_w^{!H}(\mc{M}), F_\bullet^H)\{\ell(w)\}.
\end{equation}
\end{lem}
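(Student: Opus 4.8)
The plan is to prove \eqref{eq:weakly unipotent intertwining 1} and \eqref{eq:weakly unipotent intertwining 2} simultaneously with Theorem \ref{thm:strictness}, by induction on the dominant $W$-conjugate $\lambda_+$ of $\lambda$ with respect to the dominance order on the coset $\lambda + \mb{Z}\Phi$. This order is well-founded on the relevant (finite) part of the coset because any $\mu \in C^\circ(\lambda)$ satisfies $\mu_+ \prec \lambda_+$ strictly: the vertices of the orbit polytope $C(\lambda)$ are exactly $W\lambda$, so $\mu\notin W\lambda$ cannot be a vertex of $C(\lambda)$, while a vertex of $C(\lambda)$ lying in $C(\mu)\subseteq C(\lambda)$ would be a vertex of $C(\mu)$, forcing $\mu\in W\lambda$; hence $C(\mu)$ is properly contained in $C(\lambda)$ and $\mu_+\prec\lambda_+$. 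The base case ($\lambda$ dominant) is Theorem \ref{thm:filtered exactness} together with Beilinson--Bernstein vanishing, which together say exactly that $\mrm{R}\Gamma(\mc{M}, F_\bullet^H)$ is strict.

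For the inductive step I would first prove \eqref{eq:weakly unipotent intertwining 1}. The case $\langle\lambda,\check\alpha\rangle=0$ is immediate from Theorem \ref{thm:deformation relations}\eqref{itm:deformation relations 1} (which identifies $(\tilde{\Delta}_{s_\alpha}^{(\lambda)}, F_\bullet^H)$ and $(\tilde{\nabla}_{s_\alpha}^{(\lambda)}, F_\bullet^H)$ with the untwisted kernels) and Theorem \ref{thm:intertwining}. For $\langle\lambda,\check\alpha\rangle<0$, I would convolve the strict exact sequence of Theorem \ref{thm:deformation relations}\eqref{itm:deformation relations 2} with $(\mc{M}, F_\bullet^H)$ and take $\mrm{R}\Gamma$: the subobject yields $\mrm{R}\Gamma(\mc{I}_{s_\alpha}^*(\mc{M}, F_\bullet^H))\{-1\}\cong\mrm{R}\Gamma(\mc{M}, F_\bullet^H)\{-1\}$ by Theorem \ref{thm:intertwining}, the middle term is $(\mc{I}_{s_\alpha}^{!H}\mc{M}, F_\bullet^H)=\mc{I}_{s_\alpha}^{!(\lambda)}(\mc{M}, F_\bullet^H)$, so it remains to show that the cokernel $\mc{K}*(\mc{M}, F_\bullet^H)$ has vanishing derived sections. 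By Proposition \ref{prop:translations}, $\mc{K}*(\mc{M}, F_\bullet^H)$ is an iterated extension of two-term complexes built from $\mc{M}\otimes\mc{O}(n\alpha)$ for $0<n<-\langle\lambda,\check\alpha\rangle$. Now each $\lambda+n\alpha$ lies in the relative interior of the chord of $C(\lambda)$ joining $\lambda$ and $s_\alpha\lambda$, hence in $C^\circ(\lambda)$; therefore $\mrm{R}\Gamma(\mc{B}, \mc{M}\otimes\mc{O}(n\alpha))=0$ by very weak unipotence, and moreover $\mc{M}\otimes\mc{O}(n\alpha)$ is again very weakly unipotent, now at $\lambda+n\alpha$, since $C^\circ(\lambda+n\alpha)\subseteq C^\circ(\lambda)$ by the same vertex argument applied to $C(\lambda+n\alpha)\subseteq C(\lambda)$. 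As $(\lambda+n\alpha)_+\prec\lambda_+$, the inductive hypothesis (Theorem \ref{thm:strictness} at the lower level) applies to $\mc{M}\otimes\mc{O}(n\alpha)$ and, combined with the cohomology vanishing, promotes it to the filtered statement $\mrm{R}\Gamma(\mc{M}\otimes\mc{O}(n\alpha), F_\bullet^H)=0$; hence $\mrm{R}\Gamma(\mc{K}*(\mc{M}, F_\bullet^H))=0$ and \eqref{eq:weakly unipotent intertwining 1} follows when $\langle\lambda,\check\alpha\rangle<0$. The case $\langle\lambda,\check\alpha\rangle>0$ is entirely parallel, using Theorem \ref{thm:deformation relations}\eqref{itm:deformation relations 3} (which realizes $(\tilde{\nabla}_{s_\alpha}^{(\lambda)}, F_\bullet^H)\{-1\}$ as a subobject of $(\tilde{\Delta}_{s_\alpha}^{(0)}, F_\bullet^H)$) and the points $\lambda-n\alpha\in C^\circ(\lambda)$ for $0<n<\langle\lambda,\check\alpha\rangle$; alternatively it reduces to the previous case applied to $\mc{I}_{s_\alpha}^{*H}\mc{M}$ at $s_\alpha\lambda$ via Theorem \ref{thm:hecke relations}\eqref{itm:hecke relations 2}.

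Next I would deduce \eqref{eq:weakly unipotent intertwining 2} by iterating \eqref{eq:weakly unipotent intertwining 1}: fixing a reduced word $w=s_{\alpha_k}\cdots s_{\alpha_1}$ and putting $w_j=s_{\alpha_j}\cdots s_{\alpha_1}$, one has $\langle w_{j-1}\lambda,\check\alpha_j\rangle<0$, the module $\mc{I}_{w_{j-1}}^{!H}\mc{M}$ is very weakly unipotent at $w_{j-1}\lambda$ by Lemma \ref{lem:weakly unipotent intertwining 1}, and $\mc{I}_{s_{\alpha_j}}^{!H}\mc{I}_{w_{j-1}}^{!H}=\mc{I}_{w_j}^{!H}$ by Theorem \ref{thm:hecke relations}\eqref{itm:hecke relations 1}; note that \eqref{eq:weakly unipotent intertwining 1} applied at $w_{j-1}\lambda$ (whose dominant conjugate is again $\lambda_+$) invokes the inductive hypothesis only at twists strictly below $\lambda_+$, so there is no circularity. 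Composing the resulting isomorphisms gives \eqref{eq:weakly unipotent intertwining 2}. Finally, since $w\lambda$ is dominant, Theorem \ref{thm:filtered exactness} (applied to each cohomology sheaf of $\mc{I}_w^{!H}\mc{M}$) and Beilinson--Bernstein vanishing show $\mrm{R}\Gamma(\mc{I}_w^{!H}\mc{M}, F_\bullet^H)$ is strict; by \eqref{eq:weakly unipotent intertwining 2}, and because a filtration shift does not affect strictness, $\mrm{R}\Gamma(\mc{M}, F_\bullet^H)$ is strict as well, which proves Theorem \ref{thm:strictness} at level $\lambda_+$ and closes the induction.

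The main obstacle I anticipate is the control of the error term $\mc{K}$: what is needed is the \emph{filtered} vanishing of $\mrm{R}\Gamma(\mc{M}\otimes\mc{O}(n\alpha), F_\bullet^H)$ rather than just the cohomology vanishing of the underlying $\mc{D}$-module, which is precisely why Lemma \ref{lem:weakly unipotent intertwining 2} must be interwoven with Theorem \ref{thm:strictness}; the delicate part is to set up a well-founded induction in which each such error term sits genuinely lower, i.e.\ to verify the convex-geometric facts that $C(\lambda+n\alpha)$ is properly contained in $C(\lambda)$, that $C^\circ(\lambda+n\alpha)\subseteq C^\circ(\lambda)$, and that very weak unipotence descends to $\mc{M}\otimes\mc{O}(n\alpha)$ at the lower level. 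The rest is formal manipulation of the convolution relations in Theorems \ref{thm:hecke relations} and \ref{thm:deformation relations} and the global-sections intertwining property of Theorem \ref{thm:intertwining}.
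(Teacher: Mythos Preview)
Your proof is correct and follows essentially the same strategy as the paper: both prove the lemma simultaneously with Theorem~\ref{thm:strictness}, use Theorem~\ref{thm:deformation relations} to produce the relevant exact triangles, kill the error terms $\mc{M}\otimes\mc{O}(n\alpha)$ via very weak unipotence together with the inductive strictness hypothesis, and then iterate along a reduced word for $w$ using Lemma~\ref{lem:weakly unipotent intertwining 1} and Theorem~\ref{thm:hecke relations}\eqref{itm:hecke relations 1}. The only substantive difference is the induction parameter: the paper inducts on the cardinality $|D^\circ(\lambda)|$, verifying directly that $D^\circ(\lambda+n\alpha)\subsetneq\{\mu-n\alpha:\mu\in D^\circ(\lambda)\}$, whereas you induct on $\lambda_+$ in the dominance order and argue via the convex-geometric fact that $C(\lambda+n\alpha)\subsetneq C(\lambda)$; both orderings are well-founded and strictly decrease at the error terms, so either works.
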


\begin{proof}[Proof of Lemma \ref{lem:weakly unipotent intertwining 2} and Theorem \ref{thm:strictness}]
We prove both statements simultaneously by induction on the size of the finite set $D^\circ(\lambda)$.

We first prove the assertion \eqref{eq:weakly unipotent intertwining 1}. The case $\langle \lambda, \check\alpha\rangle = 0$ is clear from Theorems \ref{thm:intertwining} and \ref{thm:deformation relations} \eqref{itm:deformation relations 1}. We give the proof for $\langle \lambda, \check\alpha \rangle < 0$; the case $\langle \lambda, \check\alpha \rangle > 0$ is similar. Convolving the exact sequence of Theorem \ref{thm:deformation relations} \eqref{itm:deformation relations 2} with $(\mc{M}, F_\bullet^H)$, we get a distinguished triangle
\[ \mc{I}_{s_\alpha}^*(\mc{M}, F_\bullet^H)\{-1\} \to (\mc{I}_{s_\alpha}^{!H}(\mc{M}), F_\bullet^H) \to (\mc{M}', F_\bullet) \xrightarrow{+1},\]
where $(\mc{M}', F_\bullet)$ lies in the full triangulated subcategory generated by filtration shifts of $(\mc{M} \otimes \mc{O}(n\alpha), F_\bullet^H)$ for $0 < n < -\langle \lambda, \check\alpha\rangle$. Now, for each such $n$, clearly $n\alpha \in D^\circ(\lambda)$ so $\mrm{R}\Gamma(\mc{M} \otimes \mc{O}(n\alpha)) = 0$ since $\mc{M}$ is very weakly unipotent. Moreover,
\[ D^\circ(\lambda + n\alpha) \subsetneq \{\mu - n\alpha \mid \mu \in D^\circ(\lambda)\},\]
so $\mc{M} \otimes \mc{O}(n\alpha)$ is very weakly unipotent and, by induction, Theorem \ref{thm:strictness} holds for $\mc{M} \otimes \mc{O}(n\alpha)$. In particular, the complex $\mrm{R}\Gamma(\mc{M} \otimes \mc{O}(n\alpha), F_\bullet^H)$ is strict, and hence
\[ \mrm{R}\Gamma(\mc{M} \otimes \mc{O}(n\alpha), F_\bullet^H) = 0.\]
So $\mrm{R}\Gamma(\mc{M}', F_\bullet) = 0$ and hence
\[ \mrm{R}\Gamma(\mc{M}, F_\bullet^H) \cong \mrm{R}\Gamma(\mc{I}_{s_\alpha}^*(\mc{M}, F_\bullet^H))\cong \mrm{R}\Gamma(\mc{I}_{s_\alpha}^{!H}(\mc{M}), F_\bullet^H)\{1\} \]
by Theorem \ref{thm:intertwining}.

We next prove \eqref{eq:weakly unipotent intertwining 2}. Letting $w$ be the shortest element such that $w\lambda$ is dominant, we can write $w = w's_\alpha$ where $\alpha$ is a simple root such that $\langle \lambda, \check\alpha \rangle < 0$ and $\ell(w) = \ell(w') + 1$. Then, by \eqref{eq:weakly unipotent intertwining 1},
\[ \mrm{R}\Gamma(\mc{M}, F_\bullet^H) \cong \mrm{R}\Gamma(\mc{I}_{s_\alpha}^{!H}(\mc{M}), F_\bullet^H) \{1\}.\]
By Lemma \ref{lem:weakly unipotent intertwining 1}, $\mc{I}_{s_\alpha}^{!H}(\mc{M}) = \mc{I}_{s_\alpha}^{!\mc{D}}(\mc{M})$ is also very weakly unipotent. Since $|D^\circ(s_\alpha\lambda)| = |D^\circ(\lambda)|$, we have by induction on $\ell(w)$ that
\[ \mrm{R}\Gamma(\mc{I}_{s_\alpha}^{!H}(\mc{M}), F_\bullet^H) \{1\} = \mrm{R}\Gamma(\mc{I}_{w'}^{!H}\mc{I}_{s_\alpha}^{!H}(\mc{M}), F_\bullet^H)\{\ell(w)\} = \mrm{R}\Gamma(\mc{I}_w^{!H}(\mc{M}), F_\bullet^H)\{\ell(w)\} \]
as claimed.

Finally, we prove Theorem \ref{thm:strictness}. Let $w \in W$ be the shortest element such that $w \lambda$ is dominant. Since $\mc{I}_w^{!H}(\mc{M}) \in \mrm{D}^b\mhm(\mc{D}_{\widetilde{w\lambda}})$, by Theorem \ref{thm:filtered exactness}, the associated spectral sequence for $\mrm{R}\Gamma(\mc{I}_w^{!H}(\mc{M}), F_\bullet^H)$ coincides with the global sections of the spectral sequence for $(\mc{I}_w^{!H}(\mc{M}), F_\bullet^H)$. But since $(\mc{I}_w^{!H}(\mc{M}), F_\bullet^H)$ is a complex of mixed Hodge modules, it is automatically strict, and hence so is $\mrm{R}\Gamma(\mc{I}_w^{!H}(\mc{M}), F_\bullet^H)$. Since $\mc{M}$ is very weakly unipotent, by Lemma \ref{lem:weakly unipotent intertwining 2}, we deduce that
\[ \mrm{R}\Gamma(\mc{M}, F_\bullet^H) = \mrm{R}\Gamma(\mc{I}_w^{!H}(\mc{M}), F_\bullet^H)\{\ell(w)\} \]
is strict as claimed.
\end{proof}

\subsection{Ideals in the universal enveloping algebra} \label{subsec:ideals}

In this subsection, we take a brief digression to recall some basic facts about two-sided ideals in $U(\mf{g})$.

Recall that $\mf{Z}(\mf{g}) \cong S(\mf{h})^W$ denotes the center of $U(\mf{g})$ and that for $\lambda \in \mf{h}^*$ we write $\chi_\lambda$ for the corresponding character of $\mf{Z}(\mf{g})$.

\begin{defn}
We say that an ideal $I \subset U(\mf{g})$ has \emph{infinitesimal character} (resp., \emph{generalized infinitesimal character}) $\chi_\lambda$ if $I \cap \mf{Z}(\mf{g})$ contains the maximal ideal $\ker \chi_\lambda$ (resp., some power of $\ker \chi_\lambda$).
\end{defn}

Clearly if $M$ is a finitely generated $U(\mf{g})$-module, then $M$ has (generalized) infinitesimal character $\chi_\lambda$ if and only if $\mrm{Ann}_{U(\mf{g})}(M)$ does.

Next, recall that, by the Poincar\'{e}-Birkhoff-Witt theorem, there is a natural isomorphism of $G$-equivariant graded algebras $\gr^F U(\fg) \xrightarrow{\sim} S(\fg)$, where $F_\bullet U(\mf{g})$ is the degree filtration. If $I \subset U(\fg)$ is a two-sided ideal, then $\gr(I) := \gr^F(I)$ can be identified, by means of this isomorphism, with a $G$-invariant homogeneous ideal in $S(\fg)$.

\begin{defn}
The \emph{associated variety of $I$} is the closed $G \times \mb{C}^\times$-invariant subset $V(I) = V(\Gr^F I) \subset \fg^* = \Spec(S(\fg))$. For an irreducible component $Z$ of $V(I)$, we write $m_Z(I)$ for the multiplicity of the $S(\fg)$-module $S(\fg)/\gr(I)$ along $Z$.
\end{defn}

If $I$ has a generalized infinitesimal character, then the associated variety $V(I)$ is always contained in the nilpotent cone $\mc{N} \subset \mf{g}^*$.

Among the two-sided ideals in $U(\mf{g})$, the most important in representation theory are the primitive ideals.

\begin{defn}
An ideal $I \subset U(\mf{g})$ is \emph{primitive} if it is of the form $I = \mrm{Ann}(M)$ for some irreducible $U(\mf{g})$-module $M$.
\end{defn}

By Schur's lemma, every primitive ideal $I$ has an infinitesimal character. In particular, its associated variety is a closed $G$-invariant subset of the nilpotent cone. By \cite{Joseph1985}, this variety is irreducible, i.e., $V(I) = \overline{\mb{O}}$ for some nilpotent $G$-orbit $\mb{O} \subset \mc{N} \subset \mf{g}^*$. 

Under inclusions of ideals, associated varieties behave as follows.

\begin{prop} \label{prop:primitive assoc var}
If $I \subset J \subset U(\mf{g})$ are two-sided ideals then $V(J) \subset V(I)$. If, moreover, $I$ and $J$ are primitive and $I \subsetneq J$, then $V(J) \subsetneq V(I)$.
\end{prop}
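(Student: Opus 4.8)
The plan is the following. The first assertion is immediate: if $I\subseteq J$ then $\gr(I)\subseteq\gr(J)$ as $G$-invariant homogeneous ideals of $S(\mf{g})$, hence $V(J)=V(\gr J)\subseteq V(\gr I)=V(I)$.

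For the second assertion I would reduce everything to a statement about Gelfand--Kirillov dimensions. First, since $I$ is primitive, $I\cap\mf{Z}(\mf{g})=\ker\chi_\lambda$ for some $\lambda\in\mf{h}^*$, so $\ker\chi_\lambda\subseteq I\subseteq J$ and $I$, $J$ have the same infinitesimal character. By Joseph's irreducibility theorem (recalled just above) both $V(I)$ and $V(J)$ are closures of single nilpotent orbits, and by the first part $V(J)\subseteq V(I)$; since a proper inclusion of irreducible closed sets is detected by dimension, it suffices to prove $\dim V(J)<\dim V(I)$. Using the PBW isomorphism $\gr(U(\mf{g})/I)\cong S(\mf{g})/\gr I$ one has $\dim V(I)=\operatorname{GKdim}(U(\mf{g})/I)$ and likewise for $J$, so the goal becomes the strict inequality $\operatorname{GKdim}(U(\mf{g})/J)<\operatorname{GKdim}(U(\mf{g})/I)$.

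To establish this I would argue as follows. Set $R=U(\mf{g})/I$, a prime Noetherian ring, and $\bar J=J/I$, a nonzero two-sided ideal of $R$. By Goldie's theorem $R$ admits a simple Artinian classical ring of quotients, and a nonzero two-sided ideal of a prime Noetherian ring always contains a regular element $c$. Then $Rc\subseteq\bar J$ is a left ideal, and right multiplication by $c$ is an isomorphism $R\xrightarrow{\sim}Rc$ of left $R$-modules (since $c$ is left regular). Considering the short exact sequence $0\to Rc\to R\to R/Rc\to 0$ of finitely generated left $U(\mf{g})$-modules, whose left-hand term is abstractly isomorphic to $R$, and comparing Hilbert polynomials for good filtrations together with additivity of Bernstein degree in top dimension, one sees that $R/Rc$ cannot have Gelfand--Kirillov dimension equal to $\operatorname{GKdim}R$, as its multiplicity would then be forced to vanish while being that of a module of full dimension. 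Hence $\operatorname{GKdim}(U(\mf{g})/J)\le\operatorname{GKdim}(R/Rc)<\operatorname{GKdim}(R)=\operatorname{GKdim}(U(\mf{g})/I)$, which yields $\dim V(J)<\dim V(I)$ and therefore $V(J)\subsetneq V(I)$.

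The main obstacle is this final step: the strict drop of Gelfand--Kirillov dimension on passing to a proper quotient of the prime ring $R$. The delicate point is that $\gr R=S(\mf{g})/\gr I$ need not be reduced or a domain, so one cannot simply pass to principal symbols of the regular element $c$; the argument has to be run with the abstract finitely generated modules and their multiplicities, with Goldie's theorem supplying the copy $Rc\cong R$ sitting inside $\bar J$. This is in substance the theorem of Borho and Kraft on the monotonicity of Gelfand--Kirillov dimension along inclusions of primitive ideals, which one could equally well invoke by citation rather than reproving.
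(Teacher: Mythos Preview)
Your proposal is correct and aligns with the paper's approach: the paper simply cites \cite[Corollary 3.8]{BorhoKraft} for the second assertion, and what you have sketched is precisely the Borho--Kraft argument via Goldie's theorem and additivity of multiplicity, as you yourself note in your final paragraph. One small remark: the appeal to Joseph's irreducibility is unnecessary, since the strict inequality $\dim V(J)<\dim V(I)$ already forces $V(J)\subsetneq V(I)$ regardless of irreducibility.
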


\begin{proof}
The first statement is obvious, while the second is \cite[Corollary 3.8]{BorhoKraft}.
\end{proof}

We will be especially interested in \emph{maximal} ideals $I \subset U(\mf{g})$. These are automatically primitive, and have the following classification.

\begin{prop}[\cite{Duflo1977}]
There is a bijection
\[ \left\{\begin{matrix} \text{Maximal ideals} \\ \text{in $U(\mf{g})$} \end{matrix}\right\} \xrightarrow{\sim} \mf{h}^*/W\]
sending a maximal ideal $I \subset U(\mf{g})$ to its infinitesimal character.
\end{prop}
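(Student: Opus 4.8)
The plan is to recall Duflo's argument \cite{Duflo1977}. First I would check that the stated map is well-defined: a maximal two-sided ideal $I \subset U(\mf{g})$ is automatically primitive --- the simple ring $U(\mf{g})/I$ acts faithfully on any of its simple left modules $L$ (the annihilator of $L$ in $U(\mf{g})/I$ being a proper two-sided ideal, hence zero), so $I = \mrm{Ann}_{U(\mf{g})}(L)$ --- and primitive ideals have infinitesimal characters by Schur's lemma, as already noted. The inverse map will send $W\cdot\lambda$, with $\lambda$ chosen dominant in its orbit, to $J_\lambda := \mrm{Ann}_{U(\mf{g})}(L(\lambda - \rho))$, where $L(\lambda - \rho)$ is the simple quotient of the Verma module of highest weight $\lambda - \rho$; equivalently, by Beilinson--Bernstein localization, $L(\lambda - \rho) = \Gamma(\mc{B}, \mc{L})$ for $\mc{L}$ the minimal extension to $\mc{B}$ of the irreducible $B$-equivariant $\mc{D}_\lambda$-module on the open $B$-orbit. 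Two things then need to be shown: that $J_\lambda$ is a maximal ideal, and that it is the only maximal ideal with infinitesimal character $\chi_\lambda$.

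The substantive input I would invoke is Duflo's classification of primitive ideals: for each $\lambda$, the primitive ideals with infinitesimal character $\chi_\lambda$ are exactly the (finitely many) annihilators $\mrm{Ann}(L(w\lambda - \rho))$, $w \in W$, where $L(\mu)$ denotes the simple quotient of the Verma module of highest weight $\mu$; and this finite poset, ordered by inclusion, has $J_\lambda$ (the case $w = e$) as its unique maximal element. This last point uses Joseph's work; one may also note Proposition \ref{prop:primitive assoc var}, which shows that a strict inclusion of primitive ideals strictly enlarges the associated variety, so the poset is layered by dimension of associated variety with $J_\lambda$ sitting over the smallest stratum. Granting the classification, uniqueness is immediate: any maximal two-sided ideal $I$ with infinitesimal character $\chi_\lambda$ is primitive of that infinitesimal character, and, being maximal in $U(\mf{g})$, is not properly contained in any proper two-sided ideal, hence is a $\subseteq$-maximal element of the poset; so $I = J_\lambda$. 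This also shows the original map is injective, with the asserted inverse on its image.

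Maximality of $J_\lambda$ as a two-sided ideal --- as opposed to merely maximal among primitive ideals of its infinitesimal character --- is then formal. Given $J_\lambda \subseteq I \subsetneq U(\mf{g})$ with $I$ two-sided, I would enlarge $I$ to a maximal two-sided ideal $I'$ by Zorn's lemma. Then $I'$ is primitive, and $I' \cap \mf{Z}(\mf{g}) \supseteq J_\lambda \cap \mf{Z}(\mf{g}) \supseteq \ker\chi_\lambda$ (the latter because $J_\lambda$ has infinitesimal character $\chi_\lambda$); since $\ker\chi_\lambda$ is a maximal ideal of $\mf{Z}(\mf{g})$ and $I' \cap \mf{Z}(\mf{g})$ is proper, equality holds, so $I'$ has infinitesimal character $\chi_\lambda$. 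By the previous step $I' = J_\lambda$, whence $I = J_\lambda$. Surjectivity of the original map is then clear, since $J_\lambda$ has infinitesimal character $\chi_\lambda$ for every $\lambda$.

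The single hard ingredient is the classical one I am quoting: Duflo's identification of primitive ideals with annihilators of highest weight modules (together with Joseph's determination of the top of the resulting poset). This cannot be extracted from the $\mc{D}$-module machinery used elsewhere in the paper --- Beilinson--Bernstein localization directly controls only modules carrying extra structure, such as Harish-Chandra or category-$\mc{O}$ modules, whereas an arbitrary simple $U(\mf{g})$-module has no such structure a priori, and it is precisely Duflo's reduction to the highest weight case that bridges this gap. Everything else above --- well-definedness of the map, the reduction of maximality to the classification, injectivity, and surjectivity --- is bookkeeping.
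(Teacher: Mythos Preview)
The paper does not supply a proof of this proposition: it is stated with a bare citation to \cite{Duflo1977} and no argument, so there is nothing in the paper to compare your proposal against. Your sketch is a reasonable outline of the classical argument --- maximal implies primitive, Duflo's theorem that every primitive ideal is the annihilator of a simple highest weight module, and the identification of the unique maximal element in the finite poset of primitive ideals with a fixed infinitesimal character --- and the bookkeeping reduction from ``maximal among primitive ideals'' to ``maximal among all two-sided ideals'' via Zorn is correct. One minor attribution point: the uniqueness of the maximal primitive ideal with a given infinitesimal character is already in Duflo's original paper, not something one needs Joseph for; Joseph's contributions concern the finer structure of the poset (e.g., minimal primitive ideals and Goldie ranks).
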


Recall that if $M$ is a finitely-generated $U(\fg)$-module equipped with a good filtration $F_{\bullet}M$, the \emph{associated variety of $M$} is the closed $\CC^{\times}$-invariant subset
\[ \mrm{AV}(M) = \operatorname{Supp} \Gr^F M \subset \mf{g}^*,\]
We note that $\AV(M)$ is \emph{independent} of the choice of good filtration. Of course, there is always an inclusion %
$$\AV(M) \subset V(\Ann(M)).$$
In certain special cases (e.g., when $M$ is a Harish-Chandra module) much more can be said about the relationship between $\AV(M)$ and $V(\Ann(M))$, see Lemma \ref{lem:support} below.

\subsection{Very weakly unipotent ideals and $U(\mf{g})$-modules} \label{subsec:D vs Ug}

Theorem \ref{thm:strictness} highlights the importance of the very weakly unipotent property for $\tilde{\mc{D}}$-modules. In this subsection, we examine related notions for two-sided ideals in $U(\mf{g})$.

\begin{defn}\label{def:weaklyunipotent}
Let $M$ be a $U(\mf{g})$-module with generalized infinitesimal character $\chi_\lambda$ for $\lambda \in \mf{h}^*_\mb{R}$. We say that $M$ is \emph{very weakly unipotent} if, for every finite dimensional (algebraic) representation $F$ of the complex adjoint group $G^{\mathit{ad}}$, we have
\[ M \otimes F = \bigoplus_{\gamma \in \mf{h}^*/W} (M \otimes F)_\gamma,\]
where $(M \otimes F)_\gamma$ has generalized infinitesimal character $\gamma$ and
\[ \text{$\gamma \in C^\circ(\lambda)$ implies $(M \otimes F)_\gamma = 0$}.\]
\end{defn}

\begin{rmk}
Definition \ref{def:weaklyunipotent} generalizes the notion of a \emph{weakly unipotent} module, see \cite[Definition 8.16]{Vogan1984}. That is, all weakly unipotent modules are very weakly unipotent. 
\end{rmk}

\begin{prop} \label{prop:weakly unipotent ideal}
Fix $\lambda \in \mf{h}^*_\mb{R}$ and let $I \subset U(\mf{g})$ be a two-sided ideal with generalized infinitesimal character $\chi_\lambda$. The following are equivalent.
\begin{enumerate}
\item \label{itm:weakly unipotent ideal 1} Every $U(\mf{g})$-module annihilated by $I$ is very weakly unipotent.
\item \label{itm:weakly unipotent ideal 2} There exists a very weakly unipotent $U(\mf{g})$-module $M$ with $\mrm{Ann}_{U(\mf{g})}(M) = I$.
\item \label{itm:weakly unipotent ideal 3} If $\gamma \in C^\circ(\lambda)$  and $F$ is a finite dimensional algebraic representation of $G^{\mathit{ad}}$ then $\mrm{Ann}_{\mf{Z}(\mf{g})}(U(\mf{g})/I \otimes F) \not\subset \ker \chi_\gamma$.
\end{enumerate}
\end{prop}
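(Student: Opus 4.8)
The plan is to prove the cycle of implications $\eqref{itm:weakly unipotent ideal 3} \Rightarrow \eqref{itm:weakly unipotent ideal 1} \Rightarrow \eqref{itm:weakly unipotent ideal 2} \Rightarrow \eqref{itm:weakly unipotent ideal 3}$, where the last step is the only one requiring a genuine idea. The implication $\eqref{itm:weakly unipotent ideal 1} \Rightarrow \eqref{itm:weakly unipotent ideal 2}$ is nearly trivial: take $M = U(\mf{g})/I$ itself, which is a nonzero module annihilated by $I$ with $\mrm{Ann}_{U(\mf{g})}(U(\mf{g})/I) = I$ since $I$ is two-sided; by $\eqref{itm:weakly unipotent ideal 1}$ it is very weakly unipotent, so $\eqref{itm:weakly unipotent ideal 2}$ holds.

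For $\eqref{itm:weakly unipotent ideal 3} \Rightarrow \eqref{itm:weakly unipotent ideal 1}$, let $M$ be any $U(\mf{g})$-module with $IM = 0$ and let $F$ be a finite dimensional algebraic $G^{\mathit{ad}}$-representation. The key structural fact is that tensoring with $F$ interacts well with the center: $M \otimes F$ is a $U(\mf{g})$-module on which $\mf{Z}(\mf{g})$ acts through the finite-dimensional commutative algebra obtained from the image of $\mf{Z}(\mf{g})$ in $\mrm{End}(U(\mf{g})/I \otimes F)$ (using that $M \otimes F$ is a quotient of $(U(\mf{g})/I \otimes F) \otimes_{\mathbb{C}} M'$-type constructions, or more directly that the generalized infinitesimal characters occurring in $M \otimes F$ are among those occurring in $U(\mf{g})/I \otimes F$ by a standard translation-functor argument, cf.\ the decomposition of $U(\mf{g})/I \otimes F$ under the finitely many generalized infinitesimal characters in its support). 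Concretely: since $M$ has generalized infinitesimal character $\chi_\lambda$ and $F$ is finite-dimensional, the possible generalized infinitesimal characters of $M \otimes F$ form a subset of $\{\chi_{\lambda + \nu} : \nu \in \mathrm{wt}(F)\}/W$, all lying in a single $W$-orbit-bounded region; the decomposition $M \otimes F = \bigoplus_\gamma (M \otimes F)_\gamma$ exists because $\mf{Z}(\mf{g})$ acts locally finitely with these finitely many generalized eigenvalues. It remains to show $(M \otimes F)_\gamma = 0$ whenever $\gamma \in C^\circ(\lambda)$. But $(M \otimes F)_\gamma$ is a quotient/submodule construction whose annihilator in $\mf{Z}(\mf{g})$ contains a power of $\ker \chi_\gamma$; on the other hand $(M \otimes F)_\gamma$ is a module over $U(\mf{g})/I \otimes F$ (acting via the natural map $U(\mf{g}) \otimes F \to \mrm{End}$), and its $\mf{Z}(\mf{g})$-action factors through the quotient of $\mf{Z}(\mf{g})$ by $\mrm{Ann}_{\mf{Z}(\mf{g})}(U(\mf{g})/I \otimes F)$. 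Hypothesis $\eqref{itm:weakly unipotent ideal 3}$ says this annihilator is \emph{not} contained in $\ker \chi_\gamma$, so the $\chi_\gamma$-generalized eigenspace of $U(\mf{g})/I \otimes F$ is zero, forcing $(M \otimes F)_\gamma = 0$. Hence $M$ is very weakly unipotent.

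The main obstacle is $\eqref{itm:weakly unipotent ideal 2} \Rightarrow \eqref{itm:weakly unipotent ideal 3}$, because here one must pass from the vanishing of certain blocks of $M \otimes F$ for a \emph{single} very weakly unipotent $M$ with $\mrm{Ann}(M) = I$ back to a statement about $U(\mf{g})/I$ itself. The idea is the following faithfulness argument. Fix $\gamma \in C^\circ(\lambda)$ and $F$; we must show the $\chi_\gamma$-generalized eigenspace $(U(\mf{g})/I \otimes F)_\gamma$ is zero (this is equivalent to $\mrm{Ann}_{\mf{Z}(\mf{g})}(U(\mf{g})/I \otimes F) \not\subset \ker\chi_\gamma$). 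Since $I = \mrm{Ann}_{U(\mf{g})}(M)$, the module $U(\mf{g})/I$ embeds $\mf{g}$-equivariantly (even as a bimodule) into $\mrm{End}_{\mathbb{C}}(M)$, or more usefully: $U(\mf{g})/I$ is a subquotient of a (possibly infinite) direct sum of copies of $M$ twisted appropriately — concretely, one uses that for $m \in M$ nonzero the map $U(\mf{g})/I \to M$, $\bar u \mapsto u m$ is injective on a suitable cyclic piece, and that $U(\mf{g})/I$ is generated as a left module by such pieces. Tensoring with $F$ is exact and commutes with the block decomposition by generalized infinitesimal character, so any nonzero $\chi_\gamma$-block of $U(\mf{g})/I \otimes F$ would produce, via such an embedding, a nonzero $\chi_\gamma$-block of $M \otimes F$ (tensoring by $F$ and restricting the relevant block is exact and the embedding $U(\mf{g})/I \hookrightarrow \bigoplus M$ stays injective after $\otimes F$ and after projecting to the $\gamma$-block, since projection to a direct summand of a central decomposition is exact). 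This contradicts very weak unipotence of $M$. Hence $(U(\mf{g})/I \otimes F)_\gamma = 0$, giving $\eqref{itm:weakly unipotent ideal 3}$. I expect the delicate point to be making precise the sense in which $U(\mf{g})/I$ is ``built from copies of $M$'': the cleanest route is to observe that $U(\mf{g})/I \hookrightarrow M^{\oplus \dim M}$ is false in general, but $U(\mf{g})/I$ \emph{does} embed into a product of copies of $M$ (as $I = \bigcap_{m} \mrm{Ann}(m)$), and then use that the functor $N \mapsto (N \otimes F)_\gamma$ commutes with the relevant limits/colimits up to the finiteness that holds because $F$ is finite-dimensional and $\mf{Z}(\mf{g})$ acts with finitely many generalized eigenvalues on everything in sight; alternatively, replace the product embedding by the observation that it suffices to test on \emph{finitely generated} submodules of $U(\mf{g})/I$, each of which maps injectively to a finite power of $M$ after a Noetherianity argument.
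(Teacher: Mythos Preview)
Your proof is correct and follows the same overall cycle as the paper. The implications $\eqref{itm:weakly unipotent ideal 1}\Rightarrow\eqref{itm:weakly unipotent ideal 2}$ and $\eqref{itm:weakly unipotent ideal 3}\Rightarrow\eqref{itm:weakly unipotent ideal 1}$ match the paper's arguments essentially verbatim: the latter boils down to the observation that the $\mf{Z}(\mf{g})$-action on $M\otimes F$ factors through $U(\mf{g})/I\otimes F$ (via $M\otimes F\cong (U(\mf{g})/I\otimes F)\otimes_{U(\mf{g})/I}M$), so any $z\in\mrm{Ann}_{\mf{Z}(\mf{g})}(U(\mf{g})/I\otimes F)\setminus\ker\chi_\gamma$ kills $(M\otimes F)_\gamma$ while acting with nonzero generalized eigenvalue, forcing it to vanish.

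For $\eqref{itm:weakly unipotent ideal 2}\Rightarrow\eqref{itm:weakly unipotent ideal 3}$ your embedding of $U(\mf{g})/I$ into a product of copies of $M$ works, but the paper's route is considerably cleaner and avoids all the limit issues you flag as delicate. The paper simply observes that when $\mrm{Ann}(M)=I$ one has an \emph{equality}
\[
\mrm{Ann}_{\mf{Z}(\mf{g})}(U(\mf{g})/I\otimes F)=\mrm{Ann}_{\mf{Z}(\mf{g})}(M\otimes F),
\]
the containment $\subseteq$ being the factoring above and $\supseteq$ following because the evaluation maps $u\otimes f\mapsto (um)\otimes f$ for $m\in M$ are $U(\mf{g})$-equivariant and jointly injective on $U(\mf{g})/I\otimes F$ (finite-dimensionality of $F$ makes this immediate). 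Once this equality is in hand, $(M\otimes F)_\gamma=0$ is literally equivalent to $\mrm{Ann}_{\mf{Z}(\mf{g})}(U(\mf{g})/I\otimes F)\not\subset\ker\chi_\gamma$, and $\eqref{itm:weakly unipotent ideal 2}\Rightarrow\eqref{itm:weakly unipotent ideal 3}$ is a one-liner. Your product-embedding argument is really a disguised proof of the $\supseteq$ inclusion, so the content is the same; the paper's formulation just packages it more efficiently.
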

\begin{proof}
Clearly \eqref{itm:weakly unipotent ideal 1} implies \eqref{itm:weakly unipotent ideal 2}. To see that \eqref{itm:weakly unipotent ideal 2} implies \eqref{itm:weakly unipotent ideal 3}, note that if $\mrm{Ann}(M) = I$ then
\[ \mrm{Ann}_{\mf{Z}(\mf{g})}(U(\mf{g})/I \otimes F) = \mrm{Ann}_{\mf{Z}(\mf{g})}(M \otimes F)\]
is a finite codimension ideal in $\mf{Z}(\mf{g})$. So
\[ (M \otimes F)_\gamma \neq 0 \Leftrightarrow \mrm{Ann}_{\mf{Z}(\mf{g})}(U(\mf{g})/I \otimes F) \subset \ker \chi_\gamma.\]
Taking $M$ to be very weakly unipotent as in \eqref{itm:weakly unipotent ideal 2}, we deduce \eqref{itm:weakly unipotent ideal 3}. Finally, to prove \eqref{itm:weakly unipotent ideal 3} implies \eqref{itm:weakly unipotent ideal 1}, clearly if $I \cdot M = 0$ then any vector in $(M \otimes F)_\gamma$ is annihilated by
\[ \mrm{Ann}_{\mf{Z}(\mf{g})}(U(\mf{g})/I \otimes F) + (\ker \chi_\gamma)^n \]
for some $n > 0$. By \eqref{itm:weakly unipotent ideal 3}, $\gamma \in C^\circ(\lambda)$ implies
\[ \mrm{Ann}_{\mf{Z}(\mf{g})}(U(\mf{g})/I \otimes F) \not\subset \ker \chi_\gamma,\]
which implies $(M \otimes F)_\gamma = 0$ since $\ker\chi_\gamma$ is a maximal ideal in $\mf{Z}(\mf{g})$. So $M$ is very weakly unipotent.
\end{proof}

\begin{defn}\label{def:weaklyunipotentideal}
We say that an ideal $I \subset U(\mf{g})$ is \emph{very weakly unipotent} if it satisfies the equivalent conditions of Proposition \ref{prop:weakly unipotent ideal}.
\end{defn}

The following proposition relates very weak unipotence for $U(\mf{g})$-modules to the corresponding condition for $\tilde{\mc{D}}$-modules. Recall that for $\lambda \in \mf{h}^*$ integrally dominant, Beilinson-Bernstein localization gives two adjoint functors
\[ \Gamma \colon \coh(\mc{D}_{\widetilde{\lambda}}) \adjo \Mod_{fg}(U(\mf{g}))_{\widetilde{\chi_\lambda}} \cocolon \Delta_{\widetilde{\lambda}}.\]
The left adjoint $\Delta_{\widetilde{\lambda}}$ is given by
\[ \Delta_{\widetilde{\lambda}}(M) = (\tilde{\mc{D}}\otimes_{U(\mf{g})} M)_{\widetilde{\lambda}},\]
where $(-)_{\widetilde{\lambda}}$ denotes the $(\lambda - \rho)$-generalized eigenspace of $\mf{h} \subset \tilde{\mc{D}}$.

\begin{prop} \label{prop:weakly unipotent D to g}
Assume $\lambda \in \mf{h}^*_\mb{R}$ is integrally dominant, let $M \in \Mod_{fg}(U(\mf{g}))_{\widetilde{\chi_\lambda}}$ and $\mc{M} \in \coh(\mc{D}_{\widetilde{\lambda}})$.
\begin{enumerate}
\item \label{itm:weakly unipotent D to g 1} If $\mc{M}$ is very weakly unipotent then so is $\Gamma(\mc{M})$.
\item \label{itm:weakly unipotent D to g 2} If $M$ is very weakly unipotent then so is $\Delta_{\widetilde{\lambda}}(M)$.
\end{enumerate}
\end{prop}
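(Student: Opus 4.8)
The plan is to deduce both statements from the localization/global-sections dictionary together with the characterization in Proposition \ref{prop:weakly unipotent ideal} \eqref{itm:weakly unipotent ideal 3}, which phrases very weak unipotence purely in terms of the two-sided ideal $I = \mrm{Ann}_{U(\mf{g})}(M)$ and the central annihilators of the translates $U(\mf{g})/I \otimes F$. Since $\lambda$ is dominant, $\Gamma$ is exact and a quotient functor, and $\Delta$ is its left adjoint with $\Gamma\Delta \cong \mrm{id}$; moreover for $\mc{D}_\lambda$-modules one has the standard compatibility $\Gamma(\mc{M} \otimes F) \cong \Gamma(\mc{M}) \otimes F$ and likewise $\Delta(M \otimes F) \cong \Delta(M)\otimes F$ for $F$ a finite-dimensional $G$-representation, because tensoring with $F$ commutes with the localization equivalence up to the wall-crossing decomposition into generalized infinitesimal characters (this is the usual translation principle; see \cite{BB1,HMSW2}). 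The only subtlety is that Definition \ref{def:weaklyunipotent} asks for representations $F$ of the \emph{adjoint} group $G^{\mathit{ad}}$ rather than of $G$, but every algebraic representation of $G^{\mathit{ad}}$ inflates to one of $G$, so this is harmless. Meanwhile, the Definition \ref{def:CandD} of very weak unipotence for $\tilde{\mc{D}}_{\widetilde\lambda}$-modules requires $\mrm{R}\Gamma(\mc{B}, \mc{M}\otimes \mc{O}(\mu)) = 0$ for $\mu \in D^\circ(\lambda)$; note that $\mc{M}\otimes\mc{O}(\mu)$ is a $\mc{D}_{\widetilde{\lambda+\mu}}$-module with $\lambda + \mu \in C^\circ(\lambda)$, so this is exactly a statement that the localizations at infinitesimal characters in $C^\circ(\lambda)$ obtained from $\mc{M}$ by line-bundle twists have no cohomology.

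For \eqref{itm:weakly unipotent D to g 1}: assume $\mc{M}$ is very weakly unipotent and set $M = \Gamma(\mc{M})$. First I would verify criterion \eqref{itm:weakly unipotent ideal 3}, i.e.\ that for $\gamma \in C^\circ(\lambda)$ and $F$ a finite-dimensional representation, the $\gamma$-generalized eigenspace $(M\otimes F)_\gamma$ vanishes; equivalently, by exactness of $\Gamma$, that $\Gamma$ of the corresponding summand $(\mc{M}\otimes F)_\gamma$ (the summand of $\mc{M}\otimes F$ on which the center $S(\mf{h})^W$ acts with generalized eigenvalue $\gamma$) vanishes. The key point is that tensoring $\mc{M}$ by a finite-dimensional $G$-representation $F$ yields a $\tilde{\mc{D}}$-module with a filtration whose associated graded pieces are, up to central twists landing in $\gamma + \mb{Z}\Phi$ for the various weights, line-bundle twists $\mc{M}\otimes\mc{O}(\mu)$ of $\mc{M}$ (this is the standard ``$F$ has a filtration by line bundles on $\mc{B}$'' argument, cf.\ the proof of the translation principle). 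Each weight $\mu$ of $F$ with $\gamma = \overline{\lambda + \mu}$ (mod $W$) contributes a subquotient supported at infinitesimal character in $C^\circ(\lambda)$, hence $\mu \in D^\circ(\lambda)$ after adjusting by $W$, so $\mrm{R}\Gamma(\mc{M}\otimes\mc{O}(\mu)) = 0$ by hypothesis. Running this through the filtration (a spectral sequence argument, using that $\Gamma$ is exact so $\mrm{R}\Gamma = \Gamma$ in degree $0$ and vanishes elsewhere on these summands) gives $\Gamma((\mc{M}\otimes F)_\gamma) = 0$, hence $(M\otimes F)_\gamma = 0$ as required. Applying Proposition \ref{prop:weakly unipotent ideal} (the implication \eqref{itm:weakly unipotent ideal 3} $\Rightarrow$ \eqref{itm:weakly unipotent ideal 1}), $M$ is very weakly unipotent.

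For \eqref{itm:weakly unipotent D to g 2}: assume $M$ is very weakly unipotent and set $\mc{M} = \Delta(M)$. Here I would go back to the ideal-theoretic characterization and show directly that the vanishing $\mrm{R}\Gamma(\mc{B},\mc{M}\otimes\mc{O}(\mu)) = 0$ holds for all $\mu \in D^\circ(\lambda)$. Using $\Delta(M)\otimes\mc{O}(\mu) \cong \Delta(M\otimes\mc{O}(\mu))$ does not immediately make sense because $\mc{O}(\mu)$ is not a $U(\mf{g})$-module, but the right move is: $\mrm{R}\Gamma(\mc{B}, \Delta(M)\otimes\mc{O}(\mu))$ computes, via the adjunction and the projection formula, the component of $\tilde{\mc{D}}\overset{\mrm{L}}\otimes_{U(\mf{g})} M$ at the appropriate infinitesimal character, which by the translation principle is a summand of $\mrm{R}\Gamma$ of $\Delta(M\otimes F)$ for a suitable $F$ containing $\mu$ as a weight, and hence of $M\otimes F$. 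Since $\lambda + \mu \in C^\circ(\lambda)$, the relevant summand $(M\otimes F)_\gamma$ with $\gamma = \overline{\lambda+\mu}$ vanishes because $M$ is very weakly unipotent. Therefore $\mrm{R}\Gamma(\mc{B}, \mc{M}\otimes\mc{O}(\mu)) = 0$, which is precisely Definition \ref{def:CandD}.

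The main obstacle, and the step deserving the most care, is the bookkeeping in the translation-principle argument: matching up which weights $\mu$ of a finite-dimensional representation $F$ contribute to which generalized infinitesimal character $\gamma$, and checking that the ``dangerous'' contributions are exactly those with $\mu$ (up to the $W$-action relating $C^\circ(\lambda)$ to $D^\circ(\lambda)$) lying in $D^\circ(\lambda)$ — in particular that $\lambda$ dominant forces the bad subquotients to be genuine line-bundle twists $\mc{M}\otimes\mc{O}(\mu)$ with $\mu$ landing in the correct region, with no contribution from weights landing back at $C(\lambda)\smallsetminus C^\circ(\lambda) = W\cdot\lambda$. Once this dictionary is set up cleanly, both implications are formal consequences of exactness of $\Gamma$ (resp.\ the adjunction for $\Delta$), the projection formula $\Gamma(\mc{M}\otimes F)\cong\Gamma(\mc{M})\otimes F$, and Proposition \ref{prop:weakly unipotent ideal}.
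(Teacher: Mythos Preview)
Your argument for \eqref{itm:weakly unipotent D to g 1} is essentially the paper's: filter $\mc{M}\otimes F$ by the line-bundle twists $\mc{M}\otimes\mc{O}(\mu)$, note that any $\mu$ contributing to a generalized infinitesimal character $\gamma\in C^\circ(\lambda)$ has $\lambda+\mu\in W\gamma\subset C^\circ(\lambda)$ (so $\mu\in D^\circ(\lambda)$, no ``adjusting by $W$'' needed since $C^\circ(\lambda)$ is $W$-stable), and conclude. The detour through Proposition~\ref{prop:weakly unipotent ideal}\,\eqref{itm:weakly unipotent ideal 3} is unnecessary but harmless.

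For \eqref{itm:weakly unipotent D to g 2} there is a genuine gap. You correctly identify $\mrm{R}\Gamma(\Delta(M)\otimes\mc{O}(\mu))$ as a summand of $\mrm{R}\Gamma(\mc{O}(\mu)\otimes\tilde{\mc{D}})\overset{\mrm{L}}\otimes_{U(\mf{g})}M$, which is exactly the paper's first step. But your next move---realize this as a piece of $M\otimes F$ for a single $F$ containing $\mu$ as a weight---does not work as stated. The filtration of $\Delta(M)\otimes F$ by $\Delta(M)\otimes\mc{O}(\nu)$ gives a spectral sequence converging to $M\otimes F$, and the part at infinitesimal character $\chi_{\lambda+\mu}$ receives contributions from \emph{every} weight $\nu$ of $F$ with $\lambda+\nu\in W(\lambda+\mu)$. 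Knowing that the abutment $(M\otimes F)_{\chi_{\lambda+\mu}}$ vanishes does not force each $E_1$-term $\mrm{R}\Gamma(\Delta(M)\otimes\mc{O}(\nu))$ to vanish individually: the differentials could produce cancellation. This is the opposite direction from part \eqref{itm:weakly unipotent D to g 1}, where vanishing of subquotients gave vanishing of the total.

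The paper repairs this by staying with the bimodule: $\mrm{R}\Gamma(\mc{O}(\mu)\otimes\tilde{\mc{D}})$ is a complex of finitely generated $U(\mf{g})$-bimodules on which the diagonal $\mf{g}$-action integrates to $G^{\mathit{ad}}$, hence admits a finite resolution by objects $F\otimes U(\mf{g})$. Tensoring with $M$ over $U(\mf{g})$ gives a resolution of the whole complex by terms $F\otimes M$, each of which has vanishing $\chi_{\lambda+\mu}$-component by hypothesis; therefore so does the complex, and in particular its summand $\mrm{R}\Gamma(\Delta(M)\otimes\mc{O}(\mu))$. The point is that resolving the bimodule runs the filtration argument in the \emph{same} direction as in \eqref{itm:weakly unipotent D to g 1} (subquotients vanish $\Rightarrow$ total vanishes), rather than the reverse.
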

\begin{proof}
To prove \eqref{itm:weakly unipotent D to g 1}, observe that $\Gamma(\mc{M}) \otimes F = \mrm{R}\Gamma(\mc{M} \otimes F)$ has a filtration in the derived category with subquotients $\mrm{R}\Gamma(\mc{M} \otimes \mc{O}(\mu))$ where $\mu \in \mb{Z}\Phi$ is a weight of the representation $F$ of $G^{\mathit{ad}}$ and $\mrm{R}\Gamma(\mc{M} \otimes \mc{O}(\mu))$ is a complex of $U(\mf{g})$-modules of generalized infinitesimal character $\chi_{\lambda + \mu}$. Since $\mrm{R}\Gamma(\mc{M} \otimes \mc{O}(\mu)) = 0$ for $\mu \in D^\circ(\lambda)$, we deduce that $(\Gamma(\mc{M}) \otimes F)_\gamma = 0$ for $\gamma \in C^\circ(\lambda)$. So $\Gamma(\mc{M})$ is very weakly unipotent as claimed.

To prove \eqref{itm:weakly unipotent D to g 2}, note that for $\mu \in D^\circ(\lambda)$, $\mrm{R}\Gamma(\mc{O}(\mu) \otimes \Delta_{\widetilde{\lambda}}(M))$ is a direct summand of the complex
\begin{equation} \label{eq:weakly unipotent D to g 1}
\mrm{R}\Gamma(\mc{O}(\mu) \otimes \tilde{\mc{D}} \overset{\mrm{L}}\otimes_{U(\mf{g})} M) = \mrm{R}\Gamma(\mc{O}(\mu) \otimes \tilde{\mc{D}}) \overset{\mrm{L}}\otimes_{U(\mf{g})} M
\end{equation}
with generalized infinitesimal character $\chi_{\lambda + \mu}$. We will show that there are no non-zero such summands, and thus $\Delta_{\widetilde{\lambda}}(M)$ is very weakly unipotent as claimed. To see this, observe that $\mrm{R}\Gamma(\mc{O}(\mu) \otimes \tilde{\mc{D}})$ is a complex of finitely generated (but not admissible) bimodules over $U(\mf{g})$ for which the diagonal action integrates to a representation of $G^{\mathit{ad}}$. Thus, it has a finite resolution by terms of the form $F \otimes U(\mf{g)}$ for $F$ a finite dimensional representation of $G^{\mathit{ad}}$. So \eqref{eq:weakly unipotent D to g 1} has a finite resolution by terms $F \otimes M$. Since $M$ is very weakly unipotent, the infinitesimal character $\chi_{\lambda + \mu}$ does not appear in $F \otimes M$, and hence it cannot appear in \eqref{eq:weakly unipotent D to g 1}.
\end{proof}

\subsection{The Cohen-Macaulay property}\label{subsec:CM property}

In this subsection, we prove the key result that the Hodge filtration on a very weakly unipotent representation with maximal annihilator is a Cohen-Macaulay $S(\fg)$-module.

We first recall the definition. If $(M, F_\bullet) \in \mrm{D}^b\Mod_{fg}(U(\mf{g}), F_\bullet)$, we define the \emph{filtered dual of $(M, F_\bullet)$} to be
\[ \mb{D}(M, F_\bullet) = \mrm{R}\Hom_{(U(\mf{g}), F_\bullet)}((M, F_\bullet), (U(\mf{g}), F_\bullet))\{-\dim \tilde{\mc{B}}\}[\dim \tilde{\mc{B}}] \in \mrm{D}^b\Mod_{fg}(U(\mf{g}), F_\bullet).\]
The shifts in filtration and cohomological degree are chosen to be compatible with the duality for mixed Hodge modules. Recall that the associated graded of a good filtration on a $U(\mf{g})$-module is a finitely generated $S(\mf{g})$-module or, equivalently, a coherent sheaf on $\mf{g}^*$. By construction,
\[ \Gr \mb{D}(M, F_\bullet) \cong \mb{D}\Gr^F M,\]
where, for $N \in \mrm{D}^b\Mod(S(\mf{g}))_{fg}$ we write
\[ \mb{D}N = \mrm{R}\Hom_{S(\mf{g})}(N, S(\mf{g}))\{-\dim \tilde{\mc{B}}\}[\dim \tilde{\mc{B}}].\]

\begin{defn}
We say that $N \in \Mod(S(\mf{g}))_{fg}$ is \emph{Cohen-Macaulay} if $\mb{D}N$ is concentrated in a single cohomological degree. We say that a filtered $U(\mf{g})$-module $(M, F_\bullet) \in \Mod_{fg}(U(\mf{g}), F_\bullet)$ is Cohen-Macaulay if $\Gr^F M$ is so.
\end{defn}

\begin{thm} \label{thm:CM}
Assume $\lambda \in \mf{h}^*_\mb{R}$ is dominant and $\mc{M} \in \mhm(\mc{D}_{\widetilde{\lambda}})$. If $\Gamma(\mc{M})$ is annihilated by a very weakly unipotent maximal ideal $I \subset U(\mf{g})$, then the filtered $U(\mf{g})$-module $(\Gamma(\mc{M}), \Gamma(F_\bullet^H\mc{M}))$ is Cohen-Macaulay.
\end{thm}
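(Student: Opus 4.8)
The plan is to reduce the Cohen--Macaulayness of $\Gr^{F^H}\Gamma(\mc{M})$ over $S(\mf{g})$ to a cohomology vanishing statement for the Verdier dual $\mb{D}\mc{M}$, exactly along the lines sketched in the introduction, and then to settle that statement by combining Theorem~\ref{thm:strictness} with a purely algebraic argument about the $U(\mf{g})$-module $\Gamma(\mc{M})$. Write $M := \Gamma(\mc{M})$; we may assume $M \neq 0$, so that maximality of $I$ forces $\mrm{Ann}_{U(\mf{g})}(M) = I$. First I would make two preliminary observations. Since $I$ is a very weakly unipotent ideal, $M$ is a very weakly unipotent $U(\mf{g})$-module (Proposition~\ref{prop:weakly unipotent ideal}); and since $\lambda$ is dominant, $\mrm{R}\Gamma = \Gamma$ is exact with $\mrm{R}\Gamma(\mc{B}, \mc{M}\otimes F) = M\otimes F$ for every finite-dimensional $G^{\mathit{ad}}$-representation $F$. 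As $\mrm{R}\Gamma(\mc{B}, \mc{M}\otimes\mc{O}(\mu))$ is the $\chi_{\lambda+\mu}$-summand of $M\otimes F$ for a suitable $F$, it follows that $\mc{M}$ itself is very weakly unipotent in the sense of Definition~\ref{def:CandD}. Next, Verdier duality takes an $\mc{O}(\mu)$-twist to an $\mc{O}(-\mu)$-twist and commutes with $\mrm{R}\Gamma(\mc{B}, -)$ up to the $U(\mf{g})$-duality $\mb{D}$ (Serre duality for $\mc{D}$-modules on the proper variety $\mc{B}$); since $D^\circ(-\lambda) = -D^\circ(\lambda)$ and $\mrm{R}\Gamma(\mc{B}, \mc{M}) = M$, this gives $\mrm{R}\Gamma(\mc{B}, \mb{D}\mc{M}) \cong \mb{D}_{U(\mf{g})}M$ and shows that $\mb{D}\mc{M}\in\mhm(\mc{D}_{-\lambda})$ is \emph{also} very weakly unipotent (this part does not use maximality).

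For the geometric core, recall from Saito's theorem that $\Gr^{F^H}\mc{M}$ and $\Gr^{F^H}\mb{D}\mc{M}$ are Cohen--Macaulay coherent sheaves on $\tilde{\mf{g}}^* = T^*\tilde{\mc{B}}/H$, dual to one another under the relative dualizing functor (up to a cohomological shift), and that the moment map $\mu\colon \tilde{\mf{g}}^* \to \mf{g}^*$ is projective, with smooth source and target. By Theorem~\ref{thm:filtered exactness} (dominance of $\lambda$) together with Beilinson--Bernstein vanishing, $\mrm{R}\Gamma(\mc{B}, \Gr^{F^H}\mc{M})$ is concentrated in degree $0$ and equals $\Gr^{F^H}M$; since $\mf{g}^*$ is affine this forces $\mrm{R}\mu_*\Gr^{F^H}\mc{M}$ to be the coherent sheaf on $\mf{g}^*$ attached to the $S(\mf{g})$-module $\Gr^{F^H}M$, again concentrated in degree $0$. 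Applying relative Grothendieck--Serre duality along $\mu$ then identifies $\mrm{R}\mu_*\Gr^{F^H}\mb{D}\mc{M}$ with the sheaf attached to $\mb{D}_{S(\mf{g})}\Gr^{F^H}M$ (up to a shift), so, taking global sections over the affine $\mf{g}^*$, we conclude that $\Gr^{F^H}M$ is Cohen--Macaulay over $S(\mf{g})$ if and only if $\mrm{R}\Gamma(\mc{B}, \Gr^{F^H}\mb{D}\mc{M})$ is concentrated in a single cohomological degree. This is where Theorem~\ref{thm:strictness} enters: $\mb{D}\mc{M}$ is very weakly unipotent, so the Hodge spectral sequence for $\mrm{R}\Gamma(\mc{B}, \mb{D}\mc{M})$ degenerates at $\mrm{E}_1$ and hence $\mrm{H}^i(\mc{B}, \Gr^{F^H}\mb{D}\mc{M}) = \Gr^{F^H}\mrm{H}^i(\mc{B}, \mb{D}\mc{M})$. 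Therefore it suffices to prove that $\mrm{R}\Gamma(\mc{B}, \mb{D}\mc{M}) \cong \mb{D}_{U(\mf{g})}M$ is concentrated in a single degree, i.e.\ that $M$ is a Cohen--Macaulay module over $U(\mf{g})$ (using that $U(\mf{g})$ is Auslander--Gorenstein and Auslander--Cohen--Macaulay).

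The main obstacle is this last, algebraic step, and it is precisely here that maximality of $I = \mrm{Ann}(M)$ is used. I would argue as follows. Any nonzero $U(\mf{g})$-submodule $N\subset M$ satisfies $I\subseteq\mrm{Ann}(N)$, hence $\mrm{Ann}(N) = I$ by maximality, and then Gabber's half-dimension theorem gives $\operatorname{GKdim}(N)\geq\tfrac12\dim V(I)$; combined with the holonomicity of $\mc{M}$, which bounds $\operatorname{GKdim}(M)$ from above (on $\mc{B}$ when $\lambda$ is regular, and via the partial flag variety $G/P_\lambda$ when $\lambda$ is singular, where the bound matches $\tfrac12\dim V(I)$ because $V(I) = \overline{\mb{O}}$ is the Richardson orbit closure attached to $P_\lambda$), this shows that $M$ is pure of Gelfand--Kirillov dimension $\tfrac12\dim V(I)$. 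One then upgrades purity to the Cohen--Macaulay property using the structure of the minimal primitive quotient $U(\mf{g})/I$ and of $\mrm{AV}(M)\subseteq V(I)$ (via the facts about associated varieties of primitive ideals recalled in \S\ref{subsec:ideals}, Lemma~\ref{lem:support}, and \cite{Joseph1985, BorhoKraft, Duflo1977}): concretely, one rules out any cohomology of $\mb{D}_{U(\mf{g})}M$ outside the expected degree $j(M) = \dim\mf{g} - \operatorname{GKdim}(M)$. Assembling the pieces — $\mc{M}$ and $\mb{D}\mc{M}$ very weakly unipotent, strictness for $\mb{D}\mc{M}$, Saito's theorem together with relative duality, and the Cohen--Macaulayness of $M$ over $U(\mf{g})$ — then yields the theorem.
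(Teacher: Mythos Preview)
Your overall architecture is right and matches the paper's: relate $\mb{D}\Gamma(\mc{M},F_\bullet^H)$ to $\mrm{R}\Gamma(\mb{D}\mc{M},F_\bullet^H)$ via Grothendieck--Serre duality (this is exactly Lemma~\ref{lem:filtered serre}), invoke strictness for the dual so that $\mc{H}^i\mb{D}\Gr^{F^H}M = \Gr\mc{H}^i\mb{D}M$, and then kill the unwanted $\mc{H}^i\mb{D}M$ using maximality of $I$ together with Lemma~\ref{lem:support}. The algebraic endgame is also essentially the paper's, though your detour through ``holonomic bounds via $G/P_\lambda$ and Richardson orbits'' is not correct as stated: $V(I)$ has no reason to be the Richardson orbit closure for the parabolic attached to the singular locus of $\lambda$ (e.g.\ for regular integral $\lambda$ the maximal ideal has $V(I)=\{0\}$, not the principal orbit). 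What you actually need, and what Lemma~\ref{lem:support} provides, is that every nonzero holonomic module with annihilator $I$ has associated variety of dimension exactly $\tfrac12\dim\mb{O}$; applying this to $\mc{H}^i(\mb{D}M)$, which is holonomic, annihilated by $I$, and satisfies $\dim\mrm{AV}(\mc{H}^i\mb{D}M)\le \dim\mc{B}-i$, gives the vanishing for $i>\dim\mc{B}-\tfrac12\dim\mb{O}$ directly.

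The genuine gap is earlier: your claim that $\mc{M}$ itself is very weakly unipotent. You assert that ``$\mrm{R}\Gamma(\mc{B},\mc{M}\otimes\mc{O}(\mu))$ is the $\chi_{\lambda+\mu}$-summand of $M\otimes F$ for a suitable $F$''. This is not true. The filtration of $F$ by $B$-stable subspaces gives a filtration of $\mc{M}\otimes F$ by $\tilde{\mc{D}}$-submodules with subquotients $\mc{M}\otimes\mc{O}(\nu)$; taking $\mrm{R}\Gamma$ and projecting to generalized infinitesimal character $\chi_{\lambda+\mu}$ yields an object whose associated graded pieces are the \emph{complexes} $\mrm{R}\Gamma(\mc{M}\otimes\mc{O}(\nu))$ for \emph{all} weights $\nu$ of $F$ with $\lambda+\nu\in W(\lambda+\mu)$, not just $\nu=\mu$. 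The vanishing of $(M\otimes F)_{\chi_{\lambda+\mu}}$ therefore says only that this iterated cone is acyclic, which does not force $\mrm{R}\Gamma(\mc{M}\otimes\mc{O}(\mu))=0$. Proposition~\ref{prop:weakly unipotent D to g} gives the implication you want only for $\Delta(M)$, not for an arbitrary $\mc{M}$ with $\Gamma(\mc{M})=M$; when $\lambda$ is singular these can differ by objects with $\Gamma=0$ but nonvanishing $\mrm{R}\Gamma(-\otimes\mc{O}(\mu))$. The paper confronts exactly this issue in Lemma~\ref{lem:strict dual}: it first treats the case where $\mc{M}$ is very weakly unipotent (your argument), and then in the general case replaces $\mc{M}$ by $\Delta\Gamma(\mc{M})$, endows the latter with a mixed Hodge module structure via $j_{S*}fj_S^*\tilde{\Xi}*\mc{M}$, and uses the big projective $\tilde{\Xi}$ to exhibit $S(\mf{h})\otimes_{S(\mf{h})^W}\Gamma(\mc{M},F_\bullet^H)$ as a filtered direct summand of $S(\mf{h})\otimes_{S(\mf{h})^W}\Gamma(\Delta\Gamma(\mc{M}),F_\bullet^H)$. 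Strictness for the summand then follows from strictness for $\Delta\Gamma(\mc{M})$. This comparison with the localization is the missing ingredient in your outline.
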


The first step in the proof of Theorem \ref{thm:CM} is to relate filtered duality for $U(\mf{g})$-modules to duality for mixed Hodge modules. Recall that the duality functor for mixed Hodge modules defines an equivalence
\[ \mb{D} \colon \mhm(\mc{D}_{\widetilde{\lambda}})^{\mathit{op}} \to \mhm(\mc{D}_{\widetilde{-\lambda}}).\]
For $(\mc{M}, F_\bullet) \in \mrm{D}^b\coh(\tilde{\mc{D}}, F_\bullet)$, let us set
\[ \mb{D}(\mc{M}, F_\bullet) = \mrm{R}\shom_{(\tilde{\mc{D}}, F_\bullet)}((\mc{M}, F_\bullet), (\tilde{\mc{D}}, F_\bullet))\{-\dim \mf{g}\}[\dim \tilde{\mc{B}}].\]
Then we have
\[ (\mb{D}\mc{M}, F_\bullet^H) \cong \mb{D}(\mc{M}, F_\bullet^H)\]
for $\mc{M} \in \mhm(\mc{D}_{\widetilde{\lambda}})$. In particular, $\mb{D}(\mc{M}, F_\bullet^H)$ is concentrated in degree $0$ as a filtered complex, i.e., $\Gr^{F^H}\mc{M}$ is a Cohen-Macaulay sheaf on $\tilde{\mf{g}}^* = \operatorname{Spec}_{\mc{B}}(\Gr^F\tilde{\mc{D}})$.

\begin{lem} \label{lem:filtered serre}
For $(\mc{M}, F_\bullet) \in \mrm{D}^b\coh(\tilde{\mc{D}}, F_\bullet)$, we have
\[\mb{D}\mrm{R}\Gamma(\mc{M}, F_\bullet) \cong \mrm{R}\Gamma(\mb{D}(\mc{M}, F_\bullet)).\]
\end{lem}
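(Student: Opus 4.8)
The identity is the filtered, $(\tilde{\mc D},U(\mf g))$-relative version of the classical fact that on the proper variety $\mc B$ the derived global sections of a twisted $\mc D$-module commute with duality; it combines coherent Serre duality on $\mc B$ with the local duality $\mb D$ for filtered $\tilde{\mc D}$-modules recalled just above. The plan is as follows. Both $\mrm R\Gamma\circ\mb D$ and $\mb D\circ\mrm R\Gamma$, viewed as triangulated functors $\mrm D^b\coh(\tilde{\mc D},F_\bullet)^{\mathit{op}}\to\mrm D^b\Mod_{fg}(U(\mf g),F_\bullet)$, are exact, so it suffices to produce a natural transformation between them — which comes from the functoriality of Grothendieck--Serre duality for the proper morphism $\mc B\to\mathrm{pt}$ together with the biduality isomorphisms — and to check that it is an isomorphism on a generating family. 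As in the proof of Proposition \ref{prop:pro forgetful functor} (cf.\ \eqref{eq:pro forgetful functor 2}), every object of $\mrm D^b\coh(\tilde{\mc D},F_\bullet)$ is represented by a bounded complex whose terms are finite direct sums of induced filtered modules $\tilde{\mc D}\otimes_{\mc O_{\mc B}}\mc V\{p\}$, with $\mc V$ a vector bundle on $\mc B$ which (enlarging $\mc V$ if necessary) we may take to be a line bundle $\mc O(\mu)$ with $\mu$ regular dominant; so it is enough to treat $(\mc M,F_\bullet)=(\tilde{\mc D}\otimes_{\mc O_{\mc B}}\mc O(\mu),F_\bullet)$.

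For such an $\mc M$ both sides can be computed explicitly. Since $\tilde{\mc D}$ is locally free over $\mc O_{\mc B}$, $\mc M$ is a locally free $\tilde{\mc D}$-module, so $\mb D(\mc M,F_\bullet)$ is again an induced module concentrated in cohomological degree $0$: $\mb D(\mc M,F_\bullet)\cong(\tilde{\mc D}\otimes_{\mc O_{\mc B}}\mc W,F_\bullet)$ for a line bundle $\mc W$, where the passage from a right to a left $\tilde{\mc D}$-module is by the side-changing isomorphism $\tilde{\mc D}\cong\tilde{\mc D}^{\mathit{op}}$ coming from the trivial canonical bundle of $\tilde{\mc B}$ (this is what is responsible for the twist $\mc W$, and on monodromies for the flip $\lambda\leftrightarrow-\lambda$). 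Hence $\mrm R\Gamma(\mb D(\mc M,F_\bullet))=\mrm R\Gamma(\mc B,\tilde{\mc D}\otimes_{\mc O_{\mc B}}\mc W)$ with the induced filtration, while $\mb D(\mrm R\Gamma(\mc M,F_\bullet))=\mb D(\mrm R\Gamma(\mc B,\tilde{\mc D}\otimes_{\mc O_{\mc B}}\mc O(\mu)))$. These are matched by coherent Serre duality on $\mc B$: the dualizing complex of $\mc B$ is $\omega_{\mc B}[\dim\mc B]$, one has $\dim\mf g-\dim\tilde{\mc B}=\dim\mc B$, and Beilinson--Bernstein vanishing for the dominant twist $\mu$ (in the form \cite[Theorem 5.2]{DV}, i.e.\ Theorem \ref{thm:filtered exactness}, or \cite[Theorem 6.3]{Bien1990}) guarantees that $\mrm R\Gamma(\mc B,F_p\mc M)$ is concentrated in degree $0$ for all $p$, so that the Serre dual --- a priori a pro-object --- collapses to a genuine bounded complex of finitely generated filtered $U(\mf g)$-modules. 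One also checks that the two $U(\mf g)$-module structures agree: Serre duality for $\mrm R\Gamma(\mc B,-)$ is $\tilde{\mc D}$-linear (hence $U(\mf g)$-linear), and $U(\mf g)$ has finite global dimension $\dim\mf g$ with $\Gr^F U(\mf g)=S(\mf g)$ regular, so $\mb D$ on $U(\mf g)$ sends induced modules to induced modules in a single degree.

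I expect the only real difficulty to be the bookkeeping of shifts and twists: one must verify that the line bundle $\mc W$ above is exactly $\mc O(\mu)^{-1}\otimes\omega_{\mc B}$ up to whatever the conventions dictate, and that the cohomological and filtration shifts in $\mb D$ on $\tilde{\mc D}$ (namely $\{-\dim\mf g\}[\dim\tilde{\mc B}]$) and in $\mb D$ on $U(\mf g)$ (namely $\{-\dim\tilde{\mc B}\}[\dim\tilde{\mc B}]$) differ precisely by the Serre-duality contribution $\omega_{\mc B}[\dim\mc B]$, using the $\rho$-shifts built into the definition of $\mc D_\lambda$ and the trivial canonical bundle of $\tilde{\mc B}$ --- this is exactly what is meant by the remark that these shifts ``are chosen to be compatible with the duality for mixed Hodge modules''. (For objects of the form $\tilde{\mc D}\overset{\mrm{L}}\otimes_{U(\mf g)}N$ there is a shortcut via the big projective: by \eqref{eq:big projective hodge} and the $\mrm R\Gamma$-acyclicity of $\tilde{\mc D}$ one has $\mrm R\Gamma(\tilde\Xi*(-))=\tilde U\otimes_{U(\mf g)}\mrm R\Gamma(-)$ with $\tilde U=\Gamma(\mc B,\tilde{\mc D})\cong U(\mf g)\otimes_{\mf Z(\mf g)}S(\mf h)$ free over $U(\mf g)$, and a projection-formula computation of $\mb D(\tilde{\mc D}\overset{\mrm{L}}\otimes_{U(\mf g)}N)$ together with the Gorenstein property of the coinvariant algebra $S(\mf h)\otimes_{S(\mf h)^W}\mb C$ gives the claim directly; but this route does not reach a general $\mc M$, since $\mrm R\Gamma$ is not conservative at non-dominant twists, which is why the reduction to induced $\tilde{\mc D}$-modules above seems unavoidable.)
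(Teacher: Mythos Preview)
Your overall architecture---build a natural transformation, then verify it on the generating family of induced modules $\tilde{\mc D}\otimes_{\mc O_{\mc B}}\mc O(\mu)$---is reasonable, but the first step is where the actual content lies, and you have not supplied it. Saying the map ``comes from the functoriality of Grothendieck--Serre duality for $\mc B\to\mathrm{pt}$'' is not a construction: coherent duality on $\mc B$ relates $\mrm R\shom_{\mc O}(-,\omega_{\mc B}[\dim\mc B])$ to linear duals, whereas the lemma is about $\mrm R\shom_{(\tilde{\mc D},F_\bullet)}(-,(\tilde{\mc D},F_\bullet))$ versus $\mrm R\Hom_{(U(\mf g),F_\bullet)}(-,(U(\mf g),F_\bullet))$. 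To pass between these you need a filtered $(U(\mf g),\tilde{\mc D})$-bimodule trace map $\mrm R\Gamma(\tilde{\mc D},F_\bullet)\to (U(\mf g),F_\bullet)\{\dim\mc B\}$, which is precisely the missing datum. The paper writes it down explicitly: using $\mrm R\Gamma(\tilde{\mc D})\cong U(\mf g)\otimes_{\mf Z(\mf g)}S(\mf h)$, the map comes from the filtered $S(\mf h)^W$-linear projection
\[
S(\mf h)\longrightarrow S(\mf h)^W\{\dim\mc B\},\qquad f\longmapsto \frac{\sum_{w\in W}(-1)^{\ell(w)}wf}{\prod_{\alpha\in\Phi_+}\check\alpha}.
\]
This is exactly the Gorenstein pairing on the coinvariant algebra that you allude to in your parenthetical, and it is not a side remark---it is the whole point.

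Once the trace map is in hand, the paper's verification is also cleaner than yours: rather than computing both sides on generators and chasing line bundles, shifts, and side-changes (which you rightly flag as delicate), the paper passes to associated gradeds. The morphism becomes the comparison map for Grothendieck--Serre duality along the proper morphism $\tilde\mu\colon\tilde{\mf g}^*\to\mf g^*$, and the graded trace map above is exactly the canonical trace $\mrm R\tilde\mu_\bullet\omega_{\tilde{\mf g}^*}\to\omega_{\mf g^*}$ (both canonical bundles being trivial). So the isomorphism follows immediately from Grothendieck duality for $\tilde\mu$, with no bookkeeping. Your generator-by-generator check would also work in principle, but only after you have the natural transformation, and at that point the associated-graded argument is shorter.
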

\begin{proof}
Recall that we have a filtered isomorphism
\[ \mrm{R}\Gamma(\tilde{\mc{D}}, F_\bullet) \cong U(\mf{g}) \otimes_{\mf{Z}(\mf{g})} S(\mf{h}).\]
Now, we have a morphism
\[
S(\mf{h}) \to S(\mf{h})^W\{\dim \mc{B}\} = \mf{Z}(\mf{g})^W\{\dim \mc{B}\}
\]
of filtered $S(\mf{h})^W$-modules given by
\[ f \mapsto \frac{\sum_{w \in W} (-1)^{\ell(w)} w f}{\prod_{\alpha \in \Phi_+} \check\alpha}\]
and thus a morphism
\begin{equation} \label{eq:filtered serre 1}
\mrm{R}\Gamma(\tilde{\mc{D}}, F_\bullet) \to U(\mf{g})\{\dim \mc{B}\}
\end{equation}
of filtered $U(\mf{g})$-modules. This defines for us the desired morphism
\begin{equation} \label{eq:filtered serre 2}
\mrm{R}\Gamma \mb{D}(\mc{M}, F_\bullet) \to \mb{D}\mrm{R}\Gamma(\mc{M}, F_\bullet)
\end{equation}
in the filtered derived category. To show that \eqref{eq:filtered serre 2} is an isomorphism, it suffices to check this after taking associated gradeds. This yields the morphism
\begin{equation} \label{eq:filtered serre 3}
\mrm{R}\tilde{\mu}_{\bigcdot}(\mb{D}\Gr^F \mc{M}) \to \mb{D}\mrm{R}\tilde{\mu}_{\bigcdot}\Gr^F\mc{M}
\end{equation}
where $\tilde{\mu} \colon \tilde{\mf{g}}^* \to \mf{g}^*$ is the Grothendieck-Springer map and $\mb{D}$ now denotes the analogous duality for coherent sheaves. Now, since the canonical bundles of $\tilde{\mf{g}}^*$ and $\mf{g}^*$ are trivial, we may identify these dualities with the Serre dual up to a cohomological shift. Moreover, the associated graded of \eqref{eq:filtered serre 1} is precisely the canonical trace morphism
\[ \mrm{R}\tilde{\mu}_{\bigcdot}(\omega_{\tilde{\mf{g}}^*}) \to \omega_{\mf{g}^*},\]
so by Grothendieck-Serre duality, \eqref{eq:filtered serre 3}, and hence \eqref{eq:filtered serre 2}, is an isomorphism as claimed.
\end{proof}

\begin{lem} \label{lem:strict dual}
If $\lambda \in \mf{h}^*_\mb{R}$ is dominant, $\mc{M} \in \mhm(\mc{D}_{\widetilde{\lambda}})$ and $\Gamma(\mc{M})$ is very weakly unipotent, then the filtered complex $\mb{D}\Gamma(\mc{M}, F_\bullet^H)$ is strict.
\end{lem}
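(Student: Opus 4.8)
The plan is to replace $\mc{M}$ by $\mc{M}^\flat := \mrm{L}\Delta\mrm{R}\Gamma(\mc{M})$, which has the same image under $\Gamma$ but is very weakly unipotent as a $\mc{D}$-module. Concretely, realize $\mrm{L}\Delta\mrm{R}\Gamma$ as convolution with the big projective: set $\mc{M}^\flat := \tilde{\Xi}^{(\lambda)} * \mc{M} \in \mrm{D}^b\mhm(\mc{D}_{\widetilde{\lambda}})$, where $\tilde{\Xi}^{(\lambda)}$ is the big projective at twist $\lambda$ with its canonical Hodge structure, so that the underlying filtered $\tilde{\mc{D}} \boxtimes \tilde{\mc{D}}$-module is $(\tilde{\mc{D}} \otimes_{U(\mf{g})} \tilde{\mc{D}}, F_\bullet)$ localized at $\widetilde{\lambda - \rho}$ (the twist-$\lambda$ analogue of \eqref{eq:big projective hodge} and \cite[Theorem 6.16]{DV}). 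Since $\lambda$ is dominant, $\mrm{R}\Gamma$ and $\mrm{L}\Delta$ are exact, so $\mc{M}^\flat$ is a single mixed Hodge module with underlying $\mc{D}$-module $\Delta\Gamma(\mc{M})$. The counit $\mc{M}^\flat \to \mc{M}$ is a morphism of mixed Hodge modules, and applying $\Gamma$ yields a morphism of mixed Hodge structures $\Gamma(\mc{M}^\flat) \to \Gamma(\mc{M})$ that is an isomorphism of $U(\mf{g})$-modules by the triangle identity $\mrm{R}\Gamma \circ \mrm{L}\Delta \simeq \mrm{id}$; since morphisms of mixed Hodge structures are strict, it is a filtered isomorphism $\Gamma(\mc{M}^\flat, F_\bullet^H) \cong \Gamma(\mc{M}, F_\bullet^H)$ (using the strictness of $\mrm{R}\Gamma(\mc{M}, F_\bullet^H)$ from Theorem \ref{thm:filtered exactness} to identify the Hodge filtration on global sections). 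Hence $\mb{D}\Gamma(\mc{M}, F_\bullet^H) \cong \mb{D}\Gamma(\mc{M}^\flat, F_\bullet^H)$, and it suffices to prove the lemma for $\mc{M}^\flat$.

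Next I would check that $\mc{M}^\flat$, and hence $\mb{D}\mc{M}^\flat \in \mrm{D}^b\mhm(\mc{D}_{\widetilde{-\lambda}})$, is very weakly unipotent in the sense of Definition \ref{def:CandD}. For $\mc{M}^\flat$ this is Proposition \ref{prop:weakly unipotent D to g}\eqref{itm:weakly unipotent D to g 2} applied to the very weakly unipotent module $\Gamma(\mc{M}) = \Gamma(\mc{M}^\flat)$. For $\mb{D}\mc{M}^\flat$ one uses that very weak unipotence of $\mc{D}$-modules is preserved by Verdier duality: $C(-\lambda) = -C(\lambda)$ and $C^\circ(-\lambda) = -C^\circ(\lambda)$, so $D^\circ(-\lambda) = -D^\circ(\lambda)$; moreover $\mb{D}(\mc{N} \otimes \mc{O}(\mu)) \cong \mb{D}\mc{N} \otimes \mc{O}(-\mu)$, and since $\mc{B}$ is proper, $\mrm{R}\Gamma(\mc{B}, \mb{D}\mc{N})$ is the dual complex of $\mrm{R}\Gamma(\mc{B}, \mc{N})$. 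Combining these, the vanishing $\mrm{R}\Gamma(\mc{B}, \mb{D}\mc{M}^\flat \otimes \mc{O}(\nu)) = 0$ for $\nu \in D^\circ(-\lambda)$ is equivalent to $\mrm{R}\Gamma(\mc{B}, \mc{M}^\flat \otimes \mc{O}(-\nu)) = 0$ for $-\nu \in D^\circ(\lambda)$, which holds.

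Finally, Lemma \ref{lem:filtered serre} applied to the filtered $\tilde{\mc{D}}$-module underlying $\mc{M}^\flat$, together with $(\mb{D}\mc{M}^\flat, F_\bullet^H) \cong \mb{D}(\mc{M}^\flat, F_\bullet^H)$, gives
\[ \mb{D}\Gamma(\mc{M}^\flat, F_\bullet^H) \cong \mrm{R}\Gamma(\mb{D}\mc{M}^\flat, F_\bullet^H), \]
and the right-hand side is strict by Theorem \ref{thm:strictness}, since $\mb{D}\mc{M}^\flat$ is very weakly unipotent (Theorem \ref{thm:strictness} needs no dominance hypothesis on the twist). Unwinding the isomorphisms, $\mb{D}\Gamma(\mc{M}, F_\bullet^H)$ is strict. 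The one delicate point is the filtered identification $\Gamma(\mc{M}^\flat, F_\bullet^H) \cong \Gamma(\mc{M}, F_\bullet^H)$ of the first step — that is, equipping $\mrm{L}\Delta\mrm{R}\Gamma(\mc{M})$ with the correct Hodge structure — which is precisely the kind of bookkeeping already performed for the big projective in \cite{DV}; everything afterwards is formal manipulation of the duality functors plus an appeal to the already-proven Theorem \ref{thm:strictness}.
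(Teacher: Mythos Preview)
Your overall architecture matches the paper's: reduce to the case where the $\tilde{\mc{D}}$-module itself is very weakly unipotent by passing to $\Delta\Gamma(\mc{M})$ with a Hodge structure, then invoke Lemma~\ref{lem:filtered serre} and Theorem~\ref{thm:strictness} (the closure of very weak unipotence under $\mb{D}$ is exactly as you say). The final paragraph is identical to the paper's first paragraph of proof.

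The gap is precisely at the point you flag as ``delicate'', and it is not bookkeeping already done in \cite{DV}. You posit an object $\tilde{\Xi}^{(\lambda)}$ whose underlying filtered module is $(\tilde{\mc{D}}\otimes_{U(\mf{g})}\tilde{\mc{D}}, F_\bullet)$ localized at $\widetilde{\lambda-\rho}$, together with a pro-MHM counit to $\tilde{\delta}_0^{(\lambda)}$. What \cite{DV} actually constructs is $j_{S*}fj_S^*\tilde{\Xi}$, where $S$ is the set of singular simple roots for $\lambda$; its underlying filtered module is $(j_{S*}j_S^*\tilde{\Xi}, F_\bullet^H)$, which for singular $\lambda$ is \emph{not} $(\tilde{\Xi}, F_\bullet^H)$ localized. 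So neither the claimed filtered description of $\tilde{\Xi}^{(\lambda)}$ nor the existence of the counit as a morphism of pro-mixed Hodge modules is available off the shelf, and your strictness-of-MHS argument for the filtered isomorphism $\Gamma(\mc{M}^\flat,F_\bullet^H)\cong\Gamma(\mc{M},F_\bullet^H)$ cannot get started.

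The paper sidesteps this entirely. Rather than proving a filtered isomorphism, it shows that $(\tilde{\Xi},F_\bullet^H)$ is a filtered \emph{direct summand} of $(\tilde{\Xi}*j_{S*}j_S^*\tilde{\Xi},F_\bullet^H)$ (using that $(\tilde{\Xi},F_\bullet^H)$ represents $\Gamma(F_0(-))^G$ and that the kernel of the relevant surjection has no such sections). Convolving with $\mc{M}$ and taking global sections, $S(\mf{h})\otimes_{S(\mf{h})^W}\Gamma(\mc{M},F_\bullet^H)$ becomes a filtered summand of $S(\mf{h})\otimes_{S(\mf{h})^W}\Gamma(\Delta\Gamma(\mc{M}),F_\bullet^H)$. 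Since strictness of $\mb{D}(-)$ passes to filtered summands and is unaffected by the free base change along $S(\mf{h})^W\to S(\mf{h})$, this suffices. So the missing idea is this summand argument in place of your direct comparison.
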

\begin{proof}
Let us first prove the lemma in the case where $\mc{M}$ itself is very weakly unipotent. By Lemma \ref{lem:filtered serre} we have
\[ \mb{D}\Gamma(\mc{M}, F_\bullet^H) = \mb{D}\mrm{R}\Gamma(\mc{M}, F_\bullet^H) = \mrm{R}\Gamma(\mb{D}(\mc{M}, F_\bullet^H)) =\mrm{R}\Gamma(\mb{D}\mc{M}, F_\bullet^H).\]
Since $\mc{M}$ is assumed very weakly unipotent, clearly $\mb{D}\mc{M}$ is also (e.g., by Lemma \ref{lem:filtered serre} again). So we may apply Theorem \ref{thm:strictness} to conclude that $\mrm{R}\Gamma(\mb{D}\mc{M}, F_\bullet^H)$ is a strict complex as claimed.

Now consider the general case where only $\Gamma(\mc{M})$ is very weakly unipotent. Consider the localization $\Delta_{\widetilde{\lambda}}\Gamma(\mc{M}) \in \Mod(\mc{D}_{\widetilde{\lambda}})$; by Proposition \ref{prop:weakly unipotent D to g}, this is a very weakly unipotent $\tilde{\mc{D}}$-module. By \cite[Theorem 5.6]{DV}, $\Delta_{\widetilde{\lambda}}\Gamma(\mc{M})$ underlies an object in $\mhm(\mc{D}_{\widetilde{\lambda}})$ constructed as follows.

Let $S$ be the set of singular simple roots for $\lambda$, $\mc{B} \to \mc{P}_S$ the corresponding projection to a partial flag variety and
\[ j_S \colon \tilde{X}_S := \tilde{\mc{B}} \times_{\mc{P}_S} \tilde{\mc{B}} \to \tilde{\mc{B}} \times \tilde{\mc{B}}\]
the inclusion. Let $f \in \Gamma_{\mb{R}}^G(\tilde{X}_S)^{\mathit{mon}}$ be the unique element such that $\varphi(f) = (\lambda, -\lambda)$. Then it was shown in \cite[Lemmas 7.2 and 7.3]{DV} that
\[ \Delta_{\widetilde{\lambda}}\Gamma(\mc{M}) \cong \mc{H}^0(j_{S*}fj_S^*\tilde{\Xi} * \mc{M}) \in \mhm(\mc{D}_{\widetilde{\lambda}}),\]
where $\tilde{\Xi}$ is the big projective; we will regard $\Delta_{\widetilde{\lambda}}\Gamma(\mc{M})$ as endowed with the mixed Hodge module structure given by this formula from now on.

Now, Lemmas \ref{lem:xitilde stalks} and \ref{lem:convolution exactness} imply that
\[ \mc{H}^i(j_{S*}fj_S^*\tilde{\Xi} * \mc{M}) = 0 \quad \text{for $i \neq 0$},\]
so, in fact,
\[ \Delta_{\widetilde{\lambda}}\Gamma(\mc{M}) = j_{S*}fj_S^*\tilde{\Xi} * \mc{M}.\]
By \cite[Lemma 7.3]{DV}, we have
\[ (j_{S*} fj_S^*\tilde{\Xi}, F_\bullet^H) \cong (j_{S*}j_S^*\tilde{\Xi}, F_\bullet^H)\]
so
\[ (\Delta_{\widetilde{\lambda}}\Gamma(\mc{M}), F_\bullet^H) \cong (j_{S*}j_S^*\tilde{\Xi}, F_\bullet^H) * (\mc{M}, F_\bullet^H).\]
Now, recalling that convolution with $\tilde{\Xi}$ is exact, we have by Lemmas \ref{lem:xitilde stalks} and \ref{lem:xitilde intertwining} a filtered surjection
\begin{equation} \label{eq:strict dual 1}
(\tilde{\Xi} * j_{S*}j_S^*\tilde{\Xi}, F_\bullet^H) \to (\tilde{\Xi} * \tilde{\Delta}'_1, F_\bullet^H) \cong (\tilde{\Xi}, F_\bullet^H).
\end{equation}
Since the kernel $\mc{K}$ of \eqref{eq:strict dual 1} underlies an object in $\widehat{\mhm}(\mc{D}_{\widetilde{0}} \boxtimes \mc{D}_{\widetilde{0}}, G)$, it satisfies
\[ \mrm{H}^i(\mc{B}, F_0\mc{K})^G = 0 \quad \text{for $i > 0$}.\]
Since $(\tilde{\Xi}, F_\bullet^H)$ represents the functor $\Gamma(F_0(-))^G$, \eqref{eq:strict dual 1} must therefore split. So $(\tilde{\Xi}, F_\bullet^H)$ is a summand of $(\tilde{\Xi} * j_{S*}j_S^*\tilde{\Xi}, F_\bullet^H)$ and hence $(\tilde{\Xi}, F_\bullet^H) * (\mc{M}, F_\bullet^H)$ is a summand of $(\tilde{\Xi}, F_\bullet^H) * (\Delta_{\widetilde{\lambda}}\Gamma(\mc{M}), F_\bullet^H)$. Taking global sections, we find that
\[ S(\mf{h}) \otimes_{S(\mf{h})^W} \Gamma(\mc{M}, F_\bullet^H) \]
is a summand of
\[ S(\mf{h}) \otimes_{S(\mf{h})^W} \Gamma(\Delta_{\widetilde{\lambda}}\Gamma(\mc{M}), F_\bullet^H).\]
Since $S(\mf{h})$ is a free filtered $S(\mf{h})^W$-module and strictness is preserved under passing to summands, we deduce the strictness of $\mb{D}\Gamma(\mc{M}, F_\bullet^H)$ from the strictness of $\mb{D}\Gamma(\Delta_{\widetilde{\lambda}}\Gamma(\mc{M}), F_\bullet^H)$ already proved.
\end{proof}

Finally, we will need the following fact relating the associated variety of a $U(\fg)$-module $M$ to the associated variety of its annihilator. In the lemma below, we say that a finitely generated $U(\mf{g})$-module $M$ is \emph{holonomic} if $\mrm{AV}(M) \subset \mc{N}$ and
\[ \mu^{-1}(\mrm{AV}(M)) \subset T^*\mc{B}\]
is an isotropic subvariety of the symplectic variety $T^*\mc{B}$, where $\mu \colon T^*\mc{B} \to \mc{N}$ is the Springer resolution. Note that if $\mc{M} \in \coh(\tilde{\mc{D}})$ is a holonomic $\tilde{\mc{D}}$-module, then $M = \mrm{H}^i(\mc{B}, \mc{M})$ is holonomic for all $i$, since
\[ \mu^{-1}(\mrm{AV}(M)) \subset \mu^{-1}\mu(\mrm{AV}(\mc{M})) = \mrm{pr}_1(T^*\mc{B} \times_{\mc{N}} \mrm{AV}(\mc{M})),\]
which is isotropic since $\mrm{AV}(\mc{M}) \subset T^*\mc{B}$ and $T^*\mc{B} \times_{\mc{N}} T^*\mc{B} \subset T^*\mc{B} \times T^*\mc{B}$ are Lagrangians.

\begin{lem} \label{lem:support}
Let $M$ be a holonomic $U(\mf{g})$-module such that $I := \mrm{Ann}(M)$ is a primitive ideal and let $\mb{O}$ be the unique nilpotent $G$-orbit such that $V(I) = \overline{\mb{O}}$. Then
\[ \mrm{AV}(M) \cap \mb{O} \subset \mb{O} \]
is a non-empty Lagrangian subvariety of $\mb{O}$ with respect to the symplectic structure defined by the Kirillov-Kostant form, and
\[ \dim \mrm{AV}(M) = \frac{1}{2}\dim \mb{O}.\]
\end{lem}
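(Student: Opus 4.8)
The plan is to sandwich $\AV(M)$ between a lower bound coming from a Gelfand--Kirillov dimension estimate and an upper bound coming from holonomicity, and then to read off the Lagrangian property from the Springer fibration. To begin, note that $\Gr^F\mrm{Ann}(M)$ annihilates $\Gr^F M$, so $\AV(M) = \supp \Gr^F M \subset V(\Gr^F\mrm{Ann}(M)) = V(I) = \overline{\mb{O}}$ (the last equality being part of the hypothesis, via Joseph's irreducibility theorem). In particular every irreducible component $Z$ of $\AV(M)$ is contained in $\overline{\mb{O}}$ and, being irreducible, is the closure of $Z\cap\mb{O}_Z$ for a unique $G$-orbit $\mb{O}_Z\subset\overline{\mb{O}}$.

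Next I would use holonomicity. The Springer map $\mu\colon T^*\mc{B}\to\mc{N}$ is the moment map for the $G$-action and hence Poisson, and for a nilpotent orbit $\mb{O}'\subset\mc{N}$ the preimage $\mu^{-1}(\mb{O}')$ is a coisotropic subvariety of $T^*\mc{B}$ of codimension $\dim\mc{B}_x = \tfrac12(\dim\mc{N}-\dim\mb{O}')$ (for $x\in\mb{O}'$), whose null foliation is the fibration $\mu^{-1}(\mb{O}')\to\mb{O}'$ and whose coisotropic reduction is $(\mb{O}',\omega_{\mb{O}'})$ with the Kirillov--Kostant form. Since $M$ is holonomic, $\mu^{-1}(\AV(M))$ is isotropic; its intersection with $\mu^{-1}(\mb{O}')$ equals $\mu^{-1}(\AV(M)\cap\mb{O}')$, a union of Springer fibers, so pushing it down through the reduction $\mu^{-1}(\mb{O}')\to\mb{O}'$ shows that $\AV(M)\cap\mb{O}'$ is isotropic in $\mb{O}'$ for every $\mb{O}'\subset\overline{\mb{O}}$. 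Feeding the dimension formula for Springer fibers into the isotropy bound $\dim\mu^{-1}(\AV(M))\le\dim\mc{B}$ moreover gives $\dim(\AV(M)\cap\mb{O}')\le\tfrac12\dim\mb{O}'$ for all $\mb{O}'$; applied to $\mb{O}'=\mb{O}_Z$ this yields $\dim Z\le\tfrac12\dim\mb{O}$, with equality only if $\mb{O}_Z=\mb{O}$.

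For the complementary lower bound I would invoke the Borho--Kraft inequality $\dim V(\mrm{Ann}(M))\le 2\dim\AV(M)$ (cf.\ \cite{BorhoKraft}), which gives $\dim\AV(M)\ge\tfrac12\dim\mb{O}$; equivalently one could invoke Gabber's involutivity theorem, since $\AV(M)$ is then coisotropic in $\mf{g}^*$ and coisotropy restricts along the inclusion of the symplectic leaf $\mb{O}$, so any component meeting $\mb{O}$ is coisotropic there. Combined with the previous paragraph, this forces a component $Z$ of $\AV(M)$ with $\dim Z=\tfrac12\dim\mb{O}$ and $\mb{O}_Z=\mb{O}$; in particular $\AV(M)\cap\mb{O}\neq\emptyset$, it is isotropic in $\mb{O}$, and it has dimension $\tfrac12\dim\mb{O}$ — that is, it is Lagrangian. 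The displayed equality $\dim\AV(M)=\tfrac12\dim\mb{O}$ then follows, since all remaining components of $\AV(M)$ have strictly smaller dimension.

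I expect the main obstacle to be making the reduction step in the second paragraph fully precise: one must check that an isotropic subvariety of the coisotropic $\mu^{-1}(\mb{O}')$ that is a union of leaves of the null foliation descends to an isotropic subvariety of the reduction $\mb{O}'$, working throughout with reduced and possibly singular subschemes and on their smooth loci, and that the Springer fibers over a fixed orbit are equidimensional of the stated dimension, so that the dimension bookkeeping is legitimate. Everything else is either standard symplectic geometry or already recorded in the excerpt (the inclusion $\AV(M)\subset V(I)$, Joseph's theorem, and the holonomicity hypothesis on $M$).
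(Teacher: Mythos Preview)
Your proposal is correct and follows essentially the same approach as the paper, which likewise derives isotropy from the Springer map (citing \cite[Lemma 8.19]{schmid-vilonen96} for the compatibility of symplectic forms that you flag as the main obstacle) and the lower bound from Gabber's inequality $\operatorname{GKdim} U(\mf{g})/I \leq 2\operatorname{GKdim} M$. The only cosmetic difference is that the paper concludes Lagrangianness by combining isotropy with Gabber's involutivity theorem (so $\AV(M)$ is coisotropic as well) rather than via your dimension count.
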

\begin{proof}
Let $\mb{O}' \subset \mf{g}^*$ be a nilpotent $G$-orbit such that $\mrm{AV}(M) \cap \mb{O}'$ has maximal dimension. Let $X \subset \mb{O}'$ be the (open) complement of the closures of $\mrm{AV}(M) \cap \mb{O}''$ for $G$-orbits $\mb{O}'' \neq \mb{O}$. We show below that $\mrm{AV}(M) \cap X \subset X$ is a non-empty Lagrangian and that $X = \mb{O}' = \mb{O}$, from which the result follows.

By \cite[Lemma 8.19]{schmid-vilonen96}, the pullback of the symplectic form on $\mb{O}'$ under the Springer map $\mu \colon T^*\mc{B} \to \mc{N}$ agrees up to sign with the restriction of the symplectic form on $T^*\mc{B}$ to $\mu^{-1}(\mb{O}')$. Since $\mrm{AV}(M) \cap X$ is open in $\mrm{AV}(M)$, we conclude that
\[ \mrm{AV}(M) \cap X = \mu(\mu^{-1}(\mrm{AV}(M)) \cap \mu^{-1}(X)) \subset X\]
is isotropic since $\mu^{-1}(\mrm{AV}(M))$ is isotropic in $T^*\mc{B}$. Since $\mrm{AV}(M) = \operatorname{Supp}\Gr(M)$ is also involutive, it is therefore Lagrangian as claimed. Since $\mrm{AV}(M) \cap \mb{O}'$ has maximal dimension, we have that $\mrm{AV}(M) \cap X$ is non-empty and of dimension $\dim \mrm{AV}(M)$ by construction.

To conclude, we appeal to a theorem of Gabber (see, e.g.\ \cite[Theorem 9.11]{krause-lenagan99}) that
\[ \dim V(I) = \operatorname{GKdim} U(\mf{g})/I \leq 2 \operatorname{GKdim} M = 2\dim \mrm{AV}(M).\]
Since $\dim V(I) = \dim \mb{O}$ and $2\dim \mrm{AV}(M) = 2\dim (\mrm{AV}(M) \cap X) = \dim \mb{O}'$, this implies $\dim \mb{O} \leq \dim \mb{O}'$ and hence $\mb{O} = \mb{O}'$. This implies $X = \mb{O}$ also by construction, so the lemma is proved.
\end{proof}

\begin{proof}[Proof of Theorem \ref{thm:CM}]

Let $\mb{O}$ be the unique nilpotent orbit such that $V(I) = \bar{\mb{O}}$. Since $I$ is maximal, Lemma \ref{lem:support} implies that for every non-zero holonomic $U(\mf{g})$-module $N$ annihilated by $I$, the associated variety $\mrm{AV}(N)$ has dimension $\frac{1}{2}\dim \mb{O}$.

Now, applying this to the holonomic module $M = \Gamma(\mc{M})$, we have that $\dim \mrm{AV}(M) = \frac{1}{2}\dim\mb{O}$. Hence
\begin{equation} \label{eq:CM 1}
\mc{H}^i(\mb{D}M) = 0 \quad \text{for $i < \dim \mc{B} - \frac{1}{2}\dim\mb{O}$};
\end{equation}
by standard commutative algebra, this is true even at the graded level for any good filtration on $M$ (the shift by $\dim \mc{B}$ appears because of our definition of $\mb{D}$). We also have
\[ \dim \mrm{AV}(\mc{H}^i(\mb{D}M)) \leq \dim \mc{B} - i < \frac{1}{2}\dim\mb{O} \quad \text{for $i > \dim \mc{B} - \frac{1}{2}\dim\mb{O}$};\]
this again follows from the analogous fact in commutative algebra. Since $\mrm{AV}(\mc{H}^i(\mb{D}M)) \subset \mrm{AV}(M)$, it follows immediately that $\mc{H}^i(\mb{D}M)$ is a holonomic $U(\mf{g})$-module annihilated by $I$. So by the above, we must have
\begin{equation} \label{eq:CM 2}
 \mc{H}^i(\mb{D}M) = 0 \quad \text{for $i > \dim \mc{B} - \frac{1}{2}\dim\mb{O}$}.
\end{equation}

Finally, consider the Hodge filtration $F_\bullet^H M$. By Lemma \ref{lem:strict dual}, the filtered complex $\mb{D}(M, F_\bullet^H)$ is strict. So by \eqref{eq:CM 1} and \eqref{eq:CM 2}, we have
\[ \mc{H}^i(\mb{D}\Gr^{F^H}M) = \mc{H}^i(\Gr \mb{D}(M, F_\bullet^H)) = \Gr \mc{H}^i(\mb{D} M) = 0 \quad \text{for $i \neq \dim \mc{B} - \frac{1}{2}\dim \mb{O}$}.\]
So $\Gr^{F^H} M$ is Cohen-Macaulay as claimed.

\end{proof}

\subsection{Filtered exactness for partial flag varieties} \label{subsec:partial flags}

We conclude this section with another application of Theorem \ref{thm:strictness} to cohomology vanishing for monodromic mixed Hodge modules on partial flag varieties.

Fix a subset $S$ of the simple roots of $G$ and let $\mc{P}_S$ be the corresponding partial flag variety. We write $H_S$ for the torus with character group
\[ \mb{X}^*(H_S) = \{\mu \in \mb{X}^*(H) \mid \langle \mu, \check\alpha\rangle = 0 \text{ for $\alpha \in S$}\} = \mrm{Pic}^G(\mc{P}_S).\]
Let $\tilde{\mc{P}}_S \to \mc{P}_S$ be the tautological $H_S$-torsor; by construction, we have a commutative diagram
\[
\begin{tikzcd}
\tilde{\mc{B}} \ar[r] \ar[d] & \tilde{\mc{B}} \times^H H_S \ar[r] & \tilde{\mc{P}}_S \ar[d] \\
\mc{B} \ar[rr, "\pi_S"] & & \mc{P}_S.
\end{tikzcd}
\]
The $H_S$-torsor $\tilde{\mc{P}}_S$ defines a sheaf of rings $\tilde{\mc{D}}_{\mc{P}_S}$ on $\mc{P}_S$ with center $S(\mf{h}_S)$, where $\mf{h}_S = \mrm{Lie}(H_S)$.

Recall that for $S = \emptyset$ (i.e.\ for $\mc{P}_S = \mc{B}$), we have been writing
\[ \mc{D}_{\mc{B}, \lambda} = \tilde{\mc{D}}_{\mc{B}} \otimes_{S(\mf{h}), \lambda - \rho} \mb{C}.\]
There is a natural geometric interpretation for the $\rho$-shift here: the line bundle $\mc{O}_\mc{B}(-\rho)$ is a square-root of the canonical bundle of the flag variety. For general $S$, the canonical bundle of $\mc{P}_S$ is $\mc{O}(-2\rho(\mf{g}/\mf{p}_S))$, where
\[ \rho(\mf{g}/\mf{p}_S) := \frac{1}{2}\sum_{\alpha \in \Phi_+ - \mrm{span}(S)} \alpha.\]
So it is natural to write
\[ \mc{D}_{\mc{P}_S, \lambda} = \tilde{\mc{D}}_{\mc{P}_S} \otimes_{S(\mf{h}_S), \lambda - \rho(\mf{g}/\mf{p}_S)} \mb{C},\]
for $\lambda \in \mf{h}^*_S$. With this convention, one has a Beilinson-Bernstein exactness theorem for partial flag varieties:

\begin{thm}[{e.g., \cite[Theorem I.6.3]{Bien1990}}] \label{thm:partial exactness}
Assume that $\lambda \in \mf{h}_S^* \subset \mf{h}^*$ is integrally dominant. Then for $\mc{M} \in \Mod(\mc{D}_{\mc{P}_S, \lambda})$ we have
\[ \mrm{H}^i(\mc{P}_S, \mc{M}) = 0 \quad \text{for $i > 0$}.\]
\end{thm}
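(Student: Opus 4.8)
The plan is to establish $\mc{D}$-affineness of $\mc{P}_S$ at the twist $\lambda$, adapting Beilinson and Bernstein's proof for the full flag variety. As usual, the asserted vanishing for all $\mc{M}$ is equivalent to exactness of $\Gamma(\mc{P}_S,-)$ on $\Mod(\mc{D}_{\mc{P}_S,\lambda})$, and since this category has enough objects of the form $\mc{D}_{\mc{P}_S,\lambda}\otimes_{\mc{O}}\mc{V}$ with $\mc{V}$ locally free, a standard d\'evissage (every $\mc{M}$ is a quotient of such an object, together with descending induction on cohomological degree) reduces it to the two assertions that $\mc{D}_{\mc{P}_S,\lambda}$ is generated by global sections and that, for a fixed ample line bundle $\mc{L}$ on $\mc{P}_S$,
\[
\mrm{H}^i\bigl(\mc{P}_S,\ \mc{D}_{\mc{P}_S,\lambda}\otimes_{\mc{O}}\mc{L}^{-n}\bigr)=0\qquad\text{for all }i>0\text{ and }n\gg0.
\]
Generation by global sections is elementary (it holds because the tangent sheaf $\mc{T}_{\mc{P}_S}$ is globally generated by the $G$-action); the cohomology vanishing is the crux.

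For the vanishing I would filter $\mc{D}_{\mc{P}_S,\lambda}$ by order of differential operators, so that $\Gr\mc{D}_{\mc{P}_S,\lambda}\cong\bigoplus_k S^k\mc{T}_{\mc{P}_S}$, and analyse the spectral sequence abutting to $\mrm{H}^\bullet(\mc{P}_S,\mc{D}_{\mc{P}_S,\lambda}\otimes_{\mc{O}}\mc{L}^{-n})$, whose $\mrm{E}_1$-page is
\[
\bigoplus_k\mrm{H}^\bullet\bigl(\mc{P}_S,S^k\mc{T}_{\mc{P}_S}\otimes_{\mc{O}}\mc{L}^{-n}\bigr)=\mrm{H}^\bullet\bigl(T^*\mc{P}_S,\varpi^*\mc{L}^{-n}\bigr),
\]
$\varpi\colon T^*\mc{P}_S\to\mc{P}_S$ being the cotangent projection. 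Two observations drive the argument: this $\mrm{E}_1$-page is \emph{independent of $\lambda$} (the symbol of $\mc{D}_{\mc{P}_S,\lambda}$ does not see the twist), and for $n\gg0$ it does \emph{not} vanish in positive cohomological degrees. The vanishing of the abutment must therefore come from the differentials, which do depend on $\lambda$ and are governed by the action of the centre $\mf{Z}(\mf{g})$ on $\Gamma(\mc{P}_S,\mc{D}_{\mc{P}_S,\lambda}\otimes_{\mc{O}}\mc{L}^{-n})$. Following Beilinson and Bernstein \cite{BB1}, one shows that when $\lambda$ is integrally dominant the infinitesimal characters occurring in the higher-degree cohomology are all distinct from $\chi_\lambda$, so the spectral sequence degenerates onto degree $0$ and the required vanishing follows. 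This is the main obstacle: it is precisely where integral dominance of $\lambda$ is used; the rest of the proof is formal.

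It is worth recording why one cannot simply bootstrap from the flag-variety case. The smooth proper projection $\pi_S\colon\mc{B}\to\mc{P}_S$ has fibres the flag varieties of the Levi $L_S$, so $\mrm{R}(\pi_S)_{\bigcdot}\mc{O}_{\mc{B}}=\mc{O}_{\mc{P}_S}$, and the projection formula gives $\mrm{R}(\pi_S)_{\bigcdot}\pi_S^{\bigcdot}\mc{M}\cong\mc{M}$ for the ($\mc{O}$-flat, hence exact) $\mc{D}$-module pullback $\pi_S^{\bigcdot}\mc{M}$; thus $\mrm{R}\Gamma(\mc{P}_S,\mc{M})\cong\mrm{R}\Gamma(\mc{B},\pi_S^{\bigcdot}\mc{M})$. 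But $\pi_S^{\bigcdot}\mc{M}$ is a $\mc{D}_{\mc{B},\lambda+\rho-\rho(\mf{g}/\mf{p}_S)}$-module, and this twist need not be integrally dominant even when $\lambda$ is, so the theorem on $\mc{B}$ does not apply directly --- which is exactly the difficulty the spectral-sequence argument above circumvents. The full proof, along these lines, is \cite[\S I.6]{Bien1990}.
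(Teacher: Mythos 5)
The paper does not give its own proof of Theorem~\ref{thm:partial exactness}; it simply records the statement as a citation to Bien (\cite[Theorem I.6.3]{Bien1990}). So there is no in-paper argument to compare against, and I'll evaluate your sketch on its own terms. Your overall reduction is sound: it is standard that vanishing for all $\mc{M}$ follows from (a) global generation of $\mc{D}_{\mc{P}_S,\lambda}$ plus (b) higher-cohomology vanishing for $\mc{D}_{\mc{P}_S,\lambda}\otimes_{\mc{O}}\mc{L}^{-n}$ for a cofinal set of $n$; your remark that $\mc{T}_{\mc{P}_S}$ is globally generated handles (a); and your final paragraph correctly identifies why one cannot just push forward along $\pi_S$ (the twist $\lambda + \rho - \rho(\mf{g}/\mf{p}_S)$ on $\mc{B}$ need not be integrally dominant --- which is exactly why the paper treats this as a black box and proves the Hodge-theoretic refinement, Theorem~\ref{thm:partial filtered exactness}, by an entirely different route).

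The crux step (b), however, does not hold together. Two concrete problems. First, the differentials in the spectral sequence of the order filtration are not ``governed by the action of $\mf{Z}(\mf{g})$'': the $\mrm{E}_1$-page consists of $S(\mf{g})$-modules (cohomology of coherent sheaves on $T^*\mc{P}_S$), which carry no infinitesimal character; the differentials are induced by the defining relations of $\mc{D}_{\mc{P}_S,\lambda}$ as a filtered deformation of $S^\bullet\mc{T}_{\mc{P}_S}$, not by the centre. Second, the sentence ``the infinitesimal characters occurring in the higher-degree cohomology are all distinct from $\chi_\lambda$'' is not a usable statement: $\mc{D}_{\mc{P}_S,\lambda}\otimes_{\mc{O}}\mc{L}^{-n}$ is a left $\mc{D}_{\mc{P}_S,\lambda}$-module, so every group $\mrm{H}^i(\mc{P}_S,\mc{D}_{\mc{P}_S,\lambda}\otimes\mc{L}^{-n})$ is a $U(\mf{g})$-module of infinitesimal character $\chi_\lambda$. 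The ``other'' infinitesimal characters you invoke never appear; asserting they do is vacuously equivalent to the vanishing you are trying to prove, so nothing has been gained.

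The argument that actually works (and is what Beilinson--Bernstein and Bien do) uses a different filtration on a different object. Fix $\mu$ dominant with $\mc{O}(\mu)$ ample on $\mc{P}_S$. By Serre vanishing, $\mrm{H}^i(\mc{P}_S,\mc{M}\otimes\mc{O}(m\mu))=0$ for $i>0$ and $m\gg 0$. Now tensor $\mc{M}\otimes\mc{O}(m\mu)$ with the finite-dimensional $G$-representation $V$ of extremal weight $-m\mu$, and filter $V\otimes\mc{O}_{\mc{P}_S}$ $G$-equivariantly with line-bundle subquotients $\mc{O}(\nu)$ for $\nu$ a weight of $V$. The resulting spectral sequence, with $\mrm{E}_1$-page $\bigoplus_\nu\mrm{H}^\bullet(\mc{M}\otimes\mc{O}(m\mu+\nu))$ converging to $V\otimes\mrm{H}^\bullet(\mc{M}\otimes\mc{O}(m\mu))$, \emph{is} $\mf{Z}(\mf{g})$-equivariant, and each term $\mrm{H}^\bullet(\mc{M}\otimes\mc{O}(m\mu+\nu))$ has infinitesimal character $\chi_{\lambda+m\mu+\nu}$. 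The spectral sequence therefore breaks up by infinitesimal character, and the role of integral dominance of $\lambda$ is to show that $\chi_\lambda$ occurs only for $\nu=-m\mu$, so $\mrm{H}^\bullet(\mc{M})$ splits off as a direct summand and inherits the vanishing. That is where $\mf{Z}(\mf{g})$ and integral dominance genuinely enter --- through the weight filtration on $V\otimes\mc{O}$, not through the order filtration on $\mc{D}_{\mc{P}_S,\lambda}$.
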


Of course, one has the same vanishing for monodromic $\mc{D}$-modules as well by reduction to the twisted case.

Note that if $\mc{M} \in \Mod(\mc{D}_{\mc{P}_S, \lambda})$ then its pullback $\pi_S^{\bigcdot} \mc{M}$ to $\mc{B}$ is naturally an object in $\Mod(\mc{D}_{\mc{B}, \lambda + \rho(\mf{l}_S)})$ satisfying
\[ \mrm{H}^i(\mc{B}, \pi_S^{\bigcdot} \mc{M}) = \mrm{H}^i(\mc{P}_S, \mc{M}) \quad \text{for all $i$},\]
where
\[ \rho(\mf{l}_S) := \rho - \rho(\mf{g}/\mf{p}_S) = \frac{1}{2}\sum_{\alpha \in \Phi_+ \cap \mrm{span}(S)} \alpha.\]
Note that since $\rho(\mf{l}_S)$ is \emph{not} (integrally) dominant in general, Theorem \ref{thm:partial exactness} gives a stronger result than applying Beilinson-Bernstein vanishing for the full flag variety to $\pi_S^{\bigcdot} \mc{M}$.

Now suppose that $\lambda \in \mf{h}^*_{S, \mb{R}}$ and set
\[ \mhm(\mc{D}_{\mc{P}_S, \lambda}) = \mhm_{\lambda - \rho(\mf{g}/\mf{p}_S)}(\tilde{\mc{P}}_S).\]
As usual, we have the functor of underlying Hodge-filtered $\mc{D}_\lambda$-module
\begin{align*}
\mhm(\mc{D}_{\mc{P}_S, \lambda}) &\to \Mod(\mc{D}_{\mc{P}_S, \lambda}, F_\bullet) \\
\mc{M} &\mapsto (\mc{M}, F_\bullet^H).
\end{align*}

We have the following Hodge filtration analog of Theorem \ref{thm:partial exactness}.

\begin{thm} \label{thm:partial filtered exactness}
Assume that $\lambda \in \mf{h}^*_{S, \mb{R}} \subset \mf{h}^*_\mb{R}$ is real dominant. Then for $\mc{M} \in \mhm(\mc{D}_{\mc{P}_S, \lambda})$, we have
\[ \mrm{H}^i(\mc{P}_S, \Gr^{F^H}_p \mc{M}) = 0 \quad \text{for $i > 0$ and all $p$}.\]
\end{thm}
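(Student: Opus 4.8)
The plan is to deduce Theorem \ref{thm:partial filtered exactness} from Theorem \ref{thm:strictness} by pulling back to the full flag variety along $\pi_S \colon \mc{B} \to \mc{P}_S$ and running (a variant of) the argument used to prove Theorem \ref{thm:strictness}. First I would set up the reduction: with the $\rho(\mf{l}_S)$-normalization built into the twisted pullback $\pi_S^{\bigcdot}\mc{M}$, the fibres of $\pi_S$ carry only the trivial (regular dominant) twist $\rho(\mf{l}_S)$, so $\mrm{R}\pi_{S\bigcdot}$ identifies $\Gr^{F^H}_p\mc{M}$ with $\mrm{R}\pi_{S\bigcdot}\Gr^{F^H}_p(\pi_S^{\bigcdot}\mc{M})$ and hence
\[ \mrm{H}^i(\mc{P}_S, \Gr^{F^H}_p\mc{M}) = \mrm{H}^i(\mc{B}, \Gr^{F^H}_p(\pi_S^{\bigcdot}\mc{M})) \quad \text{for all $i, p$.}\]
Since $\lambda$ is dominant, Theorem \ref{thm:partial exactness} gives $\mrm{H}^{>0}(\mc{P}_S, \mc{M}) = \mrm{H}^{>0}(\mc{B}, \pi_S^{\bigcdot}\mc{M}) = 0$, so the theorem is equivalent to the strictness of the filtered complex $\mrm{R}\Gamma(\mc{B}, (\pi_S^{\bigcdot}\mc{M}, F^H_\bullet))$. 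The subtlety is that $\pi_S^{\bigcdot}\mc{M}$ is a mixed Hodge module at the twist $\lambda + \rho(\mf{l}_S)$, which is dominant and regular along the simple roots in $S$ but in general \emph{not} dominant along the simple roots outside $S$; moreover one checks by examples that $\pi_S^{\bigcdot}\mc{M}$ need \emph{not} be very weakly unipotent, so neither Theorem \ref{thm:filtered exactness} nor Theorem \ref{thm:strictness} applies directly.

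Next I would produce a strict model via intertwining. Let $w \in W$ be the shortest element with $w(\lambda + \rho(\mf{l}_S))$ dominant. Then $\mc{I}_w^{!H}(\pi_S^{\bigcdot}\mc{M})$ is a complex of mixed Hodge modules at a dominant twist, so by Theorem \ref{thm:filtered exactness} (applied to its cohomology objects) the filtered complex $\mrm{R}\Gamma(\mc{B}, (\mc{I}_w^{!H}(\pi_S^{\bigcdot}\mc{M}), F^H_\bullet))$ is strict. On the other hand, by the construction of the Hodge intertwining functor we have $(\mc{I}_w^{!H}(\pi_S^{\bigcdot}\mc{M}), F^H_\bullet) \cong \mc{I}_w^{!(\lambda + \rho(\mf{l}_S))}(\pi_S^{\bigcdot}\mc{M}, F^H_\bullet)$, while Theorem \ref{thm:intertwining} gives $\mrm{R}\Gamma(\mc{B}, \mc{I}_w^{!}(\pi_S^{\bigcdot}\mc{M}, F^H_\bullet)) \cong \mrm{R}\Gamma(\mc{B}, (\pi_S^{\bigcdot}\mc{M}, F^H_\bullet))$. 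Thus strictness of $\mrm{R}\Gamma(\mc{B}, (\pi_S^{\bigcdot}\mc{M}, F^H_\bullet))$ will follow once we control the difference between the filtered intertwining functors $\mc{I}_w^{!(\lambda + \rho(\mf{l}_S))}$ and $\mc{I}_w^{!} = \mc{I}_w^{!(0)}$, which is governed by Theorem \ref{thm:deformation relations}.

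The main work — and the step I expect to be the real obstacle — is the control of the resulting error terms, which plays here the role that very weak unipotence plays in the proof of Theorem \ref{thm:strictness}. Applying Theorem \ref{thm:deformation relations} reflection by reflection along a reduced word for $w$, the cone between $\mc{I}_w^{!}(\pi_S^{\bigcdot}\mc{M}, F^H_\bullet)$ and $\mc{I}_w^{!(\lambda + \rho(\mf{l}_S))}(\pi_S^{\bigcdot}\mc{M}, F^H_\bullet)$ lies in the triangulated subcategory generated by filtration shifts of the objects $(\pi_S^{\bigcdot}\mc{M} \otimes \mc{O}(n\alpha), F^H_\bullet)$ for various simple roots $\alpha$ and integers $n$ in bounded ranges. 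I would show that for each such $n, \alpha$ the filtered complex $\mrm{R}\Gamma(\mc{B}, (\pi_S^{\bigcdot}\mc{M} \otimes \mc{O}(n\alpha), F^H_\bullet))$ is strict and that $\mrm{H}^j(\mc{B}, \Gr^{F^H}_p(\pi_S^{\bigcdot}\mc{M} \otimes \mc{O}(n\alpha))) = 0$ for $j > 0$. The key input is the relative Bott--Borel--Weil theorem for $\pi_S$: by the projection formula,
\[ \mrm{R}\pi_{S\bigcdot}(\pi_S^{\bigcdot}\mc{M} \otimes \mc{O}(n\alpha)) \cong \mc{M} \otimes \mrm{R}\pi_{S\bigcdot}\mc{O}_{\mc{B}}(n\alpha),\]
and $\mrm{R}\pi_{S\bigcdot}\mc{O}_{\mc{B}}(n\alpha)$ is either zero (when $n\alpha + \rho(\mf{l}_S)$ is $W_S$-singular) or an $L_S$-equivariant vector bundle on $\mc{P}_S$ concentrated in a single cohomological degree; filtering this bundle by line bundles, using the dominance of $\lambda$ on $\mc{P}_S$ (Theorem \ref{thm:partial exactness}) and the $\mc{D}$-module intertwining functors within $W_S$ (Corollary \ref{cor:intertwining}) to normalize the relevant twists, and running an induction on a measure of the non-dominance of $\lambda + \rho(\mf{l}_S) + n\alpha$ (for instance the cardinality of $D^\circ$), one shows these error terms contribute nothing in positive cohomological degree. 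Assembling the triangle of Theorem \ref{thm:deformation relations} with these vanishings then forces $\mrm{R}\Gamma(\mc{B}, (\pi_S^{\bigcdot}\mc{M}, F^H_\bullet))$ to be strict, which by the reduction of the first paragraph is exactly Theorem \ref{thm:partial filtered exactness}.
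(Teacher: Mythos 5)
Your first paragraph is correct: the reduction to strictness of $\mrm{R}\Gamma(\mc{B},(\pi_S^{\bigcdot}\mc{M}, F^H_\bullet))$ and the observation that $\pi_S^{\bigcdot}\mc{M}$ is in general \emph{not} very weakly unipotent for $\lambda\neq 0$ are both right. From there, however, your plan diverges from the paper and has a genuine gap. The paper first treats $\lambda = 0$, where Borel--Weil--Bott gives $\mrm{R}\pi_{S\bigcdot}\mc{O}(\mu) = 0$ for every $\mu\in D^\circ(\rho(\mf{l}_S))$ (this uses the particular weight $\rho(\mf{l}_S)$ and fails once it is replaced by $\lambda+\rho(\mf{l}_S)$ with $\lambda\neq 0$, consistent with your own observation), so that $\pi_S^{\bigcdot}\mc{M}$ \emph{is} very weakly unipotent and Theorem \ref{thm:strictness} applies directly. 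The general dominant $\lambda$ is then reduced to $\lambda = 0$ not by intertwining functors but by the wall-crossing machinery of \cite[\S 3]{DV}: an induction on $\dim\supp\mc{M}$ together with a one-parameter deformation $j_!f^s j^*\mc{M}$ as $s$ descends from $0$ to $-1$, matching associated gradeds at each jump via \cite[Theorem 3.4]{DV}, until $j_*f^{-1}j^*\mc{M}$ lands in $\mhm(\mc{D}_{\mc{P}_S,0})$.

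Your proposed replacement for the second step --- running the intertwining-functor argument at twist $\lambda+\rho(\mf{l}_S)$ and killing the error terms by Borel--Weil--Bott --- does not close as described. First, the error cone along a reduced word for $w$ is not built from $(\pi_S^{\bigcdot}\mc{M}\otimes\mc{O}(n\alpha),F^H_\bullet)$ with $\alpha$ simple: after the first reflection the ambient object is $\mc{I}^!_{s_{\alpha_1}}(\pi_S^{\bigcdot}\mc{M})$, not a pullback from $\mc{P}_S$, and commuting the later translations back through the intertwiners via Theorem \ref{thm:hecke relations}\eqref{itm:hecke relations 4} and Corollary \ref{cor:intertwining} turns the simple roots $\alpha_j$ into the general positive roots $s_{\alpha_1}\cdots s_{\alpha_{j-1}}\alpha_j$. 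More seriously, the mechanism of Lemma \ref{lem:weakly unipotent intertwining 2} requires each error term to have \emph{vanishing} filtered $\mrm{R}\Gamma$, i.e.\ zero as an object of $\mrm{D}^b\Mod_{fg}(U(\mf{g}),F_\bullet)$, not merely $\mrm{H}^{>0}=0$; in Theorem \ref{thm:strictness} this is supplied jointly by very weak unipotence (for the underlying $\tilde{\mc{D}}$-module vanishing) and an induction in which $|D^\circ|$ strictly decreases (for strictness). In your setting, $\lambda+\rho(\mf{l}_S)$ is already $W_S$-dominant, so the reduced word uses only simple roots outside $S$; for the resulting positive roots $\gamma$ the weight $n\gamma+\rho(\mf{l}_S)$ is frequently $W_S$-regular, $\mrm{R}\pi_{S\bigcdot}\mc{O}(n\gamma)\neq 0$, and Borel--Weil--Bott gives no vanishing at all. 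Your fallback --- filter the surviving pushforward by line bundles and invoke Theorem \ref{thm:partial exactness} together with intertwining inside $W_S$ --- lands at twists on $\mc{P}_S$ that need no longer be dominant, controls only $\mrm{H}^{>0}$ rather than $\mrm{R}\Gamma$, and offers no demonstrably decreasing measure to terminate the induction. The paper's wall-crossing descent is precisely what substitutes for this missing mechanism.
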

\begin{proof}
Let us first consider the case where $\lambda = 0$. In this case, we claim that the $\tilde{\mc{D}}$-module $\pi_S^{\bigcdot} \mc{M}$ on $\mc{B}$ is very weakly unipotent. To see this, observe that by the Borel-Weil-Bott Theorem, we have
\[ \mrm{R}\pi_{S \bigcdot} \mc{O}(\nu - \rho(\mf{l}_S)) = 0 \quad \text{for $\nu \in C^\circ(\rho(\mf{l}_S))$}.\]
So for $\mu \in D^\circ(\rho(\mf{l}_S))$,
\[ \mrm{R}\Gamma(\mc{B}, \pi_S^{\bigcdot}\mc{M} \otimes \mc{O}(\mu)) = \mrm{R}\Gamma(\mc{P}_S, \mc{M} \otimes \mrm{R}\pi_{S \bigcdot}\mc{O}(\mu)) = 0.\]
So $\mc{M}$ is very weakly unipotent as claimed. So by Theorem \ref{thm:strictness}, the complex
\[ \mrm{R}\Gamma(\mc{B}, (\pi_S^{\bigcdot} \mc{M}, \pi_S^{\bigcdot} F_\bullet^H\mc{M})) = \mrm{R}\Gamma(\mc{P}_S, (\mc{M}, F_\bullet^H\mc{M}))\]
is strict. So by Theorem \ref{thm:partial exactness}, we have
\[ \mrm{H}^i(\mc{P}_S, \Gr^{F^H}_p \mc{M}) = \Gr^{F^H}_p\mrm{H}^i(\mc{P}_S, \mc{M}) = 0 \quad \text{for $i > 0$ and all $p$}.\]

Now consider the general case of $\lambda$ dominant. We follow the strategy of \cite[Theorem 4.2]{DV} and prove the claim by induction on $\dim \supp \mc{M}$.

Since $\lambda \in \mf{h}^*_\mb{R}$ is dominant, there exists a divisor $D \subset \mc{P}_S$ with complement $U = \mc{P}_S - D$ and a positive element $f \in \Gamma_\mb{R}(\tilde U)^{\mathit{mon}}$, where $\tilde{U} = U \times_{\mc{P}_S} \tilde{\mc{P}}_S$. Writing $j \colon \tilde U \to \tilde{\mc{P}}_S$ for the inclusion, as argued in \cite[Theorem 4.2]{DV}, it is enough to prove the claim with $j_!j^*\mc{M}$ in place of $\mc{M}$ since, by induction, the vanishing is known for Hodge modules supported in $D$. Consider the family of morphisms
\begin{equation} \label{eq:partial filtered exactness 1}
j_!f^sj^*\mc{M} \to j_*f^sj^*\mc{M} \quad \text{for $s \in [-1, 0)$};
\end{equation}
this is an isomorphism for $s$ outside a finite subset of $[-1, 0)$. Let $s_0 \in (-1, 0)$ be the largest element such that \eqref{eq:partial filtered exactness 1} is not an isomorphism. Then by \cite[Theorem 3.4]{DV},
\[ \Gr^{F^H} j_!j^*\mc{M} \cong \Gr^{F^H} j_*f^{s_0}j^*\mc{M}.\]
Arguing by induction on dimension again, the vanishing for $\Gr^{F^H} j_*f^{s_0}j^*\mc{M}$ follows from the vanishing for $\Gr^{F^H}j_!f^{s_0}j^*\mc{M}$. Repeating this process, we eventually reduce to showing the vanishing for $\Gr^{F^H} j_*f^{-1}j^*\mc{M}$. But $j_*f^{-1}j^*\mc{M} \in \mhm(\mc{D}_{\mc{P}_S, 0})$, so the desired vanishing in this case is proved above.
\end{proof}

\section{Unipotent representations} \label{sec:unipotent}

In this section, we will apply the Cohen-Macaulay property of \S \ref{subsec:CM property} to a class of Harish-Chandra modules called `unipotent representations' arising from the Orbit Method. In \S\ref{subsec:HCmodules}, we recall the Hodge-theoretic unitarity criterion of \cite{DV} and deduce as a simple corollary (Corollary \ref{cor:indecomposable}) that any Hermitian representation with indecomposable Hodge graded is necessarily unitary. In \S\ref{subsec:AC} and \ref{subsec:W}, we review some necessary background on associated cycles and dagger functors. In \S\ref{subsec:unipotent}, we recall the definitions of unipotent ideals and representations, and show that unipotent ideals are very weakly unipotent. In \S\ref{subsec:unipotent no hodge}, we use dagger functors to prove that small-boundary and birationally rigid unipotent representations are Hermitian and have irreducible associated cycles. Finally, in \S\ref{subsec:unipotent with hodge} we combine this with the Cohen-Macaulay property to deduce our main results on the unitarity and $K$-types of unipotent representations.

\subsection{Harish-Chandra modules and the unitarity criterion}\label{subsec:HCmodules}

Let $G$ be a complex reductive algebraic group and let $\sigma: G \to G$ be an anti-holomorphic involution. Let $G_{\RR}$ be a real Lie group (possibly disconnected) equipped with a finite surjective map onto a finite-index subgroup of $G^{\sigma}$. Choose an anti-holomorphic involution $\sigma_c$ of $G$, commuting with $\sigma$, such that $U_{\RR} := G^{\sigma_c}$ is compact. Then $\theta:=\sigma \circ \sigma_c$ is an algebraic involution of $G$. Let $K_{\RR} \subset G_{\RR}$ denote the preimage of $U_{\RR}$. Then $K_{\RR}$ is a maximal compact subgroup of $G_{\RR}$ and its complexification $K$ admits a finite surjective map onto a finite-index subgroup of $G^{\theta}$. Let $\fg$ and $\fk$ denote the (complex) Lie algebras of $G$ and $K$, and let $\fg_{\RR}$ and $\fu_{\RR}$ denote the (real) Lie algebras of $G_{\RR}$ and $U_{\RR}$. Using the map $G_{\RR} \to G$, we can (and will) regard $\fg_{\RR}$ (resp. $\fk)$ as a real form (resp. subalgebra) of $\fg$. 

A $(\fg,K)$-module is a $U(\fg)$-module $M$ equipped with a locally finite $K$-action, such that
\begin{itemize}
    \item The action map $\fg \otimes M \to M$ is $K$-equivariant.
    \item The differential of the $K$-action on $M$ coincides with the $\fg$-action on $M$, restricted to $\fk \subset \fg$.
\end{itemize}
Let $\Mod(\fg,K)$ denote the category of $(\fg,K)$-modules, and let $\Mod_{fl}(\fg,K)$ (resp. $\Mod_{fg}(\fg,K)$) denote the full subcategory of finite-length (resp. finitely-generated) $(\fg,K)$-modules. 

A Hermitian form $\langle \cdot, \cdot \rangle$ on a $(\fg,K)$-module $M$ is said to be \emph{$(\fg_{\RR},K_{\RR})$-invariant} (resp. \emph{$(\mathfrak{u}_{\RR},K_{\RR})$-invariant}) if $K_{\RR}$ acts on $M$ by unitary operators and $\fg_{\RR}$ (resp. $\mathfrak{u}_{\RR}$) acts $M$ by skew-Hermitian operators. We say that $M$ is \emph{Hermitian} (resp. \emph{unitarizable}) if it admits a non-degenerate (resp. positive-definite) $(\fg_{\RR},K_{\RR})$-invariant Hermitian form. If $M$ is irreducible and $\langle \cdot, \cdot \rangle$ is a non-degenerate $(\fg_{\RR},K_{\RR})$- or $(\fu_{\RR},K_{\RR})$-invariant Hermitian form on $X$, then $\langle \cdot, \cdot \rangle$ is unique up to multiplication by a nonzero real number. 

\begin{prop}[\cite{ALTV}]\label{prop:Hermitiancriterion}
Let $M$ be an irreducible $(\fg,K)$-module of real infinitesimal character. Then the following are true:
\begin{itemize}
    \item[(i)] $M$ admits a non-degenerate $(\fu_{\RR},K_{\RR})$-invariant Hermitian form $\langle \cdot, \cdot\rangle_c$. 
    \item[(ii)] $M$ admits a non-degenerate $(\fg_{\RR},K_{\RR})$-invariant Hermitian form if and only if there is an isomorphism of $(\fg,K)$-modules
    $$M \cong \theta^*M$$
    where $\theta^*M$ is the $(\fg,K)$-module equal to $M$ as a $K$-representation, with $\fg$-action twisted by $\theta$.
    \item[(iii)] If $\theta: M \to \theta^*M$ is $(\fg,K)$-module isomorphism such that $\theta^2=1$, then
    $$\langle x,y \rangle := \langle \theta(x),y\rangle_c, \qquad x,y \in M$$
    defines a non-degenerate $(\fg_{\RR},K_{\RR})$-invariant Hermitian form on $M$.
\end{itemize}
\end{prop}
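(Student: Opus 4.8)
The plan is to derive all three statements from the formalism of Hermitian duality for $(\fg,K)$-modules, as in \cite{ALTV}. Recall that for an anti-holomorphic involution $\tau$ of $G$ one has the $\tau$-Hermitian dual functor $M\mapsto M^{h,\tau}$ on $(\fg,K)$-modules, which on underlying vector spaces sends $M$ to its $K$-finite dual equipped with the conjugate complex structure and with $\fg$ acting through $\tau$. For an irreducible $M$, a non-degenerate invariant Hermitian form for the pair $(\fg^{\tau}_{\RR},K_{\RR})$ is precisely the datum of an isomorphism $M\xrightarrow{\sim}M^{h,\tau}$, and any two such differ by a non-zero real scalar (Schur's lemma together with conjugate-symmetry). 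Taking $\tau=\sigma_c$ produces the \emph{compact} Hermitian dual and taking $\tau=\sigma$ the one relevant to $G_{\RR}$; since $\sigma=\theta\sigma_c=\sigma_c\theta$, twisting the $\fg$-action first by $\sigma_c$ and then by $\theta$ gives a canonical identification $M^{h,\sigma}\cong\theta^{*}(M^{h,\sigma_c})$.

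First I would prove (i), which is the only substantial point. The claim is that the compact Hermitian dual fixes every irreducible $(\fg,K)$-module with real infinitesimal character, i.e.\ $M\cong M^{h,\sigma_c}$. One checks that $M^{h,\sigma_c}$ is again irreducible; that it has the same infinitesimal character as $M$ --- and this is exactly where reality of the infinitesimal character is needed, since the complex conjugation built into the Hermitian dual would otherwise alter it --- and that it has the same lowest $K$-types with the same multiplicities as $M$. By Vogan's classification of irreducible $(\fg,K)$-modules in terms of their lowest $K$-type data together with an infinitesimal character (minimal $K$-types and final parameters), these invariants determine $M$ up to isomorphism, so $M^{h,\sigma_c}\cong M$. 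Any choice of isomorphism then furnishes a non-degenerate $(\fu_{\RR},K_{\RR})$-invariant sesquilinear form, and rescaling by a scalar of modulus one makes it Hermitian; this produces $\langle\cdot,\cdot\rangle_c$ and proves (i).

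Granting (i), part (ii) is formal: a non-degenerate $(\fg_{\RR},K_{\RR})$-invariant Hermitian form on the irreducible $M$ exists if and only if $M\cong M^{h,\sigma}$, and by the identification above and (i) we have $M^{h,\sigma}\cong\theta^{*}(M^{h,\sigma_c})\cong\theta^{*}M$, so this holds precisely when $M\cong\theta^{*}M$. For (iii), given a $(\fg,K)$-isomorphism $\theta\colon M\to\theta^{*}M$ with $\theta^{2}=1$, one verifies directly that $\langle x,y\rangle:=\langle\theta(x),y\rangle_c$ is $K_{\RR}$-invariant and that each $\xi\in\fg_{\RR}$ acts by a skew-Hermitian operator, using that $\theta$ intertwines the $\fg$-action with its $\theta$-twist and that $\xi$ is $\sigma$-fixed; conjugate-symmetry $\langle y,x\rangle=\overline{\langle x,y\rangle}$ then follows from the Hermitian symmetry of $\langle\cdot,\cdot\rangle_c$ and a short computation with the defining properties of $\langle\cdot,\cdot\rangle_c$ and the hypothesis $\theta^{2}=1$.

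The main obstacle is (i), and within it the classification input: one needs both that irreducible $(\fg,K)$-modules are pinned down by lowest $K$-type data and infinitesimal character, and that the compact Hermitian dual preserves exactly this data. The second point is not entirely formal --- it requires tracking the interaction of Hermitian duality with cohomological induction and with the correspondence between Langlands parameters and minimal $K$-types --- and is precisely the work carried out in \cite{ALTV}. Once that dictionary is in place, parts (ii) and (iii) are bookkeeping with Hermitian duals and the $\theta$-twist.
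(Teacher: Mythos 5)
The paper does not prove Proposition \ref{prop:Hermitiancriterion} at all; it is stated as a citation to \cite{ALTV}, so there is no internal proof to compare against. Your sketch is a faithful account of the ALTV mechanism: invariant Hermitian forms as isomorphisms to Hermitian duals, the relation $M^{h,\sigma} \cong \theta^*(M^{h,\sigma_c})$, the existence of the $c$-form as the substantive input, and (ii)--(iii) as formal consequences. That matches what the citation is appealing to.

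One imprecision worth flagging in your argument for (i): you assert that an irreducible $(\fg,K)$-module is determined up to isomorphism by its infinitesimal character together with its lowest $K$-types. Stated that baldly, this is not Vogan's classification theorem; what is true is that irreducibles are classified by full Langlands (or Vogan--Zuckerman) parameters, and lowest $K$-types enter as part of the tempered/discrete-series data attached to the Levi factor, not as a stand-alone invariant that, jointly with the infinitesimal character, uniquely pins down the module. The actual content of the ALTV proof of $M \cong M^{h,\sigma_c}$ is that the compact Hermitian dual acts trivially on the set of Langlands parameters with real infinitesimal character (one checks this for standard modules and then passes to irreducible quotients), and this requires tracking how Hermitian duality interacts with parabolic and cohomological induction --- you correctly flag this as the nontrivial part. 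So the route is right, but the specific classification statement you lean on should be replaced by the genuine parameter-level argument. Parts (ii) and (iii) are then, as you say, direct bookkeeping: (ii) from the identification $M^{h,\sigma} \cong \theta^*(M^{h,\sigma_c}) \cong \theta^*M$, and (iii) by unwinding invariance of $\langle\cdot,\cdot\rangle_c$, the intertwining identity $\theta(\xi v)=\theta(\xi)\theta(v)$, and $\sigma$-fixedness of $\xi \in \fg_\RR$, with $\theta^2 = 1$ ensuring conjugate-symmetry.
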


If $M$ is a finitely-generated $(\fg,K)$-module equipped with a $K$-invariant good filtration $F_{\bullet}M$, then $\gr^F(M)$ has the structure of a finitely-generated $K$-equivariant $S(\fg/\fk)$-module. 

\begin{theorem}[{\cite[Theorem 8.2]{DV}}]\label{thm:unitarity}
Let $M$ be an irreducible $(\fg,K)$-module of real infinitesimal character. Suppose that $M$ is Hermitian, and fix an isomorphism $\theta:M \to \theta^*M$ as in Proposition \ref{prop:Hermitiancriterion}(iii). Then the following are true
\begin{itemize}
    \item[(i)] $\theta$ preserves the Hodge filtration $F^H_{\bullet}M$ and therefore induces an involution $\gr^{F^H}(\theta)$ of $\gr^{F^H}(M)$ (as a $K \times \CC^{\times}$-representation). 
    \item[(ii)] $M$ is unitary if and only if $\gr^{F^H}(\theta)$ acts by $(-1)^p$ on $\gr^{F^H}_p(M)$ for all $p$, or by $(-1)^{p+1}$ for all $p$.
\end{itemize}
\end{theorem}

Theorem \ref{thm:unitarity} has the following easy corollary.

\begin{cor}\label{cor:indecomposable}
Let $M$ be an irreducible Hermitian $(\fg,K)$-module of real infinitesimal character such that $\gr^{F^H}(M)$ is indecomposable as a $K$-equivariant $S(\fg/\fk)$-module. Then $M$ is unitary.
\end{cor}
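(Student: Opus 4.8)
The plan is to deduce Corollary \ref{cor:indecomposable} from Theorem \ref{thm:unitarity} by a short parity argument. Since $M$ is irreducible, Hermitian, and of real infinitesimal character, Proposition \ref{prop:Hermitiancriterion} provides an isomorphism $\theta \colon M \to \theta^*M$ which we may normalize so that $\theta^2 = 1$; Theorem \ref{thm:unitarity}(i) then gives an induced involution $\gr^{F^H}(\theta)$ of $\gr^{F^H}(M)$ as a $K \times \CC^\times$-representation, so in particular it is $S(\fg/\fk)$-linear and preserves each graded piece $\gr^{F^H}_p(M)$ up to the $\CC^\times$-grading.

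The key observation is that $\gr^{F^H}(\theta)$ gives a $\ZZ/2$-grading $\gr^{F^H}(M) = N^+ \oplus N^-$ into its $(+1)$- and $(-1)$-eigenspaces, and because $\gr^{F^H}(\theta)$ commutes with the $K$-action and with the $S(\fg/\fk)$-action, both $N^+$ and $N^-$ are $K$-equivariant $S(\fg/\fk)$-submodules. By indecomposability of $\gr^{F^H}(M)$, one of them must be zero; say $\gr^{F^H}(\theta) = \epsilon \cdot \mathrm{id}$ with $\epsilon \in \{+1, -1\}$ globally. Now I would invoke Theorem \ref{thm:unitarity}(ii): we need $\gr^{F^H}(\theta)$ to act by $(-1)^p$ on $\gr^{F^H}_p(M)$ for all $p$ with $\gr^{F^H}_p(M) \neq 0$, or by $(-1)^{p+1}$ for all such $p$. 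Since $\gr^{F^H}(\theta)$ acts by the constant $\epsilon$, this forces the \emph{set of $p$ with $\gr^{F^H}_p(M) \neq 0$ to lie in a single residue class mod $2$}, which is not automatic; so the parity argument alone is not quite enough and I need one more input.

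The missing input is that the Hodge filtration on an irreducible module is supported in \emph{consecutive} degrees in the appropriate sense after a suitable normalization — more precisely, I would argue that $\gr^{F^H}(M)$ is generated over $S(\fg/\fk)$ in a single degree, or use that multiplication by the degree-one part of $S(\fg/\fk)$ connects adjacent graded pieces. If $v \in \gr^{F^H}_p(M)$ is nonzero and $\xi \in \fg/\fk$ satisfies $\xi v \neq 0$ in $\gr^{F^H}_{p+1}(M)$, then since $\gr^{F^H}(\theta)$ is $S(\fg/\fk)$-linear and $\fg/\fk$ sits in $\CC^\times$-degree one, compatibility of $\gr^{F^H}(\theta)$ with the grading forces $\gr^{F^H}(\theta)(\xi v) = -\xi \cdot \gr^{F^H}(\theta)(v)$ if we track the Tate twist correctly, i.e. the eigenvalue flips by $-1$ between degree $p$ and $p+1$ on any piece reached by the cyclic action. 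Combined with indecomposability (which makes $\gr^{F^H}(M)$ a single ``connected'' module, so every nonzero graded piece is reachable from any other by applying elements of $S(\fg/\fk)$ and, on the dual side, differential operators), this shows $\gr^{F^H}(\theta)$ acts by $\pm(-1)^p$ on $\gr^{F^H}_p(M)$ uniformly, which is exactly the criterion of Theorem \ref{thm:unitarity}(ii). Hence $M$ is unitary.

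The main obstacle I anticipate is getting the bookkeeping of the Tate twist / $\CC^\times$-grading exactly right in the claim that $\gr^{F^H}(\theta)$ is ``$S(\fg/\fk)$-linear up to a sign shift by $p$'': one must be careful that $\theta$ twists the $\fg$-action by the Cartan involution, and $\fg/\fk$ is the $(-1)$-eigenspace of $\theta$, so $\theta$ acts by $-1$ on $\fg/\fk$ and this is precisely what produces the $(-1)^p$ in part (ii) of Theorem \ref{thm:unitarity}. Once that sign is pinned down, indecomposability does the rest: a $K$-equivariant $S(\fg/\fk)$-module on which an $S(\fg/\fk)$-semilinear involution acts cannot have both eigen-behaviors unless it decomposes. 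I would present this cleanly by first recording the semilinearity of $\gr^{F^H}(\theta)$ with respect to the grading, then observing the two ``corrected eigenspaces'' $\bigoplus_p \{v \in \gr^{F^H}_p(M) : \gr^{F^H}(\theta) v = (-1)^p v\}$ and $\bigoplus_p\{v : \gr^{F^H}(\theta)v = (-1)^{p+1}v\}$ are $K$-equivariant $S(\fg/\fk)$-submodules whose direct sum is all of $\gr^{F^H}(M)$, so indecomposability kills one of them, and Theorem \ref{thm:unitarity}(ii) applies.
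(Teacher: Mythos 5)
Your final paragraph gives exactly the paper's proof: the twisted involution $T(m) = (-1)^p\,\gr^{F^H}(\theta)(m)$ on $\gr^{F^H}_p(M)$ is a genuine $K$-equivariant $S(\fg/\fk)$-module involution, its eigenspaces give a direct sum decomposition, indecomposability kills one summand, and Theorem~\ref{thm:unitarity}(ii) concludes. The route you take to get there, however, contains a false step and an unnecessary detour.

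The false step is in your second paragraph: the naive eigenspaces $N^\pm$ of $\gr^{F^H}(\theta)$ are \emph{not} $S(\fg/\fk)$-submodules, because $\gr^{F^H}(\theta)$ does not commute with the $S(\fg/\fk)$-action — it \emph{anti}-commutes with the degree-one piece $\fg/\fk$ (precisely since $\theta$ acts on $\fg/\fk$ by $-1$). You go on to worry that the nonvanishing degrees might not lie in a single residue class mod $2$ and cast about for a connectivity or single-generator input, but that is a misdiagnosis: the real issue is that $N^\pm$ was never a decomposition by submodules, so indecomposability had nothing to bite on. Once you pass to the twisted operator $T$, the semilinear sign exactly cancels, $T$ is honestly $S(\fg/\fk)$-linear, and no further input about the support of the filtration is needed. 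So the middle of the argument should simply be deleted and replaced by the computation $T(\xi m) = (-1)^{p+1}\gr^{F^H}(\theta)(\xi m) = (-1)^{p+2}\xi\,\gr^{F^H}(\theta)(m) = \xi T(m)$ for $\xi \in \fg/\fk$ and $m \in \gr^{F^H}_p(M)$, after which your final paragraph is the proof.
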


\begin{proof}
    Since $M$ is Hermitian, Proposition \ref{prop:Hermitiancriterion}(ii) implies that there is a $K$-equivariant involution $\theta: M \to M$ such that
$$\theta(\xi v) = \theta(\xi)\theta(v), \qquad \xi \in \fg, \ v \in M.$$
By Theorem \ref{thm:unitarity}(i), this involution preserves the Hodge filtration $F_{\bullet}^H M$, and therefore induces a $K$-equivariant involution $\gr^{F^H}(\theta)$ of $\mathcal{F} := \gr^{F^H}(M)$ such that
$$\gr^{F^H}(\theta)( \xi m) = - \xi \gr^{F^H}(\theta)(m), \qquad \xi \in \fg, \ m \in \mathcal{F}.$$
Here we have used that $\theta(\xi) = -\xi$ in $\mf{g}/\mf{k}$.
Define a $K$-equivariant involution $T\colon \mathcal{F} \to \mathcal{F}$ by
$$T(m) = (-1)^p\theta(m), \qquad m \in \mathcal{F}_p = \gr_p^{F^H}(M).$$
Note that
$$T(\xi m) = (-1)^{p+1} \theta(\xi m) = (-1)^{p+2} \xi \theta(m) = (-1)^{2p+2} \xi T(m) = \xi T(m), \qquad \xi \in \fg, \ m \in \mathcal{F}_p.$$
So $T$ is an involution of $K$-equivariant $S(\fg/\fk)$-modules. Let $\mathcal{F}_{\pm}$ denote its $\pm 1$-eigenspaces. These are $K$-equivariant $S(\fg/\fk)$-modules, and there is a decomposition
$$\mathcal{F}= \mathcal{F}_+ \oplus \mathcal{F}_-$$
Since $\mathcal{F}$ is indecomposable, one of $\mathcal{F}_{\pm}$ is $0$. Thus, $M$ is unitary by Theorem \ref{thm:unitarity}(ii).
\end{proof}

\subsection{Associated cycles}\label{subsec:AC}

Let $\cN \subset \fg^*$ denote the nilpotent cone and let $\cN_{\theta}$ denote the set-theoretic intersection $\cN \cap (\fg/\fk)^*$. Note that $K$ acts on $\cN_{\theta}$ with finitely many orbits (\cite[Corollary 5.22]{Vogan1991}), denoted $\OO_{\theta,1},...,\OO_{\theta,n}$, and each $K$-orbit is a smooth Lagrangian subvariety in its $G$-saturation (\cite[Corollary 5.20]{Vogan1991}). Let $\Vect^K(\OO_{\theta,i})$ denote the category of $K$-equivariant vector bundles on $\OO_{\theta,i}$, and let $\mrm{K}_+\Vect^K(\OO_{\theta,i})$ denote the semigroup of effective classes in the Grothendieck group $\mrm{K}\Vect^K(\OO_{\theta,i})$ of $\Vect^K(\OO_{\theta,i})$. An \emph{associated cycle} for $(\fg,K)$ is a sum of the form
$$\sum_{i=1}^n [\mathcal{V}_i] \cdot \OO_{\theta,i}, \qquad [\mathcal{V}_i] \in \mrm{K}_+\Vect^K(\OO_{\theta,i})$$
subject to the following condition: if $[\mathcal{V}_i] \neq 0$, then $[\mathcal{V}_j]=0$ for every $j$ such that $\OO_{\theta,j} \subseteq \partial \OO_{\theta,i}$. Let $\mathrm{AC}(\fg,K)$ denote the set of associated cycles for $(\fg,K)$. We regard $\mathrm{AC}(\fg,K)$ as a semigroup, with addition defined by the formula
$$(\sum_{i=1}^n [\mathcal{V}_i] \cdot \OO_i) + (\sum_{i=1}^n [\mathcal{V}'_i] \cdot \OO_i) = (\sum_{i=1}^n [\mathcal{W}_i] \cdot \OO_i)$$
where $[\mathcal{W}_i]=[\mathcal{V}_i]+[\mathcal{V}'_i]$, unless there is a $j$ such that $\OO_i \subseteq \partial \OO_j$ and $[\mathcal{V}_j]+[\mathcal{V}'_j] \neq 0$, in which case we define $[\mathcal{W}_i]=0$.

Now let $\mathcal{F}$ be a $K$-equivariant coherent sheaf on $(\fg/\fk)^*$ with support contained in $\cN_{\theta}$. Write $\OO_{\theta,i_1},...,\OO_{\theta,i_p}$ for the maximal $K$-orbits in $\mathrm{Supp}(\mathcal{F})$. Choose a finite filtration by $K$-invariant subsheaves $0 = \mathcal{F}_0 \subsetneq \mathcal{F}_1 \subsetneq ... \subsetneq \mathcal{F}_n=\mathcal{F}$ such that each successive quotient $\mathcal{F}_k/\mathcal{F}_{k-1}$ is generically reduced along each irreducible component $\overline{\OO}_{\theta,i_j}$ of $\mathrm{Supp}(\mathcal{F})$ (see \cite[Proposition 2.9, Lemma 2.11]{Vogan1991}). Then $(\mathcal{F}_k/\mathcal{F}_{k-1})|_{\OO_{\theta,i_j}}$ is a $K$-equivariant vector bundle on $\OO_{\theta,i_j}$. We define
$$\AC(\mathcal{F}) := \sum_j (\sum_k [(\mathcal{F}_k/\mathcal{F}_{k-1})|_{\OO_{\theta,i_j}}] ) \cdot \OO_{\theta,i_j} \in \AC(\fg,K).$$
By \cite[Theorem 2.13]{Vogan1991}, $\AC(\mathcal{F})$ is independent of the choice of filtration on $\mathcal{F}$. Thus, it defines a function (in fact, a semigroup homomorphism)
$$\AC: \mrm{K}_+\Coh^K(\cN_{\theta}) \to \AC(\fg,K).$$
Now let $M$ be a finite-length $(\fg,K)$-module equipped with a $K$-invariant good filtration $F_{\bullet}M$. Then $\gr^F(M)$ has support contained in $\cN_{\theta}$. So $\gr^F(M)$ determines a class $[\gr^F(M)]$ of $\mrm{K}_+\Coh^K(\cN_{\theta})$. This class is independent of the choice of good filtration (\cite[Proposition 2.2]{Vogan1991}). So the passage from $M$ to $[\gr^F(M)]$ defines a function (in fact, a semigroup homomorphism)
$$\gr: \mrm{K}_+\Mod_{fl}(\fg,K) \to \mrm{K}_+\Coh^K(\cN_{\theta}).$$
If $M \in \Mod_{fl}(\fg,K)$, we define the \emph{associated cycle} of $M$ to be the associated cycle of $[\gr^F(M)]$
$$\AC(M) := \AC([\gr^F(M)])$$
We will need the following facts about associated varieties of irreducible $(\fg,K)$-modules. The first is closely related to Lemma \ref{lem:support}.

\begin{theorem}[{\cite[Theorem 8.4]{Vogan1991}}]\label{thm:V vs AV HC}
Let $M$ be an irreducible $(\fg,K)$-module and let $I=\Ann(M)$, a primitive ideal in $U(\fg)$. Let $\OO$ be the open dense $G$-orbit in $V(I)$. Then the maximal $K$-orbits in $\AV(M)$ belong to $\OO \cap (\fg/\fk)^*$. In particular
$$\dim \AV(M) = \frac{1}{2} \dim \OO.$$
\end{theorem}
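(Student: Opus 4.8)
\textbf{Proof plan for Theorem \ref{thm:V vs AV HC}.} This is a statement of Vogan about Harish-Chandra modules, cited from \cite[Theorem 8.4]{Vogan1991}, but the plan is to re-derive it from the tools assembled in this paper, in particular Lemma \ref{lem:support}. The first step is to observe that an irreducible $(\fg,K)$-module $M$ is automatically holonomic as a $U(\fg)$-module: indeed $M$ has an infinitesimal character by Schur's lemma, so $\AV(M) \subset \mc{N}$, and the Beilinson--Bernstein localization $\mc{M}$ of $M$ (at a dominant $\lambda$ with $\chi_\lambda$ the infinitesimal character) is a holonomic $(\mc{D}_{\widetilde\lambda}, K)$-module because it is $K$-equivariant with $K$ acting with finitely many orbits on $\mc{B}$; hence $M = \Gamma(\mc{M})$ is holonomic in the sense preceding Lemma \ref{lem:support}. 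Moreover $I = \Ann(M)$ is primitive by definition, and by \cite{Joseph1985} (quoted in \S\ref{subsec:ideals}) $V(I) = \overline{\OO}$ for a single nilpotent $G$-orbit $\OO$.

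Next I would invoke Lemma \ref{lem:support} directly: it gives that $\AV(M) \cap \OO$ is a non-empty Lagrangian subvariety of $\OO$ (for the Kirillov--Kostant form), and in particular $\dim \AV(M) = \tfrac12 \dim \OO$, which is exactly the displayed dimension formula. So the only genuinely new content to verify is the refinement that $\AV(M) \subset (\fg/\fk)^*$ and that the \emph{maximal} $K$-orbits in $\AV(M)$ lie in $\OO$. The containment $\AV(M) \subset (\fg/\fk)^*$ is immediate from the fact that, for a $(\fg,K)$-module, any $K$-invariant good filtration has $\gr^F(M)$ an $S(\fg/\fk)$-module (as recalled at the end of \S\ref{subsec:HCmodules}), so $\supp \gr^F(M) \subset (\fg/\fk)^* \cap \mc{N} = \mc{N}_\theta$; this also shows $\AV(M)$ is $K$-stable and hence a union of $K$-orbit closures, each of which (by \cite[Corollary 5.20]{Vogan1991}) is Lagrangian in its $G$-saturation.

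For the statement that the maximal $K$-orbits lie in $\OO$: since $\AV(M) \subset V(I) = \overline{\OO}$ always, every $K$-orbit in $\AV(M)$ is contained in some $G$-orbit $\OO' \subset \overline\OO$, so $\OO' = \OO$ or $\dim \OO' < \dim \OO$. If $\OmegaK$ is a maximal $K$-orbit in $\AV(M)$ lying in such an $\OO'$ with $\dim \OO' < \dim\OO$, then since $\overline{\OmegaK}$ is Lagrangian in $\overline{\OO'}$ we get $\dim \overline{\OmegaK} = \tfrac12 \dim \OO' < \tfrac12 \dim \OO$; but maximality forces $\dim \AV(M) = \dim\overline{\OmegaK}$, contradicting $\dim\AV(M) = \tfrac12\dim\OO$ from Lemma \ref{lem:support}. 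Hence all maximal $K$-orbits sit inside $\OO \cap (\fg/\fk)^*$, and the proof is complete. The main obstacle here is not any single step but keeping the bookkeeping between $G$-orbits, $K$-orbits, and Lagrangian dimensions straight; the real work has already been done in Lemma \ref{lem:support} (which in turn rests on Gabber's theorem and the comparison of symplectic forms under the Springer map), so modulo that input the argument is essentially formal.
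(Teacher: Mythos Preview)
The paper does not prove this statement; it is quoted without proof from \cite[Theorem~8.4]{Vogan1991}, so there is no in-paper argument to compare against.

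Your reduction to Lemma~\ref{lem:support} correctly yields the dimension formula $\dim\AV(M) = \tfrac12\dim\OO$ and the containment $\AV(M)\subset\cN_\theta$, and your holonomicity argument via localization is fine. The gap is the step ``maximality forces $\dim\AV(M)$ to equal the dimension of that component.'' Maximal here means maximal in the closure order, so each maximal $K$-orbit is dense in an irreducible component of $\AV(M)$; but nothing you have written rules out $\AV(M)$ having irreducible components of \emph{different} dimensions. Lemma~\ref{lem:support} guarantees at least one component of dimension $\tfrac12\dim\OO$ (namely one meeting $\OO$), yet it does not exclude a second maximal $K$-orbit lying inside some $\OO'\subset\partial\OO$, of dimension $\tfrac12\dim\OO' < \tfrac12\dim\OO$, whose closure is incomparable with the top-dimensional components. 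Proving that $\AV(M)$ is \emph{equidimensional} is precisely the substantive content of Vogan's theorem that is not captured by Lemma~\ref{lem:support}; it genuinely uses the irreducibility of $M$ as a $(\fg,K)$-module, which your argument never invokes after the holonomicity step.
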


\begin{theorem}[{\cite[Theorem 4.6]{Vogan1991}}]\label{thm:AVirred}
Let $M$ be an irreducible $(\fg,K)$-module and suppose there is a maximal $K$-orbit $\OO_{\theta,i}$ in $\AV(M)$ such that
$$\codim(\partial \OO_{\theta,i}, \overline{\OO}_{\theta,i}) \geq 2.$$
Then $\AV(M)$ is irreducible, i.e. $\AV(M) = \overline{\OO}_{\theta,i}$. 
\end{theorem}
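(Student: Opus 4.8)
\textbf{Proof proposal for Theorem \ref{thm:AVirred}.}

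The statement to be proved is a theorem of Vogan (\cite[Theorem 4.6]{Vogan1991}), so strictly speaking one only needs to cite it; but let me sketch the argument one would give. The setup: $M$ is irreducible, $\OO_{\theta,i}$ is a maximal $K$-orbit in $\AV(M)$ with the codimension hypothesis $\codim(\partial\OO_{\theta,i},\overline{\OO}_{\theta,i})\geq 2$, and we want $\AV(M)=\overline{\OO}_{\theta,i}$. The plan is to exploit the fact that $\AV(M)$ is a union of closures of $K$-orbits, and that it is coisotropic (in fact its maximal orbits are Lagrangian in their $G$-saturations, by \cite[Corollary 5.20]{Vogan1991} and Theorem \ref{thm:V vs AV HC}) together with the combinatorial constraint imposed on associated cycles.

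First I would recall that for a good $K$-invariant filtration $F_\bullet M$, the module $\gr^F M$ is a finitely generated $K$-equivariant $S(\fg/\fk)$-module supported on $\cN_\theta$, and every associated prime of $\gr^F M$ that is of maximal dimension corresponds to a maximal $K$-orbit in $\AV(M)$. By Theorem \ref{thm:V vs AV HC} these maximal orbits all lie in $\OO\cap(\fg/\fk)^*$ for $\OO$ the dense $G$-orbit in $V(\Ann M)$, hence they all have the same dimension $\tfrac12\dim\OO$; so $\AV(M)$ is equidimensional of this dimension. In particular $\OO_{\theta,i}$ is one among finitely many maximal orbits $\OO_{\theta,i}=\OO_{\theta,i_1},\dots,\OO_{\theta,i_r}$, all of the same dimension, and $\AV(M)=\bigcup_j\overline{\OO}_{\theta,i_j}$. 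The goal becomes: $r=1$.

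The key step is the irreducibility argument, which I would run by analyzing the behaviour of $\gr^F M$ in a neighbourhood of $\OO_{\theta,i}$. Since $\OO_{\theta,i}$ is maximal, $\gr^F M$ restricted to $\OO_{\theta,i}$ is a $K$-equivariant coherent sheaf whose generic rank along $\overline{\OO}_{\theta,i}$ is the multiplicity $m_i>0$. The module $M$ is irreducible, so $\gr^F M$ is generated over $S(\fg/\fk)$ by any $K$-stable subspace generating $M$ over $U(\fg)$; I would use this cyclicity together with the hypothesis $\codim(\partial\OO_{\theta,i},\overline{\OO}_{\theta,i})\geq 2$ to show that the submodule of $\gr^F M$ supported on $\overline{\OO}_{\theta,i}$ is forced to be all of $\gr^F M$. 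The point is that a section of $\gr^F M$ supported generically on $\overline{\OO}_{\theta,i}$ extends across the codimension-$\geq 2$ boundary $\partial\OO_{\theta,i}$ (normality/Hartogs-type reasoning on the smooth locus, using that $\gr^F M$ has no embedded components of dimension $>\dim\OO_{\theta,i}-1$ by equidimensionality), so the $S(\fg/\fk)$-submodule it generates is again supported on $\overline{\OO}_{\theta,i}$; combined with irreducibility of $M$ and the Lagrangian/coisotropic property (which prevents any ``extra'' component $\overline{\OO}_{\theta,i_j}$, $j\neq 1$, from being disjoint from the one generated), one concludes $\supp\gr^F M=\overline{\OO}_{\theta,i}$.

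The main obstacle is making precise how the codimension hypothesis interacts with the generation of $\gr^F M$: one has to rule out the possibility that $\gr^F M$ is built out of pieces supported on several $\overline{\OO}_{\theta,i_j}$ glued along lower-dimensional strata, and the clean way to do this is Vogan's observation that the \emph{associated cycle} $\AC(M)$ is a genuine invariant lying in $\AC(\fg,K)$, where by definition the presence of a nonzero coefficient on $\OO_{\theta,i}$ forbids nonzero coefficients on any $\OO_{\theta,j}\subseteq\partial\OO_{\theta,i}$ — but it does \emph{not} a priori forbid two incomparable maximal orbits. So the real content is the geometric input: the characteristic variety of an irreducible Harish-Chandra module, near a maximal orbit satisfying the codimension-$2$ condition, cannot split off another component. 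I would organize this as: (1) equidimensionality via Theorem \ref{thm:V vs AV HC}; (2) a local computation near $\OO_{\theta,i}$ showing $\gr^F M$ is, after the filtration, generically a vector bundle on $\overline{\OO}_{\theta,i}$ whose cyclic $S(\fg/\fk)$-span is all of $\gr^F M$; (3) conclude $\AV(M)=\overline{\OO}_{\theta,i}$. Since this is exactly \cite[Theorem 4.6]{Vogan1991}, in the paper itself one simply cites it; the sketch above indicates the structure of the proof one would reconstruct.
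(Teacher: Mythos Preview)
The paper does not give a proof of this statement; it simply records it as a citation to \cite[Theorem 4.6]{Vogan1991}. So your proposal to cite Vogan matches the paper exactly, and for the purposes of this manuscript nothing further is required.

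That said, your sketch of what the argument ``would be'' does not capture the actual mechanism of Vogan's proof, and the gap you yourself flag---that nothing in your outline rules out two \emph{incomparable} maximal $K$-orbits in $\AV(M)$---is exactly the crux. The cyclicity/Hartogs reasoning you propose only shows that the piece of $\gr^F M$ supported on $\overline{\OO}_{\theta,i}$ is well-behaved near $\OO_{\theta,i}$; it does not force the other components to vanish. The missing ingredient is a \emph{connectedness theorem} for associated varieties of simple modules: for an irreducible $M$, any two minimal primes of $\gr^F M$ can be joined by a chain of minimal primes in which consecutive members share a specialization of codimension~$1$ in each. (This is the content of the result immediately preceding Theorem~4.6 in \cite{Vogan1991}, going back to ideas of Gabber.) Granting this, the argument is one line: all maximal $K$-orbits in $\AV(M)$ have the same dimension (Theorem~\ref{thm:V vs AV HC}), so if there were another component $\overline{\OO}_{\theta,j}$, the chain linking it to $\overline{\OO}_{\theta,i}$ would force $\overline{\OO}_{\theta,i}$ to meet some other top-dimensional component in codimension~$1$, contradicting $\codim(\partial\OO_{\theta,i},\overline{\OO}_{\theta,i})\geq 2$. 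Your steps (1)--(3) never invoke this connectedness, which is why the argument stalls where you say it does.
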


\subsection{$\cW$-algebras}\label{subsec:W}

Let $\OO \subset \fg^*$ be a nilpotent co-adjoint orbit. Choose a nilpotent element $e \in \fg$ such that $\chi:=(e,\cdot) \in \OO$, where $(\cdot, \cdot)$ is the (possibly degenerate) Killing form on $\fg$. Choose also an $\mathfrak{sl}_2$-triple $(e,f,h) \in \fg^3$ with $f$ nilpotent and $h$ semisimple. Then the \emph{Slodowy slice} associated to $\OO$ is the affine subspace $S$ of $\fg^*$ defined by
$$S = \chi + (\fg^*)^f.$$
It is known that $S \cap \OO = \{\chi\}$ and that this intersection is transverse, see \cite[\S 2]{GanGinzburg}.

The ring of regular functions $\CC[S]$ has a number of important structures. First, the Poisson bracket on $S(\fg) = \CC[\fg^*]$ induces a Poisson bracket on $\CC[S]$, see \cite[\S 3]{GanGinzburg}. Second, the reductive group $R$ stabilizes $S$ and thus acts on $\CC[S]$, see \cite[\S 2]{LosevICM}. 
This action is Hamiltonian---its moment map is the restriction to $S$ of the natural projection $\fg^* \to \mathfrak{r}^*$. Lastly, there is a positive algebra grading on $\CC[S]$. To define it, consider the co-character $\gamma: \CC^{\times} \to G$ corresponding to $h$, and let $\CC^{\times}$ act on $\fg^*$ by
$$z \cdot \zeta = z^{-2} \Ad^*(\gamma(z))(\zeta), \qquad z \in \CC^{\times}, \ \zeta \in \fg^*.$$
This action stabilizes $S$, fixes $e$, and contracts the former onto the latter. Thus, it defines a positive grading on $\CC[S]$, see \cite[Sec 4]{GanGinzburg}.
The $\cW$-algebra associated to $\OO$ (or $\chi$ or $e$) is a certain filtered associative algebra $\cW$ such that $\gr(\cW) \cong \CC[S]$ (as graded Poisson algebras). For a precise definition of $\cW$, we refer the reader to \cite{Premet2002} or \cite[\S 2.3]{LosevICM}.

Consider the following sets
\begin{align*}
    \mathrm{Prim}_{\overline{\OO}}(U(\fg)) &:= \{\text{primitive ideals $I\subset U(\fg)$ such that $V(I) \subseteq \overline{\OO}$}\}\\
    \mathrm{Id}_{fin}(\cW) &:= \{\text{ideals in $\cW$ of finite codimension}\}
\end{align*}
In \cite[\S 3.4]{Losev3}, Losev defines a map
$$(\bullet)_{\ddagger}: \mathrm{Prim}_{\overline{\OO}}(U(\fg)) \to \mathrm{Id}_{fin}(\cW)$$

\begin{prop}[{\cite[Thm 1.2.2]{Losev3}}]\label{prop:propsofddagger}
The map $(\bullet)_{\ddagger}$ enjoys the following properties:
\begin{itemize}
    \item[(i)] Let $I \in \mathrm{Prim}_{\overline{\OO}}(U(\fg))$. Then
    $$I_{\ddagger} = \cW \iff V(I) \neq \overline{\OO}.$$
    \item[(ii)] Let $I \in \mathrm{Prim}_{\overline{\OO}}(U(\fg))$ and assume $V(I) = \overline{\OO}$. Then $m_{\overline{\OO}}(I) = \codim_{\cW}(I_{\ddagger})$.
    \item[(iii)] Let $I \in \mathrm{Prim}_{\overline{\OO}}(U(\fg))$. Then $I \subseteq (I_{\ddagger})^{\ddagger}$.
\end{itemize}
\end{prop}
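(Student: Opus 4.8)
The statement is a re-packaging of Losev's theorem \cite[Theorem 1.2.2]{Losev3}, so the plan is to recall the construction of the two maps $(\bullet)_\ddagger$ and $(\bullet)^\ddagger$ and indicate how each item follows; the genuine content all lives on the $\cW$-algebra side. First I would record the setup: Losev's \emph{restriction functor} $(\bullet)_\dagger$ sends a Harish-Chandra $U(\fg)$-bimodule to a finite-dimensional $\cW$-bimodule, built by completing along $\OO$ and passing to a transverse slice. For $I \in \mathrm{Prim}_{\overline{\OO}}(U(\fg))$ one sets $I_\ddagger := \mathrm{Ann}_{\cW}((U(\fg)/I)_\dagger)$, an ideal of finite codimension since $(U(\fg)/I)_\dagger$ is finite-dimensional. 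Dually, $(\bullet)^\dagger$ denotes the right adjoint of $(\bullet)_\dagger$, a ``naive induction'' functor back to Harish-Chandra $U(\fg)$-bimodules, and for a finite-codimension ideal $J \subset \cW$ one sets $J^\ddagger := \mathrm{Ann}_{U(\fg)}((\cW/J)^\dagger)$. I would also recall from \cite{Losev3} (building on \cite{Premet2002}) the basic exactness of $(\bullet)_\dagger$ and its compatibility with tensoring and with associated varieties.

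For items (i) and (ii), the key input is that $(U(\fg)/I)_\dagger = 0$ unless $\OO \subseteq V(I)$, and that when $V(I) = \overline{\OO}$ one has $\dim_{\CC}(U(\fg)/I)_\dagger = m_{\overline{\OO}}(I)$. Granting this: since $V(I) \subseteq \overline{\OO}$ by hypothesis, $(U(\fg)/I)_\dagger = 0$ is equivalent to $V(I) \neq \overline{\OO}$; and $(U(\fg)/I)_\dagger = 0$ holds iff $I_\ddagger = \cW$, which is (i). For (ii) I would observe that $(U(\fg)/I)_\dagger$ is generated over $\cW$ by the image of $1$, so the canonical surjection $\cW/I_\ddagger \to (U(\fg)/I)_\dagger$ is an isomorphism, whence $\codim_\cW I_\ddagger = \dim_\CC (U(\fg)/I)_\dagger = m_{\overline{\OO}}(I)$.

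For item (iii), which is the statement that $(\bullet)^\ddagger \circ (\bullet)_\ddagger$ is an inflationary closure operation, I would argue formally from the adjunction $(\bullet)_\dagger \dashv (\bullet)^\dagger$: applying the unit to $M = U(\fg)/I$ yields a morphism of Harish-Chandra bimodules $M \to (M_\dagger)^\dagger$ whose image is a quotient of $M$, hence annihilated by $I$, so $I \subseteq \mathrm{Ann}_{U(\fg)}((M_\dagger)^\dagger)$. One then identifies this annihilator with $(I_\ddagger)^\ddagger$, using that $(U(\fg)/I)_\dagger$ and $\cW/I_\ddagger$ have the same two-sided annihilator in $\cW$ and that $(\bullet)^\dagger$ respects this; the details are in \cite{Losev3}.

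\emph{Main obstacle.} The entire weight of the proposition sits in the multiplicity formula $\dim_\CC (U(\fg)/I)_\dagger = m_{\overline{\OO}}(I)$ used in (i) and (ii): this is the technical heart of \cite{Losev3}, resting on a careful analysis of how associated graded modules behave under restriction to the Slodowy slice, and it is precisely the statement one should cite rather than reprove. Once it is in hand, (i)--(iii) are routine manipulations of the adjoint pair $((\bullet)_\dagger, (\bullet)^\dagger)$, and in the context of the present paper the proposition may simply be quoted from \cite{Losev3}.
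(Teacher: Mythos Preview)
The paper gives no proof of this proposition: it is stated with the citation \cite[Thm 1.2.2]{Losev3} in the header and quoted as a black box, exactly as you conclude in your final paragraph. Your exposition of what lies behind the citation is reasonable (and more than the paper offers), though the cyclicity claim in your argument for (ii)---that $(U(\fg)/I)_\dagger$ is generated over $\cW$ by the image of $1$---would need justification if you were actually writing this out, since exactness of $(\bullet)_\dagger$ alone does not guarantee that cyclic modules go to cyclic modules.
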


Now fix $G_{\RR}$,$K_{\RR}$,$K$, and $\theta$ as in \S \ref{subsec:HCmodules}. Let $\fp \subset \fg$ denote the $-1$-eigenspace of $d\theta$ on $\fg$, so that $\fg = \fk \oplus \fp$. Suppose $\OO \cap (\fg/\fk)^* \neq \emptyset$ and choose a $K$-orbit $\OO_{\theta}$ in this intersection. Assume for the remainder of this subsection that
$$\codim(\partial \OO_{\theta}, \overline{\OO}_{\theta}) \geq 2.$$
Choose an $\mathfrak{sl}_2$-triple $(e,f,h)$ such that $(e,\cdot) \in \OO_{\theta}$, $f \in \fp$, and $h \in \fk$. Then $R = Z_G(e,f,h)$ is stable under $\theta$, and $R \cap K$ is a maximal reductive subgroup of $Z_K(e)$. A \emph{$(\cW,R \cap K)$-module} is a $\cW$-module $M$ equipped with a locally finite $R \cap K$-action such that
\begin{itemize}
    \item The action map $\cW \otimes M \to M$ is $R \cap K$-equivariant;
    \item The differential of the $R \cap K$-action on $M$ coincides with the $\cW$-action on $M$, restricted to $\mathfrak{r} \cap \fk \subset \mathcal{W}$.
\end{itemize}

Consider the following categories
\begin{align*}
    \Mod_{\overline{\OO}_{\theta}}(\fg,K) &:= \{\text{finitely-generated $(\fg,K)$-modules $M$ such that $\AV(M) \subseteq \overline{\OO}_{\theta}$}\}\\
    \Mod_{fin}(\cW,R \cap K) &:= \{\text{finite-dimensional Harish-Chandra $(\cW, R \cap K)$-modules}\}
\end{align*}
In \cite[\S 6]{Losev2014}, Losev defines a functor
$$(\bullet)_{\dagger}: \Mod_{\overline{\OO}_{\theta}}(\fg,K) \to \Mod_{fin}(\cW,R \cap K).$$
\begin{prop}[{\cite[\S 6.1]{Losev2014}, \cite[\S\S 3.3,3.4,4.4]{Losev2011}}]\label{prop:propsofdagger}
The functor $(\bullet)_{\dagger}$ enjoys the following properties:
\begin{itemize}
    \item[(i)] $(\bullet)_{\dagger}$ is exact.
    \item[(ii)] $(\bullet)_{\dagger}$ is full.
    \item[(iii)] The essential image of $(\bullet)_{\dagger}$ is closed under taking arbitrary subquotients.
    \item[(iv)] Let $M \in \Mod_{\overline{\OO}_{\theta}}(\fg,K)$. Then 
    $$M_{\dagger} = 0 \iff \AV(M) \neq \overline{\OO}_{\theta}.$$
    \item[(v)] Let $M \in \Mod_{\overline{\OO}_{\theta}}(\fg,K)$. Then 
    $$\Ann(M)_{\ddagger} \subseteq \Ann(M_{\dagger}).$$
    \item[(vi)] Let $M \in \Mod_{fg}(\fg,K)$ and assume that $\AV(M) = \overline{\OO}_{\theta}$. Let $[\mathcal{V}]$ denote the virtual $K$-equivariant vector bundle on $\OO_{\theta}$ determined by the virtual $R \cap K$-representation $M_{\dagger}$. Then
    $$\AC(M) = [\mathcal{V}] \cdot \OO_{\theta}.$$
\end{itemize}
\end{prop}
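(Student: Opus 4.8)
The proposition collects results of Losev, so the plan is to match each of (i)--(vi) to the corresponding statement in \cite{Losev2011} and \cite{Losev2014} and to record the minor translations of conventions that are needed. Recall that $(\bullet)_\dagger$ is constructed from Losev's restriction functor for Harish-Chandra bimodules: one realizes $M \in \Mod_{\overline{\OO}_\theta}(\fg,K)$ inside a category of $U(\fg)$-bimodules (twisting the right action by $\theta$ so that the diagonal $\fg$-action restricts to the $K$-action on $M$), applies the exact restriction functor $\bullet_\dagger$ of \cite[\S 3.3]{Losev2011} to land in finite-dimensional $\cW$-bimodules, and then uses the equivalence between these (equipped with compatible $R\cap K$-action) and finite-dimensional $(\cW, R\cap K)$-modules. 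Given this description, property (i) is immediate from the exactness of the bimodule restriction functor, and is recorded in \cite[\S 6.1]{Losev2014}.

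Properties (ii) and (iii) are the $R\cap K$-equivariant counterparts of known facts about $\bullet_\dagger$: fullness follows from \cite[\S 4.4]{Losev2011} (where it is deduced from the existence of a one-sided inverse to the restriction functor), and closure of the essential image under subquotients is \cite[\S 3.4]{Losev2011}; adjoining the locally finite $R\cap K$-action does not interfere with either argument. For (iv), the module $M_\dagger$ is (the $R\cap K$-equivariant upgrade of) the ``fiber at $\chi$ along the Slodowy slice $S$'' of $\gr^F M$, so $M_\dagger \neq 0$ exactly when $\supp\gr^F M$ meets $S$ in a neighborhood of $\chi$; since $\overline{\OO}_\theta$ is the largest orbit closure that can occur here, and occurs precisely when $\AV(M) = \overline{\OO}_\theta$ (using Theorem \ref{thm:AVirred} and the codimension hypothesis $\codim(\partial\OO_\theta, \overline{\OO}_\theta)\geq 2$), this happens if and only if $\AV(M) = \overline{\OO}_\theta$, as recorded in \cite[\S 6.1]{Losev2014}.

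For (v), since the ideal map $(\bullet)_\ddagger$ and the module functor $(\bullet)_\dagger$ are both built from the same restriction functor, $\Ann(M)_\ddagger$ annihilates $M_\dagger$, giving $\Ann(M)_\ddagger \subseteq \Ann(M_\dagger)$; this is \cite[\S 3.4]{Losev2011} (see also \cite[Thm.\ 1.2.2]{Losev3}, i.e.\ Proposition \ref{prop:propsofddagger}). Finally, (vi) expresses that $\bullet_\dagger$ computes the leading term of $M$ along $\OO_\theta$: unwinding the definition of the associated cycle from \S\ref{subsec:AC}, the multiplicity sheaf of $\gr^F M$ along the maximal orbit $\OO_\theta \cong K/(K\cap R)$ is the $K$-equivariant vector bundle induced from the virtual $R\cap K$-representation $M_\dagger$, so $\AC(M) = [\mathcal{V}]\cdot\OO_\theta$; this is \cite[\S 6.1]{Losev2014}.

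The only genuine obstacle is bookkeeping. One must check that the $\theta$-twist used to pass from $(\fg,K)$-modules to bimodules, the normalization of the Slodowy slice and of the contracting $\CC^\times$-action fixed in \S\ref{subsec:W}, and the identification of $\OO_\theta$ with a homogeneous space for $K$ whose point stabilizer meets $R$ in $R\cap K$, are all consistent with Losev's conventions, so that the equivariant structures and gradings on both sides of each statement match precisely. This is routine but essential for the cited results to apply verbatim in the present Harish-Chandra setting.
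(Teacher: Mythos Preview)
Your proposal is essentially correct and, in fact, more detailed than what the paper itself does: the paper states this proposition purely as a citation of Losev's results (\cite[\S 6.1]{Losev2014}, \cite[\S\S 3.3, 3.4, 4.4]{Losev2011}) and gives no proof at all. Your roadmap matching each of (i)--(vi) to the corresponding statements in Losev's work, together with the remarks on the bimodule realization and the bookkeeping about conventions, is a reasonable expansion of those bare citations.
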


\subsection{Unipotent representations: definitions}\label{subsec:unipotent}

Let $\OO \subset \fg^*$ be a nilpotent co-adjoint orbit and let $\widetilde{\OO}$ be a finite connected $G$-equivariant cover of $\OO$ (a \emph{nilpotent cover}, for short). In \cite[Section 6]{LMBM}, the authors associate to $\widetilde{\OO}$ a certain maximal ideal $I(\widetilde{\OO}) \subset U(\fg)$ called the \emph{unipotent ideal} attached to $\widetilde{\OO}$. We will now briefly recall the construction of this ideal and some of its basic properties. For additional details, we refer the reader to \cite[\S\S 4 and 6]{LMBM}.

The ring of regular functions $\CC[\widetilde{\OO}]$ has the structure of a finitely-generated graded Poisson algebra (see \cite[Lemma 2.5]{LosevHC}) with Poisson bracket of degree $-2$. A filtered quantization of $\CC[\widetilde{\OO}]$ is by definition a pair $(\cA,\tau)$ consisting of a filtered associative algebra $\cA=\bigcup_{i \geq 0} \cA_i$ such that
$$[\cA_i, \cA_j] \subseteq \cA_{i+j-2}$$
and an isomorphism $\tau: \gr(\cA) \xrightarrow{\sim} \CC[\widetilde{\OO}]$ of graded Poisson algebras. The set of isomorphism classes of filtered quantizations of $\CC[\widetilde{\OO}]$ is canonically parameterized by points in a complex vector space $\fh_{\widetilde{\OO}}$ (\cite[Theorem 3.4]{Losev4}). For each filtered quantization $(\cA,\tau)$ of $\CC[\widetilde{\OO}]$ there is a unique filtered algebra homomorphism $\Phi: U(\fg) \to \cA$ such that $\tau \circ \gr(\Phi): \CC[\fg^*]=S(\fg) \to \CC[\widetilde{\OO}]$ coincides with the algebra map induced by the natural map $\widetilde{\OO} \to \fg^*$ (\cite[Lemma 4.11.2]{LMBM}).

\begin{definition}[Definition 6.0.1, \cite{LMBM}]\label{def:unipotentideal}
The \emph{canonical quantization} of $\CC[\widetilde{\OO}]$ is the filtered quantization $(\cA_0,\tau_0)$ corresponding to the point $0 \in \fh_{\widetilde{\OO}}$. The \emph{unipotent ideal} attached to $\widetilde{\OO}$ is the kernel of the quantum co-moment map to the canonical quantization
$$I(\widetilde{\OO}) := \ker{(\Phi: U(\fg) \to \cA_0)}$$
\end{definition}

Now let $M \subset G$ be a Levi subgroup of $G$ and let $\OO_M \subset \fm^*$ be a nilpotent co-adjoint $M$-orbit. Choose a parabolic subgroup $Q \subset G$ with Levi factor $M$. The annihilator of $\fq$ in $\fg^*$ is a $Q$-stable subspace $\fq^{\perp} \subset \fg^*$. Form the $G$-equivariant fiber bundle $G \times^Q (\overline{\mathbb{O}}_M \times \mathfrak{q}^{\perp})$ over the partial flag variety $G/Q$. There is a proper $G$-equivariant map
$$G \times^Q (\overline{\mathbb{O}}_M \times \mathfrak{q}^{\perp}) \to \mathfrak{g}^* \qquad (g,\xi) \mapsto g\cdot \xi$$
The image of this map is the closure of a nilpotent co-adjoint $G$-orbit, which we denote by $\mathrm{Ind}^G_M\mathbb{O}_M \subset \fg^*$. This defines a correspondence
$$\mathrm{Ind}^G_M: \{\text{nilpotent } M\text{-orbits}\} \to \{\text{nilpotent } G\text{-orbits}\}$$
called \emph{Lusztig-Spaltenstein} induction. A nilpotent orbit is said to be \emph{rigid} if it cannot be obtained by induction from a proper Levi subgroup. 

Now let $\widetilde{\OO}_M \to \OO_M$ be a finite connected $M$-equivariant cover. Consider the affine variety $\widetilde{X}_M := \Spec(\CC[\widetilde{\OO}_M])$. There is an $M$-action on $\widetilde{X}_M$ and a finite $M$-equivariant surjection $\widetilde{X}_M \to \overline{\mathbb{O}}_M$. Consider the composite map
$$G \times^Q (\widetilde{X}_M \times \mathfrak{q}^{\perp}) \to G \times^Q (\overline{\mathbb{O}}_M \times \mathfrak{q}^{\perp}) \to \fg^*.$$
The image of this map is the closure of $\Ind^G_M \OO_M$, and the preimage of this $G$-orbit is a finite connected $G$-equivariant cover, which we denote by $\mathrm{Bind}^G_M\widetilde{\OO}_M$. This defines a correspondence
$$\mathrm{Bind}^G_M: \{\text{nilpotent covers for $M$}\} \to \{\text{nilpotent covers for $G$}\}$$
called \emph{birational induction}. A nilpotent cover is said to be \emph{birationally rigid} if it cannot be obtained via birational induction from a proper Levi subgroup. Of course, every rigid orbit is birationally rigid (when regarded as a cover), but there are non-rigid orbits which are birationally rigid.

We collect in the proposition below some of the basic properties of unipotent ideals.
\begin{prop}\label{prop:propsofunipotentideals}
Let $I(\widetilde{\OO}) \subset U(\fg)$ be the unipotent ideal attached to a nilpotent cover $\widetilde{\OO}$. Then the following are true:
\begin{itemize}
    \item[(i)] $I(\widetilde{\OO})$ is a maximal (and hence, primitive) ideal in $U(\fg)$;
    \item[(ii)] $V(I(\widetilde{\OO})) = \overline{\OO}$;
    \item[(iii)]  $m_{\overline{\OO}}(I(\widetilde{\OO}))=1$ if the covering map $\widetilde{\OO} \to \OO$ is Galois;
    \item[(iv)] The infinitesimal character of $I(\widetilde{\OO})$ is real;
    \item[(v)] $I(\widetilde{\OO})$ is very weakly unipotent (Definition \ref{def:weaklyunipotentideal}).
\end{itemize}
\end{prop}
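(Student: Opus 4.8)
The plan is to obtain (i)--(iv) from the structure theory of \cite{LMBM,MBM} and to reduce (v) to an explicit combinatorial statement about the infinitesimal character of $I(\widetilde{\OO})$.

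Parts (i)--(iv) are recorded in \cite[\S 6]{LMBM}. The maximality in (i) is the defining property of a unipotent ideal there. Granting (i), $I(\widetilde{\OO})$ is primitive, hence $V(I(\widetilde{\OO}))$ is irreducible by \cite{Joseph1985}; since the symbol map $\gr\Phi \colon S(\mf{g}) \to \gr\cA_0 \cong \CC[\widetilde{\OO}]$ is the comorphism of $\widetilde{\OO} \to \mf{g}^*$ (\cite[Lemma 4.11.2]{LMBM}), the inclusion $\gr I(\widetilde{\OO}) \subseteq \ker\gr\Phi$, whose zero locus is $\overline{\OO}$, gives $\overline{\OO} \subseteq V(I(\widetilde{\OO}))$, and a Gelfand--Kirillov dimension count (using $\operatorname{GKdim} U(\mf{g})/I(\widetilde{\OO}) = \dim\OO$) forces equality, which is (ii). Part (iii) is the multiplicity computation of \cite[\S 6]{LMBM}: for a Galois cover the canonical quantization is prime and $\gr\Phi$ is generically of degree $1$ along $\OO$. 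Part (iv), the reality of the infinitesimal character $\chi_\lambda$ of $I(\widetilde{\OO})$, is part of the explicit computation of unipotent infinitesimal characters in \cite{LMBM} (refined in \cite{MBM}): the canonical parameter is fixed by the natural real structure on Losev's parameter space $\mf{h}_{\widetilde{\OO}}$, and so maps to a real point of $\mf{h}^*/W$.

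The content of the proposition is (v). By (iv) we may fix $\lambda \in \mf{h}^*_\mb{R}$ with $\chi_\lambda$ the infinitesimal character of $I := I(\widetilde{\OO})$. By Proposition \ref{prop:weakly unipotent ideal} it suffices to verify condition \eqref{itm:weakly unipotent ideal 3}, which (since $\ker\chi_\gamma$ is a maximal ideal of $\mf{Z}(\mf{g})$) asks exactly that no central character $\chi_\gamma$ with $\gamma \in C^\circ(\lambda)$ occur in $U(\mf{g})/I \otimes F$. Since $I$ is maximal, $U(\mf{g})/I$ has infinitesimal character exactly $\chi_\lambda$, so for any finite-dimensional representation $F$ of $G^{\mathit{ad}}$ the only central characters occurring in $U(\mf{g})/I \otimes F$ are $\chi_{\lambda + \mu}$ for $\mu$ a weight of $F$; and since $F$ is a $G^{\mathit{ad}}$-module, $\mu$ lies in the root lattice $\mb{Z}\Phi$. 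If some such $\chi_\gamma$ with $\gamma \in C^\circ(\lambda)$ occurred, then $\gamma \in W(\lambda + \mu)$ for such a $\mu$, whence $\lambda + \mu \in W\gamma \subseteq C^\circ(\lambda) = C(\lambda) \setminus W\lambda$ (using $W$-stability of $C^\circ(\lambda)$). Thus \eqref{itm:weakly unipotent ideal 3} follows from the purely combinatorial assertion
\[ \text{for all } \mu \in \mb{Z}\Phi, \qquad \lambda + \mu \in C(\lambda) \ \Longrightarrow\ \lambda + \mu \in W\lambda : \]
no nontrivial root-lattice translate of $\lambda$ lies in the convex hull of $W\lambda$.

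To establish this I would use the explicit description of unipotent infinitesimal characters. By \cite{LMBM}, every unipotent ideal is birationally induced from a unipotent ideal attached to a birationally rigid cover of a Levi subgroup, with an explicit $\rho$-shift formula for $\lambda$, and the infinitesimal characters of the birationally rigid cases are tabulated in \cite{LMBM,MBM}. The essential point is that these $\lambda$ are extremely ``small'' --- for special unipotent ideals $\lambda = \tfrac12 h^{\vee}$ for a dual nilpotent, and the general unipotent values are comparable --- so that $C(\lambda)$ is small enough to make the displayed implication immediate case by case. I expect the main obstacle to be organizational rather than conceptual: running through the finite list of birationally rigid nilpotent covers in each simple type, reading off $\lambda$, checking the implication, and confirming that it survives the birational-induction $\rho$-shifts. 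A uniform criterion (for instance, bounding $\langle \lambda, \check\alpha\rangle$ over coroots together with matching the singular set of $\lambda$ to the geometry of $\overline{\OO}$) would be preferable, and I would look for one, but the finite case check is available as a fallback.
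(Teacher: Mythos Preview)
Your treatment of (i)--(iv) via citations to \cite{LMBM,MBM} is essentially the same as the paper's, though your reference for (i) is slightly off: maximality is a theorem (the paper cites \cite[Theorem 5.0.1]{MBM}), not part of the definition.

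The real problem is (v). Your reduction to the ``purely combinatorial assertion'' is a one-way implication: if $D^\circ(\lambda) = \emptyset$ then $I$ is very weakly unipotent. You then propose to prove $D^\circ(\lambda) = \emptyset$ for all unipotent $\lambda$. But this is false. Take the zero orbit $\OO = \{0\}$ in any $\mf{g}$ of rank $\geq 2$: the canonical quantization of $\CC[\{0\}] = \CC$ is $\CC$, the quantum co-moment map is the augmentation $U(\mf{g}) \to \CC$, and $I(\{0\})$ is the annihilator of the trivial module, with infinitesimal character $\rho$. Then $-\rho \in \mb{Z}\Phi$ and $\rho + (-\rho) = 0 \in C^\circ(\rho)$ (since $0$ lies in the convex hull of $W\rho$ but not in $W\rho$), so $D^\circ(\rho) \neq \emptyset$. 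Nevertheless $I(\{0\})$ \emph{is} very weakly unipotent: $U(\mf{g})/I(\{0\}) \otimes F \cong F$, whose central characters are $\chi_{\nu + \rho}$ for $\nu$ a dominant weight of a summand, and $\nu + \rho \in C^\circ(\rho)$ with $\nu$ dominant forces $\nu = 0$, which gives $\nu + \rho = \rho \notin C^\circ(\rho)$. The point is that very weak unipotence is a property of the \emph{ideal} (equivalently of some module it annihilates), not of $\lambda$ alone; your reduction throws away exactly the information that makes the property hold.

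The paper's proof of (v) does not attempt a combinatorial reduction. For classical $\mf{g}$ it invokes McGovern's result that unipotent ideals are $Q$-unipotent (hence weakly unipotent, hence very weakly unipotent). For exceptional $\mf{g}$ it extracts the finite list of unipotent infinitesimal characters from \cite{MBM} and, for each $\lambda$, runs an \texttt{atlas} computation: find a module $M$ with $\mrm{Ann}(M) = I$, enumerate $C^\circ(\lambda) \cap (\mb{Z}\Phi + \lambda)$, and check that the translation functor $T^\mu_\lambda M$ vanishes for each such $\mu$. Both routes use the module structure, not just $\lambda$. If you want to rescue your case-by-case plan, you would need to replace the false combinatorial condition with a genuine module-level check along these lines.
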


\begin{proof}
(i) is \cite[Theorem 5.0.1]{MBM}. (ii) is \cite[Proposition 6.1.2(i)]{LMBM}. (iii) follows from \cite[Proposition 6.1.2(iii)]{LMBM} and \cite[Proposition 5.1.1]{LMBM}. For (iv), we argue as follows. Choose a Levi subgroup $L \subset G$ and a birationally rigid nilpotent cover $\widetilde{\OO}_L$ such that $\widetilde{\OO} = \mathrm{Bind}^G_L \widetilde{\OO}_L$. Then choose a Levi subgroup $M \subset L$ and a birationally rigid nilpotent orbit $\OO_M$ such that $\OO_L = \Ind^L_M \OO_M$. By \cite[Proposition 8.1.3]{LMBM}, there is an equality in $\fh^*/W$
$$\lambda(\widetilde{\OO}) = \lambda(\OO_M) + \delta$$
where $\lambda(\widetilde{\OO}) \in \fh^*/W$ (resp. $\lambda(\OO_M) \in \fh^*/W_M$) denotes the infinitesimal character of $I(\widetilde{\OO})$ (resp. $I(\OO_M)$) and $\delta \in (\fh/[\fm,\fm] \cap \fh)^*$ is the `barycenter parameter' defined in \cite[Section 8.1]{LMBM}. It is clear from its construction that $\delta \in \fh_{\RR}^*$. Thus, we can reduce (iv) to the case of birationally rigid orbits. For such orbits, (iv) follows from the explicit formulas in \cite[Sections 8.2, 8.4]{LMBM}. The proof of (v) is relegated to Appendix \ref{sec:appendix}, in particular Propositions \ref{prop:weaklyunipotentA}, \ref{prop:weaklyunipotentBCD}, and \ref{prop:weaklyunipotentexceptional}. 
\end{proof}

We are now prepared to define our unipotent representations.

\begin{definition}\label{def:birigidunipotent}
Let $\OO \subset \cN$ be a birationally rigid nilpotent $G$-orbit. A \emph{birationally rigid unipotent representation} (attached to $\OO$) is an irreducible $(\fg,K)$-module $M$ such that $\mathrm{Ann}(M) = I(\OO)$. We write $\mathrm{Unip}_{\OO}(\fg,K)$ for the set of equivalence classes of birationally rigid unipotent representations attached to $\OO$.
\end{definition}

\begin{definition}\label{def:smallbdryunipotent}
Let $\OO_{\theta} \subset \cN_{\theta}$ be a nilpotent $K$-orbit such that 
$$\codim(\partial \OO_{\theta},\overline{\OO}_{\theta}) \geq 2,$$
and let $\OO = G \cdot \OO_{\theta}$, a nilpotent orbit in $\fg^*$. A \emph{small boundary unipotent representation} (attached to $\OO_{\theta}$) is an irreducible $(\fg,K)$-module $M$ such that $\mathrm{Ann}(M) = I(\OO)$, and $\mathrm{AV}(M) = \overline{\OO}_{\theta}$. We write $\mathrm{Unip}_{\OO_{\theta}}(\fg,K)$ for the set of equivalence classes of small boundary unipotent representations attached to $\OO_{\theta}$. 
\end{definition}

\begin{example}
The following are examples of small boundary unipotents:
\begin{itemize}
    \item  Let $\OO \subset \cN$ be a nilpotent $G$-orbit such that
    $$\codim(\partial \OO, \overline{\OO}) \geq 4,$$
    (for example, $\OO$ might be the minimal nilpotent orbit for a simple group not of type $A_1$). Let $M$ be an irreducible $(\fg,K)$-module such that $\mathrm{Ann}(M) = I(\OO)$. Then $M$ is a small boundary unipotent attached to $\OO$. 
    \item Suppose $G_{\RR}$ is a complex Lie group (so that $G$ is identified with $G_{\RR} \times G_{\RR}$ and $K$ is identified with a diagonal copy of $G_{\RR}$). Let $\OO \subset \cN$ be a nilpotent $G$-orbit, and let $M$ be an irreducible $(\fg,K)$-module such that $\mathrm{Ann}(M) = I(\OO)$. Then $M$ is a small boundary unipotent attached to $\OO$.
\end{itemize}
\end{example}

\subsection{Unipotent representations: associated cycles and Hermitian forms} \label{subsec:unipotent no hodge}

In this section, we will prove some facts about unipotent representations which \emph{do not} rely on the Cohen-Macaulay property for very weakly unipotent modules. First, we examine the associated cycles of unipotent representations.

Recall the semigroup $\AC(\fg,K)$ defined in \S \ref{subsec:AC}.

\begin{definition}\label{def:irreducibleAC}
An associated cycle $\sum_i [\mathcal{V}_i] \cdot \OO_i \in \AC(\fg,K)$ is \emph{irreducible} if
\end{definition}
\begin{itemize}
    \item There is a unique $i$ such that $[\mathcal{V}_i] \neq 0$.
    \item $[\mathcal{V}_i]$ is the class of an irreducible $K$-equivariant vector bundle.
\end{itemize}

\begin{prop}\label{prop:smallboundaryAC}
Let $\OO_{\theta} \subset \cN_{\theta}$ be a nilpotent $K$-orbit such that
$$\codim(\partial \OO_{\theta}, \overline{\OO}_{\theta}) \geq 2.$$
Then the following are true:
\begin{itemize}
    \item[(i)] If $M \in \mathrm{Unip}_{\OO_{\theta}}(\fg,K)$, then $\AC(M)$ is irreducible.
    \item[(ii)] The map 
    $$\AC: \mathrm{Unip}_{\OO_{\theta}}(\fg,K) \to \AC(\fg,K)$$
    is injective. 
\end{itemize}
\end{prop}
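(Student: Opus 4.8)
\textbf{Proof strategy for Proposition \ref{prop:smallboundaryAC}.}

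The plan is to deduce both parts from the properties of Losev's dagger functor $(\bullet)_\dagger$ collected in Proposition \ref{prop:propsofdagger}, together with the very weak unipotence and maximality of the unipotent ideal $I(\OO)$ (Proposition \ref{prop:propsofunipotentideals}). The key observation is that a small-boundary unipotent representation $M$ attached to $\OO_\theta$ has $\mathrm{AV}(M) = \overline{\OO}_\theta$ by definition, and $\Ann(M) = I(\OO)$ with $V(I(\OO)) = \overline{\OO}$ and $\OO = G\cdot \OO_\theta$, so $M$ lies in $\Mod_{\overline{\OO}_\theta}(\fg, K)$ and Proposition \ref{prop:propsofdagger}(vi) applies: $\AC(M) = [\mathcal{V}]\cdot \OO_\theta$ where $[\mathcal V]$ is the virtual $K$-equivariant bundle on $\OO_\theta$ determined by the finite-dimensional Harish-Chandra $(\cW, R\cap K)$-module $M_\dagger$. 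Thus everything reduces to understanding $M_\dagger$.

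For part (i), I would first argue that $M_\dagger$ is an \emph{irreducible} $(\cW, R\cap K)$-module. Since $(\bullet)_\dagger$ is exact and full (Proposition \ref{prop:propsofdagger}(i),(ii)) with essential image closed under subquotients (iii), and $M_\dagger \neq 0$ by (iv) (as $\mathrm{AV}(M) = \overline{\OO}_\theta$), any proper nonzero $(\cW, R\cap K)$-submodule of $M_\dagger$ would be hit by a proper nonzero submodule of $M$, contradicting irreducibility of $M$. Hence $M_\dagger$ is irreducible, so its class in the representation ring of $R\cap K$ is (up to the nonzero scalar coming from the rank) that of a single irreducible $R\cap K$-representation; under the equivalence between $R\cap K$-representations and $K$-equivariant vector bundles on $\OO_\theta \cong K/(R\cap K)\cdot U$ (where the unipotent part acts trivially, cf. \S\ref{subsec:free-monodromic}), this corresponds to an irreducible $K$-equivariant vector bundle $\mathcal V$. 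I should check that $[\mathcal V]$ is genuinely effective (not merely virtual) — this follows because $M_\dagger$ is an honest module, not a virtual one, so the associated bundle is honest. Therefore $\AC(M) = [\mathcal V]\cdot \OO_\theta$ with $[\mathcal V]$ irreducible effective, which is exactly the definition of an irreducible associated cycle (Definition \ref{def:irreducibleAC}).

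For part (ii), suppose $M, M' \in \mathrm{Unip}_{\OO_\theta}(\fg, K)$ have $\AC(M) = \AC(M')$. By part (i) and Proposition \ref{prop:propsofdagger}(vi), this forces $M_\dagger \cong M'_\dagger$ as $(\cW, R\cap K)$-modules (two irreducible bundles with the same class are isomorphic, hence the corresponding irreducible $R\cap K$-representations agree, hence the modules agree — here I use that both are irreducible with the same annihilator, which by Proposition \ref{prop:propsofdagger}(v) contains $I(\OO)_\ddagger$, a \emph{finite-codimension} ideal of $\cW$ by Proposition \ref{prop:propsofddagger}(ii) combined with maximality of $I(\OO)$). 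The main obstacle is the final step: recovering $M$ from $M_\dagger$. Here I would invoke that the dagger functor restricts to a \emph{bijection} between irreducible $(\fg, K)$-modules with annihilator $I(\OO)$ and associated variety $\overline{\OO}_\theta$ on the one hand, and irreducible $(\cW, R\cap K)$-modules annihilated by $I(\OO)_\ddagger$ on the other — this is the content of Losev's and Losev--Ostrik's theory (e.g. \cite{Losev2011, Losev3}), using that $I(\OO)_\ddagger = (I(\OO)_\ddagger)$ is a proper finite-codimension ideal because $I(\OO)$ is maximal and $V(I(\OO)) = \overline{\OO}$ exactly. Since $M_\dagger \cong M'_\dagger$, injectivity of this bijection gives $M \cong M'$, proving (ii). I expect the bookkeeping around effectivity of $[\mathcal V]$ and the precise statement of the relevant bijection (rather than just fullness/exactness) to be the part requiring the most care; it may be cleanest to phrase it via Proposition \ref{prop:propsofdagger}(ii),(iii) plus a dimension/multiplicity count using Proposition \ref{prop:propsofddagger}(ii).
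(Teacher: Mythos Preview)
Your overall strategy via Losev's dagger functor is the same as the paper's, and your argument that $M_\dagger$ is an irreducible $(\cW, R\cap K)$-module is fine. But there is a genuine gap in part (i): an irreducible $(\cW, R\cap K)$-module need not be irreducible as an $R\cap K$-representation, so your claim that ``its class in the representation ring of $R\cap K$ is \ldots\ that of a single irreducible $R\cap K$-representation'' does not follow. The missing ingredient is that $I(\OO)_\ddagger$ has codimension \emph{one} (not merely finite codimension) in $\cW$. This comes from Proposition \ref{prop:propsofunipotentideals}(iii), which gives $m_{\overline{\OO}}(I(\OO)) = 1$ since the trivial cover $\OO \to \OO$ is Galois, combined with Proposition \ref{prop:propsofddagger}(ii). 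Once you know $\codim_\cW(I(\OO)_\ddagger) = 1$, Proposition \ref{prop:propsofdagger}(v) forces $\cW$ to act on $M_\dagger$ by a single character, so irreducibility as a $(\cW, R\cap K)$-module is the same as irreducibility as an $R\cap K$-representation. Maximality of $I(\OO)$ alone does not give this.

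For part (ii), your approach diverges from the paper's. You appeal to an external bijection from Losev's theory, whereas the paper argues directly: once $M_\dagger \cong M'_\dagger$ in $\Mod_{fin}(\cW, R\cap K)$ (which again uses the codimension-one fact to upgrade an isomorphism of $R\cap K$-representations to one of $(\cW, R\cap K)$-modules), fullness of $(\bullet)_\dagger$ lifts it to a morphism $\tilde\phi \colon M \to M'$, and then one shows $\ker\tilde\phi = \coker\tilde\phi = 0$ by a support argument using maximality of $I(\OO)$ and Theorem \ref{thm:V vs AV HC}. This is more self-contained than invoking a bijection whose precise hypotheses you would otherwise need to track down.
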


\begin{proof}
Fix the notation of \S \ref{subsec:W}, e.g. $\cW$, $R$, $\Mod_{fin}(\cW,R \cap K)$, $\Mod_{\overline{\OO}_{\theta}}(\fg,K)$, and so on. Recall the map
    $$(\bullet)_{\ddagger}: \mathrm{Prim}_{\overline{\OO}}(U(\fg)) \to \mathrm{Id}_{fin}(\cW)$$
    and the functor
    $$(\bullet)_{\dagger}: \Mod_{\overline{\OO}_{\theta}}(\fg,K) \to \Mod_{fin}(\cW, R \cap K).$$
    Let $\OO = G \cdot \OO_{\theta}$ and let $I=I(\OO)$. By Proposition \ref{prop:propsofunipotentideals}(iii), $m_{\overline{\OO}}(I)=1$. So by Proposition \ref{prop:propsofddagger}(ii), we have $\codim_{\cW}(I_{\ddagger})=1$. 

    Let $M \in \mathrm{Unip}_{\OO_{\theta}}(\fg,K)$ and let $\AC(M) = [\mathcal{V}] \cdot \OO_{\theta}$. Let $\rho$ be the (virtual) representation of $R \cap K$ corresponding to $[\mathcal{V}]$. For (i), we wish to show that $\rho$ is irreducible. 
    
    It is a formal consequence of (i), (ii), and (iii) of Proposition \ref{prop:propsofdagger} that $(\bullet)_{\dagger}$ takes irreducibles to irreducibles. So $M_{\dagger}$ is an irreducible object in $\Mod_{fin}(\cW,R \cap K)$. By Proposition \ref{prop:propsofdagger}(v), we have that $I_{\ddagger} \subseteq \Ann(M_{\dagger})$. But since $I_{\ddagger} \subset \cW$ is of codimension 1, $\cW$ acts by scalars on $X_{\dagger}$. So in fact $M_{\dagger}$ is irreducible as a representation of $R \cap K$. By property (vi) of $(\bullet)_{\dagger}$, we have $\rho \cong X_{\dagger}$ as $R \cap K$-representations. So $\rho$ is irreducible, as asserted.

    For (ii), suppose $M,N \in \mathrm{Unip}_{\OO_{\theta}}(\fg,K)$ and $\AC(M)=\AC(N)$. Then by Proposition \ref{prop:propsofdagger}(vi), there is an isomorphism of $R \cap K$-representations $\phi: X_{\dagger} \cong Y_{\dagger}$. Recall that $\cW$ acts on $M_{\dagger}$, $N_{\dagger}$ by a character (which is determined by $I$). So $\phi$ is an isomorphism in $\Mod_{fin}(\cW,R \cap K)$. Since $(\bullet)_{\dagger}$ is full, there is a morphism $\tilde{\phi}: M \to N$ such that $(\tilde{\phi})_{\dagger}=\phi$. And since $\phi$ is an isomorphism, $(\ker{\tilde{\phi}})_{\dagger}=(\coker{\tilde{\phi}})_{\dagger}=0$. By Proposition \ref{prop:propsofdagger}(iv), this implies $\AV(\ker{\tilde{\phi}}), \AV(\coker{\tilde{\phi}}) \subseteq \partial \OO_{\theta}$. We will show that this implies $\ker{\tilde{\phi}}=\coker{\tilde{\phi}}=0$. Clearly $I$ annihilates $\ker{\tilde{\phi}}$. But if $\Ann(\ker{\tilde{\phi}}) = I$, then $\dim \AV(\ker{\tilde{\phi}}) = \dim \OO_{\theta}$ by Theorem \ref{thm:V vs AV HC}. This contradicts the inclusion $\AV(\ker{\tilde{\phi}}) \subseteq \partial \OO_{\theta}$. So $I \subsetneq \Ann(\ker{\tilde{\phi}})$. Since $I$ is maximal (Proposition \ref{prop:propsofunipotentideals}(i)), this forces $\Ann(\ker{\tilde{\phi}})=U(\fg)$, i.e. $\ker{\tilde{\phi}}=0$. We have $\coker{\tilde{\phi}}=0$ by an identical argument. Thus, $\tilde{\phi}: M \to N$ is an isomorphism. This completes the proof of (ii).
\end{proof}

\begin{prop}[\cite{LosevYu}]\label{prop:birigidAC}
Let $\OO \subset \cN$ be a birationally rigid nilpotent $G$-orbit. Then the following are true:
\begin{itemize}
    \item[(i)] If $M \in \mathrm{Unip}_{\OO}(\fg,K)$, then $\AC(M)$ is irreducible.
    \item[(ii)] The map 
    $$\AC: \mathrm{Unip}_{\OO}(\fg,K) \to \AC(\fg,K)$$
    is injective. 
\end{itemize}
\end{prop}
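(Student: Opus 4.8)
The plan is to mimic the proof of Proposition \ref{prop:smallboundaryAC} almost verbatim, replacing the $\cW$-algebraic input with the statement that birationally rigid unipotent representations have \emph{birationally rigid} associated varieties, which is exactly what allows the dagger functor machinery to apply. More precisely, let $M \in \mathrm{Unip}_{\OO}(\fg,K)$, so that $\Ann(M) = I(\OO)$. By Proposition \ref{prop:propsofunipotentideals}(ii) we have $V(\Ann(M)) = \overline{\OO}$, and by Theorem \ref{thm:V vs AV HC} the maximal $K$-orbits in $\AV(M)$ are contained in $\OO \cap (\fg/\fk)^*$ and satisfy $\dim \AV(M) = \tfrac{1}{2}\dim\OO$. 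The key geometric point — this is the content of \cite{LosevYu}, which I would cite — is that since $\OO$ is birationally rigid, any maximal $K$-orbit $\OO_\theta$ in $\AV(M)$ has small boundary, i.e.\ $\codim(\partial\OO_\theta, \overline{\OO}_\theta) \geq 2$; hence by Theorem \ref{thm:AVirred}, $\AV(M)$ is irreducible, $\AV(M) = \overline{\OO}_\theta$. Once we know this, $M$ is in fact a small-boundary unipotent representation attached to $\OO_\theta$ (in the sense of Definition \ref{def:smallbdryunipotent}), and both assertions follow: (i) is immediate from Proposition \ref{prop:smallboundaryAC}(i), and for (ii) we observe that if $M, N \in \mathrm{Unip}_{\OO}(\fg,K)$ with $\AC(M) = \AC(N)$, then $\AV(M) = \AV(N) = \overline{\OO}_\theta$ for a single $K$-orbit $\OO_\theta$ (read off from the common associated cycle), so $M, N \in \mathrm{Unip}_{\OO_\theta}(\fg,K)$ and injectivity follows from Proposition \ref{prop:smallboundaryAC}(ii).

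Alternatively — and this is perhaps cleaner if one does not wish to route everything through Proposition \ref{prop:smallboundaryAC} — I would run the dagger functor argument directly. Fix an $\mathfrak{sl}_2$-triple adapted to $\OO_\theta$ as in \S\ref{subsec:W}, form $\cW$ and $R$, and use that $I(\OO)$ has multiplicity one along $\overline{\OO}$ (Proposition \ref{prop:propsofunipotentideals}(iii), noting that a birationally rigid orbit, regarded as a cover of itself, has Galois covering map), hence $\codim_\cW I(\OO)_\ddagger = 1$ by Proposition \ref{prop:propsofddagger}(ii). Then $M_\dagger$ is an irreducible finite-dimensional $(\cW, R\cap K)$-module on which $\cW$ acts through a character, so $M_\dagger$ is irreducible as an $R\cap K$-representation; by Proposition \ref{prop:propsofdagger}(vi) the virtual bundle $\mathcal{V}$ with $\AC(M) = [\mathcal{V}]\cdot\OO_\theta$ is the bundle associated to $M_\dagger$, which is therefore an irreducible $K$-equivariant vector bundle, proving (i). For (ii), an isomorphism $\AC(M) = \AC(N)$ gives an $R\cap K$-isomorphism $M_\dagger \cong N_\dagger$ which is automatically a $(\cW, R\cap K)$-isomorphism (both sides have the $\cW$-action determined by $I(\OO)$); fullness of $(\bullet)_\dagger$ lifts it to $\tilde\phi\colon M \to N$, and the maximality and primitivity of $I(\OO)$ force $\ker\tilde\phi = \coker\tilde\phi = 0$ exactly as in the proof of Proposition \ref{prop:smallboundaryAC}(ii).

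The one genuinely non-formal ingredient is the claim that $\AV(M)$ has small boundary when $\OO$ is birationally rigid — this is where I expect the main obstacle to lie, and it is the reason the statement is attributed to \cite{LosevYu} rather than proved here. The content is that the singularities of $\overline{\OO}$ are, in the birationally rigid case, controlled well enough (e.g.\ $\overline{\OO}$ has $\QQ$-factorial terminal, or symplectic, singularities of the relevant codimension) that the Lagrangian subvariety $\AV(M) \cap (\fg/\fk)^*$ cannot meet the boundary in codimension one. Assuming that input, everything else is a routine transcription of the small-boundary argument, and the two assertions drop out immediately. I would therefore structure the final write-up as: invoke \cite{LosevYu} for the small-boundary property of $\AV(M)$; apply Theorem \ref{thm:AVirred} to get irreducibility of $\AV(M)$; then either cite Proposition \ref{prop:smallboundaryAC} directly or repeat its short proof.
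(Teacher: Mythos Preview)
Your high-level strategy --- reduce to the small-boundary case and invoke Proposition~\ref{prop:smallboundaryAC} --- matches the paper's, and the formal part of your argument (once the small-boundary property is in hand) is correct and essentially identical to the paper's.

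The difference lies in how the small-boundary input is obtained. You package the entire claim ``$\mb{O}$ birationally rigid $\Rightarrow$ every maximal $K$-orbit in $\AV(M)$ has small boundary'' as a single black-box from \cite{LosevYu}. The paper is more careful: it first observes that if $\codim(\partial\mb{O}, \overline{\mb{O}}) \geq 4$ (a condition on the \emph{complex} orbit), then automatically every $K$-orbit $\mb{O}_\theta \subset \mb{O} \cap (\fg/\fk)^*$ has $\codim(\partial\mb{O}_\theta, \overline{\mb{O}}_\theta) \geq 2$, so the reduction to Proposition~\ref{prop:smallboundaryAC} goes through without external input. It then reduces to simple $\fg$ and notes that only six birationally rigid orbits (all in exceptional types) fail the codimension-$4$ condition; for these six, it appeals to \cite{LosevYu} directly for the \emph{conclusion} (i)--(ii), not for an intermediate small-boundary claim.

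This matters because your formulation of the input from \cite{LosevYu} may not be what that reference actually provides. The paper's structure strongly suggests that, for the six exceptional orbits, the small-boundary property of $\AV(M)$ is \emph{not} known (or at least not claimed), and \cite{LosevYu} instead verifies (i)--(ii) by a separate, case-by-case argument. So while your write-up is internally coherent, the step ``cite \cite{LosevYu} for small boundary'' is likely a mischaracterization; you should instead separate off the codimension-$4$ case yourself and only defer to \cite{LosevYu} for the finite residual list.
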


\begin{proof}
First suppose $\codim(\partial \OO, \overline{\OO}) \geq 4$. Then every $K$-orbit $\OO_{\theta}$ in $\OO \cap (\fg/\fk)^*$  satisfies $\codim(\partial \OO_{\theta},\overline{\OO}_{\theta}) \geq 2$. So by Theorems \ref{thm:V vs AV HC} and \ref{thm:AVirred}, $\mathrm{Unip}_{\OO}(\fg,K)$ is the disjoint union of the sets $\mathrm{Unip}_{\OO_{\theta}}(\fg,K)$ as $\OO_{\theta}$ ranges over all $K$-orbits in $\OO \cap (\fg/\fk)^*$. Now (i) and (ii) are immediate from Proposition \ref{prop:smallboundaryAC}.

In general, the statement easily reduces to the case where $\mf{g}$ is simple. In this setting, there are only six birationally rigid orbits which do not satisfy the condition $\codim(\partial \OO, \overline{\OO}) \geq 4$ (they are: $\widetilde{A}_1$ in type $G_2$, $\widetilde{A}_2+A_1$ in type $F_4$, $(A_3+A_1)'$ in type $E_7$ and $A_3+A_1$, $A_5+A_1$, and $D_5(a_1)+A_2$ in type $E_8$). For these six orbits, we appeal to \cite[Appendix A]{LosevYu}.
\end{proof}
We now turn our attention to the existence of Hermitian forms. 

\begin{prop}\label{prop:unipotentthetastable}
Let $\OO \subset \cN$ be a nilpotent co-adjoint $G$-orbit such that
$$\OO \cap (\fg/\fk)^* \neq \emptyset.$$
Then the unipotent ideal $I(\OO)$ is preserved by the involution $\theta: U(\fg) \to U(\fg)$. 
\end{prop}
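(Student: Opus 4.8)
The statement asserts that the unipotent ideal $I(\OO)$ is $\theta$-stable under the hypothesis that $\OO$ meets $(\fg/\fk)^*$. The natural strategy is to use the characterization of $I(\OO)$ as the \emph{unique} maximal ideal of its infinitesimal character $\lambda(\OO)$ (Proposition \ref{prop:propsofunipotentideals}(i) together with Duflo's classification of maximal ideals recalled in \S\ref{subsec:ideals}), so that it suffices to show $\theta(I(\OO))$ is again a maximal ideal with the \emph{same} infinitesimal character as $I(\OO)$. Since $\theta$ is an algebra automorphism of $U(\fg)$, it certainly carries maximal ideals to maximal ideals, so the entire content is the identity $\chi_{\lambda(\OO)} \circ \theta = \chi_{\lambda(\OO)}$ of characters of $\mf{Z}(\fg)$, i.e. that $\theta$ fixes the infinitesimal character of $I(\OO)$.

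First I would record that $\theta$ acts on $\mf{Z}(\fg) = S(\fh)^W$ through its action on the Cartan, and hence on $\fh^*/W$; concretely, $\theta$ corresponds to an element of the (possibly outer) automorphism group of the root datum, so $\chi_{\lambda} \circ \theta = \chi_{\theta^{-1}\lambda}$ where $\theta$ here denotes the induced action on $\fh^*/W$. Thus the claim reduces to $\theta \cdot \lambda(\OO) = \lambda(\OO)$ in $\fh^*/W$. Second, I would invoke the structure of $\lambda(\OO)$: it depends only on the orbit $\OO$ (indeed only on $G$ and $\OO$), as the infinitesimal character of the canonical quantization of $\CC[\OO]$, and this construction is manifestly functorial for automorphisms of $G$. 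Therefore $\theta \cdot \lambda(\OO) = \lambda(\theta(\OO))$, and it remains to show $\theta(\OO) = \OO$ as nilpotent $G$-orbits in $\fg^*$.

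The key input is the hypothesis $\OO \cap (\fg/\fk)^* \neq \emptyset$. Pick $\xi \in \OO \cap (\fg/\fk)^*$. Since $(\fg/\fk)^*$ is exactly the $(-1)$-eigenspace of $\theta$ acting on $\fg^*$ (the annihilator of $\fk$, on which $d\theta = -1$), we have $\theta(\xi) = -\xi$. Now $-\xi$ lies in $-\OO$; but for nilpotent coadjoint orbits, $-\OO = \OO$ (the orbit of $-\xi$ equals that of $\xi$: this follows, e.g., from the Jacobson--Morozov theorem, as one can conjugate via $\exp(\mathrm{ad})$ using an $\mathfrak{sl}_2$-triple through $\xi$, or simply note that $-1 \in W$ acts trivially on nilpotent orbits via an inner automorphism). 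Hence $\theta(\xi) \in \OO$, so $\theta(\OO) = \OO$. Combining the three steps: $\theta(I(\OO))$ is a maximal ideal with infinitesimal character $\chi_{\theta\cdot\lambda(\OO)} = \chi_{\lambda(\theta(\OO))} = \chi_{\lambda(\OO)}$, hence $\theta(I(\OO)) = I(\OO)$ by uniqueness of the maximal ideal with that infinitesimal character.

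\textbf{Main obstacle.} The only genuinely delicate point is the bookkeeping in Step 2: one must be sure that the parametrization $\OO \mapsto \lambda(\OO)$ is compatible with the $G$-automorphism $\theta$ in the precise sense that $\lambda(\theta(\OO)) = \theta \cdot \lambda(\OO)$. This is essentially formal — the canonical quantization $\cA_0(\CC[\OO])$ and its quantum comoment map $\Phi\colon U(\fg) \to \cA_0$ are defined intrinsically (the point $0 \in \fh_{\OO}$ is canonical), so an automorphism of $G$ induces compatible automorphisms of all the data — but it does require unwinding the constructions of \cite{LMBM} enough to see that $\theta$ does not secretly introduce a shift. One clean way to sidestep this entirely is to note that $I(\OO)$ is the unique maximal ideal $I$ with $V(I) = \overline{\OO}$ and $m_{\overline{\OO}}(I)$ as prescribed, together with the fact (Proposition \ref{prop:propsofunipotentideals}(iv)) that $\lambda(\OO)$ is real: since $\theta$ preserves the associated variety $V(\theta(I(\OO))) = \theta(\overline{\OO}) = \overline{\OO}$ and preserves multiplicity, $\theta(I(\OO))$ has the same associated variety and multiplicity as $I(\OO)$; if one also knows $I(\OO)$ is the unique such ideal (which holds since it is the unique maximal ideal with its infinitesimal character and the infinitesimal character is pinned down by $\overline{\OO}$), then $\theta(I(\OO)) = I(\OO)$ follows. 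Either route works; I would write up the first, as it is the most transparent.
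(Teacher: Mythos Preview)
Your Route 1 is correct and essentially coincides with the paper's argument, though you take an unnecessary detour. The paper proceeds exactly as your ``main obstacle'' paragraph anticipates: having observed $\theta\OO = \OO$ (which you justify more carefully than the paper does, via $\theta\xi = -\xi$ and $-\OO = \OO$ using $\mathrm{Ad}^*(\gamma(i))$), it lifts $\theta$ to a filtered involution $\tilde\theta$ of the canonical quantization $\cA$ using \cite[Proposition 5.1.1]{LMBM}, then invokes uniqueness of the quantum co-moment map to get $\tilde\theta\circ\Phi = \Phi\circ\theta$, whence $\theta(\ker\Phi) = \ker\Phi$ directly. Your Route 1 reaches the same conclusion, but the passage through Duflo's classification and the functoriality $\theta\cdot\lambda(\OO) = \lambda(\theta\OO)$ is redundant: to justify that functoriality you must lift $\theta$ to $\cA$ and intertwine the co-moment maps anyway, at which point $\theta(I(\OO)) = I(\OO)$ is already in hand without ever mentioning infinitesimal characters.

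Your Route 2, by contrast, has a genuine gap. The claim that ``the infinitesimal character is pinned down by $\overline{\OO}$'' is false: there are in general many maximal ideals with a given associated variety $\overline{\OO}$ (and even with multiplicity one along $\overline{\OO}$), distinguished precisely by their infinitesimal characters. So associated variety plus multiplicity do not characterize $I(\OO)$, and this route does not close. You were right to prefer Route 1.
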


\begin{proof}
Under the condition of the proposition, $\theta \OO = \OO$. Hence, $\theta$ induces a graded Poisson involution of $\CC[\OO]$, which we will again denote by $\theta$. Let $(\cA,\tau)$ denote the canonical quantization of $\CC[\OO]$ (here $\tau: \gr(\cA) \xrightarrow{\sim} \CC[\OO]$ is a fixed isomorphism of graded Poisson algebras). By \cite[Proposition 5.1.1]{LMBM}, there is a filtered algebra involution $\widetilde{\theta}$ of $\cA$ such that $\tau \circ \gr(\widetilde{\theta}) = \theta \circ \tau$. It follows that the filtered quantization $(\cA, \theta \circ \tau)$ of $\CC[\OO]$ is isomorphic to $(\cA,\tau)$ (the isomorphism is implemented by the algebra isomorphism $\widetilde{\theta}: \cA \to \cA$). Now let $\Phi: U(\fg) \to \cA$ denote the quantum co-moment map for $(\cA,\tau)$. Then clearly $\Phi \circ \theta: U(\fg) \to \cA$ is a quantum co-moment map for $(\cA,\theta \circ \tau)$. Since $(\cA,\tau)$ and $(\cA,\theta \circ \tau)$ are isomorphic quantizations, we must therefore have $\widetilde{\theta} \circ \Phi = \Phi \circ \theta$ by the uniqueness of quantum co-moment maps (see \cite[Lemma 4.11.2]{LMBM}). Taking kernels, we obtain
$$I(\OO) = \ker{(\widetilde{\theta} \circ \Phi)} = \ker{(\Phi \circ \theta)} = \theta I(\OO).$$
This completes the proof.
\end{proof}

\begin{prop}\label{prop:ACXtheta}
Let $M \in \Mod_{fl}(\fg,K)$. Then 
$$\AC(M) = \AC(\theta^*M).$$
\end{prop}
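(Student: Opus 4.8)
The statement is that the associated cycle is unchanged under the functor $\theta^*$. The key point is that $\theta^*M$ agrees with $M$ as a $K$-representation, so the only thing that can change is the $S(\mf{g}/\mf{k})$-module structure on the associated graded of a good filtration, and this change is precisely the one induced by the involution $\theta$ of $S(\mf{g})$. Since $\theta$ acts on $\mf{g}$ preserving the decomposition $\mf{g} = \mf{k} \oplus \mf{p}$ (it acts by $+1$ on $\mf{k}$ and by $-1$ on $\mf{p}$), the induced automorphism of $S(\mf{g}/\mf{k}) \cong S(\mf{p})$ is (up to sign on each graded piece) just an automorphism of the ring that is $K$-equivariant and preserves the grading up to a shift, and in particular preserves $\mc{N}_\theta \subset (\mf{g}/\mf{k})^*$ and every $K$-orbit $\OO_{\theta,i}$ therein. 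So first I would fix a $K$-invariant good filtration $F_\bullet M$; then $F_\bullet M$ is also a $K$-invariant good filtration on $\theta^* M$, and $\gr^F(\theta^* M)$ is identified with $\gr^F(M)$ as a $K$-equivariant graded abelian group, with $S(\mf{p})$-action twisted by the automorphism $\theta$ of $S(\mf{p})$. Equivalently, $\gr^F(\theta^* M) = \theta^* \gr^F(M)$ where now $\theta$ denotes the automorphism of $(\mf{g}/\mf{k})^*$ dual to the restriction of $d\theta$.

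The second step is to observe that pulling back a $K$-equivariant coherent sheaf $\mc{F}$ on $(\mf{g}/\mf{k})^*$ along the automorphism $\theta$ does not change its associated cycle. Here I would use that $\theta$ is a $K$-equivariant automorphism of $(\mf{g}/\mf{k})^*$ (it commutes with the $K$-action since $\theta$ normalizes $K$ and acts trivially on $K/K^\circ$ up to the relevant identifications; more concretely, for $k \in K$ the conjugate $\theta(k)$ differs from $k$ only by elements that act the same way, and in any case $d\theta$ restricted to $\mf{p}$ commutes with $\mathrm{Ad}(K)$). Consequently $\theta$ permutes the finitely many $K$-orbits $\OO_{\theta,i}$; in fact it must fix each one, because $\theta$ preserves the $G$-saturation (as $\theta\OO = \OO$ for any $G$-orbit $\OO$, $\theta$ being inner on $\mc{N}$ up to the action of the adjoint group — more carefully, $\theta$ is conjugation by an element normalizing things, so it fixes the set of nilpotent $K$-orbits, and since there is no continuous parameter it fixes each) — and even if it permuted them, the key invariance I need is at the level of the semigroup $\AC(\mf{g},K)$. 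Since $\theta^*$ is an exact $K$-equivariant functor on $\Coh^K(\mc{N}_\theta)$ sending $\mc{O}_{\OO_{\theta,i}}$-modules to $\mc{O}_{\theta(\OO_{\theta,i})}$-modules, it takes a good filtration adapted to computing $\AC(\mc{F})$ (one whose subquotients are generically reduced along each component) to one of the same kind for $\theta^*\mc{F}$, and it takes a $K$-equivariant vector bundle $\mc{V}$ on $\OO_{\theta,i}$ to the $K$-equivariant vector bundle $\theta^*\mc{V}$ on $\theta(\OO_{\theta,i}) = \OO_{\theta,i}$. So $\AC(\theta^*\mc{F}) = \theta^* \AC(\mc{F})$, and I need to know that $\theta^*$ acts trivially on $\AC(\mf{g},K)$.

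The cleanest way to finish is to note that $\theta$ is an \emph{inner} automorphism of the pair $(\mf{g}, K)$ in the appropriate sense: on $(\mf{g}/\mf{k})^*$, the map $d\theta|_{\mf{p}}^*$ is $-\mathrm{id}$ (since $d\theta$ acts by $-1$ on $\mf{p}$), hence on the nilpotent cone $\mc{N}_\theta$ it is the restriction of the central automorphism $\zeta \mapsto -\zeta$. The scaling $\zeta \mapsto -\zeta$ is in the image of the $\mb{C}^\times$-action used to define the grading (take $z$ with $z^2 = -1$ acting through the Kostant grading or simply rescale), so pulling back along it sends a $K$-equivariant vector bundle $\mc{V}$ on $\OO_{\theta,i}$ to a bundle which, as a class in $\mrm{K}\Vect^K(\OO_{\theta,i})$, equals $[\mc{V}]$ — because $\mc{V}$ and $(-1)^*\mc{V}$ are abstractly isomorphic as $K$-equivariant bundles, the $\mb{C}^\times$-action being connected to the identity and the relevant Grothendieck group being discrete. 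Hence $\theta^*$ is the identity on $\AC(\mf{g},K)$, and combining the two displayed identities gives $\AC(\theta^* M) = \theta^* \AC(M) = \AC(M)$. The main obstacle, and the place requiring the most care, is making the last point rigorous — precisely identifying the automorphism $\theta$ induces on $(\mf{g}/\mf{k})^*$ (as $-\mathrm{id}$, or at least as something connected to the identity through $K \times \mb{C}^\times$) and arguing that such an automorphism acts trivially on each group $\mrm{K}\Vect^K(\OO_{\theta,i})$ and hence on the semigroup $\AC(\mf{g},K)$; everything else is bookkeeping with good filtrations that is already packaged in the results of Vogan recalled in \S\ref{subsec:AC}.
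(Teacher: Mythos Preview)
Your approach is correct and is essentially the same as the paper's: reduce to showing that the involution $\theta^*$ on $\Coh^K(\mc{N}_\theta)$ (which, as you correctly identify, is pullback along $\zeta \mapsto -\zeta$ on $(\mf{g}/\mf{k})^*$) acts trivially on each $\mrm{K}\Vect^K(\OO_{\theta,i})$, and do this by realizing $-1$ inside a one-parameter family connected to the identity.

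The paper makes your final step precise in exactly the way you hint at with the phrase ``Kostant grading'', and this is the place where your write-up should be tightened. Concretely: fix $x \in \OO_\theta$, choose an $\mf{sl}_2$-triple $(e,f,h)$ with $(e,\cdot) = x$ and $h \in \mf{k}$, and let $\gamma \colon \CC^\times \to G$ be the cocharacter corresponding to $h$. Then $\Ad^*(\gamma(z))x = z^2 x$, so $\gamma(z)$ normalizes $K_x$ and $z \mapsto \Ad(\gamma(z))$ is an algebraic $\CC^\times$-action on $K_x$ by group automorphisms; any such action is trivial on the set of irreducible characters. Taking $z = i$ gives $\Ad^*(\gamma(i))x = -x$, so the automorphism of $K_x$ implementing $\theta^*$ on representations is $\Ad(\gamma(i))$, and hence $\theta^*\rho = \rho \circ \Ad(\gamma(i)) = \rho$.

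Your alternative suggestion ``or simply rescale'' does not work on its own: the naive $\CC^\times$-scaling on $(\mf{g}/\mf{k})^*$ is not given by elements of $K$, so for $t \in \CC^\times$ the pullback $t^*\mc{V}$ is a $K$-equivariant bundle on $t\cdot \OO_\theta$, but there is no a priori reason the isotropy representation at $tx$ agrees with that at $x$ without invoking $\gamma$ to move between them inside $K$. Similarly, your aside about $\theta$ fixing each $K$-orbit is muddled; the clean statement is simply that $-x = \Ad^*(\gamma(i))x \in K\cdot x$, which again comes from the $\mf{sl}_2$-triple. Once you commit to the $\gamma$ argument, the proof is complete and matches the paper.
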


\begin{proof}
Let $\theta: S(\fg/\fk) \to S(\fg/\fk)$ denote the involution defined by $\xi \mapsto -\xi$ for $\xi \in \fg/\fk$. Twisting by $\theta$ defines an involution $\theta^*$ of the category $\Coh^K(\cN_{\theta})$, and hence of the semigroup $\mrm{K}_+\Coh^K(\cN_{\theta})$. It is easy to check that the following diagram commutes:

\begin{center}
\begin{tikzcd}
\mrm{K}_+\Mod_{fl}(\fg,K) \ar[r,"\theta^*"] \ar[d,"\gr"] & \mrm{K}_+\Mod_{fl}(\fg,K) \ar[d,"\gr"]\\
\mrm{K}_+\Coh^K(\cN_{\theta}) \ar[r,"\theta^*"] & \mrm{K}_+\Coh^K(\cN_{\theta})
\end{tikzcd}
\end{center}

Now let $\OO_{\theta} \subset \cN_{\theta}$ be a $K$-orbit and let $x \in \OO_{\theta}$. Choose $k \in K$ such that $\Ad^*(k)x = -x$. Then $\Ad(k)$ is an automorphism of the centralizer $K_x$. If $\rho$ is a finite-dimensional representation of $K_x$, define $\theta^*\rho = \rho \circ \Ad(k)$. Since $k$ is well-defined up to multiplication by $K_x$, $\theta^*\rho$ is well-defined as a virtual representation of $K_x$. Thus, we obtain an involution $\theta^*$ of the semigroup $\mrm{K}_+ \Rep (K_x)$, which corresponds via the equivalence $\Rep(K_x) \cong \Coh^K(\OO_{\theta})$ to the pullback along $\theta \colon \mb{O}_\theta \to \mb{O}_\theta$. We can then define an involution $\theta^*$ of $\AC(\fg,K)$ via
$$\theta^*(\sum_i [\cV_i] \cdot \OO_i) = \sum_i \theta^*[\mathcal{V}_i] \cdot \OO_i.$$
It is easy to check that the following diagram commutes
\begin{center}
\begin{tikzcd}
\mrm{K}_+\Coh^K(\cN_{\theta}) \ar[r,"\theta^*"] \ar[d,"\AC"] & \mrm{K}_+\Coh^K(\cN_{\theta}) \ar[d, "\AC"] \\
\AC(\fg,K) \ar[r,"\theta^*"] & \AC(\fg,K)
\end{tikzcd}
\end{center}
To complete the proof, it suffices to show that the involution $\theta^*$ of $\AC(\fg,K)$ is trivial---or equivalently, that the involution $\theta^*$ of $\mrm{K}\Rep(K_x)$ is trivial. Choose an $\mathfrak{sl}_2$-triple $(e,f,h)$ such that $x=(e,\cdot)$ and $h \in \mf{k}$, and let $\gamma\colon  \CC^{\times} \to G$ be the co-character corresponding to $h$. Then $z \mapsto \Ad(\gamma(z))$ defines an action of $\CC^{\times}$ on $K_x$ by algebraic group automorphisms. Any such action induces the trivial action on the set of irreducible  characters. So $\rho \circ \Ad(\gamma(z)) = \rho$ as virtual representations for any $z \in \CC^{\times}$. But $\Ad^*(\gamma(i))x = i^2x = -x$, so $\theta^*\rho=\rho \circ \Ad(\gamma(i)) = \rho$.
\end{proof}

\begin{cor}\label{cor:unipotentHermitian}
Let $M$ be a birigid or small-boundary unipotent representation. Then $M$ is Hermitian.
\end{cor}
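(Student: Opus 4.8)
The plan is to deduce Corollary \ref{cor:unipotentHermitian} from the general Hermitian criterion in Proposition \ref{prop:Hermitiancriterion}(ii), which requires exhibiting an isomorphism of $(\fg,K)$-modules $M \cong \theta^*M$ for $M$ a birationally rigid or small-boundary unipotent representation. The two ingredients we have at hand are Proposition \ref{prop:unipotentthetastable}, which tells us that $I(\OO)$ is $\theta$-stable, and Proposition \ref{prop:ACXtheta}, which tells us $\AC(M) = \AC(\theta^*M)$. The strategy is: first observe that $\theta^*M$ is again an irreducible $(\fg,K)$-module, and then pin down $\theta^*M$ using its annihilator and associated cycle.

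First I would check that $\theta^*M$ is an irreducible unipotent representation in the same class as $M$. Irreducibility of $\theta^*M$ is clear since $\theta$ is an involution of $(\fg,K)$-modules. Next, $\Ann(\theta^*M) = \theta(\Ann(M)) = \theta(I(\OO)) = I(\OO)$ by Proposition \ref{prop:unipotentthetastable} (using that $\OO\cap(\fg/\fk)^*\neq\emptyset$, which holds since $M$ is a $(\fg,K)$-module annihilated by $I(\OO)$, so $\AV(M)\subset\overline{\OO}\cap(\fg/\fk)^*$ is non-empty and hence $\OO\cap(\fg/\fk)^*\neq\emptyset$). Moreover $\AV(\theta^*M) = \theta(\AV(M)) = \AV(M)$ since the involution $\xi\mapsto-\xi$ on $(\fg/\fk)^*$ preserves $\AV(M)$ (it is $\Ad^*(k)$-related to a $K$-action, cf.\ the argument in Proposition \ref{prop:ACXtheta}). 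Thus in the small-boundary case $\theta^*M \in \mathrm{Unip}_{\OO_{\theta}}(\fg,K)$, and in the birationally rigid case $\theta^*M \in \mathrm{Unip}_{\OO}(\fg,K)$, i.e.\ $\theta^*M$ lies in the same finite set of unipotent representations as $M$.

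Now I would invoke the injectivity of the associated cycle map. By Proposition \ref{prop:ACXtheta} we have $\AC(\theta^*M) = \AC(M)$. In the small-boundary case, Proposition \ref{prop:smallboundaryAC}(ii) says $\AC\colon \mathrm{Unip}_{\OO_{\theta}}(\fg,K)\to\AC(\fg,K)$ is injective, so $\theta^*M \cong M$ as $(\fg,K)$-modules. In the birationally rigid case, Proposition \ref{prop:birigidAC}(ii) gives injectivity of $\AC\colon \mathrm{Unip}_{\OO}(\fg,K)\to\AC(\fg,K)$, so again $\theta^*M \cong M$. In either case Proposition \ref{prop:Hermitiancriterion}(ii) now applies (note that unipotent representations have real infinitesimal character by Proposition \ref{prop:propsofunipotentideals}(iv), so the hypotheses of Proposition \ref{prop:Hermitiancriterion} are met), and we conclude that $M$ admits a non-degenerate $(\fg_\RR,K_\RR)$-invariant Hermitian form, i.e.\ $M$ is Hermitian.

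The only real subtlety — the step I would be most careful about — is making sure the associated cycle is genuinely an injective invariant on the relevant set, and that $\theta^*M$ lands in that set; but both of these are already packaged for us in Propositions \ref{prop:smallboundaryAC}, \ref{prop:birigidAC}, \ref{prop:unipotentthetastable}, and \ref{prop:ACXtheta}, so the proof is short. One should also double-check that in the birationally rigid case the definition of $\mathrm{Unip}_{\OO}(\fg,K)$ (Definition \ref{def:birigidunipotent}) only imposes the annihilator condition, so that membership of $\theta^*M$ follows immediately from $\Ann(\theta^*M) = I(\OO)$ without needing the associated variety computation — the $\AV$ computation is only needed in the small-boundary case (Definition \ref{def:smallbdryunipotent}).

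\begin{proof}
Let $M$ be a birationally rigid or small-boundary unipotent representation, attached to $\OO$ (resp.\ to $\OO_\theta$, with $\OO = G\cdot\OO_\theta$). By definition $\Ann(M) = I(\OO)$, and since $M$ is a $(\fg,K)$-module we have $\emptyset \neq \AV(M) \subset \overline{\OO}\cap(\fg/\fk)^*$, so $\OO\cap(\fg/\fk)^*\neq\emptyset$. By Proposition \ref{prop:propsofunipotentideals}(iv), $M$ has real infinitesimal character, so Proposition \ref{prop:Hermitiancriterion} applies. By Proposition \ref{prop:Hermitiancriterion}(ii), it suffices to show $M \cong \theta^*M$ as $(\fg,K)$-modules.

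The $(\fg,K)$-module $\theta^*M$ is irreducible, and
\[ \Ann(\theta^*M) = \theta\big(\Ann(M)\big) = \theta\big(I(\OO)\big) = I(\OO) \]
by Proposition \ref{prop:unipotentthetastable}. In the birationally rigid case this already shows $\theta^*M \in \mathrm{Unip}_{\OO}(\fg,K)$ (Definition \ref{def:birigidunipotent}). In the small-boundary case, we note in addition that the involution $\xi\mapsto-\xi$ of $(\fg/\fk)^*$ preserves $\AV(M)$: indeed, as in the proof of Proposition \ref{prop:ACXtheta}, for $x \in \OO_\theta$ there is $k\in K$ with $\Ad^*(k)x = -x$, so this involution agrees with the $K$-action by $k$ on the $K$-stable set $\AV(M)$. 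Hence $\AV(\theta^*M) = \AV(M) = \overline{\OO}_\theta$, so $\theta^*M \in \mathrm{Unip}_{\OO_\theta}(\fg,K)$ (Definition \ref{def:smallbdryunipotent}).

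By Proposition \ref{prop:ACXtheta}, $\AC(\theta^*M) = \AC(M)$. In the small-boundary case, injectivity of $\AC$ on $\mathrm{Unip}_{\OO_\theta}(\fg,K)$ (Proposition \ref{prop:smallboundaryAC}(ii)) gives $\theta^*M \cong M$. In the birationally rigid case, injectivity of $\AC$ on $\mathrm{Unip}_{\OO}(\fg,K)$ (Proposition \ref{prop:birigidAC}(ii)) gives $\theta^*M \cong M$. In either case Proposition \ref{prop:Hermitiancriterion}(ii) now shows that $M$ admits a non-degenerate $(\fg_\RR,K_\RR)$-invariant Hermitian form, i.e.\ $M$ is Hermitian.
\end{proof}
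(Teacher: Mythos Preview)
Your proof is correct and follows essentially the same approach as the paper: use Proposition~\ref{prop:unipotentthetastable} to show $\theta^*M$ has the same annihilator, use Proposition~\ref{prop:ACXtheta} to show $\AC(\theta^*M)=\AC(M)$, and then invoke injectivity of $\AC$ (Propositions~\ref{prop:smallboundaryAC}(ii) or \ref{prop:birigidAC}(ii)) together with Proposition~\ref{prop:Hermitiancriterion}(ii). You are in fact slightly more explicit than the paper in the small-boundary case, where the paper simply says the argument is ``completely analogous''; your observation that one must also check $\AV(\theta^*M)=\overline{\OO}_\theta$ is a useful clarification.

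One very minor point: the inference ``$\AV(M)\subset\overline{\OO}\cap(\fg/\fk)^*$ non-empty, hence $\OO\cap(\fg/\fk)^*\neq\emptyset$'' is not quite immediate in the birationally rigid case, since a priori $\AV(M)$ could lie entirely in $\partial\OO\cap(\fg/\fk)^*$. The paper closes this by citing Theorem~\ref{thm:V vs AV HC}, which says the maximal $K$-orbits in $\AV(M)$ lie in $\OO\cap(\fg/\fk)^*$; you should cite this as well. (In the small-boundary case there is no issue since $\OO_\theta\subset\OO\cap(\fg/\fk)^*$ by definition.)
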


\begin{proof}
Let $\OO \subset \cN$ be a birationally rigid nilpotent $G$-orbit, and let $M \in \mathrm{Unip}_{\OO}(\fg,K)$. By Theorem \ref{thm:V vs AV HC}, the maximal $K$-orbits in $\AV(M)$ are subsets of $\OO \cap (\fg/\fk)^*$. In particular, $\OO \cap (\fg/\fk)^* \neq \emptyset$. So by Proposition \ref{prop:unipotentthetastable}, $\Ann(\theta^*M) = \theta \Ann(M) = \theta I(\OO) = I(\OO)$, i.e. $\theta^*M \in \mathrm{Unip}_{\OO}(\fg,K)$. And by Proposition \ref{prop:ACXtheta}, $\AC(M) = \AC(\theta^*M)$. So Proposition \ref{prop:birigidAC}(ii) implies that $M \cong \theta^*M$. Also, the infinitesimal character of $M$ is real by Proposition \ref{prop:propsofunipotentideals}(iv). So $M$ is Hermitian by Proposition \ref{prop:Hermitiancriterion}. The proof for small-boundary unipotents is completely analagous.
\end{proof}

\subsection{Unipotent representations: $K$-types and unitarity} \label{subsec:unipotent with hodge}

In this subsection, we prove our main results about unipotent representations. Both are applications of the Cohen-Macaulay property for very weakly unipotent modules (Theorem \ref{thm:CM}).

Our first main result is that all small-boundary unipotent representations are quantizations of vector bundles on nilpotent $K$-orbits. This resolves a longstanding conjecture of Vogan (see \cite[Conjecture 12.1]{Vogan1991} for the original conjecture, and \cite{MB2022}, \cite{LeungYu}, \cite{LMBM} for recent partial results). 

\begin{thm}\label{thm:Kspectraunipotent}
Let $M$ be a small-boundary unipotent representation (Definition \ref{def:smallbdryunipotent}), so that $\AV(M) = \overline{\OO}_{\theta}$ for a small-boundary $K$-orbit $\OO_{\theta} \subset \cN_{\theta}$. Then there is an irreducible $K$-equivariant vector bundle $\mathcal{V}$ on $\OO_{\theta}$ such that 
$$\Gr^{F^H} M \cong \Gamma(\OO_{\theta},\mathcal{V})$$
as $(S(\mf{g}), K)$-modules. In particular, there is an element $x \in \OO_{\theta}$ and an irreducible finite-dimensional representation $\rho$ of the stabilizer $K_{x}$ of $x$ such that
$$M \cong \Ind^K_{K_{x}} \rho$$
as representations of $K$.
\end{thm}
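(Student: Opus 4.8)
\textbf{Proof plan for Theorem \ref{thm:Kspectraunipotent}.}

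The strategy is to combine the Cohen-Macaulay property (Theorem \ref{thm:CM}) with the irreducibility of the associated cycle (Proposition \ref{prop:smallboundaryAC}) to pin down $\Gr^{F^H}M$ completely. First I would set $\OO = G\cdot\OO_\theta$ and $I = I(\OO) = \Ann(M)$. By Proposition \ref{prop:propsofunipotentideals}, $I$ is maximal, has real infinitesimal character, and is very weakly unipotent; choosing $\lambda$ dominant with $\chi_\lambda$ that infinitesimal character, we realize $M = \Gamma(\mc{M})$ for an irreducible $\mc{M} \in \mhm(\mc{D}_{\widetilde\lambda})$. Theorem \ref{thm:CM} then tells us that $\Gr^{F^H}M$ is a Cohen-Macaulay $S(\fg/\fk)$-module (the module structure factors through $S(\fg/\fk)$ since $M$ is a $(\fg,K)$-module and the Hodge filtration is $K$-invariant, so the associated graded is supported on $\cN_\theta \subset (\fg/\fk)^*$). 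Moreover $\mrm{AV}(M) = \overline{\OO}_\theta$ by hypothesis, so $\Gr^{F^H}M$ is a Cohen-Macaulay sheaf on $\overline{\OO}_\theta$ of pure dimension $\dim\OO_\theta = \tfrac12\dim\OO$, equidimensional with no embedded components.

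Next I would invoke the small-boundary hypothesis $\codim(\partial\OO_\theta,\overline{\OO}_\theta)\ge 2$. A Cohen-Macaulay coherent sheaf $\mathcal{F}$ on $\overline{\OO}_\theta$ whose restriction to the open orbit $\OO_\theta$ is a vector bundle $\mathcal{V}$, and which has no sections supported on the boundary, satisfies $\mathcal{F} \hookrightarrow j_*(\mathcal{F}|_{\OO_\theta}) = j_*\mathcal{V}$, where $j\colon \OO_\theta \hookrightarrow \overline{\OO}_\theta$; and since $\mathcal{F}$ is Cohen-Macaulay with $\codim\partial\OO_\theta\ge 2$ this inclusion is an isomorphism in codimension $\le 1$, hence (by the Cohen-Macaulay = no codimension $\ge 2$ surprises principle, i.e.\ reflexivity of CM modules satisfying Serre's $S_2$) an isomorphism of sheaves, so $\Gr^{F^H}M \cong j_*\mathcal{V} = \Gamma(\OO_\theta,\mathcal{V})$ as $S(\fg/\fk)$-modules, with $\mathcal{V} = (\Gr^{F^H}M)|_{\OO_\theta}$ the vector bundle giving the associated cycle. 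That this bundle is \emph{irreducible} as a $K$-equivariant vector bundle is exactly Proposition \ref{prop:smallboundaryAC}(i): $\mrm{AC}(M) = [\mathcal{V}]\cdot\OO_\theta$ is irreducible, so $[\mathcal{V}]$ is the class of an irreducible equivariant bundle, and since $\Gr^{F^H}M$ is reduced along $\OO_\theta$ (it is the associated graded of a module with associated cycle $[\mathcal{V}]\cdot\OO_\theta$ with $\mathcal{V}$ a genuine, not merely virtual, irreducible bundle) the restriction $\mathcal{V}$ is literally that irreducible bundle.

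Finally, for the statement about $K$-representations: as $K$-modules, $M \cong \Gr^{F^H}M$ (forgetting the filtration grading, which is a $K$-module filtration with finite-dimensional graded pieces, so $M$ and its associated graded agree as $K$-reps). Then $M \cong \Gamma(\OO_\theta,\mathcal{V})$ as $K$-modules, and since $\OO_\theta = K/K_x$ is a single $K$-orbit with $\mathcal{V}$ the equivariant bundle $K\times^{K_x}\rho$ for $\rho$ the fiber representation of $K_x$ at $x$, we get $\Gamma(\OO_\theta,\mathcal{V}) = \Ind^K_{K_x}\rho$ by the standard identification of global sections of an equivariant bundle on a homogeneous space with an induced representation; $\rho$ is irreducible because $\mathcal{V}$ is.

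The main obstacle I anticipate is the second step --- rigorously upgrading ``Cohen-Macaulay plus agrees with $j_*\mathcal{V}$ in codimension $\le 1$'' to an honest sheaf isomorphism on all of $\overline{\OO}_\theta$. This is the place where the small-boundary hypothesis and the Cohen-Macaulay conclusion of Theorem \ref{thm:CM} must be fed in together: one needs that a Cohen-Macaulay module over the (generally singular) affine cone $\overline{\OO}_\theta$ satisfying $S_2$ is determined by its behavior off a codimension-$2$ locus, and that $\Gr^{F^H}M$ has no sections supported on $\partial\OO_\theta$ (which again follows from Cohen-Macaulayness, as a CM module has no embedded associated primes). One must also be slightly careful that the normalization issues (is $\overline{\OO}_\theta$ normal? is $j_*\mathcal{V}$ coherent?) are handled --- coherence of $j_*\mathcal{V}$ for $j$ an open immersion with $\codim\ge 2$ complement is standard, and normality of $\Gr^{F^H}M$ as a module (reflexivity) is what Cohen-Macaulay plus $\codim\partial\ge 2$ buys. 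The $K$-equivariance throughout is routine but should be stated.
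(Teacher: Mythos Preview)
Your proposal is correct and follows essentially the same strategy as the paper: Cohen--Macaulayness from Theorem~\ref{thm:CM}, the $S_2$-type extension across the codimension-$2$ boundary, and irreducibility of the associated cycle from Proposition~\ref{prop:smallboundaryAC}(i). The paper resolves the obstacle you anticipated by taking $j$ to be the inclusion of a $K$-invariant open $U \subset \mf{g}^*$ with $U \cap \overline{\OO}_\theta = \OO_\theta$ (rather than $\OO_\theta \hookrightarrow \overline{\OO}_\theta$), so the Cohen--Macaulay module lives on the smooth ambient space and $\mc{F} \to j_*j^*\mc{F}$ is an isomorphism by a direct citation; it then shows $j^*\mc{F}$ is literally an irreducible bundle via a filtration by sheaves scheme-theoretically supported on $\OO_\theta$, whose length must be $1$ by irreducibility of $\AC(M)$.
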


\begin{proof}
Let $\mathcal{F} = \gr^{F^H}(M)$. By Proposition \ref{prop:propsofunipotentideals}(i), $\Ann(M)$ is a maximal ideal, and by Proposition \ref{prop:propsofunipotentideals}(v), it is very weakly unipotent. So by Theorem \ref{thm:CM}, $\mathcal{F}$ is a Cohen-Macaulay $S(\fg)$-module. 

Choose a $K$-invariant open subset $U \subset \fg^*$ such that $U \cap \overline{\OO}_{\theta} = \OO_{\theta}$, and let $j\colon U \hookrightarrow \fg^*$ be the inclusion. Since $\mathcal{F}$ is Cohen-Macaulay and $\codim(\partial \OO_{\theta},\overline{\OO}_{\theta})\geq 2$, the natural map of $K$-equivariant coherent sheaves $\mathcal{F} \to j_*j^*\mathcal{F}$ is an isomorphism (see e.g. \cite[Proposition 1.11]{Hartshorne1992}). 

Choose a finite filtration $0 = \mathcal{G}_0 \subsetneq \mathcal{G}_1 \subsetneq ... \subsetneq \mathcal{G}_n = j^*\mathcal{F}$ by $K$-invariant subsheaves such that each successive quotient is scheme-theoretically supported on $\OO_{\theta}$. Then by definition
$$\AC(M) = (\sum_{i=1}^n [\mathcal{G}_i/\mathcal{G}_{i-1}]) \cdot \OO_{\theta}$$
By Proposition \ref{prop:smallboundaryAC}(i), $\AC(M)$ is irreducible. So $n=1$ and $\mathcal{G}_1=j^*\mathcal{F}$ is (the pushforward to $U$ of) an irreducible $K$-equivariant vector bundle $\mathcal{V}$ on $\OO_{\theta}$. Now as $(S(\mf{g}), K)$-modules,
$$\Gr^{F^H}M \cong \Gamma(\fg^*,\mathcal{F}) \cong \Gamma(\fg^*,j_*j^*\mathcal{F}) \cong \Gamma(U,j^*\mathcal{F}) \cong \Gamma(\OO_{\theta},\mathcal{V}).$$
Finally, choose $x \in \OO_{\theta}$ and let $\rho$ be the fiber of $\mathcal{V}$ at $x$, an irreducible representation of the isotropy group $K_{x}$. Then as representations of $K$
$$M \cong \Gr^{F^H} M \cong \Gamma(\OO_{\theta}, \mathcal{V}) \cong \Ind^K_{K_{x}} \rho.$$
\end{proof}

\begin{rmk}
The representation $\rho$ appearing in Theorem \ref{thm:Kspectraunipotent} will always be \emph{admissible} in the sense of Vogan and Duflo (see \cite[Definition 7.13]{Vogan1991}). This is an immediate consequence of \cite[Theorem 8.7]{Vogan1991}.
\end{rmk}

For our unitarity result, we will need the following lemma.

\begin{lemma}\label{lem:indecomposable}
Let $\mathcal{F}$ be a $K$-equivariant coherent sheaf on $(\fg/\fk)^*$ with support contained in $\cN_{\theta}$. Assume
\begin{itemize}
    \item[(i)] $\mathcal{F}$ is a Cohen-Macaulay $S(\fg)$-module.
    \item[(ii)] $\AC(\mathcal{F})$ is irreducible (Definition \ref{def:irreducibleAC}).
\end{itemize}
Then $\mathcal{F}$ is indecomposable as a $K$-equivariant $S(\fg/\fk)$-module.
\end{lemma}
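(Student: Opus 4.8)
\textbf{Proof sketch for Lemma \ref{lem:indecomposable}.}

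The plan is to argue by contradiction: suppose $\mathcal{F} = \mathcal{F}' \oplus \mathcal{F}''$ as $K$-equivariant $S(\fg/\fk)$-modules with both summands nonzero, and derive a contradiction with the irreducibility of $\AC(\mathcal{F})$. First I would observe that a direct summand of a Cohen-Macaulay module over $S(\fg)$ (equivalently over $S(\fg/\fk)$, since the support lies in $\cN_\theta \subset (\fg/\fk)^*$) is again Cohen-Macaulay of the same dimension $d = \frac{1}{2}\dim\OO$, where $\OO$ is the relevant nilpotent orbit; in particular neither $\mathcal{F}'$ nor $\mathcal{F}''$ can be supported in a subset of dimension $< d$, so each has support whose top-dimensional part meets the open orbit $\OO_\theta$ in $\AV(\mathcal{F}) = \overline{\OO}_\theta$ (here I use that $\AC(\mathcal{F})$ irreducible forces $\AV(\mathcal{F}) = \overline{\OO}_\theta$ with $\OO_\theta$ the unique maximal orbit). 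The key point is then that $\AC$ is additive: $\AC(\mathcal{F}) = \AC(\mathcal{F}') + \AC(\mathcal{F}'')$ in the semigroup $\AC(\fg,K)$, and since the restrictions $\mathcal{F}'|_{\OO_\theta}$, $\mathcal{F}''|_{\OO_\theta}$ are both nonzero $K$-equivariant vector bundles (by the Cohen-Macaulay dimension count, they are nonzero; by genericity they are vector bundles on the smooth orbit), we get $\AC(\mathcal{F}) = ([\mathcal{V}'] + [\mathcal{V}''])\cdot\OO_\theta$ with $[\mathcal{V}'], [\mathcal{V}''] \neq 0$. This contradicts irreducibility of $\AC(\mathcal{F})$, which requires the vector bundle on $\OO_\theta$ to be irreducible (hence not a nontrivial direct sum).

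To make the dimension count airtight, the step to be careful about is ruling out that, say, $\mathcal{F}'$ is supported generically on $\OO_\theta$ while $\mathcal{F}''$ is supported in the boundary $\partial\OO_\theta$ of smaller dimension. This is exactly where Cohen-Macaulayness is essential: a Cohen-Macaulay sheaf has no embedded components and is equidimensional, and a nonzero direct summand of an equidimensional Cohen-Macaulay module of dimension $d$ has all associated primes of dimension $d$ (otherwise one could split off the submodule of sections supported in smaller dimension, contradicting $\operatorname{depth} = d$ everywhere — more precisely, $\mathcal{F}$ Cohen-Macaulay means $\operatorname{Ext}^i_{S(\fg)}(\mathcal{F}, S(\fg)) = 0$ for $i \neq \operatorname{codim}$, and this $\operatorname{Ext}$-vanishing passes to summands, forcing each summand to be pure of the same dimension). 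So both $\mathcal{F}'$ and $\mathcal{F}''$ are pure of dimension $d = \dim\OO_\theta$, hence their supports are both exactly $\overline{\OO}_\theta$, hence both restrict to nonzero sheaves on $\OO_\theta$.

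Finally I would assemble the pieces: restricting the decomposition $\mathcal{F} = \mathcal{F}' \oplus \mathcal{F}''$ to $\OO_\theta$ gives a $K$-equivariant decomposition of the vector bundle $\mathcal{F}|_{\OO_\theta} = \mathcal{V}$ into two nonzero $K$-equivariant subbundles; passing to the fiber at a point $x\in\OO_\theta$ gives a decomposition of the $K_x$-representation into two nonzero subrepresentations, contradicting the irreducibility of $\mathcal{V}$ demanded by Definition \ref{def:irreducibleAC}. The main obstacle is purely the commutative-algebra input of the second paragraph — that Cohen-Macaulayness is inherited by summands and forces purity of dimension — which is standard but needs to be invoked cleanly (e.g., via the characterization of Cohen-Macaulay modules in terms of $\operatorname{Ext}$ against $S(\fg)$, exactly as the paper has set up $\mathbb{D}$ and the Cohen-Macaulay property); once that is in hand the rest is formal manipulation of associated cycles.
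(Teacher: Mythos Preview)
Your proposal is correct and follows essentially the same argument as the paper: both use additivity of $\AC$ on a direct-sum decomposition together with the fact that a nonzero direct summand of a Cohen-Macaulay module has the same dimension (via the $\operatorname{Ext}$ characterization), forcing both summands to contribute nonzero classes on $\OO_\theta$ and contradicting irreducibility of $[\mathcal{V}]$. The only cosmetic difference is the order of the steps---the paper first splits into cases according to whether both summands meet $\OO_\theta$ and then invokes Cohen-Macaulayness to eliminate the boundary case, whereas you invoke Cohen-Macaulayness up front to rule out the boundary case directly; the content is the same.
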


\begin{proof}
Suppose $\mathcal{F} \cong \mathcal{F}_1 \oplus \mathcal{F}_2$ in the category of $K$-equivariant $S(\fg/\fk)$-modules. We wish to show that one of $\mathcal{F}_i$ is $0$. Let $\AC(\mathcal{F}) = [\cV] \cdot \OO_{\theta}$, $\AC(\mathcal{F}_1) = \sum_j [\cW_1^j] \cdot \OO_1^j$, and $\AC(\mathcal{F}_2) = \sum_k [\cW_2^k] \cdot \OO_2^k$ so that
$$[\cV] \cdot \OO_{\theta} = (\sum_j [\cW_1^j] \cdot \OO_1^j) + (\sum_j [\cW_2^k] \cdot \OO_2^k)$$
in the semigroup $\AC(\fg,K)$ (see \S \ref{subsec:AC} for the definition of addition). This equation implies that $\OO_1^j,\OO_2^k \subseteq \overline{\OO}_{\theta}$ for each $j$ and $k$. Suppose there are indices $j_0$ and $k_0$ such that $\OO_1^{j_0}=\OO_2^{k_0}=\OO_{\theta}$. Then $[\cV] = [\cW_1^{j_0}] + [\cW_2^{k_0}]$, contradicting the assumption that $[\cV]$ is irreducible. So either $\OO_1^j \subseteq \partial \OO_{\theta}$ for all $j$ or $\OO_2^k \subseteq \partial \OO_{\theta}$ for all $k$. Let us assume that $\OO_1^j \subseteq \partial \OO_{\theta}$ for all $j$. Then $\dim \mathrm{Supp}(\mathcal{F}_1) = \max\{\dim \OO_1^j\} < \dim \OO_{\theta} = \dim \mathrm{Supp}(\mathcal{F})$. If $\mathcal{F}_1 \neq 0$, then $\dim \mathrm{Supp}(\mathcal{F}_1) = \dim \mathrm{Supp}(\mathcal{F})$, since $\mathcal{F}_1$ is a direct summand of a Cohen-Macaulay module. So it must be the case that $\mathcal{F}_1=0$. This completes the proof.
\end{proof}

\begin{theorem}\label{thm:unitaritycriterion}
Let $M$ be an irreducible $(\fg,K)$-module such that
\begin{itemize}
    \item[(i)] $M$ is Hermitian.
    \item[(ii)] $\AC(M)$ is irreducible (Definition \ref{def:irreducibleAC}).
    \item[(iii)] $\Ann(M)$ is a maximal ideal.
    \item[(iv)] $\Ann(M)$ is very weakly unipotent (Definition \ref{def:weaklyunipotent}).
\end{itemize}
Then $M$ is unitary.
\end{theorem}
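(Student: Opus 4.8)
The plan is to combine the Cohen-Macaulay property of Theorem \ref{thm:CM} with the unitarity criterion of Theorem \ref{thm:unitarity}, using the abstract Corollary \ref{cor:indecomposable} as the crucial bridge. First I would fix a dominant $\lambda \in \mf{h}^*_\mb{R}$ with $\chi = \chi_\lambda$ the (real, by hypothesis) infinitesimal character of $M$, and write $M = \Gamma(\mc{M})$ for the unique irreducible $(\mc{D}_{\widetilde{\lambda}}, K)$-module $\mc{M}$ with its canonical mixed Hodge module structure; then $F_\bullet^H M = \Gamma(F_\bullet^H \mc{M})$ by definition. Since $\Ann(M)$ is maximal and very weakly unipotent by hypotheses (iii) and (iv), Theorem \ref{thm:CM} applies directly and tells us that $\gr^{F^H} M$ is a Cohen-Macaulay $(S(\mf{g}), K)$-module.

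Next I would invoke Corollary \ref{cor:indecomposable}: since $M$ is Hermitian with real infinitesimal character, it suffices to show that $\gr^{F^H} M$ is indecomposable as a $K$-equivariant $S(\mf{g}/\mf{k})$-module. Here I would observe that $\gr^{F^H} M$ is supported on $\mc{N}_\theta = \mc{N} \cap (\mf{g}/\mf{k})^*$, since $M$ is a Harish-Chandra module with infinitesimal character and hence is holonomic, so its associated variety lies in the nilpotent cone and is $\theta$-stable. The associated cycle $\AC(M)$ computed from $\gr^{F^H} M$ agrees with the one computed from any good filtration (by \cite[Proposition 2.2]{Vogan1991}), so hypothesis (ii) says $\AC(\gr^{F^H} M)$ is irreducible. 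Now Lemma \ref{lem:indecomposable}, whose two hypotheses are exactly (a) Cohen-Macaulayness (just established via Theorem \ref{thm:CM}) and (b) irreducibility of the associated cycle (hypothesis (ii)), yields that $\gr^{F^H} M$ is indecomposable.

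Finally I would close the argument: indecomposability of $\gr^{F^H} M$ together with the Hermitian hypothesis (i) lets us apply Corollary \ref{cor:indecomposable} to conclude that $M$ is unitary. The only mild subtlety is ensuring that the $\gr^{F^H}(\theta)$-eigenspace decomposition of Theorem \ref{thm:unitarity} is genuinely a decomposition \emph{as $K$-equivariant $S(\mf{g}/\mf{k})$-modules} — but this is precisely what Corollary \ref{cor:indecomposable} already packages, so no extra work is needed.

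The main obstacle is conceptual rather than technical: all the substance is hidden inside Theorem \ref{thm:CM} (the Cohen-Macaulay property, which rests on the strictness theorem \ref{thm:strictness} and the entire intertwining functor machinery) and inside the unitarity criterion \ref{thm:unitarity} of \cite{DV}. Granting those, the proof is essentially a bookkeeping exercise chaining Theorem \ref{thm:CM}, Lemma \ref{lem:indecomposable}, and Corollary \ref{cor:indecomposable}; the one point requiring a line of justification is that $\gr^{F^H} M$ is supported on $\mc{N}_\theta$ so that the associated-cycle formalism of \S\ref{subsec:AC} applies, which follows from holonomicity of Harish-Chandra modules.
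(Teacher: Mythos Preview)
Your proposal is correct and follows exactly the same chain of implications as the paper: hypotheses (iii) and (iv) feed into Theorem \ref{thm:CM} to get Cohen-Macaulayness of $\gr^{F^H}(M)$, hypothesis (ii) then yields indecomposability via Lemma \ref{lem:indecomposable}, and hypothesis (i) closes the argument via Corollary \ref{cor:indecomposable}. The paper's proof is a three-sentence version of precisely this; your additional remarks about setting up the localization and about the support lying in $\mc{N}_\theta$ are correct but not strictly needed for the formal argument.
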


\begin{proof}
By Theorem \ref{thm:CM}, (iii) and (iv) imply that $\gr^{F^H}(M)$ is a Cohen-Macaulay $S(\fg)$-module. This, together with (ii), implies by Lemma \ref{lem:indecomposable} that $\gr^{F^H}(M)$ is indecomposable as a $K$-equivariant $S(\fg/\fk)$-module. This, together with (i), implies that $M$ is unitary by Corollary \ref{cor:indecomposable}.   
\end{proof}

\begin{cor}\label{cor:unitaryunipotent}
Let $M$ be a birationally rigid unipotent representation (Definition \ref{def:birigidunipotent}) or a small-boundary unipotent representation (Definition \ref{def:smallbdryunipotent}). Then $M$ is unitary.
\end{cor}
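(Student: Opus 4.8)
The proof of Corollary \ref{cor:unitaryunipotent} is a direct deduction from Theorem \ref{thm:unitaritycriterion}, so the plan is to verify that any birationally rigid or small-boundary unipotent representation $M$ satisfies the four hypotheses (i)--(iv) of that theorem. Hypotheses (iii) and (iv) are the easiest: by Definitions \ref{def:birigidunipotent} and \ref{def:smallbdryunipotent}, $\mathrm{Ann}(M) = I(\OO)$ is a unipotent ideal, and Proposition \ref{prop:propsofunipotentideals}(i) and (v) say precisely that such an ideal is maximal and very weakly unipotent. Hypothesis (i), that $M$ is Hermitian, is exactly the content of Corollary \ref{cor:unipotentHermitian}. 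Hypothesis (ii), that $\AC(M)$ is irreducible, is Proposition \ref{prop:birigidAC}(i) in the birationally rigid case and Proposition \ref{prop:smallboundaryAC}(i) in the small-boundary case.

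So the structure of the argument is: \textbf{First}, recall that $I := \mathrm{Ann}(M)$ equals a unipotent ideal $I(\OO)$ for an appropriate nilpotent orbit $\OO$ (the orbit $\OO$ underlying the birationally rigid cover in the first case, or $\OO = G\cdot \OO_\theta$ in the second case). \textbf{Second}, invoke Proposition \ref{prop:propsofunipotentideals}(i) and (v) to conclude that $I$ is a maximal, very weakly unipotent ideal, giving (iii) and (iv). \textbf{Third}, invoke Corollary \ref{cor:unipotentHermitian} for (i). \textbf{Fourth}, invoke Proposition \ref{prop:birigidAC}(i) or Proposition \ref{prop:smallboundaryAC}(i) for (ii). \textbf{Finally}, apply Theorem \ref{thm:unitaritycriterion} to conclude that $M$ is unitary.

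There is essentially no obstacle here, since all the real work has been done in the preceding sections — the corollary is a bookkeeping assembly of Theorem \ref{thm:unitaritycriterion} with the structural facts about unipotent ideals and representations. The one point worth being slightly careful about is making sure the two cases of Definition \ref{def:unipotentintro} are each matched to the right supporting propositions: in the small-boundary case one uses Proposition \ref{prop:smallboundaryAC} (which relies on the codimension hypothesis $\codim(\partial \OO_\theta, \overline{\OO}_\theta) \ge 2$ built into Definition \ref{def:smallbdryunipotent} via the $\cW$-algebra dagger functors), while in the birationally rigid case one uses Proposition \ref{prop:birigidAC} (which reduces to the small-boundary case when $\codim(\partial\OO,\overline{\OO}) \ge 4$ and otherwise appeals to \cite{LosevYu} for the six exceptional orbits). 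Since the Hermitian property in Corollary \ref{cor:unipotentHermitian} itself uses the injectivity statements Proposition \ref{prop:birigidAC}(ii) and Proposition \ref{prop:smallboundaryAC}(ii), everything is already in place, and the proof is just two or three lines citing these results.

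\begin{proof}
Let $M$ be a birationally rigid or small-boundary unipotent representation. We verify the hypotheses of Theorem \ref{thm:unitaritycriterion}. In either case, by Definitions \ref{def:birigidunipotent} and \ref{def:smallbdryunipotent}, the annihilator $\mathrm{Ann}(M) = I(\OO)$ is a unipotent ideal; by Proposition \ref{prop:propsofunipotentideals}(i) and (v), it is therefore maximal and very weakly unipotent, which gives hypotheses (iii) and (iv). By Corollary \ref{cor:unipotentHermitian}, $M$ is Hermitian, giving hypothesis (i). Finally, $\AC(M)$ is irreducible by Proposition \ref{prop:birigidAC}(i) (in the birationally rigid case) or Proposition \ref{prop:smallboundaryAC}(i) (in the small-boundary case), giving hypothesis (ii). Theorem \ref{thm:unitaritycriterion} now implies that $M$ is unitary.
\end{proof}
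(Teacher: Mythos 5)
Your proof is correct and matches the paper's own proof step for step: both verify hypotheses (i)--(iv) of Theorem \ref{thm:unitaritycriterion} by citing, in order, Corollary \ref{cor:unipotentHermitian}, Propositions \ref{prop:birigidAC}(i)/\ref{prop:smallboundaryAC}(i), Proposition \ref{prop:propsofunipotentideals}(i), and Proposition \ref{prop:propsofunipotentideals}(v). Nothing to add.
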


\begin{proof}
To apply Theorem \ref{thm:unitaritycriterion}, we must verify properties (i)-(iv):
\begin{itemize}
    \item (i) is Corollary \ref{cor:unipotentHermitian}.
    \item (ii) is Propositions \ref{prop:birigidAC}(i) and \ref{prop:smallboundaryAC}(i).
    \item (iii) is Proposition \ref{prop:propsofunipotentideals}(i).
    \item (iv) is Proposition \ref{prop:propsofunipotentideals}(v).
\end{itemize}
\end{proof}

\begin{rmk}\label{rmk:covers}
Our definitions of unipotent $(\fg,K)$-modules (Definitions \ref{def:birigidunipotent} and \ref{def:smallbdryunipotent}) require that the annihilator is a unipotent ideal attached to a nilpotent co-adjoint orbit. The restriction to orbits (rather than arbitrary nilpotent covers) is mostly for convenience. We note that our main results about unipotent representations, i.e.\ the description of their $K$-types and the proof of unitarity, also hold for interesting classes of modules annihilated by unipotent ideals attached to nilpotent covers.  For example, suppose $M$ is an irreducible $(\fg,K)$-module such that $\Ann(M) = I(\widetilde{\OO})$, where $\widetilde{\OO}$ is a nilpotent cover to which the involution $\theta\colon \fg^* \to \fg^*$ lifts. Assume that $\AV(M) = \overline{\OO}_{\theta}$ for a small-boundary $K$-orbit $\OO_{\theta} \subset \cN_{\theta}$. Then the statements of Theorem \ref{thm:Kspectraunipotent} and Corollary \ref{cor:unitaryunipotent} hold also for $M$, and the proofs are essentially the same.
\end{rmk}

Our final result concerns \emph{Harish-Chandra bimodules}.  A Harish-Chandra $U(\fg)$-bimodule is a $(\fg \times \fg, G_{\Delta})$-module (where $G_{\Delta} = G$ acts on $\fg \times \fg$ via the diagonal adjoint action); equivalently, it is a $U(\fg)$-bimodule $M$ such that the adjoint action of $\fg$ on $M$ integrates to a locally finite $G$-action. Writing $\sigma_c: G \to G$ for a compact real form of $G$, we define a real form $\sigma$ of $\fg \times \fg$ by
$$\sigma(X,Y) = (\sigma_c(Y),\sigma_c(X)), \qquad X,Y \in \fg.$$
This commutes with the Cartan involution $\theta$ defined by
$$\theta(X,Y) = (Y,X), \qquad X,Y \in \fg.$$
Since Harish-Chandra bimodules are, in particular, Harish-Chandra modules, it is sensible to talk about \emph{Hermitian} and \emph{unitary} Harish-Chandra bimodules as in \S \ref{subsec:HCmodules}. It is also sensible to talk about associated varieties and cycles for Harish-Chandra bimodules. The associated variety of a Harish-Chandra bimodule $V$ is, a priori, a union of nilpotent $G$-orbits in $\fg^* \cong (\fg \times \fg/\fg_{\Delta})^*$. However, since all co-adjoint orbits are of even dimension, Theorem \ref{thm:AVirred} implies that $\AV(M)$ is the closure of a single nilpotent orbit $\OO \subset \fg^*$, which is also the associated variety of the (left or right) annihilator of $M$.

\begin{theorem}\label{thm:complexunitary}
Let $G$ be a complex connected reductive algebraic group and let $\widetilde{\OO}$ be a finite connected $G$-equivariant cover of a nilpotent co-adjoint $G$-orbit. Let $M$ be an irreducible Harish-Chandra $U(\fg)$-bimodule annihilated on both sides by the unipotent ideal $I(\widetilde{\OO})$. Then $M$ is unitary.
\end{theorem}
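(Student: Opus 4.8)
The plan is to deduce Theorem~\ref{thm:complexunitary} from Theorem~\ref{thm:unitaritycriterion} by verifying its four hypotheses \eqref{itm:unitaritygeneralintro 1}--\eqref{itm:unitaritygeneralintro 4} for an irreducible Harish-Chandra $U(\fg)$-bimodule $M$ annihilated on both sides by $I(\widetilde{\OO})$. Here we regard $M$ as a Harish-Chandra $(\fg\times\fg, G_\Delta)$-module, so the relevant reductive group is $G_{\RR} = G$ viewed as a complex group, with $\fg\times\fg$ in place of $\fg$, diagonal $\fk = \fg_\Delta$, the real form $\sigma(X,Y) = (\sigma_c(Y),\sigma_c(X))$, and Cartan involution $\theta(X,Y) = (Y,X)$ as set up just above the statement. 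The point is that the annihilator $\Ann_{U(\fg\times\fg)}(M)$ has infinitesimal character $(\chi_{\lambda},\chi_{\lambda})$ where $\chi_\lambda$ is the (real, by Proposition~\ref{prop:propsofunipotentideals}(iv)) infinitesimal character of $I(\widetilde{\OO})$, so $M$ indeed has real infinitesimal character and Proposition~\ref{prop:Hermitiancriterion} applies.

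First I would check \eqref{itm:unitaritygeneralintro 4}, maximality of the annihilator: since $I(\widetilde{\OO})\subset U(\fg)$ is maximal (Proposition~\ref{prop:propsofunipotentideals}(i)), the two-sided ideal $I(\widetilde{\OO})\otimes U(\fg) + U(\fg)\otimes I(\widetilde{\OO})$ is maximal in $U(\fg)\otimes U(\fg) = U(\fg\times\fg)$, and this is the annihilator of $M$ because $M$ is irreducible and annihilated on both sides by $I(\widetilde{\OO})$ --- indeed an irreducible bimodule with both-sided annihilator containing a maximal ideal on each side must have annihilator exactly this maximal product ideal. Next, \eqref{itm:unitaritygeneralintro 3}: very weak unipotence of $I(\widetilde{\OO})\boxtimes U(\fg) + U(\fg)\boxtimes I(\widetilde{\OO})$ for the group $G\times G$ follows from very weak unipotence of $I(\widetilde{\OO})$ for $G$ (Proposition~\ref{prop:propsofunipotentideals}(v)), since $C^\circ$-chambers, the root lattice, and the collection of finite-dimensional representations of $(G\times G)^{\mathit{ad}} = G^{\mathit{ad}}\times G^{\mathit{ad}}$ all factor as products; a representation $F_1\boxtimes F_2$ decomposes the bimodule into pieces with infinitesimal characters $(\gamma_1,\gamma_2)$, and such a piece is nonzero only if each $\gamma_i\notin C^\circ(\lambda)$. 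For \eqref{itm:unitaritygeneralintro 1}, Hermitianness: since $\theta$ swaps the two factors, $\theta^*M$ is the bimodule $M$ with left and right actions exchanged, which is $M^{\mathrm{op}}$; but an irreducible Harish-Chandra bimodule annihilated on both sides by the \emph{same} ideal $I(\widetilde{\OO})$ satisfies $M\cong M^{\mathrm{op}}$ --- this is where I would invoke the classification/structure of such bimodules from \cite{LMBM} (they are quantizations of $G$-equivariant bundles on $\widetilde{\OO}$, and the relevant Galois/component-group symmetry forces self-duality), combined with Proposition~\ref{prop:Hermitiancriterion}(ii). Alternatively, and perhaps more cleanly, one can argue $\AC(\theta^*M) = \AC(M)$ as in Proposition~\ref{prop:ACXtheta} and use an injectivity statement for the $\AC$ invariant on bimodules annihilated by $I(\widetilde{\OO})$, paralleling Propositions~\ref{prop:birigidAC}--\ref{prop:smallboundaryAC}.

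The remaining and most delicate condition is \eqref{itm:unitaritygeneralintro 2}, irreducibility of the associated cycle. Here the associated variety $\AV(M)\subset\fg^*$ is the closure of a single orbit $\OO$ (as noted in the excerpt, using Theorem~\ref{thm:AVirred} together with the fact that $\AV(M)$ equals the associated variety of the annihilator $V(I(\widetilde{\OO})) = \overline{\OO}$, which is a single orbit closure by Proposition~\ref{prop:propsofunipotentideals}(ii)), so $\AV(M) = \overline{\OO}$ is automatically irreducible as a variety. What must be shown is that the associated \emph{cycle} carries multiplicity one, i.e.\ $\gr(M)$ restricted to the open orbit $\OO$ is an \emph{irreducible} $G$-equivariant vector bundle. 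This is the bimodule analogue of Proposition~\ref{prop:smallboundaryAC}(i), and I expect it to be the main obstacle. The cleanest route is via Losev's $\W$-algebra $\dagger$-functor (\S\ref{subsec:W}): applied to the symmetric pair $(G\times G, G_\Delta)$ and the orbit $\OO$, the functor $(\bullet)_\dagger$ sends $M$ to a finite-dimensional Harish-Chandra $(\W_{\OO\times\OO}, R\cap G_\Delta)$-module, where $\W_{\OO\times\OO}\cong \W_\OO\otimes\W_\OO$; because $M$ is annihilated on both sides by $I(\widetilde{\OO})$ and $m_{\overline{\OO}}(I(\widetilde{\OO})) = 1$ when the cover is Galois (Proposition~\ref{prop:propsofunipotentideals}(iii)), Proposition~\ref{prop:propsofddagger}(ii) forces $I(\widetilde{\OO})_\ddagger$ to be codimension one, so each $\W_\OO$ acts through a character, hence $\W_{\OO\times\OO}$ acts through a character, hence $M_\dagger$ is irreducible as an $R\cap G_\Delta$-representation and $\AC(M) = [\mathcal{V}]\cdot\OO$ with $\mathcal{V}$ irreducible by Proposition~\ref{prop:propsofdagger}(vi) and (i)--(iii). (For non-Galois covers one replaces $\widetilde{\OO}$ by its Galois closure as in Remark~\ref{rmk:covers}; this is the ``why Theorem~\ref{thm:intro complex unipotent} follows from our other results'' that the authors credit to Shilin Yu in the acknowledgements.) With all four hypotheses in hand, Theorem~\ref{thm:unitaritycriterion} yields that $M$ is unitary, completing the proof.
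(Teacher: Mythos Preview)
Your strategy of verifying conditions \eqref{itm:unitaritygeneralintro 1}--\eqref{itm:unitaritygeneralintro 4} of Theorem~\ref{thm:unitaritycriterion} is exactly what the paper does in its Step~2, and your arguments for maximality, very weak unipotence, and (in the Galois case) irreducibility of the associated cycle via the $\dagger$-functor are essentially correct. The Hermitian condition is also handled as you suggest in your second alternative: the paper uses $\theta$-stability of the annihilator together with $\AC(\theta^*M)=\AC(M)$ (Proposition~\ref{prop:ACXtheta}) and injectivity of $\AC$ to conclude $M\cong\theta^*M$.

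The genuine gap is your treatment of non-Galois covers. Your sentence ``one replaces $\widetilde{\OO}$ by its Galois closure as in Remark~\ref{rmk:covers}'' does not work: Remark~\ref{rmk:covers} says nothing about Galois closures, and passing to the Galois closure in general changes the unipotent ideal, so $M$ need no longer be annihilated by it. The multiplicity-one statement $m_{\overline{\OO}}(I(\widetilde{\OO}))=1$ in Proposition~\ref{prop:propsofunipotentideals}(iii) really does require the Galois hypothesis, and without it your $\dagger$-functor argument for irreducibility of $\AC(M)$ collapses. The paper's actual reduction (its Step~1) is considerably more elaborate: one passes to the maximal cover $\widetilde{\OO}'$ in the equivalence class of $\widetilde{\OO}$ (same ideal), realizes $\widetilde{\OO}'$ as birationally induced from a birationally rigid cover $\widetilde{\OO}'_L$ on a Levi $L$, uses \cite[Corollary~6.6.3]{LMBM} to exhibit $M$ as a direct summand of the canonical quantization $\cA'$, uses \cite[Proposition~10.3.1]{LMBM} to reduce unitarity of $\cA'$ to that of $\cA'_L$, and finally uses \cite[Corollary~7.6.1, Proposition~5.4.5]{LMBM} to exhibit $\cA'_L$ as a direct summand of $\cA_L$, the canonical quantization of the \emph{universal} $L$-equivariant cover of $\OO_L$. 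Only then does one have a Galois cover and can run the Step~2 argument. This chain of reductions through parabolic induction and direct summands is the substantive content you are missing.
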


\begin{proof}
{\it Step 1.} First, we reduce to the case when $\widetilde{\OO} \to \OO$ is the universal $G$-equivariant cover of $\OO$. 

Let $\widetilde{\OO}'$ be the unique maximal cover in the equivalence class of $\widetilde{\OO}$ (with respect to the equivalence relation defined in \cite[Section 6.5]{LMBM}). Choose a Levi subgroup $L \subset G$ and a birationally rigid nilpotent cover $\widetilde{\OO}_L'$ such that
$$\widetilde{\OO}' = \mathrm{Bind}^G_L \widetilde{\OO}_L',$$
and let $\widetilde{\OO}_L \to \widetilde{\OO}_L'$ be the universal $L$-equivariant cover. Let $\cA'$ denote the canonical quantization of $\CC[\widetilde{\OO}']$, regarded as a Harish-Chandra $U(\fg)$-bimodule, and let $\cA_L$ (resp. $\cA_L'$) denote the canonical quantization of $\CC[\widetilde{\OO}_L]$ (resp. $\CC[\widetilde{\OO}_L']$), regarded as a Harish-Chandra $U(\fl)$-bimodule. 

By \cite[Corollary 6.6.3]{LMBM}, $M$ is a direct summand of $\cA'$. So 
$$\cA' \text{ is unitary} \implies M \text{ is unitary}.$$
By \cite[Proposition 10.3.1]{LMBM}
$$\cA_L' \text{ is unitary} \implies \cA' \text{ is unitary}.$$
Since $\widetilde{\OO}_L'$ is birationally rigid, the covering map $\widetilde{\OO}_L \to \widetilde{\OO}_L'$ induces an almost \'etale map $\Spec(\CC[\widetilde{\OO}_L]) \to \Spec(\CC[\widetilde{\OO}'_L])$, see \cite[Corollary 7.6.1]{LMBM}. So by \cite[Proposition 5.4.5]{LMBM}, $\cA'_L$ is a direct summand of $\cA_L$. So 
$$\cA_L \text{ is unitary} \implies \cA'_L \text{ is unitary}.$$
{\it Step 2.} By Step 1, we can assume that $\widetilde{\OO} \to \OO$ is the universal $G$-equivariant cover. In particular, it is Galois. Hence, by \cite[Theorem 6.6.2]{LMBM}, $\AC(M)$ is irreducible, and the associated cycle map
$$\AC: \{M' \in \Pi(\fg \times \fg, G_{\Delta}) \mid \Ann_L(M')=\Ann_R(M')=I(\widetilde{\OO})\} \to \AC(\fg \times \fg, G_{\Delta})$$
is injective. Since the left and right annihilators of $M$ coincide and $\theta$ acts on $\fg \times \fg$ by transposing factors, it is clear that $\Ann(M)$ is $\theta$-stable. So by the argument of Corollary \ref{cor:unipotentHermitian}, $M$ is Hermitian. To deduce that $M$ is unitary, we apply Theorem \ref{thm:unitaritycriterion}. We have already verified conditions (i) and (ii). Conditions (iii) and (iv) follow from Proposition \ref{prop:propsofunipotentideals}.
\end{proof}

\appendix

\section{Weak unipotence}\label{sec:appendix}

Let $\fg$ be a complex reductive Lie algebra. In this appendix, we collect some facts regarding very weak unipotence (Definition \ref{def:weaklyunipotentideal}). Our main result is that all unipotent ideals (Definition \ref{def:unipotentideal}) are very weakly unipotent. The proof is a case-by-case calculation, involving partition combinatorics in classical types (Propositions \ref{prop:weaklyunipotentA} and \ref{prop:weaklyunipotentBCD}) and computer calculations in exceptional types (Proposition \ref{prop:weaklyunipotentexceptional}). 

Choose a Borel subalgebra $\fb \subset \fg$ and a Cartan subalgebra $\fh \subset \fb$. Let $\fg^{\vee}$ denote the Langlands dual of $\fg$. By definition, $\fg^{\vee}$ contains a distinguished Cartan subalgebra $\fh^{\vee} \subset \fg^{\vee}$ which is canonically identified with $\fh^*$. An element $\lambda \in \fh^* \simeq \fh^{\vee}$ defines two root sybsystems of $\Delta(\fh^{\vee},\fg^{\vee})$:
$$\Delta^{\vee}_{\lambda} := \{\alpha^{\vee} \in \Delta(\fh^{\vee},\fg^{\vee}) \mid \langle \alpha^{\vee},\lambda\rangle \in \ZZ\}, \qquad \Delta^{\vee}_{\lambda,0} := \{\alpha^{\vee} \in \Delta(\fh^{\vee},\fg^{\vee}) \mid \langle \alpha^{\vee},\lambda\rangle =0\}.$$
Denote the corresponding subalgebras by
$$\fg^{\vee}_{\lambda} \subset \fg^{\vee}, \qquad \fg^{\vee}_{\lambda,0} \subset \fg^{\vee}.$$
Note that $\fg^{\vee}_{\lambda,0}$ is in fact a Levi subalgebra of $\fg^{\vee}_{\lambda}$. Let $\OO^{\vee}_{\lambda}$ denote the nilpotent orbit in $(\fg^{\vee}_{\lambda})^*$, defined by
$$\OO^{\vee}_{\lambda} := \Ind^{\fg^{\vee}_{\lambda}}_{\fg^{\vee}_{\lambda,0}} \{0\}.$$
Note that for any $\gamma \in \lambda+\ZZ\Phi$, we have $\fg^{\vee}_{\gamma} = \fg^{\vee}_{\lambda}$. Moreover, the nilpotent orbit $\OO^{\vee}_{\gamma} \subset (\fg^{\vee}_{\lambda})^*$ depends only on the $W$-orbit of $\gamma$. Recall (Definition \ref{def:CandD}) the set 
\begin{equation}\label{eq:D}D^{\circ}(\lambda) = (\mathrm{Conv}(W\lambda) \setminus W\lambda) \cap (\lambda+\ZZ\Phi)\end{equation}
Let $D^{\circ}(\lambda)_+$ denote the set of dominant elements of $\fh_{\RR}^*$ which are conjugate to elements of $D^{\circ}(\lambda)$. 

\begin{definition}\label{def:lambdaweaklyunipotent}
Let $\lambda \in \fh^*_{\RR}$. We say that $\lambda$ is \emph{very weakly unipotent} if for each $\gamma \in D^{\circ}(\lambda)_+$, we have
$$\OO^{\vee}_{\lambda} \not\subseteq \OO^{\vee}_{\gamma}.$$
\end{definition}

The following lemma is probably well-known to the experts, and implicit in McGovern \cite[Theorem 5.6]{McGovern1994}. 

\begin{lemma}\label{lem:weaklyunipotentcriterion}
Let $\lambda \in \fh_{\RR}^*$ and let $I \subset U(\fg)$ be the (unique) maximal ideal of infinitesimal character $\chi_{\lambda}$. Assume that $\lambda$ is very weakly unipotent (Definition \ref{def:lambdaweaklyunipotent}). Then $I$ is very weakly unipotent (Definition \ref{def:weaklyunipotentideal}).
\end{lemma}

\begin{proof}
We begin by recalling some preliminaries on BGG category $\cO$. Let $\cO(\fg)$ denote category $\cO$ for $U(\fg)$. For any $x \in \fh^*$, let $\cO_{\chi_x}(\fg)$ denote the full subcategory of $\cO(\fg)$ consisting of $U(\fg)$-modules of generalized infinitesimal character $\chi_{x}$, so that there is a decomposition
$$\cO(\fg) = \bigoplus_{x \in \fh^*/W} \cO_{\chi_x}(\fg).$$
Write $\mathrm{pr}_{\chi_x}: \cO(\fg) \to \cO_{\chi_x}(\fg)$ for the projection functor.

Recall that the standard objects in $\cO_{\chi_x}(\fg)$ are the Verma modules $\{M(wx)\}_{w \in W}$, where $M(wx) := U(\fg) \otimes_{U(\fb)} \CC_{wx-\rho}$, and the irreducible objects are $\{L(wx)\}_{w \in W}$, where $L(wx)$ is the unique irreducible quotient of $M(wx)$. Let 
$$W_x := \{w \in W \mid wx - x \in \ZZ\Phi\}$$
be the integral Weyl group for $x$. Also recall (\cite[Theorem 4.9]{Humphreys}) that the irreducible objects $\{L(wx)\}_{w \in W_x}$ generate a block in $\cO_{\chi_x}(\fg)$, which we will denote by $\cO_x(\fg)$. 

Taking $\lambda$ to be integrally dominant, we have that the irreducible object $L(\lambda) \in \cO_{\lambda}(\fg)$ has maximal annihilator (\cite{Duflo1977}). So by Proposition \ref{prop:weakly unipotent ideal}, it suffices to show that 
$$\mathrm{pr}_{\chi_{\gamma}}(L(\lambda) \otimes F) = 0,$$
for all $\gamma \in D^{\circ}(\lambda)$ and all finite-dimensional $\fg$-modules $F$. We prove this in two steps.

Step 1.  Let $\fg_{\lambda}$ denote the Langlands dual of $\fg^{\vee}_{\lambda}$. Note that $\fg_{\lambda}$ is not in general a subalgebra of $\fg$, although its root system is contained in the root system of $\fg$. For any $\gamma \in \lambda+ \ZZ\Phi$, there is an isomorphism of Grothendieck groups
$$K_0\cO_{\gamma}(\fg) \simeq K_0\cO_{\gamma}(\fg_{\lambda}),$$
matching Verma modules $M_{\fg}(w\gamma) \leftrightarrow M_{\fg_{\lambda}}(w\gamma)$ and irreducibles $L_{\fg}(w\gamma) \leftrightarrow L_{\fg_{\lambda}}(w\gamma)$ (see \cite[Chapter 1]{Lusztig1984}). Suppose we have matching irreducibles $L \leftrightarrow L'$ in $\cO_{\lambda}(\fg)$ and $\cO_{\lambda}(\fg_{\lambda})$. We will show that if
\begin{equation}\label{eq:translationzero1}\mathrm{pr}_{\chi_{\gamma}}(L' \otimes F')=0 \quad \forall \text{ f.d. $\fg_{\lambda}$-modules $F'$}\end{equation}
then
\begin{equation}\label{eq:translationzero2}\mathrm{pr}_{\chi_{\gamma}}(L \otimes F)=0 \quad \forall \text{ f.d. $\fg$-modules $F$.}\end{equation}
Choose any finite-dimensional $\fg$-module $F$. Since the Weyl group $W_{\lambda}$ of $\fg_{\lambda}$ is a subgroup of $W$, the restriction $F|_{\fh}$ is the character of a virtual $\fg_{\lambda}$-module, which we will denote by $F|_{\fg_{\lambda}}$. We get a homomorphism of Grothendieck groups
$$\mathrm{pr}_{\chi_{\gamma}}(\bullet \otimes F|_{\fg_{\lambda}}): K_0\cO_{\lambda}(\fg_{\lambda}) \to K_0\cO_{\gamma}(\fg_{\lambda}).$$
Moreover, it is clear that the following diagram commutes
\begin{center}
\begin{tikzcd}
K_0\cO_{\lambda}(\fg) \ar[rr, "\mathrm{pr}_{\chi_{\gamma}}(\bullet \otimes F)"] \ar[d,leftrightarrow] &  & K_0\cO_{\gamma}(\fg)\ar[d,leftrightarrow] \\
K_0\cO_{\lambda}(\fg_{\lambda}) \ar[rr,"\mathrm{pr}_{\chi_{\lambda}}(\bullet \otimes F|_{\fg_{\lambda}})"] & & K_0\cO_{\gamma}(\fg_{\lambda})
\end{tikzcd}
\end{center}
For example, this is easy to check in the basis of Verma modules (see \cite[Theorem 3.6]{Humphreys}). Now, (\ref{eq:translationzero1}) implies that $\mathrm{pr}_{\chi_{\lambda}}(L' \otimes F|_{\fg_{\lambda}})=0$. Hence, $\mathrm{pr}_{\chi_{\lambda}}(L \otimes F)=0$ by the commutativity of the diagram.

Step 2. Let $I'_{\lambda}$ denote the (unique) maximal ideal in $U(\fg_{\lambda})$ of infinitesimal character $\chi_{\lambda}$, and let $L'(\lambda)$ denote the irreducible object in $\cO_{\lambda}(\fg_{\lambda})$ corresponding to $\lambda$ so that $\Ann_{U(\fg_{\lambda})}(L'(\lambda)) = I'_{\lambda}$. By Step 1, it suffices to show that 
$$\mathrm{pr}_{\chi_{\gamma}}(L'(\lambda) \otimes F')=0,$$
for all $\gamma \in D^{\circ}(\lambda)$ and all finite-dimensional $\fg_{\lambda}$-modules $F'$. So choose $\gamma \in D^{\circ}(\lambda)$ and a finite-dimensional $F'$.

According to \cite{BarbaschVogan1985}, we have
$$V(I'_{\gamma}) = \overline{d(\OO^{\vee}_{\gamma})},$$
as subsets of $\fg_{\lambda}^*$, where  $d$ denotes the Barbach-Vogan duality on nilpotent co-adjoint orbits. It is known that $d$ is an order-reversing bijection on special nilpotent orbits (and that the orbit $\OO^{\vee}_{\gamma}$ is special). So our assumption $\OO^{\vee}_{\lambda} \not\subseteq \OO^{\vee}_{\gamma}$ implies
$$V(I'_{\gamma}) \not\subseteq V(I'_{\lambda})$$
On the other hand, if we assume that $\mathrm{pr}_{\chi_{\gamma}}(L'(\lambda) \otimes F') \neq 0$, then
$$V(I'_{\gamma}) \subseteq V(\mathrm{Ann}_{U(\fg_{\lambda})}(\mathrm{pr}_{\chi_{\gamma}}(L'(\lambda) \otimes F'))) \subseteq V(\mathrm{Ann}_{U(\fg_{\lambda})}(L'(\lambda)) = V(I'_{\lambda})$$
(this first inclusion follows from the maximality of $I'_{\gamma}$; the second is a basic property of the functor $\mathrm{pr}_{\chi_{\gamma}}(\bullet \otimes F')$). This is a contradiction. So $\mathrm{pr}_{\chi_{\gamma}}(L'(\lambda) \otimes F')=0$. This completes the proof.
\end{proof}

Since unipotent ideals in $U(\fg)$ are maximal (\cite[Theorem 5.0.1]{MBM}), Lemma \ref{lem:weaklyunipotentcriterion} implies the following.

\begin{prop}\label{prop:weaklyunipotentcriterion}
Let $\widetilde{\OO}$ be a nilpotent cover and let $I(\widetilde{\OO})$ be the corresponding unipotent ideal (Definition \ref{def:unipotentideal}). Let $\lambda \in \fh_{\RR}^*$ be the dominant representative of the infinitesimal character of $I(\widetilde{\OO})$. Suppose that $\lambda$ is very weakly unipotent (Definition \ref{def:lambdaweaklyunipotent}). Then $I(\widetilde{\OO})$ is very weakly unipotent (Definition \ref{def:weaklyunipotentideal}).
\end{prop}

Finally, we record the following lemma, which will be useful for computations. The (trivial) proof is left to the reader.

\begin{lemma}\label{lem:integralrootssplit}
Let $\lambda \in \fh^*$. Suppose there are Lie subalgebras $\fa^{\vee}_1, \fa^{\vee}_2 \subset \fg^{\vee}_{\lambda}$ with Cartan subalgebras $\fh^{\vee}_1 \subset \fa^{\vee}_1$, $\fh^{\vee}_2 \subset \fa^{\vee}_2$ such that
$$\fg^{\vee}_{\lambda} = \fa^{\vee}_1 \times \fa^{\vee}_2, \qquad \fh^{\vee} = \fh^{\vee}_1 \times \fh^{\vee}_2.$$
Write $\lambda=\lambda_1+\lambda_2$ for $\lambda_i \in \fh^{\vee}_i$. Then $\lambda$ is very weakly unipotent (with respect to $\fg$) if and only if $\lambda_1$ and $\lambda_2$ are very weakly unipotent (with respect to $\fa_1$ and $\fa_2$, respectively).
\end{lemma}

\subsection{Notation for partitions}

For $n \in \ZZ_{\geq 0}$, let $\mathcal{P}(n)$ denote the set of partitions of $n$. Elements of $\mathcal{P}(n)$ are written as $p=[p_1,p_2,....,p_r]$, where $p_1 \geq p_2 \geq ... \geq p_r$ and $\sum_{i=1}^r p_r = n$. We use exponents to indicate multiplicities, i.e. 
$$[p_1^{(m_1)},p_2^{(m_2)},...,p_r^{(m_r)}] = [\underbrace{p_1,...,p_1}_{m_1},\underbrace{p_2,...,p_2}_{m_2},...,\underbrace{p_r,...,p_r}_{m_r}].$$
There is a partial order on $\mathcal{P}(n)$ defined by the relation
\begin{equation}\label{eq:dominanceorder}p \geq p' \iff \sum_{i=1}^j p_i \geq \sum_{i=1}^j p'_j, \ \forall j \geq 1.\end{equation}
The transpose of a partition $p$ is denoted by $p^t$. Note that $p \leftrightarrow p^t$ is an order-reversing involution on $\mathcal{P}(n)$. If $p=[p_1,...,p_r]$ , $q=[q_1,...,q_s]$, and $p_r \geq q_1$, we write
$$p \sqcup q = [p_1,...,p_r,q_1,...,q_s].$$
We call this partition the `concatenation' of $p$ and $q$.

For any subset $S \subset \RR$, we write $S^r_+$ for the set of weakly decreasing $r$-tuples in $S$. If $x = (x_1,....,x_r) \in S^r$, we write $x_+ \in S^r_+$ for the sequence obtained by arranging the entries of $x$ in weakly decreasing order. If $S = \ZZ_{\geq 0}$ and $\sum_{i=1}^r x_i = n$, then we can also regard $x_+$ as an element of $\mathcal{P}(n)$. For $s \in S$ and $x \in S^n$, let $m_s(x)$ denote the multiplicity of $s$ in $x$.

\subsection{Combinatorics in type $A$}

\begin{definition}
For each $\epsilon \in (-\frac{1}{2},\frac{1}{2})$, define maps
\begin{align*}
v_{\epsilon}: \mathcal{P}(n)& \to (\epsilon + \ZZ)^n_+,\\
 v_{\epsilon}([\mu_0,\mu_1,\mu_1',\mu_2,\mu_2',...])& = \begin{cases}
		(\epsilon^{(\mu_0)},(\epsilon-1)^{(\mu_1)},(\epsilon+1)^{(\mu_1')},....)_+, & \text{if $\epsilon \geq 0$,}\\
            (\epsilon^{(\mu_0)},(\epsilon+1)^{(\mu_1)},(\epsilon-1)^{(\mu_1')},....)_+, & \text{if $\epsilon <0$}
		 \end{cases} 
\end{align*}
and
$$p_{\epsilon}: (\epsilon+\ZZ)_+^n \to \mathcal{P}(n), \qquad p_{\epsilon}(v) = [m_v(\epsilon), m_v(\epsilon-1), m_v(\epsilon+1),m_v(\epsilon-2),m_v(\epsilon+2),...]_+.$$
We say that $v \in (\epsilon+\ZZ)^n_+$ is $\epsilon$-triangular if it lies in the image of $v_{\epsilon}$ or, equivalently, if $v = v_\epsilon(p_\epsilon(v))$.
\end{definition}

\begin{lemma}\label{lem:monotonicityA}
For any $\epsilon \in (-\frac{1}{2},\frac{1}{2})$, the map $v_{\epsilon}: \mathcal{P}(n) \to (\epsilon + \ZZ)^n_+$ is order-reversing, i.e. for $p,p' \in \mathcal{P}(n)$ we have
$$p' \leq p \implies |v_{\epsilon}(p)| \leq |v_{\epsilon}(p')|,$$
where $|\bullet|$ denotes the Euclidean norm on $\RR^n$.
\end{lemma}

\begin{proof}
We can reduce to the case when $(p,p')$ is a `minimal degeneration', i.e. $p' < p$ and there is no $q \in \mathcal{P}(n)$ such that $p' < q < p$. Every such pair $(p,p')$ is of the following form
$$p = [p_1,p_2,...,p_r], \qquad p' = [p_1,...,p_{i-1},p_i-1,p_{i+1}+1,p_{i+2},...p_r]$$
where $p_i \geq p_{i+1}+2$. First assume that $\epsilon \geq 0$. Then
\begin{align*}
|v_{\epsilon}(p')|^2 - |v_{\epsilon}(p)|^2 &= \left(\epsilon - (-1)^{i}\left\lfloor \frac{i}{2} \right\rfloor\right)^2(p_i-1) + \left(\epsilon - (-1)^{i+1}\left\lfloor \frac{i+1}{2}\right\rfloor\right)^2 (p_{i+1}+1)\\ 
&- \left(\epsilon - (-1)^{i}\left\lfloor \frac{i}{2} \right\rfloor\right)^2 p_i - \left(\epsilon - (-1)^{i+1}\left\lfloor \frac{i+1}{2}\right\rfloor\right)^2 p_{i+1}  \\
&= \left(\epsilon - (-1)^{i+1}\left\lfloor \frac{i+1}{2}\right\rfloor\right)^2 - \left(\epsilon - (-1)^{i}\left\lfloor \frac{i}{2} \right\rfloor\right)^2\\
&\geq 0
\end{align*}
(since the sequence $|\epsilon|, |\epsilon-1|, |\epsilon+1|, |\epsilon-2|, |\epsilon+2|,...$ is weakly increasing for $\epsilon \in [0,\frac{1}{2})$). 

On the other hand, if $\epsilon <0$, then
\begin{align*}
    |v_{\epsilon}(p')|^2 - |v_{\epsilon}(p)|^2 &= \left(\epsilon + (-1)^{i}\left\lfloor \frac{i}{2} \right\rfloor\right)^2(p_i-1) + \left(\epsilon + (-1)^{i+1}\left\lfloor \frac{i+1}{2}\right\rfloor\right)^2 (p_{i+1}+1)\\ 
&- \left(\epsilon + (-1)^{i}\left\lfloor \frac{i}{2} \right\rfloor\right)^2 p_i - \left(\epsilon + (-1)^{i+1}\left\lfloor \frac{i+1}{2}\right\rfloor\right)^2 p_{i+1}  \\
&= \left(\epsilon + (-1)^{i+1}\left\lfloor \frac{i+1}{2}\right\rfloor\right)^2 - \left(\epsilon + (-1)^{i}\left\lfloor \frac{i}{2} \right\rfloor\right)^2\\
&> 0
\end{align*}
(since the sequence $|\epsilon|, |\epsilon+1|, |\epsilon-1|, |\epsilon+2|, |\epsilon-2|,...$ is increasing for $\epsilon \in (-\frac{1}{2},0)$). 
\end{proof}

\begin{lemma}\label{lem:weaklyunipotentA}
Let $\epsilon \in (-\frac{1}{2},\frac{1}{2})$ and let $v,v' \in (\epsilon+\ZZ)^n_+$. Assume that $v$ is $\epsilon$-triangular and $|v'| < |v|$. Then $p_{\epsilon}(v') \not\leq p_{\epsilon}(v)$.
\end{lemma}

\begin{proof}
Note that $|v'| \geq |v_{\epsilon}(p_{\epsilon}(v'))|$, so that
$$|v_{\epsilon}(p_{\epsilon}(v'))| < |v| = |v_{\epsilon}(p_{\epsilon}(v))|.$$
Now Lemma \ref{lem:monotonicityA} implies that $p_{\epsilon}(v') \not\leq p_{\epsilon}(v)$. 
\end{proof}

\begin{prop}\label{prop:weaklyunipotentA}
Let $G$ be a simple group of type $A$ and let $\widetilde{\OO}$ be a nilpotent cover. Then $I(\widetilde{\OO})$ is very weakly unipotejnt.
\end{prop}

\begin{proof}
Let $\lambda = \lambda(\widetilde{\OO}) \in \fh_{\RR}^*$ denote the dominant representative of the infinitesimal character of $I(\widetilde{\OO})$. We will show that $\lambda$ is very weakly unipotent (Definition \ref{def:lambdaweaklyunipotent}). This will imply that $I(\widetilde{\OO})$ is very weakly unipotent by Proposition \ref{prop:weaklyunipotentcriterion}.

Identify $\fh^{\vee} \simeq \CC^n$ using standard (Bourbaki) coordinates. Positive coroots for $\fg$ (i.e. positive roots for $\fg^{\vee}$) are $\{e_i - e_j\}_{i < j} \subset \fh$.  The Weyl group $W$ is the symmetric group $S_n$ acting on $\CC^n$ by permutation of entries. It is clear from the explicit formulas in \cite[Proposition 9.2.8 and Subsection 9.2.3]{LMBM} that $\lambda$ is $W$-conjugate to a sequence of the form
$$v = \bigsqcup_{i=1}^t v_i,$$
where
\begin{itemize}
    \item $[n_1,n_2,...,n_t]$ is a partition of $n$;
    \item $\epsilon_1,...,\epsilon_t$ are distinct real numbers in $(-\frac{1}{2},\frac{1}{2})$;
    \item each $v_i$ is an $\epsilon_i$-triangular sequence in $(\epsilon_i+\ZZ)^{n_i}_+$;
    \item $\bigsqcup$ denotes concatenation.
\end{itemize}
For positive integers $a \leq b \leq n$, let $\Delta_A^{\vee}(a,b) = \{e_i-e_j\}_{a \leq i \neq j, \leq b}$. Since $\epsilon_i-\epsilon_j \notin \ZZ$ for $i\neq j$, the integral co-roots for $\lambda$ are
$$\Delta^{\vee}_{v} = \Delta^{\vee}_A(1,n_1) \sqcup \Delta^{\vee}_A(n_1+1,n_1+n_2) \sqcup ... \sqcup \Delta^{\vee}_A(n-n_t+1,n).$$
So by Lemma \ref{lem:integralrootssplit}, we can reduce to the case when $\lambda=v$ is an $\epsilon$-triangular $n$-tuple, for some $\epsilon \in (-\frac{1}{2},\frac{1}{2})$. We will assume this for the remainder of the proof. In the notation explained in the beginning of this Appendix, we have
$$D^{\circ}(\lambda)_+ \subseteq \{v' \in (\epsilon +\ZZ)^n_+ \mid |v'| < |v|\}.$$
Nilpotent orbits in $\fg^{\vee}=\fg^{\vee}=\mathfrak{sl}(n,\CC)$ are parameterized by $\mathcal{P}(n)$ (\cite[Section 5.1]{CM}). Under this parameterization, the closure order on orbits corresponds to the dominance order on partitions (\ref{eq:dominanceorder}). Moreover, it is easy to see, using the formula for induction in \cite[Secton 7.2]{CM}, that the nilpotent orbit $\OO^{\vee}_{v'} \subset \mathfrak{sl}(n,\CC)^*$, for any $v' \in (\epsilon+\ZZ)^n_+$, corresponds to the partition $p_{\epsilon}(v')^t \in \mathcal{P}(n)$ and that the Killing form on $\fh^*$ corresponds to a constant multiple of the dot product on $\CC^n$. So by Proposition \ref{prop:weaklyunipotentcriterion}, it suffices to show that
$$p_{\epsilon}(v') \not\leq p_{\epsilon}(v), \qquad \forall v' \in (\epsilon+\ZZ)^n_+ \text{ such that } |v'| < |v|.$$
This is the content of Lemma \ref{lem:weaklyunipotentA}.
\end{proof}

\subsection{Combinatorics in types $B$, $C$, and $D$}

\begin{definition}\label{def:triangular}
\leavevmode
\begin{itemize}
    \item Let $\mathcal{P}^*(2n)$ denote the set of partitions of $2n$ of the form 
    $$\mu = [\mu_1^{(2)},\mu_2^{(2)},...,\mu_{i-1}^{(2)},2\mu_0,\mu_{i}^{(2)},...,\mu_r^{(2)}]$$
    for $1 \geq i \geq r$ and let $\mathcal{P}^{**}(2n) \subset \mathcal{P}^*(2n)$ denote the subset consisting of partitions of the same form but with  $2\mu_0 \geq \mu_1$. Define maps
    \begin{align*}
    v_{\ZZ}: \mathcal{P}^*(2n) &\to (\ZZ_{\geq 0})^n_+, \\
    v_{\ZZ}([\mu_1^{(2)},...,\mu_{i-1}^{(2)},2\mu_0,\mu_{i}^{(2)},...,\mu_r^{(2)}]) &= (r^{(\mu_r)}, (r-1)^{(\mu_{r-1})}, ..., 1^{(\mu_1)},0^{(\mu_0)})
    \end{align*}
    and
    $$p_{\ZZ}: (\ZZ_{\geq 0})^n_+ \to \mathcal{P}^*(2n), \qquad p_{\ZZ}(v) = [2m_v(0),m_v(1)^{(2)},m_v(2)^{(2)}, ...]_+.$$
    We say that $v \in (\ZZ_{\geq 0})^n_+$ is \emph{$\ZZ$-triangular} if it lies in the set $v_{\ZZ}(\mathcal{P}^{**}(2n))$.

    \item Define maps
    $$v_{\frac{1}{2}\ZZ}: \mathcal{P}(n) \to  (\frac{1}{2} + \ZZ_{\geq 0})^n_+, \qquad v_{\frac{1}{2}\ZZ}([\nu_1,\nu_2,...,\nu_s]) = \left(\frac{2s-1}{2}^{(\nu_s)},\frac{2s-3}{2}^{(\nu_{s-1})},...,\frac{1}{2}^{(\nu_1)}\right),$$
    and
    $$p_{\frac{1}{2}\ZZ}: (\frac{1}{2} + \ZZ_{\geq 0})^n_+ \to \mathcal{P}(n), \qquad p_{\frac{1}{2}\ZZ}(v) = \left[m_v\left(\frac{1}{2}\right),m_v\left(\frac{3}{2}\right),...\right]_+.$$
    We say that a $v \in (\frac{1}{2} + \ZZ_{\geq 0})^n_+$ is \emph{$\frac{1}{2}\ZZ$-triangular} if it lies in the image of $v_{\frac{1}{2}\ZZ}$.
    
    \item Let $(\frac{1}{2} + \frac{1}{4}\ZZ_{\geq 0})^n_+$ denote the set of weakly decreasing $n$-tuples in $\frac{1}{2} + \frac{1}{4}\ZZ_{\geq 0}$. Define maps
    $$v_{\frac{1}{4}\ZZ}: \mathcal{P}(n) \to  (\frac{1}{4} + \frac{1}{2}\ZZ_{\geq 0})^n_+, \qquad v_{\frac{1}{4}\ZZ}([\nu_1,\nu_2,...,\nu_s]) = \left(\frac{2s-1}{4}^{(\nu_s)},\frac{2s-3}{2}^{(\nu_{s-1})},...,\frac{1}{4}^{(\nu_1)}\right),$$
    and
    $$p_{\frac{1}{4}\ZZ}: (\frac{1}{4} + \frac{1}{2}\ZZ_{\geq 0})^n_+ \to \mathcal{P}(n), \qquad p_{\frac{1}{4}\ZZ}(v) = \left[m_v\left(\frac{1}{4}\right),m_v\left(\frac{3}{4}\right),...\right]_+.$$
    We say that a sequence $v \in (\frac{1}{4} + \frac{1}{2}\ZZ_{\geq 0})^n_+$ is \emph{$\frac{1}{4}\ZZ$-triangular} if it lies in the image of $v_{\frac{1}{4}\ZZ}$.
\end{itemize}
\end{definition}

\begin{lemma}\label{lem:monotonicity}
\begin{itemize}
    \item Suppose $p \in \mathcal{P}^{**}(2n)$, $p' \in \mathcal{P}^*(2n)$, and $p' \leq p$. Then 
    $$|v_{\ZZ}(p)| \leq |v_{\ZZ}(p')|.$$
    \item Suppose $p,p' \in \mathcal{P}(n)$ and $p' \leq p$. Then
    $$|v_{\bullet}(p)| \leq |v_{\bullet}(p')|, \qquad \bullet \in \left\{\frac{1}{2}\ZZ,\frac{1}{4}\ZZ\right\}.$$
\end{itemize}
\end{lemma}

\begin{proof}
For the first claim, we can reduce to the case when $(p,p')$ is a minimal degeneration in $\mathcal{P}^*(2n)$ of one of the following two types: 
\begin{itemize}
    \item[(i)] %
$$p = [\mu_1^{(2)},\mu_2^{(2)},...,\mu_{i-1}^{(2)},2\mu_0,\mu_{i}^{(2)},...,\mu_r^{(2)}]$$
and
$$p' = [\mu_1^{(2)},(\mu_j-1)^{(2)},(\mu_{j+1}+1)^{(2)},...,\mu_{i-1}^{(2)},2\mu_0,\mu_{i}^{(2)},...,\mu_r^{(2)}]$$
for $1 \leq j \leq i-2$ or $i \leq j \leq r-1$;
    
    \item[(ii)] %
$$p = [\mu_1^{(2)},\mu_2^{(2)},...,\mu_{i-1}^{(2)},2\mu_0,\mu_{i}^{(2)},...,\mu_r^{(2)}]$$
and
$$p' = [\mu_1^{(2)},...,\mu_{i-1}^{(2)},2\mu_0-2,(\mu_{i}+1)^{(2)},...,\mu_r^{(2)}].$$
\end{itemize}
In case (i), we have
$$|v_{\ZZ}(p')| - |v_{\ZZ}(p)| = j^2(\mu_j-1) + (j+1)^2(\mu_{j+1}+1) - j^2(\mu_j) - (j+1)^2 \mu_j > (j+1)^2 - j^2 = 2j+1 > 0$$
In case (ii), we have
$$|v_{\ZZ}(p')| - |v_{\ZZ}(p)| = i^2 (\mu_i+1) - i^2 \mu_i = i^2 >0$$
For the second claim, there is only one type of minimal degeneration, namely

\begin{itemize}
    \item[(iii)] $p = [\nu_1,\nu_2,...,\nu_s]$ and $p' = [\nu_1,...,\nu_t-1,\nu_{t+1}+1,....\nu_s]$. 
\end{itemize}
In this case, we have
\begin{align*}
|v_{\frac{1}{2}\ZZ}(p')| - |v_{\frac{1}{2}}(p)| &= (\nu_t-1)\left(\frac{2t-1}{2}\right)^2 + (\nu_{t+1}+1)\left(\frac{2t+1}{2}\right)^2 - \nu_t\left(\frac{2t-1}{2}\right)^2 - \nu_{t+1}\left(\frac{2t+1}{2}\right)^2\\
&= \left(\frac{2t+1}{2}\right)^2 - \left(\frac{2t-1}{2}\right)^2\\
&>0
\end{align*}
and
\begin{align*}
|v_{\frac{1}{4}\ZZ}(p')| - |v_{\frac{1}{4}}(p)| &= (\nu_t-1)\left(\frac{2t-1}{4}\right)^2 + (\nu_{t+1}+1)\left(\frac{2t+1}{4}\right)^2 - \nu_t\left(\frac{2t-1}{4}\right)^2 - \nu_{t+1}\left(\frac{2t+1}{4}\right)^2\\
&= \left(\frac{2t+1}{4}\right)^2 - \left(\frac{2t-1}{4}\right)^2\\
&>0
\end{align*}
\end{proof}

\begin{lemma}\label{lem:combinatoricsBCD}
\leavevmode
\begin{itemize}
    \item Let $v, v' \in (\ZZ_{\geq 0})^n_+$. Assume that $v$ is $\ZZ$-triangular and $|v'| < |v|$. Then $p_{\ZZ}(v') \not\leq p_{\ZZ}(v)$.

    \item Let $v, v' \in (\frac{1}{2}+\ZZ_{\geq 0})^n_+$. Assume that $v$ is $\frac{1}{2}\ZZ$-triangular
 and $|v'| < |v|$. Then $p_{\frac{1}{2}\ZZ}(v') \not\leq p_{\frac{1}{2}\ZZ}(v)$.

   \item Let $v, v' \in (\frac{1}{4}+\frac{1}{2}\ZZ_{\geq 0})^n_+$. Assume that $v$ is $\frac{1}{4}\ZZ$-triangular
 and $|v'| < |v|$. Then $p_{\frac{1}{4}\ZZ}(v') \not\leq p_{\frac{1}{4}\ZZ}(v)$.
\end{itemize}
\end{lemma}

\begin{proof}
We will prove the first claim (the others are analogous). First, observe that
$$|v'| \geq |v_{\ZZ}(p_{\ZZ}(v'))|$$
so that
$$|v_{\ZZ}(p_{\ZZ}(v'))| < |v_{\ZZ}(p_{\ZZ}(v))|$$
Note that $p_{\ZZ}(v') \in \mathcal{P}^*(2n)$ and $p_{\ZZ}(v) \in \mathcal{P}^{**}(2n)$. So by Lemma \ref{lem:monotonicity} we have $p_{\ZZ}(v') \not\leq p_{\ZZ}(v)$.
\end{proof}

A partition of $2n$ is said to be of `type $C$' if every odd part has even multiplicity. For any $p \in \mathcal{P}(2n)$, let $p_C$ denote the largest type $C$ partition such that $p_C \leq p$. Note that the map $p \mapsto p_C$ is order-preserving. 

\begin{lemma}\label{lem:typeC}
Let $p,p' \in \mathcal{P}^{**}(2n)$. Suppose $(p^t)_C \leq ((p')^t)_C$. Then $p' \leq p$.
\end{lemma}

\begin{proof}
Since $p \mapsto p^t$ is order-reversing and $p \mapsto p_C$ is order-preserving, we have that $p \mapsto (p^t)_C$ is order-reversing. It remains to show that $p \mapsto (p^t)_C$ is injective when restricted to the set $\mathcal{P}^{**}(2n)$. Since $p \mapsto p^t$ is injective on all of $\mathcal{P}(2n)$, it is enough to show that $p \mapsto p_C$ is injective on $\mathcal{P}^{**}(2n)^t$. Let $\mathcal{P}'(2n)$ denote the set of type $C$ partitions $q$ with the following properties:
\begin{itemize}
    \item If $i$ is odd and $p_i$ is even, then $p_{i+1}$ is even;
    \item If $i$ is even and $p_i$ is even, then $p_{i} \geq p_{i+1}+1$.
\end{itemize}
If $q \in \mathcal{P}^{**}(2n)^t$ then all parts of $q$ are odd. It follows that $q_C \in \mathcal{P}'(2n)$. For each $r \in \mathcal{P}'(2n)$, define a partition $\tilde{r} \in \mathcal{P}(2n)$ by
$$\tilde{r}_i = \begin{cases}
  r_i+1  & \text{$i$ is odd and $r_i$ is even} \\
  r_i-1 & \text{$i$ is even and $r_i$ is even} \\
  r_i & \text{else}
\end{cases}$$
It is easy to see that the map $r \mapsto \tilde{r}$ is left-inverse to $(\bullet)_C: \mathcal{P}^{**}(2n)^t \to \mathcal{P}'(2n)$. This completes the proof.
\end{proof}

\begin{prop}\label{prop:weaklyunipotentBCD}
Let $G$ be a simple group of type $B$, $C$, or $D$, and let $\widetilde{\OO}$ be a nilpotent cover. Then $I(\widetilde{\OO})$ is very weakly unipotent.
\end{prop}

\begin{proof}
We will prove the proposition in the type $B$ case. The other cases are completely analagous. 

Let $\lambda=\lambda(\widetilde{\OO}) \in \fh_{\RR}^*$ denote the dominant representative of the infinitesimal character of $I(\widetilde{\OO})$. We will show that $\lambda$ is very weakly unipotent (Definition \ref{def:lambdaweaklyunipotent}). This will imply that $I(\widetilde{\OO})$ is very weakly unipotent by Proposition \ref{prop:weaklyunipotentcriterion}.

We have $\fg = \mathfrak{so}(2n+1,\CC)$ and $\fg^{\vee}=\mathfrak{sp}(2n,\CC)$.  We will identify $\fh^{\vee} \simeq \CC^n$ using standard (Bourbaki) coordinates. The Weyl group $W$ is generated in $GL_n(\CC)$ by permutations and sign changes. Positive coroots for $\fg$ (i.e positive roots for $\fg^{\vee}$) are $\{e_i \pm e_j\}_{1 \leq i < j \leq n} \cup \{2e_i\}_{i=1}^n$.

It is clear from the explicit formulas in \cite[Proposition 4.2.6 and Equation (A.1.3)]{MBM} that $\lambda$ is $W$-conjugate to a sequence of the form
$$v = x \sqcup y \sqcup z,$$ 
where
\begin{itemize}
    \item $r,s,t$ are nonnegative integers such that $n=r+s+t$;
    \item $x=(x_1,...,x_r)$ is a $\ZZ$-triangular sequence in $(\ZZ_{\geq 0})^r_+$;
    \item $y=(y_1,...,y_s)$ is a  $\frac{1}{2}\ZZ$-triangular sequence in $(\frac{1}{2}+\ZZ_{\geq 0})^s_+$;
    \item $(z_1,...,z_t)$ is a $\frac{1}{4}\ZZ$-triangular sequence in $(\frac{1}{4}+\frac{1}{2}\ZZ_{\geq 0})_+^t$;
    \item $\sqcup$ denotes concatenation of sequences.
\end{itemize}
For positive integers $a \leq b \leq n$, let $\Delta_C^{\vee}(a,b) = \{\pm 2e_i\}_{i=a}^b \cup \{\pm e_i \pm e_j\}_{a \leq i < j \leq b}$. Then the integral co-roots for $v$ are
$$\Delta^{\vee}_v = \Delta^{\vee}_C(1,r) \sqcup \Delta_C^{\vee}(r+1,r+s) \sqcup \{\alpha^{\vee} \in \Delta_C^{\vee}(r+s+1,n) \mid \alpha^{\vee}(0 \sqcup 0 \sqcup z) \in \ZZ\}.$$
The three pieces appearing in the disjoint union above are root systems of type $C_r$, $C_s$, and $A_{t-1}$, respectively. Thus, by Lemma \ref{lem:integralrootssplit}, we can reduce to the case when $v$ is purely $\ZZ$-, $\frac{1}{2}\ZZ$-, or $\frac{1}{4}\ZZ$-triangular, i.e. $n \in \{r,s,t\}$.

Let us first assume that $v$ is $\ZZ$-triangular. Then, in the notation explained in the beginning of this appendix, we have
$$D^{\circ}(v)_+ \subseteq \{v' \in (\ZZ_{\geq 0})^n_+ \mid |v'| < |v|\}.$$
Nilpotent orbits in $\fg^{\vee}_v = \fg^{\vee} = \mathfrak{sp}(2n)$ are parameterized by type $C$ partitions of $2n$, see \cite[Section 5.1]{CM}. Under this parameterization, the closure order on orbits corresponds to the dominance order on partitions (\ref{eq:dominanceorder}). Moreover, it is easy to see, using the formula for induction in \cite[Section 7.3]{CM}, that the nilpotent orbit $\OO^{\vee}_{v'} \subset \mathfrak{sp}(2n,\CC)^*$, for $v' \in (\ZZ_{\geq 0})^n_+$, corresponds to the partition $[p_{\ZZ}(v')^t]_C$ of $2n$, and the Killing form on $\fh^*$ corresponds to a constant multiple of the dot product on $\CC^n$. So by Proposition \ref{prop:weaklyunipotentcriterion}, it suffices to show that 
$$[p_{\ZZ}(v)^t]_C \not\leq [p_{\ZZ}(v')^t]_C, \qquad \forall v' \in (\ZZ_{\geq 0})^n_+ \text{ such that } |v'| < |v|.$$
By the first part of Lemma \ref{lem:combinatoricsBCD}, we have
$$p_{\ZZ}(v') \not\leq p_{\ZZ}(v), \qquad \forall v' \in (\ZZ_{\geq 0})^n_+ \text{ such that } |v'| < |v|.$$
This implies the desired relation by Lemma \ref{lem:typeC}.

The argument for $v$ $\frac{1}{2}\ZZ$-triangular is similar, except we use the second part of Lemma \ref{lem:combinatoricsBCD}.

Finally, let us assume that $v$ is $\frac{1}{4}\ZZ$-triangular. Then
$$D^{\circ}(v)_+ \subseteq \{v' \in (\frac{1}{4}+\frac{1}{2}\ZZ_{\geq 0})^n_+ \mid |v'| < |v|\}.$$
Nilpotent orbits in $\fg^{\vee}_v = \mathfrak{gl}(n,\CC)^*$ are parameterized by partitions of $n$, and the closure ordering on orbits corresponds to the dominance ordering on partitions. It is easy to see using \cite[Section 7.3]{CM} that the nilpotent orbit $\OO^{\vee}_{v'} \subset \mathfrak{gl}(n,\CC)^*$ corresponds to the partition $p_{\frac{1}{4}\ZZ}(v')^t$. So by Proposition \ref{prop:weaklyunipotentcriterion}, it suffices to show that 
$$p_{\frac{1}{4}\ZZ}(v') \not\leq p_{\frac{1}{4}\ZZ}(v), \qquad \forall v' \in (\frac{1}{4}+\frac{1}{2}\ZZ_{\geq 0})^n_+ \text{ such that } |v'| < |v|.$$
This is the third part of Lemma \ref{lem:combinatoricsBCD}.
\end{proof}

\subsection{Exceptional groups}

For simple exceptional groups, we check very weak unipotence using computer calculation and the tables in \cite[Appendix]{MBM}. The validity of our algorithm relies on several preliminary lemmas. 

\begin{lemma}\label{lem:translation1}
Let $\lambda, \gamma \in \fh^*$ such that $\lambda-\gamma \in \ZZ\Phi$, and let $M \in \cO_{\chi_{\lambda}}(\fg)$. Suppose
$$\mathrm{pr}_{\chi_{\gamma}}(M \otimes F_{w\gamma-\lambda}) = 0, \qquad \forall w \in W_{\lambda},$$
where $F_{w\gamma-\lambda}$ is the irreducible finite-dimensional $G^{ad}$-representation of extremal weight $w\gamma-\lambda$. Then
$$\mathrm{pr}_{\chi_{\gamma}}(M \otimes F) = 0,$$
for any finite-dimensional $G^{ad}$-module $F$.
\end{lemma}

\begin{proof}
This follows immediately from \cite[Theorem 3.3]{BernsteinGelfand}.
\end{proof}

For $\lambda,\gamma \in \fh^*$ with $\lambda-\gamma \in \ZZ\Phi$, the functor 
$$\mathrm{pr}_{\chi_{\lambda}}(\bullet \otimes F_{\gamma-\lambda}): \cO_{\lambda}(\fg) \to \cO_{\gamma}(\fg)$$
is denoted $T^{\gamma}_{\lambda}$ and is called the \emph{translation functor} from $\lambda$ to $\gamma$. Since for any $\lambda \in \fh^*$, the set $D^{\circ}(\lambda)$ is $W_{\lambda}$-stable, Lemma \ref{lem:translation1} immediately implies the following criterion for very weak unipotence. 

\begin{lemma}\label{lem:translation2}
Let $\lambda \in \fh_{\RR}^*$ and let $M$ be an irreducible object in $\cO_{\chi_{\lambda}}(\fg)$. Then $M$ is very weakly unipotent if and only if
$$T^{\gamma}_{\lambda}(M) = 0, \qquad \forall \gamma \in D^{\circ}(\lambda).$$
\end{lemma}
Lemma \ref{lem:translation2} suggests the following \emph{finite} algorithm for very weak unipotence:

\vspace{5mm}

Let $\lambda \in \fh_{\RR}^*$ be integrally dominant, and let $I \subset U(\fg)$ be the unique maximal ideal of infinitesimal character $\chi_{\lambda}$. 

\begin{enumerate}
    \item  Calculate the finite set $D^{\circ}(\lambda)$.
    \item For each $\gamma \in D^{\circ}(\lambda)$, check that $T^{\gamma}_{\lambda}(L(\lambda))=0$.
\end{enumerate}

\vspace{5mm}

We have implemented a version of this algorithm in the {\tt atlas} software.

For simple exceptional groups, a complete list of unipotent infinitesimal characters can be extracted from \cite[Appendix A]{MBM}. For each infinitesimal character on this list, we check very weak unipotence in {\tt atlas} using the algorithm above. In this way, we prove

\begin{prop}\label{prop:weaklyunipotentexceptional}
Let $G$ be a simple exceptional group and let $\widetilde{\OO}$ be a nilpotent cover. Then $I(\widetilde{\OO})$ is very weakly unipotent. 
\end{prop}

\begin{sloppypar} \printbibliography[title={References}] \end{sloppypar}

\end{document}